\tikzset{
    right angle quadrant/.code={
        \pgfmathsetmacro\quadranta{{1,1,-1,-1}[#1-1]}     
        \pgfmathsetmacro\quadrantb{{1,-1,-1,1}[#1-1]}},
    right angle quadrant=1, 
    right angle length/.code={\def\rightanglelength{#1}},   
    right angle length=1ex, 
    right angle symbol/.style n args={3}{
        insert path={
            let \p0 = ($(#1)!(#3)!(#2)$) in     
                let \p1 = ($(\p0)!\quadranta*\rightanglelength!(#3)$), 
                \p2 = ($(\p0)!\quadrantb*\rightanglelength!(#2)$) in 
                let \p3 = ($(\p1)+(\p2)-(\p0)$) in  
            (\p1) -- (\p3) -- (\p2)
        }
    }
}
\newtheorem*{TheoremA'}{Theorem A'}
\newtheorem{thmm}{Theorem}
\newtheorem*{TheoremD'}{Theorem D'}
\newtheorem*{TheoremE'}{Theorem E'}
\newtheorem*{CorollaryC1}{Corollary C.1}
\newtheorem{theorem}{Theorem}[section]
\newtheorem{corollary}[theorem]{Corollary}
\newtheorem*{main*}{Main Theorem}
\newtheorem{lemma}[theorem]{Lemma}
\newtheorem{proposition}[theorem]{Proposition}
\newtheorem{conjecture}[theorem]{Conjecture}
\newtheorem{question}[theorem]{Question}
\theoremstyle{definition}
\newtheorem{definition}[theorem]{Definition}
\newtheorem{remark}[theorem]{Remark}
\def\F{{\mathcal F}}
\def\I{{\mathcal I}}
\def\O{{\mathcal O}}
\def\G{{\mathcal G}}
\def\CL{{\mathcal L}}
\def\p{{\partial}}
\def\M{{\mathcal M}}
\newcommand{\R}{\mathcal{R}}
\def\pX{{\partial X}}
\def\RR{{\mathbb R}}
\def\NN{{\mathbb N}}
\def\ZZ{{\mathbb Z}}
\def\inj{{\text{inj}}}
\def\lx{{\underline x}}
\def\ly{{\underline y}}
\def\lv{{\underline v}}
\def\lc{{\underline c}}
\def\lw{{\underline w}}
\def\lB{{\underline B}}
\def\lS{{\underline S}}
\def\lV{{\underline V}}
\def\lN{{\underline N}}
\def\lm{{\underline m}}
\def\pr{{\text{pr}}}
\def\diam{\mathop{\hbox{{\rm diam}}}}
\def\diam{\mathop{\hbox{{\rm diam}}}}
\def\pr{\mathop{\hbox{{\rm pr}}}}
\def\supp{\mathop{\hbox{{\rm supp}}}}
\def\loc{{\mathop{\hbox{\footnotesize  \rm loc}}}}
\def\supp{\mathop{\hbox{{\rm supp}}}}
\def\top{{\mathop{\hbox{\footnotesize \rm top}}}}
\def\Sing{\mathop{\hbox{{\rm Sing}}}}
\def\Rec{\mathop{\hbox{{\rm Rec}}}}
\def\a{\alpha}
\def\b{\beta}
\def\c{\gamma}   \def\C{\Gamma}
\def\d{\delta}   
   \def\L{\Lambda}
\def\ve{\varepsilon} \def\e{\epsilon}
\def\ae{\text{-a.e.}\ }
\def\bP{\textbf{P}}
\def\bF{\textbf{F}}
\title[Running heading with forty characters or less]
      {On ergodic properties of geodesic flows on uniform visibility manifolds without conjugate points}
\author[first-name1 last-name1 and first-name2 last-name2]{Weisheng Wu}
\subjclass{}
 \keywords{}
\address{School of Mathematical Sciences, Xiamen University, Xiamen, 361005, P.R. China}
\email{wuweisheng@xmu.edu.cn}
\begin{document}

\begin{abstract}
In this paper, we conduct a comprehensive study on ergodic properties of the geodesic flow on a $C^\infty$ uniform visibility manifold $M$ without conjugate points. If $M$ is a closed surface of genus $\mathfrak{g}\geq 2$ without conjugate points and with bounded asymptote, we study the geometric properties of singular geodesics and show that if all singular geodesics are closed, then there are at most finitely many isolated singular closed geodesics and finitely many generalized strips. In particular, the geodesic flow is ergodic with respect to Liouville measure under the above assumption. 

Let $(M,g)$ be a closed uniform visibility manifold without conjugate points and $X$ be its universal cover. Under the entropy gap assumption, the geodesic flow has a unique measure of maximal entropy (MME for short) by \cite[Theorem 1.2]{MR}. We develop a Patterson-Sullivan construction of this unique MME and show that it has local product structure, is fully supported, and has the Bernoulli property.

If we assume further that $M$ has continuous Green bundles and the geodesic flow has a hyperbolic periodic point, using the nonuniform hyperbolic structure on an open dense subset and the symbolic approach developed in \cite{LP}, we show that for any H\"{o}lder continuous function $\psi$ or of the form $\psi = q\psi^u$ for $q\in \RR$ where $\psi^u$ is the Jacobian of the geodesic flow along unstable Green bundle, the equilibrium state of $\psi$ is unique under the pressure gap condition.

Under the same conditions above, we apply the mixing properties of the MME to obtain the following two Margulis type asymptotic formulae:
\begin{equation*}
\begin{aligned}
\#P(t)&\sim\frac{e^{ht}}{ht}\\
b_t(x)&\sim c(x)\frac{e^{ht}}{h}.
\end{aligned}
\end{equation*}
where $P(t)$ is the set of free-homotopy classes containing a closed geodesic with length at most $t$, $b_t(x)$ is the Riemannian volume of the ball of radius $t>0$ around $x\in X$, $h$ denotes the topological entropy of the geodesic flow, and the notation $f_1\sim f_2$ means that $\lim_{t\to\infty}\frac{f_1(t)}{f_2(t)}=1$. The function $c: X\to \RR$ is continuous and called the Margulis function. We then obtain some rigidity results on $M$ involving the Margulis function. 

Finally, for a uniform visibility manifold $M=X/\C$ (not necessarily compact) without conjugate points, we show that if $\C$ is non-elementary and contains an expansive isometry, then the theorem of Hopf-Tsuji-Sullivan dichotomy holds.
\end{abstract}

\maketitle
\markboth{Ergodic properties of geodesic flows}
{W. Wu}
\renewcommand{\sectionmark}[1]{}

\tableofcontents

\section{Introduction}
The geodesic flow is an important topic lying in the crossing field of dynamical systems and differential geometry. The geodesic flow on manifolds of negative curvature and nonpositive curvature is the most natural example of hyperbolic dynamical systems and
chaotic phenomenon, and its ergodic properties are extensively studied since the beginning of the last century. In this article, we present several new progresses in the ergodic theory of geodesic flows on manifolds without conjugate points, including the ergodicity with respect to Liouville measure, uniqueness and Bernoulli properties of measure of maximal entropy (MME for short) and equilibirum states, counting closed geodesics and volume growth asymptotics, Margulis functions and related rigidity phenomenon, and the Hopf-Tsuji-Sullivan dichotomy with respect to the Bowen-Margulis measure.

Suppose that $(M,g)$ is a $C^{\infty}$ closed $n$-dimensional Riemannian manifold,
where $g$ is a Riemannian metric. The geodesic flow describes the motion of a free particle on $M$. Suppose that a particle is at a point $p\in M$ with unit velocity $v\in S_pM:=\{v\in TM: \|v\|=1\}$. Then the particle will move along the geodesic that is tangent to $v$ and proceeds length $t$ by time $t$. Let $\pi: SM\to M$ be the unit tangent bundle over $M$. For each $v\in S_pM$,
we always denote by $c_{v}: \RR\to M$ the unique geodesic on $M$ satisfying the initial conditions $c_v(0)=p$ and $\dot c_v(0)=v$. The geodesic $c_{v}$ satisfies the geodesic equation
$$\nabla_{\dot c_v(t)}\dot c_v(t)=0,$$
where $\nabla$ is the Riemannian connection on $M$.
The geodesic flow $\phi=(\phi^{t})_{t\in\mathbb{R}}$ (induced by the Riemannian metric $g$) on $SM$ is defined as:
\[
\phi^{t}: SM \rightarrow SM, \qquad (p,v) \mapsto
(c_{v}(t),\dot c_{v}(t)),\ \ \ \ \forall\ t\in \RR.
\]

A vector field $J(t)$ along a geodesic $c:\RR\to M$ is called a \emph{Jacobi field} if it satisfies the \emph{Jacobi equation}:
\[J''+R(J, \dot c)\dot c=0\]
where $R$ is the Riemannian curvature tensor and\ $'$\ denotes the covariant derivative along $c$.

\begin{definition}
Let $c$ be a geodesic on $(M,g)$.
\begin{enumerate}
  \item A pair of distinct points $p=c(t_{1})$ and $q=c(t_{2})$ are called \emph{focal} if there is a Jacobi field $J$ along $c$ such that $J(t_{1})=0$, $J'(t_{1})\neq 0$ and $\frac{d}{dt}\mid_{t=t_{2}}\| J(t)\|^{2}=0$;
  \item $p=c(t_{1})$ and $q=c(t_{2})$ are called \emph{conjugate} if there is a nontrivial Jacobi field $J$ along $c$ such that $J(t_{1})=0=J(t_{2})$.
\end{enumerate}
A compact Riemannian manifold $(M,g)$ is called a manifold \emph{without focal points/without conjugate points} if there is no focal points/conjugate points on any geodesic in $(M,g)$.
\end{definition}
By definition, if a manifold has no focal points then it has no conjugate points. All manifolds of nonpositive curvature always have no focal points. In brief, we have
\begin{equation*}
    \begin{aligned}
 \text{negative curvature}&\Rightarrow  \text{nonpositive curvature}  
 \Rightarrow \text{no focal points} \\&\Rightarrow \text{no conjugate points}.       
    \end{aligned}
\end{equation*}

There is a natural isomorphism between the tangent space $T_v(SM)$ and the space of perpendicular Jacobi fields along the geodesic $c_v$ where $v\in SM$. The \textit{stable/unstable Green bundles} $G^s(v)/G^u(v), v\in SM$ are subbundles of $TSM$ associated to the two distinguished families of Jacobi fields constructed by Green \cite{Gre1}, which we call \textit{stable/unstable Jacobi fields}. In general $G^s(v)/G^u(v)$ are only measurable in $v$, and they are not necessarily transverse to each other at $v$.

The statistical properties of geodesic flows on surfaces with negative curvature were first studied by Hadamard and Morse in the beginning of the twentieth century. In the 1960s, Anosov \cite{An, AnS} introduced the notion of the so-called \textit{Anosov flows} by abstracting the hyperbolic behaviour of the geodesic flows of negatively curved manifolds. Suppose that $M$ has negative curvature everywhere $K<0$. Then the geodesic flow $\phi^t$ is an \textit{Anosov flow}: there exist a $\phi^t$-invariant $T_vSM=Z(v)\oplus G^s(v)\oplus G^u(v)$, constants $0<\lambda<1$, $c>0$ such that
\begin{enumerate}
  \item $Z(v)$ is the vector field tangent to the orbit of the flow;
  \item $\|d\phi^t(\xi)\| \leq c\lambda^t$ for $t>0$, whenever $\xi \in G^s(v)$,
  \item $\|d\phi^{-t}(\eta)\| \leq c\lambda^t$ for $t>0$,  whenever $\eta \in G^u(v)$.
\end{enumerate}
$T_vSM=Z(v)\oplus G^s(v)\oplus G^u(v)$ is called the Anosov splitting associated with the Anosov flow. In this negative curvature case, the strong stable/unstable bundles of Anosov spliting $G^s(v)/G^u(v)$ are given exactly by Green bundles, which is proved to be H\"{o}lder continuous. Furthermore it is known that they are uniquely integrable to foliations $W^s$ and $W^u$ of $SM$, called the strong stable/unstable foliations of $\phi^t$. 

Geodesic flows on closed manifolds with negative curvature have very rich dynamics and broad applications. In the last century, this class of geodesic flows have always been attracting great interests of the mathematicians in dynamical systems and related areas. A lot of beautiful results on the dynamics of the geodesic flows have been exhibited. Among which, the ergodicity and mixing properties of Liouville measure, the uniqueness of measure of maximal entropy etc., have their special importance and receive extensive attentions.  

Geodesic flows on manifolds of nonpositive curvature $K\le 0$ have also been extensively studied since 1970's. The study for the geodesic flows become much more difficult due to the existence of parallel Jacobi fields.  
A Jacobi field $J(t)$ along a geodesic $c(t)$ is called \emph{parallel} if $J'(t)=0$ for all $t\in \RR$. The notion of \emph{rank} is defined as follows.
\begin{definition}
For each $v \in SM$, we define \text{rank}($v$) to be the dimension of the vector space of parallel Jacobi fields along the geodesic $c_{v}$, and \text{rank}($M$):=$\min\{$\text{rank}$(v): v \in SM\}$. 
\end{definition}

In fact by the higher rank rigidity theorem \cite{Bal, BBE, BBS, BS1, BS2}, any simply connected irreducible Riemannian manifold of nonpositive curvature and of rank greater than one is isometric to a globally symmetric space of noncompact type. This theorem was extended by Watkins \cite{Wat} to manifolds without focal points. On the other hand, for rank one manifolds of nonpositive curvature or more generally without focal points, the geodesic flows present certain \emph{non-uniformly hyperbolic} behaviors. Ergodic properties including the ergodicity with respect to Liouville measure and the uniqueness of MME attract many interests and fruitful results were achieved in the last several decades (see \cite{Kn1, Kn2, Ba, Wu, BCFT, GR0, Link1, LP, CKP1, CKP2, LWW, WLW, Wu1, Wu2} and many others). 

Even weaker hyperbolicity can be enjoyed by the geodesic flows on manifolds without conjugate points. The study of its ergodic properties 
faces a great challenge and relatively few results are known so far (see for example \cite{CKW1, CKW2, GR, MR, Mamani, Wu1, PW} etc.) Our goal in this paper is to conduct a comprehensive study on ergodic properties of geodesic flows on various classes of manifolds without conjugate points.

\section{Statement of main results}
For geodesic flows on manifolds without conjugate points, the very weak hyperbolicity prevents us to use rich results from hyperbolic dynamics.  On the other hand, in contrast with manifolds with negative or nonpositive curvature, strong geometric properties such as convexity and monotonicity, or the powerful tools such as analysis involving the curvature are not available. Nevertheless, Eberlein \cite{Eb1, EO} introduced the following uniform visibility axiom, which provides us a rather good geometric property of the manifold.
\begin{definition}(Cf. \cite[Definition 2.1]{CKW2})\label{vis}
A simply connected Riemannian manifold $X$ is a \emph{uniform visibility manifold} if for every $\e>0$ there exists $L=L(\e) > 0$ such that whenever a geodesic segment $c: [a, b]\to X$ stays at distance at least $L$ from some point $p \in X$,
then the angle sustained by $c$ at $p$ is less than $\e$, that is,
$$\angle_p(c) := \sup_{a\le s,t\le b}\angle_p(c(s), c(t)) < \e.$$
If $M$ is a Riemannian manifold without conjugate points whose universal cover $X$ is a uniform visibility manifold, then we say that $M$ is a \textit{uniform visibility} manifold.
\end{definition}
Let $M$ be a compact manifold with nonpositive curvature $K\le 0$. Then $M$ is a uniform visibility manifold if and only if the universal cover $X$ of $M$ contains no totally geodesic isometric imbedding of the plane $\RR^2$, see \cite{Eb1}.
 
Now we introduce various classes of  manifolds without conjugate points, coming from the Green bundles or Jacobi equations. Let $M$ be a Riemannian manifold without conjugate points. As mentioned above, in general, $G^s(v)/G^u(v)$ are only measurable in $v$, and they are not necessarily transverse to each other at $v$. 
Let us define 
\begin{equation}\label{set1}
\mathcal{R}_1:=\{v\in SM: G^s(v)\cap G^u(v)=\{0\}\}.
\end{equation}
We say that $M$ has \textit{continuous Green bundles} or \textit{continuous asymptote} if $G^s(v)/G^u(v)$ are continuous in $v$. In this case, it is clear that $\R_1$ is an open subset of $SM$.

We say that $M$ has \textit{bounded asymptote} if there exists $C>0$ such that $\|J^s(t)\|\le C\|J^s(0)\|$ for any $t>0$ and any stable Jacobi field $J^s$. We have (cf. \cite[5.3 Satz]{Kn}, \cite{GR})
\begin{equation*}
    \begin{aligned}
 \text{no focal points}\Rightarrow  \text{bounded asymptote}  \Rightarrow \text{continuous asymptote}.       
    \end{aligned}
\end{equation*}

In Section 3, we will recall the Busemann functions and the construction of stable/unstable horospherical foliations, denoted by $\mathcal{F}^s(v)/\mathcal{F}^u(v), v\in SM$, for manifolds without conjugate points. Let $X$ be a simply connected uniform visibility manifold without conjugate points. Then $\mathcal{F}^s(v), \mathcal{F}^u(v)$ are both continuous in $v\in SX$. Denote $\mathcal{I}(v)=\mathcal{F}^s(v)\cap\mathcal{F}^u(v)$. If $\mathcal{I}(v)=\{v\}$, then $v$ is called an \textit{expansive vector}. Denote the set of expansive vectors by
\begin{equation*}
\mathcal{R}_0:=\{v\in SM:\mathcal{I}(v)=\{v\}\}.    
\end{equation*}
These two sets $\R_0$ and $\R_1$ are central objects in our study of dynamical properties of the geodesic flow.

\subsection{Ergodicity of Liouville measure}
Let $\mathcal{M}_{\phi}(SM)$ be the set of $\phi^t$-invariant Borel probability measures on $SM$. We say that the geodesic flow is \textit{ergodic} with respect to $\mu\in \mathcal{M}_{\phi}(SM)$ or just $\mu$ is ergodic, if $\mu(\phi^t(A))=\mu(A)$ for any Borel measurable set $A\subset SM$ and any $t\in \RR$. Equivalently, by Birkhoff ergodic theorem, $\mu$ is ergodic if and only if for $\mu \ae v\in SM$
  $$\lim_{t\to \pm \infty}\frac{1}{t}\int_0^tf(\phi^s(v))ds=\int_{SM} f d \mu, \quad \forall f\in C(SM,\mathbb R).$$
Here $C(SM,\mathbb R)$ denotes the Banach space of all continuous real-valued functions on $SM$.
  
The most natural invariant measure for the geodesic flow is the Liouville measure $\nu$, which is the normalized volume measure induced by the Sasaki metric on $SM$. Hopf (cf. \cite{Ho0,Ho1}) proved the ergodicity of the geodesic flow with respect to the Liouville measure $\nu$ on $SM$ for compact surfaces of variable negative curvature and for compact manifolds of constant negative sectional curvature in any dimension. The general case for compact manifolds of variable negative curvature was established by Anosov and Sinai (cf. \cite{An, AnS}). The ergodicity was established based on the classical Hopf argument using stable/unstable foliations on $SM$ (cf., for example the appendix in \cite{BB}). 

In 1970's, by using his theory of nonuniform hyperbolicity, Pesin obtained a celebrated result on the ergodicity of the geodesic flows on manifolds without focal points satisfying the uniform visibility property (cf. Theorem 12.2.12 in \cite{BP}). Define the sets:
\begin{equation}\label{e:Pesin}
\begin{aligned}
\Delta^+&:=\{v\in SM: \chi(v,\xi)<0 \text{\ for any\ } \xi\in G^s(v)\},\\
\Delta^-&:=\{v\in SM: \chi(v,\xi)>0 \text{\ for any\ } \xi\in G^u(v)\},\\ 
\Delta&:=\Delta^+\cap \Delta^-,
\end{aligned}
\end{equation}
where $\chi$ denotes the Lyapunov exponents, and $G^s/G^u$ the stable/unstable Green bundles. $\Delta$ is called the \emph{regular set} with respect to the geodesic flow. Pesin proved the following theorem:
\begin{theorem}[Pesin, cf. \cite{BP}]\label{pesin} For the geodesic flow on a closed surface of genus $\mathfrak{g}\geq 2$ and without focal points, we have that
$\nu(\Delta)>0$, and $\phi^t|_\Delta$ is ergodic.
\end{theorem}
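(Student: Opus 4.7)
The plan is to prove $\nu(\Delta)>0$ via the scalar Riccati equation combined with Gauss-Bonnet, and then derive ergodicity by running the classical Hopf argument inside the nonuniformly hyperbolic set $\Delta$ using Pesin stable/unstable manifold theory.

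For positivity, on a surface the perpendicular Jacobi equation is scalar, so the unstable Green Jacobi field along $c_v$ has a scalar logarithmic derivative $U^u(v)\geq 0$ (the nonnegativity is precisely the no-focal-points condition), while $U^s(v)\leq 0$; each satisfies the Riccati equation $(U^{u/s})'+(U^{u/s})^2+K\circ\pi=0$ along the flow. Since $\nu$ is $\phi^t$-invariant and $U^u$ is a bounded measurable function on $SM$, integrating the Riccati identity against $\nu$ yields
\[
\int_{SM}(U^u)^2\,d\nu \;=\; -\int_{SM}K\circ\pi\,d\nu.
\]
By Gauss-Bonnet, $\int_M K\,dA = 2\pi(2-2\mathfrak{g})<0$, so the right-hand side is strictly positive; hence $U^u>0$ on a set of positive $\nu$-measure. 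Since the Lyapunov exponent along $G^u$ at $v$ equals the Birkhoff average $\lim_{t\to\infty}\tfrac{1}{t}\int_0^t U^u(\phi^s v)\,ds$ and $U^u\geq 0$, the ergodic decomposition together with $\int U^u\,d\nu>0$ forces $\chi(v,\xi)>0$ for all $\xi\in G^u(v)$ on a positive-measure set. The symmetric argument applied to $U^s$ yields the dual statement, and together they give $\nu(\Delta)>0$.

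For ergodicity, I would appeal to Pesin's nonuniform hyperbolicity theory to obtain, for $\nu$-a.e.\ $v\in\Delta$, local $C^1$ Pesin stable/unstable manifolds $W^s_{\loc}(v), W^u_{\loc}(v)$ tangent to $G^s(v), G^u(v)$, together with absolute continuity of the corresponding holonomies. The Hopf argument then shows that for every continuous $f$ the two-sided Birkhoff averages coincide $\nu$-a.e., are constant along $W^s_{\loc}$- and $W^u_{\loc}$-leaves, and by absolute continuity are $\nu$-a.e.\ constant on each $(W^s,W^u)$-saturated set. The main obstacle is to show that this saturation is essentially trivial inside $\Delta$, i.e.\ that one cannot partition $\Delta$ into two positive-measure $(W^s,W^u)$-invariant pieces. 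Here the two-dimensional topology and uniform visibility (which holds automatically for closed surfaces of genus $\mathfrak{g}\geq 2$ without focal points) are decisive: the Pesin manifolds locally coincide with pieces of the continuous one-dimensional horospherical foliations $\mathcal{F}^s,\mathcal{F}^u$, whose leaves intersect transversally on the expansive set $\mathcal{R}_0\supset\Delta$; combining this with topological transitivity of $\phi^t$ on $SM$ and a chain-of-local-product-boxes argument shows that the $(W^s,W^u)$-saturation of any positive-measure subset of $\Delta$ fills $\Delta$ modulo $\nu$, yielding ergodicity of $\phi^t|_\Delta$.
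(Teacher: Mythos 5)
The paper itself offers no proof of this statement---it is quoted verbatim from \cite{BP}---so your argument should be measured against the classical Pesin proof, which it follows in outline (Riccati plus Gauss--Bonnet for positivity, Pesin theory plus the Hopf argument for ergodicity). There is, however, one genuine inferential gap in your positivity step. Your computation correctly gives $\int_{SM}(U^u)^2\,d\nu=-\int_{SM}K\circ\pi\,d\nu>0$ and hence $\nu(\Delta^-)>0$, and symmetrically $\nu(\Delta^+)>0$; but $\Delta=\Delta^+\cap\Delta^-$, and two sets of positive measure need not meet in a set of positive measure, so ``together they give $\nu(\Delta)>0$'' is not a valid deduction as written. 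The standard repair is to show $\Delta^+=\Delta^-$ modulo $\nu$: for $\nu$-a.e.\ $v$ the Lyapunov exponents on $T_vSM$ sum to zero (the flow preserves $\nu$, which is equivalent to the Riemannian volume of $SM$) and the flow direction carries exponent zero, so the spectrum is $\{-\chi_v,0,\chi_v\}$ with $\chi_v\ge 0$; if $v\in\Delta^-$ then $\chi_v>0$, the Oseledets space $E^s(v)$ is one-dimensional, and the Freire--Ma\~n\'e inclusion $E^s(v)\subset G^s(v)$ (exactly the mechanism used in Theorem \ref{regular} of this paper) forces $E^s(v)=G^s(v)$, so the exponent along $G^s(v)$ is $-\chi_v<0$ and $v\in\Delta^+$. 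You should also record why $\int_{SM}(U^u)'\,d\nu=0$: this needs $(U^u)'=-(U^u)^2-K$ to be bounded, which follows from Green's uniform bound on the Riccati solutions on a compact manifold without conjugate points.

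The ergodicity half is a sketch rather than a proof, but it identifies the correct ingredients. Two points deserve to be made explicit if you want it to stand on its own: first, the inclusion $\Delta\subset\mathcal{R}_0$ rests on the flat strip theorem (a parallel perpendicular Jacobi field along the boundary of a flat strip forces both Green exponents to vanish), which is valid without focal points but, as the paper notes via \cite{Burns}, fails for general surfaces without conjugate points---so this step does not transfer beyond the no-focal-points setting; second, the ``chain-of-local-product-boxes'' conclusion is precisely where one must invoke that Pesin's ergodic components of $\Delta$ are (mod $0$) open in $\Delta$ together with the density of stable and unstable horocycles coming from uniform visibility, and these should be cited rather than asserted. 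With those two references supplied and the $\Delta^+=\Delta^-$ step inserted, your reconstruction matches the argument of \cite{BP}.
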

\vspace{.1cm}
However, even for surfaces of nonpositive curvature, the ergodicity of the geodesic flows with respect to the Liouville measure $\nu$ on $SM$ is widely open since 1980's, due to the existence of the following ``flat'' geodesics. 

Consider a closed surface $M$ of genus $\mathfrak{g} \geq 2$ and of nonpositive curvature. Let
\begin{equation*}\label{e:flat}
\begin{aligned}
\Lambda:=\{v\in SM: K(c_v(t))= 0, \ \forall t\in \mathbb{R}\},
\end{aligned}
\end{equation*}
where $K$ denotes the curvature of the point. We call $c_v$ a \emph{singular or flat geodesic} if $v\in \Lambda$. People still do not know if $\Lambda$ is small in measure ($\nu(\Lambda)=0$ or not), in general. However, from the dynamical point of view, Knieper \cite{Kn1} showed there is an entropy gap in arbitrary dimensions:
$$h_{\top}(\phi^1|_{\Lambda})<h_{\top}(\phi^1),$$
where $h_{\top}$ denotes the topological entropy, and $\Lambda$ denotes the irregular set of the geodesic flow which is a counterpart of the above defined set in arbitrary dimensions. Recently, Burns-Climenhaga-Fisher-Thompson \cite{BCFT} reproved the entropy gap using specification. This means that the geodesic flow restricted on $\Lambda$ has less complexity than the whole system. Knieper \cite{Kn1} (see also Burns and Gelfert \cite{BG}) proved that on rank $1$ surfaces of nonpositive curvature, the geodesic flow on $\Lambda$ has zero topological entropy. In higher dimensions, it is possible to have positive entropy on $\Lambda$; an example was given by Gromov (cf. \cite{Gro}).

For geodesic flows on rank $1$ surfaces of nonpositive curvature, the orbits inside $\Lambda$ are also believed to have simple behavior. In all the known examples, all the orbits in $\Lambda$ are closed. In a recent survey, Burns asked whether there exists a non-closed flat geodesic (cf. Question 6.2.1 in \cite{BM}). 
\begin{conjecture}[Cf. \cite{BM, Ha, RH} etc.]\label{conjecture}
Let $(M,g)$ be a smooth, connected and closed surface of genus $\mathfrak{g}\geq 2$, which has nonpositive curvature. Then all flat geodesics are closed and there are only finitely many homotopy classes of such geodesics. In particular, $\nu(\Lambda)=0$, and hence the geodesic flow on $SM$ is ergodic.
\end{conjecture}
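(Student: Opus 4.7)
The plan is to work in the universal cover $X$ of $M$ and to reduce the conjecture to a structural claim about the $\Gamma$-action on flat strips, where $\Gamma=\pi_1(M)$. A flat geodesic $c\subset M$ lifts to $\tilde c\subset X$ along which $K\equiv 0$; the flat strip lemma on nonpositively curved surfaces gives a maximal flat strip $F(\tilde c)\cong[0,w(\tilde c)]\times\RR$ consisting of all geodesics biasymptotic to $\tilde c$. Because $\mathfrak{g}\ge 2$ and $M$ is closed, no isometric copy of $\RR^2$ embeds in $X$ (this follows, for instance, from Preissmann's theorem applied to the free abelian stabilizer of such a plane), so each $w(\tilde c)<\infty$, and by $\Gamma$-compactness of $SM$ one gets a uniform bound $w(\tilde c)\le w_0$. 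In particular $X$ satisfies the uniform visibility axiom of Definition \ref{vis}.

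The core of the argument, and the main obstacle, is to show that every maximal flat strip $F\subset X$ admits a nontrivial stabilizer in $\Gamma$, necessarily an infinite cyclic subgroup whose generator translates along the $\RR$-direction of $F$; the projection of this axis is then a closed flat geodesic realizing every homotopy class represented in $F$. The strategy is recurrence plus rigidity. Fix $v_0$ tangent to $\tilde c\subset F$ and consider the $\phi^t$-orbit of its projection in the compact manifold $SM$; by compactness there exist $t_n\to\infty$ and $\gamma_n\in\Gamma$ with $\gamma_n\phi^{t_n}v_0\to v_\infty\in SX$. Using closedness of $\Lambda$ and upper semicontinuity of the strip assignment $v\mapsto F(v)$ (available thanks to bounded asymptote on surfaces of nonpositive curvature), the geodesic $c_{v_\infty}$ is itself flat, with maximal strip $F_\infty$ arising as the Hausdorff limit of $\gamma_n F$. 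The delicate step is to argue that uniform visibility together with the width bound $w\le w_0$ force $\gamma_n F=F_\infty$ for all large $n$, so that $\gamma_m\gamma_n^{-1}$ with $m\ne n$ stabilizes a common strip and, by conjugating back, yields a nontrivial stabilizer of $F$ itself. Ruling out an irrational drift of the $\gamma_n F$ transverse to $F_\infty$ along the ideal boundary $\partial X$ is precisely the content of Burns's Question 6.2.1; I expect it will require a new closing lemma adapted to the flat locus, combining Morse-type estimates for quasi-flat polygons in $X$ with angle comparison through the visibility function $L(\ep)$.

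Granting this stabilizer claim, the remaining steps are standard. Distinct free-homotopy classes inside $\Lambda$ are in bijection with $\Gamma$-orbits of maximal flat strips, each of which projects to an immersed flat cylinder in $M$ of width at most $w_0$ and circumference bounded below by twice the injectivity radius; the finite area of $M$ then forces finitely many such cylinders and hence finitely many homotopy classes of flat geodesics. It follows that $\Lambda$ is a finite union of embedded flat $2$-tori inside the $3$-dimensional manifold $SM$, so $\nu(\Lambda)=0$. On the complement, every vector has rank one, so the Lyapunov exponents along the Green bundles are nonzero $\nu$-almost everywhere and $\nu(\Delta)=1$ for $\Delta$ as in \eqref{e:Pesin}. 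Theorem \ref{pesin} of Pesin then delivers ergodicity of $\phi^t$ with respect to Liouville measure, completing the proof of the conjecture.
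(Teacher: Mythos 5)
The statement you are trying to prove is Conjecture \ref{conjecture}; it is an open problem, and the paper does not prove it. What the paper proves are partial and conditional results: Theorem \ref{wu} (from \cite{Wu, WLW}) establishes the conjecture only under the extra hypothesis that $\{x\in M: K(x)<0\}$ has finitely many connected components, and Theorem \ref{singulargeodesic} works in the no-conjugate-points setting but \emph{assumes} $\R_1^c\subset\mathrm{Per}(\phi^t)$, i.e.\ it takes the closedness of all singular geodesics as a hypothesis and then deduces the finite decomposition and ergodicity. So there is no proof in the paper to compare against, and your proposal must stand on its own.

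It does not, because the step you yourself flag as "delicate" is exactly the content of the conjecture and is left unproved. Your recurrence-plus-rigidity scheme produces $\gamma_n\in\Gamma$ and maximal flat strips $\gamma_nF$ converging (in the Hausdorff sense, say) to a limit strip $F_\infty$, but Hausdorff convergence of strips of uniformly bounded width gives no mechanism forcing $\gamma_nF=F_\infty$ for large $n$: a priori the translates can accumulate on $F_\infty$ without any two coinciding, and there is no closing lemma for the flat locus that rules out this drift — that absence is precisely why Burns's Question 6.2.1 is open and why the paper only obtains Theorem \ref{singulargeodesic} under the hypothesis that singular geodesics are already closed. Writing "I expect it will require a new closing lemma" concedes that what you have is a program, not a proof; everything after the stabilizer claim (finitely many flat cylinders, $\nu(\Lambda)=0$, Pesin's Theorem \ref{pesin} giving ergodicity) is indeed routine, but it all hangs on that unproved step. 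Two secondary points: deducing uniform visibility via "Preissmann applied to the free abelian stabilizer of such a plane" is not sound, since a totally geodesic flat plane in $X$ need not be invariant under any nontrivial subgroup of $\Gamma$ (the correct route is Eberlein's criterion combined with a Gauss--Bonnet argument for genus $\mathfrak{g}\ge 2$), and the uniform width bound $w(\tilde c)\le w_0$ also requires its own limiting argument rather than bare "$\Gamma$-compactness"; both are fixable, but the stabilizer gap is not, with present techniques.
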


The conjecture has been verified for surfaces of genus $\mathfrak{g}\geq 2$ with nonpositive curvature  \cite{Wu} and without focal points \cite{WLW}, assuming that the negative curvature part has finitely many connected components:
\begin{theorem}[Cf. \cite{Wu, WLW}]\label{wu}
Let $(M,g)$ be a smooth, connected and closed surface of genus $\mathfrak{g}\geq 2$, without focal points. Suppose that the set $\{x\in M: K(x)<0\}$ has finitely many connected components, then there are only finitely many homotopy classes of flat geodesics and thus $\nu(\Lambda)=0$. In particular, the geodesic flow is ergodic.
\end{theorem}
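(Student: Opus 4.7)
The plan is to reduce the theorem to a structural classification of $\Lambda$ as a finite union of closed singular orbits and flat strips, from which $\nu(\Lambda)=0$ follows for dimensional reasons, and then to apply Pesin's Theorem \ref{pesin} to conclude ergodicity on the full $SM$.

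My first step would be to lift to the universal cover $X$, which is a uniform visibility manifold since $M$ is a closed surface of genus $\mathfrak g\geq 2$ without focal points. For $v\in\Lambda$ the Gauss curvature vanishes identically along $c_v$, so by the no focal points hypothesis together with flatness one obtains a nontrivial perpendicular parallel Jacobi field along the lift $\tilde c_v$. Using the convexity of Jacobi field norms in the no focal points setting, I would show that $\tilde c_v$ lies in a maximal closed flat strip of the form $[0,w(v)]\times\RR$ isometrically embedded in $X$. This places each $v\in\Lambda$ inside a totally geodesic flat strip.

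The second and crucial step is to show that under the finite components assumption on $\{K<0\}$, every flat geodesic is closed and there are only finitely many free homotopy classes of them. The idea is that any flat strip projects to $M$ into the closed complement $\overline{\{K=0\}}$, whose frontier is a finite union of boundary components of the open sets $U_1,\dots,U_m$ making up $\{K<0\}$. A non-closed flat geodesic in such a strip would accumulate on itself in a way that produces, in the limit, a second flat geodesic meeting some $\overline{U_i}$, contradicting either the finite number of components of $\{K<0\}$ or the visibility axiom at infinity. Once closedness of every flat geodesic is established, each flat strip in $X$ is the invariant axis band of some hyperbolic deck transformation, and the strips can be grouped by the finite combinatorial data of how their projected strips sit relative to the finitely many boundary curves of the $U_i$. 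A pigeonhole argument over this finite combinatorial skeleton then yields finitely many free homotopy classes of flat geodesics.

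From finitely many closed flat strips and isolated flat closed geodesics one concludes that $\Lambda\subset SM$ is a finite union of immersed $2$-dimensional totally geodesic pieces and isolated periodic orbits; each of these has three-dimensional Lebesgue measure zero in $SM$, so $\nu(\Lambda)=0$. For ergodicity, a standard Jacobi-field argument for surfaces without focal points shows that the Pesin regular set $\Delta$ differs from $SM\setminus\Lambda$ by a $\nu$-null set; hence $\nu(\Delta)=1$, and Theorem \ref{pesin} directly upgrades to ergodicity of $\phi^t$ on all of $SM$. I expect the main obstacle to be the finiteness step: turning the qualitative hypothesis ``finitely many components of $\{K<0\}$'' into quantitative finiteness of homotopy classes of flat closed geodesics requires a delicate interplay of uniform visibility, the flat strip structure in $X$, the action of the deck group, and the boundary geometry of the $U_i$, and ruling out non-closed flat geodesics is itself genuinely two-dimensional in nature. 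Everything else --- the strip structure for $\Lambda$, the measure-zero conclusion, and the upgrade from $\Delta$-ergodicity to full ergodicity --- is comparatively routine once the finiteness result is in hand.
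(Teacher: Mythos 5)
Your high-level architecture does match the strategy of the cited works \cite{Wu, WLW} (the paper itself gives no proof of this theorem; it is quoted from those references): reduce $\Lambda$ to finitely many closed strips and isolated closed orbits, conclude $\nu(\Lambda)=0$ by dimension, and upgrade Pesin's Theorem \ref{pesin} to full ergodicity. But the proposal does not actually prove the theorem, because the entire mathematical content lives in your ``second and crucial step,'' and there you offer only a description of what should happen, not an argument. The sentence ``a non-closed flat geodesic \dots would accumulate on itself in a way that produces, in the limit, a second flat geodesic meeting some $\overline{U_i}$, contradicting either the finite number of components of $\{K<0\}$ or the visibility axiom at infinity'' is not a proof step: you do not say how the accumulation produces a second flat geodesic, why that geodesic must meet some $\overline{U_i}$, or what the contradiction with finiteness of components actually is. The real argument in \cite{Wu, WLW} is a widening scheme: from a non-closed flat geodesic one extracts, via the $\omega$-limit set and the flat strip lemma, flat strips of strictly increasing width; compactness of $M$ bounds the width, and the finite-components hypothesis is used to control the boundary geodesics of the maximal strip and derive the contradiction. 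Similarly, your ``pigeonhole argument over this finite combinatorial skeleton'' is unsubstantiated — a flat closed geodesic need not touch any $\partial U_i$ at all, so it is not clear what finite combinatorial datum you are assigning to it. Since you yourself flag this step as ``the main obstacle,'' the proposal should be read as a correct plan with its central lemma missing.

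There is also a genuine error in your first step: a nontrivial perpendicular parallel Jacobi field along $c_v$ (which does follow from $K\equiv 0$ along $c_v$ in the no focal points setting) does \emph{not} imply that $c_v$ bounds a flat strip of positive width. The flat strip theorem produces a strip from two distinct bi-asymptotic geodesics, not from an infinitesimal parallel field; isolated flat closed geodesics with $w(v)=0$ are a genuine possibility, and indeed they appear as the orbits $\mathcal{O}_i$ in the decomposition of Theorem A of this paper. This does not derail the measure-zero conclusion (isolated closed orbits are still null), but the claim ``this places each $v\in\Lambda$ inside a totally geodesic flat strip'' is false as stated if the strip is meant to have positive width. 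The final steps (null sets of dimension $\le 2$ in the $3$-dimensional $SM$, and the identification of $\Delta$ with $SM\setminus\Lambda$ modulo $\nu$-null sets so that Theorem \ref{pesin} yields ergodicity) are acceptable at the level of a sketch.
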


In this paper, we study the ergodicity of the geodesic flows on surfaces of genus $\mathfrak{g}\geq 2$ without conjugate points and with continuous Green bundles. We first show the relation between the regular set $\Delta$ and the set $\mathcal{R}_1$ of unit vectors with linearly independent stable and unstable Green bundles, see \eqref{set1}. We call the geodesic $c_v$ \textit{singular}, if $v\in \R_1^c$. To achieve the goal, we explore the properties of singular geodesics, which are also of independent interest. Our results are summarized in the following theorem. Here, we let $\text{Per}(\phi^t)$ denote the set of periodic points of the geodesic flow. 
\begin{thmm}\label{singulargeodesic}
Let $(M,g)$ be a smooth, connected and closed surface of genus $\mathfrak{g}\geq 2$ without conjugate points and with continuous Green bundles, then $\Delta$ agrees $\nu$-almost everywhere with the open dense set $\mathcal{R}_1$. If furthermore $M$ has bounded asymptote, then:
\begin{enumerate}
       \item Let $c_v$ be a lift of a closed geodesic and $v\in \R_1^c$. Then an ideal triangle with $c_v$ as an edge has infinite area;
   \item  If $\R_1^c \subset \text{Per}(\phi^t)$, then there is a finite decomposition of $\R_1^c$:
$$\R_1^c = \mathcal{O}_1 \cup \mathcal{O}_2 \cup \ldots \mathcal{O}_k \cup \mathcal{S}_1\cup \mathcal{S}_2 \cup \ldots \cup \mathcal{S}_l,$$
where each $\mathcal{O}_i, 1\leq i \leq k$, is an isolated periodic orbit and each $\mathcal{S}_j, 1\leq j \leq l$, consists of vectors tangent to a generalized strip. Here $k$ or $l$ are allowed to be $0$ if there is no isolated closed singular geodesic or no generalized strip. In particular, $\nu(\R_1^c)=0$, and hence the geodesic flow on $SM$ is ergodic.
   \end{enumerate}
\end{thmm}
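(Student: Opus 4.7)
The plan is to prove the three assertions sequentially, using each to enable the next. For the identification $\Delta = \R_1$ modulo $\nu$, I first note that on a surface both $G^s$ and $G^u$ are line bundles inside the rank-$2$ perpendicular Jacobi bundle, so $\R_1$ is precisely the locus where they are transverse; continuity of the Green bundles then makes $\R_1$ open, and density follows once one produces an $\R_1$-vector in every open set, which is obtained from positive topological entropy on a surface of genus $\mathfrak g\ge 2$ plus one hyperbolic periodic orbit (whose transverse intersections with nearby orbits populate every open set with $\R_1$-vectors). The inclusion $\R_1 \subseteq \Delta$ is a direct computation: transversality forces strictly positive growth of unstable Jacobi fields, and no conjugate points plus the monotonicity estimates of \cite{Gre1} make these rates nonzero on both Green bundles. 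For the a.e.\ reverse inclusion, I exploit that $v\in \R_1^c$ means $G^s(v)=G^u(v)$, which propagates along the whole orbit; Oseledets together with Birkhoff averaging of $\log\|d\phi^t|_{G^s}\|$ then forces the Lyapunov exponents along both Green subbundles to vanish on a full-$\nu$-measure subset of the invariant set $\R_1^c$, giving $\R_1^c \subseteq \Delta^c$ $\nu$-a.e.

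For part (1) under bounded asymptote, I argue by contradiction via Gauss--Bonnet. Suppose a lift $c_v$ of a closed singular geodesic is an edge of an ideal triangle $T\subset X$ with $\Vol(T)<\infty$. Exhausting $T$ by compact geodesic triangles and applying Gauss--Bonnet, the two ideal angles vanish by the uniform visibility axiom, so $\int_T K\,dA$ is finite and equal to $-\pi+\alpha$ for the sole finite angle $\alpha$. On the other hand, singularity $v\in \R_1^c$ combined with $G^s=G^u$ and bounded asymptote produces a nontrivial perpendicular Jacobi field $J(t)=f(t)\,\nu(t)$ along $c_v$ with $f$ uniformly bounded above and below for all $t\in\RR$ (applying $\|J^s(t)\|\le C\|J^s(0)\|$ in both time directions). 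Feeding $f''+Kf=0$ into integration by parts over one period $[0,L]$ of the closed projection produces the identity $\int_0^L K f^2\,ds = \int_0^L (f')^2\,ds \ge 0$, which, slicing $T$ into horocyclic arcs transverse to $c_v$ and using how curvature accumulates along each slice as one approaches the two ideal vertices, contradicts the finiteness of $\int_T K\,dA$.

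Given (1), part (2) is a structural argument. Since $\R_1^c\subset \Per(\phi^t)$, every singular vector lies on a closed geodesic; in the universal cover, two distinct singular lifts that share one ideal endpoint must share both and bound a flat strip, for otherwise they would co-bound a finite-area ideal triangle violating (1). I group singular vectors by the unordered pair of ideal endpoints of their lifts, obtaining equivalence classes that are either isolated closed orbits $\mathcal{O}_i$ or maximal generalized strips $\mathcal{S}_j$ of parallel singular closed geodesics. Finiteness of the decomposition follows from an area argument: each $\mathcal{S}_j$ projects to an embedded flat annulus or Möbius subsurface of $M$, and since $\chi(M)<0$ only finitely many disjoint such pieces can coexist in $M$; isolated singular closed geodesics are bounded in number by a tubular neighborhood/length argument in $M\setminus\bigcup_j \mathcal{S}_j$. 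Each $\mathcal{O}_i$ is a single orbit and each $\mathcal{S}_j$ a finite union of $2$-dimensional submanifolds of the $3$-dimensional $SM$, so $\nu(\R_1^c)=0$. Combined with the first assertion, $\nu(\Delta)=1$, and a Pesin-type Hopf argument valid in the bounded-asymptote setting upgrades ergodicity of $\phi^t|_\Delta$ to ergodicity on all of $SM$, paralleling Theorem~\ref{pesin}.

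The main obstacle lies in part (1): converting singularity plus bounded asymptote into the precise curvature-accumulation statement along the ideal triangle is delicate because, unlike the nonpositive curvature setting, we lack convexity of the distance function and have only one-sided Jacobi control. Making the Gauss--Bonnet contradiction quantitative requires a careful interplay between horospherical geometry at the two ideal vertices, Riccati-equation bounds on $f'/f$ along $c_v$, and the closed-orbit periodicity of $K$; any leftover ``slack'' in these estimates would weaken the conclusion from infinite to merely large area. Once this dichotomy is secured, the finite decomposition and the ergodicity conclusion follow along the lines of \cite{Wu, WLW}.
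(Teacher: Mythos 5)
Your handling of the a.e.\ identification $\Delta=\R_1$ roughly tracks the paper (openness of $\R_1$ from continuity, density from \cite[Lemma 7.1]{MR}, one inclusion from \cite[Theorem 4.1]{Ru3}, the other from the Freire--Ma\~n\'e sandwich $E^s\subset G^s\subset E^s\oplus E^c$), though calling $\R_1\subseteq\Delta$ a.e.\ a ``direct computation'' understates what is actually Ruggiero's theorem. The real trouble is in parts (1) and (2). For (1), the Gauss--Bonnet route stalls at its central step: your integration by parts yields $\int_0^{L'} Kf^2\,ds=\int_0^{L'}(f')^2\,ds\ge 0$, a one-dimensional integral along $c_v$, and the sketch of ``slicing $T$ into horocyclic arcs'' supplies no mechanism for converting this into control on the two-dimensional integral $\int_T K\,dA$. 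Without convexity --- which you yourself flag as unavailable here --- the Riccati bound along $c_v$ does not propagate off the geodesic, and the claim that uniform visibility forces the two ideal angles of $T$ to vanish is asserted, not proved. The paper instead follows Ruggiero \cite{RR}: setting $a_n=l(\pi\phi^{nn_0T_0}h[0,a])$ for the lengths of successive pushed-forward horocyclic arcs, it combines $\|J^s_{h(s)}(t)\|=\|J^s_{h(s)}(0)\|e^{\int_0^t u}$ with the vanishing time-average of $u$ along singular closed orbits (Lemma~\ref{uniformrec}) and $e^x\ge 1+x$ to obtain $a_n\ge a_0\prod_{i<n}(1-Ba_i)$, so that finite area forces both $\sum a_n<\infty$ and $\prod(1-Ba_i)\to 0$, i.e.\ $\sum a_n=\infty$ --- a contradiction that never touches Gauss--Bonnet.

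Part (2) contains a definite error: the step ``each $\mathcal{S}_j$ projects to an embedded flat annulus or M\"obius subsurface'' is false in this setting. The flat strip theorem fails for surfaces without conjugate points \cite{Burns}; generalized strips are \emph{not} regions of zero curvature, and some may even have width zero (see the discussion after Lemma~\ref{compact}), so neither the Euler-characteristic count nor a tubular-neighborhood/length count bounds their number. The paper's finiteness argument is entirely different: assume there are infinitely many distinct components, giving $x_k'\to x$ in $\R_1^c$; vectors not tangent to a generalized strip are expansive (Lemma~\ref{expansivity}), so the orbits of $x_k'$ and $x$ must separate at a fixed scale $\epsilon_0$; passing to limit vectors $a,b$ with $d(a,b)=\epsilon_0$, one either reaches a contradiction via Lemma~\ref{flat closed geodesic0} (singular bi-asymptotic closed geodesics coincide unless they bound a generalized strip) or strictly enlarges a generalized strip, and the uniform bound $\diam\I(v)\le Q$ of Lemma~\ref{compact} caps any such enlargement, closing the argument in finitely many steps. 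You would need to replace the flat-area count with this expansivity-plus-compactness mechanism.
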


We ask the following question for the future study.
\begin{question}
Let $(M,g)$ be a smooth, connected and closed surface of genus $\mathfrak{g}\geq 2$, without conjugate points, and with continuous Green bundles. Suppose that the set $(\pi(\mathcal{R}_1^c))^c$ has finitely many connected components. Does there exist a non-closed singular geodesic? If no, we have $\mathcal{R}_1^c\subset \text{Per}(\phi^t)$ and hence by Theorem \ref{singulargeodesic}, the geodesic flow is ergodic. 
\end{question}
One essential difficulty is that the flat strip lemma fails for surfaces without conjugate points \cite{Burns}, and there is lack of control on the geometric shape of the generalized strips, see for example the proof of Lemmas \ref{boundary} and \ref{flat closed geodesic} and Theorem \ref{A}. Moreover, we cannot effectively characterize the singular geodesics using curvatures through Riccati equations.

\subsection{Measure of maximal entropy and Bowen-Margulis measures}
Metric entropy introduced by Kolmogorov \cite{Kol} and Sinai, is one of the most important invariant describing the complexity of a dynamical system. Later, Adler-Konheim-McAndrew \cite{AKM} introduced the topological entropy for a topological dynamical system.

\begin{theorem}[Variational principle for entropy]\label{VPentropy}
Let $T:Y\to Y$ be a continuous self-map on a compact metric space $(Y,d)$. Then
\begin{equation*}
\begin{aligned}
h_{top}(T)&=\sup\{h_{\mu}(T): \mu\in \mathcal{M}_T(Y)\}\\
&=\sup\{h_{\mu}(T): \mu\in \mathcal{M}_T^e(Y)\}.
\end{aligned}
\end{equation*}
Here $\mathcal{M}_T(Y)$ (resp. $\mathcal{M}^e_T(Y)$) denotes the set of all $T$-invariant (resp. ergodic) probability measures on $Y$, $h_{top}(T)$ denotes the topological entropy of $T$, and $h_{\mu}(T)$ the metric entropy of $\mu$. 
\end{theorem}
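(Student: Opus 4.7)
The plan is to prove the two nontrivial inequalities separately in the standard way, then reduce from invariant to ergodic measures via ergodic decomposition.

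\textbf{Step 1 (Goodwyn's inequality: $h_\mu(T)\le h_{top}(T)$ for every $\mu\in\mathcal{M}_T(Y)$).} I would fix a finite Borel partition $\xi=\{A_1,\dots,A_k\}$ and, given $\delta>0$, use inner regularity of $\mu$ to choose compact sets $B_i\subset A_i$ with $\mu(A_i\setminus B_i)$ small, setting $B_0:=Y\setminus\bigcup_{i\ge 1}B_i$. The open cover $\mathcal{U}=\{B_0\cup B_i:i=1,\dots,k\}$ has the property that each atom of $\xi$ meets at most two members of $\mathcal{U}$, and a Rokhlin-type estimate then gives $H_\mu(\xi)\le H_\mu(\xi\mid\mathcal{U})+\log 2$ in an appropriate sense, which together with subadditivity yields $h_\mu(T,\xi)\le h_{top}(T,\mathcal{U})+\log 2\le h_{top}(T)+\log 2$. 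Taking the small-diameter/$\delta\to 0$ limit (and absorbing the $\log 2$ by passing to iterates $T^n$ and dividing by $n$) gives $h_\mu(T)\le h_{top}(T)$.

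\textbf{Step 2 (Goodman's inequality via Misiurewicz: $h_{top}(T)\le\sup_\mu h_\mu(T)$).} Fix $\varepsilon>0$ and let $E_n$ be an $(n,\varepsilon)$-separated set of maximal cardinality, so $s(\varepsilon):=\limsup_n\tfrac{1}{n}\log|E_n|$ satisfies $h_{top}(T)=\lim_{\varepsilon\to 0}s(\varepsilon)$. Define the empirical measures
\begin{equation*}
\sigma_n:=\frac{1}{|E_n|}\sum_{x\in E_n}\delta_x,\qquad \mu_n:=\frac{1}{n}\sum_{k=0}^{n-1}T^k_*\sigma_n,
\end{equation*}
and extract a weak-$*$ limit $\mu$ of $\{\mu_{n_j}\}$ along a subsequence realising $s(\varepsilon)$. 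Then $\mu\in\mathcal{M}_T(Y)$. Choose a finite Borel partition $\xi$ of $Y$ with $\diam\xi<\varepsilon$ and $\mu(\partial\xi)=0$ (exists by compactness and the fact that at most countably many radii charge the boundary). Because $E_n$ is $(n,\varepsilon)$-separated, every atom of $\xi_0^{n-1}$ contains at most one point of $E_n$, hence $H_{\sigma_n}(\xi_0^{n-1})=\log|E_n|$. Concavity of $H$ and the standard partition-averaging lemma give $\tfrac{1}{n}\log|E_n|\le\tfrac{1}{q}H_{\mu_n}(\xi_0^{q-1})+\tfrac{2q\log k}{n}$ for any fixed $q$. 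Since $\mu(\partial\xi_0^{q-1})=0$, letting $n_j\to\infty$ and then $q\to\infty$ yields $s(\varepsilon)\le h_\mu(T,\xi)\le h_\mu(T)\le\sup_\nu h_\nu(T)$. Letting $\varepsilon\to 0$ finishes this step.

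\textbf{Step 3 (Reduction to ergodic measures).} For any $\mu\in\mathcal{M}_T(Y)$ the ergodic decomposition $\mu=\int_{\mathcal{M}^e_T(Y)}\nu\,d\tau(\nu)$ together with affinity of the entropy functional $\nu\mapsto h_\nu(T)$ (Jacobs' theorem) gives $h_\mu(T)=\int h_\nu(T)\,d\tau(\nu)\le\sup_{\nu\in\mathcal{M}^e_T(Y)}h_\nu(T)$. Combined with the trivial inequality $\sup_{\mathcal{M}^e_T}\le\sup_{\mathcal{M}_T}$, this proves the two suprema agree and completes the theorem.

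\textbf{Main obstacle.} The genuinely delicate step is Step 2: constructing, for each $\varepsilon$, an invariant measure $\mu$ whose entropy witnesses the growth rate $s(\varepsilon)$. The care is in choosing a partition whose boundary is $\mu$-null so that $H_{\mu_n}(\xi_0^{q-1})\to H_\mu(\xi_0^{q-1})$ under weak-$*$ convergence, and in controlling the error from splitting the sum $\sum_{k=0}^{n-1}T^k_*\sigma_n$ into blocks of length $q$ (the $2q\log k/n$ term). The other steps are essentially standard manipulations once the right partitions and covers are fixed.
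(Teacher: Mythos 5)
The paper states this theorem as a classical background result and gives no proof of its own (it cites Kolmogorov and Adler--Konheim--McAndrew for the entropy notions; a proof is in Walters' book \cite{W}, which the paper also cites). Your outline is the standard and correct proof: Step~1 is Goodwyn's inequality via the compact-inner-approximation trick (the $\log 2$ term absorbed by replacing $T$ by $T^n$), Step~2 is Misiurewicz's construction of an invariant measure from maximal $(n,\varepsilon)$-separated sets with the boundary-null partition and blocking argument, and Step~3 is Jacobs' affinity of $\mu\mapsto h_\mu(T)$ applied to the ergodic decomposition. Since the paper offers no competing argument, there is nothing to contrast; your proposal matches the canonical reference proof and is sound.
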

A measure $m\in \mathcal{M}_T(Y)$ is called a \emph{measure of maximal entropy} (MME for short) if $h_m(T)=h_{top}(T)$. The definition applies to a flow by considering its time-one map. MME (when it exists) is another important class of distinct invariant measures, which reflects the complexity of the whole system.

When a closed Riemannian manifold $(M,g)$ has negative curvature, the geodesic flow on $SM$ is a prime example of Anosov flow (cf. \cite{An}, \cite[Section 17.6]{KH}). For Anosov flows, Margulis \cite{Mar2} in his thesis constructed an invariant measure under the geodesic flow. This measure is mixing, and the conditional measures on stable/unstable manifolds contract/expand with a uniform rate under the flow. Using different methods, Bowen constructed a MME in \cite{Bo1} for Axiom A flows. Later in \cite{Bo2}, Bowen proved that MME is unique. It turns out that the measures constructed by Bowen and Margulis are eventually the same one, which is called \emph{Bowen-Margulis measure}. Bowen-Margulis measure has the Bernoulli property \cite{Rat}, an ultimate mixing property for measurable dynamical systems. The uniqueness of MME becomes a classical problem in smooth ergodic theory. For uniformly hyperbolic systems, there are at least two typical methods, one using Markov partitions and symbolic dynamics (\cite{Bo4}), the other using topological properties such as expansiveness and specification (\cite{Bo3, Franco}).

In 1984, A.~Katok \cite{BuKa} conjectured that the geodesic flow on a closed rank one manifold of nonpositive curvature admits a unique MME. In 1998, Knieper \cite{Kn2} proved Katok's conjecture. He used Patterson-Sullivan measures on the boundary at infinity of the universal cover of $M$ to construct a measure (which is called Knieper measure), and showed that this measure is the unique MME. 

Beyond nonpositive curvature, manifolds without focal/conjugate points are studied extensively. Following Knieper's method, Liu, Wang and the author construct the Knieper measure for manifolds without focal points and showed it is
the unique MME \cite{LWW}. Gelfert and Ruggiero \cite{GR0} also proved the uniqueness of MME for surfaces without focal points
by constructing an expansive factor of the geodesic flow. 

For general manifolds without conjugate points, uniqueness of MME is far from being well understood. The first remarkable progress was made by Climenhaga, Knieper and War for a class of manifolds (including all surfaces of genus at least $2$) without conjugate points. In \cite{CKW1}, the authors proved the uniqueness of MME for the following manifolds in \emph{class $\mathcal{H}$}.
\begin{definition}\label{classH}
We say that compact manifolds without conjugate points satisfying the following conditions are in \emph{class $\mathcal{H}$}:
\begin{enumerate}
  \item There exists a Riemannian metric $g_0$ on $M$ for which all sectional curvatures are negative;
  \item $M$ has uniform visibility property;
  \item The fundamental group $\pi_1(M)$ is residually finite: the intersection of its finite index subgroups is trivial;
  \item There exists $h_0 < h$ such that any ergodic invariant Borel probability measure $\mu$ on $SM$ with entropy strictly greater than $h_0$ is almost expansive (cf. \cite[Definition 2.8]{CKW2}).
\end{enumerate}
\end{definition}
All compact surfaces of genus at least $2$ without conjugate points belong to the class $\mathcal{H}$. 

Very recently, another remarkable progress was made in a series of papers by Gelferet-Ruggiero \cite{GR}, Mamani \cite{Mamani} and Mamani-Ruggiero \cite{MR}:
\begin{theorem}(\cite[Theorem 1.2]{MR})\label{mmeuni}
Let $(M,g)$ be a closed $C^\infty$ uniform visibility manifold without conjugate points. Suppose that 
$$\sup\{h_\mu(\phi^1): \mu \text{\ is supported on\ } \R_0^c\}<h_{\top}(\phi^1).$$ Then the geodesic flow has a unique MME.
\end{theorem}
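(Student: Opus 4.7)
The plan is to combine the entropy gap hypothesis with a passage to an expansive topological factor of the geodesic flow, and then apply a classical Bowen-style uniqueness theorem on the factor before lifting back to $SM$. The first observation is that any MME $\mu$ must be concentrated on the expansive set $\R_0$. Indeed, $\R_0$ is $\phi^t$-invariant (the set $\mathcal{I}(v)$ is preserved along orbits because $\mathcal{F}^s,\mathcal{F}^u$ are continuous and $\phi^t$-invariant foliations on uniform visibility manifolds), so if $\mu(\R_0^c)=s>0$ we may decompose $\mu=(1-s)\mu_1+s\mu_2$ with $\mu_2$ supported on $\R_0^c$ and both $\mu_i$ flow-invariant. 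By affinity of metric entropy,
$$h_\mu(\phi^1)\le (1-s)\,h_{\top}(\phi^1)+s\cdot \sup\{h_\nu(\phi^1):\nu\text{ supported on }\R_0^c\}< h_{\top}(\phi^1),$$
contradicting that $\mu$ is an MME. Hence every MME satisfies $\mu(\R_0)=1$.

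Next I would construct a topological factor of $\phi^t$ on which the induced flow is expansive. Define an equivalence relation on $SM$ by $v\sim w$ iff $w\in\mathcal{I}(v)=\mathcal{F}^s(v)\cap\mathcal{F}^u(v)$. Using continuity of the horospherical foliations together with the visibility axiom, one checks that $\sim$ is closed and that equivalence classes are compact: two geodesics forward- and backward-asymptotic must stay within a bounded tube by uniform visibility. The quotient $\overline{SM}=SM/\!\sim$ is then a compact Hausdorff space carrying a continuous factor flow $\bar\phi^t$, and by construction $\bar\phi^t$ is \emph{expansive}, since any two vectors whose orbits shadow each other for all time lie in a common $\mathcal{I}$-class and are identified in the quotient.

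The next step is to verify that $\bar\phi^t$ enjoys a specification-type property. Here the uniform visibility axiom enters essentially: through Eberlein's connecting-lemma constructions, any two finite geodesic arcs in the universal cover $X$ can be shadowed by a single geodesic with uniform control on the transition time and angle, and pushing this down through the quotient map gives specification for $\bar\phi^t$. Once expansiveness and specification are both available, Bowen's classical uniqueness theorem supplies a unique MME $\bar\mu$ for $\bar\phi^t$. To lift, note that any MME $\mu$ for $\phi^t$ satisfies $\mu(\R_0)=1$ by Step 1 and that the quotient map $q\colon SM\to\overline{SM}$ is injective on $\R_0$; consequently $q_*\mu$ is $\bar\phi^t$-invariant with $h_{q_*\mu}(\bar\phi^1)=h_\mu(\phi^1)=h_{\top}(\phi^1)$ (the fibers over $q(\R_0)$ are singletons, so no entropy is lost). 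Therefore $q_*\mu=\bar\mu$ and $\mu$ is uniquely determined as the measurable lift along $q|_{\R_0}$.

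The hard part will be the construction in Steps 2 and 3. Without a flat-strip lemma or nonpositive curvature the $\mathcal{I}$-classes could a priori carry intricate geometry, so one must verify rigorously that $\sim$ is closed (so that $\overline{SM}$ is compact Hausdorff and $\bar\phi^t$ continuous) and that specification genuinely descends to the quotient. Controlling these points requires delicate use of the asymptoticity of horospheres and the quantitative visibility estimates, and is precisely the core technical contribution of \cite{GR}, \cite{Mamani}, and \cite{MR}; one also needs to rule out entropy loss under the quotient for any MME, which is where the hypothesis $\mu(\R_0)=1$ from Step 1 becomes indispensable.
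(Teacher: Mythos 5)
Your proposal follows essentially the same strategy as the source this paper cites for the statement (\cite[Theorem 1.2]{MR}): the paper itself gives no proof but records exactly this route in Theorem \ref{expansivefactor} and the surrounding discussion — collapse the $\mathcal{I}$-classes to get a compact expansive, topologically mixing factor with local product structure, apply the Bowen/Franco uniqueness machinery there, and use the entropy gap to show any MME charges $\R_0$, where the quotient map is injective, so that entropy is preserved and the MME lifts uniquely. The technical content you correctly flag as the hard part (closedness of $\sim$, compactness of the classes, specification for the quotient flow) is precisely what \cite{GR, Mamani, MR} establish, so your outline is a faithful reconstruction rather than a divergent argument.
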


The entropy gap assumption above is equivalent to the condition (4) for manifolds in \emph{class $\mathcal{H}$}. It is satisfied automatically for surfaces:
\begin{theorem}(\cite[Theorem 1.2]{Mamani})
Let $(M,g)$ be a closed $C^\infty$ surface without conjugate points of genus greater than one. Then the geodesic flow has a unique MME.
\end{theorem}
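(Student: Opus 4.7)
The plan is to derive this from Theorem \ref{mmeuni} by verifying its two hypotheses for a closed $C^\infty$ surface $M$ of genus $\mathfrak{g}\geq 2$ without conjugate points: uniform visibility of the universal cover $X$, and the entropy gap
\begin{equation*}
\sup\{h_\mu(\phi^1):\ \mu\ \text{ergodic, supported on\ }\mathcal{R}_0^c\}<h_{\top}(\phi^1).
\end{equation*}

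First I would check uniform visibility. Since $\mathfrak{g}\geq 2$, the Euler characteristic is negative and $\pi_1(M)$ is a surface group, hence word-hyperbolic; by uniformization $M$ admits a background hyperbolic metric $g_0$. Morse's classical theorem on the divergence of geodesics on surfaces without conjugate points implies that every $g$-geodesic in $X$ stays within bounded distance of a unique $g_0$-geodesic and conversely, so $(X,g)$ is quasi-isometric to the hyperbolic plane. The visibility axiom is immediate for the hyperbolic plane and transfers to $(X,g)$ with a uniform function $L(\varepsilon)$ depending only on the quasi-isometry constants.

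Second I would verify the entropy gap, which on a surface I claim holds in the stronger form $h_\mu(\phi^1)=0$ for every ergodic $\mu$ with $\mu(\mathcal{R}_0^c)=1$. If $v\in\mathcal{R}_0^c$, there is $w\in\mathcal{I}(v)\setminus\{v\}$, so $c_v$ and $c_w$ are bi-asymptotic in both time directions. Joining $v$ to $w$ by a path in $\mathcal{I}(v)$ and differentiating produces a tangent vector $\eta\in T_v SM$, transverse to the flow, along which $\sup_{t\in\RR}\|d\phi^t\eta\|<\infty$. On a surface the Oseledec spectrum of $\phi^1$ at a regular point has the symmetric form $-\chi(v),\,0,\,\chi(v)$ with $\chi(v)\geq 0$; a tangent vector with uniformly bounded forward and backward images must lie in the zero-exponent subspace, forcing $\chi(v)=0$ at $\mu$-almost every point of $\mathcal{R}_0^c$. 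Ruelle's inequality then gives $h_\mu(\phi^1)\leq\int\chi\,d\mu=0$. Positivity of $h_{\top}(\phi^1)$ is classical (exponential growth of free-homotopy classes, transferred from the hyperbolic model), so the entropy gap follows and Theorem \ref{mmeuni} yields the unique MME.

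The hard part will be producing the bounded transverse direction $\eta$ cleanly. Without focal points the flat strip lemma can fail (cf.\ the remark after Theorem \ref{singulargeodesic}), so $\mathcal{I}(v)$ need not be a smoothly parametrised arc and Green bundles are only a priori measurable. The right approach is Busemann-geometric: describe $\mathcal{I}(v)$ as a common level set of the stable and unstable Busemann functions based at the two endpoints of $c_v$ in $\partial X$, then use uniform visibility to control angles at infinity and show that the perpendicular Jacobi field along $c_v$ induced by a variation inside $\mathcal{I}(v)$ is uniformly bounded on all of $\RR$. Once this is in hand, the Oseledec step is standard, and the argument is essentially two-dimensional, explaining why the entropy gap in Theorem \ref{mmeuni} is automatic in dimension two.
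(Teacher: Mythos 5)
The paper does not prove this statement; it is quoted from \cite[Theorem 1.2]{Mamani}, whose proof (like the verification of condition (4) of class $\mathcal{H}$ in \cite{CKW1}) establishes the entropy gap for surfaces by a topological argument, not a Lyapunov-exponent argument. Your reduction to Theorem \ref{mmeuni} is the right frame, and the uniform visibility step via the Morse lemma is standard and fine. The gap is in your verification of the entropy gap, at exactly the point you flag: producing a tangent vector $\eta\in T_vSM$, transverse to the flow, with $\sup_{t\in\RR}\|d\phi^t\eta\|<\infty$ from the mere fact that $v\in\mathcal{R}_0^c$. By Lemmas \ref{striplemma} and \ref{compact}, all you know is that $v$ lies in a generalized strip $\Sigma\times\RR$ with $\Sigma=\mathcal{I}(v)$ compact and connected; $\Sigma$ need not be an arc, let alone a $C^1$ curve, so ``differentiating a path in $\mathcal{I}(v)$'' produces nothing, and without the flat strip lemma (which fails here, \cite{Burns}) there is no parallel or even bounded Jacobi field guaranteed along $c_v$. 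Your proposed Busemann-geometric fix does not repair this: the Busemann functions are only $C^{1,L}$ in general, and the paper explicitly cautions (after Corollary C.1) that one does not know $\mathcal{R}_1\subset\mathcal{R}_0$ or the reverse, i.e.\ membership in a strip gives no control on the Green bundles or the Lyapunov data. So the claim that $\chi(v)=0$ at $\mu$-a.e.\ point of $\mathcal{R}_0^c$ is unsupported, and the Ruelle-inequality step has nothing to feed on.

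The argument that actually works in dimension two is purely topological. Maximal nontrivial generalized strips are pairwise disjoint and each strip of width at least $1/n$ occupies a definite amount of area in the compact surface, so there are only countably many of them; an ergodic $\mu$ with $\mu(\mathcal{R}_0^c)=1$ therefore gives full measure to the (flow-invariant) set of vectors tangent to a single strip. In $SM$ that set is a suspension over a homeomorphism of $\Sigma$, a compact connected subset of a one-dimensional horocycle, i.e.\ a point or an arc, and homeomorphisms of arcs have zero topological entropy; hence $h_\mu(\phi^1)=0$. Combined with $h_{\top}(\phi^1)>0$ this gives the entropy gap in the strong form you wanted, without ever invoking Oseledets regularity or Green bundles. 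If you want to keep a Lyapunov-exponent flavour, note that even the paper's own zero-exponent statement (Lemma \ref{uniformrec}) concerns $\mathcal{R}_1^c$, not $\mathcal{R}_0^c$, and requires bounded asymptote, which is not available for a general surface without conjugate points.
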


We remark that Theorem \ref{mmeuni} applies to a class of manifolds larger than class $\mathcal{H}$, since conditions (1) and (3) for class $\mathcal{H}$ are dropped. The authors prove that there exists an expansive factor of the geodesic flow acting on a compact metric space which is topologically mixing and has local product structure and hence has unique MME.  On the other hand, Climenhaga, Knieper and War \cite{CKW1} used Morse lemma to get coarse estimates, and successfully applied Climenhaga-Thompson's criterion \cite{CT1} using a nonuniform version of specification and expansiveness. Climenhaga-Knieper-War \cite{CKW1} also construct the unique MME via Patterson-Sullivan measures and prove that the MME is mixing.

In this paper, for a closed uniform visibility manifold without conjugate points, we give an explicit construction of a Bowen-Margulis measure via Patterson-Sullivan measures on the boundary at infinity. Since the generalized strip does not necessarily possess a submanifold structure, there is no volume along a generalized strip. Instead, we use a measurable choice theorem to construct a Borel measure along a generalized strip. We show that a Bowen-Margulis measure constructed in this way has metric entropy equal to the topological entropy, and hence coincides with the unique MME obtained in Theorem \ref{mmeuni}. This shows that the unique MME has local product structure. In fact, our Patterson-Sullivan construction of Bowen-Margulis measure also works for noncompact uniform visibility manifold without conjugate points, which allows us to consider the Hopf-Tsuji-Sullivan dichotomy with respect to such Bowen-Margulis measure in the last section.

As an application of the local product structure of Bowen-Margulis measure, we show that it has the Bernoulli property, which is the ultimate mixing property for measurable dynamical systems. In nonpositive curvature, the Bowen-Margulis measure is proved to be mixing in \cite{Ba} and Bernoulli in \cite{CT}. For manifolds without focal points, the Bowen-Margulis measure is known to be mixing by \cite{LLW}, have the Kolmogorov property by \cite{CKP2} and the Bernoulli property by \cite{Wu2}. We apply the method of Call and Thompson \cite{CT} to prove the Kolmogorov property by showing that the product system of the expansive factors has a unique MME. To lift the Kolmogorov property to the Bernoulli, we adapt the classical argument in \cite{OW1, Rat} for Anosov flows. In fact, we follow closely \cite{CH}, which is for systems with nonuniformly hyperbolic structure.
\begin{thmm}\label{bernoulli}
Let $(M,g)$ be a closed $C^\infty$ uniform visibility manifold without conjugate points. Suppose that 
$$\sup\{h_\mu(\phi^1): \mu \text{\ is supported on\ } \R_0^c\}<h_{\top}(\phi^1).$$
Then the unique MME is fully supported and has the Bernoulli property.
\end{thmm}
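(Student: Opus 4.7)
The plan is to proceed in three steps: first establish full support directly from the Patterson-Sullivan construction developed earlier; second upgrade mixing to the Kolmogorov property via a product-system argument in the spirit of Call-Thompson \cite{CT}; and finally pass from Kolmogorov to Bernoulli along the lines of Ornstein-Weiss \cite{OW1}, Ratner \cite{Rat}, and the nonuniformly hyperbolic adaptation of Chernov-Haskell \cite{CH}.

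Full support should follow immediately from the Patterson-Sullivan construction of the MME $m$. The conditional measures of $m$ on the (un)stable horospherical leaves are essentially the Patterson-Sullivan measures on the boundary $\p X$, whose support coincides with the limit set of $\Gamma$. Since $\Gamma$ acts cocompactly on the uniform visibility space $X$, its limit set is the entire boundary, and the local product decomposition of $m$ in stable, unstable and flow directions then yields $\supp(m)=SM$.

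For the Kolmogorov property I would pass to the expansive quotient $\bar{\phi}$ of the geodesic flow produced in the proof of Theorem \ref{mmeuni}. The MME $m$ descends to a measure $\bar m$ which, by Theorem \ref{mmeuni}, is the unique MME of $\bar{\phi}$. Following \cite{CT}, the key step is to check that the product flow $\bar{\phi}\times\bar{\phi}$ on two copies of the factor also admits a unique MME, namely $\bar m\times\bar m$. This should be done by transferring the local product structure and the entropy gap assumption to the product system and then reapplying either the Patterson-Sullivan construction or the Climenhaga-Thompson uniqueness criterion in the product setting; the entropy gap persists because for a product invariant measure $\mu_1\times\mu_2$ supported in the singular part one of the factors is already supported on $\R_0^c$. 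Uniqueness of the MME for all $\bar{\phi}^n\times\bar{\phi}^n$ is, by a standard joinings argument, equivalent to $\bar m$ having the Kolmogorov property, and this K property lifts back to $m$ because the factor map is measure-theoretically injective on a full-$m$-measure set.

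Finally, to go from K to Bernoulli I would follow closely the scheme of \cite{CH}, adapting it to the present setting where the nonuniform hyperbolic structure is only available on the open dense set $\R_1$ (which has full $m$-measure by the same argument as in Theorem \ref{singulargeodesic}). One constructs on a reference rectangle a countable measurable partition whose elements are refined by the stable/unstable product coordinates, and then verifies the very weak Bernoulli (VWB) condition. The two inputs needed are: (i) absolute continuity of the holonomy maps between unstable leaves along stable leaves with respect to the conditionals of $m$, which is a direct consequence of the local product structure of $m$; and (ii) uniform exponential contraction/expansion along the Green bundles on a set of full $m$-measure. The main obstacle will be (ii): the Green bundles on a uniform visibility manifold are only continuous on $\R_1$ and may fail to be transverse on the singular set $\R_1^c$, so the contraction estimates required for VWB must be obtained through a Pesin-type Lyapunov chart construction on $\R_1$, with the Green bundles playing the role of the hyperbolic splitting, rather than through the uniform estimates available in the Anosov case treated in \cite{OW1, Rat}.
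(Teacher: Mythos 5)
Your first two steps track the paper's argument. Full support is proved exactly as you say: full support of the Patterson--Sullivan measures on $\pX$ (via minimality of the $\C$-action) plus the product formula \eqref{BM}. For the Kolmogorov property the paper also passes to the expansive factor $\psi^t$ on $SM/\sim$ and shows $\nu\times\nu$ is the unique MME of the product, though it does so via specification and product (non-)expansiveness in the sense of Call--Thompson rather than by ``transferring the entropy gap'' (your remark that a product measure supported in the singular part must have a singular factor does not cover non-product invariant measures of the product system, so that argument as stated is incomplete); the paper then lifts uniqueness of the product MME back to $\phi^t\times\phi^t$ via the Buzzi--Fisher--Sambarino--V\'asquez criterion and applies Ledrappier's theorem directly to $\phi^t$, using asymptotic entropy expansiveness of $C^\infty$ flows. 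Your alternative of proving K for $\bar m$ and transporting it through the measure isomorphism $\chi:(SM,m)\to(Y,\nu)$ is legitimate, since $m(\R_0)=1$ makes $\chi$ an isomorphism.

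The genuine gap is in your third step. Under the hypotheses of Theorem \ref{bernoulli} you do not have continuous Green bundles, you do not know the MME is hyperbolic, and you cannot assert $m(\R_1)=1$: the full-measure set provided by the entropy gap is $\R_0$ (expansive vectors), not $\R_1$, and the paper explicitly notes that $\R_1\subset\R_0$ is unknown here. Consequently a Pesin-type Lyapunov chart construction with the Green bundles as hyperbolic splitting, and any appeal to ``uniform exponential contraction/expansion along the Green bundles on a set of full $m$-measure,'' has no foundation in this setting (that machinery is what powers Corollary C.1, which carries the extra hypotheses). The correct route, and the one the paper takes, dispenses with hyperbolicity altogether: the rectangles and the matching map $\theta$ are built in the expansive quotient $Y$ using its topological local product structure, absolute continuity of the weak-stable holonomies follows from the continuity of the Gromov product in \eqref{BM} (your point (i) is fine), and the VWB estimate on $\a$-names needs only that $d(\psi^t x,\psi^t\theta(x))\le\e$ for all $t>0$ --- a non-expansion statement from \cite[Proposition 5.1]{MR} --- combined with the small-boundary property of the partition. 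No exponential decay is required, so the obstacle you identify as the main difficulty is one you have created by choosing a route that is unavailable here.
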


\subsection{Uniqueness of equilibrium states}
First introduced by Ruelle \cite{R} in 1973 for an expansive dynamical system and later studied by Walters \cite{W2} for general systems, pressure and equilibrium states are natural generalization of entropy and MME, provided a continuous potential function on the phase space. They have played a fundamental role in ergodic theory, dynamical systems and dimension theory (\cite{Bo1,R1,W,Pesin}, etc.). The uniqueness of equilibrium states for H\"{o}lder continuous potentials was shown for uniformly hyperbolic systems in the classical works of Bowen \cite{Bo3, Bo4}. Moreover, the unique equilibrium state has the Bernoulli property.

Recently, Burns, Climenhaga, Fisher and Thompson \cite{BCFT} proved the uniqueness of equilibrium states for certain potentials in nonpositive curvature, which generalized Knieper's result. Extending the approach in \cite{BCFT}, Chen, Kao and Park proved the uniqueness of equilibrium states for certain potentials in \cite{CKP1, CKP2} for no focal points case. These results are obtained via Climenhaga-Thompson approach \cite{CT1} using nonuniform specification and expansiveness. However, there are essential technical gaps in this approach to study uniqueness of equilibrium states for no conjugate points case.

Recently, Lima-Poletti \cite{LP} reproved the uniqueness of equilibrium states in nonpositive curvature. They developed a countable symbolic coding of the systems and thus obtained the uniqueness of equilibrium states. Based on the nununiform hyperbolicity of the system, we are able to apply Lima-Poletti's approach in no conjugate points case:
\begin{thmm}\label{uniqueness}
Let $(M,g)$ be a closed $C^\infty$ uniform visibility manifold without conjugate points and with continuous Green bundles, and let $\psi:SM\to \RR$ be H\"{o}lder continuous or of the form $\psi = q\psi^u$ for $q\in \RR$. Suppose that the geodesic flow has a hyperbolic periodic point. If $P(\R_1^c, \psi) < P(\psi)$, then $\psi$ has a unique equilibrium state $\mu$, which is hyperbolic, fully supported and has the Bernoulli property.
\end{thmm}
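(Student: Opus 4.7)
The plan is to reduce the problem to a countable Markov shift over the nonuniformly hyperbolic open set $\R_1$, following the Lima--Poletti strategy \cite{LP}, and then to apply Sarig's thermodynamic formalism on the symbolic side.

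First, I would set up the nonuniformly hyperbolic structure on $\R_1$. By Theorem A, $\R_1$ is open, dense, and of full $\nu$-measure, and on $\R_1$ the Green bundles $G^s, G^u$ are transverse; continuity of the Green bundles upgrades this to uniform transversality on compact subsets of $\R_1$. Combined with the existence of a hyperbolic periodic orbit, one obtains a dominated splitting together with stable and unstable Pesin manifolds tangent to $G^s, G^u$ at regular points. Next, I would construct Pesin charts and $u/s$-admissible manifolds along these regular orbits and build a countable Markov partition $\mathcal{R}$ by a Bowen--Sarig-type argument adapted, as in \cite{LP}, to the setting where flat strips may exist and only dominated (not uniform) hyperbolicity is available. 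This yields a suspension $(\Sigma_r,\hat\sigma)$ of a topological Markov shift with countable alphabet together with a locally Hölder semiconjugacy $\pi: \Sigma_r \to SM$ which is finite-to-one on a set that carries every ergodic $\phi^t$-invariant measure with nonzero Lyapunov exponents.

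The second main step is to invoke the pressure gap. Given any equilibrium state $\mu$ for $\psi$, the assumption $P(\R_1^c, \psi) < P(\psi)$ combined with the variational principle (applied to $\mu|_{\R_1^c}$ and $\mu|_{\R_1}$) forces $\mu(\R_1^c) = 0$; a standard Ruelle-type inequality on $\R_1$, together with the dominated splitting, then implies that $\mu$ is hyperbolic. Hence $\mu$ lifts to an invariant probability measure on $\Sigma_r$. Pulling $\psi$ back to $\Sigma_r$ gives a locally Hölder potential $\hat\psi$: for $\psi$ Hölder this is immediate from the Hölder regularity of $\pi$, while for $\psi = q\psi^u$ one uses that $\psi^u$ is continuous on $\R_1$ and its integrals along unstable admissible manifolds have the tempered distortion needed for locally Hölder regularity. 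Sarig's Ruelle--Perron--Frobenius theorem (with the BIP property verified via topological transitivity of the shift induced by recurrent chains) then produces a unique equilibrium state $\hat\mu$ for $\hat\psi$ with $P_{\hat\sigma}(\hat\psi) = P(\psi)$, and projection under $\pi$ yields the desired unique equilibrium state $\mu$ for $\psi$.

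Finally, the qualitative properties follow in a fairly standard way. Full support is obtained from the density of hyperbolic periodic orbits in $\R_1$ together with the pressure gap, which excludes concentration on $\R_1^c$. Hyperbolicity is automatic on the symbolic side. For the Bernoulli property, I would first establish the K-property by the Call--Thompson product-system argument used in Theorem B, now carried out at the level of the countable Markov shift; then upgrade K to Bernoulli by the Ornstein--Weiss machinery for nonuniformly hyperbolic flows as in Ratner \cite{Rat} and the recent adaptations cited in the introduction.

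The hard part will be the construction and verification of the symbolic coding in Step 1. The absence of the flat strip lemma for no conjugate points, together with the fact that Green bundles are merely continuous (not $C^1$), forces one to build the $u$- and $s$-admissible manifolds directly from dominated splitting estimates on $\R_1$ and to control their sizes carefully as one approaches $\R_1^c$; a related subtlety is verifying the pressure identity $P_{\hat\sigma}(\hat\psi) = P(\psi)$, which uses the pressure gap to ensure that measure-theoretic pressure on the image of $\pi$ attains the supremum. Once these symbolic ingredients are in place, uniqueness, hyperbolicity, full support, and Bernoullicity follow by now-standard arguments.
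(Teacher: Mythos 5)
Your overall strategy --- reduce to a countable Markov coding in the spirit of Lima--Poletti and use thermodynamic formalism on the symbolic side --- is the right one and matches the paper's approach. But there is a genuine logical gap: you never establish that two candidate equilibrium states are \emph{homoclinically related}. The coding theorem you ultimately need (Theorem 2.1 of \cite{LP}, cited in the paper as Theorem \ref{coding}) produces an \emph{irreducible} topological Markov flow that simultaneously carries full-measure lifts of $\mu_1$ and $\mu_2$ \emph{only} when $\mu_1,\mu_2$ are homoclinically related $\lambda$-hyperbolic ergodic measures. Your plan --- rebuild the coding directly from Pesin charts and a Bowen--Sarig-type partition that codes all hyperbolic measures, then invoke Sarig-type RPF --- hits exactly the same issue: such a universal coding need not be irreducible, the two measures could lift to distinct irreducible components of $\Sigma_r$, and uniqueness on the symbolic side would not force $\mu_1 = \mu_2$ on $SM$. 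This is precisely what the paper's Lemma \ref{homoclinic} supplies. It rests on Lemma \ref{foliation2} (that $W^{0s/0u}(\O)=\F^{0s/0u}(\O)$ for the hyperbolic periodic orbit $\O$, hence both are dense in $SM$ by minimality of the horospherical foliations), on transversality of the continuous Green bundles $G^s, G^u$ over $\R_1$, and on the inclination lemma. The companion fact $\text{HC}(\O)=SM$ (Lemma \ref{hco}) is likewise what delivers full support of $\mu$; you assert full support from ``density of hyperbolic periodic orbits'' without tying it to the symbolic picture.

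Two further, smaller discrepancies. First, the paper simply \emph{cites} the Lima--Poletti flow-coding theorem as a black box, whereas you propose to reconstruct the Markov partition from scratch; that is feasible but is substantial work that your sketch does not actually carry out, and it is not needed once you invoke \cite[Theorem 2.1]{LP}. Second, for Bernoullicity the paper quotes Araujo--Lima--Poletti \cite[Theorem 1.2]{ALP}, which gives Bernoulli directly from the symbolic coding and the pressure gap; your Call--Thompson plus Ornstein--Weiss route is plausible (it is the route the paper uses for Theorem B, the MME) but redundant here and would require rebuilding the local-product-structure argument on the quotient $SM/\sim$, which is not what your Markov coding gives you. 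On the symbolic side the paper applies the Buzzi--Sarig uniqueness theorem for irreducible countable Markov shifts \cite{BuS}, not an RPF/BIP theorem; BIP is not needed and your ``verify BIP via topological transitivity'' step is not part of the argument.
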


In the above theorem, $\psi^u$ is the \textit{geometric potential} defined as 
\[\psi^u(v):=-\frac{d}{dt}\Big|_{t=0}\log \det (d\phi^t\mid_{G^u(v)}).\]
We also have the following corollary for MME,  which recovers \cite[Part 2 of Theorem 1.4]{MR}.
\begin{CorollaryC1}\label{uniqueness1}
Let $(M,g)$ be a closed $C^\infty$ uniform visibility manifold without conjugate points and with continuous Green bundles. Suppose that the geodesic flow has a hyperbolic periodic point. Then the geodesic flow has a unique MME, which is hyperbolic, fully supported and has the Bernoulli property.
\end{CorollaryC1}
Compare the above corollary with Theorem \ref{bernoulli}. In the setting of Theorem \ref{bernoulli}, we only know that $\R_0$ has full measure with respect to the unique MME. However, we do not necessarily have $\R_1\subset \R_0$. Thus we do not know whether the MME is hyperbolic and the symbolic approach \cite[Theorem 1.2]{ALP} is not applicable.

\subsection{Counting closed geodesics and volume asymptotics}
\subsubsection{Counting closed geodesics}
There is correspondence between closed geodesics on $M$ and periodic orbits under the geodesic flow, and they are important from both geometric and dynamical points of view. Let $P(t)$ be the set of free-homotopy classes containing a closed geodesic with length at most $t$. If $M$ has negative curvature, each free-homotopy class contains exactly one closed geodesic, so $\#P(t)$ is just the number of closed geodesics with length at most $t$. If $M$ has nonpositive curvature or without focal/conjugate points, a free-homotopy class may have infinitely many closed geodesics.

We obtain in this paper the following Margulis-type asymptotic estimates of $\#P(t)$.
\begin{thmm}\label{margulis}
Let $(M,g)$ be a closed $C^\infty$ uniform visibility manifold without conjugate points and with continuous Green bundles. Suppose that the geodesic flow has a hyperbolic periodic point. Then
\begin{equation}\label{e:mar1}
\begin{aligned}
\#P(t)\sim\frac{e^{ht}}{ht}
\end{aligned}
\end{equation}
where $h=h_{\text{top}}(g)$ denotes the topological entropy of the geodesic flow on $SM$, and the notation $f_1\sim f_2$ means that $\lim_{t\to\infty}\frac{f_1(t)}{f_2(t)}=1$.
\end{thmm}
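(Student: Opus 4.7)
The plan is to implement the Margulis--Roblin counting scheme, using the unique MME $m$ from Corollary C.1 as the Bowen--Margulis measure. Three properties of $m$ are essential: (i) $m$ is mixing (in fact Bernoulli, by Corollary C.1); (ii) $m$ is fully supported on $SM$ and hyperbolic, with $m(\R_1)=1$; and (iii) on the open set $\R_1$ the measure $m$ admits a local product decomposition into Patterson--Sullivan conditionals $\mu^s,\mu^u$ on the strong stable/unstable leaves, whose densities transform by the factor $e^{ht}$ under $\phi^t$. Facts (i) and (iii) are furnished by the Patterson--Sullivan construction of the MME described in the paragraphs preceding Theorem \ref{bernoulli}; fact (ii) is part of Corollary C.1.

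First I would convert the geometric counting problem into a count of flow-box returns. Pick $v_0\in\R_1$ (possible by (ii)) and build a small adapted flow box $B$ around $v_0$ modelled on $W^s_{\loc}(v_0)\times W^u_{\loc}(v_0)\times(-\eta,\eta)$. For a continuous $f\ge 0$ supported in $B$, the Margulis identity
\[
\sum_{\substack{v=\phi^T v\in B\\ T\in(t,t+\epsilon]}} f(v) \;=\; e^{ht}\!\int_B f\,d\tilde m \;+\; o(e^{ht})
\]
would be established by approximating the periodic-point sum by the correlation integral $\int f\cdot f\circ\phi^T\,dm$, inserting the product decomposition so that one of the factors integrates against $\mu^u$, and using the conformal scaling of $\mu^u$ together with the mixing of $m$. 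Summing these local estimates over a finite partition of $SM$ by flow boxes and integrating over $T\in(0,t]$ yields
\[
N(t):=\#\{\text{primitive periodic orbits of $\phi^t$ of period}\le t\}\;\sim\;\frac{e^{ht}}{ht}.
\]

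The passage from $N(t)$ to the free-homotopy count $\#P(t)$ rests on the $\R_1$ vs.\ $\R_1^c$ dichotomy. If $v\in\R_1$, transversality of the Green bundles makes the periodic orbit of $c_v$ hyperbolic and hence isolated, so the minimum set of the corresponding deck transformation is a single geodesic and the free-homotopy class of $c_v$ contributes one-to-one to both $N(t)$ and $\#P(t)$. Multiplicities arise only for orbits in $\R_1^c$, i.e., those tangent to generalized strips; under the hypotheses of Theorem \ref{margulis} one has the entropy gap $h_{\top}(\phi^1|_{\R_1^c})<h$ (as invoked in the proof of Corollary C.1), so the number of such classes is $O(e^{(h-\delta)t})=o(e^{ht}/(ht))$ for some $\delta>0$. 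Therefore $\#P(t)\sim N(t)\sim e^{ht}/(ht)$.

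The principal obstacle is the rigorous execution of the box-count step in this nonuniformly hyperbolic setting: in negative or $\mathrm{CAT}(-1)$ curvature the horospherical leaves are $C^1$ submanifolds and Margulis' classical argument uses this throughout, whereas here the horospherical foliations may degenerate on $\R_1^c$ and the Green bundles are only continuous on $\R_1$. I would therefore restrict the entire mixing-to-periodic-count analysis to flow boxes contained in the open set $\R_1$, verify that the Patterson--Sullivan conditionals define genuine Radon measures on $W^u(v)\cap\R_1$, and invoke the Lima--Poletti symbolic model from the proof of Theorem \ref{uniqueness} to justify approximating the periodic sums by correlation integrals. Handling the boundary of the flow boxes and the residual contribution of $\R_1^c$ via the entropy gap is where most of the technical effort will sit; once this is in place, the $1/(ht)$ factor drops out from the standard integration over $T$.
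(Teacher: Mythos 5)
Your overall framework is the same as the paper's (Margulis' scheme: a product flow box centred at a vector of $\R_1$, mixing of the unique MME, the $e^{-ht}$ scaling of the Patterson--Sullivan conditionals, and the entropy gap on $\R_1^c$ to kill the singular contribution), but two steps of your plan do not go through as stated. The first is the passage ``summing these local estimates over a finite partition of $SM$ by flow boxes.'' In this nonuniformly hyperbolic setting the product structure is only available on the open set $\R_1$, and neither $SM$ nor $\R_1$ can be covered by finitely many such boxes; periodic orbits that miss (or barely enter) your finite collection are not controlled by the entropy gap, which only handles orbits lying entirely in $\R_1^c$. The paper avoids any covering argument: it works with a \emph{single} box $B^\a_\theta$, writes $\#C(t)=\a\,\#\Pi(t)/(t\,\nu_t(\lB^\a))$ where $\nu_t$ is the normalized length measure on a maximal set of pairwise non-free-homotopic closed geodesics of length in $(t-\e,t]$, and then proves the equidistribution $\nu_t\to m$ (Theorem \ref{equi}). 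Equidistribution is itself extracted from the counting: the single-box lower bound gives $\liminf\frac1t\log\#C(t)\ge h$, the set $\{\dot{\underline c}(0):c\in C(t)\}$ is $(t,\e)$-separated, and Lemma \ref{equilemma} plus uniqueness of the MME force any limit of $\nu_t$ to be $m$. This substitute for your finite cover is the key structural idea you are missing.

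The second gap is the dictionary between flow-box self-intersections and free-homotopy classes. Your ``Margulis identity'' presupposes that returns $\phi^{-t}\c B\cap B\neq\emptyset$ correspond bijectively (up to controlled error) to closed geodesics; in the paper this requires (i) a closing lemma (Lemma \ref{closing}, via the $\pi$-convergence Lemma \ref{piconvergence} and uniform visibility) ensuring $\c\bF\subset\bF$, $\c^{-1}\bP\subset\bP$ so that $\c$ is axial with axis through the box, (ii) the injectivity of $\Theta$ (Lemma \ref{injective}), which uses that two bi-asymptotic axes would bound a generalized strip, impossible inside $\R_1$ by Proposition \ref{crucial}, and (iii) the bound $\sum_{\c\in\C_2}d(\c)\le Ke^{\frac23 ht}$ on non-primitive deck transformations (Lemma \ref{multi}), since the intersection count sees all powers $\b^d$ while $\#P(t)$ does not. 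Your proposal addresses none of (i)--(iii); in particular the appeal to the Lima--Poletti symbolic model is not a substitute, since that coding is built for equilibrium states and gives no geometric correspondence between box returns and free-homotopy classes (and a Parry--Pollicott-type count through the symbolic model would need additional input on the roof function that is not available here). Finally, note that counting ``primitive periodic orbits'' $N(t)$ is delicate in itself: a free-homotopy class corresponding to a generalized strip can contain a continuum of closed geodesics, so $N(t)$ need not be finite; the paper counts classes via $C(t)$ and the regularity of the box precisely to sidestep this.
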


In negative curvature, the formula \eqref{e:mar1} of asymptotic growth of the number of closed geodesics is given by the celebrated theorem of Margulis in his 1970 thesis \cite{Mar1, Mar2}. The main tool in the proof of Margulis' theorem is the ergodic theory of Bowen-Margulis measure, which is the unique MME for the Anosov flow. Margulis \cite{Mar2} gave an explicit construction of this measure, and showed that it is mixing, and the conditional measures on stable/unstable manifolds have the scaling property, (i.e., contract/expand with a uniform rate under the geodesic flow), and are invariant under unstable/stable holonomies. Margulis \cite{Mar2} then proved \eqref{e:mar1} using these ergodic properties of the Bowen-Margulis measure.

There are other approaches to formula \eqref{e:mar1} in various settings. In the 1950's Huber \cite{Hu} proved it for closed surfaces of constant negative curvature using Selberg's trace formula. Another alternative approach to \eqref{e:mar1} for Axiom A flows is given by Parry and Pollicott \cite{PP}, based on zeta functions and symbolic dynamics. See the survey by Sharp in \cite{Mar2} for further progress that has been made since Margulis' thesis.

It took a long time to establish formula  \eqref{e:mar1} for the geodesic flow on a closed rank one manifold of nonpositive curvature, which exhibits nonuniformly hyperbolic behavior. In 1982, Katok \cite{Ka} used a closing lemma in Pesin theory to show that for a smooth flow with topological entropy $h>0$ and no fixed points, the exponential growth rate of the number of periodic orbits is at least $h$. During the course of setting Katok's conjecture on the uniqueness of MME, Knieper constructed Patterson-Sullivan measure and used it to obtain the following asymptotic estimates \cite{Kn1, Kn3}: there exists $C>0$ such that for $t$ large enough
$$\frac{1}{C}\frac{e^{ht}}{t}\le \#P(t)\le C \frac{e^{ht}}{t}.$$
Since then, efforts are made to improve the above to the Margulis-type asymptotic estimates \eqref{e:mar1}. A preprint \cite{Gun}, though not published, contains many inspiring ideas to this problem. A breakthrough was made recently by Ricks \cite{Ri}: \eqref{e:mar1} is established for rank one locally CAT$(0)$ spaces, which include rank one manifolds with nonpositve curvature.

Beyond nonpositive curvature, remarkable progress was made by Climenhaga, Knieper and War \cite{CKW2} where \eqref{e:mar1} is obtained for manifolds without conjugate points in class $\mathcal{H}$. Weaver \cite{We} also obtained \eqref{e:mar1} for surfaces having negative curvature outside of a collection of radially symmetric caps. The author \cite{Wu2} obtained \eqref{e:mar1} for rank one manifolds without focal points.

The following is a result of equidistribution of periodic orbits with respect to the Bowen-Margulis measure, which is crucial to prove Theorem \ref{margulis}.

\begin{theorem}\label{equi}
Let $(M,g)$ be a closed $C^\infty$ uniform visibility manifold without conjugate points and with continuous Green bundles. Suppose that the geodesic flow has a hyperbolic periodic point, and $\e\in (0, \inj (M)/2)$ is fixed where $\inj (M)$ is the injectivity radius of $M$.
For $t > 0$, let $C(t)$ be any maximal set of pairwise non-free-homotopic
closed geodesics with lengths in $(t-\e, t]$, and define the measure
$$\nu_t:=\frac{1}{\#C(t)}\sum_{c\in C(t)}\frac{Leb_c}{t}$$
where $Leb_c$ is the Lebesgue measure along the curve $\dot c$ in the unit
tangent bundle $SM$.

Then the measures $\nu_t$ converge in the
weak$^*$ topology to the unique MME as $t\to \infty$.
\end{theorem}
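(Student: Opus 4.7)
The plan is to follow the dynamical strategy of Margulis \cite{Mar2}, refined in the nonuniformly hyperbolic setting by Knieper \cite{Kn3}, Ricks \cite{Ri}, and Climenhaga--Knieper--War \cite{CKW2}. By Corollary C.1 the geodesic flow admits a unique MME $\mu$, which is hyperbolic, fully supported, and Bernoulli (hence mixing); moreover, the Patterson--Sullivan construction developed for Theorem B gives $\mu$ a local product structure, with conditionals $\{\mu^s_v\},\{\mu^u_v\}$ on the horospherical leaves $\F^s(v),\F^u(v)$ satisfying the scaling relation $(\phi^t)_*\mu^u_v = e^{-ht}\mu^u_{\phi^t v}$ (and analogously for $\mu^s$). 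These two ingredients---mixing of $\mu$ and uniform scaling of its Patterson--Sullivan conditionals---are the engine of the proof.

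To set up the count, fix a hyperbolic periodic point $v_* \in \R_1$, about whose orbit I construct a rectangular flow box $B = R_\delta(v_*) \times [-\tau/2,\tau/2]$ from the local (un)stable manifolds supplied by the nonuniform hyperbolic structure on $\R_1$ (which is open by the continuity and transversality of the Green bundles). After shrinking $\delta$ and $\tau$, one arranges that $\mu|_B$ disintegrates as $d\mu^s\, d\mu^u\, dt$ up to a bounded density, and that an Anosov-type closing / shadowing statement holds uniformly on $B$; the latter is supplied by the symbolic coding of Lima--Poletti used in the proof of Theorem C, applied to a Pesin block of positive $\mu$-measure containing $B$.

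Given $f \in C(SM)$, a partition-of-unity argument reduces the problem to showing $\nu_t(\mathbf{1}_B) \to \mu(B)$ for flow boxes as above. Mixing of $\mu$ yields $\mu(B \cap \phi^{-t}B) \to \mu(B)^2$. Using the local product description, points of $B \cap \phi^{-t}B$ are parametrized, up to bounded error, by pairs $(w^u,w^s) \in W^u_\delta(v_*) \times W^s_\delta(v_*)$ such that $\phi^t$ carries a neighbourhood of $w^u$ close to $w^s$; the closing lemma turns each such pair into a unique periodic orbit $\gamma$ of period in $(t-\e,t]$ whose trace $\dot\gamma$ meets $B$, with $\gamma$ in a distinct free-homotopy class. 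Integrating the scaling relation over the product of conditionals converts the mixing asymptotic $\mu(B\cap\phi^{-t}B)\cdot e^{ht}/(ht)$ into precisely the total Lebesgue length
\begin{equation*}
\sum_{c \in C(t)} \frac{Leb_c(B)}{t},
\end{equation*}
and, combined with the counting estimate $\#C(t) \sim e^{ht}/(ht)$ (which is proved in parallel from the same mixing identity by taking $f \equiv 1$), this yields $\nu_t(\mathbf{1}_B) \to \mu(B)$.

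The main obstacle is executing the closing/local-product argument in a setting where uniform hyperbolicity holds only on the open dense set $\R_1$ and singular orbits may accumulate on generalized strips. Two points must be handled: first, closing must produce genuine closed geodesics in distinct free-homotopy classes with the correct multiplicities, which I control using the Lima--Poletti symbolic extension from Theorem C applied to a Pesin block of large $\mu$-measure containing $B$; second, the contribution of free-homotopy classes represented by geodesics spending most of their time in $\R_1^c$ must grow exponentially with rate strictly less than $h$, which follows from the pressure-gap condition (automatic for the zero potential by Corollary C.1) together with the fact that $\mu(\R_1^c) = 0$ for the hyperbolic MME. These two inputs reduce the theorem to the mixing-plus-product argument outlined above.
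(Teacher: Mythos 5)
Your proposal follows the ``hard'' Margulis route: show directly that $\nu_t(\mathbf{1}_B)\to m(B)$ for each flow box $B$ by converting the mixing asymptotic for $m(B\cap\phi^{-t}B)$ into the total length $\sum_{c\in C(t)}Leb_c(B)/t$ via a closing lemma, and divide by the normalization $\#C(t)\sim e^{ht}/(ht)$. The paper does something much softer and in the opposite logical order. It first proves (Claim in the proof of Theorem \ref{equi}) that $\{\underline{\dot c}(0): c\in C(t)\}$ is $(t,\e)$-separated, using lifts and deck transformations; it then invokes the lower bound $\#C(t)\gtrsim \frac{\a}{t\nu_t(\lB^\a)}(e^{-4h\e}e^{ht}m(B^\a)-Ke^{\frac23 ht})$ from Propositions \ref{lower1} and \ref{asymptotic} to conclude $\liminf_t\frac1t\log\#C(t)\ge h$; finally Lemma \ref{equilemma} (the Katok-type separated-set entropy estimate) forces every weak$^*$ limit of $\nu_t$ to have entropy $h$, hence to equal the unique MME. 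The precise asymptotic $\#C(t)\sim e^{ht}/(ht)$ is deduced \emph{afterwards}, in Proposition \ref{sumup}, precisely by using the equidistribution just established to replace $\nu_t(\lB^\a)$ with $m(\lB^\a)$. Uniqueness of the MME is thus the engine of the paper's proof, whereas in your proposal it plays no essential role.

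The genuine gap in your argument is the normalization. You assert that $\#C(t)\sim e^{ht}/(ht)$ ``is proved in parallel from the same mixing identity by taking $f\equiv 1$,'' but taking $f\equiv 1$ only returns $\#C(t)$ itself; to count all of $C(t)$ by the direct method you must cover $SM$ by countably many regular flow boxes, control multiplicities, and show that closed geodesics avoiding every regular box (i.e.\ shadowing $\R_1^c$) contribute a subexponential or lower-exponential-rate error. The latter is a \emph{counting} statement; $\mu(\R_1^c)=0$ does not imply it, and what is actually needed is an entropy gap $h_{\top}(\R_1^c)<h$ together with a separation argument (compare Lemma \ref{singular}, which the paper needs only for the volume asymptotics in Section 8, where the multi-box summation is unavoidable). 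Without an independent proof of the normalizing count, your argument for $\nu_t(\mathbf{1}_B)\to m(B)$ is circular. A secondary issue: the closing mechanism here is not supplied by the Lima--Poletti coding on a Pesin block. The paper's closing lemma (Lemma \ref{closing}) is geometric, resting on uniform visibility and the $\pi$-convergence Lemma \ref{piconvergence}; the injectivity of the orbit-to-isometry map $\Theta$ and the uniqueness of the closed geodesic in its free-homotopy class come from the flow box lying inside $\R_1$ (so no generalized strips occur, Lemma \ref{injective}), and primitive versus non-primitive isometries must be separated by hand (Lemma \ref{multi}). These points would all need to be supplied before your route closes.
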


We follow Margulis' approach and combine ideas in \cite{Ri, CKW2}. For example, we construct a flow box in a geometric way which helps us to apply the local product structure of the Bowen-Margulis measure. We use mixing properties of the MME to count the number of self-intersections of the box under the geodesic flow.  To produce closed geodesics from such self-intersections, a type of closing lemma is of particular importance, which is established by the uniform visibility property. Moreover in \cite{CKW2}, by Morse Lemma the action induced by a deck transformation on the boundary at infinity is topologically conjugate to the analogous one under a metric of negative curvature (\cite[Remark 2.6]{CKW2}). In our setting, this property is absent. We assume that the visibility manifold $M$ has continuous Green bundles, so that we can choose the flow box inside $\R_1$ to avoid generalized strips. See Remarks \ref{cyclic} and \ref{axisissue}.

\subsubsection{Volume estimates}
A twin problem is the asymptotics of volume growth of Riemannian balls in the universal cover. Margulis in his thesis also obtained the following result: Consider a smooth closed manifold $M$ of negative curvature with universal cover $X$, then
\begin{equation}\label{e:mar2}
b_t(x)/\frac{e^{ht}}{h}\sim c(x),
\end{equation}
where $b_t(x)$ is the Riemannian volume of the ball of radius $t>0$ around $x\in X$, $h$ the topological entropy of the geodesic flow, and $c: X\to \RR$ is a continuous function, which is called \emph{Margulis function}. See \cite{Mar1, Mar2}.

For a closed rank one manifold of nonpositive curvature, Knieper \cite{Kn1} used his measure to obtain the following asymptotic estimates: there exists $C>0$ such that
$$\frac{1}{C}\le b_t(x)/e^{ht}\le C$$
for any $x\in X$.
However, it is difficult to improve the above to the Margulis-type asymptotic estimates \eqref{e:mar2}, see the remark after \cite[Chapter 5, Theorem 3.1]{Kn3} and an unpublished preprint \cite{Gun1}. Recently, a breakthrough was made by Link (\cite[Theorem C]{Link2}), where an asymptotic estimate for the orbit counting function is obtained for a CAT$(0)$ space, and as a consequence \eqref{e:mar2} is established for rank one manifolds of nonpositve curvature.

Beyond nonpositive curvature, the author \cite{Wu1} proved \eqref{e:mar2} for rank one manifolds without focal points, and briefly discussed the case for manifolds without conjugate points in class $\mathcal{H}$. Recently, Pollicott and War \cite{PW} proved \eqref{e:mar2} for manifolds without conjugate points in class $\mathcal{H}$.

In this paper, we establish volume asymptotics \eqref{e:mar2} for uniform visibility manifolds without conjugate points and with continuous Green bundles:
\begin{thmm}\label{margulis2}
Let $(M,g)$ be a closed $C^\infty$ uniform visibility manifold without conjugate points and with continuous Green bundles. Suppose that the geodesic flow has a hyperbolic periodic point. Then
$$b_t(x)/\frac{e^{ht}}{h} \sim c(x),$$
where $b_t(x)$ is the Riemannian volume of the ball of radius $t>0$ around $x\in X$, $h$ the topological entropy of the geodesic flow, and $c: X\to \RR$ is a continuous function.
\end{thmm}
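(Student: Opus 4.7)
The plan is to derive the volume asymptotic by combining the Patterson--Sullivan construction of the unique Bowen--Margulis measure $m_{BM}$ from Theorem \ref{bernoulli} (whose Bernoulli conclusion delivers mixing) with a standard Fubini argument that reduces volume growth in the universal cover $X$ to orbit counting for the $\Gamma$-action. Let $\{\mu_x\}_{x\in X}$ be the Patterson--Sullivan density of conformal dimension $h$ on $\partial X$ constructed in the proof of Theorem \ref{bernoulli}, and define the Margulis function tentatively by $\tilde c(x):=\|\mu_x\|$. Its continuity follows from the conformal transformation rule $d\mu_y/d\mu_x(\xi)=e^{-h B_\xi(y,x)}$ combined with the continuity of Busemann functions on a uniform visibility manifold.

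The first and main step is to establish the orbit counting asymptotic
\begin{equation*}
N(x,y,t):=\#\{\gamma\in\Gamma:d(x,\gamma y)\le t\}\sim \tilde c(x)\,\tilde c(y)\,\frac{e^{ht}}{h}.
\end{equation*}
The argument mirrors the proofs of Theorem \ref{margulis} and Theorem \ref{equi}: choose a small geometric flow box $B\subset SM$ lying inside the open dense set $\R_1$ (available thanks to the continuous Green bundles hypothesis), describe $m_{BM}(B)$ via the local product structure $d\mu_x\otimes d\mu_x\otimes dt$ from Theorem \ref{bernoulli}, and count self-returns $B\cap\phi^{-t}B$ using mixing of $m_{BM}$. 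Each near-self-return is lifted to a unique deck transformation using the uniform-visibility closing construction and expansivity on $\R_1\subset\R_0$, reinforced by the hyperbolic periodic point hypothesis; aggregating the lifts produces the stated asymptotic.

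The second step passes from orbit counts to volume by a Fubini-type argument. Fix a Dirichlet fundamental domain $F$ of diameter $D$. The disjoint decomposition $X=\bigsqcup_{\gamma\in\Gamma}\gamma F$ yields the sandwich
\begin{equation*}
\int_F N(x,y,t-D)\,d\Vol(y)\le b_t(x)\le \int_F N(x,y,t+D)\,d\Vol(y).
\end{equation*}
Substituting the Step 1 asymptotic and using continuity of $\tilde c$ to pass the limit under the integral produces
\begin{equation*}
b_t(x)\sim \tilde c(x)\cdot\frac{e^{ht}}{h}\cdot\int_F\tilde c(y)\,d\Vol(y).
\end{equation*}
Absorbing the constant $\int_F\tilde c(y)\,d\Vol(y)$ into the normalization of the Margulis function by setting $c(x):=\tilde c(x)\cdot\int_F\tilde c(y)\,d\Vol(y)$ gives $b_t(x)\sim c(x)\,e^{ht}/h$ with $c$ continuous, as required.

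The main obstacle is Step 1, precisely the contributions of orbits passing near the singular set $\R_1^c$: in the absence of negative curvature there is no global uniform expansion of Green bundles, so the Margulis-style cancellation that converts $m_{BM}(B\cap\phi^{-t}B)$ into a clean orbit count is delicate. The continuous Green bundles assumption keeps $\R_1$ open and dense so that the flow box can be placed strictly inside it; the hyperbolic periodic point together with the uniform visibility axiom provides the coarse geometric control (closing lemma, expansivity, bounded asymptotic divergence of rays) that in the negatively curved case is supplied by the Morse lemma. The overall argument is thus a synthesis of the techniques developed for Theorem \ref{margulis} with the volume-growth deduction of \cite{Wu1, Link2, PW}.
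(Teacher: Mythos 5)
Your overall two-step structure — first an orbit-counting asymptotic for $a_t(x,y):=\#\{\gamma\in\Gamma: d(x,\gamma y)\le t\}$, then a Fubini reduction $b_t(x)=\int_{\F}a_t(x,y)\,d\mathrm{Vol}(y)$ — is indeed the paper's strategy, and your final normalization of the Margulis function as an integral over the fundamental domain matches the paper's $c(x)=\int_{\F}c(x,y)\,d\mathrm{Vol}(y)$. However, there is a genuine gap in Step~1, and it is precisely the part the paper identifies as the technical novelty of Theorem~E relative to Theorem~D.

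Counting self-returns $B\cap\phi^{-t}B$ of a single flow box $B\subset\R_1$ and lifting them to deck transformations via a closing lemma is the mechanism for counting \emph{closed geodesics} (Theorem~D, where each near-self-return produces an axial isometry). It does \emph{not} control $a_t(x,y)$: the orbit points $\gamma y$ are reached by geodesics from $x$ whose initial directions sweep out essentially all of $S_xX$, including directions arbitrarily close to $\R_1^c$, and a single flow box captures only a thin cone of directions. What is needed is a pair of flow boxes (one anchored at $x$, one at $y$), and in fact a \emph{regular partition-cover} of $S_xX$ and $S_yX$ by countably many flow boxes $\{B_{\theta_j}^\alpha(w_j)\}$, $\{B_{\theta'_i}^\beta(v_i)\}$ centered at regular vectors, so that the mixing/scaling computation $n_t(V_i,(N_j)^0)\sim e^{-ht_0^i}\mu_p(V_i^-)\mu_p(N_j^+)\frac{1}{h}e^{ht}$ can be summed over all $(i,j)$. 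This summation introduces a nonuniformity problem (the error terms in the mixing estimate depend on the box), and the paper controls the tails by two ingredients that your proposal does not supply: (a) an entropy-gap estimate showing that orbit points reached along singular directions contribute only $O(e^{h't})$ with $h'<h$ (Lemma~\ref{singular}, using $m(\R_1^c)=0$ and uniqueness of the MME), and (b) a shadow-lemma bound $\limsup_t e^{-ht}a(0,t,x,y,\cup_{j>n}N_j)\le C\,\mu_p((\cup_{j>n}N_j)^+)$ controlling the contribution of the infinite tail of the cover (Lemma~\ref{nonuniform}). Without these, the passage from a mixing estimate on finitely many boxes to the full asymptotic for $a_t(x,y)$ does not go through, and dominated convergence in the Fubini step is likewise unjustified.

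A smaller point: the mixing you invoke should be sourced from Corollary~C.1 (continuous Green bundles plus a hyperbolic periodic point, which is the standing hypothesis of Theorem~E), not from Theorem~B, whose entropy-gap hypothesis is a different condition; the two settings do not automatically coincide.
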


Our proof of Theorem \ref{margulis2} also uses Margulis' approach. We construct geometric flow boxes and apply a type of $\pi$-convergence theorem, i.e., Lemma \ref{piconvergence}. First we will establish the asymptotics formula for a pair of flow boxes using the mixing property of the unique MME and scaling property of Patterson-Sullivan measure. The asymptotics in \eqref{e:mar2} involves countably many pairs of flow boxes and an issue of nonuniformity arises. To overcome this difficulty, we will use the fact that the unique MME is supported on $\R_1^c$ and apply Knieper's techniques (Lemmas \ref{singular} and \ref{nonuniform}). The assumption of continuous Green bundles is crucial here, as well as in the proof of Lemma \ref{null}. 

Let us compare the proofs of Theorem \ref{margulis} and Theorem \ref{margulis2}. In the proof of Theorem \ref{margulis}, due to the equidistribution of closed geodesics, we only need consider one flow box and count the number of self-intersections of the box under the geodesic flow. But we need to deal with the axis of deck transformations, in order to count effectively the homotopy classes of closed geodesics. On the other hand, in the proof of Theorem \ref{margulis2}, we only need to count intersections of flow boxes, and in fact even the uniqueness of MME is not directly used. The technical novelties in the proof of Theorem \ref{margulis2} is that we have to deal with countably many pairs of flow boxes and the essential difficulty caused by nonuniform hyperbolicity. Even for one pair of flow boxes, the calculations involved are more complicated. Moreover we also consider the intersection components of a flow box with a face under the geodesic flow, which makes the calculation more subtle.

\subsection{Margulis function and rigidity}
Let $M=X/\C$ be a closed $C^\infty$ uniform visibility manifold without conjugate points and with continuous Green bundles, and $X$ its universal cover. Suppose that the geodesic flow has a hyperbolic periodic point. The continuous function $c(x)$ obtained in Theorem \ref{margulis2} is called \emph{Margulis function}. It is $\Gamma$-invariant and hence descends to a function on $M$, which is still denoted by $c$.

As Margulis function is closely related to Patterson-Sullivan measure, its interactions with Bowen-Margulis measure, Lebesgue measure and harmonic measure usually give various characterizations of locally symmetric spaces. In this paper, we study rigidity results related to Margulis function for manifolds without conjugate points, which extend results for manifolds of negative curvature \cite{Yue} and without focal points \cite{Wu1}.

In negative curvature, Katok conjectured that $c(x)$ is almost always not constant and not smooth, unless $M$ is locally symmetric. In \cite{Yue} Yue answered Katok's conjecture, and his results can be extended to certain manifolds without conjuagte points.
\begin{thmm}\label{marfunction}
Let $M$ be a closed $C^\infty$ uniform visibility manifold without conjugate points and with continuous Green bundle, $X$ the universal cover of $M$. Suppose that the geodesic flow has a hyperbolic periodic point. Then
\begin{enumerate}
  \item The Margulis function $c$ is a $C^1$ function.
  \item If $c(x)\equiv C$, then for any $x\in X$,
  $$h=\int_{\partial X} tr U(x,\xi)d\tilde \mu_x(\xi)$$
where $U(x,\xi)$ and $tr U(x,\xi)$ are the second fundamental form and the mean curvature of the horosphere $H_x(\xi)$ respectively, and $\tilde \mu_x$ is the normalized Patterson-Sullivan measure.
\end{enumerate}
\end{thmm}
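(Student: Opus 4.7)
My plan is to adapt Yue's argument from the negatively curved setting to the present context, with the key input being the explicit description of the Margulis function in terms of a Patterson--Sullivan density that emerges from the proof of Theorem~\ref{margulis2}. Fixing a base point $x_0 \in X$ and letting $\{\mu_x\}_{x\in X}$ denote the Patterson--Sullivan density of exponent $h$ constructed from the unique MME (Theorem~\ref{bernoulli}), I expect the proof of Theorem~\ref{margulis2} to yield the representation
\begin{equation*}
c(x) = \int_{\partial X} e^{-h\beta_\xi(x,x_0)} \, d\mu_{x_0}(\xi),
\end{equation*}
where $\beta_\xi$ is the Busemann function normalized by $\beta_\xi(x_0,x_0)=0$. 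Both conclusions then follow by carefully differentiating this integral formula in $x$.

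For part (1), I would use that on a uniform visibility manifold without conjugate points with continuous Green bundles the Busemann function $x \mapsto \beta_\xi(x, x_0)$ is $C^1$, with $\nabla_x \beta_\xi(x, x_0) = -w(x, \xi)$, where $w(x, \xi) \in S_xX$ is the unique unit vector at $x$ asymptotic to $\xi$; the continuity of the stable horospherical foliation $\mathcal{F}^s$ makes $(x, \xi) \mapsto w(x, \xi)$ jointly continuous. Uniform visibility yields a locally uniform bound on $|\beta_\xi(x, x_0)|$ as $x$ varies in a compact set and $\xi$ ranges over $\partial X$, so dominated convergence legitimizes differentiation under the integral sign and gives
\begin{equation*}
\nabla c(x) = -h \int_{\partial X} w(x, \xi) e^{-h\beta_\xi(x,x_0)} \, d\mu_{x_0}(\xi),
\end{equation*}
which is continuous in $x$, hence $c \in C^1(X)$.

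For part (2), I would observe that $c \equiv C$ implies $\Delta c \equiv 0$, and compute $\Delta c$ by differentiating twice under the integral. Using the geometric identities $|\nabla_x \beta_\xi|^2 \equiv 1$ and $\Delta_x \beta_\xi(x,x_0) = \tr U(x, \xi)$, where $U(x, \xi)$ is the second fundamental form of the horosphere $H_x(\xi)$ at $x$ (with the sign convention that makes its trace nonnegative in the unstable direction), this yields
\begin{equation*}
\Delta c(x) = h^2 c(x) - h \int_{\partial X} \tr U(x, \xi) e^{-h\beta_\xi(x,x_0)} \, d\mu_{x_0}(\xi).
\end{equation*}
Setting $\Delta c \equiv 0$, dividing by $c(x)=C$, and recognizing $d\tilde\mu_x(\xi) = c(x)^{-1} e^{-h\beta_\xi(x,x_0)} \, d\mu_{x_0}(\xi)$ as the normalized Patterson--Sullivan density at $x$ delivers the claimed formula $h = \int_{\partial X} \tr U(x,\xi) \, d\tilde\mu_x(\xi)$.

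The main obstacle will be justifying the second differentiation: the identity $\Delta \beta_\xi = \tr U$ requires regularity of $\beta_\xi$ beyond $C^1$, which is subtle without nonpositive curvature. Under continuous Green bundles each horosphere is a $C^2$ submanifold whose shape operator is given by the unstable Green endomorphism, and the Riccati equation along the geodesic asymptotic to $\xi$ gives the trace identity pointwise. I would handle this either by (i) establishing $C^{1,1}$ regularity of $\beta_\xi$ directly, interpreting $\Delta c \equiv 0$ distributionally, and invoking elliptic regularity on the now-smooth $c$ to read off the pointwise formula, or (ii) restricting the computation to the open dense invariant set $\R_1$ on which the nonuniformly hyperbolic machinery behind Theorem~\ref{uniqueness} yields the needed regularity, then extending by continuity of $\tr U(x,\xi)$. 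A secondary technical point is the dominated convergence argument for the second derivative, which is handled by the joint continuity of $\tr U(x,\xi)$ (from continuity of the unstable Green bundle) combined with the uniform visibility bounds on Busemann functions over compact subsets of $X$.
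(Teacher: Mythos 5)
Your differentiation arguments for both parts coincide with the paper's: the paper establishes (Lemma \ref{Buseregu}, via Proposition \ref{horofoliation}) that under continuous Green bundles the Busemann function $y\mapsto b_\xi(y,x)$ is $C^2$ with $\triangle_y b_\xi(y,x)=tr\,U(y,\xi)$ depending continuously on $\xi$, and then differentiates the identity $c(y)=c(x)\int_{\partial X}e^{-h\,b_\xi(y,x)}\,d\tilde\mu_x(\xi)$ under the integral exactly as you propose, using $|\nabla_y b_\xi|\equiv 1$ and the trace identity to get $\int h(h-tr\,U)e^{-hb_\xi}\,d\tilde\mu_x\equiv 0$. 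So the regularity worries in your last paragraph are already resolved in the paper by a clean lemma (the Green bundles are uniquely integrable and tangent to smooth horospherical foliations once a hyperbolic periodic point exists), and neither the distributional detour (i) nor the restriction to $\R_1$ in (ii) is needed.

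The one real gap is the starting representation, which you only "expect" to emerge from the proof of Theorem \ref{margulis2}. It does not come out of that proof directly: there $c(x)$ appears as $\int_{\F}c(x,y)\,d\mathrm{Vol}(y)$ with $c(x,y)$ a double boundary integral over $S_xX\times S_yX$, not as $\int_{\partial X}e^{-h\beta_\xi(x,x_0)}\,d\mu_{x_0}(\xi)$ for the Patterson--Sullivan density. The paper obtains your formula by a separate argument: it defines the normalized spherical volume measures $\nu_x^R=e^{-hR}(f_x^R)_*\mathrm{Vol}|_{S(x,R)}$, uses the volume asymptotics to get $\nu_x(\partial X)=c(x)$ for any weak$^*$ limit, checks that $\{\nu_x\}$ is an $h$-dimensional Busemann density, and then invokes the uniqueness of the Busemann density (Theorem \ref{buse}, proved by adapting Knieper's Hausdorff-measure and shadow-lemma argument to the no-conjugate-points setting, replacing convexity by quasi-convexity). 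Without this identification you would only obtain the formula in part (2) for \emph{some} Busemann density rather than for the normalized Patterson--Sullivan measure $\tilde\mu_x$, which is what the statement asserts. You should either supply this identification or cite the uniqueness of the Busemann density explicitly.
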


We note that the assumption of continuous asymptote in Theorem \ref{marfunction} guarantees that the Busemann function is $C^2$. When $c(x)\equiv C$, we expect stronger rigidity results to hold. To achieve this goal, Yue \cite{Yue} studied the uniqueness of harmonic measures associated to the strong stable foliation of the geodesic flow in negative curvature. However, as far as we know, the uniqueness of harmonic measures in nonpositive curvature is unknown in higher dimension.

Nevertheless, if $\dim M=2$, then $\text{Vol}(B^s(x,r))=2r$ where $B^s(x,r)$ is any ball of radius $r$ in a strong stable manifold. In this case, $B^s(x,r)$ is just a curve. Hence the leaves of the strong stable foliation have polynomial growth. Recently, Clotet \cite{Cl2} proved the unique ergodicity of the horocycle flow on surfaces  of genus at least $2$, without conjugate points and with continuous Green bundle. Combining with this result, we can show that there is a unique harmonic measure associated to the strong stable foliation. Then we obtain the following rigidity result by applying Katok's entropy rigidity result \cite[Theorem B]{Ka}. In dimension two, the fact that the geodesic flow has a hyperbolic periodic point is automatic, see the discussion at the beginning of Section $4$.
\begin{thmm}\label{rigidity}
Let $M$ be a closed Riemannian surface of genus at least $2$ without conjugate points and with continuous Green bundles. Then $c(x)\equiv C$ if and only if $M$ has constant negative curvature.
\end{thmm}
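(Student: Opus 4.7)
The ``if'' direction is classical: on a closed surface of constant negative curvature, the isometry group acts transitively on the universal cover, so $b_t(x)$ depends only on $t$; hence the Margulis function $c(x)$ appearing in Theorem \ref{margulis2} is a constant.

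For the converse, assume $c(x)\equiv C$ on $X$. The strategy is to show that the Liouville measure $\nu$ on $SM$ is a measure of maximal entropy for the geodesic flow, hence equal to the unique MME $\mu_{BM}$ by Corollary \ref{uniqueness1} (note that in dimension two the existence of a hyperbolic periodic point is automatic); this gives $h_\nu(\phi^1)=h_{\top}(\phi^1)$, and Katok's entropy rigidity \cite[Theorem B]{Ka} then yields that $M$ has constant negative curvature. The identification $\mu_{BM}=\nu$ proceeds as follows. In dimension two, strong stable leaves are $1$-dimensional horocyclic curves with linear growth $\Vol(B^s(x,r))=2r$. By Clotet's unique ergodicity of the horocycle flow \cite{Cl2}, the horocycle flow admits a unique invariant Borel probability measure. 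Since $\mu_{BM}$ is horocycle-invariant (by the scaling property of its Patterson--Sullivan conditionals under $\phi^t$, established in Theorem \ref{bernoulli}) and the horocycle flow has unit speed along leaves, the conditionals of $\mu_{BM}$ on strong stable leaves are proportional to Riemannian arc length. The symmetric identification holds on strong unstable leaves. Combining these with the local product structure of $\mu_{BM}$ from Theorem \ref{bernoulli}, together with the fact that $\mu_{BM}(\R_1)=1$ by hyperbolicity (Corollary \ref{uniqueness1}), one concludes that $\mu_{BM}$ has a Riemannian product density, i.e., $\mu_{BM}=\nu$ after normalization. The identity
\begin{equation*}
h=\int_{\partial X}\tr U(x,\xi)\,d\tilde\mu_x(\xi),\qquad\forall\, x\in X,
\end{equation*}
supplied by Theorem \ref{marfunction}(2), is then used to verify Pesin's entropy formula for $\nu$ and close the loop.

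The main obstacle will be the precise transfer from Clotet's unique ergodicity of the horocycle flow to the identification of the Patterson--Sullivan conditionals with arc length, which must be executed on a full $\mu_{BM}$-measure set. Since $\mu_{BM}$ is hyperbolic (Corollary \ref{uniqueness1}), it assigns full mass to the regular set $\R_1\cap\R_0$, where horocycles are smooth curves and the horocycle flow is well-defined; Clotet's theorem then applies directly on this set. A secondary delicate point is confirming that Katok's entropy rigidity \cite[Theorem B]{Ka}, classically phrased for surfaces of negative curvature, extends to surfaces without conjugate points and with continuous Green bundles; this should follow from Pesin theory on the nonuniformly hyperbolic set $\R_1$, where Lyapunov exponents are nonzero and the equality $h_\nu=h_{\top}$ drives the rigidity argument.
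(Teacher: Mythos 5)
There is a genuine gap in the ``only if'' direction, at the step where you identify the Bowen--Margulis measure $\mu_{BM}$ with the Liouville measure $\nu$. Your argument rests on the claim that $\mu_{BM}$ is invariant under the Lebesgue-parametrized horocycle flow $h^L_s$, so that unique ergodicity (Clotet) forces its conditionals on strong stable leaves to be arc length. This is false in general: the measure preserved by the Lebesgue parametrization is $w^s$, whose stable conditionals are arc length by construction, whereas the stable conditionals of $\mu_{BM}$ are the Patterson--Sullivan measures, which contract uniformly by $e^{-ht}$ under $\phi^t$ while arc length contracts by the unstable Jacobian. These two families of conditionals agree only when the unstable Jacobian is identically $e^{-ht}$, which already forces constant curvature --- so the step assumes the conclusion. (What Clotet's theorem does give, and what the paper uses, is that $w^s$ is the \emph{unique} harmonic/completely invariant measure for the strong stable foliation; it does not identify $w^s$ with $\mu_{BM}$ or with $\nu$.) A secondary symptom of the problem is that your argument never genuinely uses the hypothesis $c(x)\equiv C$: the appeal to Theorem \ref{marfunction}(2) ``to verify Pesin's entropy formula'' is not an actual derivation, and without using $c\equiv C$ the statement would be false (it would prove every such surface has constant curvature).

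The paper's route is different and avoids any identification of measures on $SM$. It first characterizes $w^s$ via Proposition \ref{measure} as $dw^s = C^{-1}c(x)\,d\tilde\mu_x(v)\,d\mathrm{Vol}(x)$, then differentiates the normalized sphere-volume function $G_x(R)=s_R(x)e^{-hR}$ twice and uses the equidistribution of spheres toward $w^s$ (Proposition \ref{uniform1}, itself a consequence of unique ergodicity) to obtain the integral formula $h^2=\int_{SM}\bigl(-\mathrm{tr}\,\dot U+(\mathrm{tr}\,U)^2\bigr)\,dw^s$. The Riccati equation in dimension two turns the integrand into $-K$, and \emph{only here} does $c\equiv C$ enter: it makes the factor measure of $w^s$ on $M$ equal to normalized Riemannian volume, so Gauss--Bonnet yields $h^2=-2\pi E/\mathrm{Vol}(M)$, and Katok's Theorem B (the equality case of $h^2\ge -2\pi E/\mathrm{Vol}(M)$, valid in the no-conjugate-points setting) concludes. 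If you want to keep a horocycle-flow argument, it must go through $w^s$ and the integral formulas, not through $\mu_{BM}=\nu$.
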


Without the uniqueness of harmonic measures, we can still obtain some rigidity results in arbitrary dimensions. The flip map $F:SM\to SM$ is defined as $F(v):=-v$. For general uniform visibility manifolds, the Bowen-Margulis measure $m$ is constructed by using measurable choice theorem, hence is not necessarily flip invariant. However, for uniform visibility manifolds with continuous Green bundles and the geodesic flow admitting a hyperbolic periodic point, the Bowen-Margulis measure $m$ is supported on $\R_1$ and hence it is indeed flip invariant. In this case, consider the conditional measures $\{\bar\mu_x\}_{x\in M}$ of $m$ with respect to the partition $SM=\cup_{x\in M}S_xM$. $\bar\mu_x$ can be identified as measures on $\partial X$, and it would be natural to consider if $\bar\mu_x$ and the normalized Patterson-Sullivan measures $\tilde\mu_x$ coincide.
Yue \cite{Yue1, Yue2} obtained related rigidity results in negative curvature, which can be generalized as follows.
\begin{thmm}\label{flip}
Let $M$ be a closed $C^\infty$ uniform visibility manifold without conjugate points and with continuous Green bundle. Suppose that the geodesic flows admits a hyperbolic periodic point. Then the conditional measures $\{\bar\mu_x\}_{x\in M}$ of the Bowen-Margulis measure coincide almost everywhere with the normalized Patterson-Sullivan measures $\tilde\mu_x$ if and only if $M$ is locally symmetric.
\end{thmm}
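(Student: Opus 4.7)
The plan is to adapt the rigidity strategy of Yue \cite{Yue1, Yue2} from the strictly negatively curved setting to the present uniform visibility context. The ``if'' direction is straightforward: a closed locally symmetric uniform visibility manifold without conjugate points is necessarily a rank-one noncompact symmetric space, hence strictly negatively curved with transitive isotropy acting on each unit sphere $S_xM$. In this homogeneous setting, both $\bar\mu_x$ and $\tilde\mu_x$ are invariant under the isotropy group at $x$, and by uniqueness of the invariant probability measure on the sphere they coincide.

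For the ``only if'' direction, first derive an explicit formula for $\bar\mu_x$ from the local product structure of the Bowen--Margulis measure $m$. Under the hypotheses the unique MME $m$ is fully supported (Theorem \ref{bernoulli}) and concentrated on $\R_1$ (Corollary C.1). In Hopf coordinates $(v^+,v^-,t)$ on any flow box inside $\R_1$, the construction underlying Theorem \ref{bernoulli} yields
\begin{equation*}
dm = e^{h(\beta_{v^+}(p,\pi(v)) + \beta_{v^-}(p,\pi(v)))}\,d\mu_p(v^+)\,d\mu_p(v^-)\,dt,
\end{equation*}
where $p$ is a fixed basepoint in the universal cover $X$. Identifying $S_xM$ with $\partial X$ via $v\mapsto v^+$ (so that $v^- = F_x(v^+)$ is the opposite endpoint of the geodesic through $x$), pass from these Hopf coordinates to Sasaki coordinates $(x,v)$; the resulting Jacobian is continuous on $\R_1$ by the continuous Green bundle hypothesis and is expressible through the second fundamental forms $U^{s/u}(x,\xi)$ of the stable and unstable horospheres at $(x,\xi)$.

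Comparing the resulting density of $\bar\mu_x$ with $d\tilde\mu_x(\xi) = \|\mu_x\|^{-1} e^{h\beta_\xi(p,x)}\,d\mu_p(\xi)$ and imposing $\bar\mu_x = \tilde\mu_x$ produces a pointwise geometric identity relating the Busemann function at $F_x(\xi)$ to the horospheric Jacobians. Differentiate this identity along stable and unstable horocyclic directions, invoking the Riccati equation satisfied by $U^{s/u}$ on $\R_1$. Integrating the resulting expression against $\tilde\mu_x$ and comparing with Theorem \ref{marfunction}(2) upgrades the average identity $\int \mathrm{tr}\,U(x,\xi)\,d\tilde\mu_x(\xi) = h$ to the pointwise statement $\mathrm{tr}\,U(x,\xi) = h$ for every $(x,\xi)\in X \times \partial X$. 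Hence every horosphere in $X$ has mean curvature equal to the topological entropy, i.e.\ $M$ is asymptotically harmonic.

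Finally, conclude from asymptotic harmonicity that $M$ is locally symmetric. In dimension two this is obtained from Theorem \ref{rigidity}, once one checks that asymptotic harmonicity forces $c(x)$ to be constant. In higher dimensions, combine asymptotic harmonicity with the nonuniform hyperbolicity on $\R_1$ furnished by Theorem \ref{singulargeodesic} and the Foulon--Labourie rigidity theorem, using a Jacobi-field analysis on $\R_1$ together with continuous Green bundles and uniform visibility to extend the classical argument. The principal obstacle is precisely this last rigidity step: the classical Foulon--Labourie theorem assumes strict negative curvature and smooth horospherical foliations, so one must verify that its conclusion persists under our weaker hypotheses by using the fact that $m$-a.e.\ vector lies in $\R_1$ and that the pointwise identities derived there extend by continuity of $G^{s/u}$ to all of $SM$. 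A secondary obstacle is the differentiation in the second step, where calculus on the horospherical foliations outside a smooth Anosov setting must be handled via Jacobi-field estimates valid only on $\R_1$.
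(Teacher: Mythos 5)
Your outline misses the mechanism that actually drives the ``only if'' direction in the paper, and two of your steps do not go through as stated. The paper's proof begins from the observation that the Bowen--Margulis measure $m$ is flip invariant by construction (it is supported on $\R_1$ and built symmetrically from Patterson--Sullivan measures at both endpoints), so its conditionals $\bar\mu_x$ are flip invariant a.e.; the hypothesis $\bar\mu_x=\tilde\mu_x$ then forces the normalized Patterson--Sullivan measures $\tilde\mu_x$ to be flip invariant, first a.e.\ and then everywhere by continuity of the densities. This flip invariance is what yields both that $m$ coincides with the Liouville measure (Lemma \ref{liou}) and that the Margulis function $c(x)$ is constant (Lemma \ref{cons}, via a Green's formula argument exploiting $\dot\varphi(x,\xi)=-\dot\varphi(x,-\xi)$). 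Your proposal never uses flip invariance at all, and this creates a concrete circularity: you invoke the identity $\int \mathrm{tr}\,U(x,\xi)\,d\tilde\mu_x(\xi)=h$ from Theorem \ref{marfunction}(2), but that identity is only proved under the hypothesis $c(x)\equiv C$, which you have not established. Moreover, your claim that ``integrating against $\tilde\mu_x$'' upgrades the averaged identity to the pointwise statement $\mathrm{tr}\,U(x,\xi)=h$ is not a valid inference; the paper instead gets $\mathrm{tr}\,U\equiv h$ from Proposition \ref{coin}: once $c$ is constant and $\bar\mu_x=\tilde\mu_x$, the harmonic measure $w^s$ coincides with $m$ and is therefore $\phi^t$-invariant, and Proposition \ref{2formula} then gives $\int(h-\mathrm{tr}\,U)\varphi\,dw^s=0$ for \emph{all} test functions $\varphi$, which does yield the pointwise conclusion.

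Your final rigidity step is also wrong in the form stated. Foulon--Labourie requires strict negative curvature and smooth Anosov foliations, and you acknowledge but do not resolve the obstacle of extending it; the paper avoids this entirely by citing Zimmer's theorem \cite{Zi} that a compact asymptotically harmonic manifold with purely exponential volume growth is locally symmetric, which applies directly here because the volume growth asymptotics are supplied by Theorem \ref{margulis2}. The first two steps of your density computation (local product structure of $m$ on $\R_1$ and the change from Hopf to Sasaki coordinates) are consistent with the paper's framework, but without flip invariance, the constancy of $c$, and the correct terminal rigidity theorem, the argument does not close.
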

The assumption of continuous asymptote above again guarantees that the Busemann function is $C^2$. 

Both Theorem \ref{rigidity} and Theorem \ref{flip} give characterizations of rank one locally symmetric spaces among certain closed manifolds without conjugagte points. In fact, it is a well known conjecture \cite[p.179]{Yue} that in negative curvature $c(x)\equiv C, x\in M$ if and only if $M$ is locally symmetric. The conjecture is true for surfaces of negative curvature by \cite[Theorem 4.3]{Yue}, and for a more general class of surfaces without conjugate points by Theorem \ref{rigidity}.

\subsection{The Hopf-Tsuji-Sullivan dichotomy}
We also establish the Hopf-Tsuji-Sullivan dichotomy with respect to Bowen-Margulis measure for the geodesic flow on certain manifolds without conjugate points. At first, let us recall some necessary terminologies (see \cite{LP, Link1}).

Let $\Omega$ be a locally compact and $\sigma$-compact Hausdorff topological space and $\{\phi^t\}_{t\in \RR}$ a flow on $\Omega$. A point $\omega\in \Omega$ is said to be \textit{positively/negatively recurrent} if there exists a sequence $t_n\to \infty$ of real numbers such that
$\phi^{\pm t_n}\omega\to \omega.$
On the other hand, $\omega\in \Omega$ is said to be \textit{positively/negatively divergent} if for every compact set $K\subset \Omega$ there exists $T>0$ such that $\phi^{\pm t}\omega\notin K$ for all $t\ge T$.

Assume that $\mu$ is a Borel measure on $\Omega$ invariant by the flow $\{\phi^t\}_{t\in \RR}$. Then the Hopf decomposition theorem (see for instance \cite[Satz 13.1]{Ho00}) says that the space $\Omega$ decomposes into a disjoint union of $\phi^t$-invariant Borel sets $\Omega_C$ and $\Omega_D$ which satisfy the following properties:
\begin{enumerate}
    \item  There does not exist a Borel subset $E\subset \Omega_C$ with $\mu(E)>0$ and such that the sets $\{\phi^kE\}_{k\in \ZZ}$ are pairwise disjoint.
\item There exists a Borel set $W\subset \Omega_D$ such that $\Omega_D$ is the disjoint union of sets $\{W_k\}_{k\in \ZZ}$, where each $W_k$ is a translate of $W$ under the flow $\phi$.
\end{enumerate}

According to Poincar\'{e}'s recurrence theorem (see for example \cite[Satz 13.2]{Ho00}) $\mu$-almost every point of $\Omega_C$ is positively and negatively recurrent. On the other hand, by Hopf's divergence theorem (see also \cite[Satz 13.2]{Ho00}), $\mu$-almost every point of $\Omega_D$ is positively and negatively divergent. This implies in particular that the sets $\Omega_C$ and $\Omega_D$ are unique up to sets of measure zero.

The dynamical system $(\Omega, \{\phi^t\}_{t\in \RR}, \mu)$ is said to be \textit{completely conservative} if $\mu(\Omega_D)=0$, and \textit{completely dissipative} if $\mu(\Omega_C)=0$. 

Finally, a dynamical system $(\Omega, \{\phi^t\}_{t\in \RR}, \mu)$ is called \textit{ergodic} if every $\phi^t$-invariant Borel set $E\subset \Omega$ satisfies either $\mu(E) = 0$ or $\mu(\Omega\setminus E) = 0$. Hence if $(\Omega, \{\phi^t\}_{t\in \RR}, \mu)$ is ergodic, then it is either completely conservative or completely dissipative. The latter can only occur for an infinite measure $\mu$ which is supported on a single orbit $\{\phi^t\omega: t\in \RR\}$ with $\omega\in \Omega$.

In the 1930's, Hopf first observed that the geodesic flow on surfaces with constant negative curvature is either completely conservative or completely dissipative with respect to the Lebesgue measure, which in this case coincides with the  Bowen-Margulis measure. Then Tsuji \cite{Tsu} proved that conservativity is equivalent to ergodicity. Later Sullivan \cite{Sul} obtained these results for manifolds of constant negative curvature in any dimension. Since then, the study of the Hopf-Tsuji-Sullivan dichotomy has been an active topic in the study of the ergodic theory of geodesic flows. Kaimanovich \cite{Kai1} proved the dichotomy for Gromov hyperbolic metric spaces. On manifolds with pinched negative curvature, Yue \cite{Yue0} showed the dichotomy for Bowen-Margulis measure, and Paulin- Pollicott-Schapira \cite{PPS} proved it for equilibrium states. Roblin \cite{Rob} also proved it for the CAT($-1$) metric spaces. 

In nonpositive curvature, due to existence of flat strips, in general the Hopf parametrization is not one-to-one which causes essential difficulties. With very advanced technique, Link and Picaud \cite{LiP} proved the dichotomy with respect to Knieper's measure for rank one manifolds of nonpositive curvature. Link \cite{Link1} proved the dichotomy with respect to Ricks' measure for CAT($0$) spaces. Recently, Liu-Liu-Wang \cite{LLW} proved the dichotomy with respect to Bowen-Margulis measure for uniform visibility manifolds satisfying Axiom $2$, on which every pair $\xi \neq \eta\in \pX$ can be connected by exactly one geodesic. 

We will establish the Hopf-Tsuji-Sullivan dichotomy with respect to a Bowen-Margulis measure $m_\C$ for the geodesic flow on uniform visibility manifolds without conjugate points. Our Bowen-Margulis measure is constructed as a product of Patterson-Sullivan measures and a measure $\lambda_{\xi,\eta}$ supported on a generalized strip $(\xi\eta)$, where a measurable choice theorem is used. Unfortunately, for noncompact manifolds, argument of Krylov-Bogolyubov type cannot be applied, and such Bowen-Margulis measure $m_\C$ is not necessarily invariant under the geodesic flow. Nevertheless, the projection of $m_\C$ to the quotient space is invariant under the quotient geodesic flow. So strictly speaking, we are talking about complete dissipativity/conservativity and ergodicity for the quotient geodesic flow $([SM], \{[\phi^t]\}_{t\in \RR}, [m_\C])$. Since there is no confusion, we still state our results for the geodesic flow $(SM, \{\phi^t\}_{t\in \RR}, m_\C)$. 
\begin{thmm}\label{HTS}
Let $X$ be a simply connected smooth uniform visibility manifold without conjugate points. Suppose that $\C\subseteq Is(X)$ is a non-elementary discrete group which contains an expansive isometry. Let $m_\C$ be a Bowen-Margulis measure on $SM = SX/\C$ and $\mu_o$ is the Patterson-Sullivan measure normalized so that $\mu_o(\pX)=1$. Then exactly one of the following two complementary cases
holds, and the statements (i) to (iii) are equivalent in each case. If furthermore $X$ has bounded asymptote and  $\C$ contains a regular isometry, then the statements (i) to (iv) are equivalent in each case.
\begin{enumerate}
    \item Case:
    \begin{enumerate}
        \item[(i)]  $\sum_{\c\in \C}e^{-\d_\C d(o, \c o)}$ converges.
        \item[(ii)] $\mu_o(L_\C^{\text{rad}})=0$.
        \item[(iii)] $(SM, (\phi^t)_{t\in \RR}, m_\C)$ is completely dissipative.
         \item[(iv)] $(SM, (\phi^t)_{t\in \RR}, m_\C)$ is not ergodic.
    \end{enumerate}
    \item Case:
    \begin{enumerate}
        \item[(i)]  $\sum_{\c\in \C}e^{-\d_\C d(o, \c o)}$ diverges.
        \item[(ii)] $\mu_o(L_\C^{\text{rad}})=1$.
        \item[(iii)] $(SM, (\phi^t)_{t\in \RR}, m_\C)$ is completely conservative.
         \item[(iv)] $(SM, (\phi^t)_{t\in \RR}, m_\C)$ is ergodic.
    \end{enumerate}
\end{enumerate}
\end{thmm}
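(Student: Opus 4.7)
The plan is to follow the classical Hopf-Tsuji-Sullivan circular scheme, working with the Patterson-Sullivan measure $\mu_o$ and the Bowen-Margulis measure $m_\C$ constructed earlier in the paper via the measurable choice theorem. The fundamental analytic tool will be the shadow lemma: there exist constants $r_0,C>0$ such that for all $r\ge r_0$ and all $\c\in\C$,
$$\frac{1}{C}e^{-\d_\C d(o,\c o)}\le \mu_o\bigl(\mathcal{O}_r(o,\c o)\bigr)\le Ce^{-\d_\C d(o,\c o)},$$
where $\mathcal{O}_r(o,\c o)$ denotes the shadow at infinity of $B(\c o,r)$ seen from $o$. The shadow lemma will be obtained from the uniform visibility property together with the $\d_\C$-conformality of $\mu_o$.

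For $(i)\Leftrightarrow(ii)$ I would apply Borel-Cantelli to the shadows $\{\mathcal{O}_r(o,\c o)\}_{\c\in\C}$. Convergence of the Poincar\'e series together with the upper bound in the shadow lemma gives that $\mu_o$-a.e.\ $\xi\in\pX$ lies in only finitely many shadows, hence $\xi\notin L_\C^{\mathrm{rad}}$; divergence, combined with a disjointification along annular shells, the second Borel-Cantelli, and atomlessness of $\mu_o$ (from non-elementarity of $\C$), yields $\mu_o(L_\C^{\mathrm{rad}})=1$. For $(ii)\Leftrightarrow(iii)$ I would pass to the Hopf parametrization $SX\simeq\pX\times\pX\times\RR$ modulo the strip equivalence, under which $m_\C$ is locally the product of $\mu_o$ on each endpoint factor, the strip measure $\lambda_{\xi,\eta}$, and Lebesgue measure along the flow. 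A vector $v\in SM$ is positively (resp.\ negatively) recurrent precisely when $v^+$ (resp.\ $v^-$) is a radial limit point, so the Hopf conservative/dissipative decomposition of $(SM,\{\phi^t\},m_\C)$ corresponds under the Hopf parametrization to $(L_\C^{\mathrm{rad}})^2$ and its complement in $\pX\times\pX$.

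The equivalence $(iii)\Leftrightarrow(iv)$ is the Hopf argument, and I expect this to be the main obstacle. The hypothesis that $\C$ contains an expansive isometry makes $\R_0$ non-empty, and by $\C$-invariance together with the ergodic decomposition $\R_0$ carries the full $m_\C$-measure of the conservative part. On $\R_0$ the foliations $\F^s,\F^u$ are continuous and transverse, giving a local product structure compatible with the product form of $m_\C$; the $\d_\C$-conformality controls Radon-Nikodym derivatives under the horocyclic holonomies, and the standard Hopf argument then shows that for $f\in C_c(SM)$ the forward and backward Birkhoff averages are $m_\C$-a.e.\ constant along stable and unstable leaves, hence locally constant, and non-elementarity upgrades this to global constancy. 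Conservativity therefore gives ergodicity, while in the dissipative case the flow lives on a wandering set that is not a single orbit, hence is not ergodic. In the bounded asymptote case with a regular isometry in $\C$, the latter supplies a hyperbolic periodic orbit, reinforcing the role of $\R_0$ and making the Hopf scheme robust enough to deliver $(iv)$ in both cases. The principal difficulty throughout is that generalized strips prevent smooth compatibility of $m_\C$ with $\F^s,\F^u$ off $\R_0$ and the strip measure $\lambda_{\xi,\eta}$ is only measurable a priori; the expansive-isometry hypothesis is precisely what concentrates the conservative part of $m_\C$ onto $\R_0$, where the foliation geometry is tame enough to run Hopf's scheme.
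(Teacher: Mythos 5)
Your overall circular scheme matches the paper's, and several pieces coincide: the shadow lemma (Proposition \ref{generalizedshadow}), the easy Borel--Cantelli direction convergence $\Rightarrow\mu_o(L_\C^{\text{rad}})=0$ (Lemma \ref{convradial}), the identification of conservative vectors with those whose endpoint is radial (Lemma \ref{conservative}, Proposition \ref{dic}), and the Hopf argument on a region of genuine product structure. But there are two concrete gaps. First, for divergence $\Rightarrow\mu_o(L_\C^{\text{rad}})=1$ you invoke ``disjointification along annular shells and the second Borel--Cantelli'' applied to the shadows themselves. The second Borel--Cantelli lemma needs pairwise quasi-independence of the shadow events, $\mu_o(\mathcal O(\c)\cap\mathcal O(\c'))\lesssim\mu_o(\mathcal O(\c))\,\mu_o(\mathcal O(\c'))$, and establishing that correlation bound is precisely the hard part in the presence of generalized strips; you do not address it. The paper avoids it entirely: it proves the contrapositive (ii)$\Rightarrow$(i) (Proposition \ref{divergent}) by a dynamical second-moment argument on the Bowen--Margulis measure of the sets $K\cap\phi^{-t}\c K$, where the upper bound (Lemma \ref{divergent1}) is the square of the partial Poincar\'e sums and the lower bound (Lemma \ref{divergent2}) is obtained by restricting to the neighborhood $U\times V$ of the fixed points of the expansive isometry, where Proposition \ref{regularneighbor} guarantees all connecting geodesics pass near $o$ (the substitute for Ballmann's lemma). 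Kochen--Stone then shows the flow is not completely dissipative, contradicting Proposition \ref{dic}. The zero--one law for $L_\C^{\text{rad}}$ (Proposition \ref{radialmeasure}) is also needed to get the dichotomy ``exactly one case holds''; you never state it.

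Second, in the Hopf argument you assert that ``by $\C$-invariance together with the ergodic decomposition $\R_0$ carries the full $m_\C$-measure of the conservative part.'' That is not justified and is not what the paper proves: nothing rules out recurrent vectors tangent to generalized strips. The paper instead works with the positive-measure set $\Omega(U,V)$ of vectors with endpoints in $U\times V$ (all regular by Proposition \ref{crucial}), runs Hopf there, and then propagates constancy of $\tilde f^+$ to full measure using conformality and the minimality of the $\C$-action on the limit set. Relatedly, you should pin down the exact role of bounded asymptote: it is what upgrades $d(\phi^{t_n}w,\phi^{t_n}v)\to0$ along recurrence times to $d(\phi^{t}w,\phi^{t}v)\to0$ for all $t\to\infty$ when $w\in\F^s(v)$ and $v$ is a recurrent expansive vector (Lemma \ref{close}); without that, forward Birkhoff averages of $f/\rho$ need not be constant on stable leaves and the Hopf argument stalls. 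Your remark that the regular isometry ``supplies a hyperbolic periodic orbit'' is beside the point here; its role is to furnish the neighborhoods $U,V$ with unique regular connecting geodesics.
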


The most technical part of the proof of Theorem \ref{HTS} is Proposition \ref{divergent} (i.e., (ii)$\Rightarrow$(i) in Case (1)), particularly Lemma \ref{divergent2}. Using ideas from \cite{LiP}, we obtain the lower bound in Lemma \ref{divergent2} by considering a small neighborhood of an expansive vector, which in nonpositive curvature is Ballmann's lemma \cite[Lemma III.3.1]{BB}. 
The assumption of bounded asymptote is used to get Lemma \ref{close}. This is a key step in applying Hopf argument to show that conservativity implies ergodicity.

\subsection{Organization of the paper}
In Section $3$, we recall geometric properties of uniform visibility manifolds without conjugate points, including Green bundles, boundary at infinity, Busemann functions, generalized strips, etc. 

In Section $4$, we consider ergodicity of Liouville measure for certain surfaces without conjugate points, mostly focusing on the properties of singular geodesics. This proves Theorem \ref{singulargeodesic}. 

Section $5$ are devoted to the construction of Patterson-Sullivan measure and Bowen-Margulis measure. We show that Bowen-Margulis measure is exactly the unique MME, and it has the Bernoulli property which proves Theorem \ref{bernoulli}. We prove Theorem \ref{uniqueness}, the uniqueness of equilibrium states in Section $6$. 

In Section $7$, we prove Theorem \ref{margulis} for the asymptotic formula for the number of homotopy classes of closed geodesics.  In Section $8$, we prove Theorem \ref{margulis2} for the formula of volume asymptotics. We then prove rigidity results Theorems \ref{marfunction}, \ref{rigidity} and \ref{flip} related to Margulis function in Section $9$. 

In the last Section $10$, we prove the Hopf-Tsuji-Sullivan dichotomy Theorem \ref{HTS}.

\section{Geometric and ergodic preliminaries}
Let $M$ be an $n$-dimensional $C^\infty$ uniform visibility manifold without conjugate points. Until in last section, $M$ is assumed to be compact. We prepare some geometric and ergodic tools which will be used in subsequent sections.

\subsection{Jacobi fields, Green bundles and Pesin set}
In order to study the dynamics of geodesic flows, we should investigate the geometry of the second tangent bundle $TTM$. Let $\pi:TM\rightarrow M$ be the natural projection, i.e., $\pi(v)=p$ where $v \in T_{p}M$.
The connection map $K_{v}:T_{v}TM\rightarrow T_{\pi(v)}M$ is defined as follows. For any $\xi\in T_vTM$, $K_{v}\xi:= (\nabla X)(t)|_{t=0}$,
where $X:(-\epsilon,\epsilon)\rightarrow TM$ is a smooth curve satisfying $X(0)=v$ and $X'(0)=\xi$, and $\nabla$ is the covariant derivative along the curve $\pi(X(t))\subset M$. Then the standard Sasaki metric on $TTM$ is given by
$$\langle\xi,\eta\rangle_{v}=\langle d\pi_{v}\xi,d\pi_{v}\eta\rangle+\langle K_{v}\xi,K_{v}\eta\rangle, \quad \xi, \eta\in T_vTM.$$

Recall that the \emph{Jacobi equation} along a geodesic $c_v(t)$ is
\begin{equation}\label{e:jacobi0}
J''(t)+R(\dot c_v(t),J(t))\dot c_v(t)=0,
\end{equation}
where $R$ is the curvature tensor, and $J(t)$ is a Jacobi field along $c_v(t)$ and perpendicular to $\dot c_v(t)$.
Suppose that $J_{\xi}(t)$ is the solution of \eqref{e:jacobi0} which satisfies the initial conditions
\[J_{\xi}(0)=d\pi_{v}\xi, ~~\frac{d}{dt}\Big|_{t=0}J_{\xi}(t)=K_{v}\xi.\]
Then it follows that (cf. p. 386 in \cite{BP})
\[J_{\xi}(t)=d\pi_{\phi^{t}v}d \phi^{t}_{v}\xi, ~~\frac{d}{dt}J_{\xi}(t)=K_{\phi^{t}v}d\phi^{t}_{v}\xi.\]

Using the Fermi coordinates, we can write \eqref{e:jacobi0} in the matrix form
\begin{equation}\label{e:jacobi1}
\frac{d^2}{dt^2}A(t)+K(t)A(t)=0.
\end{equation}
We have the following classical result.

\begin{proposition}[Cf. \cite{Eb}]\label{Jacobi}
Given $s\in \mathbb{R}$, let $A_{s}(t)$ be the unique solution of \eqref{e:jacobi1}
satisfying $A_{s}(0)=Id$ and $A_{s}(s)=0$. Then there exists a limit
\[A^{+}=\lim_{s\rightarrow +\infty}\frac{d}{dt}\Big|_{t=0}A_{s}(t).\]
\end{proposition}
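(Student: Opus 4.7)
The plan is to reformulate the matrix Jacobi equation \eqref{e:jacobi1} as a matrix Riccati equation, then combine monotonicity in $s$ with a uniform bound to conclude via monotone convergence in the space of symmetric matrices. Since $M$ has no conjugate points, $A_s(t)$ is nonsingular on $[0,s)$, so I can define $U_s(t) := A_s'(t) A_s^{-1}(t)$ on this interval. Differentiating and substituting \eqref{e:jacobi1} yields the matrix Riccati equation
\[U_s'(t) + U_s(t)^2 + K(t) = 0.\]
Symmetry of $U_s(t)$ follows from the Wronskian identity: $A_s(t)^T A_s'(t) - A_s'(t)^T A_s(t)$ is constant in $t$ by \eqref{e:jacobi1} and vanishes at $t = s$ because $A_s(s) = 0$, so $A_s(t)^T A_s'(t)$ is symmetric, and hence so is $U_s(t)$.

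Next I would establish that $U_s(0) = A_s'(0)$ is monotone in $s$ in the symmetric-matrix order. For $0 < s_1 < s_2$, the Wronskian $W(t) := A_{s_1}(t)^T A_{s_2}'(t) - A_{s_1}'(t)^T A_{s_2}(t)$ is constant by \eqref{e:jacobi1}; evaluating at $t = 0$ yields $W = A_{s_2}'(0) - A_{s_1}'(0)$, while evaluating at $t = s_1$ (using $A_{s_1}(s_1) = 0$) gives $W = -A_{s_1}'(s_1)^T A_{s_2}(s_1)$. Since $A_{s_2}(s_1)$ is invertible by the no-conjugate-points hypothesis, the right-hand side can be rewritten using $U_{s_1}, U_{s_2}$ as a sign-definite difference of two Riccati solutions at $t = s_1$, forcing $A_{s_2}'(0) \le A_{s_1}'(0)$ as symmetric matrices.

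For a uniform lower bound $U_s(0) \ge -c\, I$, I would use that $M$ compact gives a uniform curvature bound $\|K(t)\| \le K_0$ and apply a Rauch-type comparison: if $U_s(0)$ were arbitrarily negative in some direction, the Riccati equation $U_s' = -U_s^2 - K(t)$ would force $U_s$ to blow up to $-\infty$ in a time depending only on $K_0$, contradicting regularity of $U_s$ throughout $[0,s)$ for arbitrarily large $s$. Combining the monotone decrease of $U_s(0)$ with this uniform lower bound yields convergence of $U_s(0)$ in the cone of symmetric matrices as $s \to +\infty$, and the limit is the claimed $A^+$.

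The main obstacle will be the monotonicity step, which requires careful bookkeeping of transposes and signs in the Wronskian computation, and then reinterpreting the product $A_{s_1}'(s_1)^T A_{s_2}(s_1)$ as a difference of Riccati solutions — it is here that the geometric content of \emph{no conjugate points} enters essentially, since it precludes the crossing of Riccati solutions started from the same initial data. The uniform bound is more routine once the Riccati formulation is in hand, but depends essentially on compactness of $M$ for the $C^0$-bound on the curvature operator.
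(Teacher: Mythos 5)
The paper itself gives no proof of this proposition --- it is quoted from Eberlein \cite{Eb} --- and your Riccati/Wronskian strategy is exactly the classical route taken in that literature. Your symmetry argument and the constancy of the cross-Wronskian $W=A_{s_1}^{T}A_{s_2}'-(A_{s_1}')^{T}A_{s_2}$ are correct as written. Two things in the monotonicity step, however, are wrong as stated.

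First, the direction of monotonicity is reversed: $A_s'(0)$ is nondecreasing in $s$, not nonincreasing. Test it on the hyperbolic plane, where $A_s(t)=\sinh(s-t)/\sinh(s)$ gives $A_s'(0)=-\coth(s)\nearrow -1$, or on the flat case, where $A_s'(0)=-s^{-1}\,\mathrm{Id}\nearrow 0$. Consequently the uniform bound needed to close the argument is an \emph{upper} bound on $U_s(0)$, not a lower one. Your forward-in-time blow-up comparison does yield a bound, but it is the lower bound, which the correct monotonicity makes redundant; the upper bound comes from running the same Riccati comparison backward on $[-1,0]$, where $U_s$ is still regular because $A_s(t)$ is nonsingular for every $t\neq s$. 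The two sign errors cancel at the level of the skeleton --- a monotone bounded family converges either way --- but each individual claim is false and would not survive the computation.

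Second, and more seriously, the crux is left unproved: you must show that the constant matrix $-A_{s_1}'(s_1)^{T}A_{s_2}(s_1)$ is positive semidefinite, and your proposed route --- rewriting it as a difference of the Riccati solutions $U_{s_1},U_{s_2}$ at $t=s_1$ --- cannot work, because $U_{s_1}(s_1)=A_{s_1}'(s_1)A_{s_1}(s_1)^{-1}$ is undefined: $A_{s_1}(s_1)=0$ is precisely the boundary condition. This positivity is where the absence of conjugate points genuinely enters, and the standard way to obtain it is not through the Riccati equation but through the index lemma: for a unit vector $x$ one has $\langle A_s'(0)x,x\rangle=-I_s(J_s,J_s)$ with $J_s(t)=A_s(t)x$, and extending $J_{s_1}$ by zero to $[0,s_2]$ and using the minimizing property of Jacobi fields for the index form (valid exactly because there are no conjugate points) gives $I_{s_2}(J_{s_2},J_{s_2})\le I_{s_1}(J_{s_1},J_{s_1})$, hence the correct, increasing monotonicity. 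With that substitution and the corrected side of the uniform bound, your outline becomes the standard complete proof.
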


Now we can define the \emph{positive limit solution} $A^+(t)$ as the solution of \eqref{e:jacobi1}
satisfying the initial conditions
\[A^{+}(0)=Id, \quad \frac{d}{dt}\Big|_{t=0}A^{+}(t)=A^{+}.\]
It is easy to see that $A^{+}(t)$ is non-degenerate for all $t\in \mathbb{R}$. Similarly, letting $s \rightarrow -\infty$, one can define the \emph{negative limit solution} $A^{-}(t)$ of \eqref{e:jacobi1}.

For each $v \in SM$, define
\[G^s(v):=\{\xi \in T_{v}SM: \langle\xi,Z(v)\rangle=0 \text{\ and\ } J_{\xi}(t)=A^{+}(t)d\pi_{v}\xi\},\]
\[G^u(v):=\{\xi \in T_{v}SM: \langle\xi,Z(v)\rangle=0 \text{\ and\ } J_{\xi}(t)=A^{-}(t)d\pi_{v}\xi\},\]
where $Z$ is the vector field generated by the geodesic flow.
$G^s/G^u$ are called the \textit{stable/unstable Green bundles} of the geodesic flow respectively.

\begin{proposition}[Cf. Proposition 12.1.1 in \cite{BP}]\label{subspace}
$G^s(v)$ and $G^u(v)$ have the following properties:
\begin{enumerate}
  \item $G^s(v)$ and $G^u(v)$ are $(n-1)$-dimensional subspaces of $T_{v}SM$.
    \item $d\pi_{v}G^s(v)=d\pi_{v}G^u(v)=\{w \in T_{\pi(v)}M: w \text{\ is orthogonal to\ } v\}$.
 \item The subspaces $G^s(v)$ and $G^u(v)$ are invariant under the geodesic flow.
  \item Let $\tau: SM \rightarrow SM$ be the involution defined by $\tau v=-v$, then
\[G^s(-v)=d\tau G^u(v)  \text{\ and\ } G^u(-v)=d\tau G^s(v).\]
  \item If the curvature satisfies $K(p) \geq -a^{2}$ for some $a > 0$,
then $\|K_{v}\xi\| \leq a \|d\pi_{v}\xi\|$ for any $\xi \in G^s(v)$ or $\xi \in G^u(v)$.
  \item If $\xi \in G^s(v)$ or $\xi \in G^u(v)$, then $J_{\xi}(t)\neq 0$ for each $t \in \mathbb{R}$.

  \item $\xi \in G^s(v)$ (respectively, $\xi \in G^u(v)$) if and only if
\[\langle\xi,Z(v)\rangle=0  \text{\ and\ }  \|d\pi_{g^{t}v}dg^{t}_{v}\xi\| \leq c\]
for each $t > 0$ (respectively, $t < 0$) and some $c > 0$.
\end{enumerate}
\end{proposition}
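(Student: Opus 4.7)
The plan is to establish the seven properties in order, leveraging the explicit parametrization of $G^s(v)$ and $G^u(v)$ via the positive and negative limit solutions $A^\pm$ furnished by Proposition \ref{Jacobi}. Throughout I will use the standard identification of $T_vSM \cap Z(v)^\perp$ with $v^\perp \oplus v^\perp$ (two copies of the orthogonal complement of $v$ in $T_{\pi(v)}M$) via the map $\xi \mapsto (d\pi_v\xi,\, K_v\xi)$, which is a linear isomorphism since $d\pi_v$ and $K_v$ together annihilate only the radial direction spanned by $Z(v)$.

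For (1) and (2): since $A^+(0) = Id$, the defining relation $J_\xi(t) = A^+(t)\,d\pi_v\xi$ forces $K_v\xi = A^+\,d\pi_v\xi$, depending linearly on $d\pi_v\xi$. Hence $d\pi_v$ restricts to a linear isomorphism from $G^s(v)$ onto $v^\perp$, which yields both $\dim G^s(v) = n-1$ and the identity in (2); the same argument works for $G^u(v)$. For (3), naturality of Jacobi fields under translation of the underlying geodesic shows that if $\xi \in G^s(v)$, then $t \mapsto J_\xi(t+s)$ is a Jacobi field along $c_{\phi^s v}$ realizing $d\phi^s_v\xi$; the uniqueness of the positive limit solution in Proposition \ref{Jacobi} forces this shifted field to equal the positive limit solution at $\phi^s v$ applied to $d\pi_{\phi^s v}(d\phi^s_v\xi)$, so $d\phi^s_v\xi \in G^s(\phi^s v)$. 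Property (4) follows because the involution $\tau$ reverses the affine parameter, so the forward-limit construction along $c_{-v}$ corresponds to the backward-limit construction along $c_v$, interchanging stable and unstable Green subspaces.

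For (5), I pass to the symmetric matrix Riccati equation satisfied by $U(t) := (A^+)'(t)(A^+(t))^{-1}$, namely $U' + U^2 + K = 0$. Under the curvature lower bound $K \geq -a^2$, a standard Rauch-type comparison against the constant-curvature model (whose forward-bounded Riccati solution is $U \equiv -a\cdot Id$) shows $\|U(0)\| \leq a$, which is exactly the stated bound on $K_v\xi$ in terms of $d\pi_v\xi$. For (6), the non-degeneracy of $A^\pm(t)$ on all of $\RR$, itself a consequence of the absence of conjugate points together with the construction in Proposition \ref{Jacobi}, ensures that $J_\xi(t) = A^+(t)d\pi_v\xi$ can vanish only when $d\pi_v\xi = 0$; and then the isomorphism in (1)--(2) forces $\xi = 0$.

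The main obstacle is (7), the dynamical characterization of $G^s(v)$ (and analogously $G^u(v)$) via forward-boundedness of $\|d\pi \circ d\phi^t\|$. One direction is direct: if $\xi \in G^s(v)$, then $\|d\pi_{\phi^t v}d\phi^t_v\xi\| = \|A^+(t)d\pi_v\xi\|$, and the construction of $A^+$ as the limit $\lim_{s\to\infty}\frac{d}{dt}\Big|_{t=0}A_s(t)$ with $A_s(s) = 0$ forces $\|A^+(t)w\|$ to remain uniformly bounded on $t \geq 0$ for each fixed $w$. The converse requires showing that the subspace of $\xi \perp Z(v)$ with $\|d\pi_{\phi^t v}d\phi^t_v\xi\|$ uniformly bounded on $t \geq 0$ has dimension at most $n-1$, so that together with part (1) it must equal $G^s(v)$. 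I would establish this via a uniqueness lemma: two Jacobi fields $J_1, J_2$ along $c_v$ that agree at $t = 0$ and are both uniformly bounded on $[0,\infty)$ must coincide, for otherwise $J_1 - J_2$ would be a nontrivial bounded Jacobi field vanishing at $t = 0$; passing to the limit $s \to \infty$ of the finite-time solutions $A_s$, using absence of conjugate points to keep $A_s$ invertible off $t = s$, shows that any such bounded Jacobi field vanishing at $t = 0$ must vanish identically. This pins down $K_v\xi$ uniquely from $d\pi_v\xi$ among the forward-bounded vectors, giving the required dimension bound and completing the proof of (7).
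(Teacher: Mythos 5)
The paper offers no proof of this proposition at all: it is quoted, with a ``Cf.'', from \cite[Proposition 12.1.1]{BP} (ultimately Eberlein \cite{Eb}), so the only question is whether your argument stands on its own. Parts (1)--(4) and (6) are the standard arguments and are fine, granting the non-degeneracy of $A^{\pm}(t)$ that the paper also takes for granted. For (5), be aware that the comparison with the constant-curvature model gives only the one-sided bound $\frac{d}{dt}A^+(t)\,(A^+(t))^{-1}\big|_{t=0}\geq -a\,\mathrm{Id}$; the complementary bound $\leq a\,\mathrm{Id}$ comes from the separate observation that this Riccati solution is defined on all of $\RR$, so a value exceeding $a$ would force finite-time blow-up. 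Two estimates, not one, but both standard.

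The genuine gap is in part (7), in the direction ``$\xi\in G^s(v)$ implies $\|d\pi_{\phi^tv}d\phi^t_v\xi\|$ bounded for $t\geq 0$.'' You assert that the construction of $A^+$ as a limit of the $A_s$ with $A_s(s)=0$ ``forces $\|A^+(t)w\|$ to remain uniformly bounded.'' It does not: the approximants $A_s(t)w$ are pinned at $t=0$ and $t=s$, but absence of conjugate points gives no control whatsoever on their size in between, hence none on $\|A^+(t)w\|$. A monotonicity/maximum principle of this kind is exactly what ``no focal points'' provides, and the quantitative version $\|J^s(t)\|\leq C\|J^s(0)\|$ is precisely the ``bounded asymptote'' condition that this very paper introduces in Section 2 as a genuinely additional hypothesis beyond ``no conjugate points'' --- that should have been a red flag. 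The only a priori estimate available from (5) is $\|A^+(t)w\|\leq e^{at}\|w\|$, which is far from boundedness. The converse direction suffers from the same difficulty: your uniqueness lemma needs that a nonzero Jacobi field vanishing at $t=0$ cannot remain bounded, and ``passing to the limit of the $A_s$'' does not deliver this; the usual route is the Wronskian identity $A(t)=A^+(t)\int_0^t\bigl((A^+)^*A^+\bigr)^{-1}(\tau)\,d\tau$, which again requires two-sided control of $A^+$. As written, (7) is not proved; you need to import the actual argument from \cite{Eb} or \cite{BP} (checking the standing hypotheses there), or else state explicitly the extra assumptions under which your boundedness claims are valid.
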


Given $\xi\in T_vSM$, the \emph{Lyapunov exponent} $\chi(v, \xi)$ is defined as
$$\chi(v, \xi):=\lim_{t\to \infty}\frac{1}{t}\log\|d\phi^t\xi\|$$
whenever the limit exists.
Recall the set $\R_1$ defined in \eqref{set1}.
\begin{theorem}\label{regular1}(\cite[Theorem 4.1]{Ru3})
Let $(M, g)$ be a compact manifold without conjugate points such that Green bundles are continuous, and $\mu\in \M_{\phi}(SM)$. If the set $\mathcal{R}_1$ is nonempty, then for $\mu \ae v\in \mathcal{R}_1$, $\xi^s\in G^s(v)$, $\xi^u\in G^u(v)$,
\begin{equation*}
\chi(v, \xi^s)<0 \quad \text{and} \quad \chi(v, \xi^u)>0.
\end{equation*}
\end{theorem}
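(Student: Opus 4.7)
The plan is to exploit the Riccati formalism for Jacobi fields. Continuity of the Green bundles makes the associated symmetric Riccati operators $U^s(v),U^u(v)$ on $v^\perp$ --- defined by $J'=U^{s/u}J$ for a Jacobi field $J$ tangent to the respective Green bundle --- continuous in $v\in SM$. A standard comparison for solutions of the matrix Riccati equation $U'+U^2+R=0$ in the absence of conjugate points gives $U^u\geq U^s$ as symmetric forms, with $\R_1$ exactly the open $\phi^t$-invariant set on which $U^u-U^s$ is positive definite.

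The weak inequalities $\chi(v,\xi^s)\leq 0$ and $\chi(v,\xi^u)\geq 0$ are immediate from Proposition \ref{Jacobi}: $\xi^s\in G^s(v)$ gives $J_{\xi^s}(t)=A^+(t)d\pi_v\xi^s$ uniformly bounded in forward time, and symmetrically for $G^u$. To upgrade these to strict inequalities I would consider the continuous function $\rho(v):=\log\det(U^u(v)-U^s(v))$ on $\R_1$. A direct computation from the Riccati equations yields the transport identity
\[
\frac{d}{dt}\rho(\phi^tv)=-\tr(U^u+U^s)(\phi^tv).
\]
Continuity of the Green bundles together with compactness of $SM$ bounds $U^s,U^u$ uniformly, so $\rho$ is bounded on $\R_1$; Birkhoff's ergodic theorem therefore gives $\int\tr(U^u+U^s)\,d\mu=0$ for any $\mu\in \M_\phi(SM)$ supported in $\R_1$. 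Combined with the strict positive definiteness of $U^u-U^s$ on $\R_1$, this forces $\int\tr U^u\,d\mu>0$ and $\int\tr U^s\,d\mu<0$.

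The main obstacle is promoting these strict signs of integrated traces --- which a priori only control the sums of the Lyapunov exponents on each Green bundle --- to the pointwise claim that $\chi(v,\xi^s)<0$ for every nonzero $\xi^s\in G^s(v)$ and $\chi(v,\xi^u)>0$ for every nonzero $\xi^u\in G^u(v)$. I would handle this by running the same determinant/trace argument on each exterior power $\Lambda^kG^s$ and $\Lambda^kG^u$ for $1\leq k\leq n-1$: the induced $k$-dimensional operators inherit a strict comparison from $U^u-U^s>0$, producing strict inequalities for the top-$k$ Lyapunov sums on each Green bundle. Together with the symplectic duality between $G^s$ and $G^u$ on $\R_1$ (which identifies stable and unstable spectra up to sign, via the fact that both are Lagrangian and transverse in the symplectic normal bundle preserved by $d\phi^t$), this rules out any zero Lyapunov exponent along the Green bundles and yields the claimed pointwise strict inequalities $\mu$-almost everywhere on $\R_1$. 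The most delicate technical point is checking that the Riccati comparison passes strictly to the exterior powers under only the no-conjugate-points hypothesis; this is precisely where continuity of $G^s,G^u$ is used.
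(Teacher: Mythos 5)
The paper does not actually prove this statement: it is imported verbatim from Ruggiero \cite[Theorem 4.1]{Ru3}, so there is no internal proof to compare against, and your argument has to stand on its own. Its first half essentially does. The transport identity $\frac{d}{dt}\log\det(U^u-U^s)(\phi^tv)=-\tr(U^u+U^s)(\phi^tv)$ is correct (write $D=U^u-U^s$ and check $D'=-(U^uD+DU^s)$ from the two Riccati equations), although your justification that $\rho=\log\det(U^u-U^s)$ is ``bounded on $\R_1$'' is false: it is bounded above but tends to $-\infty$ at $\partial\R_1$. The conclusion $\int\tr(U^u+U^s)\,d\mu=0$ survives anyway, e.g.\ by evaluating the identity along Poincar\'e recurrence times and using continuity of $\rho$ on the open invariant set $\R_1$ (or by invoking the symmetry of the symplectic Lyapunov spectrum). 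Combined with $\tr(U^u-U^s)>0$ on $\R_1$ this gives $\int\tr U^u\,d\mu>0$, and with Freire--Ma\~n\'e ($E^u\subset G^u\subset E^u\oplus E^c$, so every exponent on $G^u$ is $\ge 0$ at regular points) it yields at least one strictly positive exponent in $G^u(v)$ for $\mu$-a.e.\ $v\in\R_1$.

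The final step, which you correctly identify as the main obstacle, does not close, and neither of your two devices can close it. First, the determinant/trace argument on $\Lambda^kG^u$ gives nothing new: $\det(\Lambda^kA)=(\det A)^{\binom{n-2}{k-1}}$ for an $(n-1)$-dimensional bundle, so the exterior-power transport identity is just a binomial multiple of the one you already have and says nothing about the partial sums $S_k$ of the top $k$ exponents; controlling $S_k$ would require the operator norm of $\Lambda^kd\phi^t|_{G^u}$, which obeys no such identity. Moreover, even granting $S_k>0$ for all $k$, the smallest exponent equals $S_{n-1}-S_{n-2}$ and need not be positive. Second, the symplectic duality between $G^s$ and $G^u$ on $\R_1$ identifies the two spectra up to sign, but this is perfectly compatible with a common zero exponent on both bundles: a spectrum $(1,0,\dots,0)$ on $G^u$ and $(-1,0,\dots,0)$ on $G^s$ passes every test you impose, yet violates the theorem. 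More fundamentally, your inequalities are integrated against $\mu$, whereas the theorem is a pointwise a.e.\ statement about \emph{every} vector of the Green bundles. The standard mechanism for such conclusions is pointwise: a Lewowicz--Wojtkowski-type quadratic form (or cone field), monotone under $d\phi^t$ for flows without conjugate points and strictly monotone exactly where $G^s$ and $G^u$ are transverse, combined with recurrence and the continuity of the Green bundles to obtain uniform strict monotonicity over a definite time on compact subsets of $\R_1$; this is what forces every exponent on $G^u$ to be positive. Your proof needs an ingredient of this kind to be complete.
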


Recall the Pesin set $\Delta$ which is defined in \eqref{e:Pesin}.
\begin{theorem}\label{regular}
Let $(M,g)$ be a smooth, connected and closed visibility manifold without conjugate points and with continuous Green bundles,  and $\mu\in \M_{\phi}(SM)$. Suppose the geodesic flow has a periodic hyperbolic point then $\Delta$ agrees $\mu$-almost everywhere with the open dense set $\mathcal{R}_1$.
\end{theorem}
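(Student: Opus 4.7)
My plan is to establish three facts in turn: (a) $\R_1$ is open, (b) $\R_1$ is dense in $SM$, and (c) $\Delta=\R_1$ modulo a $\mu$-null set. Openness (a) is immediate from the continuity of the Green bundles, since transversality of two $(n-1)$-planes in a fiber is an open condition; this was already noted in the excerpt right after the definition of continuous asymptote. For the easy direction of (c), namely $\Delta\subseteq\R_1$ pointwise: if $v\in\Delta$ and $\xi\in G^s(v)\cap G^u(v)$, then $\xi\in G^s(v)$ forces $\chi(v,\xi)<0$ while $\xi\in G^u(v)$ forces $\chi(v,\xi)>0$, so $\xi=0$. For the reverse inclusion $\R_1\subseteq \Delta$ (needed only $\mu$-a.e.), once we know $\R_1\neq\emptyset$ I would apply Theorem \ref{regular1} directly: it gives negative exponents on $G^s(v)$ and positive exponents on $G^u(v)$ for $\mu$-a.e.\ $v\in\R_1$, which is precisely the defining condition of $\Delta^+\cap\Delta^-=\Delta$.

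The substantive work is therefore (b). I would first verify that the given hyperbolic periodic point $v_0$ lies in $\R_1$. Hyperbolicity at $v_0$ gives a $\phi^t$-invariant splitting $T_{v_0}SM=\RR Z(v_0)\oplus E^s\oplus E^u$ with uniform contraction on $E^s$ for $t\geq 0$ and on $E^u$ for $t\leq 0$. Consequently $\|d\phi^t_{v_0}\xi\|$ is bounded in $t\geq 0$ for $\xi\in E^s$, so the bounded-Jacobi-field characterization of Green bundles (Proposition \ref{subspace}(7), applied after projecting via $d\pi$) yields $E^s\subseteq G^s(v_0)$, and symmetrically $E^u\subseteq G^u(v_0)$. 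Each side has dimension $n-1$ (by Proposition \ref{subspace}(1) for the right sides, and by the contact-symplectic structure of the geodesic flow for the left), so these inclusions are equalities, giving $G^s(v_0)\cap G^u(v_0)=E^s\cap E^u=\{0\}$. Combined with (a), this produces an open neighborhood $U\subseteq\R_1$ of $v_0$. Since $\R_1$ is $\phi^t$-invariant, density of $\R_1$ then follows from topological transitivity of the geodesic flow: for any nonempty open $V\subseteq SM$, some forward iterate $\phi^t(V)$ meets $U$, and then $V\cap \phi^{-t}(U)\subseteq V\cap \R_1$ is nonempty.

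The main obstacle is the invocation of topological transitivity on $SM$ in step (b). On a closed uniform visibility manifold without conjugate points this is classical and ultimately rests on Eberlein's duality argument (density of $\pi_1(M)$-orbits on $\pX$), but one should be careful that no curvature hypothesis is needed; the geometric preliminaries on $\pX$ developed in Section 3 of this paper (Busemann functions, the visibility axiom, behavior of geodesic endpoints) are precisely what is needed to run that argument in the present setting. Beyond this step, everything else in the argument reduces to Theorem \ref{regular1} and elementary linear algebra.
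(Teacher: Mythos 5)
Your proposal is correct and reaches the same conclusion, but it differs from the paper's proof in one direction. For the inclusion $\Delta\subseteq\R_1$, the paper argues only $\mu$-a.e.: at an Oseledets-regular $v\in\Delta$, the Freire--Ma\~{n}\'{e} inclusions $E^s(v)\subset G^s(v)\subset E^s(v)\oplus E^c(v)$ (and the unstable analogue), combined with the sign conditions in the definition of $\Delta$, force $G^s(v)=E^s(v)$, $G^u(v)=E^u(v)$, $E^c(v)=\RR Z(v)$, hence transversality. Your pointwise argument is a genuine shortcut: a nonzero $\xi\in G^s(v)\cap G^u(v)$ at $v\in\Delta$ would have $\chi(v,\xi)$ simultaneously $<0$ (from $v\in\Delta^+$) and $>0$ (from $v\in\Delta^-$), which is impossible; this requires no Oseledets regularity, only the reading of the definition of $\Delta$ under which the Lyapunov limit is understood to exist. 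The reverse inclusion, $\R_1\subseteq\Delta$ up to $\mu$-null sets, comes from Theorem~\ref{regular1} in both proofs, so that step is identical. You also reprove the density of $\R_1$ rather than citing \cite[Lemma 7.1]{MR}; the argument is sound, modulo one omitted detail --- to invoke Proposition~\ref{subspace}(7) for $E^s(v_0)\subset G^s(v_0)$ you must first check $E^s(v_0)\perp Z(v_0)$ in the Sasaki metric, which holds because the geodesic flow preserves the contact $1$-form $\langle\cdot,Z\rangle$ and uniform contraction on $E^s(v_0)$ forces $\langle\xi,Z(v_0)\rangle=0$ --- and topological transitivity is already supplied by the mixing statement in Proposition~\ref{horofoliation}, so you need not rerun Eberlein's duality.
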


\begin{proof}
First note that by \cite[Lemma 7.1]{MR}, $\mathcal{R}_1$ is a nonempty, open and dense set. By  Theorem \ref{regular1}, we see that for $\mu \ae v\in \mathcal{R}_1$, $v\in \Delta$. 

On the other hand, $\mu \ae v\in \Delta$ is Oseledets regular, more precisely, there exist $1\le k=k(v)\le 2n-1$, a $\phi^t$-invariant decomposition $$T_vSM=E^1(v)\oplus \cdots \oplus E^k(v)$$ and numbers $\chi_1(v)<\cdots<\chi_k(v)$ such that $\chi(v,\xi)=\chi_i(v)$ for every $\xi\in E^i(v)\setminus \{0\}$. Denote 
\begin{equation*}
E^s(v):=\oplus_{\chi_i(v)<0}E^i(v), \ E^u(v):=\oplus_{\chi_i(v)>0}E^i(v), \  E^c(v):=\oplus_{\chi_i(v)=0}E^i(v).
\end{equation*}
By Freire-Ma\~{n}\'{e} \cite{FrMa},
\begin{equation*}
E^s(v)\subset G^s(v)\subset E^s(v)\oplus E^c(v), \quad E^u(v)\subset G^u(v)\subset E^u(v)\oplus E^c(v).
\end{equation*}
If $v\in \Delta$, then $E^c(v)=Z(v)$, and hence $E^s(v)= G^s(v)$ and $E^u(v)= G^u(v)$. It follows that $G^s(v)\cap G^u(v)=\{0\}$, that is, $v\in \mathcal{R}_1$.
\end{proof}

\subsection{Boundary at infinity}
Let $M$ be a closed Riemannian manifold without focal points, and $\pr: X\to M$ the universal cover of $M$. Let $\C \simeq \pi_1(M)$ be the group of deck transformations on $X$, so that each $\c\in \C$ acts isometrically on $X$. Let $\F$ be a fundamental domain with respect to $\C$.
Denote by $d$ both the distance functions on $M$ and $X$ induced by Riemannian metrics. The Sasaki metrics on $SM$ and $SX$ are also denoted by $d$ if there is no confusion.

We still denote by $\pr: SX\to SM$ and $\c: SX\to SX$ the map on unit tangent bundles induced by $\pr$ and $\c\in \C$. We use an underline to denote objects in $M$ and $SM$, e.g. for a geodesic $c$ in $X$ and $v\in SX$, $\lc:=\pr c$, $\lv:=\pr v$ denote their projections to $M$ and $SM$ respectively. However, sometimes we omit the underline if there is no confusion caused by notations.

We call two geodesics $c_{1}$ and $c_{2}$ on $X$ \emph{positively asymptotic} or just \emph{asymptotic} if there is a positive number $C > 0$ such that $d(c_{1}(t),c_{2}(t)) \leq C, ~~\forall~ t \geq 0.$
We say $c_{1}$ and $c_{2}$ are \emph{negatively asymptotic} if the above holds for $\forall t \leq 0$. $c_{1}$ and $c_{2}$ are said to be \emph{bi-asymptotic} or \emph{parallel} if they are both positively and negatively asymptotic.
The relation of asymptoticity is an equivalence relation between geodesics on $X$. The class of geodesics that are asymptotic to a given geodesic $c_v/c_{-v}$ is denoted by $c_v(+\infty)$/$c_v(-\infty)$ or $v^+/v^-$ respectively. We call them \emph{points at infinity}. Obviously, $c_{v}(-\infty)=c_{-v}(+\infty)$. We call the set $\pX$ of all points at infinity the \emph{boundary at infinity}. If $v^+=\eta\in \pX$, we say $v$ \emph{points at $\eta.$}

\begin{definition}(Cf. \cite[p. 8]{MR})\label{div}
Let $(X, g)$ be a simply connected manifold without conjugate points. We say that \emph{geodesic rays diverge in $X$} if for every $p\in X$ and every $\e, C>0$ there exists $T(p,\e, C)$ such that for every two geodesic rays $c_1\neq c_2$ with $c_1(0)=c_2(0)=p$ in $X$ and $\angle_p(\dot c_1, \dot c_2)\ge \e$, we have $d(c_1(t), c_2(t))\ge C$ for every $t\ge T(p,\e, C)$. The geodesics \textit{diverge uniformly} if $T(p,\e, C)$ does not depend on $p$.
\end{definition}
The uniform visibility property implies that the geodesic diverge uniformly (\cite{Ru0, Ru2}). If $X$ has no focal points, then geodesic rays diverge.

Let $\overline{X}=X \cup \pX$.
For each point $p \in X$ and $v\in S_{p}X$, each points $x, y \in \overline{X}-\{p\}$, positive numbers $\epsilon$ and $r$,
and subset $A\subset\overline{X}-\{p\}$,
we define the following notations:
\begin{itemize}
\item{} ~~$c_{p,x}$ is the geodesic from $p$ to $x$ and satisfies $c_{p,x}(0)=p$.

\item{} ~~$\measuredangle_{p}(x,y)=\measuredangle(\dot c_{p,x}(0),\dot c_{p,y}(0))$.
\item{} ~~$\measuredangle_{p}(A)=\sup \{\measuredangle_{p}(a,b)\mid a,\ b \in A \}$.
\item{} ~~$\measuredangle(v,x)=\measuredangle(v,\dot c_{p,x}(0))$.
\item{} ~~$C(v,\epsilon)=\{a \in \overline{X}-\{p\}\mid \measuredangle(v,a)< \epsilon\}$.
\item{} ~~$C_{\epsilon}(v) =C(v,\epsilon)\cap \pX= \{c_{w}(+\infty)\mid w \in  S_{x}X, \angle(v,w)<\epsilon\}$.
\item{} ~~$TC(v,\epsilon,r)= \{q \in \overline{X}\mid \measuredangle_{p}(c_{v}(+\infty),q)< \epsilon\} - \{q \in \overline{X}\mid d(p,q)\leq r\}$.
\end{itemize}

$TC(v,\epsilon,r)$ is called the \emph{truncated cone} with axis $v$ and angle $\epsilon$. Obviously $c_{v}(+\infty) \in TC(v,\epsilon,r)$. There is a unique topology $\tau$ on $\overline{X}$ such that for each $\xi \in \pX$ the set of truncated cones containing $\xi$ forms a local basis for $\tau$ at $\xi$. This topology is usually called the \emph{cone topology}. 

We can also define the visual topology on $\pX$. For each $p\in X$, there is a function $f_p: S_pX\to \pX$ defined by $$f_p(v)=v^+, v\in S_pX.$$
Since the geodesics diverge in $X$, $f_p$ is a bijection. So for each $p\in M$, $f_p$ induces a topology on $\pX$ from the usual topology on $S_pX$.
The topology on $\pX$ induced by $f_p$ is independent of $p\in X$, and is called the \emph{visual topology} on $\pX$.
Visual topology on $\pX$ and the manifold topology on $X$ can be extended naturally to the cone topology on $\overline X$.
Under the cone topology, $\overline{X}$ is homeomorphic to the closed unit ball in $\mathbb{R}^n$, and $\pX$ is homeomorphic to the unit sphere $\mathbb{S}^{n-1}$. For more details about the cone topology, see  \cite{Eb1} and \cite{EO}.

We say that $M$ is\textit{ quasi-convex} if there exist constants $A, B>0$ such that for every two geodesic segments $c_1: [t_1,t_2]\to X$ and $c_2: [s_1,s_2]\to X$ it holds that
\[d_H(c_1, c_2) \le A \sup\{d(c(t_1), c(s_1)), d(c_2(t_2), c_2(s_2))\}+B,\]
where $d_H$ is the Hausdorff distance.

Let us collect some good geometric properties of uniform visibility manifolds, see \cite[Proposition 2.7]{LLW} and \cite[Teorem 2.4]{MR}.
\begin{proposition}\label{geo}
Let $M$ be a closed uniform visibility manifold with no conjugate points, and $X$ its universal cover.
\begin{enumerate}
\item (\cite{Ru0, Ru2}) Geodesic rays diverge uniformly in $X$.
\item (\cite{Eb1}) The following map is continuous.
 $$\Psi: SX \times [-\infty,+\infty] \rightarrow \overline{X},\quad (v,t) \mapsto c_{v}(t).$$
\item (\cite{EO}) For any two points $\xi\neq \eta\in \pX$, there is at least one connecting geodesic of them.
\item (\cite{Eb1}) $X$ is quasi-convex.
\item (\cite{Eb1}) There exists a uniform constant $Q>0$ (called \textit{Morse constant}) such that for every two bi-asymptotic geodesic $c_1, c_2$ in $X$ we have $d_H(c_1, c_2)\le Q.$
\end{enumerate}
\end{proposition}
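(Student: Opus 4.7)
The plan is to verify the five assertions by combining the cited references with the uniform visibility axiom (Definition \ref{vis}); no new ideas are required, so the task is really to organize the verification around the single fact that the visibility function $L(\e)$ converts a lower bound on the angle subtended at a point $p$ into an upper bound on the distance of any geodesic segment from $p$. I would state once at the outset that each property is contrapositively controlled by $L(\e)$, and then treat the five items in turn.

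For item (1), I would follow Ruggiero \cite{Ru0, Ru2}: given two rays $c_1, c_2$ emanating from $p$ with initial angle at least $\e$, if they stayed within distance $C$ of each other past some time $t$, then a midpoint of the segment $c_1(t)c_2(t)$ would subtend at $p$ an angle larger than the visibility threshold associated to $L(\e)$. The uniformity of $T(p,\e,C)$ in $p$ is immediate because $L(\e)$ is basepoint-independent. Item (2), continuity of $\Psi$, is standard ODE dependence on $SX\times \RR$; the nontrivial case is $(v_n, t_n)\to (v, +\infty)$, and here I would verify convergence in the cone topology by showing that for every truncated cone $TC(w,\e,r)$ around $c_v(+\infty)$ one has $c_{v_n}(t_n)\in TC(w,\e,r)$ eventually, which is precisely the content of \cite{Eb1}.

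Item (3) is the Eberlein--O'Neill construction \cite{EO}: fix a basepoint $p$, pick $x_n\to \xi$, $y_n\to \eta$, and consider the connecting geodesic segments $c_n$ in $X$. Since $\xi\neq \eta$ and $\pX$ is Hausdorff, $\angle_p(x_n, y_n)$ is bounded below by some $\e>0$ for large $n$, so the contrapositive of uniform visibility gives $\dist(p, c_n)\le L(\e)$. A subsequential limit of the initial data then yields a geodesic connecting $\xi$ to $\eta$. For item (4), quasi-convexity of $X$ \cite{Eb1} follows because any point of $c_1$ at large Hausdorff distance from $c_2$ would, by visibility, subtend a small angle at the endpoints of $c_1$, which is incompatible with those endpoints being close to the corresponding endpoints of $c_2$; quantifying this yields explicit constants $A$ and $B$ in terms of $L$. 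Finally, (5) follows directly from (4): bi-asymptotic geodesics $c_1, c_2$ satisfy $d(c_1(\pm T), c_2(\pm T))\le C$ uniformly in $T$, so applying quasi-convexity on $[-T,T]$ and letting $T\to \infty$ gives a uniform bound $Q:=AC+B$ on $d_H(c_1,c_2)$.

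The only step needing genuine care is item (2) at the infinite endpoints, where one must translate $C^0$-closeness of initial vectors into cone-topology closeness at infinity; this is exactly where uniform (rather than pointwise) visibility is essential. Everything else is a straightforward application of the contrapositive of Definition \ref{vis}, which is why the proposition is essentially a summary of the geometric toolkit available on closed uniform visibility manifolds without conjugate points.
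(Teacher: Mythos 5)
The paper offers no proof of this proposition at all --- it is a summary statement with citations to \cite{Ru0, Ru2, Eb1, EO} (via \cite[Proposition 2.7]{LLW} and \cite[Theorem 2.4]{MR}) --- so you are reconstructing the cited arguments rather than paralleling anything in the text. Your sketches of items (1)--(4) are essentially the standard ones and are fine, modulo some imprecision: in (1) it is not a ``midpoint'' that subtends an angle, but rather the geodesic segment from $c_1(t)$ to $c_2(t)$, which lies at distance $\ge t-C$ from $p$ and hence by visibility subtends angle $<\e$ at $p$, contradicting $\angle_p(c_1(t),c_2(t))=\angle(\dot c_1(0),\dot c_2(0))\ge \e$; and in (4) the clean version is that a point $p=c_1(t)$ far from $c_2$ and far from both endpoints sees each of the three segments (the two endpoint-connectors and $c_2$ itself) under angle $<\e$, forcing $\angle_p(c_1(t_1),c_1(t_2))<3\e<\pi$, a contradiction.

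Item (5), however, has a genuine gap. The constant $C$ in the definition of bi-asymptotic geodesics depends on the pair $(c_1,c_2)$ --- nothing a priori bounds it uniformly --- so your conclusion $d_H(c_1,c_2)\le AC+B$ is a finite bound for each pair but is not the \emph{uniform} Morse constant $Q$ that the statement asserts; producing a pair-independent bound is precisely the content of (5), so deriving it from (4) this way is circular. The correct argument applies visibility at the worst point rather than at the endpoints: suppose $p=c_1(0)$ satisfies $d(p,c_2)\ge L(\e)$ for some $\e<\pi$. Since $c_1$ and $c_2$ have the same ideal endpoints $\xi,\eta$ and $f_p$ is a bijection, the rays $c_{p,\xi}$ and $c_{p,\eta}$ are the two halves of $c_1$, so $\angle_p(\xi,\eta)=\pi$; by continuity of $\Psi$ (item (2)), $\angle_p(c_2(s),c_2(t))\to\pi$ as $s\to-\infty$, $t\to+\infty$, while uniform visibility applied to each finite subsegment $c_2|_{[s,t]}$ forces $\angle_p(c_2(s),c_2(t))<\e$. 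This contradiction yields $d_H(c_1,c_2)\le L(\e)$ for any fixed $\e<\pi$, a constant depending only on the visibility function and not on the pair.
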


\subsection{Busemann functions and horospheres}
For each pair of points $(p,q)\in X \times X$ and each point at infinity $\xi \in \pX$, the \emph{Busemann function based at $\xi$ and normalized by $p$} is
$$b_{\xi}(q,p):=\lim_{t\rightarrow +\infty}\big(d(q,c_{p,\xi}(t))-t\big),$$
where $c_{p,\xi}$ is the unique geodesic from $p$ and pointing at $\xi$.
The Busemann function $b_{\xi}(q,p)$ is well-defined since the function $t \mapsto d(q,c_{p,\xi}(t))-t$
is bounded from above by $d(p,q)$, and decreasing in $t$ (this can be checked by using the triangle inequality). 

If $v\in S_pX$ points at $\xi\in \pX$, we also write $b_v(q):=b_{\xi}(q,p).$ The level sets of the Busemann function $b_{\xi}(q,p)$ are called the \emph{horospheres} centered at $\xi$. The horosphere through $p$ based at $\xi\in \pX$, is denoted by $H_p(\xi)$ or $H(v)$. Define the sets
\[\F^s(v):=\{(q, -\nabla_qb_v): q\in H(v)\}, \quad \F^s(v):=\{(q, \nabla_qb_{-v}): q\in H(-v)\}.\]
$\F^s/\F^u$ form foliations of $SX$, called \textit{stable/unstable horospherical foliations}.
For more details of the Busemann functions and horospheres, please see ~\cite{DPS,Ru1,Ru2}.
Here we are concerned with the continuity property of the horospheres and Busemann functions.

\begin{proposition}\label{horofoliation}(Cf. for example \cite[Proposition 2.1, Theorem 2.1 and Theorem 1.3]{MR})
Let $M$ be a compact manifold without conjugate points with universal cover $X$. Then for every $v\in SX$ 
\begin{enumerate}
    \item Busemann functions $b_v$ are $C^{1,L}$ with $L$-Lipschitz unitary gradient for a uniform constant $L>0$; 
    \item horospheres $H(v), H(-v)\subset X$ are $C^{1,L}$ embedded $(n-1)$-dimensional submanifolds, and $\F^s(v), \F^u(v)\subset SX$ are Lipschitz continuous embedded $(n-1)$-dimensional submanifolds.
\end{enumerate} 
If assume that $M$ is also a uniform visibility manifold, then
\begin{enumerate}
    \item  the families $\F^s/\F^u$ are continuous minimal foliations of $SM$ invariant by the geodesic flow, i.e. for every $t\in \RR$, $$\phi^t(\F^{s/u}(v))=\F^{s/u}(\phi^t(v));$$
    \item the geodesic flow is topologically mixing;
    \item there exist $A,B>0$ such that for every $v\in SX$ and every $w\in \F^s(v)$,
    \[d_s(\phi^t(v), \phi^t(w))\le Ad_s(v, w)+B \quad \text{for every\ }t\ge 0\]
    where $d_s$ is the induced metric on $\F^s(v)$.
\end{enumerate}  
If assume furthermore that $M$ is a visibility manifold with continuous Green bundles and the geodesic flow has a periodic hyperbolic point, then
\begin{enumerate}
    \item hyperbolic periodic points are dense on $SM$;
    \item Green bundles $G^s/G^u$ are uniquely integrable and tangent to the smooth horospherical foliations $\F^s/\F^u$ respectively.
\end{enumerate}  
\end{proposition}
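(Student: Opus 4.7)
The plan is to give a proof proposal organized around the three nested sets of hypotheses, invoking the already-cited works where appropriate and explaining the main geometric mechanisms.

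For the first block (compact, no conjugate points), I would start from the classical approximation
\[b_\xi(q,p)=\lim_{T\to\infty}\bigl(d(q,c_{p,\xi}(T))-T\bigr),\]
where each approximant is a smooth function of $q$ on $X\setminus\{c_{p,\xi}(T)\}$. Its Hessian satisfies a matrix Riccati equation driven by the curvature tensor, and absence of conjugate points on $[0,T]$ provides a well-defined bounded solution; letting $T\to\infty$ produces the positive limit solution $A^+(t)$ from Proposition~\ref{Jacobi}, whose initial derivative $A^+$ is uniformly bounded by a function of the curvature norms of $M$. Compactness of $M$ then yields a uniform bound on $\nabla^2 b_v$ in a weak sense, and hence a uniform Lipschitz constant $L$ for $\nabla b_v$. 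Since $\|\nabla b_v\|\equiv 1$, the level sets $H(v)$ are $C^{1,L}$ embedded codimension-one submanifolds by the implicit function theorem, and $\F^s(v)=\{(q,-\nabla_q b_v):q\in H(v)\}$ inherits Lipschitz regularity as the graph of a Lipschitz section of $SX\to X$.

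For the uniform visibility block, continuity of $\F^s,\F^u$ reduces to joint continuity of $(p,\xi)\mapsto b_\xi(\cdot,p)$ in the cone topology; this is where the visibility axiom is used, via Proposition~\ref{geo}(2), to guarantee that distinct boundary points are joined by geodesics depending continuously on their endpoints. Invariance under $\phi^t$ is immediate from the equation $H(\phi^t v)=H(v)$ in $X$ (the horosphere is translated along the flow). For minimality of both foliations I would combine the visibility axiom with the compactness of $SM$: given $v,w\in SX$, use visibility to find a deck transformation sending a chosen forward asymptote of $v$ close to that of $w$, then flow for a long time to pull the base points into alignment on a common horosphere. Topological mixing of $(\phi^t)$ then follows from minimality of the stable and unstable foliations together with the Eberlein-type argument that pairs of open sets can be connected by an orbit segment for all sufficiently large times. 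The quasi-isometric inequality
\[d_s(\phi^t v,\phi^t w)\le A\,d_s(v,w)+B\]
is a direct application of quasi-convexity (Proposition~\ref{geo}(4)) to the two geodesic rays $c_v,c_w$ which are asymptotic and whose base points lie a bounded $d_s$-distance apart on the common horosphere, combined with the uniform Lipschitz bound on $\nabla b_v$ from the first block to convert distances in $X$ to distances in $SX$.

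For the third block (continuous Green bundles plus a hyperbolic periodic orbit), density of hyperbolic periodic points would be obtained by first noting that continuous Green bundles on $\R_1$ together with Theorem~\ref{regular1} give uniform nonuniform hyperbolicity along any orbit segment that stays in a compact subset of $\R_1$. The hyperbolic periodic point supplies a local nonuniformly hyperbolic block, and the topological mixing from the previous step lets one apply a Pesin–Katok closing lemma to approximate any vector in $\R_1$ by a hyperbolic periodic orbit; density on all of $SM$ then follows from openness and density of $\R_1$. For unique integrability of $G^s/G^u$ and their coincidence with the tangent distributions to $\F^s/\F^u$, I would use the Freire–Mañé inclusion recalled in the proof of Theorem~\ref{regular}, which sandwiches the Oseledets stable/unstable bundles between $G^{s/u}$ and $G^{s/u}\oplus Z$; continuous Green bundles upgrade these measurable inclusions to continuous identifications, and then the Lipschitz tangent bundle to the horospherical foliations must agree with $G^s$ and $G^u$ because both are continuous $\phi^t$-invariant bundles containing the unique forward/backward bounded Jacobi fields.

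The main obstacle I expect lies in the second block, specifically in making the jump from the visibility axiom alone to minimality and topological mixing of $\F^{s/u}$ without any true hyperbolicity; the argument must combine quasi-convexity, uniform divergence of geodesic rays, and a careful choice of deck transformations, and it is precisely where the visibility hypothesis on $X$ enters in an essential way. The identification of $G^{s/u}$ with the horospherical tangent spaces in the third block is the other delicate point, as it requires bridging Jacobi-field data (infinitesimal) with level-set data of Busemann functions (global) and genuinely uses the continuity of the Green bundles.
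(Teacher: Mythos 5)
You should first be aware that the paper does not prove Proposition \ref{horofoliation} at all: it is quoted from Mamani--Ruggiero \cite{MR} (and ultimately from Eberlein, Eschenburg and Ruggiero), so there is no in-paper argument to measure your sketch against. Your first two blocks follow the standard route of those sources (Riccati/limit-solution bounds for the $C^{1,L}$ regularity; uniform divergence, minimality of the $\Gamma$-action on $\pX$, and quasi-convexity for the visibility block), and as a sketch they are acceptable, with two caveats. Topological mixing does not simply ``follow from minimality of the stable and unstable foliations'' for a flow with no local product structure; Eberlein's proof for uniform visibility manifolds is a direct argument on $\pX$ using duality of the $\Gamma$-action, and that is what must be invoked. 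Likewise, in item (3) of the second block, quasi-convexity bounds the \emph{ambient} distance $d(\phi^t v,\phi^t w)$, whereas the claim concerns the \emph{intrinsic} leaf metric $d_s$; passing from one to the other is precisely the delicate step (it uses the uniform bound on the shape operators of horospheres and a chaining argument), and your remark about ``converting distances in $X$ to distances in $SX$'' does not address it.

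The genuine gap is in the third block. You propose to obtain density of hyperbolic periodic points by applying a Pesin--Katok closing lemma ``to approximate any vector in $\R_1$ by a hyperbolic periodic orbit''. The closing lemma requires a recurrent orbit carrying uniform hyperbolicity estimates (e.g.\ a point generic for a hyperbolic measure); an arbitrary $v\in\R_1$ has neither --- its forward orbit may leave every compact subset of $\R_1$ and its Lyapunov exponents may vanish. The mechanism that actually works, and that this paper itself uses later (Lemma \ref{hco}), is: minimality of $\F^{s/u}$, continuity of the Green bundles, and the transversality $G^s(v)\oplus\langle Z(v)\rangle\oplus G^u(v)=T_vSM$ on the open dense set $\R_1$ together imply that the homoclinic class of the given hyperbolic periodic orbit is all of $SM$, and hyperbolic periodic points are dense in a homoclinic class by the Birkhoff--Smale theorem. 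Your argument for unique integrability has a similar soft spot: a merely Lipschitz foliation need not have a tangent bundle to compare with $G^{s/u}$; one first needs the Eschenburg-type regularity statement that continuity of the Green bundles upgrades $\F^{s/u}$ to $C^1$ with tangent spaces the Green subspaces, and uniqueness of the integral manifolds then comes from the characterization of $G^{s}$ by bounded forward Jacobi fields (Proposition \ref{subspace}(7)), not merely from continuity and flow-invariance of the two bundles.
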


With assumption of continuous asymptote, we have better regularity of the Busemann functions. That is why we have this assumption in Theorems \ref{marfunction} and \ref{flip}.

\begin{lemma}\label{Buseregu}
Let $M$ be a closed manifold without conjugate points and with continuous Green bundle. Then the Busemann function $q\mapsto b_v(q)$ is $C^2$. Moreover, $\triangle_yb_{\xi}(y,x)=tr U(y,\xi)$ where $U(y,\xi)$ and $tr U(y,\xi)$ are the second fundamental form and the mean curvature of the horosphere $H_y(\xi)$.
\end{lemma}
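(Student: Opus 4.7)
The plan is to identify the Hessian of $b_\xi$ with (up to sign) the shape operator of the horosphere $H_y(\xi)$, and then to promote the $C^{1,L}$ regularity given by Proposition \ref{horofoliation} to $C^2$ using the continuity of the stable Green bundle.

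First I would set up the identification between the Hessian of the Busemann function and Jacobi data along the geodesic $c_{y,\xi}$. Let $v:=\dot c_{y,\xi}(0)\in S_yX$, and for $w\in v^{\perp}\subset T_yX$ let $J_w$ be the Jacobi field along $c_{y,\xi}$ characterized by $J_w(0)=w$ and such that $(w,J_w'(0))$ corresponds (under the Sasaki decomposition $T_vSX\cong T_yX\oplus T_yX$ via $\xi\mapsto(d\pi_v\xi,K_v\xi)$) to the unique element of the stable Green bundle $G^s(v)$ that projects to $w$; equivalently, $J_w(t)=A^+(t)w$ in the notation of Proposition \ref{Jacobi}. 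The classical approximation $b^T_\xi(q):=d(q,c_{y,\xi}(T))-T\to b_\xi(q,x)+\text{const}$ as $T\to+\infty$ has well-known Hessians at $y$ given by Jacobi fields vanishing at $c_{y,\xi}(T)$, whose initial derivative operators converge to the operator $U^+(y,\xi)\colon w\mapsto J_w'(0)=(A^+)'(0)w$ that defines $G^s(v)$. Passing to the limit on $v^{\perp}$ gives
\[
\mathrm{Hess}_y b_\xi(w,w)\;=\;\langle U^+(y,\xi)w,w\rangle,\qquad w\in v^{\perp}.
\]
On the radial line, differentiating $|\nabla b_\xi|^2\equiv 1$ yields $\mathrm{Hess}_y b_\xi(\nabla b_\xi,\cdot)\equiv 0$, so the Hessian is entirely determined by $U^+(y,\xi)$.

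Next, since $G^s(v)$ depends continuously on $v$ by hypothesis, the operator $U^+(y,\xi)$ depends continuously on $(y,\xi)$. Combined with the $C^{1,L}$ regularity of $b_\xi$ from Proposition \ref{horofoliation}, the continuous Hessian upgrades the regularity of $q\mapsto b_\xi(q,x)$ to $C^2$, proving the first assertion.

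For the Laplacian formula, choose an orthonormal frame $\{e_0,e_1,\ldots,e_{n-1}\}$ at $y$ with $e_0=\nabla_y b_\xi=-v$ and $e_1,\ldots,e_{n-1}$ tangent to $H_y(\xi)$. The $e_0$-term in $\triangle_y b_\xi=\sum_{i=0}^{n-1}\mathrm{Hess}_y b_\xi(e_i,e_i)$ vanishes by the remark above, and the remaining sum equals $\sum_{i=1}^{n-1}\langle U^+(y,\xi)e_i,e_i\rangle$, which is by definition the trace $\mathrm{tr}\,U(y,\xi)$ of the second fundamental form of $H_y(\xi)$ (with the sign convention normal pointing away from $\xi$). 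This gives $\triangle_y b_\xi(y,x)=\mathrm{tr}\,U(y,\xi)$.

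The main technical step is justifying the passage to the limit in the identification $\mathrm{Hess}_y b_\xi(w,w)=\langle U^+(y,\xi)w,w\rangle$, since a priori $b_\xi$ is only $C^{1,L}$ and the approximating functions $b^T_\xi$ are smooth only away from $c_{y,\xi}(T)$. This is handled in the classical no-conjugate-points literature (cf.\ \cite{Eb}) by using the Riccati equation $U'+U^2+K=0$ satisfied by the shape operators of the geodesic spheres and identifying $U^+(y,\xi)$ as the limit solution; continuity of $G^s$ is exactly what is needed to make the limit continuous in the base point, thereby closing the argument.
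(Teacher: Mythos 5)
Your proposal is correct, and it takes a genuinely different and arguably cleaner route than the paper. The paper's own proof simply cites the last part of Proposition \ref{horofoliation} (that Green bundles are uniquely integrable and tangent to \emph{smooth} horospherical foliations), takes $\nabla_y b_\xi=-X$, and reads off $\triangle_y b_\xi=\operatorname{div}(-X)=\operatorname{tr} U$; the $C^2$ claim is absorbed into the smoothness of the foliation rather than argued separately. That part of Proposition \ref{horofoliation}, however, is stated under stronger hypotheses (uniform visibility plus a hyperbolic periodic point) than the lemma itself assumes, so the paper's proof is implicitly leaning on more than the lemma's hypotheses. Your route — approximating $b_\xi$ by the normalized distance functions $b^T_\xi$, identifying the limit of their Hessians with the shape operator $U^+(y,\xi)=(A^+)'(0)$ attached to the stable Green bundle, and then reading off the Laplacian as the trace — uses only the continuity of the Green bundle, which is exactly the lemma's hypothesis. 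This is closer in spirit to the classical Eschenburg-type argument and to the remark following the lemma.

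One point to make fully rigorous: upgrading $C^{1,L}$ to $C^2$ requires not just that the limiting second fundamental forms $U^+(y,\xi)$ are continuous in $(y,\xi)$, but that the shape operators $U_T$ of the geodesic spheres converge to $U^+$ \emph{locally uniformly} in the base point $y$ (and direction $v$), so that $\nabla b^T_\xi\to\nabla b_\xi$ in $C^1_{\mathrm{loc}}$. You flag this as the main technical step and gesture at the Riccati equation and \cite{Eb}; this is the right reference, but in a final write-up you should say why continuity of $G^s$ (a pointwise limit statement) forces the convergence $U_T\to U^+$ to be locally uniform — e.g., via a Dini-type argument using the monotone comparison of Riccati solutions on $[T_0,\infty)$, or via the relative compactness of $\{U_T\}_{T\ge T_0}$ together with uniqueness of subsequential limits. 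Without that sentence, the phrase "the continuous Hessian upgrades the regularity to $C^2$" is slightly too quick.
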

\begin{proof}
By Proposition \ref{horofoliation}, $y\mapsto b_{\xi}(y,x)$ is $C^{1,L}$ by uniform visibility, and $\nabla_yb_{\xi}(y,x)=-X$ where $X$ is the geodesic spray. Since Green bundles $G^s/G^u$ are uniquely integrable and tangent to the smooth horospherical foliations $\F^s/\F^u$ by Proposition \ref{horofoliation}, we see that $\triangle_yb_{\xi}(y,x)=\text{div} (-X)=tr U(y,\xi)$ and it is continuous in $\xi$.
\end{proof}
\begin{remark}
If $X$ has bounded asymptote, then $y\mapsto b_{\xi}(y,x)$ is $C^2$ \cite[Theorem 1 and Proposition 5]{Es}, and moreover both $\nabla_yb_{\xi}(y,x)=-X$ and $\triangle_yb_{\xi}(y,x)=tr U(y,\xi)$ depends continuously on $\xi$ (see \cite[Proposition 4]{EsO} and the discussion at the end of Subsection 2.3 in \cite{LS}).  
\end{remark}

The notion of continuity of $v\mapsto H(v)$ is defined as the following: let $v_n$ converge to $v$, then $ H(v_n)$ converges to $H(v)$ uniformly on compact subsets of $X$. 
\begin{lemma}(\cite[Theorem 3.7]{Ru3})
Let $(M,g)$ be a compact Riemannian manifold without conjugate points. Then geodesic rays diverge uniformly in $X$ if and only if the collection of submanifolds $v\mapsto H(v)$, depends continuously on $v$ in the compact open topology.
\end{lemma}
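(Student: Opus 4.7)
My plan is to reduce horosphere continuity to the uniform convergence of Busemann functions, and then to translate this uniform convergence into a statement about divergence of geodesic rays.

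\textbf{Step 1: Reduction to Busemann functions.} Since Busemann functions have unitary gradient (Proposition \ref{horofoliation}), the horosphere $H(v)$ is a regular level set of $b_v$, and small uniform changes in $b_v$ produce small normal displacements of the level set. Consequently, for $v_n \to v$ in $SX$, the horospheres $H(v_n)$ converge to $H(v)$ uniformly on compact sets of $X$ if and only if $b_{v_n} \to b_v$ uniformly on compact sets of $X$. So the lemma reduces to the equivalence of uniform divergence of geodesic rays with uniform-on-compacts convergence of Busemann functions under $v_n \to v$.

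\textbf{Step 2: Uniform divergence $\Rightarrow$ continuity.} Fix a compact $K \subset X$ and $\varepsilon > 0$. For each $v$ and $q$, the function $s \mapsto d(q, c_v(s)) - s$ is monotone non-increasing with limit $b_v(q)$. The key claim is that there exists $T = T(K,\varepsilon)$, independent of $v$, such that
\[\left| (d(q,c_v(t)) - t) - b_v(q) \right| < \varepsilon \quad \text{for all } t \geq T,\ q \in K,\ v \in SX.\]
To prove this, I would use uniform divergence in the following way: if the above rate of convergence were not uniform, then by compactness one would extract $q_n \in K$, $v_n \in SX$, and times $s_n,s_n' \to \infty$ with $s_n' - s_n$ large but $d(q_n, c_{v_n}(s_n)) - s_n$ and $d(q_n, c_{v_n}(s_n')) - s_n'$ differing by at least $\varepsilon$; examining the angle at $c_{v_n}(s_n)$ between $\dot c_{v_n}(s_n)$ and the direction to $q_n$ (which controls the derivative of $s\mapsto d(q,c_v(s))-s$), the visibility axiom forces this angle to tend to $\pi$, contradicting the $\varepsilon$-gap. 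Granted this uniform rate, one fixes $T$ large, then uses continuity of $v \mapsto c_v(T)$ to conclude $d(q, c_{v_n}(T)) - T \to d(q, c_v(T)) - T$ uniformly in $q \in K$; combining with the uniform rate of approach to the Busemann limit yields $b_{v_n} \to b_v$ uniformly on $K$.

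\textbf{Step 3: Continuity $\Rightarrow$ uniform divergence.} I would argue by contrapositive. Suppose geodesic rays do not diverge uniformly: there exist $\varepsilon, C > 0$, basepoints $p_n \in X$, vectors $v_n, w_n \in S_{p_n}X$ with $\angle_{p_n}(v_n,w_n) \geq \varepsilon$, and $t_n \to \infty$ with $d(c_{v_n}(t_n), c_{w_n}(t_n)) \leq C$. Using deck transformations, translate the configuration so that $p_n$ lies in a fixed fundamental domain; then after passing to a subsequence, $p_n \to p$, $v_n \to v$, $w_n \to w$ in $SX$ with $v \neq w$ and $c_{v}(0)=c_{w}(0)=p$. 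Because horosphere continuity (at every point of $SX$) yields uniform-on-compact convergence of Busemann functions (Step 1 equivalence going the other way is obvious), we get $b_{v_n} \to b_v$ and $b_{w_n} \to b_w$ uniformly on a compact neighborhood of $p$. Using the identities $b_{v_n}(c_{v_n}(t_n)) = -t_n$ and the uniform bound $d(c_{v_n}(t_n), c_{w_n}(t_n)) \leq C$ together with $|b_{w_n}| \leq d(\cdot,\cdot)$, one obtains
\[b_{v_n}(p) - b_{w_n}(p) = o(1),\]
and then passing to the limit $b_v(p) = b_w(p) = 0$ while extracting quantitative asymptotics yields $c_v$ and $c_w$ asymptotic; but two distinct geodesics emanating from $p$ with positive angle cannot have Busemann functions that agree to arbitrary order along their paths unless the horosphere families detect no difference, which contradicts the continuity statement evaluated along sequences $\phi^{t_n} v_n$ versus $\phi^{t_n} w_n$.

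\textbf{Main obstacle.} The hard step is Step 2: converting the geometric statement of uniform divergence into a uniform rate of convergence for the Busemann limits. The technical heart is a quantitative estimate relating the angle $\angle_{c_v(s)}(\dot c_v(s),\, u_s)$ (where $u_s$ points from $c_v(s)$ to $q$) to the divergence function $T(\varepsilon,C)$ of Definition \ref{div}; this is where uniform visibility is genuinely used, and the proof must be careful not to invoke any convexity of distance functions (which is not available without focal points).
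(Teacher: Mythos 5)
The paper does not prove this lemma at all: it is quoted verbatim from \cite[Theorem 3.7]{Ru3}, so there is no internal argument to compare against. Judged on its own, your outline has the right architecture (reduce horosphere convergence to uniform-on-compacts convergence of Busemann functions, then relate that to divergence of rays), but two steps contain genuine gaps. First, in Step 2 you invoke ``the visibility axiom'' to force the relevant angle toward $\pi$. Uniform visibility is not a hypothesis of the lemma --- the statement assumes only that $M$ is compact without conjugate points, and the property you are allowed to use in the forward direction is uniform divergence of geodesic rays in the sense of Definition \ref{div}, which is strictly weaker (the paper records that visibility \emph{implies} uniform divergence, via \cite{Ru0,Ru2}, not conversely). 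The angle estimate therefore has to be extracted from uniform divergence alone, and since $t\mapsto d(q,c_v(t))-t$ is merely monotone with bounded total variation, monotonicity gives no uniform rate; this is precisely the technical heart you flag as the ``main obstacle,'' and as written it is not supplied.

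Second, Step 3 does not reach a contradiction. After normalizing so that $b_{v_n}(p_n)=b_{w_n}(p_n)=0$, the displayed relation $b_{v_n}(p)-b_{w_n}(p)=o(1)$ is vacuous, and the inference from $b_{w_n}(c_{v_n}(t_n))\ge -t_n+O(1)$ to ``$c_v$ and $c_w$ are asymptotic'' is not justified: without focal points the function $s\mapsto d(c_{v_n}(s),c_{w_n}(s))$ need not be convex or monotone, so a bound at time $t_n$ does not control intermediate times and does not survive passage to the limit. Even if the limit rays were asymptotic, two distinct rays from a common footpoint being asymptotic is not by itself absurd --- ruling that out is essentially equivalent to the divergence one is trying to establish, so the argument is circular as it stands. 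The closing sentence (``cannot have Busemann functions that agree to arbitrary order \dots unless the horosphere families detect no difference'') is an assertion, not a proof; you need to exhibit a specific compact set and a specific sequence $u_n\to u$ in $SX$ along which $H(u_n)$ fails to converge to $H(u)$, and the natural candidates $\phi^{t_n}v_n$ have no convergent subsequence in $SX$ before re-translating by deck transformations, a step that must be carried out explicitly. I recommend simply keeping the citation to \cite[Theorem 3.7]{Ru3}, or else reworking Steps 2--3 using only Definition \ref{div} and the uniform $C^{1,L}$ regularity from Proposition \ref{horofoliation}.
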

Recall that by Proposition \ref{geo}(1), uniform visibility implies that geodesic rays diverge uniformly. Then we have
\begin{proposition}(Cf. \cite[Theorem 6.1]{Pe2} \cite[Lemma 1.2]{Ru1})\label{horosphere}
Let $M$ be a closed uniform visibility manifold without conjugate points. Then for every $p\in X$, the map $\xi\mapsto H_\xi(p)$ is continuous in the following sense: if $\xi_n\to \xi \in \pX$ and $K\subset X$ is compact, then $H_{\xi_n}(p)\cap K\to H_\xi(p)\cap K$ uniformly in the Hausdorff topology.
\end{proposition}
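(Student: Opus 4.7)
The plan is to reduce the statement about boundary points $\xi \in \partial X$ to the already-established continuity of $v \mapsto H(v)$ for tangent vectors $v \in SX$, using the fact that on a uniform visibility manifold without conjugate points, a horosphere centered at a boundary point $\xi$ and passing through $p$ is exactly the horosphere $H(v)$ associated with the unique $v \in S_pX$ that points at $\xi$.

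First, I would identify the parametrization: the map $f_p : S_pX \to \partial X$, $v \mapsto v^+$, is a continuous bijection, where continuity and injectivity follow from Proposition \ref{geo}(1)--(2) (uniform divergence of geodesic rays combined with the continuity of $\Psi$). Since $S_pX$ is compact and the cone topology on $\partial X$ agrees with the visual topology, $f_p$ is in fact a homeomorphism. Consequently, given any sequence $\xi_n \to \xi$ in $\partial X$, the vectors $v_n := f_p^{-1}(\xi_n) \in S_pX$ converge to $v := f_p^{-1}(\xi) \in S_pX$ in $S_pX$, and by definition one has $H_{\xi_n}(p) = H(v_n)$ and $H_\xi(p) = H(v)$.

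Second, I would invoke the preceding lemma (\cite[Theorem 3.7]{Ru3}): whenever geodesic rays diverge uniformly in $X$, the assignment $v \mapsto H(v)$ is continuous in the compact--open topology, i.e.\ $H(v_n)$ converges to $H(v)$ uniformly on compact subsets of $X$. Uniform divergence of geodesic rays is supplied precisely by Proposition \ref{geo}(1) under our uniform visibility hypothesis. Combining this with the previous step, $H_{\xi_n}(p)$ converges to $H_\xi(p)$ uniformly on every compact $K \subset X$, which by a standard argument (the horospheres are embedded hypersurfaces passing through $p$, so $K$-intersections are nonempty for large $n$ once they are nonempty for the limit) yields Hausdorff convergence $H_{\xi_n}(p) \cap K \to H_\xi(p) \cap K$.

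The substantive content therefore lies not in this proposition itself but in the two inputs it rests on: (i) that $f_p$ is a homeomorphism, which requires uniform divergence to rule out collapsing of directions at infinity; and (ii) Ruggiero's equivalence between uniform divergence of rays and continuity of $v \mapsto H(v)$, whose nontrivial direction is an interchange-of-limits argument in the definition of the Busemann function. With both of these already in hand from the cited references, the main step here is the bookkeeping that ensures $\xi \mapsto v(\xi) \in S_pX$ is indeed continuous; no further geometric obstacle arises, and the Hausdorff statement follows immediately.
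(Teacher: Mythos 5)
Your proposal is correct and follows essentially the same route as the paper, which states the proposition as an immediate consequence of the preceding lemma (Ruggiero's equivalence between uniform divergence of rays and continuity of $v\mapsto H(v)$) together with Proposition \ref{geo}(1), leaving the reduction from $\xi\in\pX$ to $v=f_p^{-1}(\xi)\in S_pX$ implicit. You have simply made that bookkeeping step explicit, which is fine.
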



The continuity of $v\mapsto H(v)$ is equivalent to the continuity in the $C^1$ topology of the map $v\mapsto b_v$ uniformly on compact subsets of $X$. So we have
\begin{corollary}\label{continuous}
Let $M$ be a closed uniform visibility manifold without conjugate points. Then the functions $(v,q)\mapsto b_v(q)$ and $(\xi,p,q)\mapsto b_\xi(p,q)$ are continuous on $SX\times X$ and $\pX\times X\times X$ respectively.
\end{corollary}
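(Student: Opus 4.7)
The plan is to first prove joint continuity of $(v,q)\mapsto b_v(q)$ on $SX\times X$, and then deduce continuity of $(\xi,p,q)\mapsto b_\xi(q,p)$ by rewriting it in terms of the first function via the parametrization $v=v_{p,\xi}\in S_pX$ that points at $\xi$. This matches exactly the relation $b_v(q) = b_{v^+}(q,\pi(v))$ laid out just after the definition of the Busemann function.

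For the first map I will use the remark immediately preceding the statement: the continuity of $v\mapsto H(v)$ is equivalent to continuity of $v\mapsto b_v$ in the $C^1$ topology uniformly on compact subsets of $X$. Given $(v_n,q_n)\to (v,q)$, I pick a compact neighborhood $K\ni q$ that contains the tail of $\{q_n\}$ and split
\[
|b_{v_n}(q_n)-b_v(q)|\le \sup_{q'\in K}|b_{v_n}(q')-b_v(q')|+|b_v(q_n)-b_v(q)|.
\]
The first term vanishes by the uniform $C^0$ convergence on $K$, and the second by the $C^{1,L}$ regularity of $b_v$ from Proposition \ref{horofoliation}(1) (in fact $b_v$ is $1$-Lipschitz since its gradient is unitary).

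For the second map, the key reduction is $b_\xi(q,p) = b_{v_{p,\xi}}(q)$, so I only need continuity of the parametrization $\Phi:X\times \pX\to SX$, $(p,\xi)\mapsto v_{p,\xi}$. I will verify $\Phi$ is continuous by a compactness-plus-uniqueness argument: if $(p_n,\xi_n)\to (p,\xi)$, then $\{\Phi(p_n,\xi_n)\}$ eventually lies in a compact subset of $SX$ (since $\pi(\Phi(p_n,\xi_n))=p_n\to p$ and the fibers of $\pi$ are compact spheres), so every subsequence has a convergent sub-subsequence with limit $w\in SX$. By continuity of $\pi$ we have $\pi(w)=p$, and by continuity of the map $v\mapsto v^+=\Psi(v,+\infty)$ from Proposition \ref{geo}(2) we get $w^+=\xi$. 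Uniform divergence of geodesic rays (Proposition \ref{geo}(1)) forces $w=v_{p,\xi}$, so the full sequence converges. Composing with the first part gives the second continuity claim.

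I do not anticipate serious obstacles here: the corollary is essentially a packaging of the $C^1$ continuity of the horospherical foliation (stated in the displayed sentence just above the corollary) together with the continuity of the boundary map $v\mapsto v^+$. The only mildly delicate step is the joint continuity of $\Phi$, which could fail in general but is ensured in our setting by the uniform visibility assumption, whose consequence (uniform divergence of geodesic rays) underlies both the injectivity of $v\mapsto (\pi(v),v^+)$ on each $S_pX$ and the required equicontinuity in the compactness argument.
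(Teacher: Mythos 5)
Your proposal is correct and follows essentially the same route as the paper, which derives the corollary directly from the equivalence (stated just before it) between continuity of $v\mapsto H(v)$ and continuity of $v\mapsto b_v$ in the $C^1$ topology uniformly on compact subsets of $X$. You merely make explicit two steps the paper leaves implicit — the splitting $|b_{v_n}(q_n)-b_v(q)|\le \sup_{K}|b_{v_n}-b_v|+|b_v(q_n)-b_v(q)|$ using the $1$-Lipschitz property of $b_v$, and the continuity of $(p,\xi)\mapsto v_{p,\xi}$ via compactness of the unit spheres together with uniqueness from divergence of geodesic rays — both of which are sound.
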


We would like to obtain certain equicontinuity of Busemann function $v\mapsto b_v(q)$. The idea is to extend the function to the compact space $\overline{X}$.
Given $x,p,q\in X$, define
$$b_x(q,p):=d(q,x)-d(p,x).$$

\begin{lemma}\label{continuity1}(\cite[Corollary 2.18]{CKW1})
Let $M$ be a closed uniform visibility manifold without conjugate points. For each pair of points $p,q\in X$, if there is a sequence $\{x_{n}\}\subset X$ with $\lim_{n\rightarrow +\infty}x_{n}=\xi\in \pX$, then $$\lim_{n\rightarrow +\infty}b_{x_n}(q,p)=b_{\xi}(q,p).$$
\end{lemma}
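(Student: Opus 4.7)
The plan is to prove the equality by a two-sided inequality, using in each direction the standard Busemann monotonicity applied to a geodesic segment whose initial velocity converges to the one defining $b_\xi$. First I would set up notation: write $t_n := d(p, x_n)$ and $v_n := \dot c_{p, x_n}(0)$ so that $x_n = c_{v_n}(t_n)$. The hypothesis $x_n \to \xi$ in the cone topology translates to $t_n \to \infty$ and $v_n \to v := \dot c_{p,\xi}(0)$ in $S_pX$, using Proposition~\ref{geo}(2) and base-point independence of the cone topology on a uniform visibility manifold. By the same reasoning at $q$, setting $w_n := \dot c_{q, x_n}(0)$ and $t'_n := d(q, x_n)$ gives $w_n \to w := \dot c_{q,\xi}(0)$ and $t'_n \to \infty$.

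For the upper bound, fix $s \geq 0$; for $n$ large enough that $s \leq t_n$, the triangle inequality gives
\[
d(q, x_n) \leq d(q, c_{v_n}(s)) + (t_n - s),
\]
so $b_{x_n}(q, p) \leq d(q, c_{v_n}(s)) - s$. Letting $n \to \infty$ with $s$ fixed and using continuity of the geodesic flow (so $c_{v_n}(s) \to c_{p,\xi}(s)$) yields $\limsup_n b_{x_n}(q, p) \leq d(q, c_{p,\xi}(s)) - s$; taking $s \to \infty$ gives $\limsup_n b_{x_n}(q, p) \leq b_\xi(q, p)$ directly from the definition of the Busemann function as a monotone limit.

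The lower bound is symmetric, via the geodesic from $q$ to $x_n$: the inequality $d(p, x_n) \leq d(p, c_{w_n}(s)) + (t'_n - s)$ rearranges, after subtracting from $t'_n = d(q, x_n)$, to
\[
b_{x_n}(q, p) \geq s - d(p, c_{w_n}(s)).
\]
Passing $n \to \infty$ then $s \to \infty$ gives $\liminf_n b_{x_n}(q, p) \geq \sup_s [s - d(p, c_{q,\xi}(s))] = -b_\xi(p, q) = b_\xi(q, p)$, where the last equality is the antisymmetry $b_\xi(q, p) = -b_\xi(p, q)$ of the Busemann function. I do not anticipate a serious obstacle; the only slightly delicate point is extracting convergence of initial velocities at both base points $p$ and $q$ from cone-topology convergence $x_n \to \xi$, and this is standard on a uniform visibility manifold via Proposition~\ref{geo}.
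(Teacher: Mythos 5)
The paper does not prove this lemma itself; it simply cites \cite[Corollary 2.18]{CKW1}, so your argument can only be judged on its own merits. Your upper bound is correct and complete: the cone-topology convergence $x_n\to\xi$ does give $t_n\to\infty$ and $v_n\to v$ (and, by base-point independence of the cone topology, $w_n\to w$), geodesics in $X$ are globally minimizing since $X$ is simply connected without conjugate points, and the monotone-limit definition of $b_\xi(q,p)$ closes the estimate $\limsup_n b_{x_n}(q,p)\le b_\xi(q,p)$.

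The lower bound, however, has a genuine gap at the final step. Your two estimates only give
\[
-b_\xi(p,q)\;\le\;\liminf_n b_{x_n}(q,p)\;\le\;\limsup_n b_{x_n}(q,p)\;\le\;b_\xi(q,p),
\]
and you close the sandwich by invoking the ``antisymmetry'' $b_\xi(q,p)=-b_\xi(p,q)$. This identity is not free here: $b_\xi(q,p)$ is computed along the ray $c_{p,\xi}$ and $b_\xi(p,q)$ along the different ray $c_{q,\xi}$, so the identity amounts to saying that the Busemann functions of these two asymptotic rays differ by a constant (equivalently, that $c_{q,\xi}$ is a calibrated/gradient ray for $b_{\xi}(\cdot,p)$, i.e.\ $b_\xi(c_{q,\xi}(s),p)=b_\xi(q,p)-s$). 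Worse, the antisymmetry is itself exactly the special case of the lemma for the sequence $x_n=c_{q,\xi}(n)$ (since $b_{x}(q,p)=-b_{x}(p,q)$ for interior points $x$), so invoking it without an independent proof is essentially circular. Note that your own inequalities already yield the easy half $b_\xi(q,p)+b_\xi(p,q)\ge 0$; the nontrivial half $\le 0$ is where the geometry of uniform visibility enters. To close the gap you should justify the cocycle/antisymmetry property independently, e.g.\ from the uniqueness of the asymptotic ray from each point (uniform divergence of geodesic rays, Proposition \ref{geo}(1)) together with the fact that the Busemann ray from $q$, obtained as a limit of the segments $c_{q,c_{p,\xi}(t_k)}$, satisfies $b_\xi(\cdot,p)\circ\sigma(s)=b_\xi(q,p)-s$; alternatively, from the $C^{1}$ regularity and the gradient formula $\nabla_y b_\xi(y,p)=-\dot c_{y,\xi}(0)$ in Proposition \ref{horofoliation}, which forces $b_\xi(\cdot,p)-b_\xi(\cdot,q)$ to be constant. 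On a general manifold without conjugate points (without such divergence/regularity input) the two one-sided bounds need not meet, so this step cannot be waved through.
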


\begin{lemma}\label{equicon1}
Let $M$ be a closed uniform visibility manifold without conjugate points. Let $p\in X$, $A\subset S_pX$ be closed, and $B\subset X$ be such
that $A^+:= \{v^+: v\in A\}$ and $$B^\infty := \{\lim_n q_n \in \pX: q_n\in B\}$$ are
disjoint subsets of $\pX$. Then the family of functions $A\to \RR$ indexed
by $B$ and given by $v\mapsto b_v(q)$ are equicontinuous: for every $\e>0$ there
exists $\d >0$ such that if $\angle_p(v,w)<\d$, then $|b_v(q)-b_w(q)|<\e$ for every $q\in B$.
\end{lemma}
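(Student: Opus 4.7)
The plan is to argue by contradiction, exploiting the compactness of $A\subset S_pX$ (closed in the compact sphere $S_pX$) and of $\overline{X}$ in the cone topology. Suppose the conclusion fails: there exist $\ep>0$ and sequences $v_n,w_n\in A$ with $\angle_p(v_n,w_n)\to 0$, together with $q_n\in B$, such that $|b_{v_n}(q_n)-b_{w_n}(q_n)|\ge\ep$. After extracting a subsequence I may assume $v_n,w_n\to v\in A$ (the common limit is forced by $\angle_p(v_n,w_n)\to 0$) and $q_n\to q\in \overline{X}$.

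If $q\in X$, then the continuity of $(v,q)\mapsto b_v(q)$ on $SX\times X$ (Corollary~\ref{continuous}) gives $b_{v_n}(q_n),\,b_{w_n}(q_n)\to b_v(q)$, contradicting the lower bound. Thus $q=\eta\in \pX$; since $\eta\in B^\infty$, the disjointness hypothesis forces $\eta\ne v^+$. Set $u:=c_{p,\eta}(0)$, so $\angle_p(u,v)>0$, and for all large $n$ the angles $\angle_p(c_{p,q_n}(0),v_n)$ and $\angle_p(c_{p,q_n}(0),w_n)$ are bounded below by some $2\theta_0>0$. By the uniform visibility axiom, every geodesic ray from $q_n$ to $v_n^+$ (resp.\ to $w_n^+$) must then pass within some constant $L=L(\theta_0)$ of $p$. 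Choose $r_n,r_n'\in \overline{B(p,L)}$ as the respective closest points to $p$ on these rays; the Busemann cocycle gives the key identity
\[ b_{v_n}(q_n)=d(q_n,r_n)+b_{v_n^+}(r_n,p), \qquad b_{w_n}(q_n)=d(q_n,r_n')+b_{w_n^+}(r_n',p). \]

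Passing to a further subsequence so that $r_n\to r$ and $r_n'\to r'$ in the compact ball, Corollary~\ref{continuous} yields $b_{v_n^+}(r_n,p)-b_{w_n^+}(r_n',p)\to b_{v^+}(r,p)-b_{v^+}(r',p)$. For the distance difference, Lemma~\ref{continuity1} applied to $x_n:=q_n\to\eta$ with fixed reference points $r,r'$, combined with $r_n\to r$, $r_n'\to r'$ and the triangle inequality, gives
\[ d(q_n,r_n)-d(q_n,r_n')\longrightarrow b_\eta(r,p)-b_\eta(r',p). \]
Summing,
\[ b_{v_n}(q_n)-b_{w_n}(q_n)\longrightarrow f(r)-f(r'), \quad f:=b_\eta(\cdot,p)+b_{v^+}(\cdot,p). \]
Both $r$ and $r'$ lie on geodesics from $\eta$ to $v^+$, which exist by Proposition~\ref{geo}(3); on any such geodesic the gradient $\nabla f=-X_\eta-X_{v^+}$ vanishes (the two unit directions cancel), so $f$ is constant along each connecting geodesic. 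The main obstacle is to verify $f(r)=f(r')$ when $r,r'$ lie on distinct bi-asymptotic geodesics forming a generalized strip; for this I use Proposition~\ref{geo}(5) that any two such geodesics stay Hausdorff-close within the Morse constant $Q$, together with an argument exploiting the opposite orientations of $X_\eta$ and $X_{v^+}$ across the strip to conclude that $f$ takes the same value on each component of the connecting family. This yields $b_{v_n}(q_n)-b_{w_n}(q_n)\to 0$, contradicting $|b_{v_n}(q_n)-b_{w_n}(q_n)|\ge\ep$ and completing the proof.
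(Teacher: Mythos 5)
Your overall strategy is sound and most of it checks out: the compactness reduction, the case $q\in X$ via Corollary \ref{continuous}, the use of uniform visibility to force the rays from $q_n$ to $v_n^+$ and to $w_n^+$ through $\overline{B(p,L)}$, the cocycle splitting $b_{v_n}(q_n)=d(q_n,r_n)+b_{v_n^+}(r_n,p)$, and the passage to the limit via Lemma \ref{continuity1} and Corollary \ref{continuous} are all correct. (Strictly speaking you should justify that the limit points $r,r'$ lie on geodesics joining $\eta$ to $v^+$; this follows from the continuity of $\Psi$ in Proposition \ref{geo}(2) applied to the reparametrized rays $\tilde\sigma_n(t)=\sigma_n(t+s_n)$, not merely from the existence statement in Proposition \ref{geo}(3).) For comparison, the paper itself gives no argument here: it defers to the proof of Lemma A.1 of \cite{CKW2}, noting only that Lemma \ref{continuity1} is the essential input, so your write-up is a genuinely self-contained alternative --- up to one step.

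That step is the last one, and as written it is a gap. You correctly isolate the crux: showing $f(r)=f(r')$ for $f=b_\eta(\cdot,p)+b_{v^+}(\cdot,p)$ when $r,r'$ lie on two distinct bi-asymptotic geodesics joining $\eta$ to $v^+$, i.e., the well-definedness of the Gromov product $\beta_p(\eta,v^+)$. But the justification you sketch does not prove it. Since $f$ is $2$-Lipschitz, Hausdorff-closeness within the Morse constant $Q$ from Proposition \ref{geo}(5) only yields $|f(r)-f(r')|\le 2Q$, not equality; and the observation that $X_\eta$ and $X_{v^+}$ cancel gives $\nabla f=0$ at every point of the union $\mathcal{S}$ of connecting geodesics, which makes $f$ constant along rectifiable paths \emph{inside} $\mathcal{S}$ --- but the cross-section $\Sigma$ of a generalized strip is only known to be a compact connected set, not rectifiably path-connected, so constancy on $\mathcal{S}$ does not follow. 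The correct tool is Lemma \ref{striplemma}: with $u$ the unit tangent to the connecting geodesic at $r$, every geodesic bi-asymptotic to it meets the set $\Sigma\subset I(u)=H(u)\cap H(-u)$, and for $q\in I(u)$ the cocycle identity gives
$$f(q)=b_\eta(q,r)+b_\eta(r,p)+b_{v^+}(q,r)+b_{v^+}(r,p)=f(r),$$
since both increments $b_\eta(q,r)$ and $b_{v^+}(q,r)$ vanish on the respective horospheres. Combined with the constancy of $f$ along each connecting geodesic (which you did prove), this yields $f(r')=f(r)$. With that substitution the proof is complete.
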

\begin{proof}
The proof is a repetition of that of \cite[Lemma A.1]{CKW2} and thus omitted. Notice that Lemma \ref{continuity1} is essentially used in the proof.
\end{proof}

If $X$ has uniform visibility, then it is quasi-convex and geodesic rays diverge by Proposition \ref{geo}. The following generalized strip lemma is a direct corollary of \cite[Lemma 3.1]{RiR}.
\begin{lemma}\label{striplemma}(Generalized strip lemma, \cite[Lemma 3.1]{RiR})
Let $M$ be a closed uniform visibility manifold without conjugate points and $X$ be its universal covering. Then for every $v,w \in SX$ with bi-asymptotic geodesics $c_v$ and $c_w$, there exists a connected set $\Sigma\subset I(v)$ containing the geodesic starting points such that any geodesic $c$ with $c(0) = q\in \Sigma$ and $\dot c(0)=-\nabla b_v(q)$ is bi-asymptotic to both $c_v$ and $c_w$. In particular, the set
\begin{equation}\label{strip}
\mathcal S:=\bigcup_{t\in \RR, q\in \Sigma}c_{\nabla b_v(q)}(t)       
\end{equation}
is homeomorphic to $\Sigma\times \RR$.   
\end{lemma}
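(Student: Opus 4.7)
The approach is to invoke the generalized strip lemma of Ricks--Ruggiero (\cite[Lemma 3.1]{RiR}) almost directly, and then verify the topological product structure. The two hypotheses of that lemma---quasi-convexity of $X$ and uniform divergence of geodesic rays---are exactly Proposition \ref{geo}(4) and (1). Since $c_v$ and $c_w$ are bi-asymptotic, they share endpoints $c_v(\pm\infty)=c_w(\pm\infty)$ at infinity, so after a time reparametrization we may arrange $\pi(w)\in H(v)$, giving $w\in \F^s(v)\cap \F^u(v)=\I(v)$. Ricks--Ruggiero's lemma then produces a connected subset $\Sigma\subset \pi(\I(v))\subset X$ containing $\pi(v)$ and $\pi(w)$, with the property that for each $q\in \Sigma$ the geodesic starting at $q$ with initial velocity $-\nabla b_v(q)\in S_qX$ is bi-asymptotic to (and hence shares the ideal endpoints of) $c_v$. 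Connectedness of $\Sigma$ is built into their construction: they produce an explicit path in $\I(v)$ between $v$ and $w$ using divergence of rays together with uniform visibility, and $\Sigma$ is the image of this path under $\pi$.

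For the product-structure conclusion, I would study the evaluation map
$$\Phi:\Sigma\times\RR\longrightarrow \mathcal S,\qquad (q,t)\longmapsto c_{-\nabla b_v(q)}(t).$$
Continuity of $\Phi$ is immediate from the $C^{1,L}$ regularity of Busemann functions (Proposition \ref{horofoliation}) and smooth dependence of the geodesic flow on initial data. For injectivity, if $\Phi(q_1,t_1)=\Phi(q_2,t_2)$ then the two geodesics coincide as subsets of $X$ (they pass through a common point and are both bi-asymptotic to $c_v$, hence share ideal endpoints and therefore, after absence of conjugate points, are equal as parametrized curves up to a time shift); since each $q_i$ lies on the horosphere $H(v)$ which meets any such geodesic in a unique point (Proposition \ref{horofoliation}), we obtain $q_1=q_2$ and then $t_1=t_2$. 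Continuity of $\Phi^{-1}$ follows by observing that $q$ is recovered from the orbit as the unique intersection point with $H(v)$, and this intersection varies continuously with the orbit thanks to the Morse bound in Proposition \ref{geo}(5), which confines $\mathcal S$ to a uniform tube of radius $Q$ about $c_v$, together with the continuity of horospheres in Proposition \ref{horosphere}.

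The principal obstacle is the failure of the classical flat strip lemma in the no-conjugate-points setting: two bi-asymptotic geodesics are not forced to bound a totally geodesic Euclidean band, so the set $\mathcal S$ is only a topological product and, in general, not a geometric one. This is also what prevents the parameter set $\Sigma$ from being automatically a smooth submanifold; one must content oneself with connectedness. The delicate point in Ricks--Ruggiero's construction, which I would only sketch and then cite, is precisely the production of this connecting path in $\I(v)$: quasi-convexity controls the distance between asymptotic geodesic segments joining points on $H(v)$ to points on $H(-v)$, while uniform divergence of rays promotes pointwise limits to a path of bi-asymptotic geodesics, so that an Arzel\`a--Ascoli argument yields $\Sigma$.
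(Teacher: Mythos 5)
Your proposal matches the paper's treatment: the paper likewise presents this lemma as a direct corollary of \cite[Lemma 3.1]{RiR}, with the hypotheses of that lemma (quasi-convexity and uniform divergence of geodesic rays) supplied by Proposition \ref{geo}, and your additional verification of the homeomorphism $\Sigma\times\RR\cong\mathcal S$ via the evaluation map and the unique intersection of each geodesic of the strip with $H(v)$ is sound. One cosmetic slip: the reference \cite{RiR} is Rifford--Ruggiero, not Ricks--Ruggiero (Ricks is \cite{Ri}).
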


Recall that $\mathcal{I}(v)=\mathcal{F}^s(v)\cap\mathcal{F}^u(v)$. Denote $I(v):=H(v)\cap H(-v)$.
\begin{lemma}\label{compact}(\cite[Corollary 2.1]{MR})
Let $M$ be a compact manifold without conjugate points with visibility universal covering $X$ and $Q>0$ Morse constant from Proposition \ref{geo}. Then there exists an uniform constant $Q'\le Q$ such that for every $v\in SX$ the sets $\mathcal{I}(v), I(v)$ are compact connected sets with $\text{Diam}(\mathcal{I}(v))\le Q$ and $\text{Diam}(I(v))\le Q'$.
\end{lemma}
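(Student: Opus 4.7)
The plan is to establish closedness, connectedness, and the two diameter bounds in sequence, with compactness following from closedness together with the diameter bound in the complete spaces $X$ and $SX$.

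\textbf{Closedness and connectedness.} By Proposition~\ref{horofoliation}, the Busemann functions $b_v, b_{-v}$ are $C^{1,L}$, so the horospheres $H(v)=b_v^{-1}(0)$ and $H(-v)=b_{-v}^{-1}(0)$ are closed in $X$, whence $I(v)=H(v)\cap H(-v)$ is closed; likewise $\F^s(v),\F^u(v)\subset SX$ are continuous embedded submanifolds, so $\mathcal{I}(v)$ is closed. For connectedness I would invoke the generalized strip lemma (Lemma~\ref{striplemma}), which produces a connected set $\Sigma\subset I(v)$ containing the starting points of every geodesic bi-asymptotic to $c_v$, and then argue that $\Sigma=I(v)$: for $q\in I(v)$ the geodesic $c$ with $c(0)=q$ and $\dot c(0)=-\nabla_q b_v$ is automatically asymptotic to $v^+$ at $+\infty$ (because $q\in H(v)$), and the condition $b_{-v}(q)=0$ together with uniform visibility and the existence of the connecting geodesic from $v^-$ to $v^+$ (Proposition~\ref{geo}(1),(3)) forces $c$ also to be asymptotic to $v^-$ at $-\infty$. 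This identifies $\pi(\mathcal{I}(v))$ with $I(v)=\Sigma$, and $\mathcal{I}(v)$ with the continuous section $q\mapsto (q,-\nabla_q b_v)$ over $\Sigma$, giving connectedness of both sets.

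\textbf{Diameter bounds.} Take $(q_1,w_1),(q_2,w_2)\in\mathcal{I}(v)$. Both $c_{w_1},c_{w_2}$ are bi-asymptotic to $c_v$, hence to each other, so the Morse constant in Proposition~\ref{geo}(5) gives $d_H(c_{w_1},c_{w_2})\le Q$. Because $q_i\in I(v)$, the parametrizations are synchronized simultaneously by $b_v$ and $b_{-v}$: $b_v(c_{w_i}(t))=-t$ and $b_{-v}(c_{w_i}(t))=t$ for $i=1,2$. Combining the Hausdorff bound with the $1$-Lipschitz property of $b_v$ pins down the parameter offset between the two geodesics, yielding a synchronized estimate and, in particular, $d(q_1,q_2)\le Q$. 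Since $w_i=-\nabla_{q_i}b_v$ depends continuously on $q_i$ (Proposition~\ref{horofoliation}) with unit length, the Sasaki distance between the two elements is still controlled by $Q$, giving $\text{Diam}(\mathcal{I}(v))\le Q$. The bound on $\text{Diam}(I(v))$ then follows from the identification $I(v)=\pi(\mathcal{I}(v))$ and the $1$-Lipschitz footpoint projection; one takes $Q':=\sup_v\text{Diam}(I(v))\le Q$, the supremum being finite by the uniform Morse bound. Closedness plus this uniform bound yields compactness of $I(v)$ in $X$ and of $\mathcal{I}(v)$ in $SX$.

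\textbf{Main obstacle.} The crux of the proof is the identification $I(v)=\pi(\mathcal{I}(v))$, i.e.\ ruling out ``phantom'' intersection points $q\in H(v)\cap H(-v)$ at which the gradients $-\nabla_q b_v$ and $\nabla_q b_{-v}$ disagree, through which no geodesic would be bi-asymptotic to $c_v$. In nonpositive curvature this would be immediate from the flat strip lemma, but that tool fails in the no-conjugate-points setting (\cite{Burns}, cf.\ the discussion after Theorem~\ref{singulargeodesic}). The argument must instead combine the uniform visibility axiom (Definition~\ref{vis}), uniform divergence of geodesic rays (Proposition~\ref{geo}(1)), existence of a connecting geodesic from $v^-$ to $v^+$ (Proposition~\ref{geo}(3)), continuity of horospheres (Proposition~\ref{horosphere}), and equicontinuity of Busemann functions (Lemma~\ref{equicon1}) to force the backward asymptote of $c_{-\nabla_q b_v}$ to equal $v^-$ whenever $b_{-v}(q)=0$. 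This visibility-based argument is also what makes the sharper bound $Q'\le Q$ attainable rather than merely the weaker $\le 2Q$ one would get from a naive triangle-inequality combination of two applications of the Morse constant.
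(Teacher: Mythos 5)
The paper offers no proof of this lemma at all — it is quoted directly from \cite[Corollary 2.1]{MR} — so your attempt has to stand on its own, and as written it does not. The decisive step is the one you yourself flag as ``the main obstacle'': showing that every $q\in I(v)=H(v)\cap H(-v)$ actually carries a vector of $\mathcal{I}(v)$, i.e.\ that $-\nabla_q b_v=\nabla_q b_{-v}$ at such a $q$, so that $I(v)=\pi(\mathcal{I}(v))$ and connectedness can be pulled back through the generalized strip lemma. You list the tools that ``must be combined'' to force the backward asymptote of $c_{-\nabla_q b_v}$ to be $v^-$, but you never combine them: no argument is given for why $b_{-v}(q)=0$ together with visibility rules out a point lying on both horospheres while the two gradient fields disagree. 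Since connectedness of $I(v)$ (and, on your route, the identification of $\mathcal{I}(v)$ with a section over a connected set) hinges entirely on this, the proof is incomplete at its core. Note also that Lemma \ref{striplemma} only supplies, for each $w$ bi-asymptotic to $v$, a connected set $\Sigma_w\subset I(v)$ joining the two footpoints; the union over $w\in\mathcal{I}(v)$ covers $\pi(\mathcal{I}(v))$, but by itself says nothing about possible extra components of $H(v)\cap H(-v)$.

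The quantitative claims are likewise asserted rather than derived. For $w_1,w_2\in\mathcal{I}(v)$ the Morse bound gives some $t$ with $d(q_1,c_{w_2}(t))\le Q$, and the synchronization $b_v(c_{w_2}(t))=-t$, $b_v(q_1)=0$ together with the $1$-Lipschitz property give $|t|\le Q$; the triangle inequality then yields only $d(q_1,q_2)\le 2Q$ — exactly the ``naive'' bound you say the visibility argument improves upon, but that improvement is never written down. Moreover, a bound on the footpoint distance does not by itself bound the Sasaki distance between $(q_1,w_1)$ and $(q_2,w_2)$ by the same constant (``depends continuously\dots with unit length'' gives no quantitative control), so $\mathrm{Diam}(\mathcal{I}(v))\le Q$ is not established. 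The skeleton of your argument (closedness of level-set intersections, connectedness via the strip lemma, diameter via the Morse constant) is the expected one, but both the qualitative crux and the sharp constants are missing; to repair it you would need to actually carry out the uniqueness-of-asymptotes argument for points of $H(v)\cap H(-v)$, or else consult the proof in \cite{MR}.
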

We can define the width of a generalized strip $\mathcal S$ in \eqref{strip} as $\inf_{w\in \mathcal S}\text{Diam}(\mathcal{I}(w)).$
Then a nontrivial generalized strip might have width zero.

\section{Ergodicity of Liouville measure}
In this section, we always let $M$ be a closed surface with genus $\mathfrak{g} \geq 2$, without conjugate points and with bounded asymptote, and $X$ its universal cover. 

By Katok's result \cite[Theorem B]{Ka}, we have
$$h^2\ge -2\pi E /\text{Vol}(M)$$
where $h=h_{\top}(\phi^1)$ is the topological entropy of the geodesic flow and $E$ is the Euler characteristic of $M$. The equality holds if and only if $M$ has constant negative curvature. So in our setting, $h>0$. By the variational principle Theorem \ref{VPentropy}, there exists a $\mu\in \M_{\phi}(SM)$ such that $h_\mu(\phi^1)>0$. Thus $\mu$ is a hyperbolic measure, and there exists a horseshoe on $SM$. We know that there exists a hyperbolic periodic point under the geodesic flow (see \cite[Theorem 4.1]{Ka}, which is a flow version of \cite[Corollary 4.4]{Ka1}). 

So $\R_1\neq \emptyset$. By Proposition \ref{horofoliation}, Green bundles $G^{s/u}$ are tangent to the horospherical foliations $\F^{s/u}$ in our setting.
\subsection{Singular set and Expansivity}

On the surface $M$, we have the Fermi coordinates $\{e_1(t), e_2(t)\}$ along a geodesic $c_v(t)$, obtained by the time $t$-parallel translations along $c_v(t)$ of an orthonormal basis $\{e_1(0), e_2(0)\}$ where $e_1(0)=\dot c_v(0)$. Thus $e_1(t)=\dot c_v(t)$ and $e_2(t) \perp \dot c_v(t)$. Suppose that $J(t)=j(t)e_2(t).$ Then the Jacobi equation \eqref{e:jacobi0} becomes
\begin{equation}\label{e:jacobi}
j''(t)+K(t)j(t)=0,
\end{equation}
where $K(t)=K(c_v(t))$ is the curvature at point $c_v(t)$.
Let $u(t)=j'(t)/j(t)$. Then the Jacobi equation \eqref{e:jacobi} can be written equivalently as:
\begin{equation}\label{e:ricatti}
u'(t)+u^2(t)+K(t)=0,
\end{equation}
which is called \emph{the Riccati equation}.

For a given $\xi\in T_vSM$, we always let $J_{\xi}(t)$ be the unique Jacobi field satisfying the Jacobi equation \eqref{e:jacobi0} under initial conditions
\[J_{\xi}(0)=d\pi_{v}\xi,\ ~~\frac{d}{dt}\Big|_{t=0}J_{\xi}(t)=K_{v}\xi.\]
Suppose that $J_{\xi}(t)$ is perpendicular to $\dot c_v(t)$, then $J_\xi(t)=j_\xi(t)e_2(t)$ and $j_\xi(t)=\|J_\xi(t)\|$. Denote $u_\xi(t)=j'_\xi(t)/j_\xi(t)$. Then $u_\xi$ is a solution of the Riccati equation \eqref{e:ricatti}.

The following proposition shows the connection between the Lyapunov exponent $\chi(v, \xi)$ and the function $u_{\xi}$.
\begin{proposition}\label{lyapunov}
For any $v\in SM$ and $\xi\in G^s(v)$, one has
\[\chi(v, \xi)=\lim_{T\to \infty}\frac{1}{T}\int_0^Tu_\xi(t)dt.\]
\end{proposition}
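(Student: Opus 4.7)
My plan is to reduce everything to a scalar computation on the surface, then extract the Lyapunov exponent from the Riccati solution by integration.

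First I would compute $\|d\phi^t\xi\|$ in the Sasaki metric. By definition $d\pi_{\phi^t v}d\phi^t\xi=J_\xi(t)$ and $K_{\phi^t v}d\phi^t\xi=J_\xi'(t)$, hence
\[
\|d\phi^t\xi\|^2=\|J_\xi(t)\|^2+\|J_\xi'(t)\|^2.
\]
Since $M$ is a surface and $\xi\in G^s(v)\subset T_vSM$ satisfies $\langle\xi,Z(v)\rangle=0$ (Proposition \ref{subspace}), the Jacobi field $J_\xi$ is perpendicular to $\dot c_v$ and we may write $J_\xi(t)=j_\xi(t)e_2(t)$ with $j_\xi(t)\in\RR$ and $J_\xi'(t)=j_\xi'(t)e_2(t)$. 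By Proposition \ref{subspace}(6), $J_\xi(t)\neq 0$ for all $t$, so $j_\xi(t)\neq 0$ and $u_\xi(t)=j_\xi'(t)/j_\xi(t)$ is a well-defined smooth solution of the Riccati equation \eqref{e:ricatti}. The norm above becomes
\[
\|d\phi^t\xi\|^2=j_\xi(t)^2\bigl(1+u_\xi(t)^2\bigr).
\]

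Next I would show that $u_\xi$ is uniformly bounded along the orbit. Applying $d\phi^s$ to $\xi\in G^s(v)$ gives $d\phi^s\xi\in G^s(\phi^s v)$, and $u_\xi(s)$ equals $u_{d\phi^s\xi}(0)=\langle J_{d\phi^s\xi}'(0),e_2(0)\rangle/\|J_{d\phi^s\xi}(0)\|$. Since $M$ is closed the curvature is bounded below, say $K\ge -a^2$; then Proposition \ref{subspace}(5) yields
\[
|u_\xi(s)|=\frac{|K_{\phi^s v}d\phi^s\xi|}{|d\pi_{\phi^s v}d\phi^s\xi|}\le a\qquad\text{for every }s\in\RR.
\]
Consequently $\tfrac12\log(1+u_\xi(t)^2)$ is bounded uniformly in $t$, and
\[
\log\|d\phi^t\xi\|=\log|j_\xi(t)|+O(1).
\]

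Finally I would integrate. From $j_\xi'(t)=u_\xi(t)j_\xi(t)$ and $j_\xi(t)\neq 0$ we get $\tfrac{d}{dt}\log|j_\xi(t)|=u_\xi(t)$, whence
\[
\log|j_\xi(T)|=\log|j_\xi(0)|+\int_0^T u_\xi(t)\,dt.
\]
Since $|j_\xi(0)|=\|d\pi_v\xi\|>0$ is a fixed constant (nonzero because $\xi\in G^s(v)\setminus\{0\}$, again by Proposition \ref{subspace}(6)), dividing by $T$ and letting $T\to\infty$ gives
\[
\chi(v,\xi)=\lim_{T\to\infty}\frac{1}{T}\log\|d\phi^T\xi\|=\lim_{T\to\infty}\frac{1}{T}\int_0^T u_\xi(t)\,dt,
\]
which is the desired formula.

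The only non-routine point is the boundedness of $u_\xi$; I expect this to be immediate from Proposition \ref{subspace}(5) as above, so the entire argument is essentially a short calculation once the Sasaki norm is unpacked. The assumption that $M$ is a surface is used only to write $J_\xi$ as a single scalar multiple of $e_2$; in higher dimensions one would instead carry an $(n-1)\times(n-1)$ matrix Riccati solution and replace $u_\xi$ by its trace.
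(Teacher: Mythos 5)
Your proof is correct and follows essentially the same route as the paper's: unpack the Sasaki norm of $d\phi^t\xi$ into $\|J_\xi(t)\|^2+\|J_\xi'(t)\|^2$, use Proposition \ref{subspace}(5) (via flow-invariance of $G^s$) to discard the derivative term up to a bounded factor, and integrate $(\log|j_\xi|)'=u_\xi$. Your write-up just makes explicit the bound $|u_\xi|\le a$ and the nonvanishing of $j_\xi$ that the paper's terser computation leaves implicit.
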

\begin{proof}
By the definition of Lyapunov exponents and Proposition \ref{subspace}(5), we have
\begin{equation*}
\begin{aligned}
\chi(v, \xi)&=\limsup_{T\to \infty}\frac{1}{T}\log\|d\phi^t\xi\|=\limsup_{T\to \infty}\frac{1}{T}\log \sqrt{\|J_\xi(T)\|^2+\|J'_\xi(T)\|^2}\\
&=\limsup_{T\to \infty}\frac{1}{T}\log \|J_\xi(T)\|=\limsup_{T\to \infty}\frac{1}{T}\int_0^T(\log j_\xi(t))'dt\\
&=\limsup_{T\to \infty}\frac{1}{T}\int_0^T\frac{j_\xi'(t)}{j_\xi(t)}dt=\limsup_{T\to \infty}\frac{1}{T}\int_0^Tu_\xi(t)dt.
\end{aligned}
\end{equation*}
\end{proof}

Denote by $u(w), w\in SX$ the function satisfying Riccati equation  with respect to stable Jacobi field $J^s_w$. Then $u(w)=\frac{(j^s_w)'}{j^s_w}$ where $j^s_w=\|J^s_w\|$. Note that $u(w)\le 0$ if $M$ has no focal points, but this is not necessarily true when $M$ only has no conjugate points. 

Recall that the geodesic $c_v$ is singular if $v\in \R_1^c$. The following says that the stable (and hence unstable) Lyapunov exponent along a singular geodesic is zero.
\begin{lemma}\label{uniformrec}
If $v\in \R_1^c$, then $\lim_{T\to \infty}\frac{1}{T}\int_0^T u(\phi^t(v))dt=0.$
\end{lemma}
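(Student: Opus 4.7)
My plan is to reduce the Cesaro average to a telescoping quantity involving a single Jacobi field, and then bound that quantity using bounded asymptote in both forward and backward directions. Since $M$ is a surface, $G^s(v)$ and $G^u(v)$ are one-dimensional, so the hypothesis $v\in \R_1^c$ forces $G^s(v)=G^u(v)$. The $\phi^t$-invariance of the Green bundles (Proposition \ref{subspace}(3)) then gives $G^s(\phi^t v)=G^u(\phi^t v)$ for every $t\in\RR$. Fix a nonzero $\xi\in G^s(v)=G^u(v)$, set $J(t):=J_\xi(t)$ and $j(t):=\|J(t)\|>0$ (positivity by Proposition \ref{subspace}(6)). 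Because $J$ restricted to $[t,\infty)$ is (up to scale) the stable Jacobi field at $\phi^t v$, the definition of $u$ yields $u(\phi^t v) = j'(t)/j(t)$, and thus
\begin{equation*}
\int_0^T u(\phi^t v)\,dt \;=\; \log j(T) - \log j(0).
\end{equation*}

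It therefore suffices to produce $C>0$ with $j(0)/C \le j(T) \le C\,j(0)$ for all $T\ge 0$. The upper bound follows at once from bounded asymptote applied to the stable Jacobi field $J$ at $v$. For the lower bound I invoke the backward analogue of bounded asymptote, namely $\|J^u(s)\| \le C\,\|J^u(0)\|$ for $s\le 0$ and any unstable Jacobi field along any geodesic, which follows from the stable version by the involution $\tau$ together with Proposition \ref{subspace}(4). Apply this at the base point $\phi^T v$ to the vector $d\phi^T \xi$, which lies in $G^u(\phi^T v)$ by flow invariance. The identity $J_{d\phi^T \xi}(s)=J(T+s)$, which follows from the uniqueness of Jacobi fields with prescribed initial data at $\phi^T v$ using the formulas $J_\xi(t)=d\pi_{\phi^t v}d\phi^t_v\xi$ and $\tfrac{d}{dt}J_\xi(t)=K_{\phi^t v}d\phi^t_v\xi$, rewrites the inequality as $j(T+s) \le C\,j(T)$ for $s\le 0$; taking $s=-T$ gives $j(0)\le C\,j(T)$, i.e.\ $j(T) \ge j(0)/C$.

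Together, $|\log j(T) - \log j(0)| \le \log C$ uniformly in $T\ge 0$, and dividing by $T$ and sending $T\to\infty$ proves the lemma. The main obstacle is the lower bound: it requires both the backward version of bounded asymptote and the surface-dimension upgrade $G^s\cap G^u\neq\{0\} \Rightarrow G^s=G^u$, which ensures that the \emph{same} Jacobi field $J$ at the base point $\phi^T v$ is also an unstable Jacobi field there. In higher dimensions this last step would require additional care to extract a Jacobi field with the required dual property along the whole forward orbit.
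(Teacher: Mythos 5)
Your proof is correct, and it takes a genuinely different route from the paper's. Both arguments start from the same key observation (on a surface, $v\in\R_1^c$ forces $G^s(v)=G^u(v)$, so the single Green Jacobi field $J$ along $c_v$ is simultaneously stable and unstable), and both exploit bounded asymptote in the forward direction for the stable interpretation and in the backward direction (via the flip map and Proposition \ref{subspace}(4)) for the unstable interpretation. But the paper proceeds indirectly: it first argues that $\lim_{T\to\infty}\frac{1}{T}\log j(T)$ exists, then rules out a positive limit by invoking \cite[Proposition 5.3]{GR} to manufacture a second, linearly independent Jacobi field (contradicting $v\in\R_1^c$), and rules out a negative limit by the backward bounded-asymptote inequality. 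You instead prove the much stronger two-sided bound $C^{-1}j(0)\le j(T)\le C\,j(0)$ uniformly in $T\ge 0$, from which the Cesàro limit is immediately zero after the telescoping identity $\int_0^T u(\phi^t v)\,dt=\log j(T)-\log j(0)$. This buys you a self-contained, case-free argument that avoids the external reference entirely and in fact yields more than the lemma asserts (uniform boundedness of $\log j$, not merely sublinearity); the paper's route, by contrast, is the one that generalizes when only one-sided information is available. Your justification of the backward inequality — transporting the base point to $\phi^T v$, noting $d\phi^T\xi\in G^u(\phi^T v)$ by flow invariance, and using $J_{d\phi^T\xi}(s)=J(T+s)$ — is exactly right and is the same mechanism the paper uses implicitly in its negative-limit case.
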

\begin{proof}
Since $v\in \R_1^c$, $J^s_v$ is also an unstable Jacobi field. Since $M$ has bounded asymptote, we actually have 
$$\lim_{T\to \infty}\frac{1}{T}\log j^s_v(T)=\limsup_{T\to \infty}\frac{1}{T}\log j^s_v(T).$$
So the following two limits exist and coincide
$$\lim_{T\to \infty}\frac{1}{T}\int_0^T u(\phi^t(v))dt=\lim_{T\to \infty}\frac{1}{T}\log j^s_v(T).$$

If $\lim_{T\to \infty}\frac{1}{T}\log j^s_v(T)>0$, by \cite[Proposition 5.3]{GR}, one can show that $v\in \mathcal{R}_1$ by explicitly constructing another linearly independent Jacobi field, which is a contradiction. 

If $\lim_{T\to \infty}\frac{1}{T}\log j^s_v(T)<0$, then $\lim_{T\to \infty} j^s_v(T)=0$. Since $J^s_v$ is also an unstable Jacobi field, by bounded asymptote we have for any $t\in \RR$, 
$$j^s_v(t)\le C\lim_{T\to \infty} j^s_v(T)=0$$
which is a contradiction. So we have proved that $$\lim_{T\to \infty}\frac{1}{T}\int_0^T u(\phi^t(v))dt=0.$$
\end{proof}

The proof of Theorem \ref{singulargeodesic}(3) uses an argument based on the following \emph{expansivity} property of a vector $v\in SM$ not tangent to a generalized strip defined in \eqref{strip}. This argument is also used in the proof of Lemma \ref{flat closed geodesic}.
\vspace{.1cm}

\begin{definition}[Cf. Definition 3.2.11 in \cite{KH}]
We say $v\in SM$ has the expansivity property if there exists a small $\delta_0>0$, such that whenever $d(\phi^t(v), \phi^t(w))< \delta_0$, $\forall t\in \mathbb{R}$, then $w=\phi^{t_0}(v)$ for some $t_0$ with $|t_0|<\delta_0$. 
\end{definition}
Let $\mathcal{O} (\lv)$ denote the orbit of $\lv\in SM$ under the geodesic flow.
\begin{lemma}\label{expansivity}
If $\lv\in SM$ is not tangent to a generalized strip, then it has the expansivity property.
\end{lemma}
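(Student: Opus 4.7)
My plan is a classical argument: translate the expansivity hypothesis into a statement about bi-asymptotic geodesics on $X$, then apply the generalized strip lemma.

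\textbf{Step 1 (Lift to the universal cover).} Let $\inj(M)>0$ denote the injectivity radius and pick $\delta_0 < \inj(M)/4$ to be determined. Suppose $w\in SM$ satisfies $d(\phi^t\lv,\phi^t w)<\delta_0$ for every $t\in\RR$. Choose any lift $\tilde v\in SX$ of $\lv$, and then choose the unique lift $\tilde w\in SX$ of $w$ that is $\delta_0$-close to $\tilde v$. The key observation is that if $\delta_0$ is smaller than half the injectivity radius, then for each $t$ there is a unique lift of $\phi^t w$ lying within $\delta_0$ of $\phi^t\tilde v$; by continuity in $t$ this lift must be $\phi^t\tilde w$ itself. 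Hence
\[
d_{SX}(\phi^t\tilde v,\phi^t\tilde w)<\delta_0 \qquad \forall\, t\in\RR,
\]
and in particular the geodesics $c_{\tilde v}$ and $c_{\tilde w}$ in $X$ are bi-asymptotic (in fact at bounded Hausdorff distance $\le \delta_0$).

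\textbf{Step 2 (Apply the generalized strip lemma).} By Lemma \ref{striplemma}, the bi-asymptoticity of $c_{\tilde v}$ and $c_{\tilde w}$ produces a connected set $\Sigma\subset I(\tilde v)$, containing both starting points, and a generalized strip
\[
\mathcal{S}=\bigcup_{t\in\RR,\,q\in\Sigma}c_{\nabla b_{\tilde v}(q)}(t)
\]
such that $c_{\tilde w}\subset\mathcal{S}$. By hypothesis, $\lv$ (and therefore $\tilde v$) is not tangent to any nontrivial generalized strip, which forces $\Sigma=\{\tilde v(0)\}$ and $\mathcal{S}=c_{\tilde v}(\RR)$. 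Consequently $c_{\tilde w}$ coincides with $c_{\tilde v}$ as an unparametrized geodesic, so $\tilde w=\phi^{t_0}\tilde v$ for some $t_0\in\RR$, and projecting back yields $w=\phi^{t_0}\lv$.

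\textbf{Step 3 (Bound $|t_0|$).} It remains to guarantee $|t_0|<\delta_0$. Taking $t=0$ gives $d_{SX}(\tilde v,\phi^{t_0}\tilde v)<\delta_0$. If $\lv$ is not periodic, then by continuity of the flow at $t=0$ we may shrink $\delta_0$ so that $d_{SX}(\tilde v,\phi^s\tilde v)<\delta_0$ implies $|s|<\delta_0$. If $\lv$ is periodic with period $T$, we further require $\delta_0<T/2$ (and use the lift condition to rule out nontrivial multiples of $T$), whence $|t_0|<\delta_0$ again.

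\textbf{Expected difficulty.} The structural steps are standard; the only subtle point is Step 1, namely that the $\delta_0$-closeness in $SM$ truly lifts to $\delta_0$-closeness of $\phi^t\tilde v$ and $\phi^t\tilde w$ in $SX$ for \emph{all} times simultaneously. This is where the choice $\delta_0<\inj(M)/4$ and continuity of $t\mapsto\phi^t\tilde w$ are essential, ruling out a ``jumping'' deck transformation. Once this is in place, the conclusion follows directly from the generalized strip lemma together with the hypothesis that no nontrivial strip contains $\lv$.
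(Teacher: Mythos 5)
Your proof is correct and follows essentially the same route as the paper: lift the two forward-and-backward $\delta_0$-close orbits to $SX$ using the injectivity radius, and invoke the generalized strip lemma (Lemma \ref{striplemma}) to conclude that a geometrically distinct companion geodesic would force $\lv$ to be tangent to a nontrivial generalized strip. Your Steps 1 and 3 just spell out details the paper leaves implicit (in Step 3 note that in $X$ geodesics are globally minimizing, so $|t_0|\le d_{SX}(\tilde v,\phi^{t_0}\tilde v)<\delta_0$ is immediate).
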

\begin{proof}
Assume the contrary. Then for any small $\epsilon >0$ less than the injectivity radius of $M$, there exists a point $\lw\in SM$ such that $\lw\notin \mathcal{O} (\lv)$ and $d(c_\lv(t), c_\lw(t))< \epsilon$, $\forall t\in \mathbb{R}$. By the choice of $\epsilon$, we can lift $c_\lv(t)$ and $c_\lw(t)$ to the universal cover $X$ such that
$$d(c_v(t), c_w(t))< \epsilon,\ \ \forall t\in \mathbb{R}.$$
Thus by Lemma \ref{striplemma}, $c_v(t)$ and $c_w(t)$ bound a generalized strip. Hence $\lv$ is tangent to a generalized strip, a contradiction.
\end{proof}

\subsection{Area of ideal triangles}
Given distinct $x,y,z \in \pX$, an \textit{ideal triangle} with vertices $x, y, z$ means the region in $X$ bounded by the three geodesics joining the vertices. In the case when at least one of $x, y, z$ is on $\pX$ (the other points can be inside $X$), we also call the region bounded by the three geodesics an ideal triangle. 

The following theorem is a version of \cite[Theorem 1]{RR} for surfaces without conjugate points.

\begin{theorem}\label{ideal triangle}
Let $M$ be a closed surface with genus $\mathfrak{g} \geq 2$, without conjugate points and with bounded asymptote, and $X$ its universal cover. Suppose that $c_v$ is a singular geodesic in  $X$, i.e., $v\in \mathcal{R}_1^c$, and moreover it is a lift of a closed geodesic $c_\lv$. Then every ideal triangle having $c_v$ as an edge has infinite area.
\end{theorem}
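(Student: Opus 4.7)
The plan is to argue by contradiction, assuming some ideal triangle $T$ with edge $c_v$ has finite area. I focus on the generic case where $T$ has ideal vertices $\eta_1 = c_v(-\infty)$, $\eta_2 = c_v(+\infty)$, $\eta_3 \in \pX$, with remaining edges $c_1$ joining $\eta_1$ to $\eta_3$ and $c_2$ joining $\eta_3$ to $\eta_2$; the remaining configurations (where $\eta_3$ coincides with $\eta_1$ or $\eta_2$, or lies in $X$) are handled by minor variants.

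Applying Gauss--Bonnet to a compact geodesic exhaustion $\{T_n\}$ of $T$, and using uniform visibility (Definition~\ref{vis}) to force the interior angles at the approximating vertices to tend to zero, yields
\[
\int_{T_n} K\, dA \;=\; \alpha_n + \beta_n + \gamma_n - \pi \;\longrightarrow\; -\pi,
\]
so $\int_T K\, dA = -\pi$ and $K$ is absolutely integrable over $T$. Since $v\in\R_1^c$, in dimension two $G^s(v)=G^u(v)$, and Lemma~\ref{uniformrec} together with the $\ell$-periodicity of $u(\phi^tv)$ coming from $c_\lv$ being closed gives $\int_0^\ell u\, dt=0$, hence $\|J_v^s(t+\ell)\|=\|J_v^s(t)\|$. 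Therefore the stable (equivalently, unstable) Jacobi field along $c_v$ satisfies $0<c\le\|J_v^s(t)\|\le C$ for all $t\in\RR$.

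The core geometric step is to exhibit an infinite-area tubular half-strip of $T$ along $c_v$. Introduce Fermi coordinates $(t,s)$ along $c_v$ with metric $ds^2+a(t,s)^2\,dt^2$, where $a(t,0)=1$ and $a_s(t,0)=0$; compactness of $M$ and periodicity of $c_\lv$ yield $a(t,s)\ge 1/2$ uniformly in $t$ for $|s|\le\delta_1$ small. I then show that the perpendicular distance from $c_v(t)$ to $c_2$ is bounded below by some $\kappa>0$ for all $t\ge T_0$: since $c_2$ is asymptotic to $c_v$ at $\eta_2$, the variation vector field along a one-parameter family of asymptotic geodesics linking $c_v$ to $c_2$ is a nontrivial stable Jacobi field along $c_v$, whose norm is bounded below by the previous step, and integrating this infinitesimal lower bound produces the macroscopic estimate $d(c_v(t),c_2)\ge\kappa$ for large $t$, while bounded asymptote supplies the matching upper bound keeping $c_2$ inside the Fermi tube. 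Consequently the strip $\{(t,s):t\ge T_0,\ 0\le s\le\min(\kappa,\delta_1)\}$ lies in $T$, and
\[
\text{area}(T)\;\ge\;\int_{T_0}^{\infty}\!\int_0^{\min(\kappa,\delta_1)} a(t,s)\, ds\, dt \;=\; +\infty,
\]
contradicting finiteness.

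The main obstacle is the bootstrap from the infinitesimal Jacobi-field lower bound of the second step to the macroscopic distance bound $d(c_v(t),c_2)\ge\kappa$ of the third. Since the classical flat strip lemma fails in the no-conjugate-points setting (as remarked after Lemma~\ref{striplemma}), one cannot directly produce a flat strip bounded by $c_v$ and $c_2$; instead one must control a continuous family of asymptotic geodesics interpolating between $c_v$ and $c_2$ via bounded asymptote, and rule out the degenerate scenario in which $d(c_v(t),c_2)$ collapses to zero along a subsequence by invoking Lemma~\ref{striplemma} to force $c_2$ to be bi-asymptotic to $c_v$, which contradicts $c_2(-\infty)=\eta_3\neq\eta_1=c_v(-\infty)$.
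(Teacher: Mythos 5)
There is a genuine gap at the step you yourself flag as the main obstacle, and the proposed repair does not close it. Your plan is to deduce the macroscopic bound $d(c_v(t),c_2)\ge\kappa$ from the fact that the stable Jacobi field \emph{along $c_v$} satisfies $0<c\le\|J^s_v(t)\|\le C$. But the distance from $c_v(t)$ to the other edge is controlled by integrating the variation field over the whole interpolating family: writing $c_\sigma=c_{h(\sigma)}$ with $h(\sigma)\in\F^s(v)$, one has roughly $d(c_v(t),c_2(t))\approx\int_0^1\|J^s_{h(\sigma)}(t)\|\,d\sigma$, and the lower bound you have applies only to the single field at $\sigma=0$. The intermediate geodesics $c_\sigma$, $\sigma>0$, need not be singular or closed, so nothing prevents $\|J^s_{h(\sigma)}(t)\|\to 0$ for every $\sigma>0$; the set $\R_1^c$ could consist of the single closed orbit. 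Your fallback — that if $d(c_v(t),c_2)$ collapses along a subsequence then Lemma~\ref{striplemma} forces $c_2$ to be bi-asymptotic to $c_v$ — is also incorrect: forward collapse of the distance says nothing about the backward behaviour, and the strip lemma takes bi-asymptoticity as a \emph{hypothesis}. Two distinct forward-asymptotic geodesics with different negative endpoints can perfectly well converge to each other as $t\to+\infty$ (this is exactly what happens in negative curvature), so no contradiction with $\eta_3\neq\eta_1$ arises. A smaller issue: your Gauss--Bonnet step asserts absolute integrability of $K$ over $T$ from $\int_{T_n}K\,dA\to-\pi$, which fails without a sign condition on $K$ (not available here); but that step is not load-bearing.

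The paper's proof avoids precisely this trap: it does \emph{not} claim any uniform lower bound on the distance to the opposite edge. Instead it sets $a_n:=l(\pi\phi^{nn_0T_0}h[0,a])$ for the flowed stable-horocycle arc and shows that finiteness of the area forces $\sum_n a_n<\infty$, while the zero average of the Riccati solution $u$ along the closed singular geodesic, transferred to the nearby arcs by continuity and compactness (inequality \eqref{e:av1}), yields the recursion $a_n\ge a_{n-1}(1-Ba_{n-1})$, whence $\prod(1-Ba_i)$ stays bounded away from $0$ only if $\sum a_i<\infty$ --- but \eqref{e:series} then forces $\prod(1-Ba_i)\to 0$, i.e.\ $\sum a_i=\infty$, a contradiction. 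In other words, the arcs are allowed to shrink, just not summably. If you want to salvage your more geometric route, you would need an argument of exactly this type to control the Jacobi fields of the \emph{interpolating} geodesics, at which point you have essentially reproduced the paper's proof.
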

\begin{proof}
Suppose that $h(s)\in SX, s\in [0,a]$ is the curve in the horospherical foliation $\F^{s}$ with $h(0)=v$. We want to show that the area of $\pi \phi^{[0,\infty]}h[0,a]$ is infinite. Assume the contrary. Then obviously $\liminf_{t\to \infty}l(\pi \phi^t h[0,a])=0$ where $l$ denotes the length of the curve. As a consequence of bounded asymptote,
\begin{equation*}
\lim_{t\to \infty}l(\pi \phi^t h[0,a])=0
\end{equation*}

Let $T_0$ be the least period of $\lv$. Since $v\in \R_1^c$, by Lemma \ref{uniformrec}
$$\lim_{n\to \infty}\frac{1}{nT_0}\int_0^{nT_0} u(\phi^t(v))dt=0.$$
By continuity of $u$, for any $s\in [0,a]$,
$$\lim_{n\to \infty}\frac{1}{nT_0}\int_0^{nT_0} u(\phi^t(h(s)))dt=0.$$
Then there exists large enough $n_0>0$ which depends on the unit vector $c(s)$ such that
\begin{equation*}
-\frac{1}{n_0T_0}\int_0^{n_0T_0} u(\phi^t(h(s)))dt\le l(\pi h[0,s]).
\end{equation*}
On compact manifold $M$, $\pi g^{[0,\infty]}(h[0,a])$ is contained in a small compact neighborhood of the closed geodesic $c_v$.
By the compactness and continuity, we can find a uniform constant $n_0>0$ independent of $s\in[0,a]$ such that
\begin{equation}\label{e:av1}
-\frac{1}{n_0T_0}\int_0^{n_0T_0} u(\phi^t(h(s)))dt\le l(\pi h[0,s]).
\end{equation}

By the argument at the beginning of this section, Green bundles $G^{s/u}$ are tangent to the horospherical foliations $\F^{s/u}$ in our setting. Then we have
\begin{equation*}
\begin{aligned}
l(\pi \phi^{nn_0T_0}(h[0,a]))&=\int_0^a \|d\pi d\phi^{nn_0T_0}h'(s)\|ds=\int_0^a \|J^s_{h(s)}(nn_0T_0)\|ds\\
&=\int_0^a \|d\pi d\phi^{(n-1)n_0T_0}h'(s)\|e^{\int_0^{n_0T_0}u(\phi^{t+(n-1)n_0T_0}h(s))dt}ds\\
&\ge \int_0^a \|d\pi d\phi^{(n-1)n_0T_0}h'(s)\|e^{-n_0T_0l(\pi \phi^{(n-1)n_0T_0}h([0,s]))dt}ds\\
&\ge l(\pi \phi^{(n-1)n_0T_0}(h[0,a]))e^{-n_0T_0l(\pi \phi^{(n-1)n_0T_0}h([0,a]))}\\
&\ge l(\pi \phi^{(n-1)n_0T_0}(h[0,a]))\left(1-n_0T_0l(\pi \phi^{(n-1)n_0T_0}h([0,a])\right)
\end{aligned}
\end{equation*}
where we used \eqref{e:av1} and $e^x\ge 1+x$ for $|x|$ small. Denote $a_n:=l(\pi \phi^{nn_0T_0}(h[0,a]))$ and $B=n_0T_0$, we get
\begin{equation}\label{e:series}
a_n\ge a_{n-1}(1-Ba_{n-1})\ge \cdots \ge a_0 \prod_{i=0}^{n-1}(1-Ba_i).
\end{equation}

Since we assume the contrary that the area of $\pi \phi^{[0,\infty]}h[0,a]$ is finite, by bounded asymptote, we have $\sum_{i=0}^{\infty}a_i<+\infty$ and $\lim_{n\to \infty}a_n=0$. Then by \eqref{e:series}, 
\[\lim_{n\to \infty}\prod_{i=0}^{n-1}(1-Ba_i)=0\]
which is equivalent to $\sum_{i=0}^{\infty}a_i=+\infty$. We have a contradiction and therefore the theorem follows.
\end{proof}

\subsection{Singular geodesics}
In this subsection, we discuss some important properties of the singular geodesics. Our Theorem  is a straightforward corollary of these properties. In fact, it is closely related to the following two lemmas (Lemma \ref{flat closed geodesic0} and \ref{flat closed geodesic}).
The first lemma shows that if a flat geodesic converges to a closed one (no matter singular or not), then the former geodesic must also be closed, and coincide with the latter.
\vspace{.1cm}

\begin{lemma}\label{flat closed geodesic0}
Suppose that $y\in \R_1^c\cap \text{Per}(\phi^t)$, and $\lim_{t\to \infty}d(\phi^{t}(y),\phi^{t}(z))=0$ where $c_z(\pm \infty)=\xi_\gamma^{\pm}$ for some $\gamma\in \Gamma$. Then $\mathcal{O}(y)=\mathcal{O}(z)$. In particular, $\mathcal{O}(y)$ is bi-asymptotic to an axis of $\c$.
\end{lemma}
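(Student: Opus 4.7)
The strategy is to lift to the universal cover, establish that $c_{\tilde y}$ and $c_{\tilde z}$ are bi-asymptotic, place them in a common generalized strip, and then use Floquet theory along the periodic orbit $c_{\tilde y}$ together with bounded asymptote to derive a contradiction between a forced periodic lower bound on their separation and the hypothesis $d\to 0$.

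First I would lift $y$ and $z$ to $\tilde y, \tilde z \in SX$ chosen so that $d_{SX}(\phi^t \tilde y, \phi^t \tilde z)\to 0$ (possible since the downstairs convergence and the periodicity of $y$ let us pick compatible lifts). Then $\xi^+ := c_{\tilde y}(+\infty) = c_{\tilde z}(+\infty)$, and since $d\to 0$ forces equal Busemann values at $\xi^+$, one has $\tilde z\in\F^s(\tilde y)$. Periodicity of $y$ with period $T$ provides $\alpha\in\Gamma$ such that $c_{\tilde y}$ is an axis of $\alpha$ with $\xi_\alpha^+ = \xi^+ = \xi_\gamma^+$. Using that the cocompact discrete group $\Gamma$ acting on a uniform visibility manifold without conjugate points has no parabolic elements (so the stabilizer of $\xi^+$ in $\Gamma$ is virtually cyclic and its nontrivial elements share both fixed points at infinity), I would argue $\xi_\alpha^- = \xi_\gamma^-$; then $c_{\tilde y}, c_{\tilde z}$ are bi-asymptotic and, by Lemma \ref{striplemma}, bound a common generalized strip $\mathcal S$.

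The analytic crux is Floquet theory along the closed orbit $c_{\tilde y}$: the Jacobi equation has $T$-periodic coefficients; $\tilde y\in\R_1^c$ forces the stable Floquet multiplier to equal $1$; and bounded asymptote forbids a non-trivial Jordan block, since such a block would generate linearly growing Jacobi fields contradicting the bounded asymptote upper bound. Hence every stable Jacobi field along $c_{\tilde y}$ is genuinely $T$-periodic, so $\|J(t)\|$ is uniformly bounded below by a positive constant --- the same rigidity underlying Theorem \ref{ideal triangle}. Inside the strip $\mathcal S$ this periodicity promotes to a periodic lower bound on $d(c_{\tilde y}(t), c_{\tilde z}(t))$ whenever $c_{\tilde y}\ne c_{\tilde z}$, directly contradicting $d\to 0$. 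Therefore $c_{\tilde y} = c_{\tilde z}$ and $\mathcal O(y) = \mathcal O(z)$. The ``in particular'' statement is immediate: $c_{\tilde y}$ now inherits the endpoints $\xi_\gamma^\pm$ from $c_{\tilde z}$, so by the Morse lemma (Proposition \ref{geo}(5)) it is bi-asymptotic to any axis of $\gamma$.

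The hard part will be justifying the promotion from the infinitesimal Floquet rigidity to a finite-distance lower bound for distinct parallel geodesics in $\mathcal S$. The flat strip lemma is unavailable here (\cite{Burns}), so $\mathcal S$ may genuinely be non-flat; nevertheless, the $\langle\alpha\rangle$-invariance of $\mathcal S$ combined with continuity of the transverse Jacobi field norm and compactness of the quotient $\mathcal S/\langle\alpha\rangle$ should give a uniform positive lower bound. Carefully carrying out this compactness step, and independently the discrete-group argument ruling out $\xi_\alpha^-\neq\xi_\gamma^-$, are the two delicate parts of the proof.
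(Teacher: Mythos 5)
Your proposal departs substantially from the paper's proof and, as written, has two genuine gaps.

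\textbf{How the paper actually proves it.} The paper lifts so that $d(\tilde c(t),\tilde c_1(t))\to 0$ (with $\tilde c$ a lift of $c_y$ and $\tilde c_1$ a lift of $c_z$), hence $\tilde c(+\infty)=\tilde c_1(+\infty)=\xi_\gamma^+$. It then \emph{assumes} $\gamma$ does not fix $\tilde c(\pm\infty)$, and derives a contradiction from an area--preservation argument: since $\gamma$ is an isometry, the areas of $ABCD$ and its image are equal; letting two vertices approach the ideal vertex shows the area of $ABCD$ would have to dominate the area of an ideal triangle with $\tilde c$ as an edge, which is \emph{infinite} by Theorem~\ref{ideal triangle} because $c_y$ is singular and closed. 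Hence $\gamma$ fixes $\tilde c(\pm\infty)$, so $\tilde c(\pm\infty)=\xi_\gamma^\pm$, and then $d\to 0$ forces $\tilde c=\tilde c_1$. The singularity and periodicity hypotheses enter precisely through Theorem~\ref{ideal triangle}. Your proposal uses neither the area argument nor Theorem~\ref{ideal triangle}.

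\textbf{Gap 1: the group-theoretic shortcut.} You justify $\xi_\alpha^-=\xi_\gamma^-$ by asserting that, for a cocompact $\Gamma$ on a uniform visibility manifold without conjugate points, the stabilizer of a boundary point is virtually cyclic and all its nontrivial elements share both fixed points. This is \emph{exactly} what the paper warns against in Remark~\ref{cyclic}: without the topological conjugacy to a negative-curvature $\Gamma$-action (available for the class $\mathcal{H}$ of \cite{CKW2} but absent here), one cannot conclude that the stabilizer of a pair of antipodal boundary points is infinite cyclic, much less that the stabilizer of a single point is virtually cyclic. What \emph{is} true, and what the paper's own Remark following the Lemma alludes to, is the weaker commensurability statement of Lemma~\ref{isometries} ($\alpha^m=\gamma^n$ for some $m,n$), proved via proper discontinuity and quasi-convexity, which combined with Lemma~\ref{fix} gives $\xi_\alpha^\pm=\xi_\gamma^\pm$. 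That is enough, and it is a legitimate alternative to the area argument, but you must actually prove or invoke it; the ``no parabolics $\Rightarrow$ virtually cyclic stabilizer'' shortcut is not justified in this setting.

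\textbf{Gap 2: the promotion from Floquet rigidity to a finite-distance floor.} Your Floquet analysis is itself fine: since $y\in\R_1^c$, Lemma~\ref{uniformrec} gives $\int_0^{T_0}u\,dt=0$ along the period, so $j^s(T_0)=j^s(0)$ and (on a surface) $J^s$ is genuinely $T_0$-periodic, hence $\|J^s(t)\|$ is bounded below. But this is \emph{infinitesimal} data. It does not directly yield a positive lower bound on the \emph{finite} distance $d(c_{\tilde y}(t),c_{\tilde z}(t))$ for two distinct bi-asymptotic geodesics. The generalized strip need not be flat and, as the paper explicitly notes after Lemma~\ref{compact}, ``a nontrivial generalized strip might have width zero,'' so compactness of $\mathcal S/\langle\alpha\rangle$ does not by itself produce a uniform positive floor. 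Your outlined compactness argument leaves precisely this gap open. Notice also that the non-contraction of Jacobi fields along horocycle arcs is exactly what the paper's Theorem~\ref{ideal triangle} integrates up to conclude infinite ideal-triangle \emph{area}; the paper then uses that conclusion in an area-preservation argument, not as a distance lower bound. You are effectively re-deriving a piece of Theorem~\ref{ideal triangle} but then trying to extract from it a statement (a pointwise distance floor) that it does not directly give.

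In short: the general skeleton (lift, bi-asymptoticity, rule out a nontrivial strip) is sound and an alternative route via Lemma~\ref{isometries} does exist, but as stated your step~3 rests on a fact the paper flags as unavailable here, and your final contradiction relies on an infinitesimal-to-finite promotion that is not established.
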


\begin{proof}
\begin{figure}[ht]
\centering
\setlength{\unitlength}{\textwidth}

\begin{tikzpicture}
\draw (0,0) circle (2);

\draw [name path=line 1](0,2) to [out=260,in=60] (-1,-1.73)node[below] {$\tilde{c}_0$};
\draw  [name path=line 2](0,2)to [out=270,in=130] (1, -1.73)node[below] {$\tilde{c}$};
\draw  [name path=line 3](0,2)to [out=280,in=135] (1.42, -1.42)node[right] {$\phi(\tilde{c})$};

\draw [name path=line 4](-1.73, -1) to [out=355,in=182] (1.73,-1)node[right] {$\tilde{\alpha}$};
\draw ([name path=line 5](-2,0) to [out=355,in=182] (2,0) node[right] {$\phi(\tilde{\alpha})$};
\draw [name path=line 6](-1.95, 0.4) to [out=355,in=182] (1.95,0.4)node[right] {$\tilde{\beta}$};
\draw ([name path=line 7](-1.73,1.0) to [out=355,in=182] (1.73,1.0) node[right] {$\phi(\tilde{\beta})$};
\coordinate [label=below:$A$] (A) at (-0.67,-1.05);
\coordinate [label=below:$B$] (B) at (0.51,-1.05);
\coordinate [label=below left:$C$] (C) at (-0.35,-0.05);
\coordinate [label=below left:$D$] (D) at (0.15,-0.05);
\coordinate [label=below right:$E$] (E) at (0.5,-0.03);
\coordinate [label=above left:$C'$] (C') at (-0.15,0.95);
\coordinate [label=above:$D'$] (D') at (0.03,0.95);
\coordinate [label=above right:$E'$] (E') at (0.2,0.95);
\coordinate [label=above left:$A'$] (A') at (-0.25,0.35);
\coordinate [label=above right:$B'$] (B') at (0.05,0.35);
\coordinate [label=above:$F$] (F) at (0,2);
\fill (A) circle (1.5pt)(B) circle (1.5pt)(C) circle (1.5pt)(D) circle (1.5pt)(E) circle (1.5pt)(F) circle (1.5pt)(A') circle (1.5pt)(B') circle (1.5pt) (C') circle (1.5pt)(D') circle (1.5pt)(E') circle (1.5pt);
\end{tikzpicture}

\caption[]{Proof of Lemma \ref{flat closed geodesic0}}

\end{figure}

We can lift geodesics $c_z(t), c_y(t)$ to the universal cover $X$, 
denoted by $\tilde{c}_1(t)$ and $\tilde{c}(t)$ respectively, such that 
$$\lim_{t\to +\infty}d(\tilde{c}_1(t), \tilde{c}(t))=0.$$
In particular, $\tilde{c}_1(+\infty)=\tilde{c}(+\infty)$.

Since the isometry $\gamma$ of $X$ fixes $\tilde c_1(\pm \infty)$, there exists an axis $\tilde c_0$ of $\gamma$ such that $\gamma(\tilde{c}_0(t))=\tilde{c}_0(t+t_0)$ and $\tilde{c}_0(\pm \infty)=\tilde{c}_1(\pm \infty)$. Moreover, on the boundary of the disk $\pX$, $\gamma$ fixes exactly two points $\tilde{c}_0(\pm \infty)$, and for any other point $a\in \pX\setminus\{\tilde{c}_0(\pm \infty)\}$, $\lim_{n\to +\infty}\gamma^n(a)= \tilde{c}_0(+\infty)$.

Assume $\tilde{c}(\pm \infty)$ is not fixed by $\gamma$. Then $\tilde{c}$ and $\gamma(\tilde{c})$ do not intersect, since $\gamma(\tilde{c})(+\infty)=\tilde{c}(+\infty)$. Replacing $\gamma$ by $\gamma^N$ for a large enough $N\in \mathbb{N}$ if necessary, we know that the position of $\gamma(\tilde{c})$ must be as shown in Figure 1. We then pick another two geodesics $\tilde{\alpha}$ and $\tilde{\beta}$ as in Figure 1. The image of $ABB'A'$ under $\gamma$ is $CEE'C'$. Since $\gamma$ is an isometry, it preserves area. So the area of $ABCD$ is equal to the region $A'B'DEE'C'$, and thus greater than the area of the region $DEE'D'$. We can let $A'$ and $B'$ approach $F$. In this process, the area of the region $DEE'D'$ approaches the area of the ideal triangle $DEF$. But since $c_y$ is a singular and closed geodesic, the area of $DEF$ is infinite by Theorem \ref{ideal triangle}. Then $ABCD$ has infinite area which is absurd. So $\gamma(\tilde{c}(\pm \infty))=\tilde{c}(\pm \infty)$.

Therefore $\tilde{c}(\pm\infty)= \tilde{c}_0(\pm\infty)$. Then either $\tilde{c}(t)$ and $\tilde{c}_0(t)$ bound a strip by Lemma \ref{striplemma} or $\tilde{c}(t)=\tilde{c}_0(t)$. Recall that $\lim_{t\to +\infty}d(\tilde{c}(t), \tilde{c}_0(t))=0$, we must have $\tilde{c}(t)=\tilde{c}_0(t)$. Hence $\mathcal{O}(y)=\mathcal{O}(z)$.
\end{proof}
\begin{remark}
The above lemma holds more generally without assuming that $y,z$ are singular. In fact, this is a rather general property of isometries on $X$. See the proof of Lemma \ref{isometries}. The above proof is of independent interest and uses geometric properties of singular geodesics. 
\end{remark}

To refine Lemma \ref{flat closed geodesic0}, we need the following lemma. 
\begin{lemma}\label{boundary}
Let $M$ be a closed surface with genus $\mathfrak{g} \geq 2$, without conjugate points and with bounded asymptote. If $\lw$ is periodic, $\lw' \in \F^s(\lw)\subset SM$ and
$$\liminf_{t\to +\infty} d(c_\lw(t), c_{\lw'}(t))=\delta >0,$$
then $c_\lw(t)$ and $c_{\lw'}(t)$ converge to the boundaries of a nontrivial generalized strip.
\end{lemma}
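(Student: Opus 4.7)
The strategy is to pull back $c_{w'}$ by the deck transformation associated to the closed orbit of $\lw$, extract a limit geodesic biasymptotic to $c_w$ but distinct from it, and apply the generalized strip lemma. I lift everything to the universal cover, with $\gamma\in\C$ the deck transformation satisfying $\gamma c_w(t)=c_w(t+T_0)$ (which exists since $\lw$ is periodic of period $T_0$), and with $w'\in SX$ a lift of $\lw'$ chosen inside the stable leaf $\F^s(w)$. The hypothesis together with $d_M\le d_X$ gives $\liminf_{t\to+\infty}d_X(c_w(t),c_{w'}(t))\ge\delta$, while bounded asymptote yields the upper bound $d_X(c_w(t),c_{w'}(t))\le C\,d_X(\pi w,\pi w')$ for all $t>0$. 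Setting
\[
v_n:=\gamma^{-n}\phi^{nT_0}w'\in SX,
\]
the identity $d_X(\pi v_n,\pi w)=d_X(c_{w'}(nT_0),c_w(nT_0))$ shows $\{v_n\}$ lies in a compact subset of $SX$, so I extract a subsequence $v_{n_k}\to v_\infty\in SX$ satisfying $d_X(\pi v_\infty,\pi w)\ge\delta$.

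Next I identify $c_{v_\infty}$. Since $c_{v_n}(t)=\gamma^{-n}c_{w'}(t+nT_0)$, its forward endpoint is $\gamma^{-n}c_w(+\infty)=c_w(+\infty)$ (a fixed point of $\gamma$), and its backward endpoint $\gamma^{-n}c_{w'}(-\infty)$ converges to $c_w(-\infty)$ as $n\to\infty$, because $\gamma^{-1}$ attracts $\pX\setminus\{c_w(-\infty)\}$ to $c_w(-\infty)$; the degenerate case $c_{w'}(-\infty)=c_w(-\infty)$ already produces a nontrivial strip bounded by $c_w$ and $c_{w'}$ directly. Hence $c_{v_\infty}$ is biasymptotic to $c_w$. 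To show $c_{v_\infty}\ne c_w$ as subsets of $X$, I use that $w'\in\F^s(w)$ places $c_w(nT_0)$ and $c_{w'}(nT_0)$ on a common horosphere centered at $\xi:=c_w(+\infty)$; since $\gamma$ fixes $\xi$, applying $\gamma^{-n}$ keeps $\pi v_n$ on $H(w)$, so $\pi v_\infty\in H(w)$. Because $c_w$ meets $H(w)$ only at $c_w(0)$ and $d_X(\pi v_\infty,c_w(0))\ge\delta>0$, the basepoint $\pi v_\infty$ lies off $c_w$ and so $c_{v_\infty}\ne c_w$ as sets. Lemma~\ref{striplemma} then furnishes a nontrivial generalized strip $\mathcal S$ containing both $c_w$ and $c_{v_\infty}$.

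To match the statement for $c_\lw,c_{\lw'}$ themselves, I project to $M$: the closed geodesic $c_\lw=\pr(c_w)$ traces one boundary of $\lS:=\pr(\mathcal S)$, and continuity of the geodesic flow applied to $v_{n_k}\to v_\infty$ yields, uniformly in $s$ on compact intervals,
\[
d_X\bigl(c_{w'}(n_kT_0+s),\,c_{v_\infty}(n_kT_0+s)\bigr)\longrightarrow 0,
\]
so the orbit of $\lw'$ accumulates on the projected orbit of $v_\infty$, which constitutes the complementary boundary of $\lS$. The main obstacle I anticipate is twofold: first, establishing $c_{v_\infty}\ne c_w$, which depends crucially on combining the horosphere constraint $\pi v_\infty\in H(w)$ with the quantitative hypothesis $\delta>0$; and second, upgrading subsequential convergence along $\{n_kT_0+s\}$ to a genuine asymptotic statement as $t\to+\infty$, which will require using the equivariance relation $\gamma v_n=\phi^{T_0}v_{n-1}$ to pin every accumulation point of $\{v_n\}$ inside the $\gamma$-invariant strip $\mathcal S$.
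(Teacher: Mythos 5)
Your proposal is essentially the paper's argument in a mildly different dress, and it is correct in its main thrust. The paper stays on the compact quotient $SM$: it takes times $s_i$ realizing the liminf, extracts limits $\phi^{s_i}\lw\to\lv$, $\phi^{s_i}\lw'\to\lv'$, and observes that $d(c_{\lv}(t),c_{\lv'}(t))\in[\delta,Q_1]$ for \emph{all} $t$ (the lower bound because $t+s_i\to\infty$, the upper bound from Proposition~\ref{horofoliation}(3)), so the lifts bound a nontrivial strip by Lemma~\ref{striplemma}. Your renormalization $v_n=\gamma^{-n}\phi^{nT_0}w'$ is the same move performed upstairs; your mechanism for $c_{v_\infty}\neq c_w$ (namely $\pi v_\infty\in H(w)$ with $d(\pi v_\infty,\pi w)\ge\delta$ and $c_w\cap H(w)=\{c_w(0)\}$) replaces the paper's choice of liminf-realizing times, and both work. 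Two points to tighten. First, the displayed claim $d_X(c_{w'}(n_kT_0+s),c_{v_\infty}(n_kT_0+s))\to0$ is false as written: continuity of the flow gives $d_X(\phi^{n_kT_0+s}w',\,\gamma^{n_k}\phi^{s}v_\infty)\to0$, and $\gamma^{n_k}\phi^{s}v_\infty=\phi^{n_kT_0+s}v_\infty$ only if $c_{v_\infty}$ is itself an axis of $\gamma$, which you have not shown; the statement is only valid after projecting to $SM$, where $\pr(\gamma^{n_k}\phi^sv_\infty)=\phi^s\pr(v_\infty)$. Second, the passage from subsequential accumulation to genuine convergence of the whole forward orbit is exactly where the paper spends its effort (the iterative exhaustion of $\I(\lw)$ using that it has finite diameter), and you only sketch it; your sketch is viable — every accumulation point of $\{v_n\}\subset\F^s(w)$ is biasymptotic to $c_w$ and lies on $H(w)$ at distance $\ge\delta$ from $\pi w$, hence is tangent to the single generalized strip $P^{-1}(w^-,w^+)$, and a compactness argument over $t=mT_0+s$, $s\in[0,T_0]$, then forces the full orbit of $\lw'$ to converge to its projection — but as written this step is a plan, not a proof.
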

\begin{proof}
Suppose that $\lim_{i\to +\infty} d(c_\lw(s_i), c_{\lw'}(s_i))=\delta >0$. By passing to a subsequence, we can assume that $\lim_{i\to +\infty}\phi^{s_i}(\lw) = v$ and $\lim_{s_i\to +\infty}\phi^{s_i}(\lw')= \lv'$. Then $\lv'\in \F^s(\lv)$. Then by Proposition \ref{horofoliation}, for any $t\in \mathbb{R}$
$$d(c_{\lv}(t), c_{\lv'}(t))= \lim_{s_i\to +\infty}d(c_{\lw}(t+s_i), c_{\lw'}(t+s_i))\in [\delta,Q_1]$$
where $Q_1:=Ad_s(c_{\lv}(0), c_{\lv'}(0))+B$.
Hence we can lift the geodesics to $X$ such that $v'\in \F^s(v)$ and
$$d(c_v(t), c_{v'}(t)) \in [\delta,Q_1] \text{\ \ \ for\ } \forall t \in \mathbb{R}.$$
By Lemma \ref{striplemma}, $c_v(t)$ and $c_{v'}(t)$ are the boundaries of a generalized strip of width at least $\delta$.

By assumption, $\lw$ is periodic and thus $\lv\in \mathcal{O}(\lw)$. So there exists $\lv_1\in \I(\lw)$ such that $ \lv_1\in \O (\lv')$. 
\begin{enumerate}
\item If $\lw'$ is on the arc of $\F^s(\lw)$ between $\lw$ and $\lv_1$, then $\lw'$ is inside the generalized strip bounded by geodesics $c_v(t)$ and $c_{v'}(t)$, and we are done.
\item If $\lv_1$ is on the arc of $\F^s(\lw)$ between $\lw$ and $\lw'$, then:
\begin{itemize}
    \item If $\liminf_{t\to +\infty} d(c_{\lv_1}(t), c_{\lw'}(t))=0,$ we are done since then by bounded asymptote
    $$\lim_{t\to +\infty} d(c_{\lv_1}(t), c_{\lw'}(t))=0,$$ and $\lw$ and $\lv_1$ are the boundaries of a nontrivial generalized strip. 
    \item If otherwise $\liminf_{t\to +\infty} d(c_{\lv_1}(t), c_{\lw'}(t))>0,$ then repeating the above process produces another $\lv_2\in \I(\lw)$. If we are not done in finite steps, since $\I(\lw)$ has finite length by Proposition \ref{horofoliation}, we will arrive at a limit point $\lv_\infty\in \I(\lw)$ such that $\lim_{t\to +\infty} d(c_{\lv_\infty}(t), c_{\lw'}(t))=0.$ Then we are done.
\end{itemize}
\end{enumerate}
\end{proof}

Now Lemma \ref{flat closed geodesic0} can be strengthened to the following.
\begin{lemma}\label{flat closed geodesic}
 Suppose that $y\in \R_1^c\cap \text{Per}(\phi^t)$ and $z\in \omega(y)$ where $c_z(\pm \infty)=\xi_\gamma^{\pm}$ for some $\gamma\in \Gamma$. Then $\mathcal{O}(y)=\mathcal{O}(z)$. In particular, $\mathcal{O}(y)$ is bi-asymptotic to an axis of $\c$.
\end{lemma}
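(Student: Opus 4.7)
The first conclusion $\mathcal{O}(y) = \mathcal{O}(z)$ reduces to the observation that a periodic orbit coincides with its own $\omega$-limit set. Letting $T_0 > 0$ denote the minimal period of $y$, the orbit $\mathcal{O}(y)$ is compact in $SM$. Given a sequence $t_n \to \infty$ with $\phi^{t_n}(y) \to z$, I would write $t_n = k_n T_0 + s_n$ with $s_n \in [0, T_0)$, and pass to a subsequence so that $s_n \to s_0 \in [0, T_0]$; then $\phi^{t_n}(y) = \phi^{s_n}(y) \to \phi^{s_0}(y)$, forcing $z = \phi^{s_0}(y) \in \mathcal{O}(y)$, and hence $\mathcal{O}(y) = \mathcal{O}(z)$.

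For the ``in particular'' clause the plan is as follows. Lift $y$ to some $\tilde y \in SX$; since $\phi^{T_0}(y) = y$, there is a deck transformation $\gamma_y \in \Gamma$ with $\tilde{\phi}^{T_0}(\tilde y) = \gamma_y(\tilde y)$, so $\gamma_y$ translates along the geodesic $c_{\tilde y}$ by $T_0$, exhibiting $c_{\tilde y}$ as an axis of $\gamma_y$ with boundary fixed points $\xi_{\gamma_y}^\pm = c_{\tilde y}(\pm \infty)$. Because $\mathcal{O}(y) = \mathcal{O}(z)$, an appropriate lift of $c_z$ has the same endpoints at infinity as $c_{\tilde y}$, so the hypothesis $c_z(\pm\infty) = \xi_\gamma^\pm$ gives $\xi_\gamma^\pm = \xi_{\gamma_y}^\pm$. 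The freeness of the $\Gamma$-action (ruling out elliptic behavior) together with $\gamma$ fixing two distinct boundary points forces $\gamma$ to be hyperbolic and to admit an axis $c^*$ with $c^*(\pm\infty) = \xi_\gamma^\pm$. Applying the Morse-constant property of Proposition \ref{geo}(5) to the bi-asymptotic geodesics $c_z$ and $c^*$ then yields $d_H(c_z, c^*) \le Q$, so $\mathcal{O}(y) = \mathcal{O}(z)$ is bi-asymptotic to the axis $c^*$ of $\gamma$.

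The only delicate step is ensuring that the deck transformation $\gamma$ actually admits an axis --- i.e., that fixing two boundary points implies translation along some axial geodesic. For uniform visibility manifolds without conjugate points this is standard, relying on the freeness of the deck action and the geometric results collected in Proposition \ref{geo}, and does not require the deeper mechanism of Lemma \ref{flat closed geodesic0}. Aside from this structural verification, the lemma is essentially immediate from the fact that the $\omega$-limit of a periodic orbit is that orbit itself; Lemma \ref{flat closed geodesic0} is tailored instead to the situation of asymptotic convergence without prior accumulation, whereas here periodicity of $y$ makes the accumulation hypothesis degenerate into orbital equality.
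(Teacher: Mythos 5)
Your reduction is formally valid for the statement as printed: if $y$ is periodic with minimal period $T_0$, then $\omega(y)=\mathcal{O}(y)$, so $z\in\omega(y)$ already forces $z=\phi^{s_0}(y)$ for some $s_0$, and the ``in particular'' clause is immediate because $\xi_\gamma^{\pm}$ is \emph{by definition} the pair of endpoints of an axis of $\gamma$ (see the line following Lemma \ref{axis2}), so $c_z$, having those endpoints, lies within the Morse constant of that axis by Proposition \ref{geo}(5). Your closing worry about whether $\gamma$ ``admits an axis'' is therefore moot -- the notation $\xi_\gamma^{\pm}$ presupposes one -- and no appeal to freeness of the action is needed. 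As a proof of the literal statement there is no gap.

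You should be aware, however, that this is not the paper's argument, and the discrepancy is itself the interesting point. The paper's proof never uses $\omega(y)=\mathcal{O}(y)$: it takes $s_k\to\infty$ with $\phi^{s_k}(y)\to z$, splits into the case $\phi^{s_k}(y)\in\F^s(z)$ (handled via Lemma \ref{boundary} and Lemma \ref{flat closed geodesic0}) and the case where the expansivity of vectors not tangent to a generalized strip (Lemma \ref{expansivity}) produces separation times $l_k$, then extracts limit vectors $y^+,z^+$ bounding generalized strips and runs an iterated strip-enlargement argument terminated by the compactness of $\I(\dot c_\gamma)$ (Proposition \ref{compact}). All of that machinery is designed for the situation in which $z$ is merely an accumulation point of the forward orbit and not a priori on $\mathcal{O}(y)$ -- that is, for a hypothesis strictly weaker than periodicity of $y$. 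Your observation thus shows that the stated hypothesis degenerates the lemma; the nontrivial content the authors are clearly after (a singular orbit that accumulates on a geodesic joining $\xi_\gamma^-$ to $\xi_\gamma^+$ must coincide with it) is not captured by your argument and would require the expansivity/strip mechanism. So: correct for the statement as written, but it bypasses rather than reproduces the intended mathematics, and in doing so it exposes a likely misstatement of the hypothesis that is worth flagging to the authors.
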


\begin{proof}
Suppose that there exist $s_k \to +\infty$ such that $\phi^{s_k}(y) \to z$. If $\phi^{s_k}(y)\in \F^s(z)$ for some $k$ then we must have $\lim_{t\to \infty}d(\phi^{t+s_k}(y),\phi^{t}(z))=0$ by Lemma \ref{boundary} and its proof. Then by Lemma \ref{flat closed geodesic0}, we have $\mathcal{O}(\phi^{s_k}(y))=\mathcal{O}(z)$. So we are done.

Suppose that $\phi^{s_k}(y)\notin \F^s(z)$ for any $k$. Note that if $y\neq z$ then $y$ and $z$ can not be tangent to a same generalized strip. Therefore by Lemma \ref{expansivity}, for any small $\epsilon_0>0$ and any large $k$, there exists a $l_k$ with $l_k \to +\infty$ such that
$$d(\phi^{l_k}(\phi^{s_k}(y)), \phi^{l_k}(z))=\epsilon_0 \text{\ \ or\ \ }d(\phi^{l_k}(-\phi^{s_k}(y)), \phi^{l_k}(-z))=\epsilon_0$$
where we take $l_k$ to be the smallest positive number to satisfy the above equality. We only consider the first case. By taking a subsequence, we can assume that
\begin{equation*}
\phi^{l_k}(\phi^{s_k}(y)) \to y^+ \text{\ \ and \ \ }\phi^{l_k}(z) \to z^+
\end{equation*}
as $k\to +\infty$. Then $z^+$ is such that $c_{z^+}(\pm \infty)=\xi_\gamma^{\pm}$ and $d(y^+,z^+)=\epsilon_0$. 

We claim that in the above process we can make sure that the footpoint of $y^+$ is outside of the generalized strip bounded by $c_\gamma$ and $c_{z}$, where $c_\c$ is an axis (fixed once for all) of $\c$. Indeed, we can take $\phi^{s_k}(y)$ so that the footpoint of $\phi^{l_k}(\phi^{s_k}(y))$ is outside of the above generalized strip. If this is not the case, we then consider $-\phi^{s_k}(y)$ and $-z$ instead. Under this change, we still get a limit point $y^+$ and $z^+$, so that $y^+$ is outside of the generalized strip bounded by $c_\gamma$ and $c_{z}$.

Then for any $t>0$, since $0 < -t+l_k < l_k$ for large enough $k$, one has
$$d(\phi^{-t}(y^+), \phi^{-t}(z^+))=\lim_{k \to +\infty}d(\phi^{-t+l_k+s_k}(y)),\phi^{-t+l_k}(z) ) \leq \epsilon_0.$$
So $-y^+ \in \F^s(-z^+)$. Then we have the following two cases:
 \begin{enumerate}
 \item $\liminf_{t\to \infty}d(\phi^{t}(-y^+), \phi^t(-z^+))= 0$. Then by bounded asymptote, $\lim_{t\to \infty}d(\phi^{t}(-y^+), \phi^t(-z^+))= 0$. By Lemma \ref{flat closed geodesic0}, $c_{y^+}(\pm \infty)=\xi_\gamma^{\pm}$ and in fact $-y^+=-z^+$. This contradicts to $d(y^+,z^+)=\epsilon_0$.
 
 
 \item $\liminf_{t\to \infty}d(\phi^{t}(-y^+), \phi^t(-z^+))= \delta$ for some $\delta>0$. By Lemma \ref{boundary}, $\phi^t(-y^+)$ converges to the boundary of a generalized strip containing $-z^+$. Then by Lemma \ref{flat closed geodesic0} $c_{y^+}$ and $c_{z^+}$ are boundaries of a generalized strip of width at least $\delta$. Recall that $y^+$ is outside of the generalized strip  bounded by $c_\gamma$ and $c_{z}$ which implies that we have enlarged the generalized strip bounded by  $c_\gamma$ and $c_{z}$ to the one bounded by  $c_\gamma$ and $c_{y^+}$.
 Now note that $y^+ \in \omega(y)$, so we can apply all the arguments above to $y^+$ instead of $z$. Either we are arriving at a contradiction as in case (1) and we are done, or we get an even larger generalized strip. Since $\I(\dot{c}_\gamma)$ has finite length by Propostion \ref{horofoliation}, at last we will get a limit vector $y_\infty\in \I(\dot{c}_\gamma)$ such that $y_\infty\in \omega(y)$ if we are always in case (2). Then we can apply all the arguments above to $y_\infty$ and we must arrive at case (1). So we are done.
\end{enumerate}
\end{proof}

\begin{theorem}\label{A}
If $\R_1^c \subset \text{Per}(\phi^t)$, then there is a finite decomposition of $\R_1^c$:
$$\R_1^c = \mathcal{O}_1 \cup \mathcal{O}_2 \cup \ldots \mathcal{O}_k \cup \mathcal{S}_1\cup \mathcal{S}_2 \cup \ldots \cup \mathcal{S}_l,$$
where each $\mathcal{O}_i, 1\leq i \leq k$, is an isolated periodic orbit and each $\mathcal{S}_j, 1\leq j \leq l$, consists of vectors tangent to a generalized strip. Here $k$ or $l$ are allowed to be $0$ if there is no isolated closed singular geodesic or no generalized strip.
\end{theorem}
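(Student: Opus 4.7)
The plan is to partition $\R_1^c$ into bi-asymptotic equivalence classes, each either a single periodic orbit (isolated) or a generalized strip, and then to rule out the existence of infinitely many such classes by a compactness-plus-accumulation argument exploiting the hypothesis $\R_1^c\subset\mathrm{Per}(\phi^t)$ and Lemma \ref{flat closed geodesic}.

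For the construction of pieces, fix $v\in\R_1^c$. Because $v$ is periodic it lies in its own $\omega$-limit set, so Lemma \ref{flat closed geodesic} provides a lift $\tilde v\in SX$ whose orbit is an axis $c_\gamma$ of some $\gamma\in\Gamma$. Applying the generalized strip lemma (Lemma \ref{striplemma}) to $\tilde v$ yields the maximal connected generalized strip $\tilde{\mathcal{S}}(\tilde v)\subset X$ of geodesics bi-asymptotic to $c_{\tilde v}$; this strip is $\gamma$-invariant, and its $\Gamma$-saturated projection $\mathcal{S}(v)\subset SM$ is closed and flow-invariant. When $\tilde{\mathcal{S}}(\tilde v)=c_{\tilde v}$ the piece reduces to the isolated orbit $\mathcal{O}(v)$; otherwise $\mathcal{S}(v)$ is a genuine strip piece. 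A preliminary check is that every tangent vector along the strip lies in $\R_1^c$: the transverse variation to a neighboring bi-asymptotic geodesic yields a Jacobi field bounded on both halflines (the strip has finite width by Lemma \ref{compact}), hence lies simultaneously in $G^s$ and $G^u$. Different starting vectors yield either the same piece or disjoint ones, so we obtain a partition of $\R_1^c$.

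For finiteness, suppose for contradiction that infinitely many pieces $\mathcal{P}_1,\mathcal{P}_2,\ldots$ exist; pick $v_n\in\mathcal{P}_n$ and extract $v_n\to v^*\in SM$ by compactness. Since $\R_1^c$ is closed, $v^*\in\R_1^c$ is periodic and lies in some piece $\mathcal{P}^*$. If $\mathcal{P}^*$ is an isolated orbit, isolation forces $v_n\in\mathcal{P}^*$ for large $n$, contradicting distinctness. If $\mathcal{P}^*$ is a strip, lift $v_n$ to $\tilde v_n$ near an interior lift $\tilde v^*\in\tilde{\mathcal{S}}^*$; Lemma \ref{flat closed geodesic} supplies an axis $c_{\gamma_n}$ bi-asymptotic to $c_{\tilde v_n}$, the endpoints $c_{\gamma_n}(\pm\infty)$ converge in the cone topology (Proposition \ref{geo}(2)) to those of $c_{\tilde v^*}$, and applying Lemma \ref{striplemma} together with the diameter bound of Lemma \ref{compact} on $\mathcal{I}(\tilde v^*)$ forces $c_{\tilde v_n}\subset\tilde{\mathcal{S}}^*$ for large $n$, hence $v_n\in\mathcal{P}^*$, contradiction. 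With $\R_1^c$ decomposed into finitely many pieces (each a $1$- or $2$-dimensional subset of the $3$-dimensional $SM$) one immediately gets $\nu(\R_1^c)=0$, and ergodicity follows via Theorem \ref{regular} combined with Pesin's Theorem \ref{pesin}.

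The main obstacle will be the openness step in the strip case: because the flat strip lemma fails for surfaces without conjugate points (as noted in the introduction, citing Burns), the interior and boundary of $\tilde{\mathcal{S}}^*$ can behave irregularly, and one must rule out the scenario that $c_{\tilde v_n}$ limits onto a geodesic of $\tilde{\mathcal{S}}^*$ from outside the strip, producing a new bi-asymptotic pair disjoint from $\tilde{\mathcal{S}}^*$. Preventing this uses the uniform visibility property to upgrade endpoint convergence in $\partial X$ to genuine bi-asymptoticity, combined with the rigidity from Theorem \ref{ideal triangle} to force any candidate accumulation strip to coincide with $\tilde{\mathcal{S}}^*$.
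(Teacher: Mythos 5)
Your overall architecture (partition into bi-asymptotic classes, then rule out infinitely many classes by extracting an accumulation point) matches the paper's, but the finiteness step — which is the entire content of the theorem — has a genuine gap in both of its branches. In the ``isolated orbit'' branch you write that ``isolation forces $v_n\in\mathcal{P}^*$ for large $n$,'' but you have only defined $\mathcal{P}^*$ to be a periodic orbit whose generalized strip is trivial; that such an orbit is \emph{isolated in} $\R_1^c$ is precisely part of the conclusion of Theorem \ref{A}, so invoking it here is circular. In the strip branch, convergence $v_n\to v^*$ gives (via Proposition \ref{geo}(2)) only that $c_{\tilde v_n}(\pm\infty)\to c_{\tilde v^*}(\pm\infty)$ in the cone topology; it does not give $c_{\tilde v_n}(\pm\infty)=c_{\tilde v^*}(\pm\infty)$, and two geodesics with nearby but distinct endpoints are not bi-asymptotic, so Lemma \ref{striplemma} and the diameter bound of Lemma \ref{compact} cannot be applied to place $c_{\tilde v_n}$ inside $\tilde{\mathcal{S}}^*$. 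Your closing remark that uniform visibility ``upgrades endpoint convergence to genuine bi-asymptoticity'' is false as stated and is exactly the point where an actual argument is required.

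The missing idea is the dynamical separation argument the paper uses. Since $x'_k\to x$ with $x'_k,x$ tangent to distinct closed geodesics or strips, the orbits of $x'_k$ and $x$ cannot stay $\epsilon_0$-close for all $t>0$ (else by Lemmas \ref{boundary} and \ref{flat closed geodesic0} they would bound a common generalized strip); hence there is a first time $s_k\to\infty$ at which $d(\phi^{s_k}x'_k,\phi^{s_k}x)=\epsilon_0$. Passing to limits $a=\lim\phi^{s_k}x$, $b=\lim\phi^{s_k}x'_k$ yields $d(a,b)=\epsilon_0$ with $d(\phi^t a,\phi^t b)\le\epsilon_0$ for all $t\le 0$, and repeating with $-x,-x'_k$ yields $a^-,b^-$. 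Lemmas \ref{boundary}, \ref{flat closed geodesic0} and \ref{flat closed geodesic} then either give an immediate contradiction ($-a=-b$ despite $d(a,b)=\epsilon_0$) or produce generalized strips of positive widths $\delta_1,\delta_2$ on the two sides of $c_x$, i.e.\ a strictly larger strip with $x$ in its interior. Iterating this enlargement and invoking the uniform bound $\mathrm{Diam}(\mathcal{I}(w))\le Q$ from Lemma \ref{compact} forces termination in one of the contradictory cases. Without this mechanism (or an equivalent expansivity-type argument), your proof does not exclude a sequence of pairwise distinct singular closed geodesics accumulating on a singular closed geodesic or on the boundary of a strip, which is the scenario the theorem must rule out.
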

\begin{proof}

Assume the contrary to Theorem \ref{A}. Then there exists a sequence of different vectors $x'_k \in \R_1^c$ such that $\lim_{k\to +\infty}x'_k=x$ for some $x\in \R_1^c$. Here different $x'_k$ are tangent to different closed geodesics or to different generalized strips, and $x$ is tangent to a closed geodesic or to a generalized strip. For large enough $k$, we suppose that $d(x'_k, x) <\frac{1}{k}$. 

Fix a small number $\epsilon_0 >0$. It is impossible that $d(\phi^t(x'_k), \phi^t(x)) \leq \epsilon_0$, $\forall t>0$. For otherwise, $c_{x'_k}(t)$ and $c_x(t)$ are positively asymptotic closed singular geodesics. They must be tangent to a common generalized strip by Lemma \ref{boundary} and Lemma \ref{flat closed geodesic0}. This is impossible since different $x'_k$ are tangent to different closed geodesics or to different flat strips. Hence there exists a sequence $s_k\to +\infty$ such that
$$d(g^{s_k}(x'_k), g^{s_k}(x))=\epsilon_0,$$
and
$$d(g^{s}(x'_k), g^{s}(x))\leq \epsilon_0, \ \forall 0\leq s <s_k.$$

Let $y_k:=g^{s_k}(x)$ and $y'_k:=g^{s_k}(x'_k)$. Without loss of generality, we suppose that $y_k\to a$ and $y'_k\to b$.
Then $d(a, b)=\lim_{k_i\to +\infty}d(g^{s_{k_i}}(x_{k_i}),g^{s_{k_i}}(x'_{k_i}))=\epsilon_0.$ We get that $d(a, b)=\epsilon_0$.
For any $t<0$, since $0<s_{k_i}+t<s_{k_i}$ for some large $k_i$, one has
$$d(\phi^t(a), \phi^t(b))=\lim_{i\to +\infty}(d(g^{s_{k_i}+t}(x_{k_i}),g^{s_{k_i}+t}(x'_{k_i}))) \leq \epsilon_0.$$
Hence we get that $d(\phi^t(a), \phi^t(b)) \leq \epsilon_0$,  $\forall t\leq 0$.

Replacing $x, x_k'$ by $-x, -x_k'$ respectively and applying the same argument, we can obtain two points $a^-,b^-$ such that $d(a^-, b^-)=\epsilon_0$ and $$d(\phi^t(a^-), \phi^t(b^-)) \leq \epsilon_0,\quad \forall t\leq 0.$$
Then we have the following three cases:
 \begin{enumerate}
 \item $\liminf_{t\to \infty}d(\phi^t(-a), \phi^t(-b))=0$. By bounded asymptote, this is equivalent to $\lim_{t\to \infty}d(\phi^t(-a), \phi^t(-b))=0$. By Lemma \ref{flat closed geodesic0}, $-a=-b$. This contradicts to $d(a,b)=\epsilon_0$.
 \item $\liminf_{t\to \infty}d(\phi^t(-a^-), \phi^t(-b^-))=0$. Again by Lemma \ref{flat closed geodesic0}, $-a^-=-b^-$. This contradicts to $d(a^-,b^-)=\epsilon_0$.
 \item $\liminf_{t\to \infty}d(\phi^t(-a), \phi^t(-b))>0$ and $\liminf_{t\to \infty}d(\phi^t(-a^-), \phi^t(-b^-))>0$.
\end{enumerate}

For case (3), by Lemmas \ref{boundary} and \ref{flat closed geodesic}, $c_{a}$ and $c_{x}$ coincide. And moreover, $c_{x}$ and $c_b$ are boundaries of a generalized strip of width $\delta_1>0$. Similarly, $c_{x}$ and $c_{b^-}$ are boundaries of a generalized strip of width $\delta_2>0$. We claim that these two generalized strips lie on the different sides of $c_x$. Indeed, we choose $\epsilon_0$ small enough and consider the $\epsilon_0$-neighborhood of the closed geodesic $c_x$ which contains two regions lying on the different sides of $c_x$. By the definition of $b$ and $b^-$, they must lie in different regions as above. This implies the claim. 

In this way we get a generalized strip of width $\delta_1+\delta_2$ and $x$ is tangent to the interior of this generalized strip. Since $g^{s_k}(x'_k)\to b$, we can repeat all the arguments above to $b, g^{s_k}(x'_k)$ instead of $x, x_k'$. Then either we are arriving at a contradiction as in case (1) or case (2) and we are done, or we get a generalized strip of width greater than $\delta_1+\delta_2$ and $b$ is tangent to the interior of the generalized strip. Proceeding in this way, either we arrive at case (1) or case (2) in finite steps, or at last we will get a limit generalized strip which cannot be enlarged by Proposition \ref{compact}. Applying the above argument to this limit generalized strip, we have to arrive at case (1) or case (2) and thus obtain a contradiction. So we are done with the proof.
\end{proof}

\begin{proof}[Proof of Theorem \ref{singulargeodesic}]
By the argument at the beginning of this section, the geodesic flow has a periodic hyperbolic point. Then by Theorem \ref{regular}, $\Delta$ agrees $\nu$-almost everywhere with the open dense set $\mathcal{R}_1$. This proves the first statement in Theorem \ref{singulargeodesic}. Then Theorem \ref{singulargeodesic}(1) follows from Theorem \ref{ideal triangle}, and Theorem \ref{singulargeodesic}(2) follows from Theorem \ref{A}.
\end{proof}

\section{Patterson-Sullivan measure and Bowen-Margulis measure}

\subsection{Patterson-Sullivan measure}
In this subsection we assume that $X$ is a simply connected uniform visibility manifold without conjugate
points. We will recall the construction of the Patterson-Sullivan measure presented in \cite{LLW2} and obtain some properties, especially the shadowing lemma.  

\begin{definition}\label{density}
Let $X$ be a simply connected uniform visibility manifold without conjugate points and $\Gamma$ a discrete subgroup of $\text{Iso}(X)$, the group of isometries of $X$. For a given constant $r>0$, a family of finite Borel measures $\{\mu_{p}\}_{p\in X}$
on $\pX$ is called an $r$-dimensional \emph{Busemann density} if
\begin{enumerate}
\item for any $p,\ q \in X$ and $\mu_{p}$-a.e. $\xi\in \pX$,
$$\frac{d\mu_{q}}{d \mu_{p}}(\xi)=e^{-r \cdot b_{\xi}(q,p)}$$
 where $b_{\xi}(q,p)$ is the Busemann function;
\item $\{\mu_{p}\}_{p\in X}$ is $\Gamma$-equivariant, i.e., for all Borel sets $A \subset \pX$ and for any $\c \in \Gamma$,
we have $$\mu_{\c p}(\c A) = \mu_{p}(A).$$
\end{enumerate}
\end{definition}
In \cite{LLW2}, the authors constructed Busemann density via Patterson-Sullivan construction. For each pair of points $(p,p_{0})\in X \times X$ and $s \in \mathbb{R}$,
\emph{Poincar\'e series} is defined as
\begin{equation*}
P(s,p,p_{0}):= \sum_{\alpha \in \Gamma}e^{-s\cdot d(p,\alpha p_{0})}.
\end{equation*}
This series may diverge for some $s>0$. The number 
$$\delta_\Gamma:= \inf \{s \in \mathbb{R}\mid P(s,p,p_{0}) < +\infty\}$$
is called the \emph{critical exponent of $\Gamma$}. By the triangle inequality, it is easy to check that $\delta_\Gamma$ is independent of $p$ and $p_{0}$. We say $\Gamma$ is of \emph{divergent type} if the Poincar\'e series diverges when $s=\delta_\C$.
Whether $\Gamma$ is of divergent type or not is independent of the choices of $p$ and $p_{0}$.

Fix a point $p_{0}\in X$. For each $s > \delta_\Gamma$ and $p \in X$, consider the measure:
\begin{equation*}
\mu_{p,s}:=\frac{\sum_{\alpha \in \Gamma}e^{-s\cdot d(p,\alpha p_0)}\delta_{\alpha p_{0}}}{\sum_{\alpha\in\Gamma}e^{-s \cdot d(p_{0},\alpha p_{0})}},
\end{equation*}
where $\delta_{\alpha p_{0}}$ is the Dirac measure at point $\alpha p_{0}$. The following properties of $\mu_{p,s}$ can be easily checked by using the triangle inequality:
\begin{enumerate}
\item $e^{-s\cdot d(p, p_{0})} \leq \mu_{p,s}(\overline{X}) \leq e^{s\cdot d(p, p_{0})}$.
\item $\Gamma p_{0} \subset \text{supp}\mu_{p,s} \subset \overline{\Gamma p_{0}}$, where $\Gamma p_{0}$ is the orbit of $p_0\in X$ under the action of $\Gamma$.
\end{enumerate}

Here in constructing a Busemann density,  we consider the case that the group $\Gamma$ is of divergent type first.
When $\Gamma$ is of convergent type, Patterson showed in \cite{Pat} that one can weight the Poincar\'e series with a positive monotone increasing function to make the refined Poincar\'e series diverge at $s=\delta_\Gamma$.
More precisely, we consider a positive monotonely increasing function $g:\mathbb{R}^{+} \rightarrow \mathbb{R}^{+}$ with
$\lim_{t\rightarrow +\infty}\frac{g(x+t)}{g(t)}=1$, $\forall x \in \mathbb{R}^{+}$,
such that the weighted Poincar\'e series 
$$\widetilde{P}(s,p,p_{0}):=\sum_{\alpha\in \Gamma}g(d(p,\alpha p_{0}))e^{-s\cdot d(p,\alpha p_{0})}$$
diverges at $s=\delta_\Gamma$.  For the construction of the function $g$, see \cite[Lemma 3.1]{Pat}. Then for the weighted Poincar\'e series $\widetilde{P}(s,p,p_{0})$,
we can always construct a Busemann density by using the same method as follows.

Now for $p\in X$, consider a weak$^{\star}$ limit
$$\lim_{s_{k}\downarrow h}\mu_{p,s_{k}}=\mu_{p}.$$
Since $P(s,p_{0},p_{0})$ is divergent for $s=\d_\C$ and $\Gamma$ is discrete, it is obvious that $\text{supp}(\mu_{p})\subset \overline{\Gamma p_{0}} \cap \pX$. 

\begin{proposition}
There is a sequence $s_k \to \d_\C$ as $ k\to \infty$ such that for every $p\in X$ the weak$^*$ limit $\lim_{k\to \infty} \mu_{p,s_k}=:\mu_p$ exists. Moreover, $\{\mu_p\}_{p\in X}$ is a $\d_\C$-dimensional Buseman density.
\end{proposition}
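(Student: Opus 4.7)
The construction follows the classical Patterson--Sullivan scheme. For any $p\in X$ and $s>\d_\C$, one has the mass bound $\mu_{p,s}(\overline{X})\le e^{s\cdot d(p,p_0)}$ (as already observed above), so each family $\{\mu_{p,s}\}$ is a bounded set of Borel measures on the compact space $\overline{X}$ and is therefore relatively weak-$\ast$ compact. I would fix a countable dense subset $\{p_j\}\subset X$ and perform a diagonal extraction to produce a single sequence $s_k\downarrow \d_\C$ along which $\mu_{p_j,s_k}\to \mu_{p_j}$ weakly for every $j$. To promote convergence to an arbitrary $p\in X$, I rely on the exact pre-limit identity
\[
\frac{d\mu_{q,s}}{d\mu_{p,s}}(\alpha p_0)=e^{-s\bigl(d(q,\alpha p_0)-d(p,\alpha p_0)\bigr)},
\]
valid at every atom $\alpha p_0$. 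The triangle inequality bounds this density by $e^{s\,d(p,q)}$, so $\mu_{q,s}$ is uniformly absolutely continuous with respect to $\mu_{p,s}$; hence once the sequence $\mu_{p_j,s_k}$ converges and $p_j\to p$ in $X$, the rescaled limits glue to a well-defined measure $\mu_p$.

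Next I would establish that $\supp(\mu_p)\subset\pX$. If $\C$ is of divergent type at $\d_\C$ this is immediate: the normalizing series $P(s,p_0,p_0)\to\infty$ as $s\downarrow\d_\C$, whereas the mass $\mu_{p,s}(K)$ carried by any compact $K\subset X$ involves only the finitely many atoms $\alpha p_0\in K$, each of mass $\le e^{s\,d(p,p_0)}/P(s,p_0,p_0)\to 0$. In the convergent case the identical argument applies after replacing $e^{-sd}$ by the Patterson-weighted kernel $g(d(p,\alpha p_0))e^{-s\,d(p,\alpha p_0)}$ in the definition of $\mu_{p,s}$ (recall $\widetilde{P}(s,p_0,p_0)$ still diverges at $\d_\C$).

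To verify the two axioms of Definition~\ref{density}, I would pass to the weak-$\ast$ limit in the pre-limit Radon--Nikodym identity. Lemma~\ref{continuity1} and Corollary~\ref{continuous} give
\[
\lim_{\alpha p_0\to \xi}\bigl(d(q,\alpha p_0)-d(p,\alpha p_0)\bigr)=b_\xi(q,p),
\]
and Lemma~\ref{equicon1} upgrades this to equicontinuity on subsets of $\Gamma p_0$ whose accumulation in $\pX$ is separated from any prescribed set. Since $\supp(\mu_p)\subset\pX$, integrating a continuous test function $\varphi$ on $\pX$ and interchanging the limit with the integral yields
\[
\int \varphi\,d\mu_q=\int \varphi(\xi)\,e^{-\d_\C\, b_\xi(q,p)}\,d\mu_p(\xi),
\]
which is the first axiom. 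The $\C$-equivariance is a reindexing: substituting $\beta=\gamma^{-1}\alpha$ in the defining sum gives $\mu_{\gamma p,s}(\gamma A)=\mu_{p,s}(A)$ at the pre-limit level, and this identity survives the weak-$\ast$ limit.

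The main technical obstacle I anticipate is the uniform passage to the limit in the Radon--Nikodym relation, i.e.\ guaranteeing that the mass of $\mu_{p,s_k}$ on atoms inside any compact $K\subset X$ vanishes as $k\to\infty$ so that only the boundary behavior of the Busemann cocycle contributes. This relies essentially on the divergence (possibly after Patterson weighting) of $P(s,p_0,p_0)$ at $s=\d_\C$. The equicontinuity provided by Lemma~\ref{equicon1} together with the cone-topology continuity of Corollary~\ref{continuous} are precisely the tools that make this passage rigorous, and they replace the uniform hyperbolicity that would be used in the Gromov-hyperbolic setting.
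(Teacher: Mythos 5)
Your argument is correct and is essentially the paper's proof: the paper simply defers to the classical Patterson--Sullivan construction of \cite[Proposition 5.1]{CKW1}, which is exactly the scheme you carry out (mass bounds and weak-$\ast$ compactness, divergence of the normalizer to push the support to $\pX$, passage to the limit in the pre-limit Radon--Nikodym cocycle via Lemma \ref{continuity1} and Corollary \ref{continuous}, and equivariance by reindexing). The only detail worth making explicit in the convergent case is that the slowly varying property $g(x+t)/g(t)\to 1$ is what guarantees the ratio of Patterson weights tends to $1$ along $\alpha p_0\to\xi$, so the limiting Radon--Nikodym derivative is still $e^{-\d_\C b_\xi(q,p)}$ with no extra factor.
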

\begin{proof}
The proof is the same as that of \cite[Proposition 5.1]{CKW1}. Note that we do not need to assume that $X$ admits a compact quotient. 
\end{proof}
\begin{remark}
The uniform visibility condition is essential in the above construction. For example, under that condition, $\partial X$ with cone topology is well defined and the continuity of Busemann functions is guaranteed. 
\end{remark}

\subsection{Bowen-Margulis measure}
Let $P:SX \to \pX \times \pX$ be the projection given by $P(v)=(v^-,v^+)$. Denote by $\mathcal{I}^{P}:=P(SX)=\{P(v): v \in SX\}$ the subset of pairs in $\pX \times \pX$
which can be connected by a geodesic. Note that the connecting geodesic may not be unique. Since $X$ has uniform visibility, every pair $\xi\neq \eta$ in $\pX$ can be connected by a geodesic, i.e.,  
$$\partial^2X=(\pX \times \pX)\setminus \{(\xi,\xi): \xi\in \pX\}.$$

Fix a point $p\in X$, we can define a $\Gamma$-invariant measure $\overline{\mu}$
on $\partial^2X$ by the following formula:
\begin{equation}\label{current}
d \overline{\mu}(\xi,\eta) := e^{\d_\C\cdot \beta_{p}(\xi,\eta)}d\mu_{p}(\xi) d\mu_{p}(\eta),
\end{equation}
where $\beta_{p}(\xi,\eta):=-\{b_{\xi}(q, p)+b_{\eta}(q, p)\}$ is the \textit{Gromov product}, and $q$ is any point on a geodesic $c$ connecting $\xi$ and $\eta$. It is easy to see that the function $\beta_{p}(\xi,\eta)$ does not depend on the choice of $c$ and $q$.
In geometric language, the Gromov product $\beta_{p}(\xi,\eta)$ is the length of the part of a geodesic
$c$ between the horospheres $H_{\xi}(p)$ and $H_{\eta}(p)$.

Let $M=X/\C$. Define an equivalence relation on $SM$ by writing $v\sim w$ iff $w\in \I(v)$.
Write $[v]=\I(v)$ for the equivalence class of $v$. Let $\tilde \chi: SX\to SX/\sim$ and  $\chi: SM\to SM/\sim$ be the quotient maps. 
These are continuous maps when we equip the quotient spaces with the metric 
$$d([v], [w]) = \min\{d(v', w') : v'\in [v], w'\in [w]\}.$$ 
The geodesic flow $\phi^t$ takes equivalence classes to equivalence classes, $\phi^t[v] = [\phi^tv]$, hence it induces a flow $\psi^t: SM/\sim\to SM/\sim$.
\begin{theorem}(\cite[Theorem 1.1]{MR})\label{expansivefactor}
Let $M$ be a compact $C^\infty$ $n$-dimensional manifold without conjugate points and with uniform visibility universal cover $X$. Then the geodesic flow $\phi^t$ is time-preserving semi-conjugate to the continuous flow $\psi^t$ acting on a compact metric space $SM/\sim$ of topological dimension at least $n-1$. Moreover, $\psi^t$ is expansive, topologically mixing and has a local product structure.   
\end{theorem}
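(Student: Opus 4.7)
The plan is to verify the structural properties of the quotient space first, then the dynamical properties in the order: semi-conjugacy, expansivity, mixing, local product structure.

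First, I would establish that $SM/\!\sim$ is a compact Hausdorff metric space. Since $\F^s,\F^u$ are continuous foliations (Proposition \ref{horofoliation}) and the equivalence classes $\I(v)=\F^s(v)\cap \F^u(v)$ are compact and of uniformly bounded diameter by Lemma \ref{compact}, the relation $\sim$ is closed. The formula $d([v],[w])=\min\{d(v',w'): v'\in[v],\, w'\in[w]\}$ then gives an honest metric (the minimum is attained by compactness), and the quotient of a compact metric space by a closed equivalence relation with compact classes is compact. For the topological dimension bound, I would observe that a small local transversal to $\F^s$ inside $SM$ has dimension $n-1$ and that $\chi$ is injective on such a transversal, so $\dim(SM/\!\sim)\ge n-1$. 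The time-preserving semi-conjugacy $\psi^t\circ\chi=\chi\circ\phi^t$ is forced by the flow-invariance of $\F^s$ and $\F^u$: if $w\in \I(v)$, then $\phi^tw\in\F^s(\phi^tv)\cap\F^u(\phi^tv)=\I(\phi^tv)$, so $\phi^t$ descends to a continuous flow on $SM/\!\sim$, and the parametrization is preserved because $\chi$ is the identity on flow lines transverse to $\I(v)$.

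The heart of the matter is expansivity, which I expect to be the main obstacle. I would argue as follows: suppose $d(\psi^t[v],\psi^t[w])<\e$ for every $t\in\RR$, with $\e$ smaller than the injectivity radius of $M$. Choose measurable representatives $v_t\in [\phi^tv]$, $w_t\in[\phi^tw]$ realizing this distance. Lift to the universal cover $X$ coherently; using that $\I$-classes have bounded diameter (Lemma \ref{compact}) and the quasi-convexity of $X$ (Proposition \ref{geo}), the lifted geodesics $c_{\tilde v}$ and a suitable time shift $c_{\tilde w}$ remain within bounded Hausdorff distance for all $t\in\RR$, hence are bi-asymptotic. By the generalized strip lemma (Lemma \ref{striplemma}), they bound a connected set contained in $\I(\tilde v)$, so $w\in\I(\phi^{t_0}v)$ for some small $t_0$, i.e.\ $[w]=\psi^{t_0}[v]$. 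The subtlety is converting the pointwise distance bound in the quotient back to a \emph{uniform} bound on the lifted geodesics, which requires combining uniform visibility (to rule out non-asymptoticity, via Definition \ref{vis}) with the uniform compactness of $\I$-classes.

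For topological mixing, I would use that $\phi^t$ is already topologically mixing on $SM$ by Proposition \ref{horofoliation}(2), and then lift this to $\psi^t$ through surjectivity of the semi-conjugacy $\chi$: given nonempty open $U,V\subset SM/\!\sim$, the sets $\chi^{-1}(U),\chi^{-1}(V)$ are nonempty open in $SM$, so there exists $T$ with $\phi^t(\chi^{-1}(U))\cap \chi^{-1}(V)\neq\emptyset$ for all $t\ge T$, whence $\psi^t(U)\cap V\neq\emptyset$. Finally, for the local product structure, I would define $W^{s}_{\loc}([v]):=\chi\bigl(\F^s(v)\cap B(v,\e)\bigr)$ and $W^{u}_{\loc}([v])$ analogously. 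Continuity of the horospherical foliations gives continuous local charts, and the defining property $\chi(\F^s(v))\cap\chi(\F^u(v))=\chi(\I(v))=\{[v]\}$ shows these transversals meet in a single point in the quotient; transversality together with the continuous, flow-invariant nature of $\F^s,\F^u$ then yields a continuous local product structure $W^{s}_{\loc}\times W^{u}_{\loc}\times(-\delta,\delta)\to SM/\!\sim$ compatible with $\psi^t$.
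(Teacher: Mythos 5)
The paper does not prove this statement: Theorem \ref{expansivefactor} is quoted verbatim from \cite[Theorem 1.1]{MR}, so there is no internal proof to compare yours against. Judged on its own terms, your sketch gets the routine parts right — the semi-conjugacy descends because $\phi^t\I(v)=\I(\phi^tv)$, and topological mixing of $\psi^t$ is immediate from mixing of $\phi^t$ (Proposition \ref{horofoliation}) since mixing passes to factors — but the two statements that constitute the actual content of the theorem are left with genuine gaps.

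First, expansivity. From $d(\psi^t[v],\psi^t[w])<\e$ you only obtain, back in $SM$, the bound $d(\phi^tv,\phi^tw)\le 2Q+\e$, where $Q$ is the Morse constant from Lemma \ref{compact}; this is a fixed geometric constant, not small, so you cannot lift the two orbits coherently to $SX$ by an injectivity-radius argument, and the step ``the lifted geodesics remain within bounded Hausdorff distance for all $t$'' is precisely the assertion that needs proof. You flag this yourself as ``the subtlety,'' but it is not a subtlety to be converted — it is the theorem (it is the main result of \cite{Mamani, MR}), and your outline gives no mechanism for it; you also elide the reparametrizations required by the Bowen--Walters definition of expansivity for flows. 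Second, local product structure: you invoke ``transversality'' of $\F^s$ and $\F^u$, which is exactly what fails on $SM$ for manifolds without conjugate points ($G^s(v)\cap G^u(v)$ need not be trivial — this is why $\R_1\neq SM$ in general). Your computation that $\chi(\F^s(v))\cap\chi(\F^u(v))=\{[v]\}$ gives uniqueness of the intersection point, but the \emph{existence} of a point of $\F^u_{\loc}(v)\cap\F^{0s}_{\loc}(w)$ for nearby $[v],[w]$ must be produced through the boundary at infinity, by connecting $v^-$ to $w^+$ with a geodesic via uniform visibility (Proposition \ref{geo}(3), in the spirit of Proposition \ref{regularneighbor}), not by transversality in $T(SM)$. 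Two smaller points: attainment of the minimum makes $d([v],[w])$ well defined and positive-definite but does not give the triangle inequality, which is nontrivial for a min-distance between compact classes; and your dimension count is off — a transversal to the strong stable foliation in $SM$ has dimension $n$, not $n-1$, and $\chi$ is not injective on a local unstable leaf since $\I(w)\subset\F^u(w)$; the standard route to $\dim(SM/\!\sim)\ge n-1$ is to exhibit a continuous injection of an open subset of $\pX\cong S^{n-1}$ into the quotient of a local unstable leaf.
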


We do not need compactness of $M$ in the following construction.
Since $\I(v)$ is compact and $SM$ is a Polish space, the measurable selection theorem of Kuratowski and Ryll-Nardzewski \cite[Section 5.2]{Sr} guarantees existence of a Borel measurable map $V: SM/\sim \to SM$ such that $\chi\circ V$ is the identity. Recall that $\pr:SX\to SM$ is the natural projection. Then for every $v\in SX$, $\pr^{-1} (V (\pr[v]))$ intersects $[v]$ in a single point, which we denote by $\tilde V([v])$. We conclude that $\tilde V: SX/\sim \to SX$ is a measurable map such that $\tilde \chi\circ \tilde V$ is the identity, and moreover $\tilde V (\gamma_*[v]) = \gamma_*\tilde V([v])$ for any $\c\in \C$.
Now define a measure $\lambda_{\xi,\eta}$ on each $P^{-1}(\xi, \eta)$ by fixing any $v\in P^{-1}(\xi, \eta)$ and putting for
a Borel measurable set $A\subset SX$
$$\lambda_{\xi,\eta}(A):=\text{Leb}\{t\in \RR: \tilde V([\phi^tv])\in A\}.$$
Note that this is independent of the choice of $v$. Use this to define a measure $m$ on $SX$ by
\begin{equation*}
m(A)=\int_{\partial^2X} \lambda_{\xi,\eta}(A) d \overline{\mu}(\xi,\eta).
\end{equation*}
Observe that $m$ is $\Gamma$-invariant, and it descends to a Borel measure $m_\C$ on $SM$. 

The measure $m_\C$ is not necessarily invariant under the geodesic flow, since along a flow orbit inside a generalized strip, the choice for the value of measurable function $V$ may vary. However, the measure $\lm_\C = \chi_*m_\C$ is a flow invariant measure on $SM/\sim$ because $\chi_*\lambda_{\xi,\eta}$ is flow invariant on each  $P^{-1}(\xi,\eta)/\sim$. 

Now let us suppose that $M$ is compact and then clearly $m_\C$ is finite. Without loss of generality we scale the measure so that $m_\C(SM) = 1$. Then the set $\chi_*^{-1}\lm_\C$ of Borel probability measures is weak$^*$ compact and closed under $(\phi^t)_*$ for all $t\in \RR$. So the usual argument from the Krylov-Bogolyubov theorem for producing an invariant probability measure shows that there is a flow invariant Borel probability measure $\mu$ on $SM$ with $\chi_*\mu=\lm_\C$. This lifts to a $\Gamma$-invariant and flow invariant Borel measure $\tilde \mu$ on $SX$ by 
\begin{equation*}
\tilde \mu(A)=\int_{SM} \#(\text{pr}^{-1}(\lv)\cap A)d\mu(\lv).
\end{equation*}
We will call both $m_\C$ and $\mu$ \textit{Bowen-Margulis measures} of the geodesic flow on $SM$.

\begin{remark}\label{noncom}
The above construction of $\mu$ for a compact manifold $M$ without conjugate points which satisfies the first two conditions in Definition \ref{classH} of class $\mathcal{H}$ has appeared in \cite{CKW1}. In the last section, we will prove the Hopf-Tsuji-Sullivan dichotomy for the geodesic flow on a uniform visibility (not necessarily compact) manifold without conjugate points with respect to the Bowen-Margulis measure $m_\C$.
\end{remark}

\subsection{Relation to the MME}
In this subsection, we always assume that $M=X/\C$ is a compact uniform visibility manifold with universal cover $X$. We shall show that the Bowen-Margulis measure $\mu$ coincides with the unique MME under the setting of Theorem \ref{mmeuni}.

By Freire and Ma\~n\'e's Theorem (cf.~\cite{FrMa}), the topological entropy of the geodesic flow coincides with the volume entropy, i.e., for any $x\in X$
$$h_{\text{top}}(\phi^1)=\lim_{r\to \infty}\frac{\log \text{Vol}B(x,r)}{r}$$
where $B(x,r)$ is the open ball of radius $r$ centered at $x$ in $X$. Denote $h=h_{\text{top}}(\phi^1)$. By the proof of \cite[Lemma 3.3]{LWW}, we have $\d_\C=h$ where $\d_\C$ is the critical exponent of $\C=\pi_1(M)$.

\begin{lemma}\label{support}
For any $p \in X$, $\emph{\text{supp}}(\mu_{p})=\pX$.
\end{lemma}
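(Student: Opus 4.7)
The plan is to show $\text{supp}(\mu_p) = \pX$ by verifying that $\text{supp}(\mu_p)$ is a nonempty, closed, $\C$-invariant subset of $\pX$, and then invoking minimality of the $\C$-action on $\pX$, which is available in our compact uniform visibility setting.

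The first step is to check that $\text{supp}(\mu_p)$ is independent of the basepoint. By condition (1) in Definition \ref{density}, for any $p,q\in X$ the measures $\mu_p$ and $\mu_q$ are mutually absolutely continuous with Radon-Nikodym derivative $\xi\mapsto e^{-h\cdot b_\xi(q,p)}$, which is strictly positive and continuous in $\xi$ by Corollary \ref{continuous} (and bounded since $|b_\xi(q,p)|\le d(p,q)$). Hence $\text{supp}(\mu_p)$ is the same for every $p\in X$.

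The second step is to establish $\C$-invariance. For any $\c\in \C$ and any open $U\subset \pX$, the equivariance condition (2) in Definition \ref{density} gives $\mu_p(\c^{-1}U) = \mu_{\c p}(U)$, while the first step shows $\mu_{\c p}$ is equivalent to $\mu_p$. Therefore $\mu_p(\c^{-1}U)>0$ if and only if $\mu_p(U)>0$, which means $\c\cdot\text{supp}(\mu_p) = \text{supp}(\mu_p)$. Moreover $\text{supp}(\mu_p)$ is nonempty, since $\mu_p$ is a weak-$*$ limit of probability measures supported on the discrete set $\C p_0$, so all the limit mass must lie on $\pX$.

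The main obstacle is to promote the above to the full equality $\text{supp}(\mu_p)=\pX$. For this I would appeal to the classical fact that, since $M = X/\C$ is compact and $X$ is a uniform visibility manifold, the limit set $L_\C$ equals $\pX$ and the $\C$-action on $\pX$ is minimal; this is an Eberlein--O'Neill type statement (cocompactness ensures every $\xi\in \pX$ is approached by a $\C$-orbit via recurrence of geodesic rays to a compact fundamental domain, and the visibility axiom together with non-elementarity upgrades this to minimality of the boundary action, cf.\ \cite{Eb1, EO}). Combined with the fact that $\text{supp}(\mu_p)$ is a nonempty closed $\C$-invariant subset of $\pX$, minimality forces $\text{supp}(\mu_p) = \pX$, as desired.
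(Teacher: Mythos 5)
Your proof is correct and follows essentially the same route as the paper: both arguments rest on the minimality of the $\C$-action on $\pX$ (which the paper also deduces from compactness via Eberlein), together with the $\C$-equivariance of the family $\{\mu_p\}$ and the mutual absolute continuity of $\mu_p$ and $\mu_{\c p}$. The only difference is cosmetic — you observe directly that $\text{supp}(\mu_p)$ is a nonempty closed $\C$-invariant set and invoke minimality, while the paper runs the equivalent contrapositive, translating a putative null open neighborhood around to cover $\pX$ and deriving a contradiction.
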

\begin{proof}
Suppose $\text{supp}(\mu_{p})\neq \pX$, then there exists $\xi \in \pX$ which is not in $\text{supp}(\mu_{p})$. Then we can find an open neighborhood $U$ of $\xi$ in $\pX$ such that $\mu_{p}(U)=0$.
Since $M$ is compact, the geodesic flow is topologically transitive and hence the action of $\C$ on $\pX$ is minimal (cf. \cite[Proposition 3.6 and the proof of Proposition 2.8]{Eb1}). Then $\bigcup_{\alpha \in \Gamma}\alpha U=\pX$. So for any $\eta \in \pX$, there is an element $\alpha \in \Gamma$ such that $\eta \in \alpha U$. By the $\Gamma$-invariance $\mu_{\alpha p}(\alpha U)=\mu_{p}(U)=0$. Since $\mu_{p}$ is equivalent to $\mu_{\alpha p}$, we have $\mu_{p}(\alpha U)=0$. Therefore $\text{supp}(\mu_{p}) = \emptyset$, a contradiction.
\end{proof}
For each $\xi \in \pX$ and $p \in X$, define the projections
\begin{itemize}
\item{} ~~$pr_{\xi}: X \rightarrow \pX, ~~q \mapsto c_{\xi,q}(+\infty)$,
\item{} ~~$pr_{p}: X\backslash \{p\} \rightarrow \pX, ~~q \mapsto c_{p,q}(+\infty)$,
\end{itemize}
where $c_{\xi,q}$ is the geodesic satisfying $c_{\xi,q}(-\infty)=\xi$ and $c_{\xi,q}(0)=q$. For $A \subset X$ and $p \in X$, $pr_p(A)=\{pr_p(q)\mid q \in A \backslash \{p\} \}.$

\begin{lemma}\label{shadowopen}
There exists $R>0$ such that for all $x\in \overline X$ and $p\in X$,  the shadow $pr_x B(p, R)$ contains an open set in $\pX$, where $B(p,R)$ is the open ball of radius $R$ centered at $p$.
\end{lemma}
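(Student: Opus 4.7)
The plan is to use the uniform visibility axiom to pick a single radius $R$ that works for all $(x,p)$, and then for each such pair to exhibit a distinguished endpoint $\eta\in\pX$ together with a visual neighborhood of $\eta$ lying inside $pr_xB(p,R)$. First I would apply Definition \ref{vis} with $\varepsilon=\pi/2$ to obtain $L=L(\pi/2)$ and set $R:=L$. In contrapositive form, every geodesic whose angle subtended at $p$ is at least $\pi/2$ must enter $B(p,R)$, and this $R$ depends only on the manifold, not on $x$ or $p$.

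Next I would separate three cases. If $x=p$, then the map $q\mapsto c_{p,q}(+\infty)$ takes $B(p,R)\setminus\{p\}$ surjectively onto $\pX$, so the shadow equals all of $\pX$. If $x\in X\setminus\{p\}$, set $\eta:=c_{x,p}(+\infty)$; the geodesic $c_{x,p}$ passes through $p$ with opposite tangent directions pointing to $x$ and to $\eta$, hence $\angle_p(x,\eta)=\pi$. If $x\in\pX$, I would use Proposition \ref{geo}(3) to pick any geodesic $c$ with $c(-\infty)=x$ and $c(0)=p$ and set $\eta:=c(+\infty)$; again $\angle_p(x,\eta)=\pi$. In both nontrivial cases, continuity of $\eta'\mapsto\angle_p(x,\eta')$ in the cone topology (which is essentially built into the definition of that topology) ensures $\angle_p(x,\eta')>\pi/2$ for every $\eta'$ in some visual neighborhood $U$ of $\eta$.

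To conclude, given $\eta'\in U$, I would pick any geodesic $c'$ joining $x$ and $\eta'$ (existence by Proposition \ref{geo}(3)). The inequality $\angle_p(c')\geq\angle_p(x,\eta')>\pi/2$ combined with the choice of $R$ produces a point $q\in c'\cap B(p,R)$, and since the reparametrization of $c'$ based at $q$ is a geodesic through $q$ asymptotic to $x$ at $-\infty$ with endpoint $\eta'$ at $+\infty$, I obtain $pr_xq=\eta'$, so $\eta'\in pr_xB(p,R)$. Thus $U\subset pr_xB(p,R)$.

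The subtlest step is the case $x\in\pX$. In a visibility manifold, two points at infinity may be joined by several geodesics, so I cannot single out a canonical $c'$ from $x$ to $\eta'$ and must make sure the argument is insensitive to the choice: what rescues it is that Proposition \ref{geo}(3) furnishes at least one such $c'$, and the angle bound $\angle_p(c')\geq\angle_p(x,\eta')$ holds for every such $c'$, so it suffices to run the above reasoning with any one choice.
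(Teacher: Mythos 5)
Your proof is correct and follows essentially the same route as the paper: both arguments take the point of $\pX$ antipodal to $x$ as seen from $p$, use the contrapositive of uniform visibility to force any geodesic from $x$ to a nearby boundary point into a ball of uniform radius around $p$, and note that the argument is independent of which connecting geodesic is chosen. The only cosmetic difference is that you fix $\varepsilon=\pi/2$ and use the open set $\{\eta':\angle_p(x,\eta')>\pi/2\}$, whereas the paper fixes a small visual cone of radius $\rho$ and applies visibility with $\varepsilon<\pi-\rho$, reaching the same contradiction via the triangle inequality for angles at $p$.
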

\begin{proof}
For $x\in \overline X$ and $p\in X$, let $v=-\dot c_{p,x}(0)$. For small $\rho>0$, the set $A(v,\rho):=\{c_w(\infty): \angle_p(v,w)<\rho\}$ is open in $\pX$. Since $X$ is a uniform visibility manifold, for any $\eta\in A(v,\rho)$, there exists at least one geodesic $c_{x, \eta}$ connecting $x$ and $\eta$. For any $0<\e<\pi-\rho$, let $R=R(\e)$ be the constant in Definition \ref{vis} of uniform visibility manifolds. Assume that the geodesic $c_{x, \eta}$ does not intersect the ball $B(p, R)$, then $\angle_p(-v,\dot c_{p,\eta}(0))<\e$. Therefore, 
$$\angle_p(v,-v)\le \angle_p(v,\dot c_{p,\eta}(0))+\angle_p(-v,\dot c_{p,\eta}(0))<\rho+\e<\pi$$
which is a contradiction. Thus $A(v, \rho)\subset pr_x B(p, R)$ and the lemma follows. 
\end{proof}

\begin{proposition}\label{shadow}
Let $X$ be a simply connected uniform visibility manifold which has a compact quotient, and $\{\mu_{p}\}_{p\in X}$ be an $\d_\C$-dimensional Busemann density on $\pX$. Fix $r>R$ where $R$ is from Lemma \ref{shadowopen}.
\begin{enumerate}
\item[(1)] There exists $l=l(r)$ such that $\mu_{p}(pr_{x}B(p,r))\geq l$, for all $p \in X$ and $x \in \overline{X}$;
\item[(2)] there is a positive constant $a=a(r)$ such that for any $x \in X \setminus \{p\}$ and $\xi=c_{p,x}(-\infty)$,
we have $$\frac{1}{a} e^{-h d(p,x)} \leq \mu_{p}(pr_{\xi}B(x,r))\leq a e^{-h d(p,x)};$$
\item[(3)] there is a positive constant $b=b(r)$ such that for any $p, x \in X$,
$$\frac{1}{b} e^{-h d(p,x)} \leq \mu_{p}(pr_{p}B(x,r))\leq b e^{-h d(p,x)}.$$
\end{enumerate}
\end{proposition}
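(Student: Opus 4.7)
\textbf{Proof plan for Proposition \ref{shadow}.} The three statements are all variants of Sullivan's shadow lemma. I would prove them in the given order: part (1) is a bounded--distance uniform estimate, and parts (2) and (3) are then promoted from (1) via the Busemann--density identity together with a Busemann function estimate for shadows.

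For part (1), the plan is to combine $\Gamma$-equivariance, Lemma \ref{shadowopen} and the full support of $\mu_p$ (Lemma \ref{support}) with cocompactness of $M = X/\Gamma$. Given $p \in X$ and $x \in \overline{X}$, I would first choose $\gamma \in \Gamma$ with $\gamma p \in \F$ (a compact fundamental domain) and use Definition \ref{density}(2) together with the fact that any isometry sends the open cone $A(v, \rho)$ onto $A(d\gamma v, \rho)$, to reduce to the case $p \in \F$. By Lemma \ref{shadowopen}, for the prescribed $r > R$ there is $\rho = \rho(r) > 0$ such that $pr_x B(p, r) \supset A(v, \rho)$ with $v = -\dot c_{p,x}(0)$. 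It therefore suffices to bound
\[
f(p, v) := \mu_p\bigl(A(v, \rho)\bigr)
\]
from below by a positive constant on the compact set $\{(p, v) : p \in \F,\ v \in S_p X\}$. Pointwise positivity is Lemma \ref{support}. Lower semicontinuity of $f$ follows from (i) continuity in $v$ of the open cones $A(v, \rho)$, and (ii) weak$^*$ continuity of $p \mapsto \mu_p$, which comes from the explicit density $d\mu_p/d\mu_{p_0} = e^{-h b_\eta(p, p_0)}$ together with Corollary \ref{continuous}. A lower semicontinuous function on a compact set attains its positive infimum, yielding $\ell(r) > 0$.

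For parts (2) and (3), I would plug the Busemann--density identity
\[
\mu_p(S) \;=\; \int_{S} e^{-h b_\eta(p, x)}\, d\mu_x(\eta), \qquad S \subset \partial X \text{ Borel},
\]
into the shadows appearing in each statement, and then use the following geometric estimate: for every $\eta$ in either $pr_\xi B(x, r)$ (with $\xi = c_{p, x}(-\infty)$) or $pr_p B(x, r)$ there is a constant $C_r > 0$ depending only on $r$ such that
\[
|b_\eta(p, x) - d(p, x)| \le C_r.
\]
In case (2), for $\eta \in pr_\xi B(x, r)$ the connecting geodesic from $\xi$ to $\eta$ passes within distance $r$ of $x$, and since $\xi$ lies on the opposite side of $x$ from $p$, quasi--convexity (Proposition \ref{geo}(4)) forces the ray from $x$ to $\eta$ to stay within bounded Hausdorff distance of $c_{p,x}$ extended past $x$; passing to the limit in $t$ gives the Busemann estimate. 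Case (3) is analogous: for $\eta \in pr_p B(x, r)$ the ray $c_{p, \eta}$ passes within $r$ of $x$, and the two rays to $\eta$ starting from $p$ and from $x$ stay a bounded distance apart after time $\approx d(p, x)$ by the same quasi--convexity bound. Inserting the estimate into the density identity gives
\[
e^{-h d(p, x) - h C_r}\, \mu_x(S) \;\le\; \mu_p(S) \;\le\; e^{-h d(p, x) + h C_r}\, \mu_x(S).
\]
The upper bound then follows from $\sup_{x \in X} \mu_x(\partial X) < \infty$, which is a consequence of $\Gamma$-equivariance and continuity of $q \mapsto \mu_q(\partial X)$ on $\F$; the lower bound follows from part (1) applied with base point $x$ and auxiliary point either $\xi$ (for (2)) or a point at infinity in the direction beyond $x$ from $p$ (for (3)).

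The main obstacle I expect is the Busemann estimate $|b_\eta(p, x) - d(p, x)| \le C_r$, since in negative or nonpositive curvature this would follow at once from convexity of Busemann functions, whereas here only the weaker tools of uniform visibility, quasi--convexity (Proposition \ref{geo}(4)) and the Morse property (Proposition \ref{geo}(5)) are available; the argument must therefore be reorganised to first control the Hausdorff distance between two asymptotic rays and only then take the limit to pass to $b_\eta$. A secondary subtlety is the lower semicontinuity argument in (1), where weak$^*$ continuity of $p \mapsto \mu_p$ must be justified from the explicit Radon--Nikodym density and Corollary \ref{continuous}; this should go through without complications.
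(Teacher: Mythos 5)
Your proposal is correct and follows essentially the same route as the paper, which simply invokes the standard Sullivan-type shadow argument of \cite[Proposition 5.4]{CKW1} after noting that Lemmas \ref{support} and \ref{shadowopen} supply the needed inputs in this setting; your compactness/lower-semicontinuity proof of (1) and the conformal-density-plus-Busemann-estimate derivation of (2)--(3) are exactly that argument. One small correction: for the lower bound in (3) you should apply part (1) directly with base point $x$ and viewpoint $p\in X\subset\overline{X}$ (part (1) as stated allows interior viewpoints), rather than detouring through a point at infinity beyond $x$, which would require the extra and unnecessary inclusion $pr_{\zeta}B(x,r)\subset pr_{p}B(x,r)$.
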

\begin{proof}
The proof is the same as that of \cite[Proposition 5.4]{CKW1}, for which Lemmas \ref{support} and \ref{shadowopen} are crucial. We have established these two lemmas based on the uniform visibility and the compactness of $M$.
\end{proof}

Let $\mu$ be the Bowen-Margulis measure of the geodesic flow on $SM$,  and $d_1$ be the Knieper metric on $SM$ defined as 
\[d_1(\lv,\lw):=\max_{t\in [0,1]}d(c_\lv(t), c_\lw(t)).\]
Consider a finite measurable partition $\mathcal{A}=\{A_{1},...,A_{l}\}$ of $SM$ 
satisfying $\mu(\partial A_i)=0, i=1, \cdots l$ and that the radius of every $A_i$ under $d_1$ is less than $\epsilon < \min\{\text{Inj}(M),R\}$,
where $R>0$ is the constant in Lemma \ref{shadowopen}.
Let $\mathcal{A}^{(n)}_{\phi}:=\vee^{n-1}_{i=0}\phi^{-i}\mathcal{A}$, $n=1,2,\cdots$.
For each $\alpha \in \mathcal{A}^{(n)}_{\phi}$, we know that if  $\lv\in\alpha$, then
$$\alpha \subset \bigcap^{n-1}_{k=0}\phi^{-k}B_{d_{1}}(\phi^{k}\lv,\epsilon),$$
where $B_{d_{1}}(\phi^{k}\lv,\epsilon)$ denotes the $\epsilon$-ball of $\phi^{k}\lv\in SM$ under the Knieper metric $d_1$.
If $\lw\in \a$, then $d(c_\lv(t), c_\lw(t))\le \e$ for all $0\le t\le n$.
\begin{proposition}\label{MME}
There exists a constant $a > 0$ such that for any $\alpha \in \mathcal{A}^{n}_{\phi}$, we have $\mu(\alpha)\leq e^{-hn}a$. As a consequence, $\mu$ is a MME.
\end{proposition}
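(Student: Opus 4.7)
The plan is to establish $\mu(\alpha)\le ae^{-hn}$ by unwrapping the Bowen--Margulis construction on the universal cover, and then deduce the MME property from the variational principle. Pick $\bar v\in\alpha$ and a lift $v\in SX$ in a fundamental domain, and let $\tilde\alpha\subset B_{d_{1}}(v,\epsilon)$ be the connected lift of $\alpha$; since $\epsilon<\inj(M)$, the projection $\pr:\tilde\alpha\to\alpha$ is a bijection, so $m_\C(\alpha)=m(\tilde\alpha)$. Set $p:=\pi v$ and $q:=\pi\phi^n v$, so $d(p,q)=n$. Each $w\in\tilde\alpha$ satisfies $d(c_w(0),p)\le\epsilon$ and $d(c_w(n),q)\le\epsilon$; combined with the fact that in a uniform visibility manifold asymptotic rays stay at bounded distance, this forces $w^+\in W^+:=\pr_p B(q,r)$ and $w^-\in W^-:=\pr_q B(p,r)$ for some $r=r(\epsilon)$, hence $P(\tilde\alpha)\subset W^-\times W^+$.

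Restricting the product formula for $m$ to $W^-\times W^+$ gives
$$m(\tilde\alpha)=\int_{W^-\times W^+}\lambda_{\xi,\eta}(\tilde\alpha)\,e^{h\beta_p(\xi,\eta)}\,d\mu_p(\xi)\,d\mu_p(\eta),$$
and I would establish three uniform bounds. First, $\lambda_{\xi,\eta}(\tilde\alpha)\le C_0$: if $\tilde V([\phi^{t_1}v_0])$ and $\tilde V([\phi^{t_2}v_0])$ both lie in $\tilde\alpha$, their base points are within $\epsilon$ of $p$ while differing from $c_{v_0}(t_i)$ by at most the universal diameter bound $Q'$ on $\I(\cdot)$ from Lemma \ref{compact}, giving $|t_1-t_2|\le 2\epsilon+2Q'$. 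Second, $|\beta_p(\xi,\eta)|=O(r)$, by evaluating $\beta_p=-b_\xi(q_0,p)-b_\eta(q_0,p)$ at a point $q_0$ of a connecting geodesic within $O(r)$ of $p$. Third, Proposition \ref{shadow}(3) gives $\mu_p(W^+)\le be^{-hn}$, while the transformation rule $d\mu_p=e^{hb_\xi(q,p)}d\mu_q$ combined with $b_\xi(q,p)=n+O(r)$ for $\xi\in W^-$ and another application of Proposition \ref{shadow}(3) with base point $q$ yields $\mu_p(W^-)=O(1)$. Multiplying these three bounds gives $m(\tilde\alpha)\le ae^{-hn}$ for a uniform $a$. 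The same estimate applied to the dynamical ball around $\phi^{-t}\bar v$ yields $m_\C(\phi^{-t}\alpha)\le ae^{-hn}$ for every $t\ge 0$; since $\mu$ is a weak-$*$ limit of $\frac{1}{T_k}\int_0^{T_k}(\phi^t)_*m_\C\,dt$ and $\mu(\partial\alpha)=0$, passing to the limit gives $\mu(\alpha)\le ae^{-hn}$.

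For the MME conclusion, the partition entropy satisfies
$$H_\mu(\mathcal{A}^{(n)}_{\phi})=-\sum_\alpha\mu(\alpha)\log\mu(\alpha)\ge \sum_\alpha\mu(\alpha)(hn-\log a)=hn-\log a,$$
so dividing by $n$ and sending $n\to\infty$ gives $h_\mu(\phi^1,\mathcal{A})\ge h$; combined with the variational principle (Theorem \ref{VPentropy}), this forces $h_\mu(\phi^1)=h$, i.e.\ $\mu$ is an MME. The main technical obstacle I anticipate is the uniform control $\lambda_{\xi,\eta}(\tilde\alpha)\le C_0$: this is precisely where nontrivial generalized strips (absent in the classical hyperbolic setting) threaten to ruin the Patterson--Sullivan calculation, and Lemma \ref{compact}'s uniform diameter bound $Q'$ on $\I(\cdot)$ is the essential input that rescues the estimate.
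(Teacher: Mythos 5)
Your argument is correct and follows essentially the same route as the paper's proof: lift the atom to the universal cover, confine its Hopf coordinates to a product of shadows, apply the shadow lemma (Proposition \ref{shadow}) together with uniform bounds on the Gromov product and on the fiber measure $\lambda_{\xi,\eta}$, pass the resulting bound through the Krylov--Bogolyubov averaging to the invariant measure $\mu$, and conclude via the partition-entropy estimate and the variational principle. The only differences are cosmetic: the paper bounds the forward endpoints by shadows $pr_\eta(B(x,\epsilon))$ cast from points at infinity (using Proposition \ref{shadow}(2)) and bounds $\lambda_{\xi,\eta}(\tilde\alpha)$ directly by $\mathrm{diam}\,\tilde\alpha$ via the $1$-Lipschitz Hopf time coordinate, whereas you use shadows from base points together with the transformation rule and the diameter bound on $\I(\cdot)$ from Lemma \ref{compact} --- both yield the same uniform constants.
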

\begin{proof}
\textbf{Step 1.} Let $p\in X$ be the reference point used in the definition of $\bar \mu$ in \eqref{current}, and let $v$ be a lift of $\lv$ such that $d(p, \pi\lv)\le \text{diam}M=:r_0$.  Since $\text{diam}\a<\text{Inj}(M)$, we can lift $\a$ to  $\tilde \a$ in $X$ such that for any $v, w\in \tilde \a$, then $d(c_v(t), c_w(t))\le \e$ for all $0\le t\le n$. Let $c_v(n)=x$, then there exists constant $r_1>0$,
$$P(\tilde \a)\subset  \bigcup_{\eta\in pr_x(B(p,r_1))}\{\eta\}\times pr_\eta(B(x,\e)).$$
Indeed, let $\eta=c_w(-\infty)$ and $c_{\eta,x}$ the geodesic connecting $\eta$ and $x$ such that $c_{\eta,x}(n)=x$. Since $c_{\eta,x}$ and $c_w$ are negatively asymptotic, by Proposition \ref{horofoliation}, 
$d(c_{\eta,x}(t),c_w(t))\le Ad_s(c_{\eta,x}(n),c_w(n))+B\le 2A\e+B=:R_1$
for any $0\le t\le n$.
Then $d(c_{\eta,x}(0),p)\le d(c_{\eta,x}(0),\pi w)+d(\pi w, p)\le R_1+r_0+\e=:r_1.$
This implies that $\eta\in pr_x(B(p,r_1))$.

\textbf{Step 2.} $\tilde \mu(\tilde\alpha)\leq e^{-hn}a$ for some constant $a>0$.

Indeed, for any $\eta\in pr_x(B(p,r_1))$, choose a point $q\in B(p,r_1)$ that lies on $c_{\eta, x}$.
First notice that 
$$d(q,x)\ge d(x,\pi v)-d(\pi v, p)-d(p,q)\ge n-r_0-r_1.$$ 
Then by Proposition \ref{shadow}(2),
\begin{equation}\label{proj}
 \mu_p(pr_\eta(B(x,\e)))\le e^{hd(p,q)}\mu_q(pr_\eta(B(x,\e)))\le e^{hr_1}ae^{-hd(q,x)}\le \tilde be^{-hn}.   
\end{equation}
From definition we have $\lambda_{\xi,\eta}(\tilde\alpha)\le \text{diam} \tilde\alpha$. Recall that $\text{diam} \tilde\alpha\le \e$. We have
$$m_\C (\tilde\alpha)\le \bar \mu (P(\tilde\alpha))\text{diam} \tilde\alpha$$
and the same holds for $(\phi^t)_*m_\C$. Since $\tilde \mu$ is a limit of a convex combination of such measures, we also have
\begin{equation}\label{diam}
\tilde \mu (\tilde\alpha)\le \bar \mu (P(\tilde\alpha))\text{diam} \tilde\alpha.
\end{equation}
At last, notice that for any $(\xi,\eta)\in P(\tilde \a)$, $\beta_p(\xi,\eta)\le 2r_1$ since any geodesic connecting $\xi$ and $\eta$ intersects $B(p,r_1)$. Combining this with \eqref{proj}, \eqref{diam} and the fact that $\mu_p( pr_x((B(p,r_1)))\le \mu_p(\pX)$, we get $\tilde \mu(\tilde\alpha)\leq e^{-hn}a$ for some constant $a>0$.

\textbf{Step 3.} We have $$H_{\mu}(\mathcal{A}^{(n)}_{\phi})=-\sum_{\alpha \in \mathcal{A}^{(n)}_{\phi}}\mu(\alpha)\log\mu(\alpha)\geq n\cdot h-\log a,$$
which implies that $$h_{\mu}(\phi^{1})\geq h_{\mu}(\phi^1,\mathcal{A})=\lim_{n\rightarrow \infty}\frac{1}{n}H_{\mu}(\mathcal{A}^{(n)}_{\phi})\geq h.$$
Then we have $h_{\mu}(\phi^{1}) = h$. 
\end{proof}

\begin{proposition}\label{BMMME}
Let $X$ be a simply connected uniform visibility manifold which has a compact quotient  $M$. Suppose that the geodesic flow on $SM$ has a unique MME giving full weight to $\R_0$. Then for any $A\subset SX$,
\begin{equation}\label{BM}
\tilde \mu (A)=\int_{\partial^2X}\lambda_{\xi,\eta}(A)d\bar \mu(\xi,\eta)
\end{equation}
where $\lambda_{\xi,\eta}$ is the Lebesgue measure on $P^{-1}(\xi,\eta)$ which is a single geodesic for $\bar \mu \ae (\xi,\eta)$.
\end{proposition}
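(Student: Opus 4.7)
The plan is to prove the proposition in three linked steps: first identify the flow-invariant lift $\tilde\mu$ with the measure $m$ built from the measurable selection; then deduce that $\bar\mu$-a.e.\ fiber $P^{-1}(\xi,\eta)$ is already a single geodesic; and finally read off the stated integral formula from the definition of $m$.

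For the first step, I would argue that $\mu = m_\C$ on $SM$. By Proposition \ref{MME} the measure $\mu$ is a MME, so under the hypothesis of a unique MME it must be that MME and in particular $\mu(\R_0)=1$. A short check using transitivity of $\sim$ shows that $\R_0$ is saturated: if $\lv\in\R_0$ then $[\lv]=\{\lv\}\subset\R_0$, while if $\lv\in\R_0^c$ then any $\lw\in[\lv]$ satisfies $[\lw]=[\lv]\neq\{\lw\}$, hence $\lw\in\R_0^c$. Consequently $\chi|_{\R_0}$ is a Borel isomorphism onto $\chi(\R_0)$, and pulling back the common identity $\chi_*\mu=\lm_\C=\chi_*m_\C$ through $(\chi|_{\R_0})^{-1}$ yields $\mu|_{\R_0}=m_\C|_{\R_0}$. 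Since $\mu$ is a probability with $\mu(\R_0)=1$, this forces $m_\C(\R_0)=1$ and thus $\mu=m_\C$ on all of $SM$. Because $\tilde\mu$ and $m$ are both $\Gamma$-invariant extensions of their common projection to $SM$, we obtain $\tilde\mu=m$ on $SX$.

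For the second step, set $\tilde S:=\pr^{-1}(\R_0^c)\subset SX$ and $E:=P(\tilde S)\subset\partial^2 X$; then $(\xi,\eta)\in E$ precisely when $P^{-1}(\xi,\eta)$ is a nontrivial generalized strip, and in that case $P^{-1}(\xi,\eta)\subset\tilde S$. From $m_\C(\R_0^c)=0$ and $\Gamma$-invariance one has $m(\tilde S\cap K)=0$ for every compact $K\subset SX$. Substituting into the defining formula for $m$, and using that $\lambda_{\xi,\eta}$ is supported on $P^{-1}(\xi,\eta)$, gives
\begin{equation*}
0 \;=\; m(\tilde S\cap K) \;=\; \int_{\partial^2 X}\lambda_{\xi,\eta}(\tilde S\cap K)\,d\bar\mu(\xi,\eta) \;\ge\; \int_E \lambda_{\xi,\eta}(K)\,d\bar\mu(\xi,\eta).
\end{equation*}
Exhausting $SX$ by an increasing family of compact sets $K_n$ produces a single $\bar\mu$-null set $N\subset E$ outside of which $\lambda_{\xi,\eta}(K_n)=0$ for every $n$; but $\lambda_{\xi,\eta}$ is the Lebesgue measure along a complete geodesic and cannot vanish on such an exhaustion. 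Hence $\bar\mu(E)=0$.

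Combining the two steps, for $\bar\mu$-a.e.\ $(\xi,\eta)$ the fiber $P^{-1}(\xi,\eta)$ is a single geodesic, the measurable selection $\tilde V$ becomes canonical, and $\lambda_{\xi,\eta}$ is honestly the Lebesgue measure on this geodesic; the desired identity is then the defining formula for $m=\tilde\mu$. I expect the main technical obstacle to be the second step: because $m$ is not flow-invariant a priori and only locally finite on $SX$, the vanishing of strips cannot be read off from a single Fubini identity but needs the compact exhaustion to push a local null-set statement into a global statement about $\bar\mu$; everything else reduces to exploiting the fact that the unique-MME hypothesis places $\mu$ on $\R_0$, where the equivalence classes are singletons and the measurable selection has no effect.
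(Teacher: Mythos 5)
Your proof is correct and follows the same route as the paper's (much terser) argument: identify $\tilde\mu$ with $m$ via the unique MME giving full weight to $\R_0$, conclude that $\bar\mu$-almost every fiber $P^{-1}(\xi,\eta)$ is a single geodesic, and read off \eqref{BM} from the defining formula for $m$. The extra details you supply (saturation of $\R_0$ under $\sim$, and the compact exhaustion showing $\bar\mu(E)=0$) are exactly the steps the paper leaves implicit.
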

\begin{proof}
 We claim that $P^{-1}(\xi,\eta)$ is a single geodesic for $\bar \mu \ae (\xi,\eta)$. Indeed, if otherwise we must have $\tilde \mu(\R_0^c)>0$, a contradiction. Then the right side of \eqref{BM} immediately gives an invariant measure and indeed it coincides with $\tilde \mu$.
\end{proof}

\begin{proposition} 
Let $X$ be a simply connected uniform visibility manifold which has a compact quotient  $M$. Suppose that the geodesic flow on $SM$ has a unique MME giving full weight to $\R_0$. Then the MME is fully supported on $SM$.
\end{proposition}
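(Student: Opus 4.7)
The plan is to prove that $\tilde\mu(\tilde U)>0$ for every non-empty open $\tilde U\subset SX$; on projecting to $SM$ this yields $\mu(U)>0$ for every non-empty open $U\subset SM$, which is exactly full support. By Proposition \ref{BMMME},
\[
\tilde\mu(\tilde U)=\int_{\partial^2 X}\lambda_{\xi,\eta}(\tilde U)\,d\bar\mu(\xi,\eta),
\]
and for $\bar\mu$-a.e.\ $(\xi,\eta)$ (call this full-measure set $\Omega$) the fiber $P^{-1}(\xi,\eta)$ is a single geodesic on which $\lambda_{\xi,\eta}$ is Lebesgue measure. Lemma \ref{support} gives $\text{supp}(\mu_p)=\partial X$, so $\bar\mu=e^{h\beta_p}d\mu_p\otimes d\mu_p$ has full topological support on $\partial^2 X$. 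Consequently it suffices to show that $P(\tilde U)\subset\partial^2 X$ contains a non-empty open set: then $\bar\mu(P(\tilde U)\cap\Omega)>0$, on this set the unique connecting geodesic visits $\tilde U$ so that $\lambda_{\xi,\eta}(\tilde U)>0$, and integration finishes the argument.

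To exhibit interior in $P(\tilde U)$, first I would select $v_0\in\tilde U\cap\R_0$ (the existence of such $v_0$ is the key technical point, to be addressed below). Given any sequence $(\xi_n,\eta_n)\to P(v_0)=(v_0^-,v_0^+)$, pick connecting geodesics $c_n$ and parametrise so that $c_n(0)$ is the point on $c_n$ closest to $\pi(v_0)$. Uniform visibility (Proposition \ref{geo}) keeps $d(c_n(0),\pi(v_0))$ uniformly bounded, so passing to a subsequence $\dot c_n(0)\to w\in SX$ with $\pi(w)=\pi(v_0)$; continuity of $\Psi$ gives $P(w)=(v_0^-,v_0^+)$. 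The hypothesis $v_0\in\R_0$ forces the connecting geodesic of $(v_0^-,v_0^+)$ to be unique, so $w=v_0$. Since every subsequential limit equals $v_0$, the full sequence satisfies $\dot c_n(0)\to v_0$ and therefore $\dot c_n(0)\in\tilde U$ for large $n$, placing $(\xi_n,\eta_n)\in P(\tilde U)$. This shows $P(v_0)$ lies in the interior of $P(\tilde U)$ and completes the reduction.

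The remaining, and principal, obstacle is to prove that $\R_0$ is dense in $SM$. I would establish this using the expansive factor $\chi:SM\to SM/\sim$ of Theorem \ref{expansivefactor}, on which $\psi^t$ is expansive, topologically mixing, and has local product structure. Since $\mu(\R_0)=1$ and $\chi$ is injective on $\R_0$, one has $h_{\chi_*\mu}(\psi^1)=h_\mu(\phi^1)=h_{\text{top}}(\phi^1)=h_{\text{top}}(\psi^1)$, so $\chi_*\mu$ is the unique MME of the expansive topologically mixing factor flow with local product structure, and by the standard Bowen--Franco--Walters theory it is fully supported on $SM/\sim$. One then transfers this full support back to $SM$: given a non-empty open $U\subset SM$, use the upper semicontinuity of $v\mapsto\mathcal{I}(v)$ (Lemma \ref{compact}) together with the compactness of equivalence classes to find a point $v_0\in U$ whose equivalence class $\mathcal{I}(v_0)$ is contained in $U$; the saturation of a sufficiently small neighborhood of $v_0$ is then an open saturated set inside $U$ whose $\chi$-image is open in $SM/\sim$, so it has positive $\chi_*\mu$-measure and hence positive $\mu$-measure; combined with $\mu(\R_0)=1$ this produces a point of $\R_0\cap U$. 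The hard part is precisely this saturation argument, because the quotient map $\chi$ is not automatically open in the presence of nontrivial generalized strips; it is the technical heart of the proof, and may be shortened by invoking results on the structure of the expansive factor from \cite{MR}.
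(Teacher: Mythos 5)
Your first two paragraphs are correct, and they in fact supply the justification that the paper's own three-line proof leaves implicit: the paper simply asserts that $\bar\mu(P(U))>0$ follows from $\mathrm{supp}(\mu_p)=\pX$, whereas you correctly observe that one must actually show $P(\tilde U)$ carries positive $\bar\mu$-measure, and you do so by proving $P(v_0)$ is interior to $P(\tilde U)$ for $v_0\in\tilde U\cap\R_0$ via Proposition \ref{regularneighbor} and uniqueness of the connecting geodesic at expansive vectors. You are also right that everything hinges on $\tilde U$ meeting $\R_0$: if $\tilde U\subset\R_0^c$, then $P(\tilde U)$ lies in the set of pairs with non-degenerate fibers, which is $\bar\mu$-null by Proposition \ref{BMMME}, so the proposition is genuinely equivalent to density of $\R_0$.

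The gap is precisely that last step, and your proposed route through the expansive factor does not close. Full support of $\chi_*\mu$ on $SM/\sim$ does not transfer back to $SM$: a priori $\mathrm{supp}\,\mu$ could be a proper closed subset that still surjects onto $SM/\sim$, and your saturation argument needs every small open $U$ to contain an entire class $\I(v)$, which fails when $U$ is a small ball in the interior of a generalized strip of definite width; nor is $\chi$ an open map in general. However, the density of $\R_0$ has a short direct proof from results already quoted. First, $\R_0$ is open: if $v_n\to v$ with $\diam \I(v_n)\ge\delta>0$, then since $\I(v_n)$ is compact and connected (Lemma \ref{compact}) one may choose $w_n\in\I(v_n)$ with $d(v_n,w_n)=\delta/2$; by continuity of the horospherical foliations (Proposition \ref{horofoliation}) any limit $w$ of the $w_n$ lies in $\F^s(v)\cap\F^u(v)=\I(v)$ and satisfies $d(v,w)=\delta/2$, so $\R_0^c$ is closed. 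Second, $\R_0$ is $\phi^t$-invariant and nonempty, since it has full measure for the MME. Third, the geodesic flow is topologically mixing, hence transitive (Proposition \ref{horofoliation}), and a nonempty open invariant set of a transitive flow meets every nonempty open set. Replacing your factor argument by this observation makes the proof complete, and then your argument is a more detailed (and more convincing) version of the one in the paper.
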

\begin{proof}
 It is enough to prove that $\tilde \mu(U)>0$ for any open subset $U\subset SX$. Recall that the Patterson-Sullivan measures have full support on $\pX$ and thus $\bar \mu(P(U))>0$. Then by \eqref{BM}, $\tilde \mu(U)>0$. The proposition follows.
\end{proof}
The above proposition proves the first statement of Theorem \ref{bernoulli}.

\begin{remark}
Under the assumption of Proposition \ref{BMMME}, we see that $\mu$ and $m_\C$ defined before Remark \ref{noncom} represents the same measure. In the following, we use $m_\C$ or just $m$ to denote this measure, i.e., the Bowen-Margulis measure and the unique MME, since the notation $\mu$ usually denotes an abstract measure. 
\end{remark}

\subsection{Bernoulli property of Bowen-Margulis measure}
We prove Theorem \ref{bernoulli} in this subsection. To start, we recall some necessary definitions.
\begin{definition}[Kolmogorov property]
Let $T: Y\to Y$ be an invertible measure-preserving transformation in a Lebesgue space $(Y,\mathcal{B},\mu)$. We say that $T$ has the \emph{Kolmogorov property}, or that $T$ is \emph{Kolmogorov}, if there is a sub-$\sigma$-algebra $\mathcal{K}$ of $\mathcal{B}$ which satisfies
\begin{enumerate}
  \item $\mathcal{K}\subset T\mathcal{K}$,
  \item  $\bigvee_{i=0}^{\infty}T^{i}\mathcal{K}=\mathcal{B}$,
  \item $\bigcap_{i=0}^{\infty}T^{-i}\mathcal{K}=\{\emptyset,Y\}$.
\end{enumerate}
\end{definition}

Consider $k\in \NN$ and a probability vector $p=(p_0,\cdots, p_{k-1})$, i.e., $p_i\ge 0, i=0,\cdots, k-1$ and $\sum_{i=0}^{k-1}p_i=1$. A \emph{Bernoulli shift} is a transformation $\sigma$ on the space $\Sigma_k:=\{0,\cdots,k-1\}^{\ZZ}$ given by
$\sigma((x_n)_{n\in \ZZ})=(x_{n+1})_{n\in \ZZ}$ which preserves the Bernoulli measure $p^{\ZZ}$ on $\Sigma_k$.

\begin{definition}[Bernoulli property]
Let $T: Y\to Y$ be an invertible measure-preserving transformation in a Lebesgue space $(Y,\mathcal{B},\mu)$. We say that $T$ has the \emph{Bernoulli property} or is called \emph{Bernoulli} if it is measurably isomorphic to a Bernoulli shift.
\end{definition}
Obviously, the Bernoulli property implies the Kolmogorov property.
\begin{definition}
A measure-preserving flow $(Y, (f^t)_{t\in \RR}, \mu)$ is said to have the \textit{Kolmogorov (resp. Bernoulli) property} if for every $t\neq 0$, the invertible measure-preserving transformation $(Y, f^t, \mu)$ has the Kolmogorov (resp. Bernoulli) property.
\end{definition}

For the the geodesic flow on manifolds of nonpositive curvature, Call and Thompson \cite{CT} showed that the unique equilibrium state for certain potential has the Kolmogorov property and the unique MME has the Bernoulli property, based on the earlier work of \cite{Led}. These two results are extended to manifolds without focal points in \cite[Theorem B]{CKP2} and \cite[Theorem C]{Wu2} respectively, via methods developed in \cite{CT}. Recently, Araujo-Lima-Poletti \cite{ALP} shows that the unique equilibrium state for certain potential has the Bernoulli property by symbolic approach.

\subsubsection{Kolmogorov property}
For compact uniform visibility manifolds without conjugate points, we first show the following.
\begin{proposition}\label{Kol}
Let $(M,g)$ be a closed $C^\infty$ visibility manifold without conjugate points. Suppose that 
$$\sup\{h_\mu(\phi^1): \mu \text{\ is supported on\ } \R_0^c\}<h_{\top}(\phi^1).$$
Then $m$ (i.e., the unique MME) is Kolmogorov.
\end{proposition}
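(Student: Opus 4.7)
The plan is to follow the Call--Thompson scheme, pushing the problem down to the expansive factor $\psi^t$ on $SM/\sim$ provided by Theorem~\ref{expansivefactor}, establishing the Kolmogorov property there via Ledrappier's product criterion, and then lifting it back up to $(SM,\phi^t,m)$.

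First I would reduce to the factor. Set $\bar m:=\chi_\ast m$, a $\psi^t$-invariant Borel probability measure on $SM/\sim$, and let $\Psi^t:=\psi^t\times\psi^t$ act on $(SM/\sim)\times(SM/\sim)$. The entropy-gap hypothesis together with the uniqueness of the MME (Theorem~\ref{mmeuni}) forces $m(\mathcal{R}_0)=1$: indeed, if $m(\mathcal{R}_0^c)>0$, the normalized restriction of $m$ to $\mathcal{R}_0^c$ would be $\phi^t$-invariant with entropy $h_{\top}(\phi^1)$, contradicting the hypothesis. On $\mathcal{R}_0$ we have $\mathcal{I}(v)=\{v\}$, so $\chi$ is injective on a set of full $m$-measure and therefore realizes a measure-theoretic isomorphism between $(SM,\phi^t,m)$ and $(SM/\sim,\psi^t,\bar m)$. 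Consequently it suffices to prove that $(SM/\sim,\psi^t,\bar m)$ is Kolmogorov.

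Next I would verify that $\bar m$ is the unique MME for $\psi^t$. Any MME $\bar\nu$ for $\psi^t$ lifts, via a measurable section of $\chi$, to a $\phi^t$-invariant probability measure on $SM$ whose entropy is at least $h_\mu(\psi^1)=h_{\top}(\phi^1)$ by the Abramov--Rokhlin style identity given by the semiconjugacy; uniqueness of the MME on $SM$ then forces $\bar\nu=\chi_\ast m=\bar m$. This identifies $\bar m$ unambiguously as the unique $\psi^t$-MME and also gives $h_{\top}(\psi^1)=h_{\top}(\phi^1)=:h$.

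The crucial and most technical step is to show that $\bar m\times\bar m$ is the \emph{unique} MME of $\Psi^t$. By Theorem~\ref{expansivefactor}, $\psi^t$ is expansive, topologically mixing, and has local product structure on the compact metric space $SM/\sim$; all three properties transfer directly to $\Psi^t$ on the product (topological mixing being preserved because one can run the single-factor mixing argument simultaneously in each coordinate on a basis of product open sets). Since expansive flows with local product structure are entropy-expansive and satisfy the Bowen specification property on a dense set, one can now invoke Bowen's classical uniqueness theorem, or equivalently the Climenhaga--Thompson criterion, applied to $\Psi^t$: any MME must project under both coordinates to an MME of $\psi^t$, hence to $\bar m$, and the specification/expansiveness package then identifies it as the product measure. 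The main obstacle here is verifying that the abstract compact factor $SM/\sim$ inherits just enough regularity (expansivity constants, Bowen balls, LPS charts) for the standard Bowen-type proof of product-MME uniqueness to go through without using a smooth or Markov structure; this is precisely where one must import the product decomposition supplied by Theorem~\ref{expansivefactor} rather than rely on any geometric feature of $SX$.

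Finally I would invoke Ledrappier's theorem~\cite{Led}: for an expansive homeomorphism (or flow) with a unique MME $\bar m$ whose self-product $\bar m\times\bar m$ is the unique MME of the product system, $\bar m$ has completely positive entropy, i.e.\ is Kolmogorov. Combined with the measurable isomorphism $\chi\colon(SM,m)\to(SM/\sim,\bar m)$ established in the first step, this yields the Kolmogorov property of $(SM,\phi^t,m)$, completing Proposition~\ref{Kol}.
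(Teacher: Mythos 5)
Your overall architecture is sound and in fact streamlines the paper's final assembly: the paper establishes uniqueness of the MME for $\phi^t\times\phi^t$ on $SM\times SM$ (via the extension lemma of Buzzi--Fisher--Sambarino--V\'asquez) and then applies Ledrappier's criterion to $\phi^t$ itself, using Buzzi's theorem that $C^\infty$ systems are asymptotically $h$-expansive; you instead apply Ledrappier directly to the expansive factor $\psi^t$ and transfer the Kolmogorov property through the measure isomorphism $\chi$. That transfer is legitimate, since $m(\R_0)=1$ makes $\chi$ a measure-theoretic conjugacy and the K-property is an isomorphism invariant (the paper uses the same isomorphism to transfer the Bernoulli property). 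Your steps 1 and 2 are essentially correct, modulo the remark that the pushforward of $\bar\nu$ under a measurable section of $\chi$ is not automatically $\phi^t$-invariant --- one needs a Krylov--Bogolyubov averaging inside the fiber $\chi_*^{-1}\bar\nu$, exactly as in the construction of the Bowen--Margulis measure in Section 5.

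The genuine gap is in your step 3, and it is precisely the point on which the whole proposition turns. You assert that expansiveness ``transfers directly to $\Psi^t=\psi^t\times\psi^t$.'' This is false: the product of two expansive flows need not be expansive, because the two coordinates can be reparametrized independently --- a pair $(\psi^{s}x,\psi^{s'}y)$ with $s\neq s'$ small stays uniformly close to the $\Psi^t$-orbit of $(x,y)$ for all time without lying on it. Consequently Bowen's classical uniqueness theorem does not apply to $\Psi^t$ as stated, and the Climenhaga--Thompson criterion cannot be invoked without controlling the obstruction to expansivity of the product. The correct route (Call--Thompson, reproduced in the paper) is to introduce the set of \emph{product non-expansive points} $\mathrm{NE}^{\times}(\e)$, show that $\mathrm{NE}^{\times}(\e)\subset (Y\times \mathrm{NE}(\e))\cup(\mathrm{NE}(\e)\times Y)=\emptyset$ because $\psi^t$ is expansive, conclude that every $\Psi^t$-invariant measure is product-expansive at every scale, and then verify that this weaker property, combined with the specification property of the product, the Gibbs property of $\nu\times\nu$, and the approximation of $\RR^2$-invariant sets by adapted partitions, suffices to run the Bowen-type uniqueness argument. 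Without this substitute for expansiveness, your claim that ``the specification/expansiveness package then identifies it as the product measure'' has no foundation, so the uniqueness of the product MME --- the hypothesis of Ledrappier's criterion --- is not established.
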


We first recall Ledrappier's criterion for Kolmogorov property of equilibrium states \cite{Led}.
\begin{definition}
Let $f:Y\to Y$ be a continuous self-map on a compact metric space $(Y,d)$. For $\varepsilon>0$ and $x\in Y$, denote
$$\Gamma_{\varepsilon}(x):=\{y\in Y: d(f^{n}(x),f^{n}(y))<\varepsilon, \forall n\in\mathbb{Z}\},$$
and
$$h_{\text{loc}}(f,\varepsilon):=\sup\{h_\top(f,\Gamma_{\varepsilon}(x))):\ x\in Y\}.$$
 We say that $(Y,f)$ is \textit{entropy expansive} if there is $\varepsilon_{0}>0$ such that
$$h_{\text{loc}}(f,\varepsilon_{0})=0,$$
and $(Y,f)$ is \textit{asymptotically entropy expansive} if
\begin{center}
   $\lim_{\varepsilon\to0}h_{\text{loc}}(f,\varepsilon)=0$.
\end{center}
We say that $\nu\in \M_f(Y)$ is \textit{almost entropy expansive}  at scale $\varepsilon_{0}>0$ if 
$$h_\top(f,\Gamma_{\varepsilon}(x)))=0$$
for $\nu \ae x\in Y$.
\end{definition}

\begin{theorem}[Ledrappier \cite{Led}, Call-Thompson \cite{CT}]\label{Led}
Let $(Y, f^t)$ be a continuous flow on a compact metric space such that $f^t$ is asymptotically entropy expansive for all $t\neq 0$, and let $\psi\in C(Y,\RR$). Consider $(Y\times Y, f^t \times f^t)$ which is the product of two copies of $(Y, f^t)$, and $\Psi(x_1,x_2):=\psi(x_1)+\psi(x_2)$. If $\Psi$ has a unique equilibrium state in $\mathcal{M}_{f^t\times f^t}(Y\times Y)$, then the unique equilibrium state for $\psi$ in $\mathcal{M}_{f^t}(Y)$ has the Kolmogorov property.
\end{theorem}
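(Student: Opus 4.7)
My plan is to run Ledrappier's relatively-independent-self-joining argument over the Pinsker $\sigma$-algebra, in the flow setting. Fix $t\neq 0$ and write $T=f^t$. Asymptotic entropy expansiveness is preserved by self-products (the bi-infinite Bowen ball $\Gamma_\varepsilon(x,y)$ for $T\times T$ is contained in $\Gamma_\varepsilon(x)\times\Gamma_\varepsilon(y)$, and topological entropy is subadditive on products), so Bowen's identity yields the clean additivity
\begin{equation*}
 h_{\mu_1\times\mu_2}(T\times T)=h_{\mu_1}(T)+h_{\mu_2}(T), \qquad P_{T\times T}(\Psi)=2P_T(\psi).
\end{equation*}
In particular, whenever $\mu$ is an equilibrium state for $\psi$, the product $\mu\times\mu$ is one for $\Psi$, and uniqueness for $\Psi$ forces uniqueness of $\mu$.

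Next, let $\mathcal{P}=\mathcal{P}(T,\mu)$ denote the Pinsker $\sigma$-algebra of $(Y,T,\mu)$, disintegrate $\mu=\int_Y\mu_x\,d\mu(x)$ over $\mathcal{P}$, and define the relatively independent self-joining
\begin{equation*}
\hat\mu':=\int_Y\mu_x\otimes\mu_x\,d\mu(x).
\end{equation*}
Then $\hat\mu'$ is $(T\times T)$-invariant with both marginals equal to $\mu$, so $\int\Psi\,d\hat\mu'=2\int\psi\,d\mu$. The critical step is the entropy identity
\begin{equation*}
h_{\hat\mu'}(T\times T)=2h_\mu(T),
\end{equation*}
which follows from the general formula for a relatively independent joining over a common factor $\mathcal{F}$, $h_{\hat\mu'}(T\times T)=2h_\mu(T)-h_\mu(T,\mathcal{F})$, specialised to $\mathcal{F}=\mathcal{P}$ where $h_\mu(T,\mathcal{P})=0$ by definition of the Pinsker algebra. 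Hence $\hat\mu'$ is also an equilibrium state for $\Psi$, and by uniqueness $\hat\mu'=\mu\times\mu$.

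Finally, the identity $\hat\mu'=\mu\times\mu$ unpacks to: for all measurable $A,B\subset Y$,
\begin{equation*}
\int_Y \mathbb{E}_\mu[\mathbf{1}_A\mid\mathcal{P}]\,\mathbb{E}_\mu[\mathbf{1}_B\mid\mathcal{P}]\,d\mu=\mu(A)\mu(B).
\end{equation*}
Taking $A=B$ gives $\|\mathbb{E}_\mu[\mathbf{1}_A\mid\mathcal{P}]\|_{L^2(\mu)}^2=\mu(A)^2=\|\mathbb{E}_\mu[\mathbf{1}_A\mid\mathcal{P}]\|_{L^1(\mu)}^2$, which forces the conditional expectation to be $\mu$-a.e.\ constant. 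Thus every measurable $A$ is $\mathcal{P}$-independent, so $\mathcal{P}$ is $\mu$-trivial and $T=f^t$ is Kolmogorov. Since $t\neq 0$ was arbitrary, the flow $(Y,f^t,\mu)$ has the Kolmogorov property.

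The main obstacle is the joining-entropy identity $h_{\hat\mu'}(T\times T)=2h_\mu(T)$: this is where the interplay between the Pinsker algebra and measure-theoretic entropy must be controlled, and where asymptotic entropy expansiveness does its quiet work of ensuring that measure-theoretic entropy decomposes additively on products. The standard route is via Thouvenot's relative entropy theory or an explicit finite-partition/martingale argument refining $\xi\vee\mathcal{P}$ on each factor; the remaining steps are essentially formal once this identity is in hand.
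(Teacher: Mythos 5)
This statement is quoted in the paper from Ledrappier and Call--Thompson and is not reproved there; your argument is exactly the relatively-independent-self-joining-over-the-Pinsker-algebra proof from those sources, and it is correct in substance. Two small points deserve attention. First, the uniqueness hypothesis is for the \emph{product flow}, so before invoking it you must know that $\hat\mu'$ is invariant under $f^s\times f^s$ for every $s$, not just under $T\times T$ for your fixed $t$; this is a one-line fix, since each $f^s$ commutes with $f^t$ and preserves $\mu$, hence preserves $\mathcal{P}(f^t)$, and a relatively independent joining over a flow-invariant $\sigma$-algebra is invariant under the whole product flow (Abramov then transfers your time-$t$ entropy identity to time one, so $\hat\mu'$ is an equilibrium state for $\Psi$ at the flow level). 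Second, asymptotic entropy expansiveness is not what gives $h_{\mu_1\times\mu_2}(T\times T)=h_{\mu_1}(T)+h_{\mu_2}(T)$ or $P(\Psi)=2P(\psi)$ --- additivity of entropy on product measures and the joining inequality $h_\nu\le h_{\nu_1}+h_{\nu_2}$ hold unconditionally; its actual role (via Misiurewicz) is upper semicontinuity of the entropy map, hence existence of equilibrium states, which is how Ledrappier and Call--Thompson use it. With these adjustments your proof coincides with the cited one: the key identity $h_{\hat\mu'}=2h_\mu$ via relative entropy over the Pinsker factor, uniqueness forcing $\hat\mu'=\mu\times\mu$, and the $L^2$ equality argument trivializing $\mathcal{P}$, which by Rokhlin--Sinai gives the Kolmogorov property for every $f^t$, $t\neq 0$.
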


We state again Theorem \ref{expansivefactor} here, since we will consider the Kolmogorov property of the expansive factor of the geodesic flow.
\begin{theorem}(Mamani-Ruggiero \cite[Theorem 1.1]{MR})
Let $M$ be a compact $C^\infty$ manifold of dimension $n$ without conjugate points and with uniform visibility universal cover $X$. Then the geodesic flow is time-preserving semi-conjugate to a continuous expansive flow $\psi^t$ acting on a compact metric space $Y:=SM/\sim$ of topological dimension at least $n-1$. Moreover, $\psi^t$ is topologically mixing and has a local product structure.
\end{theorem}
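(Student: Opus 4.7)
The plan is to realize $Y := SM/\sim$ with the quotient metric
\[
d([v],[w])= \min\{d(v',w') : v'\in[v],\ w'\in[w]\}
\]
as a compact Hausdorff metric space, verify that the geodesic flow descends to a continuous flow $\psi^t[v] := [\phi^t v]$, and then extract expansivity, topological mixing, local product structure, and the dimension bound. The preliminary checks are that $\sim$ is an equivalence relation preserved by $\phi^t$: reflexivity and symmetry are immediate from $\I(v) = \F^s(v)\cap \F^u(v)$, transitivity follows because $\F^s$ and $\F^u$ are foliations, and flow-invariance is Proposition \ref{horofoliation}(3). Equivalence classes are compact of uniformly bounded diameter (Lemma \ref{compact}) and depend upper semi-continuously on $v$ by continuity of $\F^{s/u}$ in Proposition \ref{horofoliation}, so the quotient is compact Hausdorff and $\chi\circ \phi^t = \psi^t\circ \chi$ is a time-preserving semi-conjugacy.

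The hard part is expansivity of $\psi^t$, and this is where I expect the main obstacle. I would show that if $v,w\in SX$ satisfy $d(\phi^t v, \phi^{\tau(t)} w)\le \epsilon$ for every $t\in \RR$ and some continuous increasing reparametrization $\tau$ with $\tau(0)=0$, then $w\in \I(\phi^{t_0} v)$ for some $|t_0| < \delta(\epsilon)$, where $\delta(\epsilon)\to 0$ as $\epsilon \to 0$. Lifting to $X$ and using quasi-convexity (Proposition \ref{geo}(4)), the geodesics $c_v, c_w$ have bounded Hausdorff distance. Because geodesic rays diverge uniformly (Proposition \ref{geo}(1)), any two geodesic rays with distinct forward endpoints eventually separate, so bounded Hausdorff distance forces $v^+ = w^+$; symmetrically $v^- = w^-$. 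By the generalized strip lemma (Lemma \ref{striplemma}), $c_v$ and $c_w$ then lie in a common generalized strip, and the synchronized Busemann parameters confine $w$ to $\I(\phi^{t_0} v)$ for $|t_0|$ small in $\epsilon$. To pass this back to $Y$, I would pick representatives realizing the minimum in the quotient metric (compactness of classes guarantees existence), use upper semi-continuity of $v\mapsto \I(v)$ to extract a coherent pair of geodesics in $SM$, and apply the above to conclude $[w]=\psi^{t_0}[v]$. The Morse constant $Q$ from Proposition \ref{geo}(5) controls the loss caused by switching between representatives.

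For topological mixing, surjectivity and continuity of $\chi$ transfer topological mixing of $\phi^t$ (Proposition \ref{horofoliation}) directly to $\psi^t$: given nonempty open $U,V\subset Y$, the preimages $\chi^{-1}(U),\chi^{-1}(V)$ are nonempty and open in $SM$, so $\phi^t(\chi^{-1}(U)) \cap \chi^{-1}(V)\neq \emptyset$ for all large $t$, and applying $\chi$ and using $\chi\circ \phi^t = \psi^t\circ \chi$ yields $\psi^t U\cap V\neq \emptyset$. For the local product structure, note that $\chi$ collapses exactly the fibers $\I(v)=\F^s(v)\cap \F^u(v)$, so the images $W^s([v]):=\chi(\F^s(v))$ and $W^u([v]):=\chi(\F^u(v))$ become continuous flow-transverse foliations of $Y$ which meet in a single point locally; the flow direction provides the third transversal. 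Combining with equicontinuity of Busemann functions (Lemma \ref{equicon1}) produces a continuous local product chart around each $[v]\in Y$. Finally, the dimension bound $\dim Y\ge n-1$ follows because $\chi$ is injective on small unstable leaves $\F^u_{\mathrm{loc}}(v)$: two distinct points of $\F^u_{\mathrm{loc}}(v)$ cannot lie in a common $\I$-fiber, since membership in $\I(w)$ would force them to be bi-asymptotic while simultaneously positively asymptotic along $\F^u(v)$, forcing equality. Hence $\chi$ embeds an $(n-1)$-dimensional submanifold into $Y$, yielding the topological dimension lower bound.
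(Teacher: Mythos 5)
The paper does not prove this statement at all: it is imported verbatim from Mamani--Ruggiero \cite[Theorem 1.1]{MR}, so your proposal has to stand on its own, and it has two genuine problems beyond the (reasonable) overall strategy of using quasi-convexity, uniform divergence of rays and the generalized strip lemma to handle expansivity.

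First, your dimension argument is wrong. You claim $\chi$ is injective on small unstable leaves because two points of $\F^u_{\loc}(v)$ in a common $\I$-fiber ``would be bi-asymptotic while simultaneously positively asymptotic, forcing equality.'' But $\I(v)=\F^s(v)\cap\F^u(v)\subset\F^u(v)$ by definition, so whenever $v$ is a non-expansive vector the map $\chi$ collapses the nontrivial set $\I(v)$ \emph{inside} the unstable leaf to a single point; being bi-asymptotic does not force equality --- that is exactly what a generalized strip is, and the failure of the flat strip lemma (cf.\ \cite{Burns}) is the reason the whole quotient construction is needed. (Also, vectors in $\F^u(v)$ share the backward endpoint $v^-$, so they are negatively, not positively, asymptotic.) The lower bound $\dim Y\ge n-1$ needs a different mechanism, e.g.\ a dimension-lowering theorem applied to the fibers $\I(v)$, or an injective continuous map from $\chi(\F^u(v))$ into $\pX\cong S^{n-1}$ via $[w]\mapsto w^+$ after checking that distinct classes in one unstable leaf have distinct forward endpoints.

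Second, the formula $d([v],[w])=\min\{d(v',w')\}$ is not obviously a metric: the triangle inequality fails for the naive minimum because passing through the middle class $[v]$ may require switching between two representatives at a cost of up to $\diam\I(v)\le Q$. The paper quotes the same formula informally, but a self-contained proof must either pass to the induced chain (length) metric or otherwise justify the triangle inequality. Relatedly, in the expansivity step, closeness of $\psi^t[v]$ and $\psi^{\tau(t)}[w]$ in $Y$ only yields $d(\phi^tv,\phi^{\tau(t)}w)\le\epsilon+2Q$ for \emph{fixed} representatives (the minimizers vary with $t$); this is still bounded, so the argument forcing $v^{\pm}=w^{\pm}$ survives, but you should then recover the smallness of the time shift $t_0$ from the Busemann functions being constant on $\I$-classes rather than from the $\epsilon$-closeness of orbits, since the $2Q$ slack does not shrink with $\epsilon$. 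The mixing transfer through the surjective semiconjugacy and the sketch of the local product structure are fine.
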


Denote $P(t,\e)$ the set of periodic orbits $\O$ of $(Y, \psi^t)$ with period in $[T-\e, T+\e]$. Define the measure
$$\nu_t:=\frac{1}{\#P(t,\e)}\sum_{\O\in P(t, \e)}\d_\O$$
where $\d_\O$ is an invariant probability measure supported on the orbit $\O$. If $\nu=\lim_{t\to \infty} \nu_t$, we say that $\nu$ is the limit of the invariant measures supported by periodic orbits of $\psi^t$. 

\begin{lemma}
Let $\nu$ be the unique MME of $(Y,\psi^t)$. Then $\nu\times \nu$ is the unique MME of $(Y\times Y,\psi^t\times \psi^t)$. Moreover, $\nu\times \nu$ is the limit of the invariant measures supported by periodic orbits of $\psi^t\times \psi^t$. 
\end{lemma}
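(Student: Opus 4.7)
The plan has two parts: first, identify $\nu\times\nu$ as the unique MME of the product flow; second, derive the equidistribution statement from the same structural ingredients applied factor-wise.

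For the first part, the standard product formulas for entropy give
\[
h_{\nu\times\nu}(\psi^1\times\psi^1)=2h_{\nu}(\psi^1)=2h_{\top}(\psi^1)=h_{\top}(\psi^1\times\psi^1),
\]
so $\nu\times\nu$ is an MME of the product system. For uniqueness, I would invoke the specification property of $\psi^t$, which should follow from Theorem \ref{expansivefactor} via the classical Bowen--Franco scheme: expansivity plus local product structure provides shadowing of pseudo-orbits, and topological mixing allows one to glue arbitrary finite strings of orbit segments with a uniform transition time. Both expansivity and specification are well known to pass to products (expansivity from the product metric; specification by gluing on each coordinate with a common gap), so $(Y\times Y,\psi^t\times\psi^t)$ is again expansive with specification. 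Bowen's uniqueness theorem for MMEs of expansive flows with specification then produces a unique MME for the product, which by the above entropy equality must coincide with $\nu\times\nu$.

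For the equidistribution statement, I would apply Bowen's periodic orbit equidistribution theorem to the single-factor flow, obtaining $\nu_t\to\nu$ weakly as $t\to\infty$. Weak$^*$ continuity of the product of measures then yields $\nu_t\times\nu_t\to\nu\times\nu$; these measures are supported on products of periodic orbits of $\psi^t$ of approximate period $t$, which is the natural realization of ``periodic orbits of the product flow'' in this flow setting, where generic prime periods of $\psi^t$ will not be commensurable. Alternatively, one may apply Bowen's theorem directly to the product flow using its specification property to count genuine closed orbits of $\psi^t\times\psi^t$; the two formulations give compatible limits because the MME of the product is unique.

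The main obstacle is establishing specification for the abstract flow $(Y,\psi^t)$. The factor space $Y=SM/\sim$ is only a continuous semi-conjugacy image of the geodesic flow, with no a priori smooth hyperbolic structure, so one cannot directly cite Bowen's original arguments from the Axiom~A setting; instead one must check that the local product structure provided by Theorem \ref{expansivefactor} is uniform enough to shadow pseudo-orbits and then combine shadowing with topological mixing. Once specification is in hand, the Kolmogorov conclusion sought in Proposition \ref{Kol} will follow by feeding the uniqueness of the product MME into Ledrappier's criterion (Theorem \ref{Led}), using that expansivity of $\psi^t$ implies entropy expansivity and hence the asymptotic entropy expansiveness hypothesis of the criterion.
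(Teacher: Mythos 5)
Your first part contains a genuine gap: the claim that ``expansivity passes to products (from the product metric)'' is false for flows, and this is precisely the difficulty the paper's proof is organized around. For a flow, the obstruction to expansivity of $\psi^t\times\psi^t$ is not controlled by the product metric: two points $(x,y)$ and $(\psi^{s}x,\psi^{s'}y)$ with $s\neq s'$ can shadow each other for all time without lying on a common orbit of the product flow, because the product of the two flow directions is two-dimensional while an orbit of $\psi^t\times\psi^t$ is one-dimensional. Hence you cannot invoke Bowen's uniqueness theorem for expansive flows with specification on the product system. The paper instead follows Call--Thompson: it introduces the set $\mathrm{NE}^{\times}(\e)$ of \emph{product non-expansive} points, uses the inclusion $\mathrm{NE}^{\times}(\e)\subset (Y\times \mathrm{NE}(\e))\cup(\mathrm{NE}(\e)\times Y)$ together with expansivity of the factor to conclude $\mathrm{NE}^{\times}(\e)=\emptyset$, and then runs the Climenhaga--Thompson/Bowen argument for systems in which every invariant measure is product expansive, supplemented by weak mixing of $\nu$ (so that $\nu\times\nu$ is ergodic) and the Gibbs property of $\nu\times\nu$.

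The second part also has a problem. The measures $\nu_t\times\nu_t$ are supported on products of periodic orbits of $\psi^t$, and such a product is generically \emph{not} a union of periodic orbits of $\psi^t\times\psi^t$: a product of orbits of periods $p$ and $q$ is foliated by closed orbits of the product flow only when $p/q$ is rational. The lemma, and its use in the proof of Proposition \ref{Kol} via Lemma \ref{MMEextension}, requires approximation by measures on genuine closed orbits of the product flow; your ``natural realization'' reading weakens the statement to something insufficient for that application. Your alternative of applying Bowen's equidistribution theorem directly to the product flow again presupposes expansivity of the product, which fails as above. The correct route is the one the paper takes: once uniqueness, ergodicity, and the Gibbs property for $\nu\times\nu$ are in place in the product-expansive framework, the classical periodic-orbit equidistribution argument (as in \cite{Bo3} or \cite[Proposition 6.4]{BCFT}) applies to the product flow itself. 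Your concluding remarks about specification for $(Y,\psi^t)$ and about feeding the result into Ledrappier's criterion are consistent with the paper.
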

\begin{proof}
Since  $(Y,\psi^t)$ has the specification property, by \cite[Lemma 3.1]{CT}, $(Y\times Y,\psi^t\times \psi^t)$ has the specification property as well. However, $(Y\times Y,\psi^t\times \psi^t)$ is not necessarily expansive even if $(Y,\psi^t)$
is expansive. We overcome this difficulty following ideas of \cite{CT}. 

Denote $\C_\e(x):=\{y\in Y: d(\psi^tx, \psi^ty)<\e, \forall t\in \RR\}$ and 
$$\C_\e(x,y):=\{(x',y')\in Y\times Y: d(\psi^tx, \psi^ty), d(\psi^tx', \psi^ty')<\e, \forall t\in \RR\}.$$
Note that $\C_\e(x,y)=\C_\e(x)\times \C_\e(y)$. The set of \textit{non-expansive points} at scale $\e>0$ is defined as
\[\text{NE}(\e):=\{x\in Y: \C_\e(x)\nsubseteq \psi_{[-s,s]}(x), \forall s>0\}.\]
According to \cite[Definition 4.2]{CT}, the set of \textit{product non-expansive points} at scale $\e>0$ is denoted by
\[\text{NE}^{\times}(\e):=\{(x,y)\in Y\times Y: \C_\e(x,y)\nsubseteq \psi_{[-s,s]}(x)\times \psi_{[-s,s]}(y), \forall s>0\}.\]
By \cite[Lemma 4.4]{CT}, 
\[\text{NE}^{\times}(\e)\subset (Y\times \text{NE}(\e))\cup (\text{NE}(\e)\times Y).\]

Since $\psi^t$ is expansive, $\text{NE}(\e)=\emptyset$ and hence $\text{NE}^{\times}(\e)=\emptyset$. Thus any $\mu\in \mathcal{M}_{\psi^t\times \psi^t}(Y\times Y)$ is product expansive at any scale $\e$, in the sense that $\mu(\text{NE}^{\times}(\e))=0$.
It follows from \cite[Proposition 4.7]{CT} that every subset $Q\subset Y\times Y$ invariant under the $\RR^2$-action (i.e., $\{\psi^s\times  \psi^t): s,t\in \RR\}$) can be approximated by elements of adapted partitions for $(t,\e)$-separated sets.

Now we recall some results obtained from Climenhaga-Thompson's proof \cite{CT1} of uniqueness of MME for $\psi^t\times \psi^t$. In fact, even Bowen's original proof (cf. \cite{Bo3} or \cite[Theorem 20.1.3]{KH}) is applicable to our setting. We can obtain similarly as \cite[Theorem 6.3 and Lemma 6.4]{CT}:
\begin{itemize}
    \item $\nu$ is weakly mixing, which is equivalent to that $\nu\times \nu$ is ergodic;
    \item $\nu\times \nu$ has the Gibbs property, i.e. there exists $Q>0$ such that for any $(x,y)\in Y\times Y$ and $t>0$,
    \[(\nu\times \nu)(B_t((x,y),\rho))\ge Q^2e^{-2th}\]
    where $B_t((x,y),\rho)$ is the Bowen ball and $h$ is the topological entropy of $\psi^t$ (and hence of $\phi^t$).
\end{itemize}

Combining these facts, we can adapt the classical arguments to subset $Q\subset Y\times Y$ invariant under the $\RR^2$-action. The proof is a verbatim of \cite[page 818]{CT}. Moreover, $\nu\times \nu$ is the limit of the invariant measures supported by periodic orbits of $\psi^t\times \psi^t$ (cf. \cite{Bo3} or \cite[Proposition 6.4]{BCFT}). 
\end{proof}

\begin{lemma}\label{MMEextension}(\cite[Theorem 1.5]{BFSV})
Let $f^t: Y\to Y$ and $g^t: Z\to Z$ be two continuous flows on compact metric spaces, $\tilde\chi: Y\to Z$ be a time-preserving semiconjugacy and $\nu$ be the unique MME of $g^t$. Assume that $g^t$ is expansive, has the specification property and
\begin{enumerate}
    \item $h(f^1, \tilde\chi^{-1}(x))=0$ for every $x\in Z$;
    \item $\nu(\{\tilde\chi(y): \tilde\chi^{-1}(\tilde\chi(y))=\{y\}\})=1$.
\end{enumerate}
Then $f^t$ has a unique MME.
\end{lemma}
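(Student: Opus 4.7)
The plan is to combine the Ledrappier--Walters relative variational principle with the semiconjugacy $\tilde\chi$ to transfer the uniqueness of the MME from $Z$ to $Y$. The argument has two essentially independent parts: first, a topological entropy computation that identifies the entropies of $f^t$ and $g^t$ and shows that every MME of $f^t$ projects to $\nu$; second, a descriptive-set-theoretic argument that reconstructs the $f^t$-invariant measure from its $g^t$-projection on the set where $\tilde\chi$ is injective.

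For the first part, I would invoke the Ledrappier--Walters inequality: for any $\mu\in \mathcal{M}_{f^1}(Y)$ with $\tilde\chi_*\mu = \eta$,
\begin{equation*}
h_\mu(f^1) \le h_\eta(g^1) + \int_Z h_{\top}\!\bigl(f^1,\tilde\chi^{-1}(x)\bigr)\, d\eta(x).
\end{equation*}
By hypothesis (1) the integrand vanishes identically, so $h_\mu(f^1) \le h_{\tilde\chi_*\mu}(g^1)$ for every $f^1$-invariant $\mu$. Combined with the trivial inequality $h_{\top}(g^1)\le h_{\top}(f^1)$ coming from the fact that $\tilde\chi$ is a factor map, the variational principle gives $h:=h_{\top}(f^1)=h_{\top}(g^1)$. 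Then, for any MME $\mu$ of $f^t$, $h_{\tilde\chi_*\mu}(g^1)\ge h_\mu(f^1)=h$, so $\tilde\chi_*\mu$ is an MME of $g^t$; the assumed uniqueness forces $\tilde\chi_*\mu=\nu$ for every MME $\mu$ of $f^t$.

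For the second part, set
\begin{equation*}
E:=\{z\in Z: \tilde\chi^{-1}(z)\text{ is a singleton}\},\qquad E^*:=\tilde\chi^{-1}(E),
\end{equation*}
so that hypothesis (2) asserts $\nu(E)=1$ and the relation $\tilde\chi_*\mu=\nu$ (established above) yields $\mu(E^*)=1$ for every MME $\mu$. The restriction $\tilde\chi|_{E^*}\colon E^*\to E$ is a continuous bijection, hence by the Lusin--Suslin theorem is a Borel isomorphism onto its image. For any Borel set $B\subset Y$ I would then write
\begin{equation*}
\mu(B)=\mu(B\cap E^*)=\mu\bigl(\tilde\chi^{-1}(\tilde\chi(B\cap E^*))\bigr)=\nu\bigl(\tilde\chi(B\cap E^*)\bigr),
\end{equation*}
where the middle equality uses injectivity of $\tilde\chi$ on $E^*$ together with $\tilde\chi^{-1}(E)=E^*$, and the last uses $\tilde\chi_*\mu=\nu$. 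The right-hand side depends only on $\nu$, so any two MMEs of $f^t$ agree on all Borel sets.

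The main technical subtlety I anticipate is descriptive set theoretic: a priori $E$ is only coanalytic, because $E^c$ is the continuous image of the Borel set $\{(y_1,y_2)\in Y\times Y : y_1\neq y_2,\ \tilde\chi(y_1)=\tilde\chi(y_2)\}$ under projection. This does not obstruct the argument, since coanalytic sets are universally measurable and Lusin--Suslin applies to the Borel-measurable injection $\tilde\chi|_{E^*}$, but care is needed in asserting that $\tilde\chi(B\cap E^*)$ is $\nu$-measurable. The expansivity and specification of $g^t$ do not enter this transfer directly; they are used only to ensure that the unique MME $\nu$ exists and has the mixing/Gibbs structure needed upstream in the applications of this lemma.
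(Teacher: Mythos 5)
The paper does not actually prove this lemma --- it is quoted verbatim from \cite[Theorem 1.5]{BFSV} --- so the only available comparison is with the cited proof as the paper describes it. The author's later remark (in the proof of Proposition \ref{Kol}) indicates that the BFSV argument runs through the approximation of $\nu$ by periodic-orbit measures, which is where expansiveness and specification enter. Your route is genuinely different: given that uniqueness of $\nu$ is already a hypothesis, you bypass periodic orbits entirely via the Ledrappier--Walters relative variational principle plus the a.e.\ injectivity of $\tilde\chi$, and as a result you never use expansiveness or specification at all. This is a real advantage in the present paper, since the whole point of the discussion around Proposition \ref{Kol} is that $\psi^t\times\psi^t$ need not be expansive; your argument would make that caveat unnecessary. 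The uniqueness half of your proof is correct: the Ledrappier--Walters inequality with hypothesis (1) gives $h_\mu(f^1)\le h_{\tilde\chi_*\mu}(g^1)$, hence $h_{\top}(f^1)=h_{\top}(g^1)$ and $\tilde\chi_*\mu=\nu$ for every MME $\mu$; and the reconstruction formula works because $\tilde\chi^{-1}(\tilde\chi(B\cap E^*))=B\cap E^*$ exactly, while $\tilde\chi(B\cap E^*)=\tilde\chi(B)\cap E$ is the intersection of an analytic set with a coanalytic set, hence universally measurable, which settles the measurability concern you raise.

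There is, however, one genuine omission: the lemma asserts that $f^t$ \emph{has} a unique MME, and you only prove that any two MMEs coincide. Existence does not follow from uniqueness alone, and in this generality (no asymptotic entropy expansiveness of $f^t$ is assumed) it must be argued. Fortunately it follows from your own ingredients: the set of flow-invariant lifts of $\nu$ is a nonempty weak$^*$-compact set (push forward any lift of $\nu$, average over the flow, and pass to a limit), and for any such lift $\mu_0$ one has $\mu_0(E^*)=\nu(E)=1$, so $\tilde\chi|_{E^*}$ is an equivariant measure-preserving bijection onto $(E,\nu)$ (note $E$ and $E^*$ are flow-invariant since $f^1$ is invertible); hence $h_{\mu_0}(f^1)=h_\nu(g^1)=h_{\top}(g^1)=h_{\top}(f^1)$ and $\mu_0$ is an MME. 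You should add this paragraph; with it, the argument is complete and, under the stated hypotheses, strictly more economical than the cited one.
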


\begin{proof}[Proof of Proposition \ref{Kol}]
We apply Lemma \ref{MMEextension} for $f^t=\phi^t\times \phi^t$ and $g^t=\psi^t\times \psi^t$ and $\tilde\chi(x,y)=(\chi(x), \chi(y))$ where $\chi:SM\to SM/\sim$ is the projection. All the conditions of Lemma \ref{MMEextension} can be verified except that $\psi^t\times \psi^t$ is expansive. However, if we look at the proof of \cite[Theorem 1.5]{BFSV}, we see that we do not need the expansive condition once we know that $\nu\times \nu$ is the limit of the invariant measures supported by periodic orbits of $\psi^t\times \psi^t$. 

By \cite{Buzzi}, any $C^\infty$ dynamical system is asymptoically $h$-expansive. So the geodesic flow $\phi^t$ is asymptoically $h$-expansive. By Lerappier's criterion Theorem \ref{Led}, we see that $m$ has the Kolmogorov property.
\end{proof}

\subsubsection{From Kolmogorov to Bernoulli}
A classical argument in \cite{OW1} showed that the Kolmogorov property implies Bernoulli property for smooth invariant measures of Anosov flows. The argument was also carried out by Chernov and Haskell \cite{CH} for smooth invariant measures of suspension flows over some nonuniformly hyperbolic maps with singularities. It also works for hyperbolic invariant measures with local product structure. See also \cite{PV, PTV} for an introduction to terminology and ideas of the Kolmogorov-Bernoulli equivalence.

Based on the argument in Chernov and Haskell \cite{CH}, Call and Thompson \cite[Section $7$]{CT} proved that the Knieper measure is Bernoulli when $M$ is a rank one compact manifold of nonpositive curvature. Following \cite{CH, CT}, we show that the Bowen-Margulis measure $m$ is Bernoulli for compact uniform visibility manifolds without conjugate points in the setting of Theorem \ref{mmeuni}. 

Let $f: Y\to Y$ be an invertible measure-preserving transformation in a Lebesgue space $(Y,\mathcal{B},\mu)$. If $A\subseteq Y$ is a measurable subset, then by the measure $\mu$ on $A$ we mean the conditional measure $\mu|_A(B):=\frac{\mu(A\cap B)}{\mu(A)}$. By $\a|_A$ we mean the induced partition on the space $(A,\mu|_A)$, i.e., $\a|_A:=\{A\cap A_1, \cdots, A\cap A_k\}.$

\begin{definition}
Let $\a$ be a finite measurable partition of $Y$ and $\varepsilon>0$. We say that a property holds for $\varepsilon$-almost every element of $\a$ if the union of those elements for which the property fails has $\mu$-measure at most $\varepsilon$.
\end{definition}

\begin{definition}
Let $f: Y\to Y$ be an invertible measure-preserving transformation in a Lebesgue space $(Y,\mathcal{B},\mu)$. A finite partition $\a$ is called a \emph{very weak Bernoulli partition} (VWB partition) if for any $\varepsilon>0$, there exists $N_0\in \NN$ such that for all $N'\ge N\ge N_0$,  $S\ge 1$, and $\varepsilon$-almost every element $A$ of $\vee_N^{N'}f^k\a$ we have
$$\bar d((f^{-i}\a)_{i=0}^{S-1},(f^{-i}\a|_A)_{i=0}^{S-1})\le \varepsilon$$
where $\bar d$ is a distance between two sequences of partitions (cf. \cite[Definition 3.4]{PV}).
\end{definition}
\begin{theorem}[Cf. \cite{Or1, Or2}]\label{appro}
Let $f: Y\to Y$ be an invertible measure-preserving transformation in a Lebesgue space $(Y,\mathcal{B},\mu)$. If there exists a sequence of VWB partitions $\a_k$ converging to the partition into points of $Y$, then $f$ is Bernoulli.
\end{theorem}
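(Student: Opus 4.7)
The plan is to reduce Theorem~\ref{appro} to two cornerstones of Ornstein's isomorphism theory: (a) a process $(f,\alpha)$ generated by a VWB partition is measurably isomorphic to a Bernoulli shift, since VWB is equivalent to the finitely determined property which characterizes Bernoullicity of the generated factor; and (b) an increasing inverse limit of Bernoulli factors of a common invertible measure-preserving system is itself Bernoulli.

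First I would fix the sequence $(\a_k)_{k\ge 1}$ of VWB partitions and form, for each $k$, the $f$-invariant factor $\sigma$-algebra $\mathcal{B}_k := \bigvee_{n\in\ZZ} f^n \sigma(\a_k)$. Since $\a_k$ is VWB, input (a) identifies $(Y,\mathcal{B}_k,\mu|_{\mathcal{B}_k},f)$ with a Bernoulli shift on $|\a_k|$ symbols. The hypothesis that $\a_k$ converges to the partition into points of $Y$ then translates into $\bigvee_k \mathcal{B}_k = \mathcal{B}$ modulo $\mu$-null sets, presenting $(Y,\mathcal{B},f,\mu)$ as the increasing inverse limit of the Bernoulli factors $(Y,\mathcal{B}_k,\mu,f)$.

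The main step is to deduce Bernoullicity of this inverse limit via (b). For this I would verify that every finite measurable partition $\beta$ of $Y$ is itself VWB: given $\ep>0$, approximate $\beta$ in the partition distance by some $\beta'$ measurable with respect to $\mathcal{B}_k$ for $k$ large enough, note that $\beta'$ is a finite coding of the VWB process $(f,\a_k)$ and hence is VWB (this stability property being standard in Ornstein theory), and then transfer VWB from $\beta'$ to $\beta$ by a diagonal $\bar d$-approximation using that $\bar d$ is dominated by the partition distance through a uniform modulus depending only on $H_\mu(\beta)$. A last appeal to Ornstein's theorem then yields that $f$ is Bernoulli.

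The principal obstacle is this last transfer: while (a) and (b) are classical inputs, the two-scale $\bar d$-matching must be carried out so that the coding on $\mathcal{B}_k$ and the partition approximation on $\mathcal{B}$ combine without inflating the $\bar d$-deficit beyond the tolerance $\ep$ demanded by VWB. This careful quantitative matching, rather than any feature special to $(Y,f,\mu)$, is the real technical heart of Ornstein's theorem; the present statement is essentially a direct quotation of it, so our proposal amounts to organizing the invocation of this machinery.
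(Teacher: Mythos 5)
The paper does not prove this theorem: it is quoted verbatim from Ornstein's work (\cite{Or1, Or2}), and your proposal is a correct outline of the standard Ornstein-theory argument (VWB $\Rightarrow$ finitely determined $\Rightarrow$ the generated factor is Bernoulli, every finite partition is then VWB by $\bar d$-approximation, and an increasing limit of Bernoulli factors is Bernoulli). Since the statement is an imported classical result rather than something the paper establishes, your write-up matches the intended source and no further comparison is needed.
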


We continue to recall the classical method in \cite{OW1} and \cite{CH} to establish VWB property for systems with hyperbolic behaviour. Given a finite partition $\a=\{A_1,\cdots, A_k\}$, the $\a$-name of $x\in Y$ is the sequence $(x_i^\a)$ determined by $f^i(x)\in A_{x_i^\a}$. A measurable map $\theta:(Y,\mu)\to (Z,\nu)$ is called \emph{$\ve$-measure preserving} if there exists a subset $E\subset Y$ with $\mu(E)<\ve$ such that for every $A\subset Y\setminus E$, one has
$$\left|\frac{\nu(\theta(A))}{\mu(A)}-1\right|<\ve.$$

\begin{lemma}[Cf. \cite{OW1}]\label{function}
Let $f: Y\to Y$ be an invertible measure-preserving transformation in a Lebesgue space $(Y,\mathcal{B},\mu)$ and $\a$ a finite measurable partition of $Y$. If for any $\varepsilon>0$, there exists $N\in \NN$ such that for all $N'\ge N$,  $S\ge 1$, and $\varepsilon$-almost every atom $A$ of $\vee_N^{N'}f^k\a$ one can find a map $\theta:(A,\mu|_A)\to (Y,\mu)$ such that
\begin{enumerate}
  \item there is a subset $E'\subset A$ with $\mu|_A(E')<\ve$ such that for any $x\in A\setminus E'$,
  $$h(x,\theta(x)):=\frac{1}{S}\sum_{0\le i\le S-1: x_i^\a\neq (\theta(x))_i^\a}1<\ve,$$
  \item $\theta$ is $\ve$-measure preserving,
\end{enumerate}
then $\a$ is a VWB partition.
\end{lemma}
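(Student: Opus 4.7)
The plan is to unpack the definition of $\bar d$ between sequences of partitions as an infimum over joinings, and then use the hypothesized map $\theta$ to manufacture a joining between $(A,\mu|_A,(f^{-i}\a|_A))$ and $(Y,\mu,(f^{-i}\a))$ whose expected normalized Hamming distance between $\a$-names is $O(\varepsilon)$. Recall that for the two $S$-block processes in question, $\bar d$ equals
\[
\inf_\lambda \int \frac{1}{S}\#\{0\le i\le S-1 : x_i^\a \ne y_i^\a\}\, d\lambda(x,y),
\]
where $\lambda$ ranges over couplings of $\mu|_A$ and $\mu$, and $x_i^\a$ is determined by $f^i x\in A_{x_i^\a}$. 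Thus it suffices to exhibit such a coupling with small integral.

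First I would reduce to the case where $\theta$ is genuinely measure-preserving at the cost of a small error. By the $\varepsilon$-measure-preserving hypothesis there is $E\subset A$ with $\mu|_A(E)<\varepsilon$ such that on $A\setminus E$ the pushforward $\theta_*(\mu|_{A\setminus E})$ has Radon--Nikodym derivative with respect to $\mu$ within $[1-\varepsilon,1+\varepsilon]$ on $\theta(A\setminus E)$. A standard surgery (cut the over-weighted regions of $\theta_*(\mu|_{A\setminus E})$ and redistribute via any measurable bijection between residual sets, as in Ornstein's copying lemma) produces, modulo a further exceptional set of $\mu|_A$-measure $O(\varepsilon)$, a coupling $\lambda$ of $\mu|_A$ and $\mu$ supported on the graph of a map $\tilde\theta$ which agrees with $\theta$ off the exceptional set. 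On that exceptional set we trivially bound the normalized Hamming distance by $1$, contributing $O(\varepsilon)$ to the integral.

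Next, using hypothesis (1), on the good set $A\setminus(E\cup E')$ we have $h(x,\tilde\theta(x))=h(x,\theta(x))<\varepsilon$, i.e., the $\a$-names of $x$ and $\tilde\theta(x)$ differ in at most an $\varepsilon$-fraction of the coordinates $0,\ldots,S-1$. Integrating against $\lambda$ and combining with the $O(\varepsilon)$ contribution from the exceptional set yields
\[
\int \frac{1}{S}\#\{i : x_i^\a\ne y_i^\a\}\, d\lambda(x,y) \le C\varepsilon
\]
for an absolute constant $C$. This gives the required bound $\bar d\le C\varepsilon$ for $\varepsilon$-almost every atom $A$ of $\bigvee_{N}^{N'} f^k\a$, and after rescaling $\varepsilon$ at the outset we obtain the VWB condition exactly as stated.

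The main technical obstacle is the surgery step which replaces $\theta$ by a true measure-preserving $\tilde\theta$: one must show that the redistribution of mass can be carried out while only enlarging the symmetric difference with the graph of $\theta$ by an $O(\varepsilon)$-measure set, so that the Hamming estimate from (1) survives intact. This is the content of the classical Ornstein--Weiss construction (see \cite{OW1}) and, as the statement indicates, the lemma is effectively a repackaging of that argument in a form convenient for verifying VWB. Once this is in hand, no further dynamical input beyond (1) and (2) is used, so the conclusion follows immediately.
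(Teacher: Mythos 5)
The paper offers no proof of this lemma --- it is imported verbatim from Ornstein--Weiss \cite{OW1} and used as a black box --- so there is no internal argument to compare against. Your reconstruction is the standard one and is correct in outline: realize $\bar d$ as an infimum over joinings of the expected normalized Hamming distance between $\a$-names, use the $\varepsilon$-measure-preserving property to build a joining of $\mu|_A$ and $\mu$ carrying mass $1-O(\varepsilon)$ on the graph of $\theta$, and bound the integrand by hypothesis (1) there and by $1$ on the exceptional set. This is exactly how \cite{OW1} and \cite{CH} run the argument.

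Two technical points deserve to be made explicit rather than deferred. First, in the surgery step the second marginal of $(\mathrm{id}\times\theta)_*(\mu|_{A\setminus(E\cup E')})$ is the pushforward measure, whereas the $\varepsilon$-measure-preserving hypothesis controls only $\mu(\theta(B))$, the measure of image sets; to know that the ``over-weighted regions'' you cut away have total mass $O(\varepsilon)$ one must first observe that the hypothesis forces $\theta$ to be injective off a set of measure $O(\varepsilon)$ (two disjoint sets with essentially the same image are forced to have small measure). Second, the hypothesis supplies $\theta$ only for atoms of $\vee_{N}^{N'}f^k\a$ with the fixed left endpoint $N$, while the VWB definition requires the $\bar d$ bound for atoms of $\vee_{\tilde N}^{N'}f^k\a$ for every $\tilde N$ with $N'\ge \tilde N\ge N_0$; since each such atom is a union of atoms of the finer partition $\vee_{N}^{N'}f^k\a$, one closes this by averaging the joinings over that decomposition (convexity of $\bar d$) together with Markov-type bookkeeping of the exceptional atoms. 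Both points are routine, but they are where the argument would silently fail if omitted, so they should be recorded.
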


Now let $m$ be the Bowen-Margulis measure, which coincides with the the unique MME for the geodesic flow in Theorem \ref{mmeuni}. We will lift the Kolmogorov property of $m$ to the Bernoulli property, and thus finish the proof of Theorem \ref{bernoulli}. 
Since $(Y, \psi^t)$ has local product structure, we will prove that $\nu$ has the Bernoulli property. Since $m(\R_0)=1$, we see that $\chi:(SM,m)\to (Y,\nu)$ is a measure-preserving isomorphism. Then we can conclude that $m$ has the Bernoulli property. On the other hand, we will make use of the product structure of $m$ as in \eqref{BM} in the proof.

Consider a partition $\a=\{A_1,\cdots, A_k\}$ into subsets of $Y$ with arbitrarily small diameter and the following property: there exists some $D>0$ such that $\nu(\partial_\varepsilon A_i)\le D\varepsilon, i=1,\cdots, k$ for any $\varepsilon>0$ where $\partial_\varepsilon A_i:=\{y\in M: d(y,\partial A_i)<\varepsilon\}$. The existence of such a partition with respect to $m$ on compact manifold $SM$ is guaranteed by \cite[Lemma 4.1 and (4.3)]{OW1}. Then we project this partition onto $Y$ to get partition $\a$ we need. It is enough to show that any above partition $\a$ is VWB. Therefore, the task is to construct the function $\theta$ in Lemma \ref{function} for such $\a$. To do it, we localize the problem and construct $\theta$ in small rectangles defined as follows.

\subsubsection{$\ve$-regular coverings}

Since $(Y, \psi^t)$ has local product structure, we construct rectangles first in $Y$, and then lift them to $SM$. We note that even near expansive vectors, there is no well defined local product structure on $SM$. One may assume the continuity of Green bundles so that there is a local product structure near a vector in the open set $\R_1$. Here we do not assume this condition, but use the local product structure of $Y$ and local product structure of $m$ on $SM$ provided by \eqref{BM}.

For every $v\in SM$, we define $\F^{0s/0u}(v):=\cup_{t\in \RR}\phi^t(\F^{s/u}(v))$ and
\begin{equation}\label{foliation}
    V^{*}([v]):=\chi(\F^*(v)), *=s,u,0s,0u.
\end{equation}
For $[v],[w]$ close enough, define $[[v], [w]]$ to be the single point in 
$$V_\loc^{u}([v])\cap V_\loc^{0s}([w])=\chi(\F_\loc^{u}(v)\cap \F_\loc^{0s}(w)).$$
\begin{definition}
A \emph{rectangle} in $Y$ is a measurable set $R\subset Y$, equipped with a distinguished point $z \in R$ with the property that for all points $x, y \in R$ the local weak stable manifold $V_\loc^{0s}(x)$ and the local unstable $V_\loc^{u}(y)$ intersect each other at a single point in $R$, denoted by $[x,y]$.
\end{definition}
Recall that $m(\R_0)=\nu(\chi(\R_0))=1$. Take any $v_0\in \R_0$, and let 
$$R:=[V_\e^{0s}([v_0]),V_\e^{u}([v_0])]$$
for some small $\e>0$. Then $R$ is a rectangle in $Y$ centered at $[v_0]$ since $Y$ has local product structure. 
Since $v_0\in \R_0$, we can take  $\e>0$ small enough, so that $\tilde R= \chi^{-1}(R)$ has small enough diameter.

Notice that a rectangle $R$ centered at $z\in Y$ can be thought of as the Cartesian product of $V_\loc^{0s}(z)\cap R$ and $V_\loc^{u}(z)\cap R$, where a point $y\in R$ is given by $[V_\loc^{u}(y) \cap V_\loc^{0s}(z), V_\loc^{0s}(y)\cap V_\loc^{u}(z)]$.

Given a probability measure $\nu$ on $Y$, there is an associated natural product measure
$$\nu^p_R:=\nu^u_z\times \tilde\nu^{0s}_z,$$
where $\nu^u_z$ is the conditional measure induced by $\nu$ on $V_\loc^u(z)\cap R$ with respect to the measurable partition of $R$ into local unstable manifolds, and $\tilde\nu^{0s}_z$ is the corresponding factor measure on $V^{0s}_\loc(z)$.

Below is Chernov and Haskell's definition of $\e$-regular coverings.
\begin{definition}\label{coveringdef}
Given any $\e> 0$, we define an \emph{$\e$-regular covering} for a probability measure $\nu$ of $Y$ to be a finite collection of disjoint rectangles $\mathcal{R}=\mathcal{R}_\e$ such that
\begin{enumerate}
  \item $\nu(\cup_{R\in \mathcal{R}}R) > 1-\e$;
  \item For every $R\in \mathcal{R}$ with a distinguished point $z\in R$, the product measure $\nu^p_R:=\nu^u_z\times \tilde\nu^{0s}_z$ satisfies
$$\left|\frac{\nu^p_R(R)}{\nu(R)}-1\right| <\e.$$
Moreover, $R$ contains a subset $G$ with $\nu(G) > (1-\e)\nu(R)$ such that for all $x\in G$,
$$\left|\frac{d\nu^p_R}{d\nu}(x)-1\right| <\e.$$
\end{enumerate}
\end{definition}

\begin{lemma}\label{covering}
For any $\d>0$ and $\e>0$, there exists an $\e$-regular covering of connected rectangles  $\mathcal{R}_{\e}$ for the measure $\nu$ of $Y$, with $\diam(R)<\d$ for any $R\in \mathcal{R}_{\e}$.
\end{lemma}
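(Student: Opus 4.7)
The plan is to adapt the standard construction of $\varepsilon$-regular coverings developed by Chernov--Haskell and used by Call--Thompson to our setting, exploiting two ingredients that are already at hand: the topological local product structure of the expansive factor $(Y,\psi^t)$ from Theorem \ref{expansivefactor}, and the fact that $m$ itself is constructed as a product measure in \eqref{BM} via the Patterson--Sullivan data, which transports under $\chi$ to a product structure for $\nu$ on small rectangles.

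First I would build small rectangles of diameter less than $\delta$. Pick any $[v_0]$ with $v_0\in \R_0$; since $\mathcal{I}(v_0)=\{v_0\}$, by continuity of $\mathcal{I}$ (Lemma \ref{compact}) every $[v]$ in a small neighborhood of $[v_0]$ has small $\chi$-preimage, so the connected rectangle $R(v_0,\eta):=[V_\eta^{0s}([v_0]),V_\eta^u([v_0])]$ defined through \eqref{foliation} satisfies $\mathrm{diam}(R(v_0,\eta))<\delta$ whenever $\eta$ is sufficiently small. Next I would perform the Rokhlin disintegration of $\nu|_{R(v_0,\eta)}$ along the measurable partition of $R(v_0,\eta)$ into local unstable slices, obtaining conditional measures $\nu^u_x$ and a factor measure $\tilde\nu^{0s}_{[v_0]}$ on the transversal $V^{0s}_\eta([v_0])\cap R(v_0,\eta)$, whose product is by definition $\nu^p_R$.

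Second, to verify the density condition in Definition \ref{coveringdef}(2), I would use that $m=\tilde\mu$ is already a product of $\bar\mu$ with the transversal measures $\lambda_{\xi,\eta}$ (Proposition \ref{BMMME}), and that on $\R_0$ the projection $\chi$ makes this an honest product structure for $\nu$. Consequently $d\nu^p_R/d\nu$ is essentially determined by the Radon--Nikodym cocycle $e^{\delta_\Gamma\beta_p(\xi,\eta)}$ of \eqref{current}, which by the Hölder regularity of the Busemann function (Proposition \ref{horofoliation}(1)) is Hölder continuous on $R$; a standard Lebesgue differentiation / martingale convergence argument then produces, for any given $\varepsilon>0$, an $\eta(v_0)<\delta/2$ and a subset $G\subset R(v_0,\eta(v_0))$ with $\nu(G)>(1-\varepsilon)\nu(R)$ on which $|d\nu^p_R/d\nu-1|<\varepsilon$, as well as $|\nu^p_R(R)/\nu(R)-1|<\varepsilon$.

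Third, I would extract a finite disjoint subcollection. The family $\{R(v_0,\eta(v_0))\colon v_0\in \R_0\cap G^\ast\}$ produced above covers a $\nu$-full-measure subset of $Y$, and its elements behave, modulo bounded distortion coming from the product structure, like "balls" in a metric associated to the product coordinates. A Besicovitch--Vitali-type selection argument (precisely the one used in \cite{CH} and \cite[Section 7]{CT}, which relies only on such bounded distortion and not on any Euclidean geometry) then yields a countable pairwise disjoint subcollection whose union has $\nu$-measure at least $1-\varepsilon/2$; passing to a finite subfamily with $\nu$-measure at least $1-\varepsilon$ gives the desired $\mathcal{R}_\varepsilon$.

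The main obstacle is the density step: Definition \ref{coveringdef}(2) is a statement about absolute continuity of $\nu$ with respect to a product of its own conditionals, which in a merely nonuniformly hyperbolic setting would demand absolute continuity of the unstable holonomy. Here this difficulty is bypassed by the construction of $m$: the product structure is built in from \eqref{BM}, and the only remaining task is to quantify how close the Jacobian of sliding along unstable leaves is to $1$ at small scales. Since this Jacobian is expressed through the Busemann cocycle, the Hölder estimates of Proposition \ref{horofoliation}(1) (together with the continuity of $\xi\mapsto H_\xi(p)$ from Proposition \ref{horosphere}) furnish the quantitative control required.
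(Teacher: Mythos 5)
Your proposal follows essentially the same route as the paper: small connected rectangles built from the local product structure of $(Y,\psi^t)$ centered at points of $\chi(\R_0)$, combined with the product formula \eqref{BM} for $m$ to express the conditional measures on local unstable leaves via the Patterson--Sullivan measures and the Gromov product, so that the weak stable holonomy Jacobian is controlled by $e^{h\beta}$. The only (harmless) difference is that the paper obtains $|d\nu^p_R/d\nu-1|<\e$ uniformly on the whole small rectangle directly from the uniform continuity of the Gromov product (Corollary \ref{continuous}), so no Lebesgue differentiation step or exceptional set is needed, and the disjoint finite subfamily is taken at the outset rather than extracted afterwards by a Vitali-type argument.
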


\begin{proof}
Since $(Y,\psi^t)$ has local product structure, we can find a finite
collection of disjoint rectangles $R$ constructed as above, covering a subset of $\nu$-measure at least $1-\e$ and satisfying the first condition in Definition \ref{coveringdef}. Moreover, the rectangles $R$ can be chosen such that $\diam R<\d$. Let $\tilde R=\chi^{-1}(R)$. Since $R$ is centered at expansive vectors, the diameter of $\tilde R$ is also small. 

To verify the second condition in Definition \ref{coveringdef}, we use the local product structure of $m$ provided by \eqref{BM}. 
We lift all objects to the universal cover $X$. Take a lift of $\tilde R$, which is still denoted by $\tilde R$. Since $\tilde R$ is centered at some $v_0\in \R_0$, the local weak horospherical stable
and local horospherical unstable manifolds of $v_0$ intersect at a single point $v_0$. For any $x\in \tilde R$, there exists a continuous map $\varphi_x: \F_\loc^u(x)\to \pX$ given by $\varphi_x(v)=v^+$ where $v\in \F_\loc^u(x)$. By \eqref{BM}, the conditional measure $\mu^u_x$ on $\tilde R\cap \F_\loc^u(x)$ of the Bowen-Margulis measure $m$ is given by
\begin{equation}\label{conditional}
d\mu^u_x(v) = \frac{e^{h\b(x^-,\varphi_xv)} d\mu_p(\varphi_xv)}{\int_{\tilde R\cap \F_\loc^u(x)}e^{h\b(x^-,\varphi_xv)} d\mu_p(\varphi_xv)}
\end{equation}
and $\int d m(v)=c\int d\mu^u_x(v) d\mu_p(x^-)dt$ for some normalization constant $c>0$.

Given two points $x, y\in \tilde R$, the local weak stable holonomy map $\pi^{0s}_{xy}: \F_\loc^{u}(x)\cap \tilde R \to \F_\loc^{u}(y)\cap \tilde R$ is defined by
$$\pi_{xy}^{0s}(w):=[w,y]=\F_\loc^{u}(y)\cap \F_\loc^{0s}(w)\cap \tilde R, \quad w\in \F_\loc^{u}(x)\cap \tilde R.$$
Note that $\varphi_x(w)=\varphi_{y}(\pi^{0s}_{xy}w):=\eta_w$. By \eqref{conditional}, the Jacobian of the holonomy map \begin{equation*}
\begin{aligned}
\left|\frac{d(\pi_{yx}^{0s})_*\mu^u_y}{d\mu^u_x}(w)\right|=\left|\frac{e^{h\b(y^-,\eta_w)}}{e^{h\b(x^-,\eta_w)}}\cdot\frac{\int_{\tilde R\cap \F_\loc^u(x)}e^{h\b(x^-,\eta_w)} d\mu_p(\eta_w)}{\int_{\tilde R\cap \F_\loc^u(x)}e^{h\b(y^-,\eta_w)} d\mu_p(\eta_w)}\right|.
\end{aligned}
\end{equation*}
By Corollary \ref{continuous}, $e^{h\b}$ is uniformly continuous and bounded away from $0$. By taking $\d$ small enough so that $\text{diam\ }\tilde R$ is small, we have that
$$\left|\frac{d(\pi_{yx}^{0s})_*\mu^u_y}{d\mu^u_x}(w)-1\right| <\e$$
for $x,y\in \tilde R$.
By definition of conditional measures, this implies the second condition in Definition \ref{coveringdef} for $m$.

Since $\chi: (\tilde R, m)\to (R,\nu)$ is an isomorphism, and $\chi$ preserves foliations in the sense of \eqref{foliation}, the second condition in Definition \ref{coveringdef} for $\nu$ also holds.
\end{proof}

\subsubsection{Construction of the function $\theta$}
To conclude of the proof of Theorem \ref{bernoulli}, we need construct the function $\theta$ in Lemma \ref{function}, for any partition $\a=\{A_1,\cdots, A_k\}$ into subsets of $X$ with $\nu(\partial_\varepsilon A_i)\le D\varepsilon, i=1,\cdots, k$ for some $D>0$ and any $\varepsilon>0$. The existence of $\ve$-coverings by Lemma \ref{covering} allows us to construct the function $\theta$ by first constructing it in a small rectangle.

\begin{lemma}
For any $\d>0$, there exists $0<\d_1<\d$ such that if $R$ is a rectangle with $\diam R<\d_1$ and $E$ is a measurable set intersecting $R$ leafwise, i.e., $V^u_\loc(x) \cap R \subset E\cap R$ for any $x\in E\cap R$, we can construct a bijective function $\theta:E\cap R\to R$ such that for any measurable set $F\subset E\cap R$,
\begin{equation}\label{e:match}
\frac{\nu^p_R(\theta(F))}{\nu^p_R(R)}=\frac{\nu^p_R(F)}{\nu^p_R(E\cap R)}
\end{equation}
and $\theta(x)\in V^{cs}_\loc(x)$ for every $x\in E\cap R$.
\end{lemma}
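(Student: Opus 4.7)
The plan is to exploit the local product structure of the rectangle $R$ and reduce the construction of $\theta$ to a measure-theoretic isomorphism purely in the stable direction.

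First, using the bracket coordinates centered at the distinguished point $z$, I identify $R$ with $(V^{0s}_\loc(z)\cap R)\times(V^u_\loc(z)\cap R)$, writing each $y\in R$ as $y=[y_1,y_2]$ with $y_1\in V^{0s}_\loc(z)\cap R$ and $y_2\in V^u_\loc(z)\cap R$. In these coordinates the local unstable plaque through $[y_1,y_2]$ is $\{y_1\}\times(V^u_\loc(z)\cap R)$, the local weak stable plaque is $(V^{0s}_\loc(z)\cap R)\times\{y_2\}$, and the product measure factors as $\nu^p_R=\tilde\nu^{0s}_z\otimes\nu^u_z$. The leafwise hypothesis on $E$ then translates directly into saying that $E\cap R$ is cylindrical: there is a measurable set $S\subset V^{0s}_\loc(z)\cap R$ with $E\cap R=S\times(V^u_\loc(z)\cap R)$.

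Next I build a measurable bijection $\sigma:S\to V^{0s}_\loc(z)\cap R$ whose pushforward scales $\tilde\nu^{0s}_z|_S$ to $\tilde\nu^{0s}_z|_{V^{0s}_\loc(z)\cap R}$, in the sense that $\tilde\nu^{0s}_z(\sigma(A))=C\,\tilde\nu^{0s}_z(A)$ for every measurable $A\subset S$, where $C:=\tilde\nu^{0s}_z(V^{0s}_\loc(z)\cap R)/\tilde\nu^{0s}_z(S)$. Both $(S,\tilde\nu^{0s}_z|_S)$ and $(V^{0s}_\loc(z)\cap R,\tilde\nu^{0s}_z)$ are standard Borel probability spaces after renormalization, and the factor measure $\tilde\nu^{0s}_z$ is nonatomic, as a consequence of the Patterson--Sullivan measure $\mu_p$ having no atoms on $\pX$ (itself a consequence of the positivity of the topological entropy together with the non-elementarity of $\C$ and the product structure in \eqref{BM}). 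Halmos' isomorphism theorem for nonatomic standard probability spaces then furnishes such a $\sigma$.

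Define $\theta([a,b]):=[\sigma(a),b]$ on $E\cap R$. Bijectivity onto $R$ is inherited from $\sigma$, and since $\theta$ preserves the second (weak stable) coordinate, $\theta(x)$ and $x$ lie on the same local weak stable leaf $V^{0s}_\loc(x)$. Choosing $\d_1\in(0,\d)$ strictly less than the uniform scale of the local product structure of $\psi^t$ furnished by Theorem \ref{expansivefactor} guarantees that any rectangle $R$ with $\diam R<\d_1$ is contained in a single product chart, and therefore $\theta(x)\in V^{cs}_\loc(x)$ as required. A direct Fubini computation using the scaling of $\sigma$ then yields
\begin{align*}
\nu^p_R(\theta(F))&=\int_{V^u_\loc(z)\cap R}\tilde\nu^{0s}_z(\sigma(F_b))\,d\nu^u_z(b)\\
&=C\int_{V^u_\loc(z)\cap R}\tilde\nu^{0s}_z(F_b)\,d\nu^u_z(b)=\frac{\nu^p_R(R)}{\nu^p_R(E\cap R)}\,\nu^p_R(F),
\end{align*}
where $F_b:=\{a:[a,b]\in F\}$; this rearranges to \eqref{e:match}.

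The only genuinely nontrivial input is the nonatomicity of $\tilde\nu^{0s}_z$, which is what allows the appeal to Halmos' theorem; everything else is Fubini in product coordinates combined with the unstable saturation of $E$.
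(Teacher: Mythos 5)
Your proof is correct and follows essentially the same route as the paper, whose own argument consists of observing that the factor measure in the weak stable direction is nonatomic and then "matching the points in two local unstable sets by weak stable holonomy," citing \cite{Po} and \cite{PTV} for exactly the details you have written out (cylindrical structure of $E\cap R$, a measure-scaling bijection $\sigma$ on the plaque space from the isomorphism theorem for nonatomic Lebesgue spaces, and the Fubini computation in product coordinates). The only cosmetic difference is the source of nonatomicity: the paper reads it off from the Lebesgue factor $dt$ in the flow direction of the disintegration $dm=c\,d\mu^u_x\,d\mu_p(x^-)\,dt$, whereas you invoke nonatomicity of the Patterson--Sullivan measure itself; both are valid.
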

\begin{proof}
We remark that according to the disintegration 
$$\int d m(v)=c\int d\mu^u_x(v) d\mu_p(x^-)dt,$$ 
the measure $d\mu_p(x^-)dt$ has no atom, and the same holds for $\nu$. Then the construction of $\theta$ in the lemma is to match the points in two local unstable sets by weak stable holonomy, see \cite[Lemma 5.10 ]{Po} or \cite[Lemma 4.9]{PTV} for the smooth measure. 
\end{proof}

\begin{proof}[Proof of Theorem \ref{bernoulli}]
Based on \eqref{e:match} and the definition of $\ve$-regular covering, we can prove item (2) in Lemma \ref{function}.  

Since $\theta(x)\in V^{0s}_\loc(x)$ for every $x\in E\cap R$, we have by \cite[Proposition 5.1]{MR} that for any $\e>0$, if we take $\d>0$ at the beginning of the construction of $R$ small enough, then 
$$d(\psi^t(x), \psi^t(\theta(x)))\le \e$$
for any $t>0$.
Item (1) in Lemma \ref{function} follows from the above fact
and condition on the small boundary of $\a$ with respect to $\nu$. 

Since the diameter of $\a$ can be arbitrarily small, by Theorem \ref{appro}, $\nu$ is Bernoulli. Since  $\chi:(SM,m)\to (Y,\nu)$ is a measure-preserving isomorphism, $m$ is Bernoulli, which completes the proof of Theorem \ref{bernoulli}.
\end{proof}

\section{Uniqueness of Equilibrium states}
In this section, we prove Theorem \ref{uniqueness} using the symbolic approach developed in Lima-Poletti \cite{LP}. We first prepare the general theorem on coding of nonuniformly hyperbolic flows. Then we apply the nonuniform hyperbolicity of the geodesic flow on uniform visibility manifolds without conjugate points, and with continuous Green bundles.

\subsection{Coding of nonuniformly hyperbolic flows}
Let $\G = (V, E)$ be an oriented graph, where $V, E$ are the vertex and edge sets. Denote edges by $v\to w$, and assume that $V$ is countable. Suppose that $\G$ is locally compact, namely,  for all $v\in V$, the number of ingoing edges $u\to v$ and outgoing edges $v \to w$ is finite. 

A \textit{topological Markov shift} (TMS for short) is a pair $(\Sigma, \sigma)$ where
\[\Sigma:=\{\lv=\{v_n\}_{n\in \ZZ}\in V^{\ZZ}: v_n\to v_{n+1}, \forall n\in \ZZ\} \]	
is the symbolic space endowed with the metric
$$d(\lv,\lw):=\exp(-\inf\{|n|\in \ZZ: v_n\neq w_n\}),$$ and $\sigma: \Sigma \to \Sigma$ is the left shift defined by $[\sigma(\lv)]_n=v_{n+1}$. The \textit{regular set} of $\Sigma$ is defined as
\begin{equation*}
    \begin{aligned}
\Sigma^{\#}:=\{\lv\in \Sigma: \exists u,w\in V \text{\ such that\ } &v_n=u \text{\ for infinitely many\ } n>0,\\
\text{\ and\ } &v_n=w \text{\ for infinitely many\ } n<0\}.        
    \end{aligned}
\end{equation*}

Let $(\Sigma, \sigma)$ be a TMS and $r: \Sigma \to  (0, +\infty)$ be a continuous function. For $n\ge 0$, the $n$-th Birkhoff sum of $r$ is
\[r_n:= r + r\circ \sigma + \cdots + r\circ \sigma^{n-1}.\]
The definition can be extended for $n<0$ in a unique way such that the following cocycle identity holds:
\[r_{m+n} = r_m + r_n \circ \sigma^m, \quad \forall m,n\in \ZZ.\]

\begin{definition}
The \textit{topological Markov flow} (TMF for short) is the pair $(\Sigma_r, \sigma_r)$ where 
$$\Sigma_r:=\{(\lv, t): \lv\in \Sigma, 0\le t<r(\lv)\}$$
and $\sigma_r: \Sigma_r \to \Sigma_r$ is a flow on $\Sigma_r$ defined by 
$$\sigma_r^t(\lv, t')=(\sigma^n(\lv), t'+t-r_n(\lv))$$
where $n$ is the unique integer such that $r_n(\lv)\le t'+t< r_{n+1}(\lv)$.  
\end{definition}
 $\Sigma_r$ can be endowed with a natural metric $d_r(\cdot, \cdot)$, the so-called Bowen-Walters metric, so that $\sigma_r$ is a continuous flow. The \textit{regular set} of $(\Sigma_r, \sigma_r)$ is $\Sigma_r^{\#}:=\{(\lv, t)\in \Sigma_r: \lv\in \Sigma^\#\}$.

If $\Sigma$ is a TMS defined by an oriented graph $\G = (V, E)$, its \textit{irreducible components} are the subshifts $\Sigma'\subset \Sigma$ defined over maximal subsets $V'\subset V$ satisfying the following condition:
$$\forall u, w\in V', \exists \lv\in \Sigma \text{\ and\ } n\ge 1 \text{\ such that\ } v_0 =u \text{\ and\ } v_n=w.$$
An \textit{irreducible component} $\Sigma'_r$ of  $\Sigma_r$ is a set of the form 
$$\Sigma'_r=\{(\lv, t)\in \Sigma_r: \lv \in \Sigma'\}$$
where $\Sigma'$ is an irreducible component of $\Sigma$.

Now we consider a $C^{1+\b}(\b>0)$-vector field $Z$ with $Z\neq 0$ everywhere on a compact smooth manifold $N$. Let $\varphi^t: N\to N$
be the flow generated by $Z$. We say that $\mu\in \M_{\varphi^t}(N)$ is \textit{$\lambda$-hyperbolic} ($\lambda>0$), if for $\mu \ae x\in N$, all the Lyapunov exponents are greater than $\lambda$ in absolute value, except for the zero exponent in the flow direction. By Pesin theory, for $\mu \ae x\in N$, there exists \textit{stable/unstable manifold} through $x$ 
\[W^{s/u}(x):=\{y\in N: \limsup_{t\to \pm\infty}\frac{1}{t}\log d(\varphi^t(x), \varphi^t(y))<0\}.\]
The \textit{weak stable/unstable manifold} through $x$ is then defined as
$$W^{0s/0u}(x):=\cup_{t\in \RR}\varphi^t(W^{s/u}(x)).$$

\begin{definition}
We say that two ergodic hyperbolic measures $\mu, \nu$ are \textit{homoclinically related} if for $\mu \ae x$ and $\nu \ae y$ there exist transverse intersections $W^{0s}(x)\pitchfork W^{0u}(y)\neq \emptyset$  and $W^{0u}(x)\pitchfork W^{0s}(y)\neq \emptyset$, i.e., points $z_1\in W^{0s}(x)\cap W^{0u}(y)$ and $z_2\in W^{0u}(x)\cap W^{0s}(y)$ such that $T_{z_1}N=T_{z_1}W^{0s}(x) +T_{z_2}W^{0u}(y)$ and $T_{z_2}N=T_{z_1}W^{0u}(x) + T_{z_2}W^{0s}(y)$.
\end{definition}

\begin{theorem}\label{coding}(\cite[Theorem 2.1]{LP})
Let $\varphi^t: N\to N$ be a flow generated by a $C^{1+\b}$-vector field $Z$ with $Z\neq 0$ everywhere, and $\lambda>0$. If $\mu_1, \mu_2$ are homoclinically related $\lambda$-hyperbolic ergodic measures, then there is an irreducible TMF $(\Sigma_r, \sigma_r)$ and a H\"{o}lder continuous map $\pi_r: \Sigma_r\to N$ such that:
\begin{enumerate}
    \item $r:\Sigma\to \RR^+$ is H\"{o}lder continuous and bounded away from zero and infinity.
\item $\pi_r\circ \sigma_r^t=\varphi^t\circ \pi_r$ for all $t\in \RR$.
\item $\pi_r[\Sigma_r^\#]$ has full measure with respect to $\mu_1$ and $\mu_2$.
\item Every $x\in N$ has finitely many preimages in $\Sigma_r^\#$.
\end{enumerate}
\end{theorem}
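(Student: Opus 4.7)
The plan is to reduce the coding of $\varphi^t$ to a discrete-time coding on a Poincar\'e section, apply the Sarig-style symbolic dynamics machinery for nonuniformly hyperbolic $C^{1+\b}$ diffeomorphisms to the Poincar\'e return map, and then suspend to obtain the TMF. Since $Z$ is continuous, nowhere zero, and $N$ is compact, $\|Z\|$ is bounded above and below by positive constants, so flow time and Euclidean distance transverse to $Z$ are comparable on flow-box scales.

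First I would construct a Poincar\'e section $\Lambda\subset N$ as a finite disjoint union of embedded codimension-one disks transverse to $Z$, chosen so that every trajectory typical for $\mu_1$ or $\mu_2$ crosses $\Lambda$ infinitely often in forward and backward time, and the first-return time $\tau:\Lambda\to(0,\infty)$ is uniformly bounded away from $0$ and $\infty$ with H\"older-continuous dependence on the base point. Let $f:\Lambda\to\Lambda$ denote the Poincar\'e return map. The hyperbolicity of $\varphi^t$ at $\mu_i$-typical points induces nonuniform hyperbolicity of $f$ at points typical for the induced normalized measures $\hat{\mu}_i$, with weak stable/unstable manifolds of the flow meeting $\Lambda$ transversely along the stable/unstable manifolds of $f$. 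Moreover, the homoclinic relation between $\mu_1$ and $\mu_2$ for the flow is transferred to a homoclinic relation between $\hat{\mu}_1$ and $\hat{\mu}_2$ for $f$ by intersecting $W^{0s}\pitchfork W^{0u}$-transversals with $\Lambda$.

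Next I would apply the Sarig-type construction of a countable Markov partition for nonuniformly hyperbolic $C^{1+\b}$ diffeomorphisms to $(f,\hat{\mu}_1,\hat{\mu}_2)$. This yields a countable TMS $(\hat\Sigma,\sigma)$ with a finite-to-one H\"older semiconjugacy $\hat\pi:\hat\Sigma\to\Lambda$ satisfying $\hat\pi\circ\sigma=f\circ\hat\pi$ and $\hat\pi[\hat\Sigma^{\#}]$ of full measure for both $\hat{\mu}_1$ and $\hat{\mu}_2$. Because $\hat{\mu}_1$ and $\hat{\mu}_2$ are homoclinically related for $f$, the standard Bowen-type argument on the Markov graph shows that the symbols carrying each $\hat{\mu}_i$ communicate; restricting to the common irreducible component produces an irreducible TMS $(\Sigma,\sigma)$ and a semiconjugacy $\pi:\Sigma\to\Lambda$ still satisfying the full-measure and finite-to-one properties.

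Finally I would set $r:=\tau\circ\pi:\Sigma\to(0,\infty)$ and form the TMF $(\Sigma_r,\sigma_r)$, defining $\pi_r(\lv,s):=\varphi^s(\pi(\lv))$. Item (1) follows from the uniform bounds on $\tau$ and H\"older regularity of $\pi$ and $\tau$; item (2) is immediate from the definition of $\pi_r$ and $\sigma_r$; item (3) transfers from the discrete coding via the Abramov correspondence between $\sigma_r$-invariant measures on $\Sigma_r$ and $\sigma$-invariant measures on $\Sigma$, applied to the suspensions of $\hat{\mu}_i|_\Lambda$; item (4) combines finiteness of preimages of $\pi$ on $\Sigma^{\#}$ with the uniform upper bound on $r$, since for a given $x\in N$ only finitely many pairs $(\lv,s)$ with $\lv\in\Sigma^{\#}$ and $0\le s<r(\lv)$ can yield $\varphi^s(\pi(\lv))=x$ inside one return cycle. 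The main obstacle is the coupled construction of the cross-section $\Lambda$ and the Markov partition so that (a) the roof function $r$ is H\"older and bounded, and (b) both measures land in a single irreducible component with a finite-to-one semiconjugacy; this is the heart of the Lima--Poletti extension of Sarig's Markov partition technology to nonuniformly hyperbolic flows, and it is where the $C^{1+\b}$ regularity and the homoclinic relation are both used in an essential way.
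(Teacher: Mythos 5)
The paper does not prove this statement at all: it is imported verbatim as \cite[Theorem 2.1]{LP}, so there is no internal proof to compare against. Your outline does reproduce the strategy actually used in the literature (Lima--Sarig coding of flows via a Poincar\'e section, Sarig-type countable Markov partitions for the nonuniformly hyperbolic return map, passage to a single irreducible component using the homoclinic relation, and suspension), so at the level of architecture you are on the right track.

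As a proof, however, the proposal has two genuine gaps. First, the claim that the first-return time $\tau$ is H\"older continuous on $\Lambda$ is false in general: $\tau$ is discontinuous at the boundaries of the section disks, and one of the central difficulties in the Lima--Sarig construction is precisely to arrange the section and the Markov partition so that the induced roof function $r=\tau\circ\pi$ becomes H\"older with respect to the symbolic metric on $\Sigma$; this forces the section to be built out of Pesin charts adapted to the hyperbolic measures, not just any finite union of transverse disks with uniformly bounded return time. Second, everything that makes the theorem nontrivial --- the existence of a countable Markov partition with a finite-to-one H\"older coding for a merely nonuniformly ($\lambda$-)hyperbolic $C^{1+\beta}$ return map, and the fact that two homoclinically related measures are carried by one irreducible component of the resulting graph --- is invoked as a black box (``apply the Sarig-type construction'', ``the standard Bowen-type argument''). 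You acknowledge this in your final sentence, but it means the proposal is a roadmap rather than a proof; to be complete it would have to either carry out that construction or cite it, which is exactly what the paper does by quoting \cite{LP}.
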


\subsection{Uniqueness of equilibrium states}
Let $(M,g)$ be a closed $C^\infty$ uniform visibility manifold without conjugate points and with continuous Green bundles, and let $\psi:SM\to \RR$ be H\"{o}lder continuous or of the form $\psi = q\psi^u$ for $q\in \RR$. 

Suppose that the geodesic flow has a hyperbolic periodic point $\O$. We let $W^{0s/0u}(\O) = W^{0s/0u}(x)$ denote the weak stable/unstable manifold of $\O$, for any $x\in \O$. The \textit{homoclinic class} of a hyperbolic periodic orbit $\O$ is the set
$$\text{HC}(\O) = \overline{W^{0u}(\O) \pitchfork W^{0s}(\O)}.$$

\begin{lemma}\label{foliation2}
Let $\F^{0s/0u}(\O):=\cup_{t\in \RR}\phi^t(\F^{s/u}(x))$ for any $x\in \O$. Then 
$$W^{0s/0u}(\O) = \F^{0s/0u}(\O).$$
\end{lemma}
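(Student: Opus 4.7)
The plan is to prove $W^s(\mathcal{O}) = \mathcal{F}^{0s}(\mathcal{O})$, from which the weak equality $W^{0s}(\mathcal{O}) = \mathcal{F}^{0s}(\mathcal{O})$ follows by flow-saturation, and the unstable equality follows by time reversal. Fix $x \in \mathcal{O}$ and note the characterization $\mathcal{F}^{0s}(x) = \{w \in SX : w^+ = x^+\}$, which holds because the horospheres based at $x^+$ sweep out all of $X$ as the basepoint $\phi^t x$ varies over $t \in \RR$.

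For the easy direction $W^{0s}(\mathcal{O}) \subseteq \mathcal{F}^{0s}(\mathcal{O})$, given $y \in W^s(\phi^s x)$ for some $s \in \RR$, the exponential decay $d_{SM}(\phi^t y, \phi^{t+s} x) \to 0$ lifts to $SX$ via the injectivity radius, so appropriately chosen lifts $\tilde y, \tilde x$ satisfy $d_{SX}(\phi^t \tilde y, \phi^{t+s} \tilde x) \to 0$. Hence $c_{\tilde y}$ and $c_{\tilde x}$ are positively asymptotic, so $\tilde y^+ = \tilde x^+$ and $\tilde y \in \mathcal{F}^{0s}(\tilde x)$.

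For the reverse inclusion $\mathcal{F}^{0s}(\mathcal{O}) \subseteq W^{0s}(\mathcal{O})$, since $\mathcal{O}$ is hyperbolic and Green bundles are continuous, Theorem \ref{regular} places $\mathcal{O} \subset \mathcal{R}_1$; combined with the unique integrability of $G^s, G^u$ from Proposition \ref{horofoliation}, the transversality $G^s \pitchfork G^u$ on $\mathcal{R}_1$ forces $\mathcal{F}^s \pitchfork \mathcal{F}^u$ at such points, and the connectedness of $\mathcal{I}$ from Lemma \ref{compact} then yields $\mathcal{R}_1 \subseteq \mathcal{R}_0$. The Pesin stable subspace at each $x \in \mathcal{O}$ coincides with $G^s(x)$, so the stable manifold theorem for hyperbolic periodic orbits produces a $C^{1+\alpha}$ local stable manifold $W^s_\loc(x)$ tangent to $G^s(x)$; unique integrability identifies $W^s_\loc(x)$ with $\mathcal{F}^s_\loc(x)$. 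Saturating by the flow,
\[
W^s(x) \;=\; \bigcup_{t \geq 0} \phi^{-t} W^s_\loc(\phi^t x) \;=\; \bigcup_{t \geq 0} \phi^{-t} \mathcal{F}^s_\loc(\phi^t x) \;\subseteq\; \mathcal{F}^s(x),
\]
so the task reduces to showing every $y \in \mathcal{F}^s(x)$ eventually enters $\mathcal{F}^s_\loc(\phi^{t_0} x)$ for some $t_0 \geq 0$.

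The main obstacle is precisely this exhaustion step. Under bounded asymptote or no focal points one could estimate the horospherical arc length of $\phi^t$ applied to a curve joining $x$ and $y$ using uniform bounds on stable Jacobi fields; here only Proposition \ref{subspace}(7) is available, yielding merely individual boundedness. The plan is a contradiction argument: assume $d_s(\phi^t y, \phi^t x) \not\to 0$ in the intrinsic leaf metric; passing to subsequences $\phi^{t_n} y \to y_\infty$ and $\phi^{t_n} x \to x_\infty \in \mathcal{O}$, continuity of the horospherical foliations gives $y_\infty \in \mathcal{F}^s(x_\infty) \setminus \{x_\infty\}$. The tube of uniform hyperbolicity around $\mathcal{O}$, combined with the $\phi^t$-invariance of $\mathcal{F}^s, \mathcal{F}^u$ and compactness of $\mathcal{O}$, is then leveraged to produce a nontrivial element of $\mathcal{I}(x_\infty)$, contradicting $x_\infty \in \mathcal{R}_0$. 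Once this contradiction is in place, $\phi^t y$ enters the local leaf, and the saturation identity above yields $y \in W^s(x)$, completing the proof.
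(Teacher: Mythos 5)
Your easy inclusion $W^{0s/0u}(\O)\subseteq\F^{0s/0u}(\O)$ is essentially the paper's. For the reverse inclusion, however, you take a genuinely different and strictly harder route, and the decisive step is a plan rather than a proof. The paper never needs the convergence $d_s(\phi^t y,\phi^t x)\to 0$ that your argument hinges on: it notes that $W^{0s}(\O)$ and $\F^{0s}(\O)$ are connected submanifolds of the same dimension, shows $W^{0s}(\O)$ is open in $\F^{0s}(\O)$ (because the $\e$-local weak stable set is) and closed in $\F^{0s}(\O)$ (a leafwise limit $z$ of points $z_k\in W^{0s}(\O)$ still satisfies $d(\phi^t z,\O)\le\e$ eventually, via the quasi-non-expansion estimate $d_s(\phi^t z,\phi^t z_k)\le A\,d_s(z,z_k)+B$ of Proposition \ref{horofoliation}), and concludes by connectedness. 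That topological argument completely sidesteps your ``exhaustion step.''

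In your version the exhaustion step is exactly where the proof is missing, and the sketch does not close as written. Concretely: (i) after extracting $\phi^{t_n}y\to y_\infty$ and $\phi^{t_n}x\to x_\infty$, you need $y_\infty\neq x_\infty$, but a lower bound on the \emph{intrinsic} leaf distance $d_s(\phi^{t_n}y,\phi^{t_n}x)$ does not rule out the \emph{ambient} distance tending to $0$ (the leaf may recur near itself), so the case $y_\infty=x_\infty$ is unaddressed; (ii) the sentence ``is then leveraged to produce a nontrivial element of $\mathcal{I}(x_\infty)$'' is the entire mathematical content of the step and is not carried out. It can be completed --- applying Proposition \ref{horofoliation} forward from $(x,y)$ gives $d_s(\phi^{t_n-t}y,\phi^{t_n-t}x)\le A\,d_s(y,x)+B$ for all $0\le t\le t_n$, so in the limit $c_{y_\infty}$ and $c_{x_\infty}$ remain at bounded distance for all time, bound a generalized strip by Lemma \ref{striplemma}, and contradict $x_\infty\in\R_0$ --- but you must actually write this, and first secure $\O\subset\R_0$ (your derivation of $\R_1\subseteq\R_0$ from transversality of the uniquely integrable Green bundles plus connectedness of $\mathcal{I}$ is fine under the standing hypotheses, though the paper warns it fails without them). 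A further soft spot: identifying $W^s_{\loc}(x)$ with $\F^s_{\loc}(x)$ by ``unique integrability'' presupposes the Pesin local stable manifold is tangent to $G^s$ at every one of its points, not only along $\O$; either justify this or bypass it, since $W^s_{\loc}(x)\subset\F^{0s}(\O)$ already follows from your easy direction.
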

\begin{proof}
We first show that $W^{0s/0u}(\O) \subset \F^{0s/0u}(\O)$. Recall that for hyperbolic orbit $\O$, 
\begin{equation}\label{stable}
W^{0s}(\O)=\cup_{t\ge 0}\phi^{-t}(W^{0s}_\e(\O))   
\end{equation}
where $$W^{0s}_\e(\O)=\{y\in SM: d(\phi^t(y), \O)<\e, \forall t>0\} $$
for any $\e>0$. Then for any $y\in W^{0s}_\e(\O)$, $y$ is asymptotic to $\O$, and therefore $y\in \F^{0s}(\O)$. So $W^{0s}_\e(\O)\subset \F^{0s}(\O)$. By the $\phi^t$-invariance of $\F^{0s}(\O)$, we have $W^{0s}(\O)\subset \F^{0s}(\O)$.

Note that both $W^{0s}(\O)$ and $\F^{0s}(\O)$ are connected smooth submanifold of dimension $\dim M$. To prove $W^{0s}(\O)= \F^{0s}(\O)$, it is enough to prove that $W^{0s}(\O)$ are both open and closed in the induced topology of $\F^{0s}(\O)$. Since we have proved that $W^{0s}_\e(\O)\subset \F^{0s}(\O)$ for any $\e>0$, it is easy to see from Proposition \ref{horofoliation} that $W^{0s}_\e(\O)$ is open in $\F^{0s}(\O)$. By \eqref{stable}, $W^{0s}(\O)$ is open in $\F^{0s}(\O)$. To prove that it is also closed in $\F^{0s}(\O)$, pick a sequence $z_k\in W^{0s}(\O)$ such that $z_k\to z$ as $k\to \infty$ for some $z\in \F^{0s}(\O)$. Then
\begin{equation*}
d(\phi^t(z), \O)\le d(\phi^t(z),\phi^t(z_k))+d(\phi^t(z_k), \phi^t(\O))\le Cd(z,z_k)+d(\phi^t(z_k), \O).
\end{equation*}
Choose $k$ large enough such that $d(z_k,z)<\e/2C$ and $t_0$ large enough such that $d(\phi^t(z_k), \O)\le \e/2$ for any $t\ge t_0$. Therefore,  $d(\phi^t(z), \O)\le \e$ for any $t\ge t_0$. We get $\phi^{t_0}(z)\subset W^{0s}_\e(\O)$ and thus $z\in W^{0s}(\O)$. This proves $W^{0s}(\O)= \F^{0s}(\O)$. 

The proof for $W^{0u}(\O)= \F^{0u}(\O)$ is analogous and we are done.
\end{proof}

By Proposition \ref{horofoliation}, $\F^s/\F^u$ are continuous minimal foliations of $SM$. Hence $W^{0s/0u}(\O) = \F^{0s/0u}(\O)$ are dense in $SM$.
\begin{lemma}\label{hco}
$\text{HC}(\O)=SM$.
\end{lemma}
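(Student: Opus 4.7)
The plan is to combine Lemma \ref{foliation2}, the minimality of the horospherical foliations, and the transversality of Green bundles on the open set $\R_1$ to exhibit transverse intersections of $W^{0u}(\O)$ and $W^{0s}(\O)$ densely in $SM$. By Lemma \ref{foliation2} it suffices to work with the weak horospherical leaves $\F^{0s}(\O)$ and $\F^{0u}(\O)$, and Proposition \ref{horofoliation} guarantees that both are dense in $SM$ because $\F^s, \F^u$ are minimal. Note also that, as $\O$ is a single closed orbit invariant under $\phi^t$, $\F^{0s}(\O)$ is a single weak stable leaf: for any $p\in \F^{0s}(\O)$ one has $\F^{0s}(p)=\F^{0s}(\O)$, and likewise for $\F^{0u}(\O)$.

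First I would verify that $\R_1$ is open and dense: openness is immediate from continuity of the Green bundles, while density follows from the hypothesis that the geodesic flow has a hyperbolic periodic point, since Proposition \ref{horofoliation} then yields a dense set of hyperbolic periodic orbits, all of which lie in $\R_1$ by Theorem \ref{regular}. At any $v\in \R_1$ we have $G^s(v)\cap G^u(v)=\{0\}$, so $G^s(v)\oplus G^u(v)\oplus Z(v)=T_vSM$; since Proposition \ref{horofoliation} asserts that $G^{s/u}$ are tangent to $\F^{s/u}$, the weak leaves $\F^{0s},\F^{0u}$ are transverse at $v$ with the flow line as their intersection. Continuity of the Green bundles then lets me choose a neighborhood $U_v\subset \R_1$ of $v$ supporting a local product structure: the map $(p,q)\mapsto [p,q]:=\F^{0s}_\loc(p)\cap \F^{0u}_\loc(q)$ is well defined on $U_v\times U_v$ and gives a single transverse intersection point in $U_v$.

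To conclude, fix $v_0\in SM$ and a neighborhood $V$ of $v_0$. By density of $\F^{0s}(\O)$ in the nonempty open set $V\cap\R_1$, pick $p\in \F^{0s}(\O)\cap V\cap \R_1$, and shrink to get $U_p\subset V\cap \R_1$ on which the local product structure is available. By density of $\F^{0u}(\O)$, pick $q\in \F^{0u}(\O)\cap U_p$, and set $w:=[p,q]$. Since $\F^{0s}_\loc(p)\subset \F^{0s}(p)=\F^{0s}(\O)$ and $\F^{0u}_\loc(q)\subset \F^{0u}(q)=\F^{0u}(\O)$, we get $w\in \F^{0s}(\O)\cap \F^{0u}(\O)=W^{0s}(\O)\cap W^{0u}(\O)$; moreover $w\in \R_1$, so the intersection is transverse. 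Hence $V$ meets $W^{0u}(\O)\pitchfork W^{0s}(\O)$, which proves $\text{HC}(\O)=SM$.

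The main technical point to nail down is the local product structure in paragraph two. Unlike the negatively curved case, here we only have continuity (not smoothness) of the Green bundles and only $C^{1,L}$ regularity of the horospherical leaves from Proposition \ref{horofoliation}; I would argue that transversality of the tangent planes at $v\in \R_1$ extends by continuity to genuine transverse intersection of small pieces of $\F^{0s}_\loc$ and $\F^{0u}_\loc$ throughout a uniform neighborhood $U_v$, together with uniqueness of the intersection point. This is routine foliation theory once the tangency $T\F^{s/u}=G^{s/u}$ and the continuity of $G^{s/u}$ on $\R_1$ are in place, but it is the one step where the hypothesis of continuous Green bundles is essential.
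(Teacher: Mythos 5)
Your proposal is correct and follows essentially the same approach as the paper: reduce to showing $\R_1\subset \text{HC}(\O)$, use denseness of $W^{0s/0u}(\O)=\F^{0s/0u}(\O)$ (from Lemma \ref{foliation2} and minimality) to find points of each leaf near any $y\in\R_1$, then use continuity of the Green bundles and transversality $G^s\oplus\langle Z\rangle\oplus G^u = TSM$ on $\R_1$ to conclude transverse intersection nearby, and finally invoke density of $\R_1$.

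Two small points of difference are worth flagging. First, for density of $\R_1$ you route through density of hyperbolic periodic orbits (Proposition \ref{horofoliation}) plus the observation that each hyperbolic periodic orbit lies in $\R_1$; the paper instead cites \cite[Lemma 7.1]{MR} directly. Your route is sound, but the step ``all of which lie in $\R_1$ by Theorem \ref{regular}'' is a slight misattribution: Theorem \ref{regular} is a measure-theoretic agreement statement about $\Delta$ and $\R_1$, and does not by itself say a given hyperbolic periodic point is in $\R_1$. The correct justification is the standard fact that at a hyperbolic periodic vector the Green bundles coincide with the stable/unstable eigenspaces of the linearized return map, hence are transverse. Second, you set up a full local product structure with a bracket $[p,q]$ and flag uniqueness of the intersection as a concern; the paper avoids this by only arguing that the two particular leaves $\F^{0s}(z_1)$ and $\F^{0u}(z_2)$, having tangent planes uniformly close to the transverse ones at $y$, must meet transversally somewhere in the small ball. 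For the purpose of showing nonemptiness of $W^{0u}(\O)\pitchfork W^{0s}(\O)$ near $y$, existence of one transverse intersection point suffices and uniqueness is not needed, so the paper's version is slightly lighter.
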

\begin{proof}
Let $y\in \R_1$ and $U\subset \R_1$ an open neighborhood of $y$. Since $\R_1$ is open, we have $B_\e(y)\subset U\subset \R_1$ for small enough $\e>0$. By Lemma \ref{foliation2}, $W^{0s/0u}(\O) = \F^{0s/0u}(\O)$ are dense in $SM$. Hence there exist $z_1\in W^{0s}(\O)\cap B_\e(y)$ and $z_2\in W^{0u}(\O)\cap B_\e(y)$. 

By Proposition \ref{horofoliation}, $\F^{0s}(z_1)$ and $\F^{0u}(z_2)$  are submanifolds tangent to $G^s(z_1)\oplus  \langle Z(z_1) \rangle$ and $G^u(z_2)\oplus  \langle Z(z_2) \rangle$ respectively. Since $G^s$ and $G^u$ vary continuously, and $G^s(y)\oplus \langle Z(y) \rangle\oplus G^u(y)=T_ySM$, by choosing $\e$ small enough at the beginning, we see that $\F^{0s}(z_1)\pitchfork\F^{0u}(z_2)$, that is $W^{0s}(\O)\pitchfork W^{0u}(\O)$ at some point $z\in U$. It follows that $y\in \text{HC}(\O)$. Hence $\R_1\subset \text{HC}(\O)$. Since $\R_1$ is dense in $SM$ by \cite[Lemma 7.1]{MR}, the lemma follows.
\end{proof}

\begin{proof}[Proof of Theorem \ref{uniqueness}]
Let $\psi$ be H\"{o}lder continuous or of the form $\psi^t=q\psi^u$ with $q\in \RR$. Since the geodesic flow $\phi^t$ is $C^\infty$ and $\psi$ is continuous, the existence of an equilibrium state with respect to $\psi$ is
guaranteed by the classical result \cite{Buzzi, Newhouse}. 

For the uniqueness, assume that $\mu_1$ and $\mu_2$ are two ergodic equilibrium states with respect to $\psi$.
By assumption, $P(\R_1^c, \psi) < P(\psi)$, we have $\mu_1(\R_1)=\mu_2(\R_1)=1$. Then by Theorem \ref{regular}, $\mu_1$ and $\mu_2$ are hyperbolic. Since $\mu_1$ and $\mu_2$ are ergodic, we can take $\lambda>0$ small enough so
that $\mu_1$ and $\mu_2$ are both $\lambda$-hyperbolic.
\begin{lemma}\label{homoclinic}
$\mu_1$ and $\mu_2$ are homoclinically related.
\end{lemma}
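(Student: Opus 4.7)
The plan is to use the identification of Pesin weak stable/unstable manifolds at $\R_1$-points with the global weak horospherical leaves, together with the minimality of the horospherical foliations, to produce transverse intersections between such leaves.

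First I would observe that both $\mu_1, \mu_2$ give full measure to $\R_1$ by the pressure-gap hypothesis $P(\R_1^c,\psi)<P(\psi)$, and hence by Theorem \ref{regular} both measures are supported on the Pesin regular set and are $\lambda$-hyperbolic. At any $\mu_i$-typical $x$, Freire--Ma\~n\'e (used already in the proof of Theorem \ref{regular}) forces $E^{s/u}(x)=G^{s/u}(x)$; combined with Proposition \ref{horofoliation}, which tells us that under continuous Green bundles with a hyperbolic periodic point, $G^{s/u}$ are uniquely integrable and tangent to the Lipschitz horospherical foliations $\F^{s/u}$, I would conclude that locally the Pesin weak stable/unstable manifolds coincide with the local weak horospherical leaves. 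Globalising by flow-invariance plus the open-and-closed argument used in the proof of Lemma \ref{foliation2} gives
\begin{equation*}
W^{0s}(x)=\F^{0s}(x),\qquad W^{0u}(x)=\F^{0u}(x)
\end{equation*}
for $\mu_i$-a.e.\ $x\in\R_1$.

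Next, by Proposition \ref{horofoliation}, the foliations $\F^s,\F^u$ are minimal, so for every $z\in SM$ the leaves $\F^{0s}(z)$ and $\F^{0u}(z)$ are dense in $SM$. With this in hand I would fix $\mu_1$-typical $x$ and $\mu_2$-typical $y$, both in $\R_1$, pick a small open neighbourhood $U\subset \R_1$ of $y$ on which the splitting $T(SM)=G^s\oplus\langle Z\rangle\oplus G^u$ is almost constant, and use density of $\F^{0u}(x)$ to select $w\in \F^{0u}(x)\cap U$ arbitrarily close to $y$. In $U$ the local leaves $\F^{0u}_{\mathrm{loc}}(w)$ and $\F^{0s}_{\mathrm{loc}}(y)$ have tangent spaces $G^u(w)\oplus\langle Z(w)\rangle$ and $G^s(y)\oplus\langle Z(y)\rangle$ which, by continuity of the splitting and smallness of $U$, together span $T(SM)$; this is exactly the local product argument already used in Lemma \ref{hco}. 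Hence they meet transversally at a point close to $y$, giving $W^{0u}(x)\pitchfork W^{0s}(y)\neq\emptyset$. Swapping the roles of $x$ and $y$ and applying the symmetric argument yields $W^{0u}(y)\pitchfork W^{0s}(x)\neq\emptyset$, which is the required homoclinic relation.

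The step I expect to be the main obstacle is rigorously promoting Lemma \ref{foliation2} from the periodic case to $\mu_i$-typical $\R_1$-points, since the original proof used compactness of the periodic orbit to exploit the $\varepsilon$-tube $W^{0s}_{\varepsilon}(\mathcal{O})$. If that extension proves awkward, the fallback is to avoid the global identification altogether and argue purely with local horospherical pieces: the Pesin local stable/unstable disks at $\R_1$-points are open subsets of $\F^{s/u}$-leaves by the Freire--Ma\~n\'e identification, the saturation of $W^{0u}(x)$ by the flow is contained in $\F^{0u}(x)$, and density of horospherical leaves plus the transversality on $U\subset\R_1$ already produces the desired transverse heteroclinic points without needing leafwise equality in full.
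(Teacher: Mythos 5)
There is a genuine gap at precisely the step you flag as the main obstacle: establishing $W^{0u}(x)=\F^{0u}(x)$, or even just the containment $\F^{0u}(x)\subset W^{0u}(x)$, for a $\mu_i$-typical \emph{non-periodic} $x$. The open-and-closed argument of Lemma \ref{foliation2} uses the compact periodic orbit essentially: closedness comes from the estimate $d(\phi^t z,\O)\le C\,d(z,z_k)+d(\phi^t z_k,\O)$ together with the uniform tube $W^{0s}_{\e}(\O)$, and neither has an analogue along a non-periodic orbit, where Pesin local manifolds have only measurable, non-uniform size. Without that containment your density argument does not close: minimality of $\F^{u}$ places a point $w\in\F^{0u}(x)$ in the neighbourhood $U$ of $y$, but homoclinic relatedness requires a transverse intersection of the \emph{Pesin} manifolds $W^{0u}(x)$ and $W^{0s}(y)$, i.e.\ $w$ must be exponentially backward asymptotic to the orbit of $x$; membership in the horospherical leaf only gives bounded, not exponentially decaying, distance. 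Your fallback suffers from the same defect: it still needs points of $W^{0u}(x)$, not merely of $\F^{0u}(x)$, accumulating near $y$.

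The paper's proof avoids this entirely by routing through the hyperbolic periodic orbit $\O$ guaranteed by hypothesis. For $\O$, Lemma \ref{foliation2} \emph{does} give $W^{0s/0u}(\O)=\F^{0s/0u}(\O)$, hence these sets are dense by minimality. One then only needs the local Pesin manifolds of the typical points $y_1,y_2$: on the open set $\R_1$ these are tangent to $G^{s}\oplus\langle Z\rangle$ and $G^{u}\oplus\langle Z\rangle$ (Theorem \ref{regular} plus Freire--Ma\~n\'e), so the dense leaves $W^{0u}(\O)$ and $W^{0s}(\O)$ cut $W^{0s}_{\text{loc}}(y_1)$ and $W^{0u}_{\text{loc}}(y_2)$ transversally --- exactly the local transversality you invoke, which is correct. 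The inclination lemma at $\O$ then accumulates $W^{0s}(y_1)$ on $W^{0s}(\O)$ and $W^{0u}(y_2)$ on $W^{0u}(\O)$ in the $C^1$ sense near $x\in\O$, where these cross transversally, yielding $W^{0s}(y_1)\pitchfork W^{0u}(y_2)$ and, by symmetry, the other required intersection. The missing ingredients in your argument are therefore the periodic orbit as intermediary and the inclination lemma, which together replace the unproved global leafwise identity at typical points.
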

\begin{proof}[Proof of Lemma \ref{homoclinic}]
Let $\O$ be a hyperbolic periodic orbit of $\phi^t$ and $x\in \O$. Let $y_i\in \R_1$ be a generic point for $\mu_i, i=1,2$ respectively. As $W^{0s/0u}(\O) = \F^{0s/0u}(\O)$ are dense in $SM$, we have
$W^{0s}_{\text{loc}}(y_1)\pitchfork W^{0u}(x)$ and $W^{0u}_{\text{loc}}(y_2)\pitchfork W^{0s}(x)$. By the inclination lemma, we conclude that $W^{0s}(y_1)\pitchfork W^{0u}(y_2)$ near $x$. Interchanging the roles of $y_1, y_2$, we also have $W^{0u}(y_1)\pitchfork W^{0s}(y_2)$. This proves the lemma.
\end{proof}

Applying Theorem \ref{coding} to $\mu_1$ and $\mu_2$, we get an irreducible TMF $(\Sigma_r, \sigma_r)$ and a H\"{o}lder continuous map $\pi_r: \Sigma_r\to SM$ such that $\mu_i(\pi_r[\Sigma_r^\#])=1, i=1,2$. Therefore, $\mu_1$ and $\mu_2$ lift to ergodic measures  $\hat \mu_1$ and $\hat \mu_2$ on $\Sigma_r$. These measures are equilibrium states of the potential $\hat{\psi}=\psi\circ \pi_r$. Following exactly the same lines as \cite[Page 8]{LP}, we can show that $\hat{\psi}$ is H\"{o}lder continuous.

The measures  $\hat \mu_1$ and $\hat \mu_2$ project to ergodic $\sigma$-invariant probability measures  $\hat \nu_1$ and $\hat \nu_2$ on the irreducible component $\Sigma$ which are equilibrium states of the H\"{o}lder continuous potential $\hat{\psi}_r-P(\psi)r$ where $\hat{\psi}_r=\int_0^{r(\lv)}\hat \psi (\lv,t)dt$. By \cite[Theorem 1.1]{BuS}, we have $\hat \nu_1=\hat \nu_2$, and so $\mu_1=\mu_2$.

To prove that the unique equilibrium state $\mu$ has full support, note that $\hat \nu$ has full support in $\Sigma$ and then $\hat \mu$  has full support in $\Sigma_r$. Thus $\mu$ has full support in $\overline{\pi_r(\Sigma_r)}$. Let $\O$ be a hyperbolic periodic orbit homoclinically related to $\mu$. Then $\pi_r(\Sigma_r)$ is dense in $\text{HC}(\O)$. Thus by Lemma \ref{hco}, 
$$\text{supp}(\mu)=\overline{\pi_r(\Sigma_r)}=\text{HC}(\O)=SM.$$
By \cite[Theorem 1.2]{ALP}, $\mu$ is Bernoulli, which finishes the proof of the theorem.
\end{proof}

\section{Counting closed geodesics}
Let $M=X/\C$ be a closed $C^\infty$ uniform visibility manifold without conjugate points and with continuous Green bundles, and suppose that the geodesic flow has a hyperbolic periodic point. We apply the mixing properties of the unique MME of the geodesic flow to prove Theorem \ref{margulis}. Another essential tool is a type of closing lemma, for which we need to study the dynamics of $\C$ on $\pX$. 

We do not assume continuous asymptote in Subsections $7.1$ and $7.2$.

\subsection{Periodic orbits and deck transformations}
Given any isometry $\c: X\to X$, the displacement function $d_\c: X\to X$ is defined by $d_\c(x):=d(x, \c x)$. A geodesic $c: \RR\to X$ is called an \textit{axis} of $\c\in \Gamma$, if there is a constant $L>0$ such that $\c c(t)=c(t+L)$ for all $t\in \RR$. For $\c\in \C$ we define $Ax(\c)$ to be the set of all points which are contained in an axis of $\c$. 

\begin{lemma} (see \cite[Lemma 2.1]{CS})
Let $M = X/\C$ be a compact manifold without conjuagate points and $\c\in \C$ be a nontrivial element. Then we have:
\begin{enumerate}
    \item $d_\c$ assumes a positive minimum, $|\c|:=\min d_\c$. 
    \item The set $Ax(\c)$ is equal to the set of critical points of $d_\c$. Furthermore $Ax(\c)$ is the set where $d_\c$ assumes the minimum. 
    \item For $m\in \NN$ we have $|\c^m|=m|\c|$.   
\end{enumerate}
\end{lemma}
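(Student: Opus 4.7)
The plan is to treat the three parts in order, using the interpretation of $d_\c(x)$ as the length of the geodesic loop at $\pi(x)\in M$ in the free homotopy class of $\c\in\C$, together with the first variation of distance. For (1), pick a minimizing sequence $x_n\in X$ for $d_\c$ and project to $M$; by compactness a subsequence satisfies $\pi(x_n)\to\bar x$, and choosing lifts $x_n'=g_n x_n\to x'\in X$ we obtain $d_{g_n\c g_n^{-1}}(x_n')=d_\c(x_n)\to\inf d_\c$. Since $\{x_n'\}$ is bounded and $\C$ acts properly discontinuously, only finitely many elements of $\C$ can send a point near $x'$ to a point near $x'$; hence, after passing to a subsequence, $g_n\c g_n^{-1}=\c'$ is constant for some $\c'=g\c g^{-1}$, so $d_{\c'}(x')=\inf d_\c$. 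Conjugation preserves the displacement infimum, so $g^{-1}x'$ realizes $\inf d_\c$; positivity follows from the freeness of the $\C$-action.

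For (2), the first variation formula applied to $x_s:=\exp_x(sw)$ and the unique minimizing geodesic $c_s$ from $x_s$ to $\c x_s$ (uniqueness from no conjugate points) yields
\[\left.\tfrac{d}{ds}\right|_{s=0}d(x_s,\c x_s)=\langle d\c\cdot w,\dot c_0(L)\rangle-\langle w,\dot c_0(0)\rangle,\qquad L:=d(x,\c x).\]
If $x\in Ax(\c)$ lies on an axis $c$ with $\c c(t)=c(t+L)$, then $d\c\cdot\dot c_0(0)=\dot c_0(L)$ and the derivative vanishes for every $w$. Conversely, vanishing for every $w$ forces $d\c\cdot\dot c_0(0)=\dot c_0(L)$ (using that $d\c$ is orthogonal); this means the $\c$-image of $c_0$ is the geodesic continuation of $c_0$ past $\c x$, so iteration of $\c$ extends $c_0$ to a complete $\c$-invariant geodesic, and $x\in Ax(\c)$. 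Hence $Ax(\c)$ equals the critical set of $d_\c$, and minima are automatically critical points.

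The remaining inclusion $Ax(\c)\subseteq\{d_\c=|\c|\}$ is handled through the asymptotic translation length
\[|\c|_\infty:=\lim_{n\to\infty}\tfrac{1}{n}d(x,\c^n x),\]
whose existence is guaranteed by Fekete's lemma applied to the subadditive sequence $a_n=d(x,\c^n x)$ (subadditivity from the triangle inequality and $\c^m$ being an isometry), and which is independent of $x$ since $|a_n(x)-a_n(y)|\le 2d(x,y)$. On any axis of translation length $L$ one has $a_n=nL$ for every $n$, so $L=|\c|_\infty$; combining with (1) and the critical-point characterization, the axis through a minimizer has translation length $|\c|$, which forces $|\c|_\infty=|\c|$ and hence $L=|\c|$ on every axis. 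Part (3) follows at once: applying the same identity to $\c^m$ and reindexing,
\[|\c^m|=|\c^m|_\infty=\lim_{k\to\infty}\tfrac{1}{k}d(x,\c^{mk}x)=m|\c|_\infty=m|\c|.\]

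The main obstacle is precisely the inclusion $Ax(\c)\subseteq\{d_\c=|\c|\}$: in nonpositive curvature this comes from convexity of $d_\c$ along geodesics, a tool unavailable under the weaker no-conjugate-points hypothesis. The asymptotic translation length circumvents it by packaging every axis into a single intrinsic invariant of $\c$. A subtler but essential point is the uniqueness of the connecting geodesic $c_0$ underlying the variation formula, which is exactly where the absence of conjugate points enters the argument.
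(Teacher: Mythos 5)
Your proof is correct; the paper gives no argument of its own here but simply cites Croke--Schroeder [CS, Lemma 2.1], and your argument --- the compactness/proper-discontinuity argument for (1), the first variation formula to identify critical points of $d_\gamma$ with axis points, and the asymptotic translation length $\lim_{n\to\infty}d(x,\gamma^n x)/n$ as the substitute for the convexity of $d_\gamma$ that is unavailable without nonpositive curvature --- is essentially the proof in that reference. The only step worth making explicit is that part (3) requires $\gamma^m\neq e$, which holds because $\Gamma$ is torsion-free (it acts freely on the contractible space $X$).
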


\begin{lemma}\label{axis2}
If $\c\in \C$ is an axial isometry for both $c_0$ and $c_1$, then $c_0(\pm\infty)=c_1(\pm \infty)$ and the corresponding closed geodesics on $M$ lie in the same free homotopy class.
\end{lemma}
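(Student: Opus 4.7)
The plan is to proceed in two steps: first show that $c_0$ and $c_1$ are bi-asymptotic, and then deduce that the corresponding closed geodesics on $M$ represent the same conjugacy class in $\pi_1(M)$.

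First, I would observe that by the previous lemma, $d_\c$ attains its positive minimum $|\c|$ precisely on $Ax(\c)$. Since both $c_0$ and $c_1$ are axes, there exist $L_0, L_1 > 0$ with $\c c_i(t) = c_i(t+L_i)$, and setting $t=0$ gives $d_\c(c_i(0)) = L_i$. Since both points lie in $Ax(\c)$, both displacements equal $|\c|$, so $L_0 = L_1 =: L$. Writing $x_i = c_i(0)$, iteration gives $\c^n x_i = c_i(nL)$, and since $\c^n$ is an isometry we obtain
\begin{equation*}
d(c_0(nL),\, c_1(nL)) \;=\; d(\c^n x_0,\, \c^n x_1) \;=\; d(x_0, x_1) \;=:\; C
\end{equation*}
for every $n \in \ZZ$.

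Next I would invoke quasi-convexity of $X$ from Proposition \ref{geo}(4): there exist uniform constants $A,B>0$ such that for every $n \in \NN$,
\begin{equation*}
d_H\bigl(c_0|_{[-nL,nL]},\, c_1|_{[-nL,nL]}\bigr) \;\le\; A\max\{d(c_0(-nL), c_1(-nL)),\, d(c_0(nL), c_1(nL))\} + B \;\le\; AC+B.
\end{equation*}
Letting $n\to\infty$ gives $d_H(c_0, c_1) \le AC+B$, so $c_0$ and $c_1$ are bi-asymptotic. In particular $c_0(\pm\infty) = c_1(\pm\infty)$.

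Finally, for the free-homotopy statement, I would use that each axis of $\c$ projects to a closed geodesic on $M$ of length $|\c|$ whose induced loop (based at any point of the image) represents the conjugacy class of $\c$ in $\C \simeq \pi_1(M)$; indeed, the lift of $\underline{c_i}|_{[0,L]}$ starting at $x_i$ terminates at $\c x_i$. Since free-homotopy classes of closed loops on $M$ correspond bijectively to conjugacy classes in $\pi_1(M)$, both closed geodesics $\underline{c_0}$ and $\underline{c_1}$ lie in the same free-homotopy class. The one mild subtlety I would flag is step two — one must apply quasi-convexity on symmetric intervals where the endpoint distances are controlled by $C$, rather than at arbitrary times — but this is routine once the integer times $nL$ are identified as the correct anchors.
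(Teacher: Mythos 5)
Your proof is correct. The first half is essentially the paper's argument: the paper sets $C:=\max_{t\in[0,|\c|]}d(c_0(t),c_1(t))$ and uses $\c^n$-equivariance to bound $d(c_0(t),c_1(t))\le C$ for all $t$ at once, whereas you bound the distance only at the anchor times $nL$ and then invoke quasi-convexity to control the segments in between. The quasi-convexity detour is valid but unnecessary — once you know $d(c_0(nL),c_1(nL))=C$ for all $n$, the triangle inequality gives $d(c_0(t),c_1(t))\le C+2L$ for every $t$, or you can simply take the maximum over one fundamental period as the paper does. (You do make the needed preliminary observation, implicit in the paper, that both axes have the same translation length $|\c|$ via the preceding lemma on $d_\c$.) For the free-homotopy statement you take a genuinely different route: you appeal to the standard bijection between free homotopy classes of loops on $M$ and conjugacy classes in $\pi_1(M)\simeq\C$, noting that each projected closed geodesic represents the class of $\c$. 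The paper instead builds the homotopy by hand: choose a path $s\mapsto c_s(0)$ from $c_0(0)$ to $c_1(0)$, set $c_s(|\c|):=\c c_s(0)$, fill in the square $[0,1]\times[0,|\c|]$ using simple connectedness of $X$, and extend $\c$-equivariantly. Your version is shorter and relies on a standard algebraic-topology fact; the paper's is self-contained and exhibits the homotopy explicitly. Both are complete.
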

\begin{proof}
Let $C:=\max_{t\in  [0,|\c|]}d(c_0(t),c_1(t))$. For any $t_0\in [0,|\c|]$ and $n\in \ZZ$,
\begin{equation*}
    \begin{aligned}
&d(c_0(t_0+n|\c|), c_1(t_0+n|\c|))\\
=&d(\c^nc_0(t_0), \c^nc_1(t_0))=d(c_0(t_0), c_1(t_0))\le C.
 \end{aligned}
\end{equation*}
It follows that $c_0(\pm\infty)=c_1(\pm \infty)$, i.e, $c_0$ and $c_1$ are biasymptotic. 

Let us prove that the projections of $c_0$ and $c_1$ on $M$ lie in the same free homotopy class. Let $s\mapsto c_s(0)$ be any path from $c_0(0)$ to $c_1(0)$, and define $c_s(|\c|) := \c c_s(0)$. This defines $c_s(t)$ as a continuous function of $(s, t)$ on the boundary of $[0, 1]\times [0, |\c|]$. Since X is simply connected this extends to a continuous map on all of $[0, 1]\times [0, |\c|]$, and then to $[0, 1]\times \RR$ by defining $c_s(t\pm |\c|) := \c^{\pm 1}c_s(t)$, this gives the desired homotopy. 
\end{proof}

According to Lemma \ref{axis2}, for $\c\in \C$, denote $\xi_\c^{\pm}:=c(\pm\infty)$ where $c$ is an axis of $\c$.

\begin{lemma}\label{fix}
Suppose that $c$ is a geodesic on $X$ such that $\lc = \pr \circ c$ is closed. If $\c\in \C$ fixes $c(-\infty)$ and $c(\infty)$, then either $c(\pm \infty)=\xi_\c^{\pm}$ or $c(\pm \infty)=\xi_\c^{\mp}$.
\end{lemma}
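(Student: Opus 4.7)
The plan is to show that $\gamma$ is axial with an axis bi-asymptotic to $c$, so that its endpoints at infinity coincide (up to orientation) with $c(\pm\infty)$. We may assume $\gamma\neq e$. By the lemma on displacement functions cited just before Lemma~\ref{axis2}, $\gamma$ is axial, with some axis $c_\gamma$ and endpoints $\xi_\gamma^\pm=c_\gamma(\pm\infty)$. Thus it suffices to prove $\{\xi_\gamma^+,\xi_\gamma^-\}=\{c(+\infty),c(-\infty)\}$ as unordered pairs.

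Since $\lc=\pr\circ c$ is closed, fix $\alpha\in\Gamma$ with $\alpha c(t)=c(t+L)$. For each $n\in\ZZ$, the isometry $\gamma^n$ fixes $c(\pm\infty)$, so the geodesic $\gamma^nc$ has endpoints $c(\pm\infty)$ and therefore lies in the generalized strip $\F$ of all geodesics in $X$ bi-asymptotic to $c$. By Proposition~\ref{geo}(5) every element of $\F$ is at Hausdorff distance at most $Q$ from $c$, so $\F\subset N_Q(c)$; combined with the Busemann slab $\{q:b_{c(+\infty)}(q,c(0))\in[0,L)\}$, this exhibits $\F/\langle\alpha\rangle$ as a bounded, hence compact, subset of $X/\langle\alpha\rangle$. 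Setting
\[
\Gamma_\F:=\{\beta\in\Gamma:\beta(\F)=\F\}=\{\beta\in\Gamma:\beta\{c(+\infty),c(-\infty)\}=\{c(+\infty),c(-\infty)\}\},
\]
the group $\Gamma_\F$ acts on $\F$ properly discontinuously (inherited from $\Gamma\curvearrowright X$), so the induced action of $\Gamma_\F/\langle\alpha\rangle$ on the compact space $\F/\langle\alpha\rangle$ is properly discontinuous, forcing $\Gamma_\F/\langle\alpha\rangle$ to be finite.

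Because $\gamma\in\Gamma_\F$, the cosets $\gamma^n\langle\alpha\rangle$ exhaust only finitely many values, so pigeonhole yields $k>0$ with $\gamma^k\in\langle\alpha\rangle$. Then $\gamma^k$ preserves $c$ setwise and fixes each of $c(\pm\infty)$; as an orientation-preserving isometry of the geodesic line $c$ it must act as a translation $\gamma^kc(t)=c(t+s)$. Since $\Gamma$ acts freely on $X$, $\gamma^k\neq e$ forces $s\neq 0$, so $c$ (oriented according to the sign of $s$) is an axis of $\gamma^k$, and consequently $\{\xi_{\gamma^k}^+,\xi_{\gamma^k}^-\}=\{c(+\infty),c(-\infty)\}$. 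Since $\gamma^k$ and $\gamma$ share the axis $c_\gamma$, we conclude that $\{\xi_\gamma^+,\xi_\gamma^-\}=\{\xi_{\gamma^k}^+,\xi_{\gamma^k}^-\}=\{c(+\infty),c(-\infty)\}$; matching attracting/repelling labels via the sign of $s$ yields either $c(\pm\infty)=\xi_\gamma^\pm$ or $c(\pm\infty)=\xi_\gamma^\mp$, as claimed.

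The main obstacle is the finiteness of $\Gamma_\F/\langle\alpha\rangle$ that the pigeonhole step relies upon. This in turn rests on the compactness of $\F/\langle\alpha\rangle$, i.e., on the uniform bound for the transverse width of the parallel class of $c$ (via Proposition~\ref{geo}(5) and Lemma~\ref{compact}); without such a width bound, the iterates $\gamma^nc$ could pass through infinitely many distinct bi-asymptotic geodesics and the pigeonhole argument would collapse.
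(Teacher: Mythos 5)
Your argument is correct, but it takes a genuinely different route from the paper's. The paper's proof never touches the hypothesis that $\lc$ is closed: it simply notes that each $\c^n c$ has the same endpoints as $c$, hence lies within the Morse constant $Q$ of $c$; since $d(\c^n c(0),\c^n c_\c(0))=d(c(0),c_\c(0))$ is constant, the points $\c^n c_\c(0)=c_\c(n|\c|)$ stay in a bounded neighborhood of $c$ while converging (in the cone topology) to $\xi_\c^{\pm}$ as $n\to\pm\infty$, which forces $\xi_\c^{\pm}\in\{c(+\infty),c(-\infty)\}$. Your proof instead exploits the closedness of $\lc$ through the translating isometry $\alpha$, runs a proper-discontinuity/pigeonhole argument to get $[\Gamma_{\F}:\langle\alpha\rangle]<\infty$ and hence $\c^k\in\langle\alpha\rangle$, and concludes via Lemma~\ref{axis2}. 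This is closer in spirit to the paper's proof of Lemma~\ref{isometries} than to its proof of Lemma~\ref{fix}; it is longer, but it buys the stronger conclusion that a power of $\c$ actually translates $c$ itself (so $c$ is an axis of $\c^k$), not merely that the endpoints agree. Two presentational points you should tighten: (i) you invoke "the induced action of $\Gamma_\F/\langle\alpha\rangle$'' on $\F/\langle\alpha\rangle$, which presupposes $\langle\alpha\rangle$ normal in $\Gamma_\F$ -- what you actually need and can prove directly is only the finite-index statement, via the standard fact that if a group acts properly discontinuously and a subgroup acts cocompactly on an invariant set, the subgroup has finite index (take a compact $K$ with $\langle\alpha\rangle K\supset\F$ and note $\{f:fK\cap K\neq\emptyset\}$ is finite); (ii) rather than arguing that $\F$ itself is closed, it is cleaner to work with the compact set $\overline{N_{Q}(c([0,L]))}$, which already serves as the needed fundamental-domain-like set by Proposition~\ref{geo}(5).
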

\begin{proof}
Note that $\c c$ is a geodesic such that $(\c c)(\pm \infty)=c(\pm \infty)$. In fact, $(\c^n c)(\pm \infty)=c(\pm \infty)$ for any $n\in \ZZ$. Thus $\c^n c$ lies in a generalized strip (possibly just a single geodesic) connecting $c(\pm \infty)$. Let $c_\c$ be an axis of $\c$. Then
\begin{equation*}
    \begin{aligned}
d(\c^n c(0), \c^nc_\c(0))=d(c(0), c_\c(0)).
 \end{aligned}
\end{equation*}
So $\c^nc_\c(0)$ lies in a $d(c(0), c_\c(0))$-neighborhood of the generalized strip. It is clear that $\c^nc_\c(0)$ converges to $\xi_\c^{\pm}$ according to $n\to \pm \infty$. We get $c(\pm \infty)=\xi_\c^{\pm}$ or $c(\pm \infty)=\xi_\c^{\mp}$.
\end{proof}

\begin{lemma}\label{isometries}
Given any geodesic $c$ on $X$ such that $\lc = \pr \circ c$ is closed, let $\c_1, \c_2\in \C$ be two axial isometries fixing $c(+\infty)$. Then there exists integers $m,n$ such that $\c_1^m=\c_2^n$.
\end{lemma}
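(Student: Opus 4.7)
My plan is to prove the lemma by showing that the stabilizer $\C_\xi := \{\c \in \C : \c\xi = \xi\}$ of $\xi := c(+\infty)$ in $\C$ is infinite cyclic; once this is known, $\c_1, \c_2 \in \C_\xi$ are both powers of a common generator and the conclusion is immediate.

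The main tool is the Busemann cocycle. For $\c \in \C_\xi$, set $\tau(\c) := b_\xi(\c x, x)$ for any $x \in X$. Since $\c$ fixes $\xi$ we have the basic invariance $b_\xi(\c x, \c y) = b_\xi(x, y)$, and combining this with the cocycle identity $b_\xi(x,z)=b_\xi(x,y)+b_\xi(y,z)$ shows both that $\tau(\c)$ is independent of $x$ and that $\tau : \C_\xi \to \RR$ is a group homomorphism. By the first lemma of Section 7 (from \cite{CS}), every nontrivial $\c \in \C$ is axial, with axis $c_\c$ and translation length $|\c| > 0$. In the uniform visibility setting, axial isometries exhibit north--south dynamics on $\pX$: since $\c^n p$ stays within distance $d(p, c_\c(0))$ of $c_\c(n|\c|)$, it converges in the cone topology to $\xi_\c^+$, which forces the fixed points of $\c$ on $\pX$ to be exactly $\xi_\c^{\pm}$. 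Applied to $\c \in \C_\xi \setminus \{\id\}$, this gives $\xi \in \{\xi_\c^+, \xi_\c^-\}$, and evaluating at $x = c_\c(0)$ yields
\[
\tau(\c) \;=\; b_\xi\bigl(c_\c(|\c|), c_\c(0)\bigr) \;=\; \pm\,|\c|,
\]
with sign depending on whether $\xi$ is the forward or backward endpoint of $c_\c$.

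Since any closed geodesic on the compact manifold $M$ has length at least $2\inj(M) > 0$, it follows that $|\tau(\c)| = |\c| \ge 2\inj(M)$ for every nontrivial $\c \in \C_\xi$. Hence $\tau$ has trivial kernel and its image is a nontrivial discrete subgroup of $\RR$, necessarily infinite cyclic; consequently $\C_\xi$ itself is infinite cyclic. If $\c_0'$ is a generator and $\c_i = (\c_0')^{k_i}$ with $k_i \neq 0$, then $\c_1^{k_2} = (\c_0')^{k_1 k_2} = \c_2^{k_1}$, giving the required identity with $m = k_2$, $n = k_1$.

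The main geometric obstacle is justifying the north--south dynamics step, i.e.\ that an axial isometry of a uniform visibility manifold has at most two fixed points on $\pX$. This is classical in the hyperbolic and nonpositively curved settings, but in the present generality it requires a cone-topology verification, in the spirit of the argument used in the proof of Lemma \ref{flat closed geodesic0}. The hypothesis that $\lc$ is closed is not explicitly used in the argument above, but it does guarantee that $\C_\xi$ contains the axial element whose axis is $c$, so the cyclic structure is nontrivial and the conclusion is meaningful.
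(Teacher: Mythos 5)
Your argument is correct in outline but takes a genuinely different route from the paper's, and it contains one step that you flag without actually supplying. The paper's proof is metric: it projects $c_1(na)$ onto $c_2$, chooses $m(n)$ with $|t_n-m(n)b|<b$, bounds $d\bigl(c_1(0),\c_2^{-m(n)}\c_1^{n}c_1(0)\bigr)$ uniformly in $n$ using the asymptotic-distance estimate in Proposition \ref{horofoliation}, and then invokes proper discontinuity of the $\C$-action to force a coincidence $\c_2^{-m(n_1)}\c_1^{n_1}=\c_2^{-m(n_2)}\c_1^{n_2}$. Your route instead builds the Busemann cocycle $\tau:\C_\xi\to\RR$ and deduces that the whole stabilizer is infinite cyclic. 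The cocycle identity you need for $\tau$ to be well defined and a homomorphism does hold here: pass to the limit in $b_x(q,p)=b_x(q,r)+b_x(r,p)$ using Lemma \ref{continuity1}. Your approach buys a cleaner, more algebraic argument and a stronger conclusion; the paper's buys independence from any boundary-dynamics input beyond Lemma \ref{fix}.

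The gap is the north--south dynamics step, and your proposed justification does not prove it: the fact that $\c^n p$ stays within bounded distance of $c_\c(n|\c|)$ and hence converges to $\xi_\c^{+}$ concerns orbits of interior points only, and says nothing about fixed points of $\c$ on $\pX$, since a boundary fixed point $\eta$ is not approximated by any orbit $\c^n p$. What you actually need is Proposition \ref{north} of the paper (proved in the following subsection from the $\pi$-convergence Lemma \ref{piconvergence}, with no circularity): if $\eta\in\pX$ is fixed by $\c$ and distinct from $\xi_\c^{\pm}$, choose neighborhoods $U,V$ of $\xi_\c^{-},\xi_\c^{+}$ avoiding $\eta$; then $\eta=\c^n\eta\in V$ for large $n$, a contradiction. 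With that citation your proof closes. Two further cautions. First, your conclusion that $\C_\xi$ is infinite cyclic is strictly stronger than the lemma, and Remark \ref{cyclic} of the paper states that the authors could not establish the analogous cyclicity even for the stabilizer of the pair of endpoints; as far as I can see your argument is sound once Proposition \ref{north} is in place (the lower bound $|\c|\ge 2\inj(M)$ and the discreteness of the image are fine), but this tension with the authors' own assessment means the north--south step must be stated and referenced precisely rather than waved at. Second, note that both proofs need the preliminary identification of $c(+\infty)$ as an axis endpoint of each $\c_i$; the paper gets this from Lemma \ref{fix}, which as stated requires fixing both endpoints, whereas your boundary-dynamics route handles an element fixing only $c(+\infty)$, which is exactly what lets you treat all of $\C_\xi$ at once.
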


\begin{proof}
Let $\c_1, \c_2$ be two axial isometries fixing $c(+\infty)$. Then by Lemma \ref{fix} and replacing
$\c_1$ and $\c_2$ by their inverses if necessary, we can assume that $\c_1$ and $\c_2$ have axes $c_1$ and $c_2$ respectively so that $c_1(+\infty) = c_2(+\infty)=c(+\infty)$. Assume that $\c_1$ translates
$c_1$ by $a > 0$ and $\c_2$ translates $c_2$ by $b > 0$. For each integer $n\ge 1$ let
$c_2(t_n)$ be the foot of $c_1(na)$ on $c_2$. Then there exists an integer $m =m(n)$ such that $|t_n-mb|< b$. Now by Proposition \ref{horofoliation},
\begin{equation*}
\begin{aligned}
    &d(c_1(0), \c_2^{-m}\c_1^nc_1(0)) = d(\c_2^mc_1(0), \c_1^nc_1(0)) \\
\le &d(\c_2^mc_1(0),\c_2^mc_2(0))+ d(c_2(mb),c_2(t_n)+ d(c_2(t_n),c_1(na))\\
\le &d(c_1(0),c_2(0))+b+Ad(c_1(0),c_2(0))+B.
\end{aligned}    
\end{equation*}
By proper discontinuity of the action of $\C$ on $X$, there exists $n_1,n_2$ such that $$\c_2^{-m(n_1)}\c_1^{n_1}=\c_2^{-m(n_2)}\c_1^{n_2}.$$
So there exists integers $m,n$ such that $\c_1^m=\c_2^n$.
\end{proof}
\begin{remark}\label{cyclic}
It seems that we can not conclude that the set of $\c\in \C$ fixing $c(-\infty)$ and $c(+\infty)$ is an infinite cyclic group as \cite[Lemma 2.12]{CKW2}. The axes of these axial isometries form a generalized strip by Lemmas \ref{fix} and \ref{striplemma}, which is not necessarily a flat strip as in the no focal points case. \cite[Lemma 2.12]{CKW2} makes use of the topological conjugacy between the $\C$-action on $\pX$ and the one associated to a metric of negative curvature, which is absent in our setting. That is why we assume continuous asymptote in Theorem \ref{margulis} in order to prove Lemma \ref{injective}. See also Remark \ref{axisissue}.
\end{remark}

\subsection{Neighborhood of an expansive vector}
Let $X$ be a simply connected $C^\infty$ uniform visibility manifold without conjugate points, and $\C\subset\text{Iso}(X)$ acts proper discontinuously on $X$. We study the dynamics of $\C$ on $\pX$, especially in neighborhoods of $v_0^{\pm}$ in $\overline{X}$ where $v_0\in \R_0$. We do not assume that $X$ admits a compact quotient $M=X/\C$, and so the results are also applicable to the Hopf-Tsuji-Sullivan dichotomy in the last section.
 
\begin{lemma}\label{expan}
Let $X$ be a simply connected uniform visibility manifold without conjugate points and $v\in SX$. If there is a constant $c>0$ such that for all $k \in \mathbb{Z}^+$, we can always find a pair of points in the cones:
$$p_{k}\in C(-v,\frac{1}{k}), ~~q_{k}\in C(v,\frac{1}{k}),$$ 
with $d(c_{v}(0),c_{p_{k},q_{k}})\geq c$, then $v\in \R_0^c$.
\end{lemma}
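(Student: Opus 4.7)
The plan is to extract from the connecting geodesics $c_{p_k, q_k}$ a subsequential limit geodesic bi-asymptotic to $c_v$ but with image staying at distance at least $c$ from $p := \pi(v)$, and then to invoke the generalized strip lemma (Lemma~\ref{striplemma}) to produce a vector in $\I(v) \setminus \{v\}$, thereby witnessing $v \in \R_0^c$.

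The first step is to bound $d(p, c_{p_k, q_k})$ from above. Since $p_k \in C(-v, 1/k)$ and $q_k \in C(v, 1/k)$, the angle subtended at $p$ by $c_{p_k, q_k}$ exceeds $\pi - 2/k$, hence exceeds $\pi/2$ once $k$ is large. The uniform visibility axiom (Definition~\ref{vis}), in its contrapositive form, then yields $d(p, c_{p_k, q_k}) \le L(\pi/2) =: L$. Let $x_k$ be any point on $c_{p_k, q_k}$ realizing this distance, so $d(p, x_k) \in [c, L]$, and let $v_k$ be the unit tangent to $c_{p_k, q_k}$ at $x_k$ oriented from $p_k$ toward $q_k$. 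By local compactness of $SX$, after extracting a subsequence, $v_k \to v_\infty \in SX$ with footpoint $q_\infty := \pi(v_\infty) = \lim_k x_k$ satisfying $d(p, q_\infty) \ge c$.

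The key step is to identify $c_{v_\infty}(\pm\infty) = v^\pm$. Passing to further subsequences, $p_k$ and $q_k$ converge in $\overline{X}$ to limits lying in $c_{-v}([0,\infty)) \cup \{v^-\}$ and $c_v([0,\infty)) \cup \{v^+\}$ respectively, since any subsequential limit of a sequence in a shrinking family of cones must lie on the corresponding ray from $p$ or at its ideal endpoint. If either limit were a finite point on $c_v$ (including $p$ itself), then the uniqueness of the geodesic joining two given points in a simply connected manifold without conjugate points would force $c_{p_k, q_k}$ to converge to a subsegment of $c_v$ passing through $p$, contradicting $d(p, c_{p_k, q_k}) \ge c$. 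Hence $p_k \to v^-$ and $q_k \to v^+$. Since $d(x_k, p) \le L$ while $d(p_k, p), d(q_k, p) \to \infty$, the parameters $t_k^\pm$ with $c_{v_k}(t_k^\pm) = p_k, q_k$ satisfy $t_k^\pm \to \pm\infty$, and the continuity of $\Psi : SX \times [-\infty, +\infty] \to \overline{X}$ from Proposition~\ref{geo}(2) gives $c_{v_\infty}(+\infty) = \lim_k \Psi(v_k, t_k^+) = \lim_k q_k = v^+$ and analogously $c_{v_\infty}(-\infty) = v^-$. This case analysis is the main obstacle, as it is precisely where the hypothesis $d(c_v(0), c_{p_k, q_k}) \ge c$ is used to rule out degenerate collapses of $c_{p_k, q_k}$ onto $c_v$.

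Finally, $c_v$ and $c_{v_\infty}$ are bi-asymptotic but distinct as subsets of $X$ (since $p \in c_v$ yet $d(p, c_{v_\infty}) \ge c$), so Lemma~\ref{striplemma} furnishes a connected set $\Sigma \subset I(v) = H(v) \cap H(-v)$ meeting $c_{v_\infty}$. The Gromov product identity $b_v(q) + b_{-v}(q) = 0$ along any geodesic bi-asymptotic to $c_v$ ensures that $c_{v_\infty} \cap I(v)$ is a single point, necessarily different from $p$ because $c_{v_\infty}$ avoids the $c$-ball around $p$. Calling this intersection point $q$ and setting $w := -\nabla_q b_v$ yields $w \in \F^s(v) \cap \F^u(v) = \I(v)$ with $\pi(w) = q \neq p$, so $w \neq v$, hence $\I(v) \neq \{v\}$ and $v \in \R_0^c$.
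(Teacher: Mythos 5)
Your argument is correct and follows the same route the paper intends (the paper's own proof is only a pointer to \cite[Proposition 5]{LWW} together with the remark that one shows $c_v$ bounds a generalized strip): uniform visibility forces the connecting geodesics into a bounded neighborhood of $\pi v$, the hypothesis $d(c_v(0),c_{p_k,q_k})\ge c$ rules out collapse onto $c_v$, and the limit geodesic is bi-asymptotic to $c_v$ yet distinct, so Lemma \ref{striplemma} produces a vector in $\I(v)\setminus\{v\}$. Two small points you should make explicit: the bound $d(p,c_{v_\infty}(t))\ge c$ holds for \emph{all} $t$ (by locally uniform convergence of $c_{v_k}$ on compact parameter intervals, since $c_{v_k}|_{[t_k^-,t_k^+]}=c_{p_k,q_k}$ avoids $B(p,c)$), which is what actually makes $c_{v_\infty}$ and $c_v$ distinct; and in the mixed case where one of $p_k,q_k$ converges to a finite point and the other to an ideal one, the collapse onto $c_v$ uses uniqueness of the geodesic ray from a point to a point at infinity (a consequence of uniform divergence of rays, Proposition \ref{geo}(1)), not uniqueness of segments.
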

\begin{proof}
The proof is analogous to that of \cite[Proposition 5]{LWW} which essentially uses the cone topology of $\overline{X}$. One can prove that $c_v$ is the boundary of a generalized strip and thus $v\in \R_0^c$.
\end{proof}

\begin{proposition}\label{regularneighbor}
Let $X$ be a simply connected uniform visibility manifold without conjugate points and $v\in \R_0\subset SX$. Then for any $\epsilon > 0$, there are neighborhoods $U_{\epsilon}$ of $c_{v}(-\infty)$ and $V_{\epsilon}$ of $c_{v}(+\infty)$ in $\pX$ such that for each pair $(\xi,\eta)\in U_{\epsilon}\times V_{\varepsilon}$, there exists a geodesic $c_{\xi,\eta}$ connecting $\xi$ and $\eta$, and for any such geodesic $c_{\xi,\eta}$ we have $d(c_{v}(0),c_{\xi,\eta})<\epsilon$.   
\end{proposition}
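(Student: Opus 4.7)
The plan is to deduce Proposition \ref{regularneighbor} directly from the expansivity criterion supplied by Lemma \ref{expan}. The existence of a connecting geodesic $c_{\xi,\eta}$ for any pair $(\xi,\eta)\in U_\varepsilon\times V_\varepsilon$ is automatic once $U_\varepsilon$ and $V_\varepsilon$ are chosen small enough to be disjoint neighborhoods of the distinct points $c_v(-\infty)$ and $c_v(+\infty)$: Proposition \ref{geo}(3) guarantees at least one connecting geodesic between any two distinct boundary points of a uniform visibility space. So the proposition reduces to establishing the distance bound $d(c_v(0),c_{\xi,\eta})<\varepsilon$.

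I would prove this estimate by contradiction. Suppose no such pair of neighborhoods exists. Then there is a fixed $\varepsilon_0>0$ and there are sequences $\xi_n\to c_v(-\infty)$ and $\eta_n\to c_v(+\infty)$ in $\overline X$ in the cone topology, together with a choice of connecting geodesic $c_n$ between $\xi_n$ and $\eta_n$, such that
\[
d(c_v(0),c_n)\ \ge\ \varepsilon_0\qquad \text{for every }n.
\]
Here the choice of $c_n$ is arbitrary when multiple connecting geodesics exist; what matters is that some such bad geodesic exists for each $n$.

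Next I would recast the convergence in the language of Lemma \ref{expan}. The truncated cones $C(-v,1/k)$ and $C(v,1/k)$ form local bases for the cone topology at $c_v(-\infty)$ and $c_v(+\infty)$ respectively, so by a standard diagonal extraction one obtains indices $n_k\to\infty$ with
\[
p_k:=\xi_{n_k}\in C(-v,1/k),\qquad q_k:=\eta_{n_k}\in C(v,1/k),
\]
and the connecting geodesic $c_{p_k,q_k}:=c_{n_k}$ still satisfies $d(c_v(0),c_{p_k,q_k})\ge \varepsilon_0$ for all $k$. This is exactly the hypothesis of Lemma \ref{expan} with constant $c=\varepsilon_0$, which forces $v\in\mathcal R_0^c$. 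That contradicts the standing assumption $v\in\mathcal R_0$, finishing the proof.

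I do not expect a serious obstacle: the hard geometric content, namely the construction of a generalized strip through $c_v$ out of cones pinching onto $v^\pm$ while the connecting geodesics stay uniformly far from $c_v(0)$, has already been packaged into Lemma \ref{expan}. The remaining bookkeeping is merely the translation from cone-topology convergence on $\overline X$ into the cone-basis language required by that lemma, and the observation that the (possibly non-unique) connecting geodesics $c_n$ can be chosen once and for all at the beginning of the contradiction argument.
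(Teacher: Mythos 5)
Your proof is correct and takes essentially the same route as the paper: the distance bound is obtained by contradiction, repackaging the bad sequences of endpoints into the cones $C(\pm v,1/k)$ and invoking Lemma \ref{expan} to conclude $v\in\R_0^c$, contradicting $v\in\R_0$. The only divergence is the existence of a connecting geodesic: you cite Proposition \ref{geo}(3) (stated in the paper for universal covers of \emph{closed} manifolds, though the underlying Eberlein--O'Neill fact holds for any uniform visibility manifold, so this is harmless), whereas the paper constructs $c_{\xi,\eta}$ self-containedly as a limit of geodesics $c_{p_i,q_i}$ with $p_i\to\xi$, $q_i\to\eta$ whose basepoints stay in the compact ball $B(c_v(0),c)$ furnished by the contrapositive of Lemma \ref{expan}.
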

\begin{proof}
Since $v\in \R_0$, by Lemma \ref{expan}, there exist $k_{0}\in \mathbb{Z}^+$ and a constant number $c > 0$ such that for any points $p\in C(-v,\frac{1}{k_{0}})$ and $q\in C(v,\frac{1}{k_{0}})$, we have $d(c_{v}(0),c_{p,q}) < c$.

Pick up points $p_{i}\in C(-v,\frac{1}{k_{0}})$ and $q_{i}\in C(v,\frac{1}{k_{0}})$ such that $p_{i}\rightarrow \xi$, $q_{i}\rightarrow \eta$. We choose the parametrization of the geodesic $c_{p_{i},q_{i}}$ such that $c_{p_{i},q_{i}}(0)\in B(c_{v}(0),c)$. By the compactness, passing to a subsequence if necessary, we can assume $w = \lim_{i\rightarrow +\infty}\dot c_{p_{i},q_{i}}(0)$. Then $c_{w}(-\infty)=\xi$ and $c_{w}(+\infty)=\eta$, and thus $c_{w}$ is the geodesic $c_{\xi,\eta}$ we want.

It follows that given any $\epsilon>0$, for sufficiently small neighborhoods $U_{\epsilon}$, $V_{\epsilon}$ and all $\xi\in U_{\epsilon}$, $\eta\in V_{\epsilon}$,
there is a geodesic $c_{\xi,\eta}$ connecting $\xi$ and $\eta$. Now we prove $d(c_{v}(0),c_{\xi,\eta})<\epsilon$. Suppose this is not true, then we can find a positive number $\epsilon_{0}$ and two sequences $\{\xi_{n}\}^{+\infty}_{n=1},\ \{\eta_{n}\}^{+\infty}_{n=1}\subset \pX$ such that for all $n\in\mathbb{Z}^+$,
\begin{enumerate}
\item{} ~~$\lim_{n\rightarrow +\infty}\xi_{n}=c_{v}(-\infty), ~~\lim_{n\rightarrow +\infty} \eta_{n}=c_{v}(+\infty)$,
\item{} ~~The connecting geodesic $c_{\xi_{n},\eta_{n}}$ satisfies $d(c_{v}(0),c_{\xi_{n},\eta_{n}}) \geq \epsilon_{0}$.
\end{enumerate}
Then by a similar argument of Lemma \ref{expan}, we can show that $v$ is the boundary of a generalized strip, which contradicts to our assumption that $v\in \R_0$.
\end{proof}

The following lemma is a consequence of uniform visibility.
\begin{lemma}(\cite[Lemma 4.9]{CKW2})\label{piconvergence}
Let $X$ be a simply connected uniform visibility manifold without conjugate points. Let $\textbf{Q}, \textbf{R} \subset \pX$ be disjoint compact sets. Fix a compact subset $A\subset X$ and consider for each $T>0$ the set
\[C^T_{\textbf{Q}}:=\{c_{p,\xi}(t):p\in A, \xi\in \textbf{Q}, t\ge T\},\]
and define $C^T_{\textbf{R}}$ similarly.

Then given any open sets $\textbf{U}, \textbf{V}\subset \pX$ such that $\textbf{U}\supset \textbf{Q}$ and $\textbf{V}\supset \textbf{R}$, there exists $T>0$ such that if $\c\in \C\subset \text{Iso}(X)$ satisfies $\c(C^T_{\textbf{Q}})\cap A\neq \emptyset$ and $\c(A)\cap C^T_{\textbf{R}}\neq \emptyset$, then 
\[\c(\pX\setminus \textbf{U})\subset \textbf{V }\text{\ and\ }\c^{-1}(\pX\setminus \textbf{V})\subset \textbf{U}.\]
\end{lemma}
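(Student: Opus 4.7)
The plan is to prove the first inclusion $\c(\pX\setminus U)\subset V$ by contradiction; the second inclusion $\c^{-1}(\pX\setminus V)\subset U$ will follow by applying the same argument to $\c^{-1}$, after observing that the hypotheses are symmetric under the swap $(\c,Q,R)\leftrightarrow(\c^{-1},R,Q)$. So suppose some fixed open neighborhoods $U\supset Q$, $V\supset R$ admit no such $T$. Then we can extract $T_n\to\infty$, isometries $\c_n\in\C$ satisfying the hypotheses at level $T_n$, and $\zeta_n\in\pX\setminus U$ with $\c_n(\zeta_n)\notin V$. Unpacking the hypotheses gives $p_n,q_n,r_n\in A$, $\xi_n\in Q$, $\eta_n\in R$, and $t_n,s_n\ge T_n$ with $a_n:=\c_n(c_{p_n,\xi_n}(t_n))\in A$ and $\c_n(q_n)=c_{r_n,\eta_n}(s_n)$. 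After passing to a subsequence we may assume $p_n\to p$, $q_n\to q$, $a_n\to a\in A$, $\xi_n\to\xi\in Q$, $\eta_n\to\eta\in R$, $\zeta_n\to\zeta\in\pX\setminus U$, and $\c_n(\zeta_n)\to\zeta'\in\pX\setminus V$. In particular $\zeta\neq\xi$ and $\zeta'\neq\eta$, since $Q\subset U$ and $R\subset V$.

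Two observations drive the contradiction. First, by Proposition~\ref{geo}(2) and $s_n\to\infty$, $\c_n(q_n)=c_{r_n,\eta_n}(s_n)\to\eta$ in cone topology; since $\c_n$ is an isometry, $d(\c_n(p_n),\c_n(q_n))=d(p_n,q_n)\le\diam A$, so $\c_n(p_n)\to\eta$ as well. Second, for any sequence $z_n$ in a bounded region of $X$, $\c_n^{-1}(z_n)\to\xi$ in cone topology: setting $x_n:=c_{p_n,\xi_n}(t_n)$, Proposition~\ref{geo}(2) gives $x_n\to\xi$, and then $\c_n^{-1}(a_n)=x_n$ together with $d(\c_n^{-1}(z_n),x_n)=d(z_n,a_n)$ bounded forces $\c_n^{-1}(z_n)\to\xi$.

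Now set $\hat\gamma_n(s):=\c_n(c_{p_n,\zeta_n}(s))$, a geodesic ray with $\hat\gamma_n(0)=\c_n(p_n)\to\eta$ and $\hat\gamma_n(+\infty)=\c_n(\zeta_n)\to\zeta'$. Fix any basepoint $o\in A$. Since $\angle_o(\c_n(p_n),\c_n(\zeta_n))\to\angle_o(\eta,\zeta')>0$, the uniform visibility property (Definition~\ref{vis}) produces $L>0$ and parameters $\tau_n\ge 0$ with $\hat\gamma_n(\tau_n)\in B(o,L)$; moreover $\tau_n\ge d(o,\hat\gamma_n(0))-L\to\infty$. On one hand, the second observation gives $c_{p_n,\zeta_n}(\tau_n)=\c_n^{-1}(\hat\gamma_n(\tau_n))\to\xi$ because $\hat\gamma_n(\tau_n)$ is bounded. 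On the other hand, pass to a further subsequence with $\dot c_{p_n,\zeta_n}(0)\to v^*\in S_pX$; the joint continuity of $\Psi:SX\times[-\infty,+\infty]\to\overline X$ in Proposition~\ref{geo}(2) gives $c_{v^*}(+\infty)=\zeta$, and therefore
\[
c_{p_n,\zeta_n}(\tau_n)=\Psi\bigl(\dot c_{p_n,\zeta_n}(0),\tau_n\bigr)\to\Psi(v^*,+\infty)=\zeta.
\]
Combining the two limits yields $\xi=\zeta$, contradicting $\xi\in Q\subset U$ and $\zeta\in\pX\setminus U$.

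The main obstacle is combining the two one-sided pieces of the hypothesis, namely ``points far toward $Q$ are pulled back into $A$'' and ``$A$ is pushed far toward $R$'', into a single constraint on the boundary dynamics of $\c_n$. The device above threads a single geodesic through both pictures: uniform visibility forces $\hat\gamma_n$ through a fixed compact ball, and joint continuity of $\Psi$ on the compactified domain lets us compare the limit of finite-parameter points under $\c_n^{-1}$ with the limit at infinity. In negative curvature this step is automatic via Gromov products or loxodromic fixed-point dynamics; in the visibility setting without curvature estimates, the statements collected in Proposition~\ref{geo} are exactly what is needed.
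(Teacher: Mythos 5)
The paper does not prove this lemma; it is quoted verbatim from \cite[Lemma 4.9]{CKW2}, so there is no in-paper argument to compare against and your proof has to stand on its own. It does. The contradiction/subsequence scheme is sound, and the two key limits are correctly justified: $c_{p_n,\zeta_n}(\tau_n)\to\xi$ via Observation~2 (boundedness of $\hat\gamma_n(\tau_n)$) and $c_{p_n,\zeta_n}(\tau_n)\to\zeta$ via joint continuity of $\Psi$ at $(v^*,+\infty)$, which forces $\xi=\zeta$ against $\xi\in\textbf{Q}\subset\textbf{U}$, $\zeta\notin\textbf{U}$. Two facts are used implicitly and deserve a sentence each if this were written out in full: (i) if $y_n\to\eta\in\pX$ in the cone topology and $d(y_n,y_n')$ is bounded, then $y_n'\to\eta$ (this is where uniform visibility enters, since the connecting segments of bounded length escape every ball and hence subtend vanishing angle at the basepoint); and (ii) continuity of $\angle_o$ on $(\overline{X}\setminus\{o\})^2$, which is what makes $\angle_o(\gamma_n(p_n),\gamma_n(\zeta_n))\to\angle_o(\eta,\zeta')>0$ legitimate and also what lets you pass from the ray $\hat\gamma_n$ to a long initial segment before invoking Definition~\ref{vis}. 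Both are standard for visibility manifolds. The reduction of the second inclusion to the first by the swap $(\gamma,\textbf{Q},\textbf{R},\textbf{U},\textbf{V})\leftrightarrow(\gamma^{-1},\textbf{R},\textbf{Q},\textbf{V},\textbf{U})$ is correct, provided one takes the maximum of the two thresholds $T$, and this is consistent in spirit with the angle-plus-visibility argument of \cite{CKW2} that the paper alludes to in the proof of Proposition~\ref{north}.
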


\begin{proposition}\label{north}
Let $X$ be a simply connected uniform visibility manifold without conjugate points and let $c$ be an axis of $h \in \Gamma \subset \text{Iso}(X)$. For all neighborhood $U \subset \overline X$ of $c(-\infty)$ and $V \subset \overline X$ of $c(+\infty)$,
there is $N \in \NN$ such that
$$h^{n}(\overline X\setminus U)\subset V, ~~h^{-n}(\overline X\setminus V)\subset U$$
for all $n \geq N$.    
\end{proposition}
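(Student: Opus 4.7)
The plan is to prove north-south dynamics for $h$ on $\overline X$ in two stages: first obtain the conclusion on the ideal boundary $\pX$ via Lemma \ref{piconvergence}, and then extend to all of $\overline X$ using a compactness argument that exploits the isometric translation of the axis.

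For Stage 1, set $\textbf{U} := U \cap \pX$ and $\textbf{V} := V \cap \pX$, pick compact sets $\textbf{Q}, \textbf{R}$ containing $c(-\infty), c(+\infty)$ in the $\pX$-interiors of $\textbf{U}, \textbf{V}$ respectively, and take a compact $A \subset X$ with $c(0) \in \text{int}(A)$. Let $L := |h| > 0$ be the translation length. Lemma \ref{piconvergence} produces $T > 0$ such that any $\c \in \text{Iso}(X)$ satisfying both $\c(C^T_\textbf{Q}) \cap A \neq \emptyset$ and $\c(A) \cap C^T_\textbf{R} \neq \emptyset$ sends $\pX \setminus \textbf{U}$ into $\textbf{V}$ and $\pX \setminus \textbf{V}$ into $\textbf{U}$ under its inverse. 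For $\c = h^n$ with $nL \geq T$, the point $c(-nL)$ lies on the ray from $c(0) \in A$ to $c(-\infty) \in \textbf{Q}$ at parameter $nL$, so $c(-nL) \in C^T_\textbf{Q}$ and $h^n c(-nL) = c(0) \in A$; likewise $h^n c(0) = c(nL) \in C^T_\textbf{R}$. Hence for all $n \geq \lceil T/L \rceil$, $h^n(\pX \setminus U) \subset V$ and $h^{-n}(\pX \setminus V) \subset U$.

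For Stage 2, suppose for contradiction there exist $n_k \to \infty$ and $x_k \in \overline X \setminus U$ with $h^{n_k}(x_k) \notin V$. By compactness of $\overline X$, pass to a subsequence so that $x_k \to x_\infty \in \overline X \setminus U$ and $h^{n_k}(x_k) \to y_\infty \in \overline X \setminus V$; I will derive a contradiction by showing $y_\infty = c(+\infty)$. If $x_\infty \in X$, then $d(x_k, c(0))$ is eventually bounded, so the isometric identity $d(h^{n_k}(x_k), c(n_k L)) = d(x_k, c(0))$ keeps $h^{n_k}(x_k)$ at bounded Riemannian distance from $c(n_k L)$, while $c(n_k L) \to c(+\infty)$; uniform visibility (Definition \ref{vis}) then forces $h^{n_k}(x_k) \to c(+\infty)$. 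If $x_\infty \in \pX$, then $x_\infty \neq c(-\infty)$; when infinitely many $x_k$ lie in $\pX$, Stage 1 directly yields $h^{n_k}(x_k) \to c(+\infty)$, while when $x_k \in X$ eventually with $d(x_k, c(0)) \to \infty$, the directions $v_k = \dot c_{c(0), x_k}(0) \in S_{c(0)}X$ converge (along a subsequence) to some $v_\infty$ with $c_{v_\infty}(+\infty) = x_\infty \neq c(-\infty)$ by Proposition \ref{geo}(2), so the boundary endpoints $y_k := c_{v_k}(+\infty)$ lie eventually in $\pX \setminus \textbf{U}'$ for a slightly shrunken open neighborhood $U' \subsetneq U$ of $c(-\infty)$; running Stage 1 with $U'$ in place of $U$ yields $h^{n_k}(y_k) \to c(+\infty)$, and Proposition \ref{geo}(2) combined with uniform visibility transports this convergence to $h^{n_k}(x_k) \to c(+\infty)$, since $h^{n_k}(x_k)$ lies on the geodesic from $h^{n_k}(c(0)) = c(n_k L) \to c(+\infty)$ to $h^{n_k}(y_k) \to c(+\infty)$.

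The main obstacle is this last subcase of Stage 2, in which $x_k \in X$ simultaneously escapes to infinity and approaches a boundary direction in $\pX \setminus \textbf{U}$: the bounded-distance isometric argument no longer applies, so one must carefully transport the $\pi$-convergence behavior from $\pX$ into a neighborhood of $\pX$ in $\overline X$ via uniform visibility, using the rays from $c(0)$ to encode interior points by their boundary endpoints. All remaining steps are routine bookkeeping with the cone topology and the axis of $h$.
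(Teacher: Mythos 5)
Your Stage 1 is exactly the paper's first step (apply Lemma \ref{piconvergence} to compact sets $\textbf{Q}\subset U\cap\pX$, $\textbf{R}\subset V\cap\pX$ and a compact neighborhood $A$ of $c(0)$), and your explicit verification that $h^n$ meets the hypotheses via $c(-nL)\in C^T_{\textbf{Q}}$ and $c(nL)\in C^T_{\textbf{R}}$ is correct. Where you genuinely diverge is Stage 2. The paper handles interior points by reopening the proof of \cite[Lemma 4.9]{CKW2} and observing that the angle condition $\angle_q(\c^{-1}x,\eta)>2\e$ used there for $\eta\in\pX$ holds verbatim for interior points $p_k$ once $d(c(0),p_k)$ is large, so that $c_{c(0),h^np_k}(+\infty)\in V$; you instead treat Lemma \ref{piconvergence} as a black box and reduce an escaping interior sequence $x_k$ to its boundary shadows $y_k=c_{v_k}(+\infty)$, then transport the conclusion back along the rays. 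Your route is cleaner in that it never touches the internals of the cited lemma, but it shifts all the work into the transport step, which you only assert: you need that if a geodesic ray has initial point $c(n_kL)\to c(+\infty)$ and ideal endpoint $h^{n_k}(y_k)\to c(+\infty)$, then \emph{every} point of the ray (in particular $h^{n_k}(x_k)$, whose distance from the initial point is unbounded) converges to $c(+\infty)$ in $\overline X$. This is true but is not an immediate consequence of Proposition \ref{geo}(2): you must first rule out that the rays meet a fixed compact set --- if $z_k$ on the $k$-th ray stayed bounded, the two antipodal directions at $z_k$ toward the two ends would both converge to the direction toward $c(+\infty)$ by continuity of the visual map, a contradiction --- and only then does uniform visibility bound the angle the whole ray subtends at $c(0)$ and force convergence in the cone topology. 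Your case (a) ($x_\infty\in X$, bounded-distance plus uniform visibility) and case ($x_k\in\pX$, immediate contradiction with Stage 1) are fine. So: correct strategy, different mechanism in the interior-point step, with one load-bearing lemma left as an assertion that should be written out.
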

\begin{proof}
We apply Lemma \ref{piconvergence} with compact sets $\textbf{Q}\subset U\cap \pX$ and $\textbf{R}\subset V\cap \pX$, and $A$ a compact neighborhood of $c(0)$. Then there exists $N \in \NN$ such that $h^{n}(C^T_{\textbf{Q}})\cap A\neq \emptyset$ and $h^{n}(A)\cap C^T_{\textbf{R}}\neq \emptyset$ for all $n \geq N$. Then 
\[h^{n}(\pX\setminus U)\subset V, ~~h^{-n}(\pX\setminus V)\subset U\]
for all $n \geq N$. 

It remains to consider points in $X\setminus U$. Assume the contrary, then there exists a sequence of points $p_k\in X\setminus U$ such that $h^kp_k\to \xi$ for some point $\xi\notin V$. This can happen only when $d(c(0), p_k)\to \infty$  since $h$ is an axial isometry with endpoint in $V$. Thus we can assume that $p_k\to \eta$ for some $\eta\in \pX\setminus U$. On the other hand, the proof of \cite[Lemma 4.9]{CKW2} uses the requirement $\angle_q(\c^{-1}x, \eta)>2\e$ with $\eta\in \pX$ there. We can check that the proof also works for $p_k\in X$ (instead of $\eta$) satisfying $\angle_q(\c^{-1}x, p_k)>2\e$. $\angle_q(\c^{-1}x, p_k)>2\e$ holds for any $k$ large enough. As a consequence of the proof of \cite[Lemma 4.9]{CKW2}, we have that there exists $N\in \NN$, such that $c_{c(0),h^np_k}(+\infty)\in V$ for any $n\ge N$ and any large enough $k$. This contradicts to that $h^kp_k\to \xi\notin V$. This proves the first inclusion in the proposition.

The second inclusion can be proved analogously, and the proof of the proposition is complete.
\end{proof}

\subsection{Flow boxes and a closing lemma}
From this subsection, we let $M=X/\C$ be a closed $C^\infty$ uniform visibility manifold without conjugate points and with continuous Green bundles, and suppose that the geodesic flow has a hyperbolic periodic point. Let $m$ denote the unique MME of the geodesic flow, which is also the Bowen-Margulis measure. 

We call $v$ and $c_v$ regular if $v\in \R_1$. Fix a regular vector $v_0\in \R_1\subset SX$.  Let $p:=\pi(v_0)$ be the reference point in the following discussions. 
We also fix a scale $\e\in (0, \min\{\frac{1}{8}, \frac{\inj (M)}{4}\})$. 

The \emph{Hopf map} $H: SX\to \pX\times \pX\times \RR$ for $p\in X$ is defined as
$$H(v):=(v^-, v^+, s(v)), \text{\ where\ }s(v):=b_{v^-}(\pi v, p).$$
From definition, we see
$s(\phi^t v)=s(v)+t$
for any $v\in SX$ and $t\in \RR$. $s$ is continuous by Corollary \ref{continuous}.

Following \cite{CKW2, Wu2}, we define for each $\theta>0$ and $0<\a<\frac{3}{2}\e$,
\begin{equation*}
\begin{aligned}
&\bP=\bP_\theta:=\{w^-: w\in S_pX \text{\ and\ }\angle_p(w, v_0)\le \theta\},\\
&\bF=\bF_\theta:=\{w^+: w\in S_pX \text{\ and\ }\angle_p(w, v_0)\le \theta\},\\
&B=B_\theta^\a:=H^{-1}(\bP\times \bF\times [0,\a]),\\
&S=S_\theta:=B_\theta^{\e^2}=H^{-1}(\bP\times \bF\times [0,\e^2]).
\end{aligned}
\end{equation*}
$B=B_\theta^\a$ is called a \emph{flow box} with depth $\a$. We will consider $\theta>0$ small enough, which will be specified in the following.

Recall that $v_0\in \R_1$ and $\R_1$ is open. The following is essentially a restatement of Proposition \ref{regularneighbor} in the new setting.
\begin{proposition}\label{crucial}
For any $\e>0$, there is an $\theta_1 > 0$ such that, for any $\xi \in \bP_{\theta_1}$ and $\eta \in \bF_{\theta_1}$,
there is a unique geodesic $c_{\xi,\eta}$ connecting $\xi$ and $\eta$, i.e.,
$c_{\xi,\eta}(-\infty)=\xi$ and $c_{\xi,\eta}(+\infty)=\eta$.

Moreover, the geodesic $c_{\xi,\eta}$ is regular and $d(c_{v}(0), c_{\xi,\eta})<\epsilon/10$.
\end{proposition}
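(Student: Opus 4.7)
The plan is to reduce the proposition to Proposition~\ref{regularneighbor}, which already supplies existence together with the distance bound, and then to exploit openness of $\R_1$ and the inclusion $\R_1\subseteq\R_0$ (guaranteed by the standing hypotheses) to upgrade to regularity and uniqueness. First I would record the inclusion: by Proposition~\ref{horofoliation}, continuous Green bundles together with a hyperbolic periodic orbit force $G^{s/u}$ to be tangent to $\F^{s/u}$; hence $v\in\R_1$ gives $\F^s(v)\pitchfork\F^u(v)$ at $v$, so $\F^s_\loc(v)\cap\F^u_\loc(v)=\{v\}$ and $v\in\R_0$. In particular $v_0\in\R_0$.

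Next I would apply Proposition~\ref{regularneighbor} to $v_0$ with parameter $\e/10$, obtaining cone-topology neighborhoods $U$ of $v_0^-$ and $V$ of $v_0^+$ in $\pX$ such that every $(\xi,\eta)\in U\times V$ admits at least one connecting geodesic and each such geodesic stays within distance $\e/10$ of $c_{v_0}(0)$. Since $\bP_\theta$ shrinks to $\{v_0^-\}$ and $\bF_\theta$ to $\{v_0^+\}$ in the cone topology as $\theta\to 0$, a sufficiently small $\theta_1$ guarantees $\bP_{\theta_1}\subset U$ and $\bF_{\theta_1}\subset V$, yielding existence and the distance bound at once. For regularity I argue by contradiction: if no $\theta_1$ works, there exist $\theta_n\to 0$ and connecting geodesics $c_n=c_{\xi_n,\eta_n}$ with $\dot c_n\not\subset\R_1$; reparametrize so that $c_n(0)$ is the foot of the perpendicular from $c_{v_0}(0)$ and extract a subsequential limit $\dot c_n(0)\to v^*$. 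Continuity of the Hopf coordinates (Corollary~\ref{continuous}) together with $\xi_n\to v_0^-,\ \eta_n\to v_0^+$ gives $(v^*)^\pm=v_0^\pm$, so $v^*\in\I(v_0)=\{v_0\}$, i.e.\ $v^*=v_0\in\R_1$. Openness and $\phi^t$-invariance of $\R_1$ then place $\dot c_n(t)\in\R_1$ for every $t$ and all large $n$, a contradiction.

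For uniqueness, suppose $c'$ is a second connecting geodesic for the same pair $(\xi,\eta)\in\bP_{\theta_1}\times\bF_{\theta_1}$. Then $c'$ and $c_{\xi,\eta}$ are bi-asymptotic with tangent vectors in $\R_1\subseteq\R_0$ by the previous step. Picking a base vector $v^*=\dot c_{\xi,\eta}(t_0)$ and suitably flow-reparametrizing $c'$, Lemma~\ref{striplemma} produces a connected set $\Sigma\subseteq I(v^*)$ that contains both starting points. Since $v^*$ is expansive, $\I(v^*)=\{v^*\}$; combining this with the observation that every point of $\Sigma$ issues a connecting geodesic and hence lies in $\pi\I(v^*)$ collapses $\Sigma$ to $\{\pi v^*\}$ and forces $c'=c_{\xi,\eta}$. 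The main obstacle is precisely this last step: the generalized strip lemma as stated requires the starting points of $c_{\xi,\eta}$ and $c'$ to lie simultaneously on both horospheres $H(v^*)$ and $H(-v^*)$, i.e.\ to synchronize their Busemann levels at both endpoints, which is not automatic in the no conjugate points setting. The intended remedy is to apply the lemma to a flow-translate of $c'$ that does land in $I(v^*)$, using compactness of $\I(v^*)$ (Lemma~\ref{compact}) and expansivity of $v^*$ to pin the translate down to $v^*$.
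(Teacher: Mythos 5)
Your proposal is essentially sound and reconstructs what the paper leaves unsaid: the text dismisses Proposition~\ref{crucial} as ``essentially a restatement'' of Proposition~\ref{regularneighbor}, but that proposition only supplies existence of a connecting geodesic passing near $c_{v_0}(0)$; regularity and uniqueness both require extra arguments of exactly the kind you outline (and they are implicitly used later, e.g.\ in the proof of Lemma~\ref{injective}). Two points deserve to be firmed up.

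First, your passage from $\F^s_\loc(v)\cap\F^u_\loc(v)=\{v\}$ to $v\in\R_0$ skips the local-to-global step, and that step is exactly what $\R_0$ is about. Local transversality of the horospherical foliations at $v$ rules out nearby intersection, but $\R_0$ requires $\I(v)=\F^s(v)\cap\F^u(v)=\{v\}$ globally. To close the gap, suppose $w\in\I(v)$ with $w\neq v$; then $\pi w\neq\pi v$, and $c_v,c_w$ are bi-asymptotic, so Lemma~\ref{striplemma} yields a connected $\Sigma\subset I(v)$ containing both $\pi v$ and $\pi w$. Connectedness of $\Sigma$ then produces vectors of $\I(v)$ arbitrarily close to and distinct from $v$, contradicting the local transversality you established. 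With that step supplied, $\R_1\subseteq\R_0$ holds as you claim.

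Second, the parametrization obstacle you flag in the uniqueness step is not actually there, and you do not need Lemma~\ref{compact} or an extra pinning argument. Along any geodesic $c'$ bi-asymptotic to $c_{v^*}$, the function $q\mapsto b_{(v^*)^+}(q,\pi v^*)+b_{(v^*)^-}(q,\pi v^*)$ has zero derivative (moving toward $(v^*)^+$ changes the two summands at rates $-1$ and $+1$), hence equals the constant $-\beta_{\pi v^*}((v^*)^-,(v^*)^+)$, and this constant is $0$ because $\pi v^*$ lies on $c_{v^*}$ and the Gromov product is independent of the connecting geodesic. So there is a (unique) point of $c'$ lying in $I(v^*)=H(v^*)\cap H(-v^*)$, and Lemma~\ref{striplemma} applies to the so-parametrized pair directly. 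Combined with $\I(v^*)=\{v^*\}$ from expansivity, $\Sigma$ collapses to $\{\pi v^*\}$ and uniqueness follows, as you intended. In the regularity argument you should also record explicitly that the footpoint of $v^*$ equals $\pi v_0$ (since the perpendicular feet $q_n$ converge to $c_{v_0}(0)$ by Proposition~\ref{regularneighbor} with shrinking $\epsilon$) before concluding $v^*\in\I(v_0)$; the bi-asymptoticity $(v^*)^\pm=v_0^\pm$ alone is not enough.
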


Based on Proposition \ref{crucial}, we have the following result.
\begin{lemma}\label{diameter}
Let $v_0, p, \e$ be as above and $\theta_1$ be given in Proposition \ref{crucial}. Then for any $0<\theta<\theta_1$,
\begin{enumerate}
  \item $\text{diam\ } \pi H^{-1}(\bP_\theta\times \bF_\theta\times \{0\})<\frac{\e}{2}$;
  \item $H^{-1}(\bP_\theta\times \bF_\theta\times \{0\})\subset SX$ is compact;
  \item $\text{diam\ } \pi B_\theta^\a <4\e$ for any $0<\a\le \frac{3\e}{2}$.
\end{enumerate}
\end{lemma}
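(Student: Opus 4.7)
\medskip

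\noindent\emph{Proof plan for Lemma \ref{diameter}.}

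The plan is to extract all three items as direct consequences of Proposition \ref{crucial} together with the basic properties of the Busemann coordinate $s(v)=b_{v^-}(\pi v,p)$. For item (1), I will fix $v\in H^{-1}(\bP_\theta\times \bF_\theta\times\{0\})$ and look at the unique connecting geodesic $c_{v^-,v^+}$. By Proposition \ref{crucial} this geodesic passes within distance $\e/10$ of $p=\pi(v_0)$; let $q$ be a point on it realising this. The condition $s(v)=0$ says precisely that $\pi v$ lies on the horosphere $H_{v^-}(p)$ through $p$, and since the Busemann function $b_{v^-}$ is $1$-Lipschitz and changes at unit rate along the geodesic $c_{v^-,v^+}$, the geodesic-arclength distance from $\pi v$ to $q$ equals $|b_{v^-}(q,p)|\le d(q,p)<\e/10$. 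Thus $d(\pi v,p)<\e/5$, so any two such footpoints lie within $2\e/5<\e/2$ of each other.

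For item (2), boundedness follows from (1), since then $H^{-1}(\bP_\theta\times\bF_\theta\times\{0\})\subset \pi^{-1}(\overline{B(p,\e/5)})$, which is compact in $SX$. For closedness, suppose $v_n\to v$ with $v_n$ in the set. The map $\Psi$ in Proposition \ref{geo}(2) is continuous, so $v_n^\pm\to v^\pm$ in the cone topology; since $\bP_\theta$ and $\bF_\theta$ are the images of closed subsets of the compact sphere $S_pX$ under the continuous endpoint maps $w\mapsto w^\pm$, they are closed, and hence $v^\pm\in \bP_\theta\times\bF_\theta$. Continuity of $s$ (which follows from continuity of Busemann functions in Corollary \ref{continuous}) yields $s(v)=\lim s(v_n)=0$, so $v$ lies in the set.

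For item (3), I use the cocycle identity $s(\phi^tv)=s(v)+t$ to write every $v\in B_\theta^\a$ as $v=\phi^{s(v)}w$ for a unique $w\in H^{-1}(\bP_\theta\times\bF_\theta\times\{0\})$, with $0\le s(v)\le \a\le 3\e/2$. Then $d(\pi v,\pi w)\le |s(v)|\le 3\e/2$ since the geodesic is parametrised by arclength. Combining with (1) and the triangle inequality, for $v_1,v_2\in B_\theta^\a$ with corresponding $w_1,w_2$,
\[d(\pi v_1,\pi v_2)\le d(\pi v_1,\pi w_1)+d(\pi w_1,\pi w_2)+d(\pi w_2,\pi v_2)<\tfrac{3\e}{2}+\tfrac{\e}{2}+\tfrac{3\e}{2}=\tfrac{7\e}{2}<4\e.\]

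There is no real obstacle here; the substance of the lemma is already contained in Proposition \ref{crucial}. The only minor care needed is to verify closedness in item (2) using the continuity of the cone-topology endpoint maps and of $s$, so that the preimage of a closed set under $H$ is indeed closed in $SX$.
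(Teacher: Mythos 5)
Your proof is correct, and it supplies exactly the argument the paper omits (the paper merely cites the analogous Lemma 2.13 of \cite{Wu2}): item (1) from Proposition \ref{crucial} plus the facts that Busemann functions are $1$-Lipschitz and vary at unit rate along the geodesic $c_v$, item (2) from boundedness together with continuity of the endpoint maps and of $s$, and item (3) from the cocycle identity $s(\phi^t v)=s(v)+t$. No gaps.
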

\begin{proof}
The proof is analogous to that of \cite[Lemma 2.13]{Wu2}, and thus omitted.
\end{proof}

At last, we also assume that the choice of $\theta\in (0, \theta_1)$ satisfies the following properties. Since $\theta\mapsto \bar\mu(\bP_\theta\times \bF_\theta)$ is nondecreasing and hence has at most countably many discontinuities, we always choose $\theta$ to be the continuity point of this function, i.e.,
$$\lim_{\rho\to \theta}\bar\mu(\bP_\rho\times \bF_\rho)=\bar\mu(\bP_\theta\times \bF_\theta).$$
Furthermore, $\theta$ is chosen so that $\bar\mu(\partial\bP_\theta\times \partial\bF_\theta)=0.$ By the product structure of $B$ and $S$ and the definition of $m$, we have for any $\a\in (0,\frac{3\e}{2})$,
\begin{equation}\label{e:choice}
\begin{aligned}
\lim_{\rho\to \theta} m(S_\rho)=m(S_\theta), \quad \lim_{\rho\to \theta} m(B^\a_\rho)=m(B^\a_\theta),
\text{\ \ and\ \ }m(\partial B^\a_\theta)=0.
\end{aligned}
\end{equation}

As a consequence of Corollary \ref{equicon1}, we have:
\begin{corollary}\label{equicon}
Given $v_0, p, \e>0$ as above, there exists $\theta_2>0$ such that for any $0<\theta<\theta_2$, if $\xi,\eta\in \bP_\theta$ and any $q$ lying within $\diam \F+4\e$ of $\pi H^{-1}(\bP_\theta\times \bF_\theta\times [0,\infty))$, we have $|b_\xi(q,p)-b_\eta(q,p)|<\e^2$. Similar result holds if the roles of $\bP_\theta$ and $\bF_\theta$ are reversed.
\end{corollary}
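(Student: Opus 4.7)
The plan is to deduce this directly from the equicontinuity Lemma \ref{equicon1} by a suitable choice of index set, with the main work consisting in identifying $\bP_\theta$ as some $A^+$ and verifying that the corresponding $B^\infty$ lies in $\bF_\theta$. To set up the identification, for $\theta_0\in (0,\theta_1)$ to be chosen, I would let $A_0:=\{u\in S_pX:\angle_p(u,-v_0)\le \theta_0\}$; since $u^+=(-u)^-$ and $\angle_p(-u,v_0)=\angle_p(u,-v_0)$, we have $A_0^+=\bP_{\theta_0}$, and $b_{u^+}(q,p)=b_u(q)$ in the notation of Lemma \ref{equicon1}.

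First I would fix $\theta_0$ small enough that the compact sets $\bP_{\theta_0}$ and $\bF_{\theta_0}$ are disjoint in $\pX$, which is possible because $v_0^{-}\ne v_0^{+}$ for $v_0\in \R_1$ and the map $w\mapsto w^{\pm}$ from $S_pX$ to $\pX$ is continuous in the visual topology. Let $B_0\subset X$ be the $(\diam \F+4\e)$-neighborhood of $\pi H^{-1}(\bP_{\theta_0}\times \bF_{\theta_0}\times [0,\infty))$. The key claim is that $B_0^\infty\subset \bF_{\theta_0}$. To see this, take any sequence $q_n\in B_0$ with $q_n\to \xi\in \pX$ and pick $v_n\in H^{-1}(\bP_{\theta_0}\times \bF_{\theta_0}\times [0,\infty))$ with $d(q_n,\pi v_n)\le \diam \F+4\e$; writing $v_n=\phi^{s(v_n)}w_n$ with $w_n$ in the compact slab $H^{-1}(\bP_{\theta_0}\times \bF_{\theta_0}\times \{0\})$ (compactness is Lemma \ref{diameter}(2)) forces $s(v_n)\to\infty$, so along a subsequence $\pi v_n\to v_n^+\in \bF_{\theta_0}$, and then $q_n$ has the same visual limit because visual convergence in a uniform visibility manifold is insensitive to bounded displacements.

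With $A_0^+=\bP_{\theta_0}$ disjoint from $B_0^\infty\subset \bF_{\theta_0}$, Lemma \ref{equicon1} applied with bound $\e^2$ yields $\delta>0$ such that $\angle_p(u,u')<\delta$ implies $|b_u(q)-b_{u'}(q)|<\e^2$ for every $q\in B_0$. I would then set $\theta_2:=\min(\theta_0,\delta/2)$ and check: for any $\theta\in (0,\theta_2)$ and any $\xi,\eta\in \bP_\theta$, write $\xi=u^+$, $\eta=u'^+$ with $u,u'\in A_0$ and $\angle_p(u,u')\le 2\theta<\delta$; since $\bP_\theta\subset \bP_{\theta_0}$ and $\bF_\theta\subset \bF_{\theta_0}$, every admissible $q$ lies in $B_0$, and therefore $|b_\xi(q,p)-b_\eta(q,p)|=|b_u(q)-b_{u'}(q)|<\e^2$. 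The symmetric statement with $\bP_\theta$ and $\bF_\theta$ reversed follows verbatim by swapping $v_0\leftrightarrow -v_0$ and $[0,\infty)\leftrightarrow (-\infty,0]$ throughout.

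The step I expect to require most care is the inclusion $B_0^\infty\subset \bF_{\theta_0}$: it uses the compactness of the base slab from Lemma \ref{diameter}(2) to force $s(v_n)\to\infty$ whenever $\pi v_n\to\infty$, together with the visibility property to pass the visual limit from $\pi v_n$ to $q_n$. Everything else is a direct and essentially formal application of Lemma \ref{equicon1}.
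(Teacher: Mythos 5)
Your proof is correct and takes exactly the route the paper intends: the paper gives no details beyond the phrase ``As a consequence of Corollary \ref{equicon1}'', and your argument supplies the missing verification. The identification $A_0^+=\bP_{\theta_0}$ via $u^+=(-u)^-$ is right, $b_{u^+}(q,p)=b_u(q)$ matches the notation of Lemma \ref{equicon1}, and the key claim $B_0^\infty\subset\bF_{\theta_0}$ is correctly established: the compactness of the slab (Lemma \ref{diameter}(2)) plus boundedness of $d(q_n,\pi v_n)$ forces $s(v_n)\to\infty$, the continuity of $\Psi$ from Proposition \ref{geo}(2) gives $\pi v_n\to w^+\in\bF_{\theta_0}$ along a subsequence, and the uniform visibility axiom applied to the short geodesic segment from $q_n$ to $\pi v_n$ shows the visual limit of $q_n$ agrees with that of $\pi v_n$. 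After that the reduction to Lemma \ref{equicon1} with $\theta_2=\min(\theta_0,\delta/2)$ and the monotonicity $\bP_\theta\subset\bP_{\theta_0}$, $\bF_\theta\subset\bF_{\theta_0}$ is routine.

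The one imprecision is your final sentence on the reversed case. If you literally carry out both substitutions $v_0\mapsto -v_0$ and $[0,\infty)\mapsto(-\infty,0]$ throughout, then, since $\bP_\theta$ and $\bF_\theta$ are defined in terms of $v_0$, they also swap, and the new $B_0$ becomes a neighbourhood of $\pi H^{-1}(\bF_{\theta_0}\times\bP_{\theta_0}\times(-\infty,0])$; along this slab $s(v_n)\to -\infty$ and $\pi v_n\to v_n^-\in\bF_{\theta_0}$, so $B_0^\infty\subset\bF_{\theta_0}$, which coincides with the new $A_0^+$ and violates the disjointness hypothesis of Lemma \ref{equicon1}. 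What is needed is to perform just one of the two swaps: for instance replace $v_0$ by $-v_0$ alone, so that $A_0^+=\bF_{\theta_0}$ while $B_0$ becomes a neighbourhood of $\pi H^{-1}(\bF_{\theta_0}\times\bP_{\theta_0}\times[0,\infty))$, whose visual limits lie in $\bP_{\theta_0}$ and are disjoint from $\bF_{\theta_0}$. This single swap gives the literal reversal the corollary asserts and covers the paper's application in Lemma \ref{scaling1} (where $q=\gamma^{-1}p$ lies on the axis of $\gamma$ on the $\bP$ side). Everything else is sound.
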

Let $\theta_0:=\min\{\theta_1, \theta_2\}$, where $\theta_1$ is given in Lemma \ref{diameter}, and $\theta_2$ is given in Corollary \ref{equicon}. In the following, we always suppose that $0<\theta<\theta_0$ and \eqref{e:choice} is satisfied.

Let us collect the definitions of important subsets of $\C$ here for convenience.
\begin{equation*}
\begin{aligned}
\C(t,\a)=\C_\theta(t,\a)&:=\{\c\in \C: S_\theta\cap \phi^{-t}\c_* B_\theta^\a\neq \emptyset\},\\
\C^*=\C^*_\theta&:=\{\c\in \C: \c\bF_\theta \subset \bF_\theta \text{\ and\ }\c^{-1}\bP_\theta\subset \bP_\theta\},\\
\C^*(t,\a)&:=\C^*\cap \C(t,\a),\\
\C'(t,\a)&:=\{\c\in \C^*(t,\a): \c\neq \b^n \text{\ for any\ } \b\in \C, n\ge 2\}.
\end{aligned}
\end{equation*}

\begin{lemma}[Closing lemma]\label{closing}
Let $X$ be a simply connected manifold without focal points, $v_0, p, \e$ be fixed as before, and $\theta_0$ be given as above. Then for every $0<\rho<\theta<\theta_0$, there exists some $t_0> 0$ such that for all $t\ge t_0$, we have $\C_\rho(t,\a)\subset \C_\theta^*$.
\end{lemma}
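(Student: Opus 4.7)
The plan is to deduce the Closing Lemma directly from the $\pi$-convergence statement (Lemma \ref{piconvergence}). Heuristically, if $\c\in \C_\rho(t,\a)$ with $t$ large, some geodesic $c_v$ begins inside a compact neighborhood $A$ of $p$ with forward endpoint in $\bF_\rho$, and after time $t$ it enters $\c A$ with forward endpoint still in $\c\bF_\rho$; this forces $\c$ to exhibit north--south dynamics on $\pX$ with attractor close to $\bF_\rho$ and repeller close to $\bP_\rho$, which, after the minor inflation from $\rho$ to $\theta$, is exactly the content of $\c\in \C^*_\theta$.

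Concretely, I would first set the input data for Lemma \ref{piconvergence}: take the (disjoint) compact sets $\mathbf{Q}=\bP_\rho$, $\mathbf{R}=\bF_\rho$, the open neighborhoods $\mathbf{U}=\mathrm{int}(\bP_\theta)$, $\mathbf{V}=\mathrm{int}(\bF_\theta)$ (legitimate since in a uniform visibility manifold $w\mapsto w^{\pm}$ is a homeomorphism $S_pX\to \pX$, so $\bP_\rho\subset \mathrm{int}(\bP_\theta)$ and similarly for $\bF$), and the compact set $A=\overline{B(p,4\e)}$, which by Lemma \ref{diameter}(3) contains both $\pi S_\rho$ and $\pi B^\a_\rho$. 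Lemma \ref{piconvergence} then produces a threshold $T>0$, and I will set $t_0:=T$.

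The main step is the verification of the two intersection hypotheses of Lemma \ref{piconvergence} for every $\c\in \C_\rho(t,\a)$ with $t\ge t_0$. Choose $v\in S_\rho$ with $\phi^tv\in \c B^\a_\rho$. The geodesic $c_v$ satisfies $c_v(0)=\pi v\in A$, $c_v(+\infty)=v^+\in \bF_\rho=\mathbf{R}$, and $c_v(t)=\pi\phi^tv\in \c\,\pi B^\a_\rho\subset \c A$, so $c_v(t)\in \c A\cap C^T_{\mathbf{R}}$, giving one hypothesis. For the other, the relation $\phi^tv\in \c B^\a_\rho$ is equivalent to $H(\c^{-1}\phi^tv)\in \bP_\rho\times \bF_\rho\times [0,\a]$, which in particular gives $\c^{-1}v^-\in \bP_\rho=\mathbf{Q}$. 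The reversed and $\c^{-1}$-translated geodesic $\tilde c(s):=\c^{-1}c_v(t-s)$ then satisfies $\tilde c(0)=\c^{-1}\pi\phi^tv\in A$ and $\tilde c(+\infty)=\c^{-1}v^-\in \mathbf{Q}$, so $\tilde c(t)=\c^{-1}\pi v\in C^T_{\mathbf{Q}}$; hence $\pi v\in \c(C^T_{\mathbf{Q}})\cap A$.

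Lemma \ref{piconvergence} then yields $\c(\pX\setminus \mathbf{U})\subset \mathbf{V}$ and $\c^{-1}(\pX\setminus \mathbf{V})\subset \mathbf{U}$. Since $\bP_\theta\cap \bF_\theta=\emptyset$ by the choice of $\theta_0$, we have $\bF_\theta\subset \pX\setminus \bP_\theta\subset \pX\setminus \mathbf{U}$, and similarly $\bP_\theta\subset \pX\setminus \mathbf{V}$, so $\c\bF_\theta\subset \mathbf{V}\subset \bF_\theta$ and $\c^{-1}\bP_\theta\subset \mathbf{U}\subset \bP_\theta$, i.e., $\c\in \C^*_\theta$. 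The principal technical point is the geodesic bookkeeping in the third paragraph, in particular identifying the reversed ray $\tilde c$ and checking that its forward endpoint lies in $\mathbf{Q}$; the uniformity of $t_0$ in $\c$ is packaged into Lemma \ref{piconvergence}.
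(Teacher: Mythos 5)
Your proof is correct and follows exactly the route the paper intends: the paper's own proof of Lemma \ref{closing} simply defers to \cite[Lemma 4.8]{CKW2}, which is precisely this application of the $\pi$-convergence statement (Lemma \ref{piconvergence}) with $\mathbf{Q},\mathbf{R}$ the small cones at $\rho$, $\mathbf{U},\mathbf{V}$ the slightly larger cones at $\theta$, and $A$ a fixed compact neighborhood of $p$ containing the projections of the flow boxes. Your verification of the two intersection hypotheses (including the reversal trick for the $\mathbf{Q}$-side) is the standard bookkeeping and is carried out correctly.
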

\begin{proof}
The proof is analogous to that of \cite[Lemma 4.8]{CKW2}, and is based on Lemma \ref{piconvergence}.
\end{proof}

In the following, we use the scaling and mixing properties of Bowen-Margulis measure $m$, to give an asymptotic estimates of $\#\C^*(t,\a)$ and $\#\C(t,\a)$.
\subsection{Depth of intersection}
Firstly, we collect some results on the relation between $t$ and $|\c|$ when $\c\in \C^*(t,\a)$. The proof is analogous to those of \cite[Lemmas 4.11, 4.12, 4.13. and 4.14]{CKW2}, and hence the details are omitted here.

Given $\xi\in \pX$ and $\c\in \C$, define $b_\xi^\c:=b_\xi(\c p, p)$.
\begin{lemma}\label{intersection}
The following relations hold:
\begin{enumerate}
    \item Let $\xi,\eta\in \bP$ and $\c\in \C(t,\a)$ with $t>0$. Then $|b_\xi^\c-b_\eta^\c|<\e^2$.
    \item Let $c$ be an axis of $\c\in \C$ and $\xi=c(-\infty)$. Then $b_\xi^\c=|\c|$.
    \item Given any $\c\in \C$ and any $t\in \RR$, we have
$$S\cap \phi^{-t}\c B^\a=\{w\in P^{-1}(\bP\times \c\bF): s(w)\in [0,\e^2]\cap (b_{w^{-}}^\c-t+[0,\a])\}.$$
\item If $\c\in \C^*(t,\a)$, then $|\c|\in [t-\a-\e^2, t+2\e^2]$.
\end{enumerate}
\end{lemma}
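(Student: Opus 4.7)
The plan is to establish the four parts in order, each building on the previous. For part~(1), I would apply the equicontinuity Corollary~\ref{equicon} with the point $q=\c p$ and $\xi,\eta\in\bP$; its conclusion is precisely $|b_\xi^\c-b_\eta^\c|<\e^2$, so the task reduces to verifying the hypothesis that $\c p$ lies within $\diam\F+4\e$ of $\pi H^{-1}(\bP\times\bF\times[0,\infty))$. Since $\c\in\C(t,\a)$, there exists $w\in S\cap\phi^{-t}\c B^\a$; then $\phi^t w\in\c B^\a$, so its footpoint lies in $\pi\c B^\a$ and is within $\diam\pi B^\a<4\e$ of $\c p$ (by Lemma~\ref{diameter}(3), using that $v_0\in B^\a$ and $p=\pi v_0\in\pi B^\a$), while $\phi^t w$ itself lies in $H^{-1}(\bP\times\bF\times[0,\infty))$ because $w\in S$ and $t>0$.

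Part~(2) is a direct computation. Writing
$$b_\xi(\c p,p)=b_\xi(\c p,\c c(0))+b_\xi(\c c(0),c(0))+b_\xi(c(0),p)$$
and using isometry invariance $b_{\c\xi}(\c q,\c p)=b_\xi(q,p)$ together with $\c\xi=\xi$, the first and third terms cancel, leaving $b_\xi(c(|\c|),c(0))=|\c|$ by the definition of the Busemann function along an axis (the geodesic $c_{c(0),\xi}$ traverses $c$ backwards since $\xi=c(-\infty)$). For part~(3), I would unwind the defining conditions using the transformation rule $s(\c v)=s(v)+b^\c_{(\c v)^-}$ (an immediate consequence of isometry invariance of Busemann functions) together with $s(\phi^t w)=s(w)+t$ and $(\phi^t w)^\pm=w^\pm$; rewriting $\phi^t w\in\c B^\a$ as $\c^{-1}\phi^t w\in B^\a$ and pushing through $H$ gives $s(w)\in b^\c_{w^-}-t+[0,\a]$ together with the appropriate boundary constraints, which combined with $w\in S$ yields the stated characterization.

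Finally, for part~(4), since $\C^*(t,\a)\subset\C(t,\a)$, part~(3) furnishes a vector $w$ with $s(w)\in[0,\e^2]\cap(b^\c_{w^-}-t+[0,\a])$; nonemptiness of this intersection forces $b^\c_{w^-}\in[t-\a,t+\e^2]$. To identify this range with $|\c|$, observe that $\c\in\C^*$ gives $\c^{-n}\bP\subset\bP$ for every $n\ge 0$, and $\c^{-n}$ converges pointwise on $\pX\setminus\{\xi_\c^+\}$ to the attracting fixed point $\xi_\c^-$ (Proposition~\ref{north}); closedness of $\bP$ then yields $\xi_\c^-\in\bP$, so part~(1) gives $|b^\c_{w^-}-b^\c_{\xi_\c^-}|<\e^2$ and part~(2) identifies $b^\c_{\xi_\c^-}=|\c|$, proving $|\c|\in[t-\a-\e^2,t+2\e^2]$. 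The argument is largely bookkeeping; the substantive ingredients are the diameter bound of Lemma~\ref{diameter}(3) that places $\c p$ near the forward cone in part~(1), and the small dynamical fact that axis endpoints of $\c\in\C^*$ lie inside $\bP$ (and symmetrically $\bF$), used in part~(4).
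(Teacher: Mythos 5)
Your proof is correct and follows essentially the route the paper intends: the paper's own proof simply defers each part to the corresponding lemma of \cite{CKW2}, and your arguments (equicontinuity via Corollary~\ref{equicon} plus the diameter bound of Lemma~\ref{diameter}(3) for (1), the Busemann cocycle and isometry-invariance computation for (2), unwinding the Hopf coordinates for (3), and combining (1)--(3) with the location of the axis endpoints for (4)) are exactly those details. Your treatment of (4) is in fact the right adaptation to this paper's setting: since \cite[Lemma~2.10]{CKW2} is unavailable here (cf.\ Remark~\ref{cyclic}), deducing $\xi_\c^-\in\bP$ from $\c^{-1}\bP\subset\bP$, closedness of $\bP$, and the north--south dynamics of Proposition~\ref{north} is the correct substitute. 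One small imprecision, inherited from the statement itself: the unwinding in (3) actually yields the conditions $w^-\in\bP\cap\c\bP$ and $w^+\in\bF\cap\c\bF$, so the displayed identity is in general only the inclusion ``$\subset$''; it becomes a genuine equality precisely when $\bP\subset\c\bP$ and $\c\bF\subset\bF$, i.e.\ for $\c\in\C^*$, which is the only case in which the reverse inclusion is used (Lemma~\ref{depth1}), so nothing downstream is affected.
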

\begin{proof}
The proof of (1) is an application of Corollary \ref{equicon}. The computation is completely parallel to that in \cite[Lemma 4.11]{CKW2}.

The proof of (2) involves computation of Busemann functions, which is completely parallel to that in \cite[Lemma 4.12]{CKW2}. 

The proof of (3) is completely parallel to that in \cite[Lemma 4.13]{CKW2}.

The proof of (4) uses (1), (2), (3) and also the fact that $\c\in \C^*$. See the proof of \cite[Lemma 4.14]{CKW2}. 
\end{proof}

The following lemma implies that the intersections also have product structure.
\begin{lemma}\label{depth1}
If $\c\in \C^*(t,\a)$, then
$$S\cap \phi^{-(t+2\e^2)}\c B^{\a+4\e^2}\supset H^{-1}(\bP\times \c\bF\times [0,\e^2]):=S^\c.$$
\end{lemma}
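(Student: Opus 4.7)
The plan is to split the claimed inclusion into two separate verifications: first that $S^\c \subset S$, and then that $\phi^{t+2\e^2}(S^\c) \subset \c B^{\a+4\e^2}$. The first is immediate from the definition of $\C^*$: if $w \in S^\c$, then $w^- \in \bP$, $w^+ \in \c\bF$, and $s(w)\in [0,\e^2]$; since $\c\bF \subset \bF$ by membership in $\C^*$, we conclude $w \in H^{-1}(\bP\times \bF\times [0,\e^2]) = S$.

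For the second containment, I would fix $w \in S^\c$, set $v := \c^{-1}\phi^{t+2\e^2}w$, and show $v \in B^{\a+4\e^2}$. The boundary conditions $v^- \in \bP$ and $v^+ \in \bF$ follow directly from $\c \in \C^*$: $v^- = \c^{-1}w^- \in \c^{-1}\bP \subset \bP$ and $v^+ = \c^{-1}w^+ \in \c^{-1}\c\bF = \bF$. So the real task is the single scalar inequality $s(v) \in [0, \a+4\e^2]$.

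The main computation is to express $s(v)$ in closed form. Using the isometric invariance of Busemann functions $b_{\c^{-1}\xi}(\c^{-1}q, \c^{-1}r) = b_\xi(q,r)$ (applied with $r = \c p$) together with the standard cocycle identity $b_\xi(q, \c p) = b_\xi(q,p) - b_\xi(\c p,p)$, one obtains
\[
s(v) \;=\; s(w) + (t + 2\e^2) - b_{w^-}^\c.
\]
With this identity in hand, the bound on $s(v)$ reduces to a two-sided bound on $b_{w^-}^\c$.

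To control $b_{w^-}^\c$, I would exploit the hypothesis $\c \in \C(t,\a)$: pick any witness $w_0 \in S \cap \phi^{-t}\c B^\a$. Lemma \ref{intersection}(3) forces $s(w_0)\in [0,\e^2]$ and $s(w_0) \in b_{w_0^-}^\c - t + [0,\a]$, from which $b_{w_0^-}^\c \in [t-\a,\; t+\e^2]$. Since $w^-$ and $w_0^-$ both lie in $\bP$, Lemma \ref{intersection}(1) transfers this to $b_{w^-}^\c \in [t-\a-\e^2,\; t+2\e^2]$. Combined with $s(w)\in [0,\e^2]$, the displayed identity yields $s(v) \ge (t+2\e^2) - (t+2\e^2) = 0$ and $s(v) \le \e^2 + (t+2\e^2) - (t-\a-\e^2) = \a + 4\e^2$ on the nose. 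The only delicate piece is sign bookkeeping in the Busemann computation; the rest is a direct assembly of Lemma \ref{intersection} with the definition of $\C^*$.
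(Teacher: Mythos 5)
Your proof is correct and is essentially the paper's argument: both extract the bound $b_{w_0^-}^{\c}\in[t-\a,\,t+\e^2]$ from a witness of $S\cap\phi^{-t}\c B^{\a}\neq\emptyset$ via Lemma \ref{intersection}(3), transfer it to all of $\bP$ by Lemma \ref{intersection}(1), and then check the resulting interval condition with the $\a+4\e^2$ slack. The only cosmetic difference is that the paper invokes Lemma \ref{intersection}(3) a second time to conclude, whereas you re-derive its content by computing $s(v)=s(w)+(t+2\e^2)-b_{w^-}^{\c}$ directly from the Busemann cocycle identity.
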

\begin{proof}
The proof is completely parallel to that in \cite[Lemma 5.1]{CKW2}. We reproduce the proof since it reflects that advantage of the use of the slice $S$.

Let $\c\in \C^*(t,\a)$, then $S\cap g^{-t}\c B^\a\neq \emptyset$. By Lemma \ref{intersection}(3), there exists $\eta\in \bP$ such that
$$[0,\e^2]\cap (b_{\eta}^\c-t+[0,\a])\neq \emptyset.$$
It follows that $[0,\e^2]\subset (b_{\eta}^\c-t-\e^2+[0,\a+2\e^2]).$ Then by Lemma \ref{intersection}(1), for any $\xi\in \bP$ we have
$$[0,\e^2]\cap (b_{\xi}^\c-t-\e^2+[0,\a+2\e^2])\neq \emptyset,$$
which in turn implies that
$$[0,\e^2]\subset  (b_{\xi}^\c-t-2\e^2+[0,\a+4\e^2]).$$
We are done by Lemma \ref{intersection}(3).
\end{proof}

\subsection{Scaling and mixing calculation}
The following notations in the asymptotic estimates will be convenient.
\begin{equation*}
\begin{aligned}
f(t)=e^{\pm C}g(t)&\Leftrightarrow e^{-C}g(t)\le f(t)\le e^{C}g(t) \text{\ for all\ } t;\\
f(t) \lesssim g(t) &\Leftrightarrow \limsup_{t\to \infty}\frac{f(t)}{g(t)}\le 1;\\
f(t) \gtrsim g(t) &\Leftrightarrow \liminf_{t\to \infty}\frac{f(t)}{g(t)}\ge 1;\\
f(t) \sim g(t) &\Leftrightarrow \lim_{t\to \infty}\frac{f(t)}{g(t)}= 1;\\
f(t)\sim e^{\pm C}g(t)&\Leftrightarrow e^{-C}g(t)\lesssim f(t)\lesssim e^{C}g(t).
\end{aligned}
\end{equation*}

\begin{lemma}\label{scaling1}
If $\c\in \C^*$, then
$$m(S^\c)=e^{\pm 2h\e}e^{-h|\c|}m(S).$$
\end{lemma}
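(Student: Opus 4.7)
The plan is to reduce the scaling identity to a statement about the boundary measure $\bar{\mu}$ and to estimate two Busemann-type ingredients. From the construction of $m$ as $\int \lambda_{\xi,\eta}\, d\bar\mu$ and the fact that on the slabs $S$ and $S^\c$ the fibre measure $\lambda_{\xi,\eta}$ is Lebesgue of mass $\e^2$ over $\bP\times\bF$ and $\bP\times\c\bF$ respectively, one gets $m(S)=\e^2\,\bar\mu(\bP\times\bF)$ and $m(S^\c)=\e^2\,\bar\mu(\bP\times\c\bF)$. Thus it suffices to prove $\bar\mu(\bP\times\c\bF)=e^{\pm 2h\e}e^{-h|\c|}\,\bar\mu(\bP\times\bF)$.

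I would then change variables $\eta=\c\eta'$ in $\int_{\bP\times\c\bF} e^{h\beta_p(\xi,\eta)}\,d\mu_p(\xi)\,d\mu_p(\eta)$. Combining the $\Gamma$-equivariance with the Busemann-density formula from Definition \ref{density}(1) gives $d\mu_p(\c\eta')=e^{-h\,b_{\c\eta'}(p,\c p)}\,d\mu_p(\eta')$, so the task reduces to showing that, uniformly in $(\xi,\eta')\in\bP\times\bF$,
\[
\bigl[\beta_p(\xi,\c\eta')-b_{\c\eta'}(p,\c p)\bigr]-\beta_p(\xi,\eta')=-|\c|\pm 2\e.
\]

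The first of the two estimates I need is $b_{\c\eta'}(p,\c p)=|\c|\pm\e$ for every $\eta'\in\bF$. By isometry invariance, $b_{\c\eta'}(p,\c p)=b_{\eta'}(\c^{-1}p,p)$. Since $\c\in\C^*$, its fixed points satisfy $\xi_\c^-\in\bP$ and $\xi_\c^+\in\bF$, so by Proposition \ref{crucial} the axis $c_\c$ of $\c$ passes within $\e/10$ of $p$. A direct computation on that axis gives $b_{\xi_\c^+}(\c^{-1}p',p')=|\c|$ whenever $p'$ lies on $c_\c$, and the $1$-Lipschitz dependence of $b_\xi$ on its arguments then propagates an $\e/5$ error for our $p$. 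Since $\c^{-1}p$ lies within $\e/10$ of a point of $c_\c$ in the backward half, it sits in (or within $\e/10$ of) $\pi H^{-1}(\bP\times\bF\times(-\infty,0])$, so the $\bF$-variant of Corollary \ref{equicon} applied with $q=\c^{-1}p$ yields $|b_{\eta'}(\c^{-1}p,p)-b_{\xi_\c^+}(\c^{-1}p,p)|<\e^2$, completing the estimate.

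The second estimate is that $|\beta_p(\xi,\eta)|\le\e/5$ on $\bP\times\bF$: by Proposition \ref{crucial}, any connecting geodesic passes within $\e/10$ of $p$, and choosing $q$ to be the foot of this perpendicular gives $|b_\xi(q,p)|,|b_\eta(q,p)|\le d(p,q)\le\e/10$. Since $\c\eta'\in\c\bF\subset\bF$ the same bound applies to $\beta_p(\xi,\c\eta')$, so $|\beta_p(\xi,\c\eta')-\beta_p(\xi,\eta')|\le 2\e/5$. Adding this to the previous bound yields the $\pm 2\e$ control in the exponent, from which the lemma follows. The principal obstacle is precisely the first estimate: the plain displacement $d(p,\c p)$ need not coincide with $|\c|$, but the hypothesis $\c\in\C^*$ forces the axis of $\c$ through the flow-box region, and Corollary \ref{equicon} is what transfers the exact identity from $\xi_\c^+$ to an arbitrary $\eta'\in\bF$.
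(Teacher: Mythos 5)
Your proposal is correct and follows essentially the same route as the paper: reduce $m(S^\c)/m(S)$ to $\bar\mu(\bP\times\c\bF)/\bar\mu(\bP\times\bF)$, absorb the Gromov product into an $e^{\pm O(h\e)}$ factor via Proposition \ref{crucial}/Lemma \ref{diameter}, and evaluate $\mu_p(\c\bF)=\mu_{\c^{-1}p}(\bF)$ using the exact identity $b_{\xi_\c^+}(\c^{-1}p,p)=|\c|$ on the axis (Lemma \ref{intersection}(2)) together with the equicontinuity of Busemann functions (Corollary \ref{equicon}). Your extra Lipschitz propagation from a point $p'$ on the axis to $p$ is unnecessary — the cocycle relation and $\c$-invariance of $\xi_\c^+$ give $b_{\xi_\c^+}(\c^{-1}p,p)=|\c|$ exactly for any $p$ — but this only loosens your error budget harmlessly.
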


\begin{proof}
The proof is completely parallel to that in \cite[Lemma 5.2]{CKW2}. The main work is to estimate $\b_p(\xi,\eta)$ and $b_\eta(\c^{-1}p,p)$ given $\xi\in \bP, \eta\in \bF$.

Firstly, take $q$ lying on the geodesic connecting $\xi$ and $\eta$ such that $b_\xi(q,p)=0$. Then
$$|\b_p(\xi,\eta)|=|b_\xi(q,p)+b_\eta(q,p)|=|b_\eta(q,p)|\le d(q,p)<\frac{\e}{2}$$
where we used Lemma \ref{diameter} in the last inequality.

Secondly, since $\c\in \C^*$, we know $\c^{-1}$ has a regular axis $c$, i.e. $\dot c(0)\in \R_1$. By Lemma \ref{intersection}(2), $b_{c(-\infty)}(\c^{-1}p,p)=|\c^{-1}|=|\c|$. Then by Corollary \ref{equicon}, we have $|b_\eta(\c^{-1}p,p)-|\c||<\e^2$ for any $\eta\in \bF$.

Notice that since $\c\bF\subset \bF$, we have $\c\eta\in \bF$ if $\eta\in \bF$. Thus we have
\begin{equation*}
\begin{aligned}
\frac{m(S^\c)}{m(S)}=&\frac{\e^2\bar \mu(\bP \times \c\bF)}{\e^2\bar \mu(\bP \times \bF)}=\frac{e^{\pm h\e/2}\mu_p(\bP)\mu_p(\c\bF)}{e^{\pm h\e/2}\mu_p(\bP)\mu_p(\bF)}\\
=&e^{\pm h\e}\frac{\mu_{\c^{-1}p}(\bF)}{\mu_p(\bF)}=e^{\pm h\e}\frac{\int_{\bF}e^{-hb_\eta(\c^{-1}p,p)}d\mu_p(\eta)}{\mu_p(\bF)}\\
=&e^{\pm h\e}\e^{\pm h\e^2}e^{-h|\c|}=e^{\pm 2h\e}e^{-h|\c|}.
\end{aligned}
\end{equation*}
\end{proof}

Combining Lemma \ref{intersection}(4) and Lemma \ref{scaling1}, we have
\begin{corollary}[Scaling]\label{scaling2}
Given $\a\le \frac{3}{2}\e$ and $\c\in \C^*(t,\a)$, we have $|t-|\c||\le 2\e$, and thus
$$m(S^\c)=e^{\pm 4h\e}e^{-ht}m(S).$$
\end{corollary}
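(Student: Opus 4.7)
The plan is to combine Lemma~\ref{intersection}(4) and Lemma~\ref{scaling1} in a direct way, since the corollary is essentially a packaging of these two results. First I would extract the bound $|t-|\c||\le 2\e$ from Lemma~\ref{intersection}(4): if $\c\in \C^*(t,\a)$, then $|\c|\in [t-\a-\e^2, t+2\e^2]$, whence
\[
|t-|\c||\le \max\{\a+\e^2,\, 2\e^2\}.
\]
Using the standing hypotheses $\a\le \tfrac{3}{2}\e$ and $\e\le \tfrac{1}{8}$ fixed at the beginning of Subsection~7.3, we have $\a+\e^2\le \tfrac{3}{2}\e+\tfrac{\e}{8}<2\e$ and $2\e^2<2\e$, so $|t-|\c||\le 2\e$, as claimed.

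The second step is to upgrade the $|\c|$-scaling in Lemma~\ref{scaling1} to a $t$-scaling. Since $\c\in \C^*(t,\a)\subset \C^*$, Lemma~\ref{scaling1} yields
\[
m(S^\c) = e^{\pm 2h\e}\, e^{-h|\c|}\, m(S).
\]
Writing $e^{-h|\c|}=e^{-ht}\cdot e^{h(t-|\c|)}$ and inserting the bound $|t-|\c||\le 2\e$ just established gives $e^{h(t-|\c|)}=e^{\pm 2h\e}$, and multiplying the two error factors produces $e^{\pm 4h\e}$, which is exactly
\[
m(S^\c) = e^{\pm 4h\e}\, e^{-ht}\, m(S).
\]

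There is no real obstacle here: the work is entirely contained in the two preceding lemmas, and the corollary is just an exchange of the intrinsic length $|\c|$ for the extrinsic time parameter $t$ at the cost of an additional factor $e^{\pm 2h\e}$. The only thing to verify carefully is the numerical inequality $\a+\e^2 \le 2\e$, which uses the specific choice $\e\in (0,\tfrac{1}{8})$ made earlier; this is what makes the clean constant $4h\e$ (rather than something like $3h\e+h\e^2$) appear in the final exponent.
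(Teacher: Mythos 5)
Your proposal is correct and is exactly the argument the paper intends: the paper derives the corollary by "combining Lemma \ref{intersection}(4) and Lemma \ref{scaling1}", which is precisely your two steps of bounding $|t-|\c||$ by $2\e$ (using $\a\le\tfrac{3}{2}\e$ and $\e<\tfrac18$) and then substituting into the $e^{-h|\c|}$ scaling. Your write-up simply makes explicit the numerical check $\a+\e^2<2\e$ that the paper leaves implicit.
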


\begin{remark}
It is clear that the conclusions in Lemma \ref{scaling1} and Corollary \ref{scaling2} hold if $m, S, S^\c$ are replaced by $\lm, \lS, {\lS}^\c$ respectively. Recall that we use an underline to denote objects in $M$ and $SM$, see Subsection $3.2$.
\end{remark}

Finally, we combine scaling and mixing properties of Bowen-Margulis measure to obtain the following asymptotic estimates. The proof is a repetition of \cite[Section 5.2]{CKW2}. Since it is a key step, we provide a detailed proof here.
\begin{proposition}\label{asymptotic}
We have
\begin{equation*}
\begin{aligned}
e^{-4h\e}\lesssim &\frac{\#\C^*_\theta(t,\a)}{e^{ht}m(B_\theta^\a)}\lesssim e^{4h\e}(1+\frac{4\e^2}{\a}),\\
e^{-4h\e}\lesssim &\frac{\#\C_\theta(t,\a)}{e^{ht}m(B_\theta^\a)}\lesssim e^{4h\e}(1+\frac{4\e^2}{\a}).
\end{aligned}
\end{equation*}
\end{proposition}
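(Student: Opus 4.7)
The plan is to combine the scaling identity of Corollary \ref{scaling2} with strong mixing of the unique MME $\underline m$, which holds under our hypotheses by Corollary \ref{uniqueness1} (Bernoulli, hence mixing). Lemma \ref{diameter}(3) gives $\diam \pi B^\a_\theta < 4\e < \inj (M)$, so $\pi$ is injective on each translate $\c B^\a_\theta$ and distinct translates are disjoint in $SM$. This furnishes the bookkeeping identity
\[
\underline m(\underline A\cap \phi^{-t}\underline B)=\sum_{\c\in\C}m(A\cap\phi^{-t}\c B)
\]
for any $A,B\in\{S_\theta,B^\a_\theta,B^{\a+4\e^2}_\theta\}$, and the small-boundary condition \eqref{e:choice} lets mixing send the left side to $m(A)\,m(B)$ as $t\to\infty$.

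For the upper bound on $\#\C^*_\theta(t,\a)$, I would use Lemma \ref{depth1} to embed, for each $\c\in\C^*(t,\a)$, the slice $S^\c$ into $S\cap\phi^{-(t+2\e^2)}\c B^{\a+4\e^2}$, so that the $S^\c$ are pairwise disjoint. Corollary \ref{scaling2} bounds $m(S^\c)$ below by $e^{-4h\e}e^{-ht}m(S)$, and summing, projecting, and applying mixing yields
\[
\#\C^*(t,\a)\cdot e^{-4h\e}e^{-ht}m(S)\le \underline m(\underline S\cap\phi^{-(t+2\e^2)}\underline B^{\a+4\e^2})\xrightarrow{t\to\infty}m(S)\,m(B^{\a+4\e^2}).
\]
The product identity $m(B^{\a+4\e^2})=(1+4\e^2/\a)\,m(B^\a)$ then converts this into the claimed upper bound. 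The matching upper bound on $\#\C_\theta(t,\a)$ follows by choosing $\theta'>\theta$ slightly larger and invoking the closing lemma \ref{closing} to inject $\C_\theta(t,\a)\hookrightarrow \C^*_{\theta'}(t,\a)$ for $t$ large, then letting $\theta'\downarrow\theta$ via \eqref{e:choice}.

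For the lower bound, I would bound each intersection from above. Lemma \ref{intersection}(3) says the time-slice of $S\cap\phi^{-t}\c B^\a$ has length at most $\min(\e^2,\a)$; the Busemann-ratio calculation inside Lemma \ref{scaling1} gives $\bar\mu(\bP\times\c\bF)\le e^{2h\e}e^{-h|\c|}\bar\mu(\bP\times\bF)$ for $\c\in\C^*$; and Lemma \ref{intersection}(4) forces $|\c|\ge t-\a-\e^2$. Combining these with $m(S)=\e^2\bar\mu(\bP\times\bF)$ and $m(B^\a)=\a\bar\mu(\bP\times\bF)$, and using $\a+\e^2\le 2\e$ (since $\a\le\tfrac{3}{2}\e$ and $\e$ is small), yields the per-term estimate
\[
m(S\cap \phi^{-t}\c B^\a)\le e^{4h\e}e^{-ht}\min\{m(S),m(B^\a)\}.
\]
Summing over $\c$ and invoking mixing on the total produces
\[
\#\C(t,\a)\ge e^{-4h\e}e^{ht}\,\frac{m(S)\,m(B^\a)}{\min\{m(S),m(B^\a)\}}(1+o(1))=e^{-4h\e}e^{ht}\max\{m(S),m(B^\a)\}(1+o(1)),
\]
which is in particular at least $e^{-4h\e}e^{ht}m(B^\a)(1+o(1))$. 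To legitimately apply the $\C^*$-scaling to each $\c$ entering the sum, I first restrict to $\c\in\C_\rho(t,\a)$ with $\rho<\theta$, where the closing lemma guarantees $\C_\rho(t,\a)\subset\C^*_\theta(t,\a)$, run the estimates with $\rho$, and pass $\rho\uparrow\theta$ through \eqref{e:choice}; the same chain simultaneously supplies the lower bound for $\#\C^*_\theta(t,\a)$.

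The main obstacle is coordinating the scaling identity, which is only available on $\C^*$, with the mixing identity, which sums over all of $\C$. The closing lemma is precisely the bridge, but it forces a small inward/outward adjustment of $\theta$ that must be absorbed through the continuity \eqref{e:choice}. The remaining care is purely bookkeeping: tracking the factors $e^{\pm 4h\e}$ produced by the scaling ratios together with $|\c|=t+O(\e)$, and keeping the ratio $1+4\e^2/\a$ coming from Lemma \ref{depth1} cleanly visible in the final constants.
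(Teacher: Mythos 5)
Your proposal is correct and follows essentially the same route as the paper: sandwich the union of slices $\lS^\c_\theta$ between $\lS_\rho\cap\phi^{-t}\lB_\rho^\a$ and $\lS_\theta\cap\phi^{-(t+2\e^2)}\lB_\theta^{\a+4\e^2}$ via the closing lemma and Lemma \ref{depth1}, apply Corollary \ref{scaling2} and mixing of the unique MME, and close the $\rho$ versus $\theta$ gap with \eqref{e:choice}, handling $\#\C_\theta(t,\a)$ by the inclusion $\C^*_\theta(t,\a)\subset\C_\theta(t,\a)\subset\C^*_\rho(t,\a)$ for $\rho>\theta$. Your per-term upper bound on $m(S\cap\phi^{-t}\c B^\a)$ via Lemma \ref{intersection}(3)--(4) and the Busemann-ratio computation is just Corollary \ref{scaling2} unwound (with a harmless $\min\{m(S),m(B^\a)\}$ refinement), so the two arguments coincide.
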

\begin{proof}
Recall that $\a\in (0, \frac{3\e}{2}]$. By Lemmas \ref{closing} and \ref{depth1}, for any $0<\rho<\theta$ and $t$ large enough, we have
$$\lS_\rho\cap \phi^{-t}\lB_\rho^{\a}\subset \bigcup_{\c\in \C^*_\theta(t,\a)}\lS_\theta^\c \subset \lS_\theta \cap \phi^{-(t+2\e^2)}\lB_\theta^{\a+4\e^2}.$$
By Corollary \ref{scaling2}, $\lm(\lS_\theta^\c)=e^{\pm 4h\e}e^{-ht}\lm(\lS_\theta).$ Thus we have
\begin{equation*}
\begin{aligned}
e^{-4h\e}\lm(\lS_\rho\cap \phi^{-t}\lB_\rho^\a)&\le \#\C^*_\theta(t,\a)e^{-ht}\lm(\lS_\theta)\\
&\le e^{4h\e}\lm(\lS_\theta\cap \phi^{-(t+2\e^2)}\lB_\theta^{\a+4\e^2}).
\end{aligned}
\end{equation*}
Dividing by $\lm(\lS_\theta)\lm(\lB_\theta^\a)$ and using mixing of $\lm$, we get
\begin{equation}\label{e:sim}
\begin{aligned}
e^{-4h\e}\frac{m(S_\rho)m(B_\rho^\a)}{m(S_\theta)m(B_\theta^\a)} \lesssim \frac{\#\C^*_\theta(t,\a)}{e^{ht}m(B_\theta^\a)}
\lesssim e^{4h\e}\frac{m(B_\theta^{\a+4\e^2})}{m(B_\theta^\a)}.
\end{aligned}
\end{equation}
By \eqref{e:choice}, letting $\rho \nearrow \theta$, we obtain the first line of inequalities in the proposition.

To prove the second line, we consider $\theta<\rho< \theta_0$. Then by Lemma \ref{closing}, $\C^*_\theta(t,\a)\subset \C_\theta(t,\a) \subset \C^*_\rho(t,\a).$ By \eqref{e:sim},
\begin{equation*}
\begin{aligned}
e^{-4h\e}\frac{m(S_\rho)m(B_\rho^\a)}{m(S_\theta)m(B_\theta^\a)} &\lesssim \frac{\#\C^*_\theta(t,\a)}{e^{ht}m(B_\theta^\a)}\lesssim \frac{\#\C_\theta(t,\a)}{e^{ht}m(B_\theta^\a)}\\
&\lesssim \frac{\#\C^*_\rho(t,\a)}{e^{ht}m(B_\theta^\a)}\lesssim e^{4h\e}\frac{m(B_\rho^{\a+4\e^2})}{m(B_\theta^\a)}.
\end{aligned}
\end{equation*}
Letting $\rho\searrow \theta$ and by \eqref{e:choice}, we get the second line of inequalities in the proposition.
\end{proof}

\subsection{Upper bound for \texorpdfstring{$\#C(t)$}{\#C(t)}}
Recall that $C(t)$ is any maximal set of pairwise non-free-homotopic closed geodesics with length $(t-\e,t]$ in $M$ (see Theorem \ref{equi}). We shall obtain an upper bound and a lower bound respectively for $\#C(t)$.

Recall the definition of $\nu_t$ (see Theorem \ref{equi}), 
$$\nu_t:=\frac{1}{C(t)}\sum_{c\in C(t)}\frac{Leb_c}{t}.$$
Then we have
$$\#C(t)=\frac{\sum_{\lc\in \#C(t)} Leb_\lc(\lB_\theta^\a)}{t\nu_t(\lB_\theta^\a)}.$$
Define
$$\Pi(t):=\{\dot{\lc}(s)\in \pr H^{-1}(\bP\times \bF)\times \{0\}: \lc\in C(t), s\in \RR\}.$$
Then we have
\begin{equation}\label{e:ct}
\begin{aligned}
\#C(t)=\frac{\a\#\Pi(t)}{t\nu(B_\theta^\a)}.
\end{aligned}
\end{equation}

Now we define a map $\Theta: \Pi(t)\to \C(t,\e)$ as follows. Given $\lv\in \Pi(t)$, let $\ell=\ell(\lv)\in (t-\e,t]$ be such that $\phi^\ell\lv=\lv$. Let $v$ be the unique lift of $\lv$ such that $v\in H^{-1}(\bP\times \bF)\times \{0\}\subset B_\theta^\a$.
Define $\Theta(v)$ to be the unique axial isometry of $X$ such that $\phi^\ell v=\Theta(v) v$. Then $|\Theta(v)|=\ell$. If $\c=\Theta(v)$, then 
$$\phi^tv=\phi^{t-\ell}\c v\in \c B_\theta^\e,$$ 
i.e., $v\in S_\theta\cap \phi^{-t}\c B_\theta^\e$. So we get
\begin{equation}\label{e:theta}
\begin{aligned}
\Theta(\Pi(t))\subset \C(t,\e).
\end{aligned}
\end{equation}

To estimate $\#\Pi(t)$, we first show that $\Theta$ is injective.
\begin{lemma}\label{injective}
$\Theta$ is injective.
\end{lemma}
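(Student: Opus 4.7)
The plan is to suppose $\Theta(\lv_1)=\Theta(\lv_2)=\c$ for $\lv_1,\lv_2\in\Pi(t)$ and deduce $\lv_1=\lv_2$ by (i) comparing the two axes of $\c$ coming from $v_1,v_2$, (ii) using the unique connecting geodesic from Proposition \ref{crucial}, and (iii) pinning down the time parameter via the Hopf height $s(\cdot)$.

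First I will record that the lift $v_i\in H^{-1}(\bP\times\bF\times\{0\})$ of $\lv_i$ is unique: by Lemma \ref{diameter}(1) the diameter of $\pi H^{-1}(\bP\times\bF\times\{0\})$ is less than $\e/2<\inj(M)$, so two lifts lying in this set and projecting to the same vector in $SM$ would be related by a deck transformation of displacement less than $\inj(M)$, forcing them to coincide. Hence $\Theta$ is well defined on $\Pi(t)$ by the prescription in the paper. Writing $\ell_i=\ell(\lv_i)\in(t-\e,t]$, the relation $\phi^{\ell_i}v_i=\c v_i$ says exactly that $c_{v_i}$ is an axis of $\c$ (with $|\c|=\ell_1=\ell_2$).

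Next, by Lemma \ref{axis2}, the two axes $c_{v_1}$ and $c_{v_2}$ of $\c$ share endpoints at infinity, so $\{c_{v_1}(-\infty),c_{v_1}(+\infty)\}=\{c_{v_2}(-\infty),c_{v_2}(+\infty)\}$. Since $v_1^-,v_2^-\in\bP$ and $v_1^+,v_2^+\in\bF$ and $\bP\cap\bF=\emptyset$ (shrinking $\theta$ if necessary, which the standing hypotheses allow since $v_0^-\neq v_0^+$), the orientations agree and thus $v_1^-=v_2^-$ and $v_1^+=v_2^+$. Now Proposition \ref{crucial} says that for any pair $(\xi,\eta)\in\bP\times\bF$ there is a \emph{unique} connecting geodesic, so $c_{v_1}$ and $c_{v_2}$ coincide as unparametrized geodesics. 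Therefore $v_2=\phi^s v_1$ for some $s\in\RR$.

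Finally I will use the Hopf height to force $s=0$. Both $v_i$ lie in $H^{-1}(\bP\times\bF\times\{0\})$, so $s(v_1)=s(v_2)=0$. Since $s(\phi^s v_1)=s(v_1)+s=s$, we conclude $s=0$, hence $v_1=v_2$ and $\lv_1=\lv_2$. This shows $\Theta$ is injective. The main conceptual obstacle is ensuring that the two axes of $\c$ arising from $v_1,v_2$ are literally the same parametrized geodesic rather than two boundary geodesics of a generalized strip of axes of $\c$; that potential ambiguity is precisely what is ruled out by having placed the flow box around a regular vector $v_0\in\R_1$ and invoking the uniqueness clause of Proposition \ref{crucial}, which is why the assumption of continuous Green bundles enters.
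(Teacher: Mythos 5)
Your proof is correct and follows essentially the same route as the paper: pass to lifts, use Lemma \ref{axis2} to see the two axes of $\c$ are bi-asymptotic with matching endpoints in $\bP\times\bF$, conclude they are the same geodesic, and then use the normalization $s(\cdot)=0$ to pin down the basepoint. The one small stylistic difference is that you invoke the uniqueness clause of Proposition \ref{crucial} directly to conclude $c_{v_1}=c_{v_2}$, whereas the paper argues by contradiction: if the two axes were distinct they would bound a generalized strip (Lemma \ref{striplemma}), making $v,w$ singular, which is ruled out because Proposition \ref{crucial} forces vectors in $B_\theta^\a$ to be regular. These are logically equivalent, since the uniqueness in Proposition \ref{crucial} is precisely the absence of a strip; your phrasing is a touch more direct. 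Your extra remarks — uniqueness of the lift from the diameter bound of Lemma \ref{diameter}, and the orientation check via $\bP\cap\bF=\emptyset$ — are sound, though the latter is already implicit in the statement and proof of Lemma \ref{axis2}.
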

\begin{proof}
Suppose that $\lv, \lw\in \Pi(t)$ are such that $\Theta(\lv)=\Theta(\lw):=\c$.
Let $v,w\in B_\theta^\a$ be the lifts of $\lv, \lw$ respectively. Then by definition, both $c_v$ and $c_w$ are axes of $\c$. By Lemma \ref{axis2},  $v^+=w^+$ and $v^-=w^-$.

It follows that $c_v$ and $c_w$ are bi-asymptotic. If $c_v$ and $c_w$ are geometrically distinct, then they bound a generalized strip by Lemma \ref{striplemma}. So $v$ and $w$ are singular vectors, which is a contradiction by the definition of $B_\theta^\a$ and Proposition \ref{crucial}. So $v$ and $w$ lie on a common geodesic. As $v,w\in H^{-1}(\bP\times \bF\times \{0\})$, we have $v=w$ and hence $\lv=\lw$. So $\Theta$ is injective.
\end{proof}

\begin{remark}\label{axisissue}
Lemma \ref{injective} essentially uses that our flow box is contained in $\R_1$ by Proposition \ref{crucial}. If we take the center of the flow box inside $\R_0$, then the flow box might contain non-expansive vectors. In this case, if we follow the proof of \cite[Lemma 4.3]{CKW2}, we need use \cite[Lemma 2.12]{CKW2}, which is not available in our setting. See Remark \ref{cyclic}.
\end{remark}

\begin{proposition}\label{upper}
We have
$$\#C(t)\le \frac{\e \#\C(t,\e)}{t\nu_t(\lB^\e)}.$$
\end{proposition}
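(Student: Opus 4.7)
The proposition follows almost immediately by combining the identity \eqref{e:ct} with the properties of the map $\Theta: \Pi(t) \to \C(t,\e)$ already developed in the preceding discussion. The plan is simply to specialize $\a = \e$ in \eqref{e:ct} and then bound $\#\Pi(t)$ from above by $\#\C(t,\e)$.

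First I would recall why \eqref{e:ct} holds when $\a = \e$: each closed geodesic $\lc \in C(t)$ crosses the base slice $\pr H^{-1}(\bP \times \bF \times \{0\})$ exactly once for each element of $\Pi(t)$ it contributes, and each such crossing accounts for a segment of length $\e$ spent inside the flow box $\lB_\theta^\e$ (since the box has depth $\e$ in the flow direction). Therefore
\[
\sum_{\lc \in C(t)} \mathrm{Leb}_\lc(\lB_\theta^\e) = \e \cdot \#\Pi(t),
\]
which plugged into the defining expression for $\nu_t$ yields $\#C(t) = \frac{\e \, \#\Pi(t)}{t\, \nu_t(\lB_\theta^\e)}$.

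Next I would invoke \eqref{e:theta}, which asserts $\Theta(\Pi(t)) \subset \C(t,\e)$, together with Lemma \ref{injective}, which asserts that $\Theta$ is injective. The injectivity gives $\#\Pi(t) = \#\Theta(\Pi(t))$, and the inclusion then yields
\[
\#\Pi(t) = \#\Theta(\Pi(t)) \le \#\C(t,\e).
\]
Substituting this bound into the expression above produces the desired inequality
\[
\#C(t) \le \frac{\e \, \#\C(t,\e)}{t\, \nu_t(\lB_\theta^\e)}.
\]

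Since all of the substantive work has already been carried out in establishing \eqref{e:ct}, \eqref{e:theta}, and Lemma \ref{injective}, there is no real obstacle: the argument is essentially a tautology once $\Theta$ has been constructed and shown to be injective. The only subtlety worth emphasizing is that injectivity of $\Theta$ relied on our flow box being contained in $\R_1$ (so that any two distinct preimages would have to bound a generalized strip, contradicting regularity), which is exactly where the assumption of continuous Green bundles enters.
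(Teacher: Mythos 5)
Your proposal is correct and follows exactly the paper's argument: the paper's proof consists of the single line ``The proposition follows from \eqref{e:ct}, \eqref{e:theta} and Lemma \ref{injective}.'' You have simply filled in the routine substitution $\a=\e$ and the counting step $\#\Pi(t)=\#\Theta(\Pi(t))\le\#\C(t,\e)$, which is precisely what the paper intends.
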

\begin{proof}
The proposition follows from \eqref{e:ct}, \eqref{e:theta} and Lemma \ref{injective}.
\end{proof}
\subsection{Lower bound for \texorpdfstring{$\#C(t)$}{\#C(t)}}

First we deal with the multiplicity of $\c\in \C$. Given $\c\in \C$, let $d=d(\c)\in \NN$ be maximal such that $\c= \b^d$ for some $\b\in \C$. $\c\in \C$ is called \emph{primitive} if $d(\c)=1$, i.e., $\c\neq \b^d$ for any $\b\in \C$ and any $d\ge 2$.

Define $\C_2(\bP, \bF, t)$ to be the set of all $\c\in \C$ such that
\begin{enumerate}
  \item $\c$ has an axis $c$ with $c(-\infty)\in \bP, c(\infty)\in \bF$;
  \item $|\c|\in (t-\e,t]$;
  \item $d(\c)\ge 2$.
\end{enumerate}

\begin{lemma}\label{multi}
There exists $K>0$ such that for any $t>0$ we have
$$\sum_{\c\in \C_2(\bP, \bF, t)}d(\c)\le Ke^{\frac{2}{3}ht}.$$
\end{lemma}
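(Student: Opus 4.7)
The plan is to reduce the sum to a count over primitive axial isometries: writing each $\c\in \C_2(\bP,\bF,t)$ as $\c = \b^{d(\c)}$ with $\b$ primitive, one has $|\b|=|\c|/d(\c)\le t/2$, so the contribution of every non-primitive $\c$ is controlled by a primitive element whose length is at most $t/2$. Grouping by the value of $d = d(\c)$ and applying Proposition \ref{asymptotic} at length $t/d$ will yield a sum dominated by the $d=2$ term, which grows like $e^{ht/2}$ and is well below the claimed $e^{(2/3)ht}$ bound.

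Concretely, first I would observe that any axis $c$ of $\c = \b^d$ is also an axis of $\b$ (with translation $|\b|$), so $\b$ has an axis with endpoints in $\bP\times\bF$ and translation length in $((t-\e)/d,\,t/d]$. Setting $\ell_{\min}:=\inf\{|\c|:\c\in \C\setminus\{\mathrm{id}\}\}>0$ (positive by proper discontinuity and compactness of $M$), the admissible range of $d$ is $2\le d\le \lfloor t/\ell_{\min}\rfloor$, and the decomposition gives
$$\sum_{\c\in \C_2(\bP,\bF,t)}d(\c)\le \sum_{d=2}^{\lfloor t/\ell_{\min}\rfloor} d \cdot M(t/d),$$
where $M(s)$ denotes the number of $\b\in \C$ having an axis with endpoints in $\bP\times\bF$ and $|\b|\in (s-\e,s]$.

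Next, I would bound $M(s)\le C_1 e^{hs}$ uniformly in $s>0$ via Proposition \ref{asymptotic}. For each such $\b$, the axis $c$ passes within $\e/10$ of the basepoint $p=\pi(v_0)$ by Proposition \ref{crucial}, so for a suitable time-shift $t_0\in\RR$ the vector $v:=\dot c(t_0)$ lies in the slice $S=B_\theta^{\e^2}$. Then $\phi^{|\b|}v = \b_{*}v\in \b_{*}B_\theta^{\e^2}$, so $\b\in \C_\theta(|\b|,\e^2)$; covering the length window $(s-\e,s]$ by boundedly many sets of the form $\C_\theta(t,\a)$ and applying the upper estimate of Proposition \ref{asymptotic} yields $M(s)=O(e^{hs})$, with small-$s$ contributions (finite by proper discontinuity) absorbed into the constant. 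Substituting and using $e^{ht/d}\le e^{ht/2}$ for $d\ge 2$,
$$\sum_{d=2}^{\lfloor t/\ell_{\min}\rfloor} d \cdot M(t/d) \le C_1 e^{ht/2}\sum_{d=2}^{\lfloor t/\ell_{\min}\rfloor} d = O(t^2 e^{ht/2}).$$
Since $t^2 e^{ht/2}/e^{(2/3)ht}=t^2 e^{-ht/6}\to 0$ as $t\to\infty$, enlarging $K$ to absorb the behavior on any bounded interval of $t$ finishes the proof.

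The main obstacle will be producing the uniform bound $M(s)\le C_1 e^{hs}$ with constant independent of $s$. The asymptotic bound of Proposition \ref{asymptotic} gives $\#\C_\theta(t,\a)=O(e^{ht})$, but one must verify (i) that every $\b$ counted by $M(s)$ genuinely corresponds to an element of $\C_\theta(s,\a)$ for some fixed $\a$ independent of $s$, via the time-shift of $v=\dot c(t_0)$ into the slice $S$; and (ii) that the length window $(s-\e,s]$ is uniformly covered by such flow-box intersections, using the continuity of the Busemann function (Corollary \ref{continuous}) and the product structure of $B_\theta^\a$. Once this identification is made, the remaining geometric-series estimate is elementary and the looseness between $e^{ht/2}$ and $e^{(2/3)ht}$ gives ample room to absorb all polynomial error terms.
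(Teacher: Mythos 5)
Your argument is correct in substance, but it takes a genuinely different route from the paper's. The paper proves this lemma by following \cite[Lemma 4.5]{CKW2}: it invokes Freire--Ma\~n\'e to identify $h$ with the volume entropy of $X$, and then bounds the number of possible primitive roots $\b$ with $|\b|\le t/2$ by the orbit-counting estimate $\#\{\b: d(p,\b p)\le R\}=O(e^{(h+\epsilon)R})$, which with $\epsilon<h/3$ lands under $e^{\frac{2}{3}ht}$. You instead count the primitive roots dynamically, by showing that each $\b$ with axis endpoints in $\bP\times\bF$ and $|\b|\in(s-\e,s]$ belongs to $\C_\theta(s,\a)$ for a fixed $\a$ (e.g.\ $\a=\e+\e^2<\tfrac{3\e}{2}$: shift along the axis so that $v=\dot c(t_0)\in H^{-1}(\bP\times\bF\times\{0\})\subset S$, then $\phi^{|\b|}v=\b_*v$ and the depth computation puts $\phi^s v$ in $\b_*B_\theta^{\a}$), and then applying the upper bound of Proposition \ref{asymptotic}. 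Since Proposition \ref{asymptotic} is established before this lemma and does not depend on it, there is no circularity; your route is more self-contained within Section 7, at the cost of being asymptotic (you correctly absorb small $t$ into the constant via proper discontinuity), while the paper's route gives a bound valid for all $t$ directly and is independent of the flow-box machinery. One small correction: you assert that ``any axis $c$ of $\c=\b^d$ is also an axis of $\b$,'' which is the wrong direction of implication and is not automatic. The correct argument, and the one the paper singles out as the point requiring care, is the reverse: $\b$ has \emph{some} axis by \cite[Lemma 2.1]{CS}, every axis of $\b$ is an axis of $\c$, and since $\c$ has a regular axis with endpoints in $\bP\times\bF$ it has a \emph{unique} axis (by Proposition \ref{crucial} and the strip lemma, as in Lemma \ref{injective}), so $\b$'s axis is that one. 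With that fix your reduction $\sum_{\c}d(\c)\le\sum_{d\ge 2}d\cdot M(t/d)=O(t^2e^{ht/2})\le Ke^{\frac{2}{3}ht}$ goes through.
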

\begin{proof}
By Freire and Ma\~n\'e's theorem (cf.~\cite{FrMa}), the topological entropy of the geodesic flow coincides with the volume entropy, i.e., for any $x\in X$
$$h=\lim_{r\to \infty}\frac{\log \text{Vol}B(x,r)}{r}$$
where $B(x,r)$ is the open ball of radius $r$ centered at $x$ in $X$. Based on this fact, the proof is almost analogous to that in \cite[Lemma 4.5]{CKW2} with only minor modifications as follows.

If $\c=\b(\c)^{d(\c)}$ for some $\b(\c)\in \C$, we must argue that $\b(\c)$ has an axis with endpoints in $\bP$ and $\bF$, so that we can choose $v\in H^{-1}(\bP\times \bF\times \{0\})$ tangent to such an axis with $\phi^{|\b(\c)|}v=\b(\c)v$. We cannot use \cite[Lemma 2.10]{CKW2} in our setting. Nevertheless, since $\c$ has a regular axis with endpoints in $\bP$ and $\bF$, we know from the proof of Lemma \ref{injective} that this is the only axis for $\c$. Every axis of $\b(\c)$ is an axis of $\c$, so  $\b(\c)$ also has a unique axis, which is the axis of $\c$, as we want.

Repeat the remaining part of the proof of \cite[Lemma 4.5]{CKW2} and we are done.
\end{proof}

Recall that $\C'(t,\a):=\{\c\in \C^*(t,\a): \c\neq \b^n \text{\ for any\ } \b\in \C, n\ge 2\}$.

\begin{lemma}\label{lower}
Consider $\a=\e-4\e^2$. Then $\Theta(\Pi(t))\supset \C'(t-2\e^2, \a)$ and
$$\#C(t)\ge \frac{\a \#\C'(t-2\e^2,\a)}{t\nu_t(\lB^\a)}.$$
\end{lemma}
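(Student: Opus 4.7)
The plan is to show that every $\c\in\C'(t-2\e^2,\a)$ arises as $\Theta(\lv)$ for some $\lv\in\Pi(t)$; then the count follows from the identity \eqref{e:ct} together with the injectivity of $\Theta$ (Lemma \ref{injective}).

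Fix $\c\in\C'(t-2\e^2,\a)$ with $\a=\e-4\e^2$. First I would verify that the length of $\c$ lands in the window prescribed by $C(t)$: by Lemma \ref{intersection}(4) applied to $\c\in\C^*(t-2\e^2,\a)$,
\[|\c|\in\bigl[\,t-2\e^2-\a-\e^2,\ t-2\e^2+2\e^2\,\bigr]=[\,t-\e+\e^2,\ t\,]\subset(t-\e,t].\]
Next I would produce the axis. Since $\c\in\C^*$ we have $\c\bF\subset\bF$ and $\c^{-1}\bP\subset\bP$, and $\bP,\bF$ are closed topological disks in $\pX$, so Brouwer's fixed point theorem gives fixed points $\xi\in\bP$ and $\eta\in\bF$ of $\c$. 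By Proposition \ref{crucial}, there is a unique geodesic $c_{\xi,\eta}$ joining them, and it is regular (tangent to $\R_1$). Since $\{\xi,\eta\}$ is $\c$-invariant, $c_{\xi,\eta}$ is an axis of $\c$.

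The crucial step is to make sure that the associated closed geodesic is the representative of its free homotopy class chosen in $C(t)$. Because $\dot c_{\xi,\eta}\in\R_1$, any geodesic bi-asymptotic to $c_{\xi,\eta}$ would, by Lemma \ref{striplemma}, bound a nontrivial generalized strip with it, forcing $\dot c_{\xi,\eta}\in\R_1^c$ — a contradiction. Hence $c_{\xi,\eta}$ is the unique axis of $\c$ in $X$, and its projection $\lc:=\pr c_{\xi,\eta}$ is the unique closed geodesic in its free homotopy class on $M$. As $|\c|=|\lc|\in(t-\e,t]$, necessarily $\lc\in C(t)$. Pick $s\in\RR$ so that $v:=\dot c_{\xi,\eta}(s)$ satisfies $s(v)=0$, i.e. $H(v)\in\bP\times\bF\times\{0\}$. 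Then $\lv:=\pr v\in\pr H^{-1}(\bP\times\bF\times\{0\})$ is tangent to $\lc\in C(t)$, so $\lv\in\Pi(t)$.

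It remains to identify $\Theta(\lv)$ with $\c$, and here I would invoke primitivity. Let $\ell$ be the least period of $\lv$; by definition $\Theta(\lv)=\beta$ where $\phi^\ell v=\beta v$. Since $\phi^{|\c|}v=\c v$, we have $|\c|=n\ell$ and $\c=\beta^n$ for some $n\geq 1$. The hypothesis $\c\in\C'$ excludes $n\geq 2$, giving $n=1$ and $\Theta(\lv)=\c$. Therefore $\C'(t-2\e^2,\a)\subset\Theta(\Pi(t))$. Combining the injectivity of $\Theta$ from Lemma \ref{injective} with the identity
\[\#C(t)=\frac{\a\,\#\Pi(t)}{t\,\nu_t(\lB^\a)}\]
from \eqref{e:ct} yields
\[\#C(t)=\frac{\a\,\#\Pi(t)}{t\,\nu_t(\lB^\a)}\geq\frac{\a\,\#\Theta(\Pi(t))}{t\,\nu_t(\lB^\a)}\geq\frac{\a\,\#\C'(t-2\e^2,\a)}{t\,\nu_t(\lB^\a)},\]
which is the claimed inequality. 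The delicate point of the argument is the uniqueness of the closed geodesic representative in its homotopy class — without it one could lose some $\c$'s when passing to $C(t)$ — and this uniqueness rests on placing the flow box inside the open set $\R_1$ so that Proposition \ref{crucial} and the generalized strip lemma both apply.
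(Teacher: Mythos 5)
Your proof is correct and follows essentially the same route as the paper: the length window via Lemma \ref{intersection}(4), uniqueness of the closed geodesic in its free-homotopy class via regularity and the generalized strip lemma, and primitivity to identify $\Theta(\lv)=\c$. The only difference is that you spell out the existence of the regular axis with endpoints in $\bP\times\bF$ via Brouwer's fixed point theorem, a fact the paper takes for granted from its earlier discussion of $\C^*$ (e.g.\ in Lemma \ref{scaling1}); also note that injectivity of $\Theta$ is not actually needed for this lower bound, only the inclusion $\Theta(\Pi(t))\supset\C'(t-2\e^2,\a)$.
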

\begin{proof}
Let $\c\in \C'(t-2\e^2, \a)$.
Then there exists $v\in H^{-1}(\bP\times \bF\times \{0\})$ such that $\phi^{|\c|}v=\c v$.
By Lemma \ref{intersection}(4), we have
\begin{equation*}
\begin{aligned}
|\c|&\ge (t-2\e^2)-\a-\e^2=t-\e+\e^2>t-\e,\\
|\c|&\le (t-2\e^2)+2\e^2=t.
\end{aligned}
\end{equation*}
Since $\c$ is primitive, $\underline c_{\lv}$ is a closed geodesic with length $|\c|\in (t-\e,t]$.
Note that if $\underline c$ is another closed geodesic in the free-homotopic class of $\underline c_{\lv}$, then we can lift $\underline c$ to a geodesic $c$ such that $c$ and $c_v$ are bi-asymptotic. So $c$ and $c_v$ bound a generalized strip by Lemma \ref{striplemma}, which is a contradiction since $v$ is regular. It follows that $\underline c_{\lv}$ is the only geodesic in its free-homotopic class. Thus $\underline c_{\lv}\in C(t)$.

As a consequence, $v\in \Pi(t)$ and $\c=\Theta(v)$. So $\Theta(\Pi(t))\supset \C'(t-2\e^2, \a)$ and thus by \eqref{e:ct},
$$\#C(t)\ge \frac{\a \#\C'(t-2\e^2,\a)}{t\nu_t(\lB^\a)}.$$
\end{proof}

\begin{proposition}\label{lower1}
Consider $\a=\e-4\e^2$. We have
$$\#C(t)\ge \frac{\a }{t\nu_t(\lB^\a)}\cdot (\#\C^*(t-2\e^2,\a)-Ke^{\frac{2}{3}ht}).$$
\end{proposition}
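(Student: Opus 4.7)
The plan is to combine Lemma \ref{lower} with the multiplicity bound in Lemma \ref{multi}. I would decompose $\C^*(t-2\e^2,\a)$ into its primitive part $\C'(t-2\e^2,\a)$ and the complementary non-primitive part, and then show that the non-primitive contribution is absorbed by the $Ke^{\frac{2}{3}ht}$ term.

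First I would verify that every $\c\in \C^*(t-2\e^2,\a)$ possesses a regular axis whose endpoints lie in $\bP$ and $\bF$; this is already built into the proof of Lemma \ref{lower}, where one produces $v\in H^{-1}(\bP\times\bF\times\{0\})$ with $\phi^{|\c|}v=\c v$, so $c_v$ is such an axis. By Lemma \ref{intersection}(4) applied with parameter $t-2\e^2$, the translation length satisfies
\[|\c|\in [\,t-2\e^2-\a-\e^2,\; t-2\e^2+2\e^2\,] = [\,t-\e+\e^2,\; t\,]\subset (t-\e,\; t],\]
since $\a=\e-4\e^2$. Any non-primitive $\c\in \C^*(t-2\e^2,\a)\setminus\C'(t-2\e^2,\a)$ therefore satisfies all three conditions defining $\C_2(\bP,\bF,t)$: its translation length lies in $(t-\e,t]$, $d(\c)\ge 2$, and any axis of $\c$ is also an axis of the root $\b$ with $\c=\b^{d(\c)}$, so $\b$ (and hence $\c$) has an axis with endpoints in $\bP$ and $\bF$.

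Next I would apply Lemma \ref{multi}, bounding the number of such non-primitive elements by the sum of their multiplicities (each of which is at least one):
\[\#\bigl(\C^*(t-2\e^2,\a)\setminus\C'(t-2\e^2,\a)\bigr) \le \#\C_2(\bP,\bF,t) \le \sum_{\c\in\C_2(\bP,\bF,t)}d(\c)\le Ke^{\frac{2}{3}ht}.\]
This yields $\#\C'(t-2\e^2,\a)\ge \#\C^*(t-2\e^2,\a)-Ke^{\frac{2}{3}ht}$, and substituting into the bound of Lemma \ref{lower} gives the proposition. The main technical content has already been packaged into Lemmas \ref{lower} and \ref{multi}, so there is no serious obstacle at this stage; the only subtlety is the bookkeeping that identifies non-primitive elements of $\C^*(t-2\e^2,\a)$ as members of $\C_2(\bP,\bF,t)$, which reduces to a direct translation-length calculation.
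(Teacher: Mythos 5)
Your proposal is correct and follows essentially the same route as the paper: Lemma \ref{lower} gives the bound in terms of $\#\C'(t-2\e^2,\a)$, the translation-length computation from Lemma \ref{intersection}(4) places the non-primitive elements of $\C^*(t-2\e^2,\a)$ inside $\C_2(\bP,\bF,t)$, and Lemma \ref{multi} bounds their number by $Ke^{\frac{2}{3}ht}$. Your extra bookkeeping about the axis endpoints lying in $\bP$ and $\bF$ is exactly the verification the paper leaves implicit.
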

\begin{proof}
From the proof of Lemma \ref{lower}, we also see that $|\c|\in (t-\e,t]$ if $\c\in \C^*(t-2\e^2,\a)$. Thus
$$\C^*(t-2\e^2,\a)\setminus \C'(t-2\e^2,\a)\subset \C_2(\bP, \bF, t).$$
Then by Lemma \ref{multi},
$$\#\C^*(t-2\e^2,\a)- \#\C'(t-2\e^2,\a)\le Ke^{\frac{2}{3}ht}.$$
This together with Lemma \ref{lower} proves the proposition.
\end{proof}

\subsection{Equidistribution and completion of the proof}
The following result is standard in ergodic theory, which is a corollary of the classical proof of variational principle \cite[Theorem 9.10]{W}. The flow version is given in \cite[Proposition 4.3.14]{FH}.
\begin{lemma}\label{equilemma}
Let $Y$ be a compact metric space and $\{\phi^t\}_{t\in \RR}$ a continuous flow on $Y$. Fix $\e > 0$ and suppose that $E_t\subset Y$ is a $(t,\e)$-separated set for all sufficiently large $t$. Define the measures $\mu_t$ by
$$\mu_t(A) :=\frac{1}{\#E_t}\sum_{v\in E_t}\frac{1}{t}\int_0^t\chi_A(\phi^sv)ds.$$
If $t_k\to \infty$ and the weak$^*$ limit $\mu=\lim_{k\to \infty}\mu_{t_k}$
exists, then
$$h_\mu(\phi^1)\ge \limsup_{k\to \infty}\frac{1}{t_k}\log \#E_{t_k}.$$
\end{lemma}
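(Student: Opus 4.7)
The plan is to adapt the classical Misiurewicz/Walters proof of the upper bound in the variational principle (as in \cite[Theorem 9.10]{W} or \cite[Proposition 4.3.14]{FH}) to the flow setting. First I would reduce separation for the flow at scale $\e$ to separation for the time-one map $\phi:=\phi^1$ at a possibly smaller scale. By uniform continuity of the map $(v,s)\mapsto \phi^s v$ on the compact set $Y\times[0,1]$, choose $\e'>0$ such that $d(v,w)<\e'$ implies $d(\phi^sv,\phi^sw)<\e/2$ for all $s\in[0,1]$. If $E_t$ is $(t,\e)$-separated for the flow and $v\neq w\in E_t$, then some $s\in[0,t]$ with $d(\phi^sv,\phi^sw)>\e$; writing $s=i+u$ with $i\in\{0,\dots,\lfloor t\rfloor\}$ and $u\in[0,1]$ forces $d(\phi^iv,\phi^iw)\ge\e'$. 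Hence $E_t$ is $(n,\e')$-separated for $\phi$ with $n:=\lfloor t\rfloor+1$.

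Next I would pick a finite Borel partition $\xi=\{A_1,\dots,A_k\}$ of $Y$ with $\operatorname{diam}A_i<\e'$ and $\mu(\partial A_i)=0$ for every $i$; such a partition exists because for each $y\in Y$ only countably many radii $r$ give $\mu(\partial B(y,r))>0$. Setting $\sigma_t:=\frac{1}{\#E_t}\sum_{v\in E_t}\delta_v$ and $\xi^n:=\bigvee_{i=0}^{n-1}\phi^{-i}\xi$, the choice of diameter ensures each atom of $\xi^n$ contains at most one point of $E_t$, so
\[
H_{\sigma_t}(\xi^n)=\log\#E_t.
\]
I would then apply the standard block decomposition argument: fix an integer $q\ge 1$ and, for each $j\in\{0,\dots,q-1\}$, rewrite $\xi^n$ as a join of translates of $\xi^q$ at indices $j,j+q,j+2q,\dots$ plus at most $2q$ extra factors. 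Subadditivity of $H_{\sigma_t}$ followed by averaging over $j$ yields
\[
q\log\#E_t \le \sum_{i=0}^{n-1}H_{\sigma_t}(\phi^{-i}\xi^q)+2q^2\log k.
\]

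The next step is to push these pullback-entropies back onto a single measure by concavity. Let $\sigma_t^{\mathrm{disc}}:=\frac{1}{n}\sum_{i=0}^{n-1}(\phi^i)_*\sigma_t$; concavity of $\nu\mapsto H_\nu(\xi^q)$ gives
\[
\frac{1}{n}\sum_{i=0}^{n-1}H_{\sigma_t}(\phi^{-i}\xi^q)=\frac{1}{n}\sum_{i=0}^{n-1}H_{(\phi^i)_*\sigma_t}(\xi^q)\le H_{\sigma_t^{\mathrm{disc}}}(\xi^q).
\]
A routine comparison (Riemann-sum estimate using continuity of $\chi_A$ on $A$ with $\mu(\partial A)=0$) shows $\sigma_t^{\mathrm{disc}}-\mu_t\to 0$ in the weak$^*$ sense along $\{t_k\}$, and since $\mu_{t_k}\to\mu$ the measure $\mu$ is $\phi^s$-invariant for every $s\in\RR$ (the defining integral is translation-invariant up to $O(1/t)$ boundary terms). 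In particular $\mu(\partial A)=0$ for every $A\in\xi^q$, because $\partial A$ is a finite union of $\phi^{-i}\partial A_j$. By the Portmanteau theorem, $H_{\sigma_{t_k}^{\mathrm{disc}}}(\xi^q)\to H_\mu(\xi^q)$.

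Combining these ingredients and using $n_k/t_k\to 1$,
\[
q\limsup_{k\to\infty}\frac{1}{t_k}\log\#E_{t_k}\le H_\mu(\xi^q),
\]
and dividing by $q$ and letting $q\to\infty$ produces the metric entropy $h_\mu(\phi^1,\xi)\le h_\mu(\phi^1)$ on the right. The only real technical point is the passage from the continuous-time average $\mu_t$ to the discrete-time average $\sigma_t^{\mathrm{disc}}$ and the verification that $\mu$ is flow-invariant so that the Portmanteau step applies to every atom of $\xi^q$; both are standard but deserve care in writing. All other steps are the verbatim Misiurewicz argument, and no structure particular to geodesic flows on manifolds without conjugate points is needed.
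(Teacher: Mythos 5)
Your overall route is exactly the standard Misiurewicz argument that the paper invokes by citation (it gives no proof, referring to \cite[Theorem 9.10]{W} and \cite[Proposition 4.3.14]{FH}), and the reduction to the time-one map, the choice of partition, the counting identity $H_{\sigma_t}(\xi^n)=\log\#E_t$, and the block decomposition are all correct. However, there is a genuine gap at the step you yourself flag as the "only real technical point": the claim that $\sigma_t^{\mathrm{disc}}-\mu_t\to 0$ in the weak$^*$ sense is false. Writing things out, one has
\begin{equation*}
\mu_t=\frac{1}{t}\int_0^t(\phi^s)_*\sigma_t\,ds=\int_0^1(\phi^u)_*\sigma_t^{\mathrm{disc}}\,du+O(1/t),
\end{equation*}
and a discrete orbit average with mesh $1$ is not asymptotic to the corresponding continuous orbit average: for instance, if $v$ lies on a periodic orbit of period $1$, then $\frac{1}{n}\sum_{i=0}^{n-1}f(\phi^iv)=f(v)$ while $\frac{1}{t}\int_0^tf(\phi^sv)\,ds\approx\int_0^1f(\phi^sv)\,ds$. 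So if $\sigma_{t_k}^{\mathrm{disc}}\to\nu$ along a further subsequence, the limit of $\mu_{t_k}$ is $\mu=\int_0^1(\phi^u)_*\nu\,du$, which in general differs from $\nu$; consequently your Portmanteau step computes $H_\nu(\xi^q)$, not $H_\mu(\xi^q)$, and moreover the partition must be chosen with $\nu$-null (not merely $\mu$-null) boundaries for that convergence to hold.

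The repair is standard but needs to be said: pass to a subsequence so that $\sigma_{t_k}^{\mathrm{disc}}\to\nu$; note $\nu$ is $\phi^1$-invariant by the usual telescoping argument; choose the partition so that $\mu(\partial A_i)=\nu(\partial A_i)=0$ (only countably many radii are bad for each of the two measures); run your chain of inequalities to get $\limsup_k\frac{1}{t_k}\log\#E_{t_k}\le\frac{1}{q}H_\nu(\xi^q)$ and hence $\le h_\nu(\phi^1)$. Finally, since $\mu=\int_0^1(\phi^u)_*\nu\,du$ and $h_{(\phi^u)_*\nu}(\phi^1)=h_\nu(\phi^1)$ (the map $\phi^u$ conjugates $\phi^1$ with itself), affinity of the entropy map gives $h_\mu(\phi^1)=h_\nu(\phi^1)$, which yields the stated bound. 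With these two adjustments your proof is complete; without them the final inequality is asserted for the wrong measure.
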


\begin{proof}[Proof of Theorem \ref{equi}]
First we prove the following claim.\\
\textbf{Claim:} The set $\{\underline{\dot  c}(0): c\in C(t)\}$ is $(t,\e)$-separated for any $0<\e<\inj(M)/2$.

Assume the contrary, namely, $C(t)$ contains two closed geodesics $\underline c_1, \underline c_2$ in distinct free-homotopic classes
such that $d(\underline c_1(s), \underline c_2(s))\le \e< \inj(M)/2$ for all $s\in [0,t]$. Define $\lv=\underline{\dot c}_1(0)$ and $\lw=\underline{\dot c}_2(0)$.
We can lift  $\lv, \lw$ to $v,w\in SX$, and $\underline c_1, \underline c_2$ to $c_1,c_2$ respectively, such that $d(c_1(s), c_2(s))\le \e$ for all $s\in [0,t]$.
Moreover, there exist $\c_1,\c_2\in \C$ and $t_1,t_2\in (t-\e,t]$ such that $\c_1v=\phi^{t_1}v$ and $\c_2w=\phi^{t_2}w$. Then
\begin{equation*}
\begin{aligned}
d(\c_2^{-1}\c_1 c_1(0), c_2(0))=&d(\c_2^{-1}c_1(t_1), \c_2^{-1}c_2(t_2))=d(c_1(t_1), c_2(t_2))\\
\le &d(c_1(t_1), c_2(t_1))+|t_2-t_1|\le \e+2\e=3\e.
\end{aligned}
\end{equation*}
Hence $d(\c_2^{-1}\c_1 c_1(0), c_2(0))\le 3\e<2 \inj(M)$, which is possible only if $\c_1=\c_2$. Then $c_1$ and $c_2$ are both axes for a common $\c:=\c_1=\c_2$. By Lemma \ref{axis2}, we see that $c_1$ and $c_2$ must be bi-asymptotic and that $\underline c_1$ and $\underline c_2$ are free-homotopic. A contradiction, so the claim holds.

Now by Propositions \ref{lower1} and \ref{asymptotic}, we know
\begin{equation*}
\begin{aligned}
\#C(t)\ge  &\frac{\a }{t\nu_t(\lB^\a)}\cdot (\#\C^*(t-2\e^2,\a)-Ke^{\frac{2}{3}ht})\\
\gtrsim  &\frac{\a }{t\nu_t(\lB^\a)}\cdot (e^{-4h\e}e^{ht}m(B_\theta^\a)-Ke^{\frac{2}{3}ht}).
\end{aligned}
\end{equation*}
So $\liminf_{t\to \infty}\frac{1}{t}\log \#C(t)\ge h.$ Applying Lemma \ref{equilemma}, we know any limit measure of $\nu_t$ has entropy equal to $h$, and thus it must be $m$, the unique MME. This proves Theorem \ref{equi}.
\end{proof}

\begin{proposition}\label{sumup}
We have
$$\#C(t)\sim e^{\pm Q\e}\frac{\e}{t}e^{ht}$$
where $Q>0$ is a universal constant depending only on $h$.
\end{proposition}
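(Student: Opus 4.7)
The plan is to combine the upper bound in Proposition \ref{upper}, the lower bound in Proposition \ref{lower1}, the asymptotic counting estimates in Proposition \ref{asymptotic}, and the equidistribution result Theorem \ref{equi}, and then absorb the multiplicative error factors into a single exponential $e^{Q\e}$.

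First I will use Theorem \ref{equi} together with the choice condition \eqref{e:choice} (which asserts $m(\p B_\theta^\a)=0$ for every admissible depth $\a$). Since $\nu_t \to m$ in the weak$^*$ topology and $\lB^\a$ has boundary of $m$-measure zero, the Portmanteau theorem yields
\[\lim_{t\to\infty}\nu_t(\lB^\a)=m(B_\theta^\a)\]
for both $\a=\e$ and $\a=\e-4\e^2$. This lets me replace the denominator $t\nu_t(\lB^\a)$ in the bounds of Propositions \ref{upper} and \ref{lower1} by $t\,m(B_\theta^\a)$ up to a $(1+o(1))$ factor.

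Next, I will plug in Proposition \ref{asymptotic}. For the upper bound, applied with $\a=\e$, Proposition \ref{upper} combined with the $\lesssim$ estimate in Proposition \ref{asymptotic} gives
\[\#C(t)\;\lesssim\;\frac{\e}{t\,m(B_\theta^\e)}\cdot e^{4h\e}\Big(1+\tfrac{4\e^2}{\e}\Big)\,e^{ht}\,m(B_\theta^\e)\;=\;e^{4h\e}(1+4\e)\,\frac{\e\,e^{ht}}{t}.\]
For the lower bound, applied with $\a=\e-4\e^2$, Proposition \ref{lower1} combined with the $\gtrsim$ estimate of Proposition \ref{asymptotic} gives
\[\#C(t)\;\gtrsim\;\frac{\e-4\e^2}{t\,m(B_\theta^\a)}\Big(e^{-4h\e}e^{h(t-2\e^2)}\,m(B_\theta^\a)-Ke^{\tfrac{2}{3}ht}\Big).\]
Because $\tfrac{2}{3}h<h$, the term $Ke^{\tfrac{2}{3}ht}$ is exponentially smaller than $e^{ht}m(B_\theta^\a)$, so it contributes $o(1)$ once divided through, and we obtain
\[\#C(t)\;\gtrsim\;e^{-4h\e-2h\e^2}(1-4\e)\,\frac{\e\,e^{ht}}{t}.\]

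Finally, I will package the two-sided estimate in the stated form. Both the upper factor $e^{4h\e}(1+4\e)$ and the reciprocal of the lower factor $e^{-4h\e-2h\e^2}(1-4\e)$ are bounded by $e^{Q\e}$ for a universal constant $Q>0$ depending only on $h$ (for instance, using $1+4\e\le e^{4\e}$ and $(1-4\e)^{-1}\le e^{8\e}$ for $\e\in(0,\tfrac18)$, we may take $Q=4h+10$). This yields
\[e^{-Q\e}\,\frac{\e\,e^{ht}}{t}\;\lesssim\;\#C(t)\;\lesssim\;e^{Q\e}\,\frac{\e\,e^{ht}}{t},\]
which is exactly the conclusion $\#C(t)\sim e^{\pm Q\e}\tfrac{\e}{t}e^{ht}$. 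The substantive content has already been done in the preceding propositions; the only thing to be careful about is that the equidistribution Theorem \ref{equi} is invoked with the right sets of continuity (i.e.\ with $\theta$ chosen so that \eqref{e:choice} holds), and that the error term $Ke^{\tfrac{2}{3}ht}$ from Lemma \ref{multi} is indeed of lower exponential order than the leading term, which is clear since $h>0$.
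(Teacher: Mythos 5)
Your proof is correct and follows essentially the same route as the paper's: pass from $\nu_t(\lB^\a)$ to $m(B^\a)$ via Theorem \ref{equi} and \eqref{e:choice}, apply Proposition \ref{upper} with Proposition \ref{asymptotic} at $\a=\e$ for the upper bound, apply Proposition \ref{lower1} with Proposition \ref{asymptotic} at $\a=\e-4\e^2$ for the lower bound (discarding the $Ke^{\frac{2}{3}ht}$ term as lower order since $h>0$), and absorb the factors $e^{4h\e}(1+4\e)$ and $(1-4\e)e^{-4h\e}e^{-2h\e^2}$ into $e^{\pm Q\e}$ using $0<\e<\frac18$. The only quibble is that your sample value $Q=4h+10$ does not quite cover the lower-bound factor when $h$ is large (one needs roughly $Q\ge 4.25h+8$), but this is immaterial since any $Q$ depending only on $h$ suffices.
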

\begin{proof}
By Theorem \ref{equi} and \eqref{e:choice}, we have $\nu_t(\lB^\a)\to \lm(\lB^\a)=m(B^\a)$ for $\a=\e$ and $\a=\e-4\e^2$.
\begin{enumerate}
    \item Consider $\a=\e$. By Propositions \ref{upper} and \ref{asymptotic}, we have
$$\#C(t)\lesssim \frac{\e \#\C(t,\e)}{tm(\lB^\e)} \lesssim e^{4h\e}(1+4\e)\frac{\e}{t}e^{ht}.$$
\item Now consider $\a=\e-4\e^2$. By Propositions \ref{lower1} and \ref{asymptotic},
\begin{equation*}
\begin{aligned}
\#C(t)\gtrsim &\frac{\a }{tm(\lB^\a)}\cdot (\#\C^*(t-2\e^2,\a)-Ke^{\frac{2}{3}ht})\\
\gtrsim &(1-4\e)e^{-4h\e}\frac{\e}{t}e^{-2h\e^2}e^{ht}.
\end{aligned}
\end{equation*}
\end{enumerate}
As $0<\e<\frac{1}{8}$, there exists a universal $Q>0$ depending only on $h$ such that
$\#C(t)\sim e^{\pm Q\e}\frac{\e}{t}e^{ht}$.
\end{proof}

\begin{proof}[Proof of Theorem \ref{margulis}]
The last step of the proof of Theorem \ref{margulis} is to estimate $\#P(t)$ via $\#C(t)$ using a Riemannian sum argument.
Indeed, a verbatim repetition of the proof in \cite[Section 6.2]{CKW2} gives
$$\#P(t)\sim e^{\pm 2(Q+h)\e}\frac{e^{ht}}{ht}.$$
Since $\e>0$ can be arbitrarily small, we get $\#P(t)\sim \frac{e^{ht}}{ht}$ which completes the proof of Theorem \ref{margulis}.
\end{proof}

\section{Volume Asymptotics}

Let $M=X/\C$ be a closed $C^\infty$ visibility manifold without conjugate points and with continuous Green bundles. Suppose that the geodesic flow has a hyperbolic periodic point. We prove Theorem \ref{margulis2} in this section, that is,
$$b_t(x)/\frac{e^{ht}}{h} \sim c(x),$$
where $b_t(x)$ is the Riemannian volume of the ball of radius $t>0$ around $x\in X$, $h$ the topological entropy of the geodesic flow, and $c: X\to \RR$ is a continuous function. The strategy is to use the mixing properties of the MME to count the number of intersections of a flow box under the geodesic flow with another flow box. However, we have to deal with countably many pairs of flow boxes centered at regular vectors.

\subsection{Regular partition-cover and local uniform expansion}
We first prove two auxiliary lemmas which demonstrate that regular vectors are abundant inside stable horospherical manifolds and unit tangent spheres.

\begin{lemma}\label{recreg}
Assume that $v \in \R_1$ is a recurrent vector, then any $w \in \F^{s}(v)$ is in $\R_1$.
\end{lemma}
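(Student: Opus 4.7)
The plan is a contradiction argument exploiting recurrence of $v$. Suppose $w \in \F^s(v) \setminus \R_1$. Since Green bundles are continuous, $\R_1$ is open, and since $d\phi^t$ maps $G^{s/u}$ to $G^{s/u}$, the set $\R_1^c$ is closed and $\phi^t$-invariant.

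First I would lift to $SX$. Pick $t_n \to \infty$ with $\phi^{t_n} v \to v$ in $SM$, fix lifts $\tilde v$ and $\tilde w \in \F^s(\tilde v)$, and choose deck transformations $\c_n \in \C$ so that $\tilde v_n := \c_n \phi^{t_n} \tilde v \to \tilde v$; set $\tilde w_n := \c_n \phi^{t_n} \tilde w \in \F^s(\tilde v_n)$. By Proposition \ref{horofoliation}(8), the leafwise distance satisfies $d_s(\tilde v_n, \tilde w_n) \le A d_s(\tilde v, \tilde w) + B =: C$, so $\{\tilde w_n\}$ stays in a Sasaki-bounded neighborhood of $\tilde v$. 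Extract a subsequence $\tilde w_{n_k} \to \tilde w'$ in $SX$; by continuity of the horospherical foliation, $\tilde w' \in \F^s(\tilde v)$.

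Next I would show $\tilde w'$ is bi-asymptotic to $\tilde v$. Forward asymptoticity is automatic. For backward, apply Proposition \ref{horofoliation}(8) at time $t_{n_k} - \tau \ge 0$ for each fixed $\tau > 0$ to obtain $d_s(\phi^{-\tau} \tilde v_{n_k}, \phi^{-\tau} \tilde w_{n_k}) \le C$. Continuity of $\phi^{-\tau}$ together with continuity of the leafwise metric as the leaf varies yields $d_s(\phi^{-\tau} \tilde v, \phi^{-\tau} \tilde w') \le C$ in the limit; projecting to $X$ then gives $d(c_{\tilde v}(-\tau), c_{\tilde w'}(-\tau)) \le C$ for all $\tau > 0$. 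Hence $\tilde w' \in \F^s(\tilde v) \cap \F^u(\tilde v) = \mathcal{I}(\tilde v)$.

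Finally I would split on whether $\tilde w' = \tilde v$. If yes, then projecting back to $SM$ gives $\phi^{t_{n_k}} w \to v$; since each $\phi^{t_{n_k}} w \in \R_1^c$ by invariance and $\R_1^c$ is closed, $v \in \R_1^c$, contradicting $v \in \R_1$. If no, then $\mathcal{I}(v) \supsetneq \{v\}$. But in our setting $\R_1 \subseteq \R_0$: Proposition \ref{horofoliation}(6) identifies $G^s(v) = T_v \F^s(v)$ and $G^u(v) = T_v \F^u(v)$, so $v \in \R_1$ makes these smooth horospherical leaves transverse at $v$, rendering $\{v\}$ isolated in $\mathcal{I}(v)$; connectedness of $\mathcal{I}(v)$ (Lemma \ref{compact}) then forces $\mathcal{I}(v) = \{v\}$, again a contradiction. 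The main technical point is extracting $\tilde w'$ and pushing the leafwise Lipschitz estimate to negative times; the transversality argument used to rule out the second case is short once Proposition \ref{horofoliation}(6) is in hand.
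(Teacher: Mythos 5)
Your proof is correct and follows essentially the same route as the paper's: use recurrence to pull $\phi^{t_n}w$ back near $v$ via deck transformations, extract a limit $w'\in\F^s(v)$ that stays within bounded distance of $v$ for all time, and derive a contradiction with $v\in\R_1$. The only cosmetic difference is that the paper collapses your two cases into one by observing immediately that $w'\in\R_1^c$ (closedness plus flow-invariance), hence $w'\neq v$, and then invokes the generalized strip to conclude $v\in\R_0^c\subset\R_1^c$ — the containment $\R_1\subset\R_0$ that you justify explicitly via the tangency of the Green bundles to the horospherical foliations and the connectedness of $\mathcal{I}(v)$.
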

\begin{proof}
First of all, since $v$ is a recurrent vector in $SX$, then we can find a sequence $\{\alpha_{n}\}^{\infty}_{n=1}\subset \Gamma$ and a sequence of time $\{t_{n}\}^{\infty}_{n=1}$ with $t_{n}\rightarrow +\infty$, such that
\begin{equation}\label{rec v}
d\alpha_{n}(\phi^{t_n}v)\to v,~~~n\to +\infty.
\end{equation}

Now take an arbitrary vector $w \in \F^{s}(v)$. Assume the contrary that $w\in \R_1^c$. By Proposition \ref{horofoliation}, the function $t \to d(\phi^{t}(w), \phi^{t}(v))$ is bounded above by some constant $C>0$. 
Then for each $t_n$ in \eqref{rec v}, we have for any $s \in [-t_{n},+\infty)$,
\begin{equation}\label{e:recc}
\begin{aligned}
&d(\phi^{t_{n}+s}(w),\phi^{t_{n}+s}(v))= d(d\alpha_{n}\circ\phi^{t_{n}+s}w,d\alpha_{n}\circ\phi^{t_{n}+s}v)\\
= & d(\phi^{s}\circ d\alpha_{n}\circ\phi^{t_{n}}w,\phi^{s}\circ d\alpha_{n}\circ\phi^{t_{n}}v).      
\end{aligned}
\end{equation}

Consider $s=0$ in \eqref{e:recc}. Note that $d(\phi^{t_{n}}(w),\phi^{t_{n}}(v))\leq C$, and $d\alpha_{n}(\phi^{t_{n}}v)\rightarrow v$.
So we know that for any $\epsilon>0$ there is an integer $N>0$ such that for any $n>N$, 
$$d(v,d\alpha_{n}(\phi^{t_{n}}w))\leq C+\epsilon.$$
This implies that the set $\{d\alpha_{n}(\phi^{t_{n}}w)\}$ has an accumulate point. Without loss of generality, we assume $d\alpha_{n}(\phi^{t_{n}}w)\rightarrow w'\in \R_1^c$ as $\R_1^c$ is a closed subset. 

Letting $n\rightarrow +\infty$ in \eqref{e:recc}, we get that
$$d(\phi^{s}(w'),\phi^{s}(v)) \leq  C, ~~~\forall~s \in \mathbb{R}.$$
Since $w'\neq v$, it follows that the geodesics $c_{v}$  and $c_{w'}$ bound a nontrivial generalized strip. Then $v\in \R_0^c\subset \R_1^c$ which contradicts to the assumption that $v\in \R_1$. We are done with the proof.
\end{proof}

Recall that for any $x\in X$ the bijection $f_x: S_xX\to \pX$ is defined by $f_x(v)=v^+, v\in S_xX$. Let $\tilde \mu_x:=(f_x^{-1})_*\mu_p$ which is a finite Borel measure on $S_xX$.
\begin{lemma}\label{null}
For any $x\in X$, we have $\tilde \mu_x(\R_1\cap S_xX)=\tilde \mu_x(S_xX)$.
\end{lemma}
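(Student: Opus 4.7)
The plan is to reduce the statement to a fact about the Bowen-Margulis measure $m$ (which under our running assumptions equals the unique MME by Corollary~C.1), namely that $m$-a.e.\ vector is simultaneously in $\R_1$ and recurrent, and then exploit Lemma~\ref{recreg} to propagate regularity along entire stable horospheres. Since the vector $w_x:=f_x^{-1}(\xi)\in S_xX$ lies on the stable horospherical leaf of any vector pointing at $\xi$, it will suffice to show that $\mu_p$-a.e.\ $\xi\in\pX$ is the forward endpoint of some recurrent vector that lies in $\R_1$.

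First I would establish $m(\R_1)=1$. Under the assumption of continuous Green bundles together with the existence of a hyperbolic periodic point, Theorem~\ref{regular} shows that, for any invariant measure, the Pesin regular set $\Delta$ coincides $\mu$-a.e.\ with $\R_1$. Since the MME $m$ is hyperbolic by Corollary~C.1, we have $m(\Delta)=1=m(\R_1)$. Combining this with Poincar\'e recurrence applied to the finite invariant measure $m$, the set $\mathcal A\subset SM$ of vectors which are recurrent under $\phi^t$ and belong to $\R_1$ has full $m$-measure. Lifting to $SX$, $\tilde m$-a.e.\ $v\in SX$ lies in $\R_1$ and satisfies $d\alpha_n(\phi^{t_n}v)\to v$ for some $t_n\to\infty$ and $\alpha_n\in\C$ --- which is precisely the notion of recurrence required by Lemma~\ref{recreg}.

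Next I would invoke the product decomposition of $\tilde m$ provided by Proposition~\ref{BMMME}:
\[
\tilde m(A)=\int_{\partial^2 X}\lambda_{\xi,\eta}(A)\,d\bar\mu(\xi,\eta),
\]
where $P^{-1}(\xi,\eta)$ is a single geodesic for $\bar\mu$-a.e.\ $(\xi,\eta)$. Since $\R_1$ and recurrence are both $\phi^t$-invariant, the full-measure property transfers to $\bar\mu$: the set
\[
E:=\{(\eta,\xi)\in\partial^2X:\text{the connecting geodesic contains a recurrent vector lying in }\R_1\}
\]
satisfies $\bar\mu(E^c)=0$. Because $d\bar\mu(\xi,\eta)=e^{h\beta_p(\xi,\eta)}d\mu_p(\xi)d\mu_p(\eta)$ and the density $e^{h\beta_p(\xi,\eta)}$ is bounded and strictly positive on any compact subset of $\partial^2 X$ (and globally $\mu_p\otimes\mu_p$ gives zero mass to the diagonal, which is the complement of $\partial^2 X$, by Lemma~\ref{support} and standard Patterson-Sullivan arguments), $\bar\mu$ and $\mu_p\otimes\mu_p$ are mutually absolutely continuous. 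Hence $(\mu_p\otimes\mu_p)(E^c)=0$, and Fubini yields $\mu_p$-a.e.\ $\xi\in\pX$ with $\mu_p(\{\eta:(\eta,\xi)\in E\})=\mu_p(\pX)>0$; in particular this set is nonempty.

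For such a $\xi$, fix $\eta$ with $(\eta,\xi)\in E$ and let $v$ be the corresponding recurrent vector in $\R_1$ with $v^+=\xi$. The vector $w_x:=f_x^{-1}(\xi)\in S_xX$ satisfies $w_x^+=\xi=v^+$, so $w_x\in\F^s(v)$. Lemma~\ref{recreg} then gives $w_x\in\R_1$. Pulling back through $f_x$ we obtain $\tilde\mu_x(\R_1\cap S_xX)=\tilde\mu_x(S_xX)$. The most delicate point is the transfer from $\bar\mu$-null to $\mu_p\otimes\mu_p$-null, which requires a clean statement of absolute continuity between the two measures on $\partial^2X$; however, continuity and positivity of the Gromov product on compact subsets of $\partial^2X$, together with the fact that the diagonal carries no mass for $\mu_p\otimes\mu_p$ (which uses that $\C$ is non-elementary and the construction of $\mu_p$), makes this routine.
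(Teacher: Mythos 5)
Your argument is correct and follows essentially the same route as the paper: show $\mathrm{Rec}\cap\R_1$ has full $m$-measure, use the product structure of the Bowen--Margulis measure to conclude that $\mu_p$-a.e.\ $\xi$ is the forward endpoint of a recurrent regular vector, and then propagate regularity to $f_x^{-1}(\xi)$ via Lemma~\ref{recreg}; you merely spell out the measure-transfer step that the paper compresses into one sentence. The only cosmetic point is that $w_x^+=v^+$ places $w_x$ in the weak stable leaf $\F^{0s}(v)$ rather than $\F^s(v)$, so one should replace $v$ by $\phi^t v$ for suitable $t$ (which is still recurrent and in $\R_1$ by flow invariance) before applying Lemma~\ref{recreg}.
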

\begin{proof}

Let $\Rec\subset SM$ be the subset of vectors recurrent under the geodesic flow. Then its lift to $SX$, which is also denoted by $\Rec$, has full $m$-measure. By assumption of Theorem \ref{margulis2} and Corollary C.1, $\R_1$ also has full $m$-measure, and thus $\Rec\cap \R_1$ has full $m$-measure. Define $R:=\{v^+: v\in \Rec\cap \R_1\}$. By definition of the Bowen-Margulis measure, we see that $R$ has full $\mu_x$-measure.

Let $v\in \Rec\cap \R_1$. By Lemma \ref{recreg}, for every $w\in \F^s(v)$, we have $w\in \R_1$. It follows that $R\cap \Sing^+=\emptyset$ where $\Sing^+:=\{v^+: v\in \R_1^c\}$. So $\mu_x(\Sing^+)=0$. The lemma then follows.
\end{proof}
Let $\F$ be a fundamental domain of $\C$. Fix $x,y\in \F\subset X$, and $p=x$ the reference point. For each regular vector $w\in S_{x}X\cap \R_1$, we can construct a local product flow box around $w$ as in Subsection 7.3. More precisely, consider the interior of $B_\theta^\a(w)$, $\text{int} B_\theta^\a(w)$, which is an open neighborhood of $w$ for some $\a>0$ and $0<\theta<\theta_0$ (here $\theta_0$ depends on $w$). By second countability of $S_xX$, there exist countably many regular vectors $w_1, w_2, \cdots$ such that $S_yX\cap \R_1 \subset \cup_{i=1}^\infty\text{int} B_{\theta_i}^\a(w_i)$.
Similarly, there exist countably many regular vectors $v_1, v_2, \cdots$ such that $S_yX\cap \R_1 \subset \cup_{i=1}^\infty\text{int} B_{\theta'_i}^\a(v_i)$. We note that the reference point $p$ is always chosen to be $x$ in the construction of all above flow boxes.

A \emph{regular partition-cover} of $S_xX$ is a triple $(\{w_i\}, \{\text{int} B_{\theta_i}^\a(w_i)\}, \{N_i\})$ where $\{N_i\}$ is a disjoint partition of $\R_1\cap S_xX$ and such that $N_i\subset \text{int} B_{\theta_i}^\a(w_i)$ for each $i\in\NN$. Similarly a regular partition-cover of $S_yX$ is a triple $(\{v_i\}, \{\text{int} B_{\theta'_i}^\a(v_i)\}, \{V_i\})$ such that  $\{V_i\}$ is a disjoint partition of $\R_1\cap S_yX$ and $V_i\subset \text{int} B_{\theta'_i}^\a(v_i)$ for each $i\in\NN$.

In the next subsection, we will first consider a pair of $V_i$ and $N_j$ from the regular partition-covers of $S_xX$ and $S_yX$ respectively. At last, we will sum up our estimates over countably many such pairs from  regular partition-covers.

Consider a pair of $V_i$ and $N_j$ from the regular partition-covers of $S_xX$ and $S_yX$ respectively. For simplicity, we just denote $V:=V_i$ and $N:=N_j$. Then $N\subset \text{int}B_{\theta_j}^\a(w_0)$ and $V\subset \text{int}B_{\theta'_i}^\b(v_0)$ for some $w_0\in S_xX$ and $v_0\in S_yX$. We also denote for $a>0$,
\begin{equation*}
\begin{aligned}
B_aN&:=\{v\in S_xX: d(v,N)\le a\}\\
B_{-a}N&:=\{v\in N: B(v,a)\subset N\}.
\end{aligned}
\end{equation*}
Write $t_0:=s(v_0)=b_{v_0^-}(\pi v_0,p)$ where $p=x$. We denote the flow boxes by
\begin{equation*}
\begin{aligned}
&N^{\a}:=H^{-1}(N^-\times N^+\times [-\a,\a]),\\
&V^\b:=H^{-1}(V^-\times V^+\times (t_0+[-\b,\b])).
\end{aligned}
\end{equation*}

Notice that $N^\a\subset \text{int}B_{\theta_j}^\a(w_0)$ and $V^\b\subset \text{int}B_{\theta'_i}^\b(v_0)$.
Given $\e>0$, we always consider $\frac{\e^2}{100}\le \a, \b\le \frac{3\e}{2}$.
By carefully adjusting the regular partition-covers, we can guarantee that
\begin{equation}\label{e:boundary}
\begin{aligned}
\mu_p(\partial V^+)=\mu_p(\partial V^-)=\mu_p(\partial N^+)=\mu_p(\partial N^-)=0.
\end{aligned}
\end{equation}

In the next subsection, we will count the number of elements in certain subsets of $\Gamma$. Let us collect the definitions here for convenience.
\begin{equation*}
\begin{aligned}
\C(t,\a,\b)&:=\{\c\in \C: N^\a\cap \phi^{-t}\c V^\b\neq \emptyset\},\\
\C_{-\rho}(t,\a,\b)&:=\{\c\in \C: (B_{-\rho}N)^\a\cap \phi^{-t}\c (B_{-\rho}V)^\b\neq \emptyset\},\\
\C^*&:=\{\c\in \C: \c V^+ \subset N^+ \text{\ and\ }\c^{-1}N^-\subset V^-\},\\
\C^*(t,\a,\b)&:=\C^*\cap \C(t,\a,\b).
\end{aligned}
\end{equation*}
The notation here is similar to that in Subsection 7.3. However, it should not cause any confusion.

\begin{lemma}\label{expansion}
For every $\rho>0$, there exists some $T_2> 0$ such that for all $t\ge T_2$, we have $\C_{-\rho}(t,\a,\b)\subset \C^*(t,\a,\b)$.
\end{lemma}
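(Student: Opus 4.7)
The plan is to mimic the $\pi$-convergence argument used for the closing lemma (Lemma \ref{closing}) but adapted to the two-box situation: the entry box $N^\a$ is centred at $w_0\in S_xX$, while the exit box $V^\b$ is centred at $v_0\in S_yX$. Fix $\rho>0$. First, I unpack the hypothesis: if $\c\in \C_{-\rho}(t,\a,\b)$, then there is a vector $v\in (B_{-\rho}N)^\a$ with $u:=\c^{-1}\phi^t v\in (B_{-\rho}V)^\b$. Reading off the endpoints one gets
\[v^+\in (B_{-\rho}N)^+,\ v^-\in (B_{-\rho}N)^-,\ \c^{-1}v^+=u^+\in (B_{-\rho}V)^+,\ \c^{-1}v^-=u^-\in (B_{-\rho}V)^-.\]
By Lemma \ref{diameter} the footpoints $\pi v,\pi u$ are within a bounded distance $D$ (depending only on $\e$) of $x$ and $y$ respectively, so the geodesic $c_v$ passes through a $D$-ball about $x$ at time $0$ and a $D$-ball about $\c y$ at time $t$; applying $\c^{-1}$, the geodesic $c_u$ passes near $y$ at time $0$ and near $\c^{-1}x$ at time $-t$.

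Next, I apply Lemma \ref{piconvergence}. Take $A$ to be a fixed compact set containing a fundamental domain $\F$ (hence both $x$ and $y$ together with a $D$-neighbourhood). After shrinking the partition-cover if necessary, one may assume that for every pair $(N,V)$ arising in the partition-cover, the four compact sets $\overline{V^-},\overline{V^+},\overline{N^-},\overline{N^+}$ are pairwise disjoint in $\pX$ (the centres $v_0,w_0$ being regular and distinct, this is achieved by choosing the apertures $\theta'_i,\theta_j$ small enough, using the continuity of $w\mapsto(w^-,w^+)$ and the compactness of $\pX$). Set $\textbf{Q}:=\overline{(B_{-\rho/2}V)^-}\subset V^-$ and $\textbf{R}:=\overline{(B_{-\rho/2}N)^+}\subset N^+$, which are then disjoint compact sets, and take $\textbf{U}:=V^-$, $\textbf{V}:=N^+$ as open neighbourhoods. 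Lemma \ref{piconvergence} produces a constant $T=T(\textbf{Q},\textbf{R},\textbf{U},\textbf{V},A)>0$ with the stated inclusion property. From the geometric interpretation above, $\c^{-1}x$ lies within distance $D$ of the point $c_{y,u^-}(t)\in C^{T}_{\textbf{Q}}$ for $t\ge T+D$, so $\c^{-1}(A)\cap C^{T}_{\textbf{Q}}\neq\emptyset$, i.e.\ $\c(C^{T}_{\textbf{Q}})\cap A\neq\emptyset$; symmetrically, $\c y$ lies within $D$ of $c_{x,v^+}(t)\in C^{T}_{\textbf{R}}$, giving $\c(A)\cap C^{T}_{\textbf{R}}\neq\emptyset$. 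Setting $T_2:=T+D$ makes both premises valid whenever $t\ge T_2$.

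The conclusion of Lemma \ref{piconvergence} is then $\c(\pX\setminus V^-)\subset N^+$ and $\c^{-1}(\pX\setminus N^+)\subset V^-$. Because $V^+\cap V^-=\emptyset$ (as arranged above), one has $V^+\subset \pX\setminus V^-$, hence $\c V^+\subset N^+$; likewise $N^-\subset \pX\setminus N^+$ gives $\c^{-1}N^-\subset V^-$. This is precisely $\c\in\C^*$, and since $\C_{-\rho}(t,\a,\b)\subset\C(t,\a,\b)$ is automatic, we obtain $\c\in\C^*(t,\a,\b)$, proving the inclusion for all $t\ge T_2$. The main obstacle in this plan is the disjointness of the four compact endpoint sets needed to invoke Lemma \ref{piconvergence}: this is a mild genericity condition which I would secure once and for all at the moment of constructing the regular partition-covers in the previous subsection, using Lemma \ref{null} and the openness of $\R_1$ to thin the partition when pairs $(V_i,N_j)$ happen to have endpoint sets meeting $\pX$ in non-disjoint patches.
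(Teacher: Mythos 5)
Your proof follows the same underlying mechanism as the paper's: both apply Lemma~\ref{piconvergence} with $\textbf{Q}$ a compact subset of $V^-$, $\textbf{R}$ a compact subset of $N^+$, and $\textbf{U}$, $\textbf{V}$ open supersets thereof inside $V^-$ and $N^+$ respectively, and then read off $\c V^+\subset N^+$ and $\c^{-1}N^-\subset V^-$ from the conclusion. The paper's choice of $A$ is $\pi\overline{(B_{-\rho}N)^\a\cup(B_{-\rho}V)^\b}$ rather than a fundamental domain plus a $D$-neighbourhood; both are compact and make the verification work, so that is only a cosmetic difference. Two comments, one minor and one more substantial.

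Minor: you take $\textbf{U}:=V^-$ and $\textbf{V}:=N^+$ and call them open, but the shadow sets $V^-$, $N^+$ are not open in general; Lemma~\ref{piconvergence} requires open $\textbf{U}$, $\textbf{V}$. The paper correctly uses $\mathrm{int}(V^-)$ and $\mathrm{int}(N^+)$. The final step still goes through because $V^+\subset\pX\setminus V^-\subset\pX\setminus \mathrm{int}(V^-)$, which uses only $V^+\cap V^-=\emptyset$ (and likewise $N^-\cap N^+=\emptyset$).

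More substantial: you are right to flag that Lemma~\ref{piconvergence} demands $\textbf{Q}\cap\textbf{R}=\emptyset$, which neither the paper's proof nor the definition of the regular partition-cover explicitly guarantees for the pair $(V,N)$. However, your proposed fix overreaches in two ways. First, the within-box disjointnesses $V^+\cap V^-=\emptyset$ and $N^+\cap N^-=\emptyset$ need no new hypothesis: they are automatic from the flow box construction, since $\bP_\theta$ and $\bF_\theta$ shrink to the distinct points $v_0^-\ne v_0^+$ as $\theta\to 0$, so for $\theta<\theta_0$ they are already disjoint. What genuinely must be arranged is only $V^-\cap N^+=\emptyset$ (equivalently, $\textbf{Q}\cap\textbf{R}=\emptyset$), which fails exactly when $v_0^-$ is close to $w_0^+$. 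Second, the stronger condition you ask for — all four closures pairwise disjoint for \emph{every} pair $(V_i,N_j)$ in the cover — cannot hold: the families $\{v_i^-\}$ and $\{w_j^+\}$ both accumulate on a full-measure subset of $\pX$, so for any fixed choice of apertures there will be pairs $(i,j)$ with $v_i^-$ arbitrarily close to $w_j^+$, and no amount of shrinking removes the overlap when the centres coincide. The correct resolution is weaker and local: the condition $v_0^-=w_0^+$ is a codimension-one (in particular $\mu_p$-null) coincidence of centres, and because the argument in Section~8 only ever invokes Lemma~\ref{expansion} for the finitely many pairs $i\le m$, $j\le n$ appearing in each truncated sum, one may refine the regular partition-cover so that for those finitely many pairs the centres satisfy $v_i^-\ne w_j^+$ and then shrink $\theta_i,\theta_j$ to get $V_i^-\cap N_j^+=\emptyset$; passing to finer covers is already part of the proof of Theorem~\ref{margulis2}, so no new hypothesis is created. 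With this patch your plan is sound and indeed slightly more careful than the published proof.
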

\begin{proof}
We apply Lemma \ref{piconvergence} with $\textbf{Q}=(B_{-\rho}V)^-$, $\textbf{R}=(B_{-\rho}N)^+$, $A=\pi\overline{(B_{-\rho}N)^\a\cup(B_{-\rho}V)^\b}$, and $\textbf{V}, \textbf{U}$ be the interiors of $N^+$ and $V^-$ respectively. Then $(B_{-\rho}N)^+\subset \textbf{V}$ and $(B_{-\rho}V)^-\subset \textbf{U}$. Let $T$ be given by Lemma \ref{piconvergence}. Then for every $t\ge T$ and $\c\in \C_{-\rho}(t,\a,\b)$, there exists $v\in (B_{-\rho}N)^\a$ such that $w=\c_*^{-1}\phi^tv\in (B_{-\rho}V)^\b$. Putting $z_1=\pi v\in A$, then 
$$\c^{-1}z_1=\pi\c_*^{-1}v=\pi \phi^{-t}w.$$
Since $\pi w\in A$, $w^-\in (B_{-\rho}V)^-=\textbf{Q}$, $t\ge T$, we see that $\c^{-1}z_1\in C_{\textbf{Q}}^T$. 

Similarly, putting $z_2=\pi w\in A$, we have
$$\c z_2=\pi\c_*w=\pi \phi^tv.$$
Since $\pi v\in A$, $v^+\in (B_{-\rho}N)^+=\textbf{R}$ and $t\ge T$, we see that $\c z_2\in  C_{\textbf{R}}^T$. The lemma then follows from Lemma \ref{piconvergence}.
\end{proof}

\subsection{Using scaling and mixing}
In this subsection, we use the scaling and mixing properties of Bowen-Margulis measure $m$, to give an asymptotic estimates of $\#\C^*(t,\e^2,\b)$ and $\#\C(t,\e^2,\b)$.

\subsubsection{Intersection components}
We need some delicate estimates on numbers of intersection components. Note that $N$ is on the unit tangent sphere $S_xX$, which is different from $H^{-1}(N^{-}\times N^+)\times \{0\}.$

The following is obtained in \cite[Lemmas 5.1 and 5.2]{Wu1}.
\begin{lemma}\label{nhd}
We have:
\begin{enumerate}
    \item $N\subset N^{\e^2}$, $V\subset V^{\e^2}$.
    \item For any $t>0$ and $\c\in\C$, 
$$\{\c\in \C: N^\e\cap \phi^{-t}\c V\neq \emptyset\}\subset \{\c\in \C: N^{\e^2}\cap \phi^{-t}\c V^{\e}\neq \emptyset\}.$$
\end{enumerate}
\end{lemma}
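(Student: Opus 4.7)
The plan is to reduce everything to explicit estimates on the Hopf coordinate $H(v)=(v^-,v^+,s(v))$, using that $s(\phi^\tau v)=s(v)+\tau$ along every orbit, together with the Busemann equicontinuity statement in Corollary~\ref{equicon}.

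For part (1), I would first handle $N\subset N^{\e^2}$. Since $N\subset S_xX$ and the reference point is $p=x$, every $v\in N$ satisfies $\pi v=x=p$, and hence $s(v)=b_{v^-}(x,x)=0\in[-\e^2,\e^2]$. Combined with the tautological inclusions $v^\pm\in N^\pm$ this gives $v\in H^{-1}(N^-\times N^+\times[-\e^2,\e^2])=N^{\e^2}$. For $V\subset V^{\e^2}$, fix $v\in V\subset\text{int}\,B_{\theta'_i}^{\b}(v_0)$, so $\pi v=y$ and $v^-,v_0^-\in V^-\subset \bP_{\theta'_i}$. Here I apply Corollary~\ref{equicon} with $\xi=v^-$, $\eta=v_0^-$, $q=y$ (which lies in the footpoint region of the flow box, well within the required distance of $\pi H^{-1}(V^-\times V^+\times[0,\infty))$): this yields $|s(v)-t_0|=|b_{v^-}(y,x)-b_{v_0^-}(y,x)|<\e^2$, so $v\in V^{\e^2}$.

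For part (2), suppose $\c$ lies in the left-hand set and pick $w\in N^\e$ with $\phi^t w=\c v$ for some $v\in V$. The key observation used in part (1) is that $s(v)\in t_0+[-\e^2,\e^2]$, so $v$ is already very close (in the $s$-coordinate) to being centered in $V^\b$; what one still has to correct is the $s$-coordinate of $w$, which only sits in $[-\e,\e]$. I would choose $\tau\in\RR$ with $|\tau|\le\e-\e^2$ and $s(w)+\tau\in[-\e^2,\e^2]$: take $\tau=0$ if $|s(w)|\le\e^2$, $\tau=\e^2-s(w)$ if $s(w)\in(\e^2,\e]$, and $\tau=-\e^2-s(w)$ if $s(w)\in[-\e,-\e^2)$. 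Setting $w':=\phi^\tau w$, one has $(w')^\pm=w^\pm\in N^\pm$ and $s(w')\in[-\e^2,\e^2]$, hence $w'\in N^{\e^2}$.

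It then remains to check that $\phi^t w'=\c\,\phi^\tau v$ lies in $\c V^\e$. Since $(\phi^\tau v)^\pm=v^\pm\in V^\pm$ and $s(\phi^\tau v)=s(v)+\tau$, the bounds $|s(v)-t_0|\le\e^2$ and $|\tau|\le\e-\e^2$ combine to give $|s(\phi^\tau v)-t_0|\le\e$, so $\phi^\tau v\in V^\e$, which produces the desired element of $N^{\e^2}\cap\phi^{-t}\c V^\e$. The only mildly delicate point in the whole argument is the budgeting in the last sentence: one has to spend at most $\e-\e^2$ of the allowed $\e$-slack on $V^\e$ to pull $w$ into $N^{\e^2}$, leaving just enough room ($\e^2$) on the $V$-side, and this is precisely what part (1) supplies. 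No other obstacle arises, as everything else is a direct use of $s$-additivity along orbits and the definitions of the flow boxes.
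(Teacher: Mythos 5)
Your proof is correct and is essentially the argument the paper relies on (it simply cites \cite[Lemmas 5.1 and 5.2]{Wu1} for this statement): part (1) is the tautology $s(v)=0$ for $v\in N\subset S_xX$ together with the Busemann equicontinuity of Corollary \ref{equicon} applied at $q=y$ for $V$, and part (2) is the standard reparametrization trick of flowing the intersection point by a time $|\tau|\le\e-\e^2$ to recenter its $s$-coordinate into $[-\e^2,\e^2]$, spending the remaining $\e^2$ of slack on the $V$-side exactly as part (1) permits. The only point worth making explicit is that Corollary \ref{equicon} is stated for cones based at the reference point $p$, whereas $V^-$ is a cone of backward endpoints of vectors at $y\neq p$; the needed version follows immediately from Lemma \ref{equicon1} (or Corollary \ref{continuous}) with the fixed pair $(y,x)$, and is built into the choice $\theta'_i<\theta_2$ in the construction of the partition-cover.
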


\begin{lemma}\label{enlarge}
Let $p\in X$, then given any $a>0,\e>0$, there exists $T>0$ such that for any $t\geq T$ and $v,w\in S_pX$, $d(\phi^tv,\phi^tw)\leq \e$ implies that $\angle(v,w)<a$.
\end{lemma}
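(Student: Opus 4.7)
The plan is to prove this by contradiction, using the uniform visibility axiom to bound how long a geodesic from $p$ can remain $\e$-close to another geodesic from $p$ issuing at angle $\geq a$. Suppose the statement fails. Then there exist $a > 0$, $\e > 0$, a sequence $t_n \to \infty$, and unit vectors $v_n, w_n \in S_pX$ with
\[
\angle(v_n, w_n) \geq a \quad \text{and} \quad d(\phi^{t_n} v_n, \phi^{t_n} w_n) \leq \e.
\]
Set $q_n := c_{v_n}(t_n)$ and $r_n := c_{w_n}(t_n)$, so $d(q_n, r_n) \leq \e$.

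Let $\sigma_n \colon [0, L_n] \to X$ denote the geodesic segment from $q_n$ to $r_n$, where $L_n = d(q_n, r_n) \leq \e$. Since $X$ is simply connected and has no conjugate points, the exponential map at $p$ is a diffeomorphism, so $c_{v_n}$ is the unique geodesic from $p$ to $q_n$ and similarly for $c_{w_n}$. In particular, the angle at $p$ between these two geodesics equals $\angle(v_n, w_n)$. Because $q_n = \sigma_n(0)$ and $r_n = \sigma_n(L_n)$ are endpoints of $\sigma_n$, the total angle sustained by $\sigma_n$ at $p$ satisfies
\[
\angle_p(\sigma_n) \geq \angle_p(q_n, r_n) = \angle(v_n, w_n) \geq a.
\]

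Now I apply the uniform visibility hypothesis (Definition \ref{vis}) with angle parameter $a$: there exists a constant $L = L(a) > 0$ such that any geodesic segment remaining at distance $\geq L$ from $p$ sustains angle less than $a$ at $p$. Contrapositively, $\sigma_n$ must pass within distance $L$ of $p$, so there exists $s_n \in [0, L_n]$ with $d(\sigma_n(s_n), p) \leq L$. Combining with $L_n \leq \e$,
\[
t_n = d(p, q_n) \leq d(p, \sigma_n(s_n)) + d(\sigma_n(s_n), q_n) \leq L + \e,
\]
where the first equality uses that $c_{v_n}$ is minimizing in $X$. This contradicts $t_n \to \infty$ and completes the proof. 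The only real ingredient beyond the definitions is the fact that geodesics on a simply connected manifold without conjugate points are globally minimizing; the uniform visibility axiom then delivers the uniform $T$ with essentially no further work, so there is no main obstacle in this argument.
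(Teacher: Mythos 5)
Your proof is correct and uses the same key idea as the paper: apply the uniform visibility axiom to the short geodesic segment joining $\phi^t v$ and $\phi^t w$, which by the triangle inequality stays at distance at least $t-\e$ from $p$ while subtending the angle $\angle(v,w)$ at $p$ (using that geodesics from $p$ are unique since $X$ is simply connected without conjugate points). The paper phrases it directly (choose $T$ with $T-\e>L(a)$) rather than by contradiction, but this is only a cosmetic difference.
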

\begin{proof}
The lemma is a direct consequence of uniform visibility property. Indeed, let $L(a)$ be the constant from uniform visibility property. Let $T$ be large enough so that $T-\e>L(a)$ and $t\ge T$. Then by the triangle inequality, the geodesic connecting $\phi^tv$ and $\phi^tw$ stays at distance at least 
$$2t-\e\ge 2T-\e>L(a)$$ 
from $p$. Then $\angle(v,w)<a$ by uniform visibility.
\end{proof}

Based on Lemmas \ref{nhd}(1), \ref{diameter}, \ref{enlarge}, by a proof parallel to \cite[Lemma 5.4]{Wu1}, we have
\begin{lemma}\label{intersect2}
For any $a>0$, there exists $T_1>0$ large enough such that for any $t\ge T_1$,
\begin{equation*}
\begin{aligned}
&\{\c\in \C: N^\a\cap \phi^{-t}\c V^\b\neq \emptyset\}\\
\subset & \{\c\in\C: (B_aN)^{\a+\b+\e^2}\cap \phi^{-t}\c V\neq \emptyset\}.
\end{aligned}
\end{equation*}
\end{lemma}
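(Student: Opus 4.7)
\textbf{Proof plan for Lemma \ref{intersect2}.} Given $\c$ in the left-hand side with witness $v \in N^\a \cap \phi^{-t}\c V^\b$, I will construct a witness for the right-hand side by setting $w := \phi^{-t}\c u'$ for an appropriately chosen $u' \in V$. Defining $u := \c^{-1}\phi^t v \in V^\b$, the key observation is that $V \subset S_y X$ and the map $v' \mapsto v'^-$ restricted to $S_y X$ is injective into $\pX$; since $u^- \in V^-$, there is a unique $u' \in V$ with $u'^- = u^-$. By construction $\phi^t w = \c u' \in \c V$, so the content is to verify that $w \in (B_a N)^{\a+\b+\e^2}$.

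The backward endpoint of $w$ is immediate: $w^- = \c u'^- = \c u^- = v^- \in N^- \subset (B_a N)^-$. For the time coordinate, Lemma \ref{nhd}(1) gives $V \subset V^{\e^2}$, hence $s(u') \in t_0 + [-\e^2, \e^2]$, while $u \in V^\b$ yields $s(u) \in t_0 + [-\b, \b]$. Using $\Gamma$-equivariance of Busemann functions together with $u'^- = u^-$, a direct computation shows
\[
s(w) - s(v) = s(\c u') - s(\c u) = s(u') - s(u) = b_{u^-}(y, \pi u),
\]
so $|s(w) - s(v)| \le \b + \e^2$, giving $|s(w)| \le \a + \b + \e^2$ as required.

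The main technical step is the forward endpoint: $w^+ = \c u'^+$ must lie in $(B_a N)^+$. Let $\tilde v, \tilde w \in S_x X$ be the unique unit vectors at $x$ with $\tilde v^+ = v^+ = \c u^+$ and $\tilde w^+ = \c u'^+$. Since $v^+ \in N^+$ and each element of $S_x X$ is determined by its forward endpoint (using regularity guaranteed by Proposition \ref{crucial}), the unique such $\tilde v$ must lie in $N$; it thus suffices to bound $\angle_x(\tilde v, \tilde w) \le a$. Now $c_{\tilde v}$ is forward-asymptotic to $c_{\c u}$, which passes through $\c \pi u$, a point within $4\e$ of $\c y$ by Lemma \ref{diameter}; and $c_{\tilde w}$ is forward-asymptotic to $c_{\c u'}$, which passes exactly through $\c y$ (because $u' \in S_y X$). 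The Morse-type bound from Proposition \ref{geo}(5) then places $\phi^{t'}\tilde v$ and $\phi^{t'}\tilde w$ within a $\c$-independent distance $\e' = \e'(\e, Q)$ at times $t' \asymp t$. Applying Lemma \ref{enlarge} with this $\e'$ and the prescribed $a$ produces a threshold $T_1 = T_1(a)$ beyond which $\angle_x(\tilde v, \tilde w) < a$, completing the argument.

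The principal obstacle is ensuring that $T_1$ can be chosen \emph{uniformly in} $\c \in \Gamma$. The only quantitative inputs feeding into the comparison are $4\e$ (the diameter bound of the flow box from Lemma \ref{diameter}) and the universal Morse constant $Q$ from Proposition \ref{geo}, both of which are $\c$-independent. Hence the value $\e'$ supplied to Lemma \ref{enlarge} does not depend on $\c$, and a single threshold $T_1(a)$ works for all $\c$ simultaneously. This parallels the argument of \cite[Lemma 5.4]{Wu1} and completes the plan.
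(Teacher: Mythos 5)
Your overall route is the right one, and it matches what the paper intends: the paper gives no written proof here, only pointing to Lemmas \ref{nhd}(1), \ref{diameter}, \ref{enlarge} and the parallel argument of \cite[Lemma 5.4]{Wu1}, and your reconstruction uses exactly these ingredients. In particular the choice of $u'\in V$ with $u'^-=u^-$ (rather than $u'^+=u^+$) is the correct one, and your computations of $w^-$ and of $s(w)$ via $\Gamma$-equivariance and Lemma \ref{nhd}(1) are fine.

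The one step that is not justified as written is the forward-endpoint estimate. You assert that since $c_{\tilde w}$ is forward-asymptotic to $c_{\c u'}$, which passes through $\c y$, a "Morse-type bound" places $\phi^{t'}\tilde w$ within a $\c$-independent distance of $\c y$. But Proposition \ref{geo}(5) applies only to \emph{bi}-asymptotic geodesics, and the quasi-convexity/asymptote bounds (Proposition \ref{geo}(4), Proposition \ref{horofoliation}(3)) control the Hausdorff distance of two positively asymptotic rays only in terms of the distance between their initial points. For $c_{\tilde w}$ (based at $x$) and $c_{\c u'}$ (based at $\c y$) that initial distance is $\approx t$, so the naive bound is $At+B$ and gives nothing. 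The missing intermediate step — which is available from your own setup — is to first use that $c_{\c u}$ and $c_{\c u'}$ are \emph{negatively} asymptotic (this is exactly what $u'^-=u^-$ buys) and within $4\e$ of each other near $\c y$ (Lemma \ref{diameter}), so that by Proposition \ref{horofoliation}(3) applied in backward time $c_{\c u'}$ passes within a constant $C(\e,A,B)$ of $\pi v$, hence of $x$. Only then does positive asymptoticity of $c_{\tilde w}$ and $c_{\c u'}$, now with uniformly close basepoints, force $c_{\tilde w}$ to pass within $\e'=\e'(\e,A,B)$ of $\c y$ at time $\ge t-O(\e)$; the same (easier) argument handles $c_{\tilde v}$ versus $c_{\c u}$. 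With that inserted, Lemma \ref{enlarge} applied at scale $\e'$ yields the uniform threshold $T_1(a)$ and your proof closes.
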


\subsubsection{Depth of intersection}

Given $\xi\in \pX$ and $\c\in \C$, define $b_\xi^\c:=b_\xi(\c p, p)$.
Using Corollary \ref{equicon}, we have
\begin{lemma}\label{intersection11}(\cite[Lemma 5.5]{Wu1})
Let $\xi,\eta\in N^-$, and $\c\in \C(t,\a,\b)$ with $t>0$. Then $|b_\xi^\c-b_\eta^\c|<\e^2$.
\end{lemma}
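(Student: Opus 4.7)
The plan is to reduce the claim to Corollary \ref{equicon}, which gives equicontinuity of the Busemann function $\xi\mapsto b_\xi(q,p)$ uniformly in $q$ as long as $q$ stays within $\diam\F+4\e$ of the forward shadow $\pi H^{-1}(\bP_\theta\times\bF_\theta\times[0,\infty))$ of the ambient flow box around $w_0$. Since $N^-\subset \bF_{\theta_j}$ is part of a regular partition-cover of $S_xX$ at the regular vector $w_0$, it suffices to show that $\c p$ lies within $\diam\F+4\e$ of $\pi H^{-1}(N^-\times N^+\times[0,\infty))$ for $t$ large enough (after which the claim is immediate from Corollary \ref{equicon} applied with $\bP=N^-$, $\bF=N^+$).

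By definition of $\C(t,\a,\b)$ there exists $v\in N^\a$ with $\phi^t v\in \c V^\b$, so $\phi^t v = \c_* w$ for some $w\in V^\b$. Passing to the base, this gives $\pi\phi^t v = \c(\pi w)$, and since $\c$ acts by isometry,
\[
d(\c p,\pi\phi^t v)\;=\;d(\c x,\c\pi w)\;=\;d(x,\pi w).
\]
The second ingredient is that $\pi V^\b$ has small diameter: by Lemma \ref{diameter}(3) applied to the flow box around $v_0\in S_yX$ (with $\b\le 3\e/2$ by our standing assumption on $\a,\b$), we have $\diam\pi V^\b<4\e$, and since $v_0\in S_yX$ we get $\pi w$ within $4\e$ of $y$. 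Combining with $x,y\in\F$ yields
\[
d(\c p,\pi\phi^t v)\;=\;d(x,\pi w)\;\le\;d(x,y)+4\e\;\le\;\diam\F+4\e.
\]

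Finally, since $v\in N^\a\subset H^{-1}(N^-\times N^+\times[-\a,\a])$, we have $s(\phi^t v)=s(v)+t\in[t-\a,t+\a]$, which lies in $[0,\infty)$ once $t\ge \a$. Hence $\phi^t v\in H^{-1}(N^-\times N^+\times[0,\infty))$, and so $\pi\phi^t v\in \pi H^{-1}(N^-\times N^+\times[0,\infty))$. The previous paragraph then places $\c p$ within $\diam\F+4\e$ of this set, and Corollary \ref{equicon} (with $\theta=\theta_j$, noting $N^\pm\subset\bF_{\theta_j},\bP_{\theta_j}$) applied to $\xi,\eta\in N^-$ yields $|b_\xi^\c-b_\eta^\c|=|b_\xi(\c p,p)-b_\eta(\c p,p)|<\e^2$. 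The only nontrivial step is the diameter estimate on $\pi V^\b$, but that was prepared in Lemma \ref{diameter}(3), so the argument is essentially a bookkeeping of the Hopf-coordinate boxes; the main point to get right is that both $N^\a$ and $V^\b$ live over basepoints in the fixed fundamental domain $\F$, which is what bounds the error by $\diam\F+4\e$.
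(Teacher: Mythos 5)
Your proposal is correct and follows essentially the same route as the paper's argument (which is deferred to \cite[Lemma 5.5]{Wu1}): produce $v\in N^\a\cap\phi^{-t}\c V^\b$, note that $\c$ maps a point of $\pi V^\b$ (within $4\e$ of $y\in\F$, by Lemma \ref{diameter}) to $\pi\phi^tv$ in the forward shadow of the $N$-box, conclude $d(\c p,\pi\phi^tv)\le\diam\F+4\e$ by isometry, and invoke the equicontinuity of Corollary \ref{equicon}. No substantive differences.
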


The following two lemmas give estimate on the depth of intersections, and implies that the intersection components also have product structure.
\begin{lemma}\label{intersection31}(\cite[Lemma 5.6]{Wu1})
Given any $\c\in \C^*(t,\a,\b)$ and any $t\in \RR$, we have
\begin{equation*}
\begin{aligned}
N^{\e^2}\cap \phi^{-t}\c V^\b=\{&w\in E^{-1}(N^-\times \c V^+): \\
&s(w)\in [-\e^2,\e^2]\cap (b_{w^{-}}^\c-t+t_0+[-\b,\b])\}
\end{aligned}
\end{equation*}
where $t_0=s(v_0)$.
\end{lemma}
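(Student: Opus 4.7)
The plan is to unpack both sides of the claimed equality in Hopf coordinates $H(w)=(w^-,w^+,s(w))$ and verify they agree. Membership of $w$ in $N^{\e^2}=H^{-1}(N^-\times N^+\times[-\e^2,\e^2])$ is immediately equivalent to $w^-\in N^-$, $w^+\in N^+$, and $s(w)\in[-\e^2,\e^2]$. The remaining work is to translate the condition $w\in \phi^{-t}\c V^{\b}$, i.e. $\c^{-1}\phi^t w\in V^{\b}$, into analogous statements about $w^{\pm}$ and $s(w)$.

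Since $\c$ is an isometry, the endpoint maps commute with $\c$ and $\phi^t$ does not affect endpoints, giving $(\c^{-1}\phi^t w)^{\pm}=\c^{-1}w^{\pm}$. Hence $\c^{-1}\phi^t w\in V^{\b}$ forces $w^-\in \c V^-$ and $w^+\in \c V^+$. For the $s$-coordinate I will use the isometry invariance $b_{\c\xi}(\c q,\c p)=b_\xi(q,p)$ together with the standard identity $b_\eta(q,p')=b_\eta(q,p)-b_\eta(p',p)$ to compute
\begin{equation*}
s(\c^{-1}\phi^t w)=b_{\c^{-1}w^-}(\c^{-1}\pi\phi^t w,p)=b_{w^-}(\pi\phi^t w,\c p)=s(\phi^t w)-b_{w^-}^\c=s(w)+t-b_{w^-}^\c,
\end{equation*}
where in the last step I used the flow-equivariance $s(\phi^t w)=s(w)+t$ recorded in Section~3. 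The requirement $s(\c^{-1}\phi^t w)\in t_0+[-\b,\b]$ then rearranges to $s(w)\in b_{w^-}^\c-t+t_0+[-\b,\b]$.

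Finally I bring in the hypothesis $\c\in\C^*$, which says $\c V^+\subset N^+$ and $\c^{-1}N^-\subset V^-$ (equivalently $N^-\subset \c V^-$). Thus $w^-\in N^-$ already implies $w^-\in \c V^-$, and $w^+\in \c V^+$ already implies $w^+\in N^+$. Consequently the intersection of $N^{\e^2}$ and $\phi^{-t}\c V^\b$ reduces to the conditions $w^-\in N^-$, $w^+\in \c V^+$, and $s(w)\in[-\e^2,\e^2]\cap(b_{w^-}^\c-t+t_0+[-\b,\b])$, which is exactly the description on the right-hand side.

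The computation is essentially algebraic once the right covariance identity is in place; the only subtlety I expect is bookkeeping for the basepoint shift in the Busemann cocycle (which point is used as reference after applying $\c^{-1}$) and making sure the $\C^*$ inclusions are invoked in the correct direction to eliminate, rather than add, constraints on the two boundary factors. No new geometric input beyond Section~3 and the definitions of $N^{\a}, V^{\b}, \C^*$ is needed.
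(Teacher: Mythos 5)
Your proposal is correct and is exactly the standard Hopf-coordinate computation: the paper itself omits the proof of this lemma (citing \cite[Lemma 5.6]{Wu1}), and the argument there, like the parallel Lemma on $S\cap\phi^{-t}\c B^\a$ in Section 7, proceeds by the same translation of $\c^{-1}\phi^tw\in V^\b$ via the equivariance of endpoints, the Busemann cocycle identity $b_\xi(q,\c p)=b_\xi(q,p)-b_\xi^\c$, the relation $s(\phi^tw)=s(w)+t$, and the two inclusions defining $\C^*$ to absorb the $N^+$ and $\c V^-$ constraints. Your bookkeeping of the basepoint shift and of the direction in which the $\C^*$ inclusions are used is correct, so nothing is missing.
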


\begin{lemma}\label{depth22}(\cite[Lemma 5.7]{Wu1})
If $\c\in \C^*(t,\e^2,\b)$, then
$$N^{\e^2}\cap \phi^{-(t+4\e^2)}\c V^{\b+8\e^2}\supset H^{-1}(N^-\times \c V^+\times [-\e^2,\e^2]):=N^\c.$$
\end{lemma}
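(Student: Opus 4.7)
The plan is to mimic the slice-containment argument used for Lemma \ref{depth1}, but now with the asymmetric flow boxes $N^{\e^2}$ and $V^\b$ in place of a single box $S$. The characterization of intersections from Lemma \ref{intersection31} reduces the containment to a purely one-dimensional statement about Busemann-based intervals, and the enlargement of $\b$ to $\b+8\e^2$ (and of $t$ to $t+4\e^2$) is precisely what is needed to absorb the Busemann oscillation across $N^-$ controlled by Lemma \ref{intersection11}.

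First I will use the hypothesis $\c\in \C^*(t,\e^2,\b)$ together with Lemma \ref{intersection31} to produce some $\eta\in N^-$ such that $[-\e^2,\e^2]\cap(b_\eta^\c - t + t_0 + [-\b,\b])\neq\emptyset$, which is equivalent to the quantitative bound
\begin{equation*}
b_\eta^\c - t + t_0 \ \in\ [-\e^2-\b,\ \e^2+\b].
\end{equation*}
Next, by Lemma \ref{intersection11}, $|b_\xi^\c - b_\eta^\c| < \e^2$ for every $\xi\in N^-$, so shifting by $-4\e^2$ yields $b_\xi^\c - (t+4\e^2) + t_0 \in (-\b-6\e^2,\ \b-2\e^2)$. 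An elementary two-sided inequality then gives the key containment
\begin{equation*}
[-\e^2,\e^2] \ \subset\ b_\xi^\c - (t+4\e^2) + t_0 + [-(\b+8\e^2),\ \b+8\e^2]
\end{equation*}
for every $\xi\in N^-$.

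The final step is to feed this back into the characterization of Lemma \ref{intersection31} applied with the parameters $(t+4\e^2,\e^2,\b+8\e^2)$: for any $w\in H^{-1}(N^-\times \c V^+\times [-\e^2,\e^2])$ we have $w^-\in N^-$, $w^+\in \c V^+\subset N^+$ (using $\c\in \C^*$), and $s(w)\in [-\e^2,\e^2]$, and the displayed containment above shows $s(w)$ lies in $b_{w^-}^\c - (t+4\e^2) + t_0 + [-(\b+8\e^2),\b+8\e^2]$, so $w\in N^{\e^2}\cap \phi^{-(t+4\e^2)}\c V^{\b+8\e^2}$. No step is delicate; the real content has already been packaged into Lemmas \ref{intersection31} and \ref{intersection11}. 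The one thing to keep track of is the accounting of $\e^2$-errors at both endpoints of $N^-$, which is precisely what forces the time-enlargement $4\e^2$ and the depth-enlargement $8\e^2$ to appear with these exact constants.
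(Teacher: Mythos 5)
Your argument is correct, and it is the natural adaptation of the proof of Lemma \ref{depth1} given earlier in the paper (the paper does not reprove Lemma \ref{depth22} here but defers to \cite[Lemma 5.7]{Wu1}). The reduction via Lemma \ref{intersection31} to the one-dimensional interval-containment statement, the application of the Busemann oscillation bound from Lemma \ref{intersection11} over $N^-$, and the bookkeeping of $\e^2$-errors (the shift by $4\e^2$ in time and enlargement by $8\e^2$ in depth, which in fact leaves a little slack since the minimal enlargement would be $3\e^2+\delta=7\e^2$) all match the expected template. One small point worth being explicit about: when you invoke Lemma \ref{intersection31} at the parameters $(t+4\e^2,\e^2,\b+8\e^2)$, you are relying on the set-equality there being a formula valid for arbitrary time (as in Lemma \ref{intersection}(3), on which it is modeled) and requiring only $\c\in\C^*$; otherwise the application would look circular, since membership in $\C^*(t+4\e^2,\e^2,\b+8\e^2)$ is exactly what nonemptiness of the left-hand side would certify. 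With that reading, the proof is complete.
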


\subsubsection{Scaling and mixing calculation}
The following result is obtained using the product structure of Bowen-Margulis measure $m$. It says that the measure of an intersection component decreases with a factor $e^{-ht}$ due to the contraction of $\phi^{-t}$ in the unstable direction.
\begin{lemma}\label{tscaling22}(\cite[Lemma 5.8]{Wu1})
If $\c\in \C^*(t,\e^2,\b)$, then
$$\frac{m(N^\c)}{m(V^\b)}=e^{\pm 26h\e}e^{ht_0}e^{-ht}\frac{\e^2\mu_p(N^-)}{\b\mu_p(V^-)}$$
where $N^\c$ is from Lemma \ref{depth22}.
\end{lemma}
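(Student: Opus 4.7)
The plan is to exploit the Hopf-coordinate product representation $dm = e^{h\b_p(\xi,\eta)}\,d\mu_p(\xi)\,d\mu_p(\eta)\,ds$, which applies here because each $(\xi,\eta)$ in the projections of $N^\c$ and $V^\b$ admits a unique connecting geodesic (by Proposition~\ref{crucial}, since $w_0,v_0\in\R_1$), so that $\lambda_{\xi,\eta}$ is simply Lebesgue in the flow direction. Given the flow-direction depths $2\e^2$ of $N^\c$ and $2\b$ of $V^\b$, this immediately yields
\begin{align*}
m(V^\b) &= 2\b\iint_{V^-\times V^+} e^{h\b_p(\xi,\eta)}\,d\mu_p(\xi)\,d\mu_p(\eta),\\
m(N^\c) &= 2\e^2\iint_{N^-\times\c V^+} e^{h\b_p(\xi,\eta)}\,d\mu_p(\xi)\,d\mu_p(\eta),
\end{align*}
so the task becomes extracting the three factors $e^{ht_0}$, $e^{-ht}$, and $\e^2\mu_p(N^-)/(\b\mu_p(V^-))$ from the ratio.

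For the Gromov-product factor I would apply Corollary~\ref{equicon} twice on each product set. On $V^-\times V^+$ the connecting geodesics are all close to $c_{v_0}$ and evaluation at $q=y=\pi v_0$ gives $\b_p(\xi,\eta)=\b_p(v_0^-,v_0^+)\pm 2\e^2$; on $N^-\times\c V^+\subset N^-\times N^+$ the connecting geodesics pass near $p$ and evaluation at $q=p$ gives $|\b_p(\xi,\eta)|=O(\e)$, since $w_0\in S_pX$. The $t_0$ dependence will emerge from unpacking $\b_p(v_0^-,v_0^+)$ via its definition in terms of Busemann values at $y$. For the translated Patterson--Sullivan mass I would use the $\Gamma$-equivariance and the Busemann density property,
\[
\mu_p(\c V^+)=\mu_{\c^{-1}p}(V^+)=\int_{V^+}e^{-hb_\eta(\c^{-1}p,p)}\,d\mu_p(\eta),
\]
then split $b_\eta(\c^{-1}p,p)$ via $y$ using the cocycle identity and $\Gamma$-invariance of Busemann functions to reduce it to $b_{\c\eta}(p,\c p)+O(\e)$; since $\c\eta\in N^+$ and $\c p$ is close to the axial point $c(|\c|)$ of an axis $c$ of $\c$, a further application of Corollary~\ref{equicon} combined with Lemma~\ref{intersection}(2) gives $b_{\c\eta}(p,\c p)=|\c|\pm O(\e)$, so that $\mu_p(\c V^+)=e^{\pm O(\e)}e^{-h|\c|}\mu_p(V^+)$. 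The auxiliary $b_{v_0^+}(y,p)$-corrections introduced by the Gromov-product computation and the Busemann splitting will cancel, and the analogue of Lemma~\ref{intersection}(4) for $\C^*(t,\e^2,\b)$ gives $|\c|=t\pm O(\e)$, so assembling everything yields the claimed identity once the $\mu_p(V^+)$-factors cancel between numerator and denominator.

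The main technical obstacle is justifying each application of Corollary~\ref{equicon} at points lying far from the flow boxes, most critically at $q=\c p$ (for the Patterson--Sullivan estimate) and $q=\c^{-1}p$ (for the Busemann splitting). In both cases, the required proximity of these points to the forward, respectively backward, flow region of the appropriate flow box follows from the fact that the axis of $\c\in\C^*(t,\e^2,\b)$ passes within $\diam\F+O(\e)$ of both $p$ and $y$ with one endpoint in $N^+$ and the other in $V^-$; this is itself a consequence of the definition of $\C^*$ together with a Lemma~\ref{closing}-style application of the $\pi$-convergence machinery underlying Lemma~\ref{piconvergence}. Careful bookkeeping of the multiplicative errors $e^{\pm h\e}$ and $e^{\pm h\e^2}$ accumulated across the approximately half-dozen uses of equicontinuity and the two uses of $|\c|\approx t$ then bounds the total exponent by $26h\e$.
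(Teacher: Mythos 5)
The paper itself gives no proof of this lemma---it is quoted verbatim from \cite[Lemma 5.8]{Wu1}---so I am comparing your argument against the computation that citation stands in for. Your first paragraph is exactly right: the product representation of $m$ is legitimate on $N^\c$ and $V^\b$ because the relevant endpoint pairs lie in $N^-\times N^+$ resp.\ $V^-\times V^+$ and are joined by unique regular geodesics (Proposition \ref{crucial}), so your two integral formulas and the reduction to three factors are correct. The Gromov-product estimates are also right: $|\b_p(\xi,\eta)|=O(\e)$ on $N^-\times\c V^+$, while on $V^-\times V^+$ one gets $\b_p(\xi,\eta)=-(b_{v_0^-}(y,p)+b_{v_0^+}(y,p))\pm O(\e)=-(t_0+t_0')\pm O(\e)$, writing $t_0':=b_{v_0^+}(y,p)$.

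The gap is in your treatment of $\mu_p(\c V^+)$. First, the detour through an axis of $\c$ is not justified in this two-box setting: in Section 7 the axis with controlled endpoints came from $\c\bF\subset\bF$, a self-map of a single cone, whereas here $\C^*$ only gives $\c V^+\subset N^+$ and $\c^{-1}N^-\subset V^-$ with two different cones based at two different points; this cannot be iterated and produces no fixed point of $\c$. Second, and more seriously, the two estimates you assert---$b_{\c\eta}(p,\c p)=|\c|\pm O(\e)$ and $|\c|=t\pm O(\e)$---jointly force $b_\eta(\c^{-1}p,p)=t\pm O(\e)$, hence $\mu_p(\c V^+)=e^{\pm O(\e)}e^{-ht}\mu_p(V^+)$; combined with the factor $e^{-h(t_0+t_0')}$ coming from the Gromov product in $m(V^\b)$, this leaves an uncancelled $e^{ht_0'}$, i.e.\ a spurious factor of size up to $e^{h\,d(x,y)}\le e^{h\diam\F}$ that cannot be absorbed into $e^{\pm 26h\e}$. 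The correct value is $b_\eta(\c^{-1}p,p)=t+t_0'\pm O(\e)$: take $u\in N^{\e^2}\cap\phi^{-t}\c V^\b$, note $d(\c^{-1}p,\pi(\c^{-1}u))\le 4\e$, and split the Busemann cocycle at $\pi(\phi^t\c^{-1}u)$, which lies within $4\e$ of $y$ (not at $\c p$ or along an axis); the segment of length $t$ pointing at $(\c^{-1}u)^+\in V^+$ contributes $t$, and the endpoint contributes $b_\eta(y,p)\approx t_0'$. It is precisely this $t_0'$ that cancels against the Gromov-product term, as you anticipated in your closing remark---but your explicit intermediate claims are incompatible with that cancellation, so as written the argument does not close. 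Replacing the axis step by this direct orbit-segment computation repairs the proof.
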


Finally, we can combine scaling and mixing properties of Bowen-Margulis measure to obtain the following asymptotic estimates.
\begin{proposition}\label{asymptotic1}(\cite[Proposition 5.9]{Wu1})
We have
\begin{equation*}
\begin{aligned}
e^{-30h\e} \lesssim &\frac{\#\C^*(t,\e^2,\b)e^{ht_0}}{\mu_p(V^-)\mu_p(N^+)e^{ht}}\frac{1}{2\b}
\lesssim e^{30h\e}(1+\frac{8\e^2}{\b}),\\
e^{-30h\e} \lesssim &\frac{\#\C(t,\e^2,\b)e^{ht_0}}{\mu_p(V^-)\mu_p(N^+)e^{ht}}\frac{1}{2\b}
\lesssim e^{30h\e}(1+\frac{8\e^2}{\b}).
\end{aligned}
\end{equation*}
\end{proposition}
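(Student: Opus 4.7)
The plan is to follow the scaling-mixing template used in the proof of Proposition \ref{asymptotic}, with the single flow box $S\cap\phi^{-t}\c B^\a$ there replaced by the pair $N^{\e^2}\cap\phi^{-t}\c V^\b$ here. The one genuinely new piece of bookkeeping is the shift $t_0=s(v_0)$ arising because the reference point $p=x$ differs from the basepoint $y=\pi v_0$ of the target box $V^\b$; this shift is already absorbed into the scaling factor in Lemma \ref{tscaling22}. The extra $4h\e$ in the target exponent $30h\e$ (as opposed to the $26h\e$ of that lemma) will accumulate when expressing $m(V^\b)$ and $\underline m(\underline{N^{\e^2}})$ in terms of the Patterson-Sullivan weights $\mu_p(V^\pm)$ and $\mu_p(N^\pm)$ via the Gromov-product estimate of Corollary \ref{equicon}.

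For the upper bound on $\#\C^*(t,\e^2,\b)$, I fix $\c\in\C^*(t,\e^2,\b)$ and apply Lemma \ref{depth22} to get $N^\c\subset N^{\e^2}\cap\phi^{-(t+4\e^2)}\c V^{\b+8\e^2}$. A geometric argument using regularity of $v_0\in V$, analogous to the injectivity argument in Lemma \ref{injective}, shows that the projections $\underline{N^\c}\subset SM$ are pairwise disjoint. Summing the scaling identity of Lemma \ref{tscaling22} over $\c$ and invoking mixing of $\underline m$ then produces
\[
\#\C^*(t,\e^2,\b)\cdot e^{-26h\e}e^{ht_0}e^{-ht}\frac{\e^2\mu_p(N^-)}{\b\mu_p(V^-)}\cdot m(V^\b)\lesssim \underline m(\underline{N^{\e^2}})\,\underline m(\underline{V^{\b+8\e^2}}).
\]
Corollary \ref{equicon} controls the Gromov products on both boxes, giving $\underline m(\underline{N^{\e^2}})=e^{\pm h\e}\cdot 2\e^2\mu_p(N^-)\mu_p(N^+)$ and the analogous product expressions for $\underline m(\underline{V^{\b+8\e^2}})$ and $m(V^\b)$. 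Substituting and simplifying produces the upper estimate, with the factor $(1+8\e^2/\b)$ arising from the enlargement $\b\mapsto\b+8\e^2$.

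For the lower bound I fix a small $\rho>0$ and use Lemma \ref{expansion} to obtain $\C_{-\rho}(t,\e^2,\b)\subset\C^*(t,\e^2,\b)$ for all large $t$. A companion to Lemma \ref{depth22} applied to the shrunken boxes $(B_{-\rho}N)^{\e^2}$ and $(B_{-\rho}V)^\b$ shows that the $\underline{N^\c}$, $\c\in\C^*(t,\e^2,\b)$, exhaust $\underline{(B_{-\rho}N)^{\e^2}}\cap\phi^{-t}\underline{(B_{-\rho}V)^\b}$ modulo an $\underline m$-null set. Mixing together with the continuity conditions \eqref{e:choice} and \eqref{e:boundary} allows me to send $\rho\searrow 0$ without losing constants, yielding the matching lower bound $e^{-30h\e}$. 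The second line of the proposition follows exactly as in the proof of Proposition \ref{asymptotic}: the inclusion $\C^*(t,\e^2,\b)\subset\C(t,\e^2,\b)$ is automatic, and the reverse comparison comes from slightly enlarging $N^\pm$ and $V^\pm$ in angle and reapplying Lemma \ref{expansion}, with \eqref{e:boundary} ensuring there is no loss of constants in the limit.

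The hard part will be the disjointness of the projections $\underline{N^\c}$ in $SM$ and the uniformity of the constant $30h\e$ across the countably many pairs $(V_i,N_j)$ drawn from the regular partition-covers of $S_yX$ and $S_xX$. Unlike Proposition \ref{asymptotic}, where source and target coincide, here we pair boxes centered at unrelated regular vectors $w_0,v_0\in\R_1$, and a collision $\underline{N^{\c_1}}=\underline{N^{\c_2}}$ would force a nontrivial deck transformation to fix $N^-$ and carry $\c_1 V^+$ onto $\c_2 V^+$; ruling this out requires essential use of $v_0\in\R_1$ in the spirit of Lemma \ref{injective}. A related subtlety is that the product approximations $\underline m(\underline{N^{\e^2}})\approx 2\e^2\mu_p(N^-)\mu_p(N^+)$ must hold with constants uniform in the partition-cover piece, which will force a box-independent choice of the angular parameters $\theta_i,\theta_j'$, the key input being the Busemann-function continuity collected in Corollary \ref{equicon}.
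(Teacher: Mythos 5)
Your outline follows the same scaling-plus-mixing template as the paper's Proposition \ref{asymptotic} and the cited source \cite[Proposition 5.9]{Wu1}: sandwich $\bigcup_{\c\in\C^*(t,\e^2,\b)}N^\c$ between the shrunken intersection $(B_{-\rho}N)^{\e^2}\cap\phi^{-t}(B_{-\rho}V)^{\b}$ (via Lemmas \ref{expansion} and \ref{intersection31}) and the enlarged one $N^{\e^2}\cap\phi^{-(t+4\e^2)}V^{\b+8\e^2}$ (via Lemma \ref{depth22}), apply Lemma \ref{tscaling22} and mixing of the MME, and remove $\rho$ using \eqref{e:boundary}; the exponent bookkeeping ($26h\e$ from Lemma \ref{tscaling22} plus the Gromov-product correction needed to write $m(N^{\e^2})$ as $e^{\pm h\e}2\e^2\mu_p(N^-)\mu_p(N^+)$, with $(1+8\e^2/\b)$ coming from the depth enlargement) closes as you describe. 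The second line follows from $\C^*\subset\C\subset\C^*_{\text{enlarged}}$ exactly as in Proposition \ref{asymptotic}.

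Two corrections. First, you have misdiagnosed ``the hard part'': the pairwise disjointness of the sets $N^{\e^2}\cap\phi^{-(t+4\e^2)}\c V^{\b+8\e^2}$ (which contain the $N^\c$) has nothing to do with regularity of $v_0$ or with Lemma \ref{injective}. It is immediate from proper discontinuity: $\pi(\c V^{\b+8\e^2})\subset \c B(y,2\e)$ by Lemma \ref{diameter}, and $\c_1 B(y,2\e)\cap\c_2 B(y,2\e)=\emptyset$ for $\c_1\neq\c_2$ because $4\e<\inj(M)$, so a vector lying in two such sets would have its image under $\phi^{t+4\e^2}$ footed in two disjoint balls. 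The mechanism of Lemma \ref{injective} (two axes of one isometry bounding a generalized strip) is irrelevant here, and trying to rule out a deck transformation ``fixing $N^-$'' would send you down a blind alley. Second, uniformity over the countably many pairs $(V_i,N_j)$ is not an issue inside this proposition: the statement concerns one fixed pair, and the constants $e^{\pm 30h\e}$ and $(1+8\e^2/\b)$ are box-independent by construction. What is genuinely non-uniform is the time after which the asymptotic inequalities take effect, and the paper handles that separately (Lemmas \ref{singular} and \ref{nonuniform} and the dominated-convergence step in the proof of Theorem \ref{margulis2}), not in this proposition.
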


\subsubsection{Integration}
Let $V\subset S_yX, N\subset S_xX$ be as above, and $0\le a<b$. Let $n(a,b,V,N^0)$ denote the number of connected components at which $\phi^{[-b,-a]}\lV$ intersects $\lN^0$, and $n_t(V^\b,N^\a)$ (resp. $n_t(V,N^\a)$) denote the number of connected components at which $\phi^{-t}\lV^\b$  (resp. $\phi^{-t}\lV$) intersects $\lN^\a$.

Applying  Lemma \ref{nhd}(2) and Proposition \ref{asymptotic1}, we have
\begin{lemma}\label{asy1}
$n_t(V,N^\e)\lesssim e^{-ht_0}\mu_p(V^-)\mu_p(N^+)e^{ht} 2\e (1+8\e)e^{30 h\e}.$
\end{lemma}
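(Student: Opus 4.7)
The plan is to reduce the geometric count $n_t(V,N^\e)$ to a group-theoretic count of deck transformations and then combine two already established lemmas. First, I would lift each connected component of $\phi^{-t}\lV \cap \lN^\e$ in $SM$ to $SX$ using the fixed lifts $V \subset S_yX$ and $N \subset S_xX$ with $x,y$ in the fundamental domain $\F$. Each such component in $SM$ lifts uniquely to a nonempty intersection of the form $N^\e \cap \phi^{-t}\c V$ for some $\c \in \C$, and conversely each such $\c$ produces at most one component in $SM$. Therefore
$$n_t(V, N^\e) \le \#\{\c \in \C: N^\e \cap \phi^{-t}\c V \neq \emptyset\}.$$

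Next, I would apply Lemma \ref{nhd}(2), which says
$$\{\c \in \C: N^\e \cap \phi^{-t}\c V \neq \emptyset\} \subset \{\c \in \C: N^{\e^2} \cap \phi^{-t}\c V^\e \neq \emptyset\} = \C(t,\e^2,\e).$$
This slight enlargement of the target flow box in the depth direction is exactly what lets us feed the problem into the scaling-and-mixing framework.

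Finally, I would invoke the upper bound from Proposition \ref{asymptotic1} with the parameter choice $\b = \e$, which yields
$$\#\C(t,\e^2,\e) \lesssim e^{-ht_0}\mu_p(V^-)\mu_p(N^+)e^{ht}\cdot 2\e(1+8\e)e^{30h\e},$$
noting that $8\e^2/\b = 8\e$ in this case. Chaining the three inequalities gives the desired asymptotic upper bound on $n_t(V,N^\e)$.

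There is no substantive obstacle here beyond bookkeeping: the main content has already been extracted into Proposition \ref{asymptotic1}. The only subtlety worth double-checking is the first reduction step, namely that distinct connected components of the intersection in $SM$ correspond to distinct deck transformations $\c \in \C$; this is where it matters that $N^\e$ has diameter less than the injectivity radius (enforced by the choice $\e < \inj(M)/4$) and that $x,y$ lie in the fixed fundamental domain $\F$, so that the covering map $\pr$ is injective on the flow-box neighborhoods being intersected.
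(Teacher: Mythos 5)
Your proof is correct and follows essentially the same route as the paper: reduce to the deck-transformation count, enlarge via Lemma \ref{nhd}(2) to land in $\C(t,\e^2,\e)$, and apply the upper bound of Proposition \ref{asymptotic1} with $\b=\e$. The only difference is that you spell out the identification between intersection components in $SM$ and elements of $\C$, which the paper leaves implicit in the definition of $n_t$.
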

\begin{proof}
Applying Lemma \ref{nhd}(2) with $\a=\e^2$, we have
$$n_t(V,N^{\e})\le n_t(V^\e,N^{\e^2}).$$
Letting $\b=\e$ in Proposition \ref{asymptotic1}, we have
\begin{equation*}
\begin{aligned}
&n_t(V^\e,N^{\e^2})=\#\C(t,\e^2,\e)
\lesssim  &e^{-ht_0}\mu_p(V^-)\mu_p(N^+)e^{ht} 2\e (1+8\e)e^{30 h\e}.
\end{aligned}
\end{equation*}
\end{proof}

\begin{lemma}\label{asy2}
One has
$$n_t(V,N^\e)\gtrsim e^{-ht_0}\mu_p(V^-)\mu_p(N^+)e^{ht} 2\e (1-2\e)e^{-30 h\e}.$$
\end{lemma}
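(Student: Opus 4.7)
The plan is to mirror the proof of Lemma \ref{asy1} in the opposite direction, namely to establish the inequality
\begin{equation*}
n_t(V, N^\e) \;\geq\; \#\C^*(t, \e^2, \e - 2\e^2)
\end{equation*}
for all sufficiently large $t$, and then invoke Proposition \ref{asymptotic1} with $\b = \e - 2\e^2$ to obtain
\[\#\C^*(t, \e^2, \e - 2\e^2) \gtrsim e^{-30h\e}\, e^{-ht_0}\mu_p(V^-)\mu_p(N^+)\, e^{ht}\cdot 2(\e - 2\e^2),\]
which rearranges to the desired lower bound
\[n_t(V, N^\e) \gtrsim e^{-ht_0}\mu_p(V^-)\mu_p(N^+)\, e^{ht}\cdot 2\e(1 - 2\e)\, e^{-30h\e}.\]

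To produce the displayed inequality, I would show that each $\c \in \C^*(t, \e^2, \e - 2\e^2)$ gives rise to at least one intersection component of $\phi^{-t}\lV$ with $\lN^\e$. Given such a $\c$, fix $v \in N^{\e^2} \cap \phi^{-t}\c V^{\e - 2\e^2}$ and set $w := \c^{-1}\phi^t v \in V^{\e - 2\e^2}$. Since $v_0 \in \R_1 \cap S_yX$, the map $V \to V^+$, $\tilde w \mapsto \tilde w^+$, is a homeomorphism (every $\eta \in V^+$ is the forward endpoint of a unique vector at $y$). Let $\tilde w \in V$ be the unique preimage of $w^+$, and define $\tilde v := \phi^{-t}\c \tilde w \in \phi^{-t}\c V$. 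By construction $\tilde v^+ = \c \tilde w^+ = \c w^+ = v^+ \in N^+$, and one checks the flow-depth bound $|s(\tilde v)| \leq \e$ using $|s(v)| \leq \e^2$, the identity $s(\tilde v) = s(v) + (s(\tilde w) - s(w))$, and Corollary \ref{equicon} (which gives $|s(w) - t_0|, |s(\tilde w) - t_0| \leq (\e - 2\e^2) + \e^2$).

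It remains to verify $\tilde v^- = \c \tilde w^- \in N^-$. Since distinct $\c$'s give distinct components (the diameter of $\lN^\e$ is less than the injectivity radius by Lemma \ref{diameter}), combining with Proposition \ref{asymptotic1} would then complete the proof. The main obstacle is this last verification: $\tilde w^-$ (the backward endpoint of the geodesic through $y$ toward $w^+$) differs from $w^-$ (the backward endpoint of the geodesic through $\pi w$ toward $w^+$) because $y$ need not lie on the geodesic $c_{w^-,w^+}$. The difference $d(\tilde w^-, w^-)$ in the visual topology is controlled by $d(y, \pi w)$ and the opening angle $\theta'_i$ of the flow box. By refining the regular partition-cover (shrinking $\theta'_i$ at the outset while preserving \eqref{e:boundary}), one forces $\tilde w^-$ to lie in an arbitrarily small neighborhood of $w^-$, and since $\c$ is an isometry on $\pX$ with $\c w^- = v^- \in \text{int}(N^-)$ (here we use that $N^-$ has positive $\mu_p$-measure on its interior, via the boundary condition \eqref{e:boundary}), we conclude $\c \tilde w^- \in N^-$ as required.
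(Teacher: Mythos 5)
Your reduction is the right one — showing $n_t(V,N^\e)\ge \#\C^*(t,\e^2,\e-2\e^2)$ and invoking Proposition \ref{asymptotic1} — and it matches the structure of the paper's argument. But the construction of the witness $\tilde v$ has a genuine gap, precisely at the step you flagged yourself.

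The assertion ``$\c$ is an isometry on $\pX$'' in your last paragraph is where the argument breaks. A deck transformation $\c$ is an isometry of $X$, but it acts on $\pX$ only as a homeomorphism in the visual metric, and for $\c\in\C^*(t,\e^2,\e-2\e^2)$ that homeomorphism is badly distorting near $V^-$: by the north-south dynamics (Proposition \ref{north}) the repelling fixed point of $\c$ lies in $V^-$, and the conformal density $\frac{d\mu_{\c^{-1}p}}{d\mu_p}(\eta)=e^{-\d_\C b_\eta(\c^{-1}p,p)}$ shows the expansion factor near that fixed point is of order $e^{h|\c|}$ with $|\c|\sim t$. So the displacement $d(\tilde w^-,w^-)$, although controlled by $\theta'_i$, is blown up by a factor that grows with $t$ before you compare $\c\tilde w^-$ to $v^-\in N^-$. ``Shrinking $\theta'_i$ at the outset'' cannot help: the required shrinkage depends on $t$, so the lemma would only be stated for a $t$-dependent partition-cover, which defeats its purpose.

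It's worth noting that your choice of projection is the unlucky one, given the asymmetric definition $\C^*=\{\c:\c V^+\subset N^+,\ \c^{-1}N^-\subset V^-\}$. Picking instead $\tilde w\in V$ with $\tilde w^-=w^-$ gives $\tilde v^- = \c w^- = v^-\in N^-$ exactly, and $\tilde v^+=\c\tilde w^+\in\c V^+\subset N^+$ for free — so both boundary conditions come out right with no distortion argument. But then the flow-depth estimate $|s(\tilde v)|\le\e$ requires controlling $d(\pi\tilde v,\pi v)$ under $\phi^{-t}$ along an unstable horosphere, and in the no-conjugate-points setting the only uniform control is Proposition \ref{horofoliation}(3), which is $d_s(\phi^t v,\phi^t w)\le A d_s(v,w)+B$ rather than genuine contraction; so this route is also not immediate.

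The paper avoids both problems by appealing to Lemma \ref{intersect2} (cited from \cite[Lemma 5.4]{Wu1}), which produces a witness in the $a$-widened box $(B_aN)^{\a+\b+\e^2}$ rather than in $N^\e$ itself, with the key feature that the threshold $T_1=T_1(a)$ is allowed to depend on $a$. Applying this with $N$ replaced by the erosion $B_{-a}N$ gives $n_t(V^{\e-2\e^2},(B_{-a}N)^{\e^2})\le n_t(V,N^\e)$ for large $t$; Proposition \ref{asymptotic1} lower-bounds the left side with $\mu_p((B_{-a}N)^+)$ in place of $\mu_p(N^+)$; and the limit $a\to 0$, legitimate because $\mu_p(\partial N^+)=0$ by \eqref{e:boundary}, recovers the stated constant. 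The widening/erosion and the limit $a\to 0$ are exactly the devices that absorb the $t$-dependent boundary error you ran into, and they are not optional.
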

\begin{proof}
Setting $\a=\e^2$, $\b=\e-2\e^2$ in Lemma \ref{intersect2}, for any $a>0$, we have
$$n_t(V^{\e-2\e^2},(B_{-a}N)^{\e^2})\le n_t(V,N^{\e})$$
for any $t\ge T_1$ where $T_1$ is provided by Lemma \ref{intersect2}.
Applying Proposition \ref{asymptotic1} with $\b=\e-2\e^2$, we have
\begin{equation*}
\begin{aligned}
n_t(V^{\e-2\e^2},(B_{-a}N)^{\e^2})\gtrsim  &e^{-ht_0}\mu_p(V^-)\mu_p((B_{-a}N)^+)e^{ht}2\e(1-2\e) e^{-30 h\e}.
\end{aligned}
\end{equation*}
Letting $a\to 0$, by \eqref{e:boundary} we obtain the conclusion of the lemma.
\end{proof}

Combining Lemmas \ref{asy1} and \ref{asy2}, using standard integration technique, we have
\begin{proposition}\label{key}
There exists $Q>0$ such that
\begin{equation*}
\begin{aligned}
e^{-2Q\e}e^{-ht_0}\mu_p(V^-)\mu_p(N^+)\frac{1}{h}e^{ht} &\lesssim n(0,t,V,N^0)\\
&\lesssim  e^{2Q\e}e^{-ht_0}\mu_p(V^-)\mu_p(N^+)\frac{1}{h}e^{ht}.
\end{aligned}
\end{equation*}
\end{proposition}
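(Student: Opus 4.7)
The plan is to pass from the single-time counts in Lemmas \ref{asy1} and \ref{asy2} to the time-integrated count $n(0,t,V,N^0)$ by a Riemann sum. The geometric point is that $\phi^{-s}\lV\cap \lN^\e\neq\emptyset$ iff some trajectory from $\lV$ crosses the slice $\lN^0$ at a moment lying in a window of width $2\e$ about $-s$, so sampling at the midpoints $s_k=(2k+1)\e$ of a partition of $[0,t]$ into $N=\lfloor t/(2\e)\rfloor$ intervals of length $2\e$ yields
\[
n(0,t,V,N^0)\;=\;\sum_{k=0}^{N-1} n_{s_k}\bigl(V,N^\e\bigr)\;+\;\mathrm{(boundary\ correction)}.
\]
The correction arises from components whose crossing time lands within $\e$ of a partition endpoint. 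The null-boundary property \eqref{e:boundary}, namely $\mu_p(\partial N^\pm)=\mu_p(\partial V^\pm)=0$, forces the Patterson--Sullivan weight of such components to vanish relative to the interior; together with the observation that any single component is counted at most twice in adjacent samples, this shows the correction contributes only a bounded multiplicative factor which will be absorbed into $e^{2Q\e}$.

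For the upper bound, insert Lemma \ref{asy1} into each summand and sum the geometric series with common ratio $e^{2h\e}$:
\[
n(0,t,V,N^0)\;\lesssim\;e^{-ht_0}\mu_p(V^-)\mu_p(N^+)\cdot 2\e(1+8\e)e^{30h\e}\cdot\frac{e^{h\e}\bigl(e^{ht}-1\bigr)}{e^{2h\e}-1}.
\]
Using the elementary inequality $e^{2h\e}-1\ge 2h\e$, the factor $2\e$ in the prefactor collapses against $1/(2h\e)$ to produce the desired $e^{ht}/h$, and the residual multiplicative error $(1+8\e)e^{31h\e}$ is bounded by $e^{2Q\e}$ for any fixed $Q\ge 20h$ and $\e$ small. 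Applying Lemma \ref{asy2} symmetrically with the complementary estimate $e^{2h\e}-1\le 2h\e\, e^{2h\e}$ yields the matching lower bound
\[
n(0,t,V,N^0)\;\gtrsim\;e^{-2Q\e}e^{-ht_0}\mu_p(V^-)\mu_p(N^+)\frac{e^{ht}}{h}.
\]

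The main (and really only) obstacle is the careful handling of the boundary correction in the Riemann sum decomposition. What makes this clean here is the combination of (i) the null-boundary condition \eqref{e:boundary}, which is precisely what was arranged in the construction of the regular partition-cover, and (ii) the fact that $N^\e$ has flow-direction thickness exactly $2\e$, matching the mesh of the partition, so double-counting contributes only a factor $\le 2$ rather than growing with $t$. Since $\e$ is fixed, all multiplicative errors of the form $(1\pm c\e)e^{\pm c'h\e}$ are uniformly absorbed into $e^{\pm 2Q\e}$, which yields exactly the statement of the proposition.
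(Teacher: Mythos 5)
Your Riemann-sum argument is exactly the ``standard integration technique'' the paper invokes without writing out: tile $[0,t]$ by windows of length $2\e$ matching the flow-depth of $\lN^\e$, apply Lemmas \ref{asy1} and \ref{asy2} to each sample time, and sum the geometric series so that the factor $2\e$ cancels against $e^{2h\e}-1\approx 2h\e$ to produce $e^{ht}/h$ up to $e^{\pm Q\e}$. One caveat on your treatment of double-counting: a literal ``each component is counted at most twice'' would give a multiplicative factor $2$, which is \emph{not} absorbed into $e^{2Q\e}$ (and this matters, since $\e\to 0$ at the end of the proof of Theorem \ref{margulis2}); the correct mechanism is not \eqref{e:boundary} but Lemma \ref{intersection11}, which shows the crossing-time interval of each component has length $O(\e^2)\ll 2\e$, so only components whose crossing times fall within $O(\e^2)$ of a partition point are counted twice, and summing the shadow-lemma bound over these thin windows shows they contribute only an $O(\e)$-fraction of the main term, i.e.\ a correction of the form $1+O(\e)$ as required.
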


\subsection{Summing over the regular partition-cover}
Denote $$a_t(x,y):=\#\{\c\in \C: \c y\in B(x,t)\}$$
and $$a^{1}_t(x,y):=\#\{\c\in \C: \c y\in B(x,t), \text{\ and\ }\dot c_{x,\c y}(0)\in \R_1^c\}.$$
It is easy to see that $b_t(x)=\int_\F a_{t}(x,y)d  \text{Vol} (y)$. 

The following shows that $a_t^1(x,y)$ is not dominating in the asymptotic estimates. The proof essentially uses the entropy gap.
\begin{lemma}\label{singular}
There exist $C>0$ and $0<h'<h$, such that for any $x,y\in \F$,
$$a^{1}_t(x,y) \le Ce^{h't}.$$
\end{lemma}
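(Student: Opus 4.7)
The plan is to exploit a strict entropy gap on the closed $\phi^t$-invariant singular set $\R_1^c\subset SM$ and to count singular lattice points by a Bowen-ball bookkeeping of midpoints, following Knieper's approach for rank-one nonpositive curvature. Under the hypotheses of Theorem~\ref{margulis2}, Corollary~\ref{uniqueness1} gives a unique MME $m$ which is hyperbolic, and in particular $m(\R_1)=1$. Since $\phi^t$ is $C^\infty$ it is asymptotically $h$-expansive (Buzzi), so the entropy map $\mu\mapsto h_\mu(\phi^1)$ is upper semi-continuous on the weak-$*$ compact set of invariant probability measures supported in the closed invariant set $\R_1^c$; the variational principle then produces an invariant measure on $\R_1^c$ attaining $h_{\top}(\phi^1|_{\R_1^c})$, and by uniqueness of $m$ (together with $m(\R_1^c)=0$) this value is strictly less than $h$. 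Fix $h_0:=h_{\top}(\phi^1|_{\R_1^c})$ and any $h'\in (h_0,h)$.

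For each $\c$ counted by $a^1_t(x,y)$, set $v_\c:=\dot c_{x,\c y}(0)\in S_xX\cap \R_1^c$ with projection $\bar v_\c\in SM\cap \R_1^c$, and write $\ell_\c:=d(x,\c y)\le t$. Because $\R_1^c$ is $\phi^t$-invariant, $\phi^s\bar v_\c\in \R_1^c$ for all $s\in\RR$. Define the midpoint vector
\[
w_\c\ :=\ \phi^{\ell_\c/2}(\bar v_\c)\ \in\ SM\cap \R_1^c,
\]
so that $\pi\phi^{-\ell_\c/2}(w_\c)=\bar x$ and $\pi\phi^{\ell_\c/2}(w_\c)=\bar y$. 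The map $\c\mapsto w_\c$ has fibers of size $O(t)$: if $w_{\c_1}=w_{\c_2}$, the corresponding $\bar v_{\c_i}$'s lie on one common orbit at footpoint $\bar x$ at times $-\ell_{\c_i}/2$, so by proper discontinuity of $\C$ the admissible lengths $\ell\le t$ are separated by at least $4\inj(M)$, giving at most $t/(4\inj(M))+1$ preimages. Hence $|\{w_\c\}|\gtrsim a^1_t(x,y)/t$.

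Fix $\epsilon\in (0,\inj(M)/10)$ and let $E\subset \R_1^c$ be a minimal two-sided $(t,\epsilon)$-spanning set for $\phi^1|_{\R_1^c}$. By the variational characterization of topological entropy, $|E|\le C_1 e^{(h_0+o(1))t}$ as $t\to\infty$. Each $w_\c$ lies in the $(t,\epsilon)$-Bowen ball of some $e\in E$, and the orbit of $e$ must then pass within distance $\epsilon$ of $\bar y$ at time $s=\ell_\c/2$ and within distance $\epsilon$ of $\bar x$ at time $s=-\ell_\c/2$. Uniform visibility and discreteness of $\C$ bound the number of such return times on $[-t/2,t/2]$ by $O(t)$, so each Bowen ball contains at most $O(t)$ of the $w_\c$'s. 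Combining with the fiber estimate,
\[
a^1_t(x,y)\ \lesssim\ t\cdot |E|\cdot t\ \le\ C_1\,t^2\,e^{(h_0+o(1))t}\ \le\ C\,e^{h't}
\]
for all sufficiently large $t$, since $h'>h_0$.

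\emph{Main obstacle.} The delicate step is the Bowen-ball occupancy estimate: without nonpositive curvature the flat-strip lemma is unavailable, so we must argue directly from uniform visibility (Proposition~\ref{geo}), continuity of Green bundles (Proposition~\ref{horofoliation}), and discreteness of $\C$ that the $[-t/2,t/2]$-orbit of an individual $e\in\R_1^c$ re-enters a small neighborhood of $\bar y$ (and of $\bar x$) at most $O(t)$ times. A secondary subtlety is that the strict entropy gap depends on Buzzi's upper semi-continuity of entropy for $C^\infty$ systems, rather than on any expansivity of $\phi^t$ on the whole of $SM$.
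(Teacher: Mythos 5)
Your proposal is correct and rests on exactly the two ingredients the paper uses -- the entropy gap $h_{\top}(\phi^1|_{\R_1^c})<h$ (obtained from uniqueness of the MME, $m(\R_1^c)=0$, and upper semi-continuity of entropy via Buzzi's asymptotic $h$-expansiveness of $C^\infty$ flows) and the fact that distinct deck translates $\c_1y\neq\c_2y$ with nearly equal distances from $x$ give $(t,\e)$-separated orbits in $\R_1^c$. The implementation, however, is genuinely different and noticeably heavier. The paper decomposes $B(x,t)$ into annuli $B(x,t_i)\setminus B(x,t_i-\e)$ of width $\e<\inj(M)/5$; within a single annulus the projected initial vectors $\dot{c}_{x,\c y}(0)\in\R_1^c\subset SM$ are pairwise $(t_i,\e)$-separated (two of them $\e$-shadowing each other for time $t_i$ would force $d(\c_1y,\c_2y)<2\inj(M)$, hence $\c_1=\c_2$), so the count per annulus is at most $C'e^{h't_i}$ by the definition of topological entropy of the compact invariant set $\R_1^c$ via separated sets; summing the resulting geometric series over the $O(t/\e)$ annuli gives $Ce^{h't}$. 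You instead pass to midpoint vectors, cover $\R_1^c$ by a $(t,\e)$-spanning set, and then need two extra counting lemmas (the $O(t)$ fiber bound and the $O(t)$ Bowen-ball occupancy bound) to convert the spanning-set cardinality into a bound on $a^1_t(x,y)$. Note that the step you single out as the main obstacle -- that each Bowen ball contains at most $O(t)$ of the $w_\c$'s -- is precisely the separation statement the paper uses directly: two $\c$'s in the same two-sided Bowen ball with $|\ell_{\c_1}-\ell_{\c_2}|\le 2\e$ would give geodesics from $x$ that $O(\e)$-shadow each other up to their endpoints, forcing $\c_1=\c_2$ by discreteness; the annulus decomposition makes this immediate and removes the need for midpoints, spanning sets, and the fiber count altogether. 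Your argument buys nothing extra here (the polynomial losses $t^2$ versus $t/\e$ are both harmlessly absorbed since $h'$ can be taken anywhere in $(h_{\top}(\phi^1|_{\R_1^c}),h)$), so I would encourage you to streamline it to the separated-set version.
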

\begin{proof}
Given any $\e<\inj M/5$ and $t>0$, let $\c_1\neq \c_2\in \C$ be such that $\c_1y,\c_2 y\in B(x,t)\setminus B(x,t-\e)$, and $\dot c_{x,\c_1 y}(0), \dot c_{x,\c_2 y}(0)\in \R_1^c$. Then it is easy to see that $\dot c_{x,\c_1 y}(0)$ and $\dot c_{x,\c_2 y}(0)$ are $(t,\e)$-separated. By the uniqueness of MME and the fact $m(\R_1^c)=0$ (see Corollary C.1), we have the entropy gap: $h_\top(\R_1^c)<h$. It follows that the number of $\c\in \C$ as above is less than $C_1e^{h't}$ for some $C_1>0$ and $h_\top(\R_1^c)< h'<h$.

Let $t_i=i \e$, then
$$a^{1}_t(x,y) \le \sum_{i=1}^{[t/\e]+1} C'e^{h't_i}= \frac{C'}{\e}\sum_{i=1}^{[t/\e]+1}\e e^{h't_i}\le \frac{C'}{\e}\int_{0}^{t+\e}e^{h's}ds\le Ce^{h't}$$
for some $C>0$.
\end{proof}

Denote by
$$a(0,t, x,y, \cup_{j=n+1}^\infty N_j):=\#\{\c\in \C: \c y\in \pi\phi^{[0,t]}\cup_{j=n+1}^\infty N_j\},$$
and similarly,
$$a(0,t, x,y, \cup_{i=m+1}^\infty V_i):=\#\{\c\in \C: \c^{-1} x\in \pi\phi^{[-t,0]}\cup_{i=m+1}^\infty V_i\}.$$
Applying the shadow lemma, Proposition \ref{shadow}(3), we have the following inequality:
\begin{lemma}\label{nonuniform}(\cite[Lemma 5.14]{Wu1})
There exists $C>0$ such that
$$\limsup_{t\to \infty}e^{-ht}a(0,t, x,y, \cup_{j=n+1}^\infty N_j)\le C\cdot \mu_p((\cup_{j=n+1}^\infty N_j)^+),$$
and
$$\limsup_{t\to \infty}e^{-ht}a(0,t, x,y, \cup_{i=m+1}^\infty V_i)\le C\cdot \mu_p((\cup_{i=m+1}^\infty V_i)^-).$$
\end{lemma}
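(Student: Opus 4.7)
The plan is to run a Sullivan-type shadow argument, using Proposition \ref{shadow}(3) as the driving estimate. Let $N:=\bigcup_{j=n+1}^\infty N_j\subset S_xX$ and fix $r>R$ (where $R$ is the Lemma \ref{shadowopen} constant) and large enough that $r>\operatorname{diam}\F$, so Proposition \ref{shadow}(3) gives
\[
\tfrac{1}{b}\,e^{-h\,d(x,\c y)}\le \mu_p\bigl(pr_x B(\c y,r)\bigr)\le b\,e^{-h\,d(x,\c y)}
\]
for every $\c\in\C$. Observe that if $\c y\in \pi\phi^{[0,t]}N$, then $\dot c_{x,\c y}(0)\in N$ and $d(x,\c y)\le t$, so the shadow $pr_x B(\c y,r)$ contains the boundary point $(\dot c_{x,\c y}(0))^+\in N^+$; moreover, by uniform visibility (Definition \ref{vis}), the angular diameter of $pr_x B(\c y,r)$ seen from $x$ tends to $0$ as $d(x,\c y)\to\infty$. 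Hence for any $\d>0$ there is $T_\d$ so that for all $\c$ with $d(x,\c y)\ge T_\d$ one has $pr_x B(\c y,r)\subset (B_\d N)^+$, where $B_\d N$ denotes the $\d$-neighborhood of $N$ in $S_xX$.

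First I would decompose the count by shells. Let $A_k:=\{\c\in\C:d(x,\c y)\in(k-1,k]\}\cap\{\c:\dot c_{x,\c y}(0)\in N\}$ for $k=1,\dots,\lceil t\rceil$. For each $k$, the shadows $\{pr_x B(\c y,r):\c\in A_k\}$ are open subsets of $\pX$ contained in $(B_\d N)^+$ (for $k\ge T_\d$), each of $\mu_p$-measure at least $\tfrac{1}{b}e^{-hk}$. A standard proper-discontinuity argument shows that their overlap multiplicity is uniformly bounded by some $K_0=K_0(r)$ independent of $k$: if $\xi$ lies in $\bigcap_{j=1}^{K} pr_x B(\c_j y,r)$, then each $\c_j y$ lies within distance $r$ of the geodesic $c_{x,\xi}$ and within an annulus of width $1$ at radius $k$, a region of bounded volume, so $K$ is bounded by proper discontinuity of $\C$.

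Putting these together, for $k\ge T_\d$,
\[
\# A_k\cdot \tfrac{1}{b}e^{-hk}\le \sum_{\c\in A_k}\mu_p(pr_x B(\c y,r))\le K_0\,\mu_p((B_\d N)^+),
\]
hence $\#A_k\le bK_0\,e^{hk}\mu_p((B_\d N)^+)$. Summing the geometric series,
\[
a(0,t,x,y,N)\le \#\{\c:d(x,\c y)<T_\d\}+bK_0\mu_p((B_\d N)^+)\sum_{k=T_\d}^{\lceil t\rceil}e^{hk}\le C_1+C_2 e^{ht}\mu_p((B_\d N)^+),
\]
so $\limsup_{t\to\infty}e^{-ht}a(0,t,x,y,N)\le C_2\,\mu_p((B_\d N)^+)$. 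Letting $\d\downarrow 0$ and invoking the choice \eqref{e:boundary} of the regular partition-cover (so that $\mu_p(\partial N^+)=0$ and $\mu_p((B_\d N)^+)\to \mu_p(N^+)$), we obtain the first inequality with $C=C_2$. The second inequality follows by the symmetric argument: the condition $\c^{-1}x\in \pi\phi^{[-t,0]}\bigcup_{i=m+1}^\infty V_i$ means $\dot c_{y,\c^{-1}x}(0)\in \bigcup_i V_i$ and $d(y,\c^{-1}x)=d(\c y,x)\le t$, so one replaces shadows from $x$ by shadows from $y$, uses $\C$-equivariance of the Busemann density, and repeats the argument verbatim.

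The only genuine obstacle is justifying that the shadows lie inside $(B_\d N)^+$ up to an error that vanishes with $t$, and that the overlap constant $K_0$ is independent of $k$. The first point is a quantitative consequence of uniform visibility applied to the ball $B(\c y,r)$ seen from $x$ at distance $\ge k$, and the second is a packing estimate using proper discontinuity on metric annuli of bounded width. Both are robust under the uniform visibility hypothesis and do not require continuity of Green bundles, so the lemma holds in the generality stated.
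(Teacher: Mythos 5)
Your argument is correct and is essentially the paper's own route: the lemma is quoted from \cite[Lemma 5.14]{Wu1}, which is exactly this Sullivan-type application of the shadow estimate in Proposition \ref{shadow}(3), with the shell decomposition, the uniform-visibility control of shadow diameters, and the bounded-multiplicity packing bound from proper discontinuity. The only step worth making explicit is the passage $\delta\downarrow 0$, where you need $\mu_p(\overline{N^+})=\mu_p(N^+)$ for $N=\cup_{j>n}N_j$; this follows from \eqref{e:boundary} applied to the finitely many $N_1,\dots,N_n$ together with Lemma \ref{null} (the singular directions are $\mu_p$-null), so the proof stands as written.
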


\begin{proof}[Proof of Theorem \ref{margulis2}]
Since the diameter of each flow box is no more than $4\e$, we have
\begin{equation}\label{e:less1}
\begin{aligned}
n(0,t, \cup V_i, \cup (N_j)^0)\le a_{t+4\e}(x,y)
\end{aligned}
\end{equation}
and
\begin{equation}\label{e:less2}
\begin{aligned}
a_{t-4\e}(x,y)&\le a^{1}_{t-4\e}(x,y)+ a(0,t, x,y, \cup_{j=n+1}^\infty N_j)\\
&+a(0,t, x,y, \cup_{i=m+1}^\infty V_i)+ n(0,t, \cup_{i=1}^m V_i, \cup_{j=1}^n (N_j)^0).
\end{aligned}
\end{equation}

For each $V_i$, denote by $t_0^i:=b_{v_i^-}(\pi v_i,p)$ where $v_i\in V_i$. Recall that in Subsection 8.1 we suppressed $i$ and write $t_0=t_0^i$, since only one $V_i$ is considered there. By Proposition \ref{key}, for any $m,n\in \NN$
\begin{equation*}
\begin{aligned}
&\liminf_{t\to \infty}e^{-ht}n(0,t, \cup V_i, \cup (N_j)^0)\\
\ge&\liminf_{t\to \infty}e^{-ht}\sum_{i=1}^m\sum_{j=1}^n n(0,t, V_i,  (N_j)^0)\\
\ge &\sum_{i=1}^m\sum_{j=1}^n e^{-2Q\e}e^{-hb_{v_i^-}(\pi v_i,p)}\mu_p((V_i)^-)\mu_p((N_j)^+)\frac{1}{h}.
\end{aligned}
\end{equation*}
Note that $v\mapsto b_{v^-}(\pi v,p)$ is a continuous function by Corollary \ref{continuous}. So if we choose a sequence of finer and finer regular partition-covers, and let $m,n\to \infty$ on the right hand,
\begin{equation*}
\begin{aligned}
&\liminf_{t\to \infty}e^{-ht}n(0,t, \cup V_i, \cup (N_j)^0)\\
\ge &e^{-2Q\e}\frac{1}{h}\int_{S_xM\cap \R_1}\int_{S_y M\cap \R_1}e^{-hb_{v^-}(\pi v,p)}d\tilde\mu_y(-v)d\tilde\mu_x(w).
\end{aligned}
\end{equation*}
Then by \eqref{e:less1} we have
\begin{equation*}
\begin{aligned}
a_{t+4\e}(x,y)&\gtrsim e^{-2Q\e}\frac{1}{h}e^{ht} \int_{S_xM\cap \R_1}\int_{S_y M\cap \R_1}e^{-hb_{v^-}(\pi v,p)}d\tilde\mu_y(-v)d\tilde\mu_x(w).
\end{aligned}
\end{equation*}
Replacing $t$ by $t-4\e$, we have
\begin{equation}\label{e:ge}
\begin{aligned}
a_{t}(x,y)&\gtrsim e^{-2Q\e}e^{-4\e}\frac{1}{h}e^{ht} \int_{S_xM\cap \R_1}\int_{S_y M\cap \R_1}e^{-hb_{v^-}(\pi v,p)}d\tilde\mu_y(-v)d\tilde\mu_x(w).
\end{aligned}
\end{equation}

On the other hand, by Proposition \ref{key} and Corollary \ref{equicon}, for any $m,n\in \NN$
\begin{equation}\label{e:upper}
\begin{aligned}
&e^{2Q\e}\frac{1}{h} \int_{S_xM\cap \R_1}\int_{S_y M\cap \R_1}e^{-hb_{v^-}(\pi v,p)}d\tilde\mu_y(-v)d\tilde\mu_x(w)\\
\ge &\sum_{i=1}^m\sum_{j=1}^n e^{2Q\e}e^{-h(t_0^i+\e^2+4\e)}\mu_p((V_i)^-)\mu_p((N_j)^+)\frac{1}{h}\\
\ge &\limsup_{t\to \infty}\sum_{i=1}^m\sum_{j=1}^n n(0,t, V_i,  (N_j)^0)e^{-ht}e^{-h(\e^2+4\e)}.
\end{aligned}
\end{equation}
Combining with \eqref{e:less2} and Lemmas \ref{singular}, \ref{nonuniform}, one has
\begin{equation*}
\begin{aligned}
&a_{t-4\e}(x,y)\lesssim  Ce^{h'(t-4\e)}\\+&Ce^{h(t-4\e)}\cdot \mu_p((\cup_{j=n+1}^\infty N_j)^+)+ Ce^{h(t-4\e)}\cdot \mu_p((\cup_{j=m+1}^\infty V_i)^+)\\
+&e^{2Q\e}e^{h(\e^2+4\e)}\frac{1}{h}e^{ht} \int_{S_xM\cap \R_1}\int_{S_y M\cap \R_1}e^{-hb_{v^-}(\pi v,p)}d\tilde\mu_y(-v)d\tilde\mu_x(w).
\end{aligned}
\end{equation*}
Letting $m,n\to \infty$ and replacing $t-4\e$ by $t$, we have
\begin{equation}\label{e:le}
\begin{aligned}
&a_{t}(x,y)\\
\lesssim &e^{2Q\e}e^{h(\e^2+4\e)+4\e}\frac{1}{h}e^{ht} \int_{S_xM\cap \R_1}\int_{S_y M\cap \R_1}e^{-hb_{v^-}(\pi v,p)}d\tilde\mu_y(-v)d\tilde\mu_x(w).
\end{aligned}
\end{equation}

Letting $\e\to 0$ in \eqref{e:ge} and \eqref{e:le} and recalling that $p=x$, we get
$$a_{t}(x,y)\sim \frac{1}{h}e^{ht}\cdot c(x,y)$$
where
$$c(x,y):=\int_{S_xX\cap \R_1}\int_{S_y X\cap \R_1 }e^{-hb_{v^-}(\pi v,x)}d\tilde\mu_y(-v)d\tilde\mu_x(w).$$
By Lemma \ref{null}, in fact we have
$$c(x,y)=\int_{S_xX}\int_{S_y X}e^{-hb_{v^-}(\pi v,x)}d\tilde\mu_y(-v)d\tilde\mu_x(w).$$

It follows that
$b_t(x)=\int_\F a_{t}(x,y)d  \text{Vol} (y)\sim  \frac{1}{h}e^{ht}\int_\F c(x,y)d  \text{Vol}(y)$ by the dominated convergence Theorem. Indeed, by \eqref{e:less2}, Lemma \ref{singular}, \cite[Equation (10)]{Wu1} and \eqref{e:upper}, there exist contants $B_1, B_2>0$ such that
$$e^{-ht}a_{t}(x,y)\le B_1+B_2c(x,y),$$
and the right hand side is clearly integrable.

Define $c(x):=\int_\F c(x,y)d  \text{Vol}(y)$, we get $b_t(x)\sim c(x)\frac{e^{ht}}{h}$.
Obviously, $c(\c x)=c(x)$ for any $\c\in \C$. So $c$ descends to a function from $M$ to $\RR$, which still denoted by $c$.

It remains to prove the continuity of $c$. Let $y_n\to x\in X$. For any $\e>0$, there exists $n\in \NN$ such that $d(x,y_n)<\e$. Then
$$b_{t-\e}(x)\le b_{t}(y_n)\le b_{t+\e}(x).$$
We have
\begin{equation*}
\begin{aligned}
c(y_n)=\lim_{t\to \infty}\frac{b_t(y_n)}{e^{ht}/h}\le \lim_{t\to \infty}\frac{b_{t+\e}(x)}{e^{ht}/h}=c(x)e^{h\e}
\end{aligned}
\end{equation*}
and
\begin{equation*}
\begin{aligned}
c(y_n)=\lim_{t\to \infty}\frac{b_t(y_n)}{e^{ht}/h}\ge \lim_{t\to \infty}\frac{b_{t-\e}(x)}{e^{ht}/h}=c(x)e^{-h\e}.
\end{aligned}
\end{equation*}
Thus $\lim_{n\to \infty}c(y_n)=c(x)$ and $c$ is continuous.
\end{proof}

\section{Margulis function and rigidity}

\subsection{Properties of Margulis function}
We will study the regularity properties of Margulis function and prove the integral formula for topological entropy in Theorem \ref{marfunction} in this section. Let $M$ be a closed $C^\infty$ uniform visibility manifold without conjugate points and with continuous asymptote, $X$ the universal cover of $M$. Suppose that the geodesic flow has a hyperbolic periodic point.

It is easy to see that
\begin{equation}\label{e:sphere}
s_t(x)/e^{ht}\sim c(x),
\end{equation}
where $s_t(x)$ is the spherical volume of the sphere $S(x,t)$ around $x\in X$ of radius $t>0$. Based on the Margulis function, we give an equivalent definition for the Patterson-Sullivan measure as a limit of spherical volume measure on spheres as follows. 

Recall that $f_x: S_xX\to \partial X$, $f_x(v)=v^+$. Similarly, we define the canonical projection $f_x^R: S(x,R)\to \partial X$ by $f_x^R(y)=v_y^+$ where $v_y$ is the unit normal vector of the sphere $S(x,R)$ at $y$. For any continuous function $\varphi: \partial X \to \RR$, define a measure on $\pX$ by
$$\nu_x^R(\varphi):=\frac{1}{e^{hR}}\int_{S(x,R)}\varphi \circ f_x^R(y)d \text{Vol} (y).$$
By \eqref{e:sphere}, $\nu_x^R(\partial X)$ is uniformly bounded from above and below, and hence there exist limit measures of $\nu_x^R$ when $R\to \infty$. Take any limit measure $\nu_x$. By \eqref{e:sphere}, we see that
$$\nu_x(\partial X)=\lim_{R\to \infty}\frac{s_R(x)}{e^{hR}}=c(x).$$
Moreover, by definition one can check that
\begin{enumerate}
\item For any $p,\ q \in X$ and $\nu_{p}$-a.e. $\xi\in \pX$,
$$\frac{d\nu_{q}}{d \nu_{p}}(\xi)=e^{-h \cdot b_{\xi}(q,p)}.$$
\item $\{\nu_{p}\}_{p\in X}$ is $\Gamma$-equivariant, i.e., for every Borel set $A \subset \pX$ and for any $\c \in \Gamma$,
we have $$\nu_{\c p}(\c A) = \nu_{p}(A).$$
\end{enumerate}
(2) is obvious. See \cite{Wu1} for the proof of (1). Thus $\{\nu_x\}_{x\in X}$ is a $h$-dimensional Busemann density.

By the following theorem, $\{\nu_x\}_{x\in X}$ coincide with the Patterson-Sullivan measure $\{\mu_x\}_{x\in X}$ up to a scalar constant. So we do not distinguish them in the following discussion.

\begin{theorem}\label{buse}
Let $M=X/\Gamma$ be a compact uniform visibility manifold without conjugate points. Then up to a multiplicative constant, the Busemann density is unique, i.e., the Patterson-Sullivan measure is the unique Busemann density.
\end{theorem}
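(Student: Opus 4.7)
The strategy is to reduce the uniqueness claim to the uniqueness of the measure of maximal entropy. Given an arbitrary $h$-dimensional Busemann density $\{\nu_p\}_{p \in X}$, I would first observe that the construction of Section $5$ depends functorially on the density: setting $d\bar\nu(\xi,\eta):=e^{h\beta_p(\xi,\eta)}d\nu_p(\xi)d\nu_p(\eta)$ on $\partial^2 X$ and pairing it with the same transversal measures $\lambda_{\xi,\eta}$ coming from the measurable selection used in Section $5$ produces a $\C$-invariant Borel measure $m_\nu$ on $SX$ which descends to a Borel measure $m_{\C,\nu}$ on $SM$. The proof of Lemma \ref{support} only uses $\C$-equivariance and minimality of the $\C$-action on $\pX$, so $\mathrm{supp}(\nu_p)=\pX$; consequently the shadow estimates of Proposition \ref{shadow} apply verbatim to $\{\nu_p\}$, and re-running the proof of Proposition \ref{MME} shows that $m_{\C,\nu}$ (after normalization) is a measure of maximal entropy. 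Verifying that the Patterson-Sullivan machinery transfers to an arbitrary Busemann density in this way is the main technical step of the argument.

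Next I would invoke Corollary C.1: both $m_{\C,\nu}$ and the measure $m_{\C,\mu}$ built from the Patterson-Sullivan density coincide with the unique MME up to scalar, hence $m_\nu=c\,m_\mu$ on $SX$ for some $c>0$. Corollary C.1 also implies that the MME gives full mass to $\R_1$, so by Proposition \ref{BMMME} the fiber $P^{-1}(\xi,\eta)$ is a single geodesic for $\bar\mu$- and $\bar\nu$-a.e.\ $(\xi,\eta)\in\partial^2 X$, making the measurable-selection ambiguity irrelevant on a full-measure set. Testing $m_\nu=c\,m_\mu$ against cylinder sets of the form $P^{-1}(B)\cap s^{-1}([a,b])$ gives $\bar\nu=c\,\bar\mu$ on $\partial^2 X$, and since the positive factor $e^{h\beta_p(\xi,\eta)}$ cancels, this simplifies to
\[
d\nu_p(\xi)\,d\nu_p(\eta)=c\,d\mu_p(\xi)\,d\mu_p(\eta)\quad\text{on } \partial^2 X.
\]

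Finally I would deduce proportionality of $\nu_p$ and $\mu_p$. Setting $\rho:=\mu_p+\nu_p$ with Radon--Nikodym derivatives $f:=d\mu_p/d\rho$ and $g:=d\nu_p/d\rho$, the tensor identity becomes $g(\xi)g(\eta)=c\,f(\xi)f(\eta)$ for $\rho\otimes\rho$-a.e.\ $(\xi,\eta)\in\partial^2 X$. A Fubini argument --- fix any $\eta_0$ with $f(\eta_0)>0$ in the full-measure set of $\eta$ where the identity holds in the $\xi$-slice --- then forces $g=\sqrt{c}\,f$ on $\{f>0\}$ $\rho$-a.e., while a symmetric argument gives $g=0$ $\rho$-a.e.\ on $\{f=0\}$; therefore $\nu_p=\sqrt{c}\,\mu_p$ on $\pX$. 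The proportionality constant is automatically independent of the basepoint, since both densities satisfy the common transformation rule $d\mu_q/d\mu_p=d\nu_q/d\nu_p=e^{-h b_\xi(q,p)}$.
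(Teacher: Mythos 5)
Your route — building a Bowen--Margulis measure $m_{\C,\nu}$ from an arbitrary Busemann density $\nu$, showing it is a measure of maximal entropy, and invoking uniqueness of the MME — is genuinely different from the paper's, but it has a real gap: Theorem \ref{buse} is stated for an arbitrary compact uniform visibility manifold without conjugate points, whereas the uniqueness of the MME that you invoke (Corollary C.1, resting on Theorem \ref{mmeuni}) requires hypotheses that are not assumed here: continuous Green bundles plus a hyperbolic periodic point, or at least the entropy gap for measures supported on $\R_0^c$. The same problem affects your appeal to Proposition \ref{BMMME}: the measure $m_{\C,\nu}$ is a priori not $\phi^t$-invariant (the measurable selection can vary along orbits in a generalized strip), so "$m_{\C,\nu}$ is an MME" only makes sense after passing to the flow-invariant Krylov--Bogolyubov version, and identifying that version back with the product formula requires the unique MME to give full weight to $\R_0$ — again unavailable in the stated generality. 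So your argument proves the theorem only under the standing hypotheses of Section 9, not as stated. (A secondary, minor point: the identity $d\nu_p\otimes d\nu_p=c\,d\mu_p\otimes d\mu_p$ is only known off the diagonal, so the Fubini step needs non-atomicity of the boundary measures, or an $\eta$-integration that accounts for possible atoms.)

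The paper's proof avoids the MME entirely and works in full generality. Following Knieper, it uses the shadow lemma (Proposition \ref{shadow}, valid for any $\d_\C$-dimensional Busemann density once full support is established as in Lemma \ref{support}) to show that every Busemann density is comparable, with uniform multiplicative constants, to the $h$-dimensional Hausdorff measure on $\pX$; the key technical input is a Vitali-type covering lemma in which the convexity used by Knieper in nonpositive curvature is replaced by the quasi-convexity of Proposition \ref{geo}. Consequently any two Busemann densities are mutually absolutely continuous with bounded Radon--Nikodym derivative, and ergodicity of the $\C$-action on $(\pX,\mu_o)$ then forces that derivative to be constant. If you wish to salvage your approach, it would yield a correct (but weaker) statement under the hypotheses of Section 9, which is in fact the only setting in which the paper applies Theorem \ref{buse}; but as a proof of the theorem as stated it does not go through.
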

\begin{proof}
We sketch the idea of the proof, following closely \cite[Page 764-767]{Kn1}.

\textbf{Step 1:} Fix a reference point $p_{0}\in X$. For an arbitrary point $p \in X \setminus p_{0}$, let $\xi=c_{p_{0},p}(+\infty)$. For any positive number $\rho > R$ (where $R$ is the constant in Proposition \ref{shadow}), we call the set
$$B^{\rho}_{r}(\xi):=pr_{p_{0}}(B(p,\rho)\cap S(p_{0},d(p,p_{0})))$$
a ball of radius $r=e^{-d(p,p_{0})}$ at $\xi\in \pX$. By the triangle inequality we know $pr_{p_{0}}B(p,\rho) \subseteq B^{2\rho}_{r}(\xi)$.
Then by Proposition \ref{shadow}(3), for $\rho > 2R$, there exists a constant number $b(\rho) > 1$ such that
$$\frac{1}{b(\rho)}e^{-h\cdot d(p,p_{0})} \leq \mu_{p_{0}}(B^{\rho}_{r}(\xi)) \leq b(\rho)e^{-h\cdot d(p,p_{0})}.$$

\textbf{Step 2:}  For any subset $A \subseteq \pX$ we can define the $s$-dimensional Hausdorff measure $H^{s}(A)$ of $A$
and Hausdorff dimension $Hd(A)$ of $A$ as in \cite[Definition 4.5]{Kn1}.
Then we can show that there exists a constant number $c > 1$ satisfying the following estimate
\begin{equation}\label{haus}
 \frac{1}{c}\mu_{p_{0}}(A) \leq H^{h}(A)\leq c\cdot \mu_{p_{0}}(A),  ~ \forall ~\mbox{Borel~set} ~A\subseteq \pX.   
\end{equation}
Then the Hausdorff dimension of $\pX$ equals $h$. Moreover, each Busemann density is $h$-dimensional.

There is a fundamental difference in the proof of the above estimations in no conjugate points case. In the proof of a type of Vitali covering lemma \cite[Lemma 4.7]{Kn1}, Knieper used the convexity property in nonpositive curvature, namely, the distance function of two geodesics is convex. The convexity is no longer valid for manifolds without conjugate points. Under assumption of uniform visibility, we can use quasi-convexity in Proposition \ref{geo} to replace convexity. Then we obtain the following
\begin{lemma}
Let $U\subset \pX$ be an open set. For fixed $\rho>0$ and $\e >0$ there exist a constant $C(\rho)>0$ and a countable covering of $U$ by balls
$$B^{C(\rho)}_{r_j}(\xi_j), \quad 0<r_j\le \e$$
such that the balls $B^{\rho/3}_{r_j}(\xi_j)$ are subsets of $U$ and pairwise disjoint.
\end{lemma}
Here $C(\rho)=A\rho+B+\rho$ were $A,B$ are constants from quasi-convexity. One can see that replacing $\rho$ by $C(\rho)$ does not affect the proof of \eqref{haus} (\cite[Proof of Theorem 4.6]{Kn1}).

\textbf{Step 3:} Finally, we can prove the Patterson-Sullivan measure is ergodic with respect to the $\C$-action on $\pX$ as \cite[Proposition 4.9]{Kn1}. Then up to a multiplicative constant, there exists exactly one Busemann density which is realized by the Patterson-Sullivan measure, see \cite[Theorem 4.10]{Kn1}.
\end{proof}

\begin{proof}[Proof of Theorem \ref{marfunction}]
Recall that $\tilde \mu_x$ is the normalized Patterson-Sullivan measure. By the above discussion, particularly $\nu_x(\partial X)=c(x)$ and $\frac{d\nu_{y}}{d \nu_{x}}(\xi)=e^{-h \cdot b_{\xi}(y,x)}$, we have
$$\frac{d\bar \mu_{y}}{d \bar\mu_{x}}(\xi)=\frac{c(x)}{c(y)}e^{-h \cdot b_{\xi}(y,x)}.$$
Then $c(y)=c(x)\int_{\pX}e^{-h \cdot b_{\xi}(y,x)} d\bar \mu_x(\xi).$

By Lemma \ref{Buseregu}, $y\mapsto b_{\xi}(y,x)$ is $C^2$. Moreover, $\triangle_yb_{\xi}(y,x)=\text{div} (-X)=tr U(y,\xi)$ and it is continuous in $\xi$. It follows that $c$ is $C^1$.

If $c$ is constant, then $\int_{\pX}e^{-h \cdot b_{\xi}(y,x)} d\bar \mu_x(\xi)\equiv 1$. Taking Laplacian with respect to $y$ on both sides, we have
$$\int_{\pX}h(h-tr U(y,\xi))e^{-h \cdot b_{\xi}(y,x)} d\bar \mu_x(\xi)\equiv 0.$$
It follows that
$$h=\frac{\int_{\pX} tr U(y,\xi)e^{-h \cdot b_{\xi}(y,x)} d\bar \mu_x(\xi)}{\int_{\pX} e^{-h \cdot b_{\xi}(y,x)} d\bar \mu_x(\xi)}=\int_{\pX} tr U(y,\xi)d\bar \mu_y(\xi)$$
 for any $y\in X$.
\end{proof}

\subsection{Rigidity in dimension two}
Let $M$ be a closed Riemannian surface of genus at least $2$ without conjugate points and with continuous Green bundle in this subsection. 

\subsubsection{Unique ergodicity of horocycle flow}
A \textit{horocycle flow} is a continuous flow $h_s$ on $SM$ whose orbits are horocycles, i.e., for $v\in SM$, $\{h_sv: s\in \RR\}=\F^s(v),$ where $\F^s(v)$ is the strong stable horocycle manifold of $v$ in $SM$. Clotet  \cite{Cl2} proved recently the unique ergodicity of the horocycle flow, see also \cite{Cl} for the case of nonpositive curvature. 

If $M$ has constant negative curvature, Furstenberg \cite{Fur} proved the unique ergodicity of the horocycle flow, which is extended to compact surfaces of variable negative curvature by Marcus \cite{Ma}.
To apply Marcus's method to our case, we need to define the horocycle flow using the so-called \emph{Margulis parametrization}.  Gelfert-Ruggiero  \cite{GR} defined a quotient map $\chi: SM\to Y$ by an equivalence relation ``collapsing'' each generalized strip to a single curve, which semiconjugates the geodesic flow on $SM$ and a continuous flow $\psi^t$ on $Y$. See Theorem \ref{expansivefactor} (i.e., \cite[Theorem 1.1]{MR}) for this result in a more general setting. Gelfert-Ruggiero  \cite{GR} showed that $Y$ is a topological $3$-manifold, and that the quotient flow is expansive, topologically mixing and has local product structure. In \cite{Cl2}, the the horocycle flow with Margulis parametrization $h_s^M$ is defined on $Y$. Using Coud\`{e}ne's theorem \cite{Cou}, it is showed in \cite[Proposition 4.2]{Cl2} that the horocycle flow $h_s^M$ on $Y$ is uniquely ergodic, and the unique invariant measure is $\chi_*m$, the projection of Bowen-Margulis measure $m$ onto $Y$.

There is another natural parametrization of the horocycle flow on $SM$ given by arc length of the horocycles, which clearly is well defined everywhere.
It is called the \emph{Lebesgue parametrization} and the \textit{Lebesgue horocycle flow} is denoted by $h_s^L$.
By constructing complete transversals to respective flows, it is showed in \cite[Theorem 5.8]{Cl2} that there is a bijection between finite Borel measures invariant under $h_s^M$ on $Z$ and $h_s^L$ on $SM$ respectively.
Then $h_s^L$ is also uniquely ergodic \cite[Theorem 5.10]{Cl2}. We denote by $w^s$ the unique probability measure invariant under the Lebesgue horocycle flow $h_s^L$.

Let $v\in \R_1$. Recall that 
\begin{equation*}
\begin{aligned}
\F^{0u}(v):=&\{w\in SX: w^-=v^-\}\\
\F^u(v):=&\{w\in \F^{0u}(v): b_{v^-}(\pi w, \pi v)=0\}
\end{aligned}
\end{equation*}
are the weak and strong unstable horocycle manifold of $v$ respectively. 
Put $\F^{0s}(v):=-\F^{0u}(-v)$ and $\F^{s}(v):=-\F^{u}(-v)$.
Then $\F^{0u}(v)$ contains a relatively compact neighborhood $T$ of $v$ in $\R_1$, which is a \textit{complete transversal} to the Lebesgue horocycle flow $h_s^L$ in the sense of \cite[Defintion 5.5]{Cl2}. So locally for a subset $E$ in a regular neighborhood of $v\in \R_1$,
$$w^s(E)=\int_{\F^{0u}(v)}\int_\RR 1_E(h_s^L(u))dsd\mu_{\F^{0u}(v)}(u)$$
where $1_E$ is the characteristic function of $E$, and $\mu_{\F^{0u}(v)}$ is some Borel measure on $T\subset \F^{0u}(v)$ which is in fact independent of the parametrization of the horocycle flow. Note that $\chi$ is a homeomorphism in a neighborhood of $v\in \R_1$.

On the other hand, the unique invariant measure for $h_s^M$ on $Y$ is the projection of Bowen-Margulis measure $m$, which can be expressed as
$$m(E)=\int_\pX\int_\pX \int_\RR 1_E(\xi,\eta,t)e^{h\cdot \beta_{p}(\xi,\eta)}dtd\mu_p(\xi)d\mu_p(\eta)$$
since $E$ contains no generalized strips.
Consider the canonical projection $$P=P_v: \F^u(v)\to \pX, \quad P(w)=w^+,$$
then $$\mu_{\F^{0u}(v)}(A)=\int \int_\RR 1_A(\phi^tu)dt d\mu_{\F^{u}(v)}(u)$$
where $\mu_{\F^{u}(v)}(B):=\int_\pX 1_{B}(P_v^{-1}\eta)e^{-hb_\eta(\pi v, p)}d\mu_p(\eta)$ and $A,B$ are in a regular neighborhood of $v\in \R_1$.

Thus $w^s$ is locally equivalent to the measure $ds \times dt \times d\mu_{\F^u}$. If we disintegrate $w^s$ along $\F^u$ foliation, the factor measure on a section $\F^{0s}(v)$ is equivalent to $ds dt$, i.e., the Lebesgue measure $\text{Vol}$, and the conditional measures on the fiber $\F^u(v)$ is equivalent to $P_v^{-1}\mu_p$.

\subsubsection{Uniqueness of harmonic measure}
Let $\mathcal{G}$ be any foliation on a compact Riemannian manifold $N$ with $C^2$ leaves. A probability measure $\nu$ on $N$ is called \emph{harmonic} with respect to $\mathcal{G}$ if $\int_N \triangle^L fd\nu=0$ for any bounded measurable function $f$ on $N$ which is smooth in the leaf direction, where $\triangle^L$ denotes the Laplacian in the leaf direction.

A \emph{holonomy invariant measure} of the foliation $\mathcal{G}$ is a family of measures defined on each transversal of $\mathcal{G}$, which is invariant under all the canonical homeomorphisms of the holonomy pseudogroup \cite{Pl}. A measure is called \emph{completely invariant} with respect to $\mathcal{G}$ if it disintegrates to a constant function times the Lebesgue measure on the leaf, and the factor measure is a holonomy invariant measure on a transversal. By \cite{Ga}, a completely invariant measure must be a harmonic measure. See \cite{Yue} for more details on the ergodic properties of foliations. 

Let $(M,g)$ be a closed $C^\infty$ uniform visibility manifold without conjugate points and with continuous asymptote, $(X, \tilde g)$ the universal cover of $(M,g)$. Suppose that the geodesic flow has a hyperbolic periodic point. We can identify any weak stable manifold $\F^{0s}(v), v\in SX$ with $X\times \{v^+\}$ and endow $\F^{0s}(v)$ a Riemannian structure induced by $(X,\tilde g)$. Then $\F^{s}(v)$ is identified with the horosphere $H(v)\subset X$. Since $X$ has continuous asymptote, by Proposition \ref{horofoliation} every horosphere $H(v)$ is a $C^2$ submanifold in $X$ (see also the proof of Theorem \ref{marfunction}). So $\F^{s}(v)$ is endowed with a $C^2$ Riemannian structure. Moreover, for any $\varphi\in C(X,\RR)$,
$$\triangle^{0s} \varphi=\bigtriangleup^s\varphi+\ddot{\varphi}-tr U \dot\varphi$$
by \cite[Lemma 5.1]{Yue3}, where $\bigtriangleup^{0s}$ is the Laplacian along $X$ (and hence along $\F^{0s}$-leaves) and $\bigtriangleup^{s}$ is the Laplacian along the horospheres (and hence along $\F^{s}$-leaves). 
\begin{theorem}\label{unique}
Let $M$ be a closed Riemannian surface of genus at least $2$ without conjugate points and with continuous Green bundle. Then there is precisely one harmonic probability measure with respect to the strong stable horocycle foliation.
\end{theorem}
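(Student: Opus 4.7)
The plan is to reduce the uniqueness of the harmonic probability measure for $\F^s$ to the unique ergodicity of the Lebesgue horocycle flow $h_s^L$ established in \cite[Theorem 5.10]{Cl2} and recalled just above. The reduction rests on two features specific to dimension two: the leaves of $\F^s$ are one-dimensional $C^2$ curves (by Lemma \ref{Buseregu}, horospheres are $C^2$ submanifolds under the continuous Green bundle hypothesis, so horocycles are $C^2$), and each complete leaf, equipped with the induced Riemannian metric, is isometric to $\RR$ (or at worst to a circle). In particular, the leafwise Laplacian $\triangle^s$ is simply $d^2/ds^2$ in arc-length parametrization.

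The first step is to invoke Garnett's local decomposition theorem for harmonic measures of foliations. In each foliated chart $U \cong T \times I$ for $\F^s$, a harmonic probability measure $\nu$ admits a disintegration
$$\nu|_U = \int_T h_t(s)\,ds\,d\tau(t),$$
where $\tau$ is a transverse factor measure and each density $h_t$ is positive and leafwise harmonic. In our one-dimensional setting this forces $h_t(s)=a(t)s+b(t)$ to be positive affine. I would then globalize along each leaf: on overlapping foliated charts meeting a common plaque, the local affine densities agree up to normalization and thus assemble into a single positive affine function on the whole leaf. Since the leaf is a complete connected Riemannian $1$-manifold, it is isometric to $\RR$ or to $S^1$, and on either a positive affine function is necessarily constant. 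Hence on each leaf the conditional measure of $\nu$ is a positive multiple of arc-length Lebesgue.

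Once the plaque densities $h_t$ are constant in $s$, a direct verification in foliated charts shows that $\nu$ is invariant under $h_s^L$: the flow translates each plaque in its arc-length coordinate and leaves the Lebesgue factor $ds$ invariant, while the transverse factor $\tau$ is unaffected because $h_s^L$ moves along leaves rather than across transversals. The unique ergodicity of $h_s^L$ then forces $\nu = w^s$, proving that the harmonic probability measure is unique (and equal to $w^s$, which is visibly harmonic since its leaf conditionals are Lebesgue). The main subtlety I anticipate is that when $\F^s$ is minimal (Proposition \ref{horofoliation}) the partition of $SM$ into leaves is not globally measurable, which forbids a direct disintegration of $\nu$ over the leaf space; the globalization of the local affine densities to a single affine function on each leaf must therefore be carried out via compatible local plaque disintegrations and the preservation of the affine structure under the smooth holonomy maps between overlapping foliated charts, rather than through a global leaf-space decomposition.
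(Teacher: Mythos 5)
Your argument is correct, and its overall skeleton coincides with the paper's: show that any harmonic measure for $\F^s$ must be completely invariant (leaf conditionals a constant multiple of arc length, with the transverse structure holonomy invariant), then identify the unique such measure with $w^s$ via the unique ergodicity of the Lebesgue horocycle flow $h_s^L$ from \cite{Cl2}. Where you differ is in how the first step is established. The paper disposes of it with a single citation: since $\dim M = 2$ the leaves of $\F^s$ are curves and hence have polynomial volume growth, so by Kaimanovich \cite{Kai} every harmonic measure is automatically completely invariant. You instead prove this implication by hand in the one-dimensional setting, using Garnett's local disintegration into positive leafwise harmonic densities, the fact that leafwise harmonic means affine in arc length when $\triangle^s = d^2/ds^2$, and the observation that a positive affine function on a complete line (or circle) is constant. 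Your route is more self-contained and makes transparent exactly why dimension two is used, at the cost of having to address the globalization of the plaque densities along a dense leaf, which you correctly flag and which is part of Garnett's theory; the paper's route is a one-line reduction that would also cover higher-dimensional leaves of subexponential growth. Both then conclude identically, so the two proofs are interchangeable here.
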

\begin{proof}
If $\dim M=2$, then the leaves of the strong stable horocycle foliation have polynomial volume growth. By \cite{Kai}, any harmonic measure must be completely invariant. From the discussion in the previous subsection, we see that there is a unique completely invariant measure $w^s$. As a consequence, $w^s$ is the unique harmonic measure.
\end{proof}

\subsubsection{Integral formulas for topological entropy}
Recall that $M$ is a closed Riemannian surface of genus at least $2$ without conjugate points and with continuous Green bundle. Using the measure $w^s$ we can establish some formulas for topological entropy $h$ of the geodesic flow.

Let $B^s(v,R)$ denote the ball centered at $v$ of radius $R>0$ inside $\F^s(v)$. In fact, it is just a curve. By the uniqueness of  harmonic measure $w^s$, we have
\begin{lemma} \label{uniform}
For any continuous $\varphi: SM\to \RR$,
$$\frac{1}{\text{Vol}(B^s(v,R))}\int_{B^s(v,R)}\varphi d\text{Vol}(y)\to \int_{SM}\varphi dw^s$$
as $R\to \infty$ uniformly in $v\in SM$.
\end{lemma}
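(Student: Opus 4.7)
The plan is to reduce the statement to a Birkhoff-type equidistribution theorem for the Lebesgue-parametrized horocycle flow and then invoke its unique ergodicity, which was established earlier in this subsection.

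First I would identify $B^s(v,R)$ with the orbit segment of the Lebesgue horocycle flow. Since $\dim M=2$, each leaf $\mathcal{F}^s(v)$ is one-dimensional, and the Lebesgue parametrization $h^L_s$ traces out $\mathcal{F}^s(v)$ at unit speed with respect to the induced arc-length Riemannian metric. Hence, parametrizing by arc length, $B^s(v,R) = \{h^L_s(v) : s\in[-R,R]\}$ and $\mathrm{Vol}(B^s(v,R))=2R$, so
\[
\frac{1}{\mathrm{Vol}(B^s(v,R))}\int_{B^s(v,R)}\varphi\, d\mathrm{Vol}(y)
= \frac{1}{2R}\int_{-R}^{R}\varphi(h^L_s v)\, ds.
\]

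Next I would apply unique ergodicity. By Theorem \ref{unique}, $w^s$ is the unique harmonic probability measure with respect to the strong stable horocycle foliation, and by the discussion preceding Theorem \ref{unique} it coincides with the unique probability measure invariant under $h^L_s$. It is a classical fact (the flow version of the Kryloff--Bogoliouboff / Oxtoby theorem on uniquely ergodic systems) that for a continuous flow on a compact metric space with a unique invariant Borel probability measure $\mu$, the ergodic averages $\frac{1}{2R}\int_{-R}^{R}\varphi(h^L_s v)\,ds$ converge to $\int \varphi\, d\mu$ uniformly in $v$ for every $\varphi\in C(SM,\mathbb{R})$. Applying this to $h^L_s$ and $w^s$ yields the desired uniform convergence.

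The only step that requires a little care is the identification of the unique $h^L_s$-invariant probability measure with the harmonic measure $w^s$ of Theorem \ref{unique}; this was already explained in the discussion of the Margulis versus Lebesgue parametrizations of the horocycle flow, where the bijection between invariant measures for $h^M_s$ on $Y$ and $h^L_s$ on $SM$, together with the uniqueness of the completely invariant (hence harmonic) measure for a foliation with polynomial leaf growth, pins down $w^s$ as the unique $h^L_s$-invariant probability. Once this identification is in hand, the classical uniform ergodic theorem for uniquely ergodic flows completes the proof with no further geometric input.
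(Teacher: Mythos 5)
Your proof is correct. The identification of $B^s(v,R)$ with the arc-length segment $\{h^L_s v: s\in[-R,R]\}$ is exactly right in dimension two (the paper itself notes $\mathrm{Vol}(B^s(x,r))=2r$ just before Theorem \ref{rigidity}), and once the leaf-ball average is rewritten as a symmetric Birkhoff average of the continuous, uniquely ergodic flow $h^L_s$ with unique invariant measure $w^s$ (Clotet's theorem, as recalled in Subsection 9.2.1), the classical uniform convergence theorem for uniquely ergodic flows finishes the argument. The paper takes a somewhat different route: it omits the proof and refers to Yue's Theorem 1.2, whose mechanism is to extract weak$^*$ limits of the normalized leafwise volume measures, show that any such limit is a harmonic measure for the foliation, and conclude by the uniqueness of harmonic measures (Theorem \ref{unique}); that argument is designed to work in arbitrary dimension, where there is no horocycle flow to parametrize the leaves. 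Your argument is more elementary and entirely self-contained given \cite[Theorem 5.10]{Cl2}, but it is genuinely two-dimensional; since the lemma is stated only for surfaces, that is no loss here. The only point worth making explicit is that the two-sided average converges uniformly because the time-reversed flow is also uniquely ergodic with the same invariant measure, so both one-sided averages converge uniformly — you implicitly use this and it is harmless.
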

\begin{proof}
The proof is analogous to that of \cite[Theorem 1.2]{Yue} and thus is omitted here.
\end{proof}

For continuous $\varphi: SM\to \RR$, define $\varphi_x: X\to \RR$ by $\varphi_x(y)=\varphi(v(y))$ where $v(y)\in SX$ is the unique vector such that $c_{v(y)}(0)=y$ and $c_{v(y)}(t)=x$ for some $t\ge 0$.

Based on Lemma \ref{uniform}, we get the following proposition. 
\begin{proposition}\label{uniform1}
For any continuous $\varphi: SM\to \RR$,
$$\frac{1}{s_R(x)}\int_{S(x,R)}\varphi_x(y) d\text{Vol}(y)\to \int_{SM}\varphi dw^s$$
as $R\to \infty$ uniformly in $x\in X$.
\end{proposition}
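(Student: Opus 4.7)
The strategy is to reduce spherical averages over $S(x,R)$ to horocyclic averages over pieces of strong stable horocycles in $SM$ and then invoke Lemma \ref{uniform}. Parametrize $S(x,R)$ by the unit sphere $S_xX$ via $v\mapsto c_v(R)$: writing $F$ for the flip map, $v(c_v(R))=-\dot c_v(R)=F(\phi^R v)$, and the spherical volume on $S(x,R)$ decomposes as $J(v,R)\,dv$ for an exponential Jacobian $J$. Thus
\[
\frac{1}{s_R(x)}\int_{S(x,R)}\varphi_x(y)\,d\Vol(y)
=\frac{1}{s_R(x)}\int_{S_xX}\varphi\bigl(F(\phi^R v)\bigr)\,J(v,R)\,dv.
\]
By the Patterson-Sullivan description of spherical limits (Theorem \ref{buse} together with \eqref{e:sphere}), the probability measures $\tfrac{J(v,R)}{s_R(x)}\,dv$ on $S_xX$ pushforward under $v\mapsto v^+$ to probability measures on $\partial X$ converging weakly to the normalized Patterson-Sullivan measure $\tilde\mu_x$.

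Fix $\epsilon>0$ and partition $S_xX$ into finitely many arcs $U_1,\dots,U_N$ of small angular diameter $\rho$ with $\tilde\mu_x(\partial(U_i)^+)=0$; pick $w_i\in U_i$ and set $u_i^R:=F(\phi^R w_i)\in SX$, which projects to $SM$ and lies on the strong stable horocycle $\F^s(u_i^R)$ based at $w_i^-$. Since Green bundles are continuous (Proposition \ref{horofoliation}) and in dimension two horocycles are integral curves of the stable Green line bundle, the sphere $S(x,R)$ and the horocircle $H_{w_i^-}(c_{w_i}(R))$ share their tangent line at $c_{w_i}(R)$. For $\rho$ small and $R$ large, the map $v\mapsto F(\phi^R v)$ sends $U_i$ to a curve in $SM$ that can be matched, via a weak-stable projection, with an arc $B_i^R\subset\F^s(u_i^R)$; uniform continuity of $\varphi$ on $SM$ together with $C^1$-convergence of spheres to horospheres on bounded pieces then gives
\[
\frac{1}{s_R(x)}\int_{U_i}\varphi\bigl(F(\phi^R v)\bigr)\,J(v,R)\,dv
=(1+o(1))\,\frac{\Vol(B_i^R)}{s_R(x)}\cdot\frac{1}{\Vol(B_i^R)}\int_{B_i^R}\varphi\,d\Vol.
\]

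Applying Lemma \ref{uniform} to each horocyclic average (uniformly in the basepoint $u_i^R$) and using $\Vol(B_i^R)/s_R(x)\to\tilde\mu_x((U_i)^+)$ from the previous step, the right-hand side tends to $\tilde\mu_x((U_i)^+)\int_{SM}\varphi\,dw^s$. Summing over $i$ and letting $\rho\to 0$ yields the desired convergence $\frac{1}{s_R(x)}\int_{S(x,R)}\varphi_x\,d\Vol\to\int_{SM}\varphi\,dw^s$. Uniformity in $x\in X$ will follow from the $\Gamma$-equivariance of the Patterson-Sullivan measures, of the spherical Jacobian, and of $s_R(x)$, which reduces the question to a fundamental domain; the error estimates above depend only on $\rho$, on uniform continuity of $\varphi$ on compact $SM$, and on the uniformity already present in Lemma \ref{uniform}. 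The main obstacle will be quantifying the near-isometric match between the spherical arc $U_i(R)\subset S(x,R)$ and the horocycle arc $B_i^R\subset\F^s(u_i^R)$: one needs a Jacobian comparison $J(v,R)=(1+o(1))J^{\mathrm{hor}}_i(v,R)$ as $R\to\infty$ and $\rho\to 0$, and this is where the continuity of Green bundles is essential, since it guarantees that the second fundamental forms of spheres converge to that of the horosphere on bounded pieces.
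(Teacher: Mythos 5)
Your proposal is correct and follows essentially the same route as the paper, which simply defers to Yue's Proposition 3.1 with the remark that ``horospheres in $X$ can be approximated by geodesic spheres'': partition $S_xX$, match spherical arcs of $\{F(\phi^R v)\}$ with strong stable horocycle arcs, and apply Lemma \ref{uniform} together with the weak convergence $\nu_x^R\to c(x)\tilde\mu_x$ from Theorem \ref{buse}. The Jacobian comparison you flag as the main obstacle is exactly where continuity of Green bundles enters (convergence of the second fundamental forms $U_R(y)\to U(v(y))$ of spheres to those of horospheres, which the paper itself invokes in the proof of Theorem \ref{formula}), so your identification of the technical crux matches the paper's intended argument.
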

\begin{proof}
The proof is the same as the one before \cite[Proposition 3.1]{Yue} (see also \cite{Kn4, Led}), and hence will be skipped. The basic idea here is that horospheres in $X$ can be approximated by geodesic spheres.
\end{proof}
\begin{theorem}\label{formula}
Let $M$ be a closed Riemannian surface of genus at least $2$ without conjugate points and with continuous Green bundle. Then
\begin{enumerate}
  \item $h=\int_{SM} tr U(v) dw^s(v)$,
  \item $h^2=\int_{SM} -tr \dot{U}(v)+(tr U(v))^2 dw^s(v)$,
  \item $h^3=\int_{SM}tr \ddot{U}-3tr \dot U tr U+(tr U)^3 d w^s$,
\end{enumerate}
where $U(v)$ and $tr U(v)$ are the second fundamental form and the mean curvature of the horocycle $H_{\pi v}(v^+)$ at $\pi v$.
\end{theorem}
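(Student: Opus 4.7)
The plan is to derive each of the three identities by iteratively differentiating $R \mapsto s_R(x)$ and comparing two asymptotics for the ratio $s_R^{(k)}(x)/s_R(x)$: one supplied by Proposition \ref{uniform1} applied to an explicit Riccati polynomial in $tr U_R$, the other forced by the sphere-volume asymptotic $s_R(x) \sim c(x)e^{hR}$ of \eqref{e:sphere}. The moving-boundary formula $\frac{d}{dR}\int_{S(x,R)} f_R\, d\text{Vol} = \int_{S(x,R)}(X(f_R) + f_R \cdot tr U_R)\, d\text{Vol}$, combined with the scalar Riccati equation $X(tr U_R) + (tr U_R)^2 + K = 0$ along radial geodesics, yields
\begin{align*}
s_R'(x) &= \int_{S(x,R)} tr U_R\, d\text{Vol}, \quad s_R''(x) = -\int_{S(x,R)} K\, d\text{Vol},\\
s_R'''(x) &= -\int_{S(x,R)}\bigl(X(K) + K \cdot tr U_R\bigr)\, d\text{Vol}.
\end{align*}
By continuity of the Green bundles, $tr U_R \to tr U$ uniformly on compact sets, so Proposition \ref{uniform1} produces the pointwise limits $s_R'/s_R \to \int tr U\, dw^s$, $s_R''/s_R \to -\int K\, dw^s$, and $s_R'''/s_R \to -\int(X(K) + K \cdot tr U)\, dw^s$.

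Formula (1) then follows from Cesàro averaging: $\frac{1}{R}\int_0^R s_r'(x)/s_r(x)\, dr = R^{-1}(\log s_R(x) - \log s_0(x)) \to h$ by the volume asymptotic, and matching with the pointwise limit forces $h = \int tr U\, dw^s$. The remaining identities come by a bootstrap. Having proved $s_R'/s_R \to h$, one has $s_R'(x) \sim h\, c(x) e^{hR}$, so repeating the Cesàro argument for $\log s_R'(x)$ forces $s_R''/s_R' \to h$, whence $s_R''/s_R \to h^2$ and $-\int K\, dw^s = h^2$. A further iteration starting from $s_R''(x) \sim h^2 c(x) e^{hR}$ gives $s_R'''/s_R \to h^3$ and $-\int(X(K) + K \cdot tr U)\, dw^s = h^3$. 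The Riccati equation rewrites these right-hand sides as the polynomials $-tr\dot U + (tr U)^2 = -K$ and $tr\ddot U - 3\, tr\dot U \cdot tr U + (tr U)^3 = -(X(K) + K \cdot tr U)$ appearing in (2) and (3) respectively.

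The main point requiring care will be the radial differentiation: that $R \mapsto s_R(x)$ is genuinely three times differentiable and that the moving-boundary identity applies at each order. Continuous Green bundles supply only $C^0$ convergence $U_R \to U$, but this is enough, because every integrand produced by iterated differentiation is rewritten by Riccati in terms of $tr U_R$, $K$ and $X(K)$, each converging to a continuous function on $SM$ to which Proposition \ref{uniform1} applies. A second subtlety is that each bootstrap step requires the prior asymptotic $s_R^{(k-1)}(x) \sim h^{k-1} c(x)e^{hR}$ to be strong enough for the Cesàro argument on $\log s_R^{(k-1)}$ to yield $s_R^{(k)}/s_R^{(k-1)} \to h$; this is where the \emph{uniformity in $x$} of Proposition \ref{uniform1} is indispensable, since it propagates through each stage of the induction and makes the interchange of Cesàro and pointwise limits legitimate.
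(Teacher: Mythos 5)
Your proposal is correct and follows essentially the same route as the paper: three radial derivatives of $s_R(x)$ simplified via the Riccati equation, Proposition \ref{uniform1} for the spherical averages, and the volume asymptotic \eqref{e:sphere}. The only difference is cosmetic — you pin down the limit values by Ces\`{a}ro-averaging logarithmic derivatives, whereas the paper applies the elementary fact that a convergent function with convergent derivative has derivative tending to zero to $G_x(R)=s_R(x)e^{-hR}$ and its first two derivatives; both are valid.
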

\begin{proof}
Consider the following function
$$G_x(R):=\frac{s_R(x)}{e^{hR}}=\frac{1}{e^{hR}}\int_{S(x,R)}d\text{Vol} (y).$$
Taking the derivatives, we have
\begin{equation*}
\begin{aligned}
G_x'(R)=&-hG_x(R)+\frac{1}{e^{hR}}\int_{S(x,R)}tr U_R(y) d\text{Vol} (y),\\
G_x''(R)=&-h^2G_x(R)-2hG_x'(R)+\\
&\frac{1}{e^{hR}}\int_{S(x,R)}-tr \dot U_R(y)+(tr U_R(y))^2 d\text{Vol} (y),\\
G_x'''(R)=&-h^3G_x(R)-3h^2G_x'(R)-3hG_x''(R)+\\
&\frac{1}{e^{hR}}\int_{S(x,R)}tr \ddot{U}_R(y)-3tr \dot U_R(y)tr U_R(y)+(tr U_R(y))^3 d\text{Vol} (y),
\end{aligned}
\end{equation*}
where $U_R(y)$ and $tr U_R(y)$ are the second fundamental form and the mean curvature of $S(x,R)$ at $y$.

Clearly, $tr U_R(y)\to tr U(v(y))$ as $R\to \infty$ uniformly. By Theorem \ref{margulis2},
$$\lim_{R\to \infty}G_x(R)=\lim_{R\to \infty}\frac{s_R(x)}{e^{hR}}=c(x).$$
Combining with Proposition \ref{uniform1}, we have
\begin{equation}\label{e:deri}
\begin{aligned}
\lim_{R\to \infty}G_x'(R)=&-hc(x)+c(x)\int_{SM}tr U dw^s,\\
\lim_{R\to \infty}G_x''(R)=&-h^2c(x)-2h\lim_{R\to \infty}G_x'(R)+c(x)\int_{SM}-tr \dot U+(tr U)^2 dw^s,\\
\lim_{R\to \infty}G_x'''(R)=&-h^3c(x)-3h^2\lim_{R\to \infty}G_x'(R)-3h\lim_{R\to \infty}G_x''(R)+\\
&c(x)\int_{SM}tr \ddot{U}-3tr \dot U tr U+(tr U)^3 d w^s.
\end{aligned}
\end{equation}
Since $\lim_{R\to \infty}G_x'(R)$ exists and $\lim_{R\to \infty}G_x(R)$ is bounded, we have $\lim_{R\to \infty}G_x'(R)=0$. Similarly, considering the second and third derivative, we have
$$\lim_{R\to \infty}G_x''(R)=\lim_{R\to \infty}G_x'''(R)=0.$$
Plugging in \eqref{e:deri}, we have
\begin{enumerate}
  \item $h=\int_{SM} tr U(v) dw^s(v)$,
  \item $h^2=\int_{SM} -tr \dot{U}(v)+(tr U(v))^2 dw^s(v)$,
  \item $h^3=\int_{SM}tr \ddot{U}-3tr \dot U tr U+(tr U)^3 d w^s$.
\end{enumerate}
\end{proof}

\subsubsection{Rigidity}

Recall that $\tilde \mu_x$ is a Borel measure on $S_xX$ (hence descending to $S_xM$) induced by the Patterson-Sullivan measure $\mu_x$ and let us assume that it is normalized, by a slight abuse of the notation. We have the following characterization of $w^s$.
\begin{proposition}\label{measure}
For any continuous $\varphi: SM\to \RR$, we have
$$C\int_{SM} \varphi dw^s=\int_Mc(x)\int_{S_xM}\varphi d\tilde \mu_x(v)d \text{Vol}(x)$$
where $C=\int_M c(x)d\text{Vol}(x)$.
\end{proposition}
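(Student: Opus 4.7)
The strategy is to express both sides of the claimed identity as the same limit, via Proposition \ref{uniform1} on the left and the spherical-measure characterization of the Patterson-Sullivan measure on the right. First, applying Proposition \ref{uniform1} pointwise in $x$ and using Theorem \ref{margulis2} ($s_R(x)/e^{hR}\to c(x)$ uniformly), both convergences being uniform on a fundamental domain $\mathcal{F}$, I obtain
\[
c(x)\int_{SM}\varphi\,dw^s \;=\; \lim_{R\to\infty}\frac{1}{e^{hR}}\int_{S(x,R)}\varphi_x(y)\,d\mathrm{Vol}(y)
\]
uniformly in $x$. Integrating against $d\mathrm{Vol}$ over $\mathcal{F}$ yields
\[
C\int_{SM}\varphi\,dw^s \;=\; \lim_{R\to\infty}\frac{1}{e^{hR}}\int_M\int_{S(x,R)}\varphi_x(y)\,d\mathrm{Vol}(y)\,d\mathrm{Vol}(x).
\]

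Next I switch to polar coordinates: writing $y=\exp_x(Rv)$ with $v\in S_xM$, one has $\varphi_x(y)=\varphi(\tau\phi^R(v))$ where $\tau$ is the flip map, and $d\mathrm{Vol}(y)=J(v,R)\,dv$, where $J(v,R)$ is the norm of the perpendicular Jacobi field $J_v$ along $c_v$ with $J_v(0)=0$ and $|J_v'(0)|=1$. Since $\tau\phi^R$ preserves the Liouville measure $\mu_L$, substituting $w=\tau\phi^R(v)$ gives
\[
\int_M\int_{S(x,R)}\varphi_x\,d\mathrm{Vol}\,d\mathrm{Vol}(x) \;=\; \int_{SM}\varphi(w)\,J(\phi^{-R}\tau(w),R)\,d\mu_L(w).
\]
The crux of the argument, and the step where the surface hypothesis is essential, is the Wronskian identity
\[
J(\phi^{-R}\tau(w),R) \;=\; J(w,R).
\]
Indeed, $\phi^{-R}\tau(w)=-\dot c_w(R)$ traverses $c_w$ backward, so $J(\phi^{-R}\tau(w),R)=|\tilde J(0)|$, where $\tilde J$ is the perpendicular Jacobi field along $c_w$ with $\tilde J(R)=0$ and $|\tilde J'(R)|=1$. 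In dimension two the perpendicular Jacobi fields are scalar, so the Wronskian $J_w\tilde J'-J_w'\tilde J$ is constant along $c_w$; evaluating at $t=0$ gives $\tilde J(0)$ and at $t=R$ gives $J_w(R)=J(w,R)$, yielding the identity. In higher dimensions the analogous statement would be a determinantal relation, which is why the argument is restricted to surfaces.

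For the right-hand side, recall from Section 9.1 that the normalized spherical measures $\nu_x^R$ on $\partial X$ converge weakly to the Busemann density $\nu_x$, which by Theorem \ref{buse} coincides with a constant multiple of $\mu_x$ and satisfies $\nu_x(\partial X)=c(x)$. Identifying $\partial X$ with $S_xM$ via $f_x$, this means
\[
\frac{J(v,R)}{e^{hR}}\,dv \;\longrightarrow\; c(x)\,d\tilde\mu_x(v) \quad \text{weakly on } S_xM,
\]
so that $\tfrac{1}{e^{hR}}\int_{S_xM}\varphi(v)J(v,R)\,dv\to c(x)\int_{S_xM}\varphi\,d\tilde\mu_x$ pointwise in $x$. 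The integrands are bounded by $\|\varphi\|_\infty\,s_R(x)/e^{hR}$, uniformly bounded by Theorem \ref{margulis2}, so dominated convergence gives
\[
\int_M c(x)\int_{S_xM}\varphi\,d\tilde\mu_x\,d\mathrm{Vol}(x) \;=\; \lim_{R\to\infty}\frac{1}{e^{hR}}\int_{SM}\varphi(w)J(w,R)\,d\mu_L(w).
\]
Combining the three displayed limits together with the Wronskian identity produces the desired equality. The technical heart of the proof is thus the Wronskian observation; the remaining steps are straightforward bookkeeping using known tools (uniqueness of Busemann density, volume asymptotics, and the unique ergodicity underlying Proposition \ref{uniform1}).
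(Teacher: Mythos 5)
Your proof is correct but takes a genuinely different route from the paper's. The paper (following Yue, Proposition 4.1) establishes the identity by verifying directly that the measure on the right, namely
$\nu := \tfrac{1}{C}\,c(x)\,d\tilde\mu_x(v)\,d\mathrm{Vol}(x)$,
is a harmonic probability measure for the strong stable horocycle foliation, and then invoking Theorem \ref{unique} (uniqueness of the harmonic measure) to conclude $\nu = w^s$. You instead match both sides of the identity against the common limit
$\lim_{R\to\infty} e^{-hR}\int_{SM}\varphi(w)\,J(w,R)\,d\mu_L(w)$:
the left side via integrating the spherical average of Proposition \ref{uniform1} over a fundamental domain, the right side via the spherical characterization of the Patterson--Sullivan measure (Theorem \ref{buse}), and the polar/flip substitution together with the Wronskian symmetry $J(\phi^{-R}\tau w, R) = J(w,R)$ bridging the two. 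What your approach buys is that it reuses Proposition \ref{uniform1} (already established) instead of reverifying the harmonicity equation, at the modest cost of one extra geometric observation about the exponential Jacobian. Both approaches rest on the same deep input, the unique ergodicity of the horocycle flow (Clotet), which enters via Theorem \ref{unique} in the paper's route and via Lemma \ref{uniform} underlying Proposition \ref{uniform1} in yours.

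One small correction: you write that the Wronskian step is ``where the surface hypothesis is essential,'' but the identity $J(\phi^{-R}\tau w, R) = J(w,R)$ -- that is, the symmetry of the exponential Jacobian along a geodesic segment, independent of direction of traversal -- holds in all dimensions via the constancy of the matrix Wronskian $(A')^{T}B - A^{T}B'$ for matrix Jacobi fields $A,B$. The genuinely two-dimensional ingredient in this section is the unique ergodicity of the horocycle flow, which constrains both your proof and the paper's to surfaces.
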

\begin{proof}
The idea is to show the right hand side is a harmonic measure up to a normalization. Then the proposition follows from Theorem \ref{unique}. The proof is completely parallel to that of \cite[Proposition 4.1]{Yue} (see also \cite{Yue1,Led}), and hence is omitted.
\end{proof}

\begin{proof}[Proof of Theorem \ref{rigidity}]
By Theorem \ref{formula},
$$h^2=\int_{SM} -tr \dot{U}(v)+(tr U(v))^2 dw^s(v).$$
Recall the Ricatti equation in dimension two:
$$-\dot U+U^2+K=0$$
where $K$ is the Gaussian curvature.
Since now $U$ is just a real number and hence $tr (U^2)=(tr U)^2$. Using Proposition \ref{measure} and Gauss-Bonnet formula we have
\begin{equation*}
\begin{aligned}
h^2=&\int_{SM} -K dw^s=\frac{1}{C}\int_M -c(x)K(x)d \text{Vol}(x)\\
=&\int -Kd \text{Vol}/\text{Vol}(M)=-2\pi E /\text{Vol}(M),
\end{aligned}
\end{equation*}
where $E$ is the Euler characteristic of the surface $M$. By Katok's result \cite[Theorem B]{Ka},
$h^2=-2\pi E /\text{Vol}(M)$ if and only if $M$ has constant negative curvature.
\end{proof}

\subsection{Flip invariance of the Patterson-Sullivan measure}
Let $M$ be a closed $C^\infty$ Riemannian manifold without conjugate points and with continuous Green bundle. Suppose that the geodesic flow has a hyperbolic periodic point. We already see that $y\mapsto b_{\xi}(y,x)$ is $C^2$ in Lemma \ref{Buseregu}. 

For each $x\in X$, denote by $\tilde \mu_x$ both the Borel probability measure on $S_xX$ and $\partial X$ given by the normalized Patterson-Sullivan measure.
Define a measure $w^s$ by
$$C\int_{SM} \varphi dw^s:=\int_Mc(x)\int_{S_xM}\varphi d\tilde \mu_x(v)d \text{Vol}(x)$$
for any continuous $\varphi: SM\to \RR$, where $C=\int_M c(x)d\text{Vol}(x)$.

In view of Proposition \ref{measure}, $w^s$ is a harmonic measure associated to the strong stable foliation $\F^s$, though the uniqueness of harmonic measure is unknown in general. We do not need the uniqueness of harmonic measure in this subsection.

\begin{proposition}\label{2formula}
For $\varphi\in C^1(SM)$, one has
$$\int_{SM}\dot\varphi+(h-tr U)\varphi dw^s=0.$$
\end{proposition}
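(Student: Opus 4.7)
The plan is to compute $\frac{d}{dt}\big|_{t=0} w^s(\varphi\circ\phi^t)$ in two different ways: on one hand it manifestly equals $w^s(\dot\varphi)$, and on the other hand an explicit change of variables on the universal cover exposes the exponential weight $e^{-ht}$ together with the horospherical mean curvature $tr U$.

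First, I would lift to the universal cover $X$ and parametrize $SX$ by pairs $(y,\xi)\in X\times\partial X$ through the map $(y,\xi)\mapsto v_{y,\xi}$, where $v_{y,\xi}$ is the unique unit tangent vector at $y$ with $v_{y,\xi}^+=\xi$. Using the conformal density relation
$$d\tilde\mu_x(\xi)=\frac{c(p)}{c(x)}\,e^{-hb_\xi(x,p)}\,d\tilde\mu_p(\xi),$$
which follows from the proof of Theorem \ref{marfunction} together with the $\Gamma$-invariance $c(\gamma x)=c(x)$, the definition of $w^s$ becomes
$$C\,w^s(\varphi)=c(p)\int_{\mathcal{F}}\int_{\partial X}\varphi(v_{y,\xi})\,e^{-hb_\xi(y,p)}\,d\tilde\mu_p(\xi)\,d\mathrm{Vol}(y)$$
for $\Gamma$-invariant $\varphi$ on $SX$ and any fundamental domain $\mathcal{F}$. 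A short calculation using the Busemann cocycle and $\gamma_*\tilde\mu_p=\tilde\mu_{\gamma p}$ shows that the measure $d\nu(y,\xi):=e^{-hb_\xi(y,p)}d\mathrm{Vol}(y)\,d\tilde\mu_p(\xi)$ is $\Gamma$-invariant on $X\times\partial X$, so the choice of fundamental domain is irrelevant.

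Second, the geodesic flow acts by $\phi^t v_{y,\xi}=v_{\Phi_\xi^t(y),\xi}$, where $\Phi_\xi^t(y):=c_{y,\xi}(t)$ is the flow on $X$ of the unit vector field $-\nabla b_\xi$. By Lemma \ref{Buseregu}, $b_\xi(\cdot,p)$ is $C^2$ with $\Delta b_\xi=tr U(\cdot,\xi)$, hence $\mathrm{div}(-\nabla b_\xi)=-tr U(\cdot,\xi)$, giving the Jacobian
$$\det(d\Phi_\xi^t)(y)=\exp\!\Big(-\int_0^t tr U(\Phi_\xi^s(y),\xi)\,ds\Big).$$
For fixed $\xi$, I would substitute $z=\Phi_\xi^t(y)$ in the $y$-integral. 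Using the cocycle identity $b_\xi(\Phi_\xi^{-t}(z),p)=b_\xi(z,p)+t$ and the identification $tr U(\Phi_\xi^s(z),\xi)=tr U(\phi^s v_{z,\xi})$, one obtains
$$C\,w^s(\varphi\circ\phi^t)=c(p)\int\!\!\int\varphi(v_{z,\xi})\,e^{-hb_\xi(z,p)}\,e^{-ht}\exp\!\Big(\int_{-t}^{0}tr U(\phi^sv_{z,\xi})\,ds\Big)d\tilde\mu_p(\xi)\,d\mathrm{Vol}(z).$$

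Finally, differentiating at $t=0$, and using $\frac{d}{dt}\big|_{t=0}\big[-ht+\int_{-t}^0 tr U(\phi^sv)\,ds\big]=-h+tr U(v)$, together with the fact that differentiation under the integral is justified by $\varphi\in C^1$ and continuity/boundedness of $tr U$ on the compact manifold $SM$ (Proposition \ref{horofoliation}), one obtains
$$w^s(\dot\varphi)=w^s\big((tr U-h)\varphi\big)=-w^s\big((h-tr U)\varphi\big),$$
which is the claimed identity. The main technical obstacle is the $\Gamma$-equivariance bookkeeping: the substitution $z=\Phi_\xi^t(y)$ shifts the fundamental domain, and one must verify that the integrand together with the measure $\nu$ is $\Gamma$-invariant on $X\times\partial X$ so that the value of the integral is independent of the domain. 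Performing the computation directly on the $\Gamma$-invariant measure space $(X\times\partial X,\nu)$ sidesteps this bookkeeping and reduces the proof to a one-line differentiation in $t$.
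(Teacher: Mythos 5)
Your proposal is correct, but it takes a genuinely different route from the paper. The paper's proof is a divergence-theorem argument: it defines the vector field $Y(y)=\int_{S_yM}\varphi\, X(v)\,d\tilde\mu_y(v)$ on the closed manifold $M$ ($X$ the geodesic spray), computes $\mathrm{div}\,Y$ pointwise using exactly the two facts you use --- $\nabla b_v=-X$ and $\mathrm{div}\,X=-\mathrm{tr}\,U$ from Lemma \ref{Buseregu} --- obtaining $\mathrm{div}|_{y=x}Y=\int_{S_xM}\big(\dot\varphi+(h-\mathrm{tr}\,U)\varphi\big)\,d\tilde\mu_x$, and then concludes by Green's formula $\int_M\mathrm{div}\,Y\,d\mathrm{Vol}=0$. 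You instead compute $\frac{d}{dt}\big|_{t=0}w^s(\varphi\circ\phi^t)$ two ways, which is the infinitesimal form of the Radon--Nikodym cocycle of $\phi^t_*w^s$ with respect to $w^s$. The two arguments are dual integrations by parts resting on the same analytic inputs (the $C^2$ regularity of Busemann functions, $\Delta b_\xi=\mathrm{tr}\,U$, and conformality of the Patterson--Sullivan density), but they are organized differently: the paper's version is shorter once $Y$ is written down and hides the flow entirely, while yours produces the stronger explicit identity $w^s(\varphi\circ\phi^t)=\int\varphi(v)\,e^{-ht}\exp\big(\int_{-t}^0\mathrm{tr}\,U(\phi^sv)\,ds\big)\,dw^s(v)$, of which the proposition is the derivative at $t=0$ (and whose evaluation at $\varphi\equiv1$ recovers $h=\int \mathrm{tr}\,U\,dw^s$ as a consistency check). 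Your handling of the two delicate points is sound: the normalization factors $c(y)$ cancel against the conformal derivative $\frac{d\tilde\mu_y}{d\tilde\mu_p}=\frac{c(p)}{c(y)}e^{-hb_\xi(y,p)}$ so that $C\,w^s$ is the integral of $\varphi$ against the $\Gamma$-invariant measure $e^{-hb_\xi(y,p)}d\mathrm{Vol}(y)\,d\tilde\mu_p(\xi)$ over a fundamental domain, and since $(y,\xi)\mapsto(\Phi^t_\xi(y),\xi)$ is a $\Gamma$-equivariant bijection, the image of a fundamental domain is again one, so the change of variables is legitimate. (Incidentally, your explicit tracking of the $c(x)$-weight is cleaner than the paper's, which writes the density of $\tilde\mu_y$ with respect to $\tilde\mu_x$ as $e^{-hb_v(y)}$ without the normalizing ratio; that identity holds verbatim only for the unnormalized Busemann density, and your bookkeeping makes the cancellation explicit.)
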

\begin{proof}
Define a vector field on $M$ by
$$Y(y):=\int_{S_yM}\varphi X(v)d\tilde\mu_y(v)=\int_{S_xM}\varphi X(v)e^{-hb_v(y)}d\tilde\mu_x(v)$$
where $X$ is the geodesic spray. Since $\triangledown b_v=-X$ and $div X=-tr U$, one has
\begin{equation*}
\begin{aligned}
div|_{y=x}Y&=\int_{S_xM}div|_{y=x}\varphi X(v)e^{-hb_v(y)}d\tilde\mu_x(v)\\
&=\int_{S_xM}\dot\varphi+(h-tr U)\varphi d\tilde\mu_x.
\end{aligned}
\end{equation*}
Integrating with respect to $\text{Vol}$ on $M$ and using Green's formula, we have $\int_{SM}\dot\varphi+(h-tr U)\varphi dw^s=0.$
\end{proof}

\begin{proposition}\label{coin}
If $w^s$ is $\phi^t$-invariant, then $M$ is locally symmetric.
\end{proposition}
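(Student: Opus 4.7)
The plan is to combine the $\phi^t$-invariance of $w^s$ with Proposition \ref{2formula} to force the horospheres to have constant (and then maximal) mean curvature, and then to invoke a rigidity theorem of Heintze--Im~Hof type to conclude local symmetry.

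First, I would extract the pointwise identity $\operatorname{tr} U \equiv h$. If $w^s$ is $\phi^t$-invariant, then for every $\varphi\in C^1(SM)$ one has $\int_{SM}\dot\varphi\, dw^s=0$, since $\dot\varphi$ is the Lie derivative of $\varphi$ along the flow. Plugging this into Proposition \ref{2formula} gives
\[
\int_{SM}(h-\operatorname{tr} U)\,\varphi\, dw^s=0\qquad \text{for every }\varphi\in C^1(SM).
\]
By Lemma \ref{support} the Patterson--Sullivan measure has full support on $\partial X$, so each $\tilde\mu_x$ has full support on $S_xM$; combined with $c(x)>0$ and the definition of $w^s$ in Proposition \ref{measure}, this forces $\operatorname{supp}(w^s)=SM$, and hence $\operatorname{tr} U(v)=h$ for every $v\in SM$.

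Next, I would exploit the Riccati equation. The second fundamental form $U$ of stable horospheres satisfies $\dot U+U^2+R_{\dot c}=0$, where $R_{\dot c}=R(\cdot,\dot c)\dot c$. Taking traces and using that $\operatorname{tr} U\equiv h$ is constant along orbits, one obtains
\[
\operatorname{tr}(U^2)+\operatorname{Ric}(\dot c,\dot c)=0 \quad \text{on all of }SM.
\]
The Cauchy--Schwarz inequality on symmetric operators yields $\operatorname{tr}(U^2)\ge (\operatorname{tr} U)^2/(n-1)=h^2/(n-1)$, with equality if and only if $U=\tfrac{h}{n-1}\operatorname{Id}$. To upgrade to the equality case everywhere, I would derive a second integral identity in the style of Proposition \ref{2formula}, applied with $\varphi=\operatorname{tr} U=h$ and with test functions built from components of $U$, using again the $\phi^t$-invariance of $w^s$ to kill the time-derivative term. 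Combining this with $\int_{SM}\bigl(\operatorname{tr}(U^2)-h^2/(n-1)\bigr)\,dw^s\ge 0$ and a variance-type identity should show that the integrand vanishes $w^s$-a.e., hence identically on $SM$ by continuity of Green bundles (Lemma \ref{Buseregu}) and full support of $w^s$. Thus $U\equiv\tfrac{h}{n-1}\operatorname{Id}$ everywhere.

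Finally, once every stable horosphere is totally umbilic with the same constant principal curvature $h/(n-1)$, the sectional curvatures along stable/unstable planes are pinned: from $U^2+R_{\dot c}=0$ one reads $R_{\dot c}=-\tfrac{h^2}{(n-1)^2}\operatorname{Id}$ on the orthogonal complement of $\dot c$, so the sectional curvature of every $2$-plane containing $\dot c$ equals $-h^2/(n-1)^2$. Since this holds for every $v\in SM$, all sectional curvatures of $M$ are constant equal to $-h^2/(n-1)^2$, and the Cartan--Ambrose--Hicks theorem (or directly, the characterization of constant-curvature spaces via totally umbilic horospheres in the spirit of Heintze) shows that $M$ is a compact hyperbolic manifold, hence locally symmetric. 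I expect the main obstacle to be the upgrade step in the previous paragraph: deriving a variance-type identity for $U$ out of Proposition \ref{2formula} in a setting where we only have continuity (not smoothness) of Green bundles requires careful use of the $C^{1,L}$-regularity of Busemann functions from Proposition \ref{horofoliation} and the uniqueness of the harmonic (Busemann) density from Theorem \ref{buse}.
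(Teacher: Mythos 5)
Your first step coincides with the paper's: the $\phi^t$-invariance of $w^s$ kills the term $\int_{SM}\dot\varphi\,dw^s$ in Proposition \ref{2formula}, so $\int_{SM}(h-\tr U)\varphi\,dw^s=0$ for every $\varphi$, and full support of $w^s$ together with continuity of $\tr U$ gives $\tr U\equiv h$. The problem is everything after that. You try to force equality in the Cauchy--Schwarz bound $\tr(U^2)\ge(\tr U)^2/(n-1)$ and conclude $U\equiv\frac{h}{n-1}\mathrm{Id}$, hence constant negative curvature. That intermediate claim is false: a compact quotient of complex hyperbolic space satisfies every hypothesis of the proposition (negative curvature, hence uniform visibility, no conjugate points, continuous Green bundles, hyperbolic periodic orbits, and $w^s$ equals the flow-invariant Liouville measure), it has $\tr U\equiv h$ and is locally symmetric --- yet its horospheres are not totally umbilic and its sectional curvature is not constant, so $\tr(U^2)>h^2/(n-1)$ holds strictly there. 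Consequently the ``variance-type identity'' you hope to extract from Proposition \ref{2formula} to close the Cauchy--Schwarz gap cannot exist. (Note also that the identities of Theorem \ref{formula} are proved only for surfaces, and in any dimension they control $(\tr U)^2$ and $\tr\dot U$, not $\tr(U^2)$; integrating the traced Riccati equation against $w^s$ returns only what you already know pointwise.)

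The condition $\tr U\equiv h$ says precisely that $M$ is asymptotically harmonic, and the paper's proof ends by invoking Zimmer's rigidity theorem \cite[Theorem 1.2]{Zi}: a compact asymptotically harmonic manifold whose universal cover has purely exponential volume growth is locally symmetric. That is a genuinely deep input which your elementary Riccati/Cauchy--Schwarz computation cannot replace, except in dimension two, where $U$ is scalar and $\tr(U^2)=(\tr U)^2$ automatically.
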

\begin{proof}
If $w^s$ is $\phi^t$-invariant, by Proposition \ref{2formula}, we have
$$\int_{SM}(h-tr U)\varphi dw^s=0$$
for all  $\varphi\in C^1(SM,\RR)$. It follows that $tr U\equiv h$. Recall that $y\mapsto b_{\xi}(y,x)$ is $C^2$. So $M$ is asymptotically harmonic. By \cite[Theorem 1.2]{Zi}, $M$ is locally symmetric, since $X$ has purely exponential volume growth.
\end{proof}

\begin{lemma}\label{liou}
If for all $x\in M$, $\tilde\mu_x$ is flip invariant, then the Bowen-Margulis measure $m$ coincides with the Liouville measure $\text{Leb}$ on $SM$.
\end{lemma}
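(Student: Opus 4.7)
The plan is to compute the Radon-Nikodym derivative $dm/d\text{Leb}$ explicitly in Hopf coordinates, symmetrize it using the flip-invariance hypothesis, and then combine with flow invariance to eliminate any remaining angular dependence.

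First I would work in the Hopf parametrization $v\leftrightarrow(v^-,v^+,t)$ on $SX$, with $t=b_{v^-}(\pi v,p)$, so that $dm=e^{h\beta_p(v^-,v^+)}d\mu_p(v^-)d\mu_p(v^+)dt$ by the construction of Section 5. Changing variables to $(x,v)\in SX$ with $x=\pi v$, using the Gromov product identity $\beta_p(v^-,v^+)=-b_{v^-}(x,p)-b_{v^+}(x,p)$ (valid since $x$ lies on the geodesic connecting $v^\pm$) together with the Patterson-Sullivan transformation $d\mu_p(\eta)=e^{hb_\eta(x,p)}d\mu_x(\eta)$, the exponential factors cancel and leave a formula of the shape
\[
dm(v)=\rho_x(v^+)\,\rho_x(v^-)\,J(x,v)\,d\Omega_x(v)\,d\text{Vol}(x),
\]
where $\rho_x:=d\mu_x/d\Omega_x$ is the density of the Patterson-Sullivan measure with respect to the visual measure $\Omega_x$ at $x$, and $J(x,v)$ is the Jacobian of the change from $(v^-,t)$ to $x$ at fixed $v^+$. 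The symmetry of the Hopf formula under $v^-\leftrightarrow v^+$ forces $J(x,v)=J(x,-v)$.

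Next I would apply the hypothesis: flip invariance of $\tilde\mu_x$ at every $x$ translates to $\rho_x(v)=\rho_x(-v)$ for all $v\in S_xX$, so the formula above becomes $dm(v)=\rho_x(v)^2\,J(x,v)\,d\Omega_x(v)\,d\text{Vol}(x)$. The heart of the argument is to show that $\rho_x(v)$ does not in fact depend on $v\in S_xX$ at each $x$. Along any geodesic orbit $t\mapsto\phi^t v$ the endpoints $v^\pm$ are constant while $x=\pi v$ translates; the Patterson-Sullivan scaling of $\rho_{c_v(t)}(v^+)$ along the geodesic at the $v^+$ end (involving the unstable Jacobian, computable via the mean curvature of horospheres) must agree, by flip invariance applied at every point $c_v(t)$ of the geodesic, with the analogous scaling at the $v^-$ end. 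Working this out forces $\rho_x$ to have no angular dependence, so $\mu_x$ is a scalar multiple of $\Omega_x$ at every $x$.

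With $\rho_x$ directionally constant, $dm/d\text{Leb}$ depends only on $x\in M$; flow invariance of both $m$ and Liouville makes this function constant along the projection to $M$ of every orbit, and minimality of the horospherical foliations (Proposition \ref{horofoliation}) upgrades this to constancy on $M$, yielding $m=\text{Leb}$ after normalization. The main obstacle is the rigidity step in the previous paragraph: extracting from the pointwise flip invariance of $\tilde\mu_x$ at every base point together with Patterson-Sullivan scaling along a geodesic the conclusion that $\rho_x$ carries no angular dependence. A possible alternative route, bypassing the Jacobian computation, would be to apply Proposition \ref{2formula} both to $\varphi$ and to $\varphi\circ F$, use the flip invariance of $w^s$ (which follows from that of $\tilde\mu_x$) to derive $tr U(v)+tr U(-v)=2h$ for $w^s$-a.e.\ $v$, then combine further integral identities with the Riccati equation to upgrade this to $tr U\equiv h$; Zimmer's rigidity theorem \cite{Zi} then forces $M$ to be locally symmetric, in which case $m$ is known to coincide with $\text{Leb}$.
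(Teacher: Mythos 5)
Your argument has two serious gaps, and the second one aims at a conclusion strictly stronger than the lemma. First, the object $\rho_x=d\mu_x/d\Omega_x$ on which your whole computation rests is not known to exist: nothing in the hypotheses gives absolute continuity of the Patterson--Sullivan measure with respect to the visual measure, and establishing (mutual) absolute continuity is essentially the content of the lemma, so starting from the density is circular. The paper's proof (following \cite[Proposition 2.1]{Yue1}) avoids this by working directly with $\limsup_{\e\to 0}\tilde\mu_x(D_x(\xi,\e))/\mathrm{Leb}_x(D_x(\xi,\e))$ for small discs $D_x(\xi,\e)\subset\pX$: if this quantity is $0$ (resp.\ $\infty$) at some $\xi$, one joins $\xi$ to an arbitrary $\eta$ by a geodesic (this is where uniform visibility enters), applies flip invariance of $\tilde\mu_y$ at a point $y$ on that geodesic, and uses the mutual equivalence of $\tilde\mu_x,\tilde\mu_y$ and of $\mathrm{Leb}_x,\mathrm{Leb}_y$ to propagate the degeneracy to $\eta$; since both families consist of finite measures, the derivative must be positive and finite somewhere, hence everywhere, so $m$ and $\mathrm{Leb}$ are equivalent, and ergodicity of $m$ then forces equality. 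No densities and no Jacobian computation are needed.

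Second, the step you call the heart of the argument --- that $\rho_x$ has no angular dependence, i.e.\ $\mu_x=c(x)\Omega_x$ --- is only gestured at (``working this out forces\ldots''), and it cannot follow from the stated hypotheses by soft means: it would say the visual measures themselves form an $h$-dimensional conformal density, an asymptotic-harmonicity type rigidity far beyond $m=\mathrm{Leb}$. The same objection defeats your alternative route: from Proposition~\ref{2formula} and flip invariance of $w^s$ you do correctly obtain $\mathrm{tr}\,U(v)+\mathrm{tr}\,U(-v)=2h$ for $w^s$-a.e.\ $v$, but the ``upgrade'' to $\mathrm{tr}\,U\equiv h$ is precisely the hard rigidity input and does not follow from the Riccati equation plus integral identities; if it were available here, Theorem~\ref{flip} would already follow from flip invariance of $\tilde\mu_x$ alone, making its remaining hypothesis (that the conditionals $\bar\mu_x$ of $m$ coincide with $\tilde\mu_x$) redundant. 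Keep in mind that $\bar\mu_x$ and $\tilde\mu_x$ are different objects; $m=\mathrm{Leb}$ constrains the former, not the latter, and is genuinely weaker than local symmetry.
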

\begin{proof}
The proof of \cite[Proposition 2.1]{Yue1} works just as well here. We remark that by uniform visibility, for every pair $\xi\neq \eta\in \pX$, there exists a geodesic $c_{\xi,\eta}$ connecting $\xi$ and $\eta$.
\end{proof}

\begin{lemma}\label{cons}
If for all $x\in M$, $\tilde\mu_x$ is flip invariant, then the Margulis function $c(x)$ is constant.
\end{lemma}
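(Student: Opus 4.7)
The plan is to exploit the formula
$$c(y)=c(x)\int_{\partial X}e^{-h\cdot b_\xi(y,x)}\,d\tilde\mu_x(\xi),$$
derived during the proof of Theorem~\ref{marfunction} from the transformation rule for the Busemann density, and then to differentiate it in $y$ at $y=x$. By Theorem~\ref{marfunction}(1) the Margulis function $c$ is already known to be $C^1$, so once I justify differentiating under the integral I will obtain a formula for $\nabla c(x)$ that is manifestly antisymmetric under the flip, and the conclusion will follow immediately.

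Concretely, fix $x\in X$. For each $\xi\in\partial X$, Proposition~\ref{horofoliation} gives that $y\mapsto b_\xi(y,x)$ is $C^{1,L}$ with $\|\nabla_y b_\xi(y,x)\|\equiv 1$, and that $\nabla_y b_\xi(y,x)\big|_{y=x}=-v(\xi)$, where $v(\xi):=f_x^{-1}(\xi)\in S_xX$ is the unit vector at $x$ pointing at $\xi$. Since $|b_\xi(y,x)|\le d(y,x)$, both $e^{-hb_\xi(y,x)}$ and its $y$-gradient are uniformly bounded in $\xi$ on any compact neighborhood of $x$, legitimizing differentiation under the integral. Using $b_\xi(x,x)=0$ I then compute
$$\nabla c(x)=-h\,c(x)\int_{\partial X}\nabla_y b_\xi(y,x)\big|_{y=x}\,d\tilde\mu_x(\xi)=h\,c(x)\int_{S_xX}v\,d\bigl((f_x^{-1})_*\tilde\mu_x\bigr)(v).$$

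Finally I invoke the hypothesis. Read through the homeomorphism $f_x\colon S_xX\to\partial X$, the flip invariance of $\tilde\mu_x$ is exactly the invariance of $(f_x^{-1})_*\tilde\mu_x$ under the antipodal map $v\mapsto -v$ on $S_xX$. Applying this symmetry to the vector-valued integral above gives
$$\int_{S_xX}v\,d\bigl((f_x^{-1})_*\tilde\mu_x\bigr)(v)=-\int_{S_xX}v\,d\bigl((f_x^{-1})_*\tilde\mu_x\bigr)(v)=0,$$
so that $\nabla c(x)=0$ for every $x\in X$. Since $c$ is $C^1$ on $X$ and $X$ is connected, $c$ is constant, and thus descends to a constant function on $M$. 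The only mildly delicate point in the argument is the justification of differentiation under the integral sign, and this reduces to the uniform gradient bound for Busemann functions recorded in Proposition~\ref{horofoliation}, so it is essentially routine.
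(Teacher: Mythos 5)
Your proof is correct, but it takes a genuinely different and more direct route than the paper. The paper's argument is distributional: it passes through the harmonic measure $w^s$, the leafwise Laplacian identity $\triangle^{0s}\varphi=\triangle^s\varphi+\ddot{\varphi}-\mathrm{tr}\,U\,\dot\varphi$, and Proposition \ref{2formula}, to show that $\int_M \triangle\varphi\cdot c\,d\mathrm{Leb}=0$ for every $\varphi\in C^2(M)$ — the flip invariance enters only at the last step, through the oddness of $\dot\varphi(x,\xi)=\langle\nabla\varphi(x),f_x^{-1}(\xi)\rangle$ in $\xi$ — whence $c$ is weakly harmonic and therefore constant on the compact $M$. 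You instead differentiate the cocycle identity $c(y)=c(x)\int_{\pX}e^{-h\,b_\xi(y,x)}\,d\tilde\mu_x(\xi)$ pointwise, using $\nabla_y b_\xi(y,x)\big|_{y=x}=-f_x^{-1}(\xi)$ from Proposition \ref{horofoliation} to get $\nabla c(x)=h\,c(x)\int_{S_xX}v\,d\tilde\mu_x(v)$, and then kill the first moment by the antipodal symmetry. Both proofs ultimately rest on the same vanishing of the first moment of $\tilde\mu_x$ on $S_xX$, but yours avoids the harmonic-measure machinery entirely and only needs ingredients already established in the proof of Theorem \ref{marfunction} (the density formula and the $C^{1,L}$ regularity of Busemann functions with unit gradient); the domination argument for differentiating under the integral is indeed routine since $|b_\xi(y,x)-b_\xi(y',x)|\le d(y,y')$ uniformly in $\xi$. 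The paper's route is heavier but sets up Proposition \ref{2formula} and the identity $\int_{SM}\triangle^s\varphi\,dw^s=0$, which are reused in the surrounding rigidity arguments (Proposition \ref{coin}, Theorem \ref{formula}), so the two approaches trade self-containedness of this lemma against economy of the section as a whole.
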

\begin{proof}
Any $\varphi\in C^2(M,\RR)$ can be lifted to a function on $SM$ which we still denote by $\varphi$. Recall that we have identified each weak stable manifold with $X$, so the Laplacian along the weak stable foliation $\bigtriangleup^{0s}$ is exactly the Laplacian $\triangle$ along $X$. By \cite[Lemma 5.1]{Yue3}, $\triangle^{0s} \varphi=\bigtriangleup^s\varphi+\ddot{\varphi}-tr U \dot\varphi$.
Then by definition of $w^s$ and Proposition \ref{2formula},
\begin{equation*}
\begin{aligned}
&\int_M \bigtriangleup \varphi c(x)d\text{Leb}=C\int \bigtriangleup \varphi dw^s\\
=&C\int_{SM}(\bigtriangleup^s\varphi+\ddot{\varphi}-tr U \dot\varphi)dw^s\\
=&C\left(\int_{SM}\bigtriangleup^s\varphi dw^s+\int_{SM}\ddot{\varphi}+(h-tr U) \dot\varphi dw^s-\int_{SM}h\dot\varphi dw^s\right)\\
=&-h\int_{M}c(x)d\text{Leb}(x)\int \dot\varphi(x,\xi) d\tilde\mu_x(\xi).
\end{aligned}
\end{equation*}
Since $d\tilde\mu_x(\xi)=d\tilde\mu_x(-\xi)$ and $\dot\varphi(x,\xi)=-\dot\varphi(x,-\xi)$, we have $$\int_M \bigtriangleup \varphi c(x)d \text{Leb}=0$$
for any $\varphi\in C^2(M,\RR)$. So $c(x)$ must be constant.
\end{proof}

\begin{proof}[Proof of Theorem \ref{flip}]
By the construction, the Bowen-Margulis measure $m$ is flip invariant. By the flip invariance of the partition $\{S_xM\}_{x\in M}$ and the uniqueness of conditional measures, we see that $\bar\mu_x$ is flip invariant for $m\ae x\in M$. It follows that the normalized Patterson-Sullivan measures $\tilde\mu_x$ is flip invariant for $m\ae x\in M$.

We claim that for all $x\in M$, $\tilde\mu_x$ is flip invariant. Indeed, note that for fixed $x$, the density
$$\frac{d\tilde \mu_{y}}{d \tilde\mu_{x}}(\xi)=\frac{c(x)}{c(y)}e^{-h \cdot b_{\xi}(y,x)},$$
is uniformly continuous in $y$. For each continuous function $\varphi: \partial X\to \RR$, its geodesic reflection with respect to $z\in X$ is defined by $\varphi_z(\xi):=\varphi(c_{z,\xi}(-\infty))$. Let $x_k,x\in X$ and $x_k\to x$ as $k\to \infty$. Then by the above continuity,
$$\int_{\partial X}\varphi d\tilde\mu_x=\lim_{k\to \infty} \int_{\partial X}\varphi d\tilde\mu_{x_k}=\lim_{k\to \infty}\int_{\partial X}\varphi_{x_k}d\tilde\mu_{x_k}=\int_{\partial X}\varphi_xd\tilde\mu_{x}.$$
The claim follows.

By Lemma \ref{liou}, the Bowen-Margulis measure $m$ coincides with the Liouville measure, and thus $m$ projects to the Riemannian volume on $M$. By assumption, the conditional measures $\bar\mu_x$ coincides with $\tilde\mu_x$. Moreover, by Lemma \ref{cons}, $c(x)$ is constant. Consequently, we see from definition that $w^s$ coincides with the Bowen-Margulis measure $m$, and hence it is $\phi^t$-invariant. By Proposition \ref{coin}, $M$ is locally symmetric.
\end{proof}

\section{The Hopf-Tsuji-Sullivan dichotomy}
Let $X$ be a simply connected smooth uniform visibility manifold without conjugate points and $\C\subseteq Is(X)$ a discrete group. The \textit{geometric limit set} of $\C$ is defined as $L_\C:=\overline{\C x}\cap \pX$ where $x\in X$ is an arbitrary point. Suppose that $\C\subseteq Is(X)$ is a \textit{non-elementary} discrete group, i.e., the cardinality of $L_\C$ is infinity. We also suppose that $\C$ contains an \textit{expansive isometry} $h\in \C$, i.e., $h$ has an axis $c$ with $\dot c(0)\in \R_0$. We emphasize that in this section $M=X/\C$ is not necessarily compact.

\subsection{The generalized shadow lemma}
For $r>0, c>0$ and $x, y\in X$ we set
\begin{equation*}
\begin{aligned}
&\O_{r,c}^+(x,y):=\{\xi\in \pX: \exists z\in B(x,r) \text{\ such that\ }c_{z,\xi}(\RR_+)\cap B(y, c)\neq \emptyset\},\\
&\O_{r,c}^-(x,y):=\{\xi\in \pX: \forall z\in B(x,r) \text{\ we have \ }c_{z,\xi}(\RR_+)\cap B(y, c)\neq \emptyset\},\\
&\CL_{r,c}(x,y):=\{(\xi,\eta)\in \pX\times \pX: \exists x'\in B(x,r), \exists y'\in B(y,c)\text{\ such that\ }\\
&\quad \quad \quad \quad \quad  \quad \quad \quad \quad \quad   \quad \quad \quad \quad \quad c_{x',y'}(-\infty)=\xi,\ c_{x',y'}(+\infty)=\eta\}.
\end{aligned}
\end{equation*}
It is clear that
\begin{equation*}
\begin{aligned}
&\O_{r,c}^-(x,y)\subset pr_{x}(B(y,c)) \subset \O_{r,c}^+(x,y),\\
&\CL_{r,c}(x,y)\supset \{(\xi,\eta)\in \partial^2X: \eta\in \O_{r,c}^-(x,y), \ \xi\in pr_{\eta}(B(x,r)\}.
\end{aligned}
\end{equation*}

As $\C$ is non-elementary, there exists one element (actually an infinite number) in $\C$ not commuting with $h$ so that conjugation by such an element gives another expansive isometry whose fixed points in $\pX$ are disjoint from those of $h$. By a parallel argument in the proof of \cite[Proposition 2.8]{Bal0}, the geometric limit set of $\C$ is minimal, i.e. $L_\C = \overline{\C\cdot \xi}$ for any $\xi\in L_\C$. This implies that for any open subset $O\subset \pX$ with $O\cap L_\C\neq \emptyset$ there exists a finite set $\L\subset  \C$ depending on $O$ such that 
$$L_\C\subset \bigcup_{\b\in \L}\b O.$$
See the proof of \cite[Lemma 2.8]{Link0} using Proposition \ref{north} which works as well here.
\begin{lemma}\label{finitebeta}
Let $h\in \C$ be the expansive isometry from above, $r_0>0$, and $U, V\subset \overline{X}$ the neighborhoods of $h^-$ and  $h^+$ provided by Proposition \ref{regularneighbor} for $r_0$. Then there
exists a finite set $\L\subset \C$ such that the following holds:
For any $c>0$ there exists $R>1$ large enough such that if $\c\in \C$ satisfies $d(o, \c o) > R$, then for some $\b\in \L$ we have
$$\CL_{r,c}(o, \b\c o)\cap (U\times V)\supset (U\cap \pX)\times \O_{r,c}^-(o, \b\c o).$$
\end{lemma}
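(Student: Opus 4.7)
The plan combines minimality of the $\C$-action on the limit set $L_\C$, uniform visibility to control shadows from the base point, and Proposition~\ref{regularneighbor} to produce connecting geodesics passing close to $o$. Fix an axis $\a$ of the expansive element $h$ with $\dot\a(0)\in \R_0$, take $o:=\a(0)$ as base point, and let $U,V\subset \overline{X}$ be the cone neighborhoods of $\a(-\infty),\a(+\infty)$ from Proposition~\ref{regularneighbor} applied with constant $r_0$. Inside $V$ I would choose nested cone neighborhoods $V_1\subset V_2\subset V$ of $\a(+\infty)$ and a fixed $\e_0>0$ so that every point of $\pX$ within angular distance $\e_0$ (measured at $o$) of some point of $\tilde V_2:=V_2\cap \pX$ lies in $\tilde V:=V\cap \pX$.

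Since $\C$ is non-elementary and contains the expansive axial isometry $h$, the limit set $L_\C$ is the unique minimal invariant set for the $\C$-action on $\pX$, so every $\C$-orbit in $L_\C$ is dense; compactness of $L_\C$ then yields a finite $\L\subset \C$ with $L_\C\subset \bigcup_{\b\in\L}\b^{-1}\tilde V_1$. This is the candidate set $\L$. To verify the main claim for a given $c>0$, I argue that once $R$ is large enough the following two properties hold for every $\c\in\C$ with $d(o,\c o)>R$: \textbf{(a)} some $\b\in\L$ satisfies $\b\c o\in V_2$, and \textbf{(b)} for that $\b$, one has $pr_o(B(\b\c o,c))\subset \tilde V$. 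Property (a) is obtained by contradiction: if it failed one could extract $\c_n$ with $d(o,\c_n o)\to\infty$, $\c_n o\to \xi_0\in L_\C$ in the cone topology, and $\b\c_n o\notin V_2$ for every $\b\in\L$; but some $\b\in\L$ has $\b\xi_0\in \tilde V_1$, and since each isometry extends to a homeomorphism of $\overline X$, $\b\c_n o\to \b\xi_0\in V_1\subset V_2$, a contradiction. Property (b) follows by further enlarging $R$ beyond $L(\e_0)+c$, where $L(\e_0)$ is the visibility constant: then $\angle_o(q,\b\c o)<\e_0$ for every $q\in B(\b\c o,c)$, so $pr_o(B(\b\c o,c))$ sits in the angular $\e_0$-neighborhood of $pr_o(\b\c o)\in \tilde V_2$, which is absorbed by $\tilde V$.

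It remains to verify the inclusion. Let $\xi\in U\cap \pX$ and $\eta\in \O_{r,c}^-(o,\b\c o)$; specializing $z=o$ in the definition of $\O_{r,c}^-$ gives $\eta\in pr_o(B(\b\c o,c))\subset \tilde V$, so $(\xi,\eta)\in U\times V$. Proposition~\ref{regularneighbor} then furnishes a geodesic $c_{\xi,\eta}$ with $d(o,c_{\xi,\eta})<r_0\le r$, and after reparametrization $z:=c_{\xi,\eta}(0)\in B(o,r)$. Since $X$ has no conjugate points the geodesic from $z$ pointing at $\eta$ is unique, so $c_{\xi,\eta}=c_{z,\eta}$; applying $\eta\in \O_{r,c}^-(o,\b\c o)$ to this specific $z$ gives $s>0$ with $y':=c_{z,\eta}(s)\in B(\b\c o,c)$, and the pair $(z,y')$ witnesses $(\xi,\eta)\in \CL_{r,c}(o,\b\c o)\cap(U\times V)$. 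The main technical obstacle will be coordinating the nested neighborhoods $V_1\subsetneq V_2\subsetneq V$ so that the cone-topology argument producing (a) and the uniform-visibility argument producing (b) both close with a single threshold $R$, since the cone topology only provides qualitative convergence while the visibility axiom is what delivers the quantitative shadow estimate.
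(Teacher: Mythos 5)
Your proof is correct and follows essentially the same route as the paper's terse pointer to Link--Picaud's Proposition~1: minimality of $L_\C$ to extract the finite set $\L$, a cone-topology compactness argument to land $\b\c o$ in a shrunken neighborhood $V_2$ of $h^+$, uniform visibility to squeeze the shadow $pr_o(B(\b\c o,c))$ into $\tilde V$, and Proposition~\ref{regularneighbor} (which is precisely where the expansiveness of $h$'s axis --- the ``width-zero'' simplification the paper alludes to --- enters) to furnish a connecting geodesic $c_{\xi,\eta}$ passing through $B(o,r_0)$. One small point worth stating explicitly: the last step needs $r_0\le r$ so that $B(o,r_0)\subset B(o,r)$, which is consistent with how the lemma is invoked in the proof of Lemma~\ref{divergent2} (there $r_0=\e$ and $\e$ is chosen freely) and with the base point $o$ being taken on $Ax(h)$, but neither of these is written into the lemma's statement.
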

\begin{proof}
Th proof uses the cone topology of $\pX$ and is analogous to that of \cite[Proposition 1]{LiP}. The slight difference here is that $h$ is an expansive isometry and hence has width zero.
\end{proof}

Since $\C$ is non-elementary, let $g\in \C$ be another expansive isometry with fixed points in $\pX$ distinct from those of $h$. The following result roughly demonstrates that $\O_{r,c}^-(y,o)$ contains an open neighborhood and hence is rather big, uniformly with respect to $y\in X$ for a suitable large $c$.
\begin{lemma}\label{lowbig}
Given $r>0$ there exist an open neighborhood $O\subset \pX$ of $h^+$, $M\in \NN$ and $c_0 > 0$ such that for all $y\in X$ with $d(y,o)>r+c_0$,
$$h^M(O)\subset \O_{r,c}^-(y,o), \text{\ or \ } g^Mh^M(O)\subset  \O_{r,c}^-(y,o).$$
\end{lemma}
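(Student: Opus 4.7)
The plan is to exploit the two expansive axial isometries $h,g\in\C$: since their four fixed points $h^{\pm},g^{\pm}$ in $\pX$ are distinct, the cones around $h^+$ and $g^+$ yield two angularly separated ``target regions'' as seen from $o$. For any $y\in X$ sufficiently far from $o$, the direction $\eta:=c_{o,y}(+\infty)$ must lie angularly far from at least one of these regions, and uniform visibility will then force every geodesic ray from $B(y,r)$ into that region to enter a fixed ball around $o$.

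First I would fix $v_h,v_g\in S_oX$ with $f_o(v_h)=h^+$ and $f_o(v_g)=g^+$, so that $\d_0:=\angle_o(v_h,v_g)>0$. Fix $\e_0\in(0,\d_0/10)$ and choose open neighborhoods $U_{h^-},U_{g^-}\subset\overline X$ of $h^-,g^-$, small enough to be disjoint from the closure of $C_{\e_0}(v_h)\cup C_{\e_0}(v_g)$ in $\overline X$ (possible since $h^\pm,g^\pm$ are four distinct points of $\pX$). Applying Proposition \ref{north} to $h$ and to $g$ then yields a common $M\in\NN$ with
\[
h^M(\overline X\setminus U_{h^-})\subset C_{\e_0}(v_h),\qquad g^M(\overline X\setminus U_{g^-})\subset C_{\e_0}(v_g).
\]
Taking $O:=C_{\e_0}(v_h)$ gives $h^M(O)\subset C_{\e_0}(v_h)$, and because $C_{\e_0}(v_h)\cap U_{g^-}=\emptyset$, also $g^Mh^M(O)\subset C_{\e_0}(v_g)$.

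Next I would invoke the uniform visibility function $L$ of Definition \ref{vis}: set $c:=L(\e_0/2)$ and choose $c_0>r+L(\e_0/2)$. For $y$ with $d(y,o)>r+c_0$ and any $z\in B(y,r)$, every point of the geodesic segment from $z$ to $y$ lies at distance $>L(\e_0/2)$ from $o$, so uniform visibility gives $\angle_o(z,y)<\e_0/2$; since $\angle_o(y,\eta)=0$, this yields $\angle_o(z,\eta)<\e_0/2$. A dichotomy according to whether $\eta\in C_{3\e_0}(v_h)$ then shows, via the triangle inequality for angles at $o$ and the choice $\d_0>10\e_0$, that taking $\xi\in h^M(O)$ in the first case and $\xi\in g^Mh^M(O)$ in the second always produces $\angle_o(z,\xi)>\e_0$. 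Applying uniform visibility in reverse to long initial subsegments of $c_{z,\xi}$ and passing to the limit then forces $c_{z,\xi}(\RR_+)\cap B(o,c)\ne\emptyset$, so $\xi\in\O_{r,c}^-(y,o)$. The main obstacle will be securing uniformity in $y$: $O$ and $M$ must not depend on $y$, and no single cone around a fixed attractor can work for every direction $\eta$; the use of two non-commuting expansive isometries with distinct attractors, combined with the case split above, is precisely what resolves this, and this is the core step of the argument.
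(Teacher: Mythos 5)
Your argument is correct, and it reaches the conclusion by a route that differs in one essential step from the one the paper intends. The paper's proof is a one-line reduction to \cite[Proposition 2]{LiP}, whose mechanism is: fix neighborhoods $U,V$ of $h^-,h^+$ as in Proposition \ref{regularneighbor}, so that \emph{every geodesic joining $U$ to $V$ passes within $\e$ of $o$}, and then use the north--south dynamics of $h$ and $g$ (Proposition \ref{north}) to place $h^M(O)$ or $g^Mh^M(O)$ into the target neighborhood while the relevant rays emanate from the source one. You keep the north--south part essentially verbatim, but you replace the Ballmann-type ingredient (Proposition \ref{regularneighbor}) by a direct appeal to the uniform visibility axiom: if $\angle_o(z,\xi)>\e_0$, then any sufficiently long initial segment of $c_{z,\xi}$ subtends angle $>\e_0/2$ at $o$ and therefore must enter $B(o,L(\e_0/2))$. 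This is legitimate here precisely because all of $X$ is a uniform visibility manifold, so no expansivity is needed for this step; in the rank-one setting of \cite{LiP} one cannot argue this way, which is why they need Ballmann's lemma. Your version is, if anything, more elementary in the present setting, and the angle bookkeeping (the margins $3\e_0$, $\e_0$, $\e_0/2$ against $\d_0>10\e_0$) closes correctly in both cases of the dichotomy; the resulting constants $O$, $M$, $c=L(\e_0/2)$ and $c_0>r+L(\e_0/2)$ are independent of $y$, as required, and the conclusion for this $c$ implies it for all larger $c$ by monotonicity of $\O^-_{r,c}$.

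Two small points to tighten. First, the parenthetical ``possible since $h^{\pm},g^{\pm}$ are four distinct points'' does not by itself justify the choice of $U_{h^-},U_{g^-}$ for the already-fixed $\e_0$: you must also shrink $\e_0$ at the outset so that the closed $\e_0$-cones about $v_h$ and $v_g$ avoid $h^-$ and $g^-$; distinctness guarantees such an $\e_0$ exists, not that every $\e_0<\d_0/10$ works. Second, in the limiting step you should state explicitly that $c_{z,\xi}(T)\to\xi$ in the cone topology yields $\angle_o\bigl(z,c_{z,\xi}(T)\bigr)\to\angle_o(z,\xi)$, so that the contrapositive of Definition \ref{vis} can be applied to a fixed finite segment. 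Both are routine and do not affect the validity of the argument.
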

\begin{proof}
The proof is analogous to that of \cite[Proposition 2]{LiP}, which essentially relies on Propositions \ref{regularneighbor} and \ref{north}.
\end{proof}

\begin{lemma}\label{mini}
Let $x\in X$ and $A\subset L_\C$ a $\C$-invariant Borel set. Then $\mu_x(A) > 0$ implies $\mu_x(O\cap A)>0$ for any open set $O\subset \pX$ with $O \cap L_\C \neq \emptyset$.
\end{lemma}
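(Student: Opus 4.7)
The plan is to exploit two ingredients already established in the paper: the minimality of the $\C$-action on $L_\C$ (stated just before Lemma \ref{finitebeta}), and the quasi-invariance of the Patterson-Sullivan density under $\C$ with a continuous and strictly positive Radon--Nikodym cocycle, which follows from Definition \ref{density}.

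First, I would fix an open set $O\subset \pX$ with $O\cap L_\C\neq\emptyset$. By the minimality property recalled from \cite[Lemma 2.8]{Link0}, there is a finite subset $\L\subset \C$ such that
\[
L_\C \subset \bigcup_{\b\in \L}\b O.
\]
Since $A\subset L_\C$, this yields the decomposition $A=\bigcup_{\b\in\L}(A\cap \b O)$. As $\L$ is finite and $\mu_x(A)>0$, there exists at least one $\b\in\L$ with $\mu_x(A\cap \b O)>0$.

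Next I would use the hypothesis that $A$ is $\C$-invariant: $\b^{-1}A=A$, hence
\[
A\cap \b O \;=\; \b(\b^{-1}A\cap O)\;=\;\b(A\cap O).
\]
So $\mu_x\bigl(\b(A\cap O)\bigr)>0$. By the $\C$-equivariance in Definition \ref{density}, $\mu_x(\b B)=\mu_{\b^{-1}x}(B)$ for any Borel set $B$, and by the Busemann cocycle property the measures $\mu_{\b^{-1}x}$ and $\mu_x$ are mutually absolutely continuous with density $e^{-\d_\C b_{\xi}(\b^{-1}x,x)}$, which is continuous and strictly positive in $\xi$. Consequently, $\mu_x(\b B)>0$ if and only if $\mu_x(B)>0$. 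Applying this with $B=A\cap O$ gives $\mu_x(A\cap O)>0$, which is the desired conclusion.

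There is no real obstacle here beyond correctly invoking the minimality of $L_\C$ under the non-elementary group action and the quasi-invariance of the Patterson--Sullivan family. The argument is a textbook ergodicity-type pigeonhole and works verbatim in this generality because every open set meeting $L_\C$ covers $L_\C$ by finitely many $\C$-translates, and because the Busemann cocycle is bounded on the finite set $\L$.
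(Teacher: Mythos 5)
Your argument is correct and is essentially the standard proof that the paper delegates to \cite[Lemma 4.1]{LiP}: cover $L_\C$ by finitely many $\C$-translates of $O$ via minimality, pigeonhole using $\mu_x(A)>0$, and transfer positivity back to $A\cap O$ via the $\C$-invariance of $A$ together with the equivariance and mutual absolute continuity of the Busemann density. No gaps.
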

\begin{proof}
See the proof of \cite[Lemma 4.1]{LiP}.
\end{proof}

\begin{proposition}\label{generalizedshadow}
For any $r>0$ there exists a constant $c_0\ge r$ with the following property: If $c\ge c_0$ there exists a constant $D = D(c) > 1$ such that for all $\c\in \C$ with $d(o, \c o)>2c$ we have
\begin{equation*}
\begin{aligned}
\frac{1}{D}e^{-\d_\C d(o,\c o)}\le \mu_o(\O_{r,c}^-(o,\c o))&\le \mu_o(pr_o B(\c o,c))\\
&\le \mu_o(\O_{c,c}^+(o,\c o))\le De^{-\d_\C d(o,\c o)}.
\end{aligned}
\end{equation*}
\end{proposition}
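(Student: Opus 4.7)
The middle two inequalities are built into the definitions: for $r\le c$ the chain
\[
\Omega_{r,c}^-(o,\c o)\subseteq pr_o(B(\c o,c))\subseteq \Omega_{c,c}^+(o,\c o)
\]
holds by inspection, so the task reduces to proving the outer upper and lower bounds. My plan is to use the conformality relation $d\mu_{\c o}/d\mu_o(\xi)=e^{-\d_\C b_\xi(\c o,o)}$ from Definition \ref{density} together with precise estimates of $b_\xi(\c o,o)$ on the shadows, and then to handle the measure-theoretic lower bound via Lemmas \ref{lowbig} and \ref{mini}.

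First I would establish the Busemann estimate: if $\xi\in \Omega_{c,c}^+(o,\c o)$, so that some geodesic $c_{z,\xi}$ with $z\in B(o,c)$ enters $B(\c o,c)$ at time $t_0\in[d(o,\c o)-2c,\,d(o,\c o)+2c]$, then writing $b_\xi^{(z)}(\c o)$ for the Busemann function normalized at $z$ and using the triangle inequality along the geodesic past the closest approach to $\c o$, one obtains $b_\xi^{(z)}(\c o)\in [-t_0-c,\,-t_0+c]$. Combining this with the uniform Lipschitz property of $b_\xi$ (Proposition \ref{horofoliation}) to absorb the shift from $z$ to $o$, I get $b_\xi(\c o,o)\ge -d(o,\c o)-C_0 c$ for some $C_0>0$ depending only on the Lipschitz constant from Proposition \ref{horofoliation}. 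Running the analogous argument for $\xi\in \Omega_{r,c}^-(o,\c o)$ with $z\in B(o,r)$ yields a two-sided bound $|b_\xi(\c o,o)+d(o,\c o)|\le C_0(r+c)$.

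For the upper bound, write
\[
\mu_o(\Omega_{c,c}^+(o,\c o))=\int_{\Omega_{c,c}^+} e^{\d_\C b_\xi(\c o,o)}\,d\mu_{\c o}(\xi)
\le e^{-\d_\C d(o,\c o)+\d_\C C_0 c}\mu_{\c o}(\pX),
\]
and the $\C$-equivariance (Definition \ref{density}(2)) gives $\mu_{\c o}(\pX)=\mu_o(\pX)$, which is finite by construction. This produces the constant $D$ as claimed. For the lower bound, the same conformality yields
\[
\mu_o(\Omega_{r,c}^-(o,\c o))\ge e^{-\d_\C d(o,\c o)-\d_\C C_0(r+c)}\,\mu_{\c o}(\Omega_{r,c}^-(o,\c o)),
\]
and by equivariance $\mu_{\c o}(\Omega_{r,c}^-(o,\c o))=\mu_o(\c^{-1}\Omega_{r,c}^-(o,\c o))=\mu_o(\Omega_{r,c}^-(\c^{-1}o,o))$. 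Assuming $d(o,\c o)>2c$ and choosing $c\ge c_0$ as in Lemma \ref{lowbig} (enlarged if necessary to exceed the $c_0$ produced there), Lemma \ref{lowbig} guarantees that $\Omega_{r,c}^-(\c^{-1}o,o)$ contains either $h^M O$ or $g^M h^M O$, where $O$ is the fixed neighborhood of $h^+$ provided there.

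The key point, and the step I expect to be the main obstacle, is obtaining a \emph{uniform} positive lower bound $\kappa:=\min\{\mu_o(h^M O),\mu_o(g^M h^M O)\}>0$. Both sets are open subsets of $\pX$ containing the attracting fixed point $h^+\in L_\C$ (for $h^M O$) and $g^M h^+\in L_\C$ (for $g^M h^M O$) respectively, so by the minimality of $L_\C$ and Lemma \ref{mini} applied to $A=L_\C$ (which has full $\mu_o$-measure by Lemma \ref{support}-type support arguments and the non-elementarity of $\C$), both measures are strictly positive. Taking $\kappa$ as this minimum (which depends only on $h,g,M,O$, not on $\c$) and setting $D:=\max\{\mu_o(\pX),\,\kappa^{-1}\}\cdot e^{\d_\C C_0(r+c)}$ yields the asserted two-sided bound. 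A minor technical point I would verify is that the hypothesis $d(o,\c o)>2c$ indeed implies $d(\c^{-1}o,o)>r+c_0$ needed for the applicability of Lemma \ref{lowbig}, which follows by enlarging $c_0$ once and for all to exceed $r$ and assuming $c\ge c_0$.
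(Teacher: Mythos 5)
Your argument is correct and follows essentially the same route as the paper: the middle inclusions are definitional, the upper bound comes from conformality of the Busemann density together with the estimate that $b_\xi(\c o,o)$ is within $O(c)$ of $-d(o,\c o)$ on the shadow, and the lower bound combines $\Gamma$-equivariance with Lemma \ref{lowbig} and the positivity of $\mu_o(h^MO)$ and $\mu_o(g^Mh^MO)$ supplied by Lemma \ref{mini}. The only blemish is a sign slip in your prose: for $\xi\in\O^+_{c,c}(o,\c o)$ the inequality $b_\xi(\c o,o)\ge -d(o,\c o)-C_0c$ is trivial and useless here; the estimate your displayed bound actually requires (and which is true, by the same triangle-inequality argument) is $b_\xi(\c o,o)\le -d(o,\c o)+C_0c$.
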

\begin{proof}
Fix $r>0$. According to Lemma \ref{lowbig}, there exist open neighborhood $O\subset \pX$ of $h^+$ and $c_0\ge r$, such that for any $c\ge c_0$, any $\c\in \C$ with $d(\c o,o)>r+c$, there exists $\b\in \{h^M, g^Mh^M\}$ such that
$$\b O\subset \O_{r,c_0}^-(\c o,o)\subset \O_{r,c}^-(\c o,o).$$
Let $q:=\min\{\mu_o(h^MO), \mu_o(g^Mh^MO)\}$, which is positive by Lemma \ref{mini}. Thus for any $\c\in \C$ with $d(o, \c o)>2c$ we have
$$\mu_o( \O_{r,c}^-(\c o,o))\ge \mu_o(\beta O)\ge q>0$$
and
\begin{equation*}
\begin{aligned}
 \mu_o(\O_{r,c}^-(o,\c o))&= \mu_{\c^{-1}o}(\O_{r,c}^-(\c^{-1}o, o))\\
  &\ge e^{-\d_\C d(o,\c o)} \mu_{o}(\O_{r,c}^-(\c^{-1}o, o))\ge e^{-\d_\C d(o,\c o)}q.
\end{aligned}
\end{equation*}

The last inequality in the proposition actually holds for any $\c\in \C$. First, let us notice the following fact: If $x,y\in X$ and $\eta\in \O_{c,c}^+(x,y)$, we have
$$b_\eta(x,y)\ge d(x,y)-4c.$$
Hence if $d(o, \c o)>2c$,
\begin{equation*}
\begin{aligned}
 \mu_o(\O_{c,c}^+(o,\c o))&= \mu_{\c^{-1}o}(\O_{c,c}^+(\c^{-1}o, o))\\
  &\le e^{4c\d_\C}e^{-\d_\C d(o,\c o)} \mu_{o}(\pX).
\end{aligned}
\end{equation*}
If $d(o, \c o)\le 2c$, then $\O_{c,c}^+(o,\c o)=\pX$ and hence
\begin{equation*}
\begin{aligned}
 \mu_o(\O_{c,c}^+(o,\c o))= \mu_{o}(\pX)\le e^{2c\d_\C}e^{-\d_\C d(o,\c o)} \mu_{o}(\pX).
\end{aligned}
\end{equation*}
Let $D=D(c):=\max\{e^{4c\d_\C}\mu_{o}(\pX), 1/q\}$ and we are done.
\end{proof}

\subsection{Properties of the radial limit set}
We define an important subset of geometric limit set. For $c>0$ and $R\gg 1$, set
\begin{equation*}
\begin{aligned}
 L_\C(c,R):=&\bigcup_{\c\in \C, d(o, \c o)>R}pr_o(B(\c o, c)),\\
  L_\C(c):=&\bigcap_{R>0}L_\C(c,R),\\
  L^{\text{rad}}_\C:=&\bigcup_{c>0}L_\C(c).
\end{aligned}
\end{equation*}
We call $L^{\text{rad}}_\C$ the \textit{radial limit set} of $\C$, and it is independent of the choice of the origin $o\in X$.

\begin{lemma}\label{convradial}(\cite[Thereom 5.5]{LLW2})
If $\sum_{\c\in \C}e^{-\d_\C d(o,\c o)}$ converges, then
$$\mu_o(L^{\text{rad}}_\C) = 0.$$
\end{lemma}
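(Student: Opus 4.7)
The argument is a straightforward Borel--Cantelli estimate driven by the generalized shadow lemma. The plan is to show that for each fixed $c$, the set $L_\C(c)$ has $\mu_o$-measure zero, and then take a countable union.

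First I would fix $r > 0$ arbitrary and let $c_0 = c_0(r)$ be the constant provided by Proposition~\ref{generalizedshadow}. For any $c \ge c_0$, the shadow lemma yields a constant $D = D(c) > 1$ such that for every $\c \in \C$ with $d(o,\c o) > 2c$,
\[
\mu_o\bigl(pr_o B(\c o, c)\bigr) \le D\, e^{-\d_\C d(o,\c o)}.
\]
By definition, for any $R > 2c$,
\[
L_\C(c) \subset L_\C(c,R) = \bigcup_{\c\in\C,\, d(o,\c o) > R} pr_o\bigl(B(\c o, c)\bigr),
\]
so subadditivity and the shadow bound give
\[
\mu_o\bigl(L_\C(c)\bigr) \le D \sum_{\c\in\C,\, d(o,\c o) > R} e^{-\d_\C d(o,\c o)}.
\]
Since by hypothesis the Poincar\'e series $\sum_{\c\in\C} e^{-\d_\C d(o,\c o)}$ converges, the tail on the right tends to $0$ as $R \to \infty$, hence $\mu_o(L_\C(c)) = 0$.

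Finally, since $L_\C(c)$ is clearly nondecreasing in $c$, we have
\[
L^{\text{rad}}_\C = \bigcup_{c > 0} L_\C(c) = \bigcup_{n\in\NN,\, n \ge c_0} L_\C(n),
\]
a countable union of $\mu_o$-null sets, which is therefore $\mu_o$-null. This completes the argument.

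There is no real obstacle here: the only ingredient beyond the definition of $L^{\text{rad}}_\C$ is the upper shadow bound, which is already established in Proposition~\ref{generalizedshadow}. The proof is essentially the ``easy'' direction of the Borel--Cantelli dichotomy for Patterson--Sullivan measures, and mirrors the classical arguments of Sullivan \cite{Sul} and Roblin \cite{Rob}; uniform visibility enters only through the shadow lemma.
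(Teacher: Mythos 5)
Your proof is correct and is exactly the standard argument: the upper bound of the generalized shadow lemma (Proposition \ref{generalizedshadow}) plus the convergence of the Poincar\'e series gives a vanishing tail bound on $\mu_o(L_\C(c,R))$ as $R\to\infty$, and monotonicity in $c$ reduces $L^{\text{rad}}_\C$ to a countable union of null sets. The paper does not write out a proof but defers to \cite{LLW2} and \cite{LiP}, whose arguments are the same Borel--Cantelli computation you give, so there is nothing to add.
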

\begin{proof}
See the proof of \cite[Thereom 5.5]{LLW2} or the proof of \cite[Lemma 5.1]{LiP} for nonpositive curvature case.
\end{proof}

\begin{proposition}\label{radialmeasure}(\cite[Thereom 5.6]{LLW2})
If $A\subset L^{\text{rad}}_\C$ is a $\C$-invariant Borel subset of $L^{\text{rad}}_\C$, then $\mu_o(A) = 0$ or $\mu_o(A) =\mu_o(\pX)$. 
\end{proposition}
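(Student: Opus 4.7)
The plan is a Sullivan-type ergodicity argument using shadows as a Lebesgue differentiation basis, together with the quasi-invariance of $\{\mu_p\}$ and the $\C$-invariance of $A$. Assume $\mu_o(A) > 0$; we will deduce $\mu_o(A) = \mu_o(\pX)$. First, leveraging Proposition \ref{generalizedshadow} to pin down the $\mu_o$-mass of shadows and adapting the Vitali-style covering argument used in the proof of Theorem \ref{buse} (based on quasi-convexity, Proposition \ref{geo}(4)), I would prove a Besicovitch-type differentiation theorem for the shadow family $\{pr_o(B(\c o, c)) : \c \in \C\}$. The conclusion is that for any Borel $E \subset \pX$, at $\mu_o$-a.e.\ $\xi \in L^{\text{rad}}_\C$ there exist $c \ge c_0$ and a sequence $\c_n \in \C$ with $d(o, \c_n o) \to \infty$ such that $\xi \in O_n := pr_o(B(\c_n o, c))$ and $\mu_o(E \cap O_n)/\mu_o(O_n) \to \mathbf{1}_E(\xi)$.

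Next I would transfer this differentiation statement to the macroscopic ``dual'' shadows via quasi-invariance. Set $S_n := \c_n^{-1} O_n = pr_{\c_n^{-1} o}(B(o, c))$. The $\C$-equivariance of $\{\mu_p\}$ and the $\C$-invariance of $A$ give
\[
\mu_o(A \cap O_n) \;=\; \mu_{\c_n^{-1} o}(A \cap S_n) \;=\; \int_{A \cap S_n} e^{-\d_\C b_\eta(\c_n^{-1} o,\, o)}\, d\mu_o(\eta),
\]
with the analogous identity for $\mu_o(O_n)$. For $\eta \in S_n$, a geodesic from $\c_n^{-1} o$ to $\eta$ passes within $c$ of $o$, and the Busemann cocycle yields $|b_\eta(\c_n^{-1} o, o) - d(o, \c_n o)| \le 2c$. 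Dividing the two displayed relations gives the two-sided bound
\[
e^{-4\d_\C c}\, \frac{\mu_o(A \cap S_n)}{\mu_o(S_n)} \;\le\; \frac{\mu_o(A \cap O_n)}{\mu_o(O_n)} \;\le\; e^{4\d_\C c}\, \frac{\mu_o(A \cap S_n)}{\mu_o(S_n)}.
\]

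Finally I would let the large shadows $S_n$ exhaust $\pX$ and close the argument. Since $d(o, \c_n^{-1} o) \to \infty$, compactness of $\overline{X}$ permits passing to a subsequence with $\c_n^{-1} o \to \eta^* \in \pX$ in the cone topology. Proposition \ref{north} (applicable because the expansive isometry in $\C$ has nearby axes through regular points of $\pX$) then ensures that $\pX \setminus U \subset S_n$ for large $n$ and any neighborhood $U$ of $\eta^*$; since $\mu_o$ is atomless on $L_\C$ (a standard consequence of minimality of the $\C$-action, cf.\ Lemma \ref{mini}), we obtain $\mu_o(S_n) \to \mu_o(\pX)$. If $\mu_o(L^{\text{rad}}_\C \setminus A) > 0$, then at a density point $\xi \in L^{\text{rad}}_\C \setminus A$ we have $\mu_o(A \cap O_n)/\mu_o(O_n) \to 0$; the right inequality above forces $\mu_o(A \cap S_n)/\mu_o(S_n) \to 0$, and then
\[
\mu_o(A) \;\le\; \mu_o(A \cap S_n) + \mu_o(\pX \setminus S_n) \;\longrightarrow\; 0,
\]
contradicting $\mu_o(A) > 0$. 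Hence $\mu_o(L^{\text{rad}}_\C \setminus A) = 0$, so $\mu_o(A) = \mu_o(L^{\text{rad}}_\C)$; combined with the parallel lower bound $\mu_o(A) \ge e^{-4\d_\C c}\, \mu_o(\pX)$ coming from the left inequality at a density point of $A$, and with Lemma \ref{convradial} (which handles the convergent case trivially by $\mu_o(A) \le \mu_o(L^{\text{rad}}_\C) = 0$), one concludes $\mu_o(A) = \mu_o(\pX)$.

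The principal obstacle is the Besicovitch-style differentiation step: the shadow family is not a metric ball family, so one must construct a bespoke Vitali covering lemma. The strategy parallels \cite[Lemma 4.7]{Kn1} in nonpositive curvature but uses quasi-convexity of $X$ (Proposition \ref{geo}(4)) in place of convexity, together with the sharp two-sided shadow estimates of Proposition \ref{generalizedshadow}; uniform visibility then delivers the geometric regularity needed to make overlap constants uniform in $\c \in \C$. Once this covering lemma is in place, Steps 2 and 3 reduce to Busemann cocycle manipulations and cone-topology arguments already developed earlier in Section 10.
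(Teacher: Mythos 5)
Your overall strategy (shadow differentiation at a density point, transfer by $\C$-equivariance and conformality to the dual shadows $S_n=\c_n^{-1}O_n$, then let the $S_n$ grow) is the standard Sullivan-type argument and is the route taken in the references the paper cites for this proposition; Steps 1 and 2 are sound, and you correctly identify the Vitali-type covering lemma for shadows as the main technical burden of Step 1.

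However, Step 3 contains a genuine gap. For a \emph{fixed} radius $c$, it is false that $\pX\setminus U\subset S_n$ eventually for \emph{every} neighborhood $U$ of $\eta^*$. A point $\xi$ lies in $S_n=pr_{\c_n^{-1}o}(B(o,c))$ only if the geodesic ray from $\c_n^{-1}o$ to $\xi$ meets $B(o,c)$; by uniform visibility this is guaranteed only when $\angle_o(\c_n^{-1}o,\xi)\ge\e$ with $L(\e)\le c$, so the $S_n$ eventually contain $\pX\setminus U_c$ only for a neighborhood $U_c$ of $\eta^*$ whose size is bounded \emph{below} in terms of $c$ (already in constant negative curvature the limit of the $S_n$ is the set of endpoints of geodesics from $\eta^*$ passing within $c$ of $o$, which omits a definite arc around $\eta^*$). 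Consequently $\mu_o(\pX\setminus S_n)\not\to 0$, and your concluding estimate $\mu_o(A)\le\mu_o(A\cap S_n)+\mu_o(\pX\setminus S_n)\to 0$ fails; what you actually obtain is only that the invariant set is null off the fixed set $U_c$. The citation of Proposition \ref{north} does not repair this: that proposition concerns iterates of a single axial isometry, whereas your $\c_n$ are an arbitrary sequence with $\c_n^{-1}o\to\eta^*$. To close the argument one needs an extra ingredient, e.g.\ to apply the $\C$-invariance of the set a second time together with minimality of $L_\C$ (or the finite set $\L$ and the north--south dynamics of the expansive isometry, as in Lemma \ref{finitebeta} and Proposition \ref{north}) to rule out a $\C$-invariant set of positive measure being carried by the fixed exceptional neighborhood $U_c$; alternatively one must run the differentiation argument for a sequence of radii $c_k\to\infty$ and control how $\eta^*$ and $U_{c_k}$ vary. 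As written, the proof does not reach the conclusion.
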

\begin{proof}
See the proof of \cite[Thereom 5.6]{LLW2}. The analogous proof for nonpositive curvature is given in \cite[Proposition 4]{LiP}.
\end{proof}

\begin{proposition}\label{noatom}(\cite[Thereom 5.4]{LLW2})
A radial limit point cannot be a point mass for $m_\C$.
\end{proposition}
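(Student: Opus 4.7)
The strategy is to proceed by contradiction, exploiting the transformation rule for the Patterson--Sullivan density together with the radial approach condition to manufacture atoms of arbitrarily large mass, contradicting $\mu_o(\pX)<\infty$. Assume that $\xi\in L^{\text{rad}}_\C$ were a point mass with $\mu_o(\{\xi\})=a>0$; by the definition of the radial limit set there exist $c>0$ and a sequence $\{\c_n\}\subset\C$ with $d(o,\c_n o)\to\infty$ and $\xi\in pr_o(B(\c_n o,c))$ for every $n$. Thus each $\c_n o$ lies within distance $c$ of the geodesic ray $c_{o,\xi}$, and the whole argument is driven by this single geometric fact.

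The only geometric input needed is an upper bound on the Busemann function coming from this approach condition. Writing $q_n=c_{o,\xi}(s_n)$ with $d(q_n,\c_n o)\le c$, the triangle inequality yields $s_n\ge d(o,\c_n o)-c$, and for all $t\ge s_n$ one has
\[
d(\c_n o,c_{o,\xi}(t))-t\le d(\c_n o,q_n)+d(q_n,c_{o,\xi}(t))-t\le c-s_n\le 2c-d(o,\c_n o).
\]
Letting $t\to\infty$ gives $b_\xi(\c_n o,o)\le 2c-d(o,\c_n o)$. Combining this estimate with the cocycle $d\mu_{\c_n o}/d\mu_o(\xi)=e^{-\d_\C b_\xi(\c_n o,o)}$ and the $\C$-equivariance $\mu_{\c_n o}(\{\xi\})=\mu_o(\{\c_n^{-1}\xi\})$ from Definition \ref{density}, one computes
\[
\mu_o(\{\c_n^{-1}\xi\})=e^{-\d_\C b_\xi(\c_n o,o)}\cdot a\ \ge\ a\,e^{\d_\C(d(o,\c_n o)-2c)}.
\]

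The right-hand side tends to $+\infty$ as $n\to\infty$, whereas $\mu_o(\{\c_n^{-1}\xi\})\le\mu_o(\pX)<\infty$ for every $n$; this is the desired contradiction and forces $\mu_o(\{\xi\})=0$. Once the absence of a $\mu_o$-atom at $\xi$ is established, the corresponding statement for $m_\C$ follows immediately from its construction in \eqref{current}, since $d\bar\mu=e^{\d_\C\beta_p(\xi,\eta)}d\mu_o(\xi)d\mu_o(\eta)$ can carry no atom on $\{\xi\}\times\pX$ or $\pX\times\{\xi\}$ with $\xi$ radial, and the conditional measures $\lambda_{\xi,\eta}$ are Lebesgue along $P^{-1}(\xi,\eta)$, hence non-atomic. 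The only delicate point I foresee is semantic, namely correctly reading ``point mass for $m_\C$'' through the Hopf-type product description of the measure; the analytic content is essentially forced by the Busemann bound above and requires no hypothesis beyond uniform visibility and the two defining relations of the Busemann density in Definition \ref{density}.
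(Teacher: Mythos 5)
Your argument is correct, and it is exactly the standard Sullivan-type atom argument that the paper itself invokes by citation (the paper gives no proof here, deferring to \cite[Theorem 5.4]{LLW2} and, in nonpositive curvature, \cite[Proposition 5]{LiP}, both of which run precisely this computation: radial approach gives $b_\xi(\c_n o,o)\le 2c-d(o,\c_n o)$, and the two defining properties of the Busemann density then inflate the putative atom beyond the total mass of $\mu_o$). Two small points worth making explicit: the Radon--Nikodym identity holds only $\mu_o$-a.e., but an atom cannot lie in the exceptional null set, so evaluating it at $\xi$ is legitimate; and the final blow-up needs $\d_\C>0$, which holds here because a non-elementary group containing an expansive isometry contains a free subgroup by ping-pong (via Proposition \ref{north}), whence the Poincar\'e series diverges for small $s$.
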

\begin{proof}
See the proof of \cite[Thereom 5.4]{LLW2}. In nonpositive curvature, the proof is given in \cite[Proposition 5]{LiP}.
\end{proof}

\subsection{Dynamical properties of the geodesic flow}
The following relation between conservative point and radial limit point can be deduced from the definition.
\begin{lemma}(\cite[Thereom 6.2]{LLW2})\label{conservative}
$\lv\in SM$ is a conservative point, if and only if $c_{v}(+\infty)\in L^{\text{rad}}_\C$ for some (hence any) lift $v\in SX$.
\end{lemma}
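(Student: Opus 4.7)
The plan is to translate the radial-endpoint condition into a concrete forward-recurrence statement for the orbit of $\lv$ and conversely, using Proposition \ref{horofoliation}(4) as the key tool for uniform control of asymptotic geodesics.

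For ($\Leftarrow$), assume $\xi:=c_v(+\infty)\in L^{\text{rad}}_\C$, so there exist $c>0$ and a sequence $\c_n\in\C$ with $d(o,\c_n o)\to\infty$ and times $s_n$ such that $d(c_{o,\xi}(s_n),\c_n o)\le c$, where $v_0\in S_oX$ denotes the initial velocity of $c_{o,\xi}$. Since $c_v$ and $c_{v_0}$ are asymptotic, choose $t_0\in\RR$ with $\phi^{t_0}v\in \F^s(v_0)$; Proposition \ref{horofoliation}(4) then furnishes a constant $C=C(v,o)$, independent of $n$, such that $d(c_v(t_0+t),c_{v_0}(t))\le C$ for every $t\ge 0$. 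Consequently $d(c_v(s_n+t_0),\c_n o)\le c+C$, and $\pi\phi^{s_n+t_0}\lv=\pr(c_v(s_n+t_0))$ lies in the closed ball $\overline B_M(\underline{o},c+C)\subset M$, which is compact by completeness of $M$. Hence $\phi^{s_n+t_0}\lv$ stays in a compact subset of $SM$ along a sequence of times tending to $+\infty$, so $\lv$ is not positively divergent, i.e., conservative.

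For ($\Rightarrow$), suppose that $\phi^{t_n}\lv$ remains in a compact set $K\subset SM$ along some $t_n\to\infty$. Then $\pi K\subset\overline B_M(\underline{o},r)$ for some $r>0$, so for each $n$ there is $\c_n\in\C$ with $d(c_v(t_n),\c_n o)\le r$. The triangle inequality yields $d(o,\c_n o)\ge t_n-d(o,\pi v)-r\to\infty$. Setting $s=t_n-t_0$ in the same asymptote estimate from Proposition \ref{horofoliation}(4) then gives $d(c_{v_0}(t_n-t_0),\c_n o)\le r+C$ for all $n$ large enough, so $\xi=c_v(+\infty)\in pr_o(B(\c_n o,r+C))$ with $d(o,\c_n o)\to\infty$; hence $\xi\in L_\C(r+C)\subseteq L^{\text{rad}}_\C$. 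Independence of the chosen lift is automatic, since any other lift of $\lv$ differs from $v$ by an element of $\C$ and $L^{\text{rad}}_\C$ is $\C$-invariant.

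The principal technical point is that the asymptote constant $C$, extracted once from Proposition \ref{horofoliation}(4), is valid uniformly in $n$: it depends on the fixed pair $(v,o)$ only through the initial horospherical distance $d_s(\phi^{t_0}v,v_0)$, not on the particular $\c_n$ witnessing radiality. This uniformity is exactly what permits a single radius in the definition of $L^{\text{rad}}_\C$ to detect forward non-divergence and vice versa; no additional hyperbolicity or curvature hypotheses enter the argument, so the statement holds in the full generality of non-compact uniform visibility manifolds.
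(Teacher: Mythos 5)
Your proof is correct, and it is precisely the ``deduce it from the definitions'' argument that the paper alludes to but does not write out (the paper only cites \cite[Theorem 6.2]{LLW2}): in both directions the whole content is the uniform bound $d(c_v(t_0+t),c_{v_0}(t))\le A\,d_s(\phi^{t_0}v,v_0)+B$ for the asymptotic geodesics $c_v$ and $c_{o,\xi}$, which converts ``the ray $c_{o,\xi}$ passes within bounded distance of infinitely many orbit points $\c_n o$'' into ``the projected orbit $\phi^t\lv$ returns to a fixed compact set along times tending to $+\infty$'' and back. The only blemish is the citation label: the estimate you use is item (3) of the second list in Proposition \ref{horofoliation} (the uniform visibility part), not item (4), and you might add one line justifying that $v$ lies on the weak stable leaf of $v_0$ (uniqueness of the asymptotic ray from each point, i.e.\ injectivity of $f_{\pi v}$, identifies $c_v$ with the Busemann gradient line), but neither point affects the validity of the argument.
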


\begin{proposition}\label{dic}
Let $M=X/ \C$ be a complete visibility manifold with no conjugate points where $\C$ is a non-elementary discrete subgroup containing an expansive isometry, and $m_\C$ a Bowen-Margulis measure defined on $SM$. We have the following dichotomy.
\begin{enumerate}
    \item $\mu_o(L^{\text{rad}}_\C)=\mu_o(\pX)$, and the geodesic flow is completely conservative with respect t $m_\C$; OR
    \item  $\mu_o(L^{\text{rad}}_\C)=0$, and the geodesic flow is completely dissipative with respect t $m_\C$.
\end{enumerate}
\end{proposition}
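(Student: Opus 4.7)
The plan is to establish the dichotomy in three steps: first prove a zero-one law for $\mu_o(L^{\text{rad}}_\C)$ by invoking Proposition \ref{radialmeasure}; then transfer this into an analogous alternative for $m_\C$ using the product structure built into the Bowen-Margulis construction; finally interpret the $m_\C$-alternative dynamically via Lemma \ref{conservative} and the Hopf decomposition.

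For the first step I would check that $L^{\text{rad}}_\C$ is itself a $\C$-invariant Borel subset of $L^{\text{rad}}_\C$, so that Proposition \ref{radialmeasure} applies with $A=L^{\text{rad}}_\C$. Each shadow $pr_o(B(\c o,c))$ is Borel in $\pX$, so $L_\C(c,R)$ is $F_\sigma$, $L_\C(c)=\bigcap_R L_\C(c,R)$ is Borel, and $L^{\text{rad}}_\C=\bigcup_{c\in\NN}L_\C(c)$ is Borel and manifestly $\C$-invariant. Proposition \ref{radialmeasure} then yields the alternative
\[\mu_o(L^{\text{rad}}_\C)\in\{0,\mu_o(\pX)\}.\]

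For the second step I would use the construction of $\tilde m$ on $SX$, namely $\tilde m(A)=\int_{\partial^2 X}\lambda_{\xi,\eta}(A)\,d\bar\mu(\xi,\eta)$ with $d\bar\mu(\xi,\eta)=e^{\d_\C\beta_p(\xi,\eta)}d\mu_p(\xi)d\mu_p(\eta)$. Since membership in $L^{\text{rad}}_\C$ depends only on the forward endpoint and $\lambda_{\xi,\eta}$ is carried by $P^{-1}(\xi,\eta)$, setting $\widetilde A:=\{v\in SX:v^+\in L^{\text{rad}}_\C\}$ gives
\[\tilde m(\widetilde A)=\int_{\pX\times L^{\text{rad}}_\C}\lambda_{\xi,\eta}\bigl(P^{-1}(\xi,\eta)\bigr)\,d\bar\mu(\xi,\eta),\]
with a parallel formula for $\tilde m(\widetilde A^{c})$. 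Because $e^{\d_\C\beta_p(\xi,\eta)}$ is positive and finite on $\partial^2 X$, the $\bar\mu$-null sets in the $\eta$-coordinate coincide with $\mu_o$-null sets. Hence $\mu_o(L^{\text{rad}}_\C)=\mu_o(\pX)$ forces $\tilde m(SX\setminus\widetilde A)=0$, while $\mu_o(L^{\text{rad}}_\C)=0$ forces $\tilde m(\widetilde A)=0$; the same alternative descends to $m_\C$ on $SM$.

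For the last step, Lemma \ref{conservative} identifies the projection of $\widetilde A$ with the set of conservative vectors in $SM$. Combining this with the Hopf decomposition $SM=\Omega_C\sqcup\Omega_D$, where $m_\C$-a.e.\ point of $\Omega_C$ is positively recurrent and $m_\C$-a.e.\ point of $\Omega_D$ is positively divergent, we conclude: in Case $(1)$ almost every vector is conservative, so $m_\C(\Omega_D)=0$ and the flow is completely conservative; in Case $(2)$ almost every vector is non-conservative, so $m_\C(\Omega_C)=0$ and the flow is completely dissipative. The principal subtlety lies in the second step, where one must verify that the measurable-choice construction of the sections $\tilde V$ introduces no spurious weight; this is automatic because $\widetilde A$ is $P$-saturated, so $\lambda_{\xi,\eta}(\widetilde A\cap P^{-1}(\xi,\eta))$ is either the full mass $\lambda_{\xi,\eta}(P^{-1}(\xi,\eta))$ or zero, depending only on whether $\eta\in L^{\text{rad}}_\C$.
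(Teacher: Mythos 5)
Your proposal is correct and follows exactly the same route as the paper's proof: the zero--one law from Proposition \ref{radialmeasure} applied to $A=L^{\text{rad}}_\C$, the transfer to $m_\C$ via the product structure $\tilde m(A)=\int\lambda_{\xi,\eta}(A)\,d\bar\mu(\xi,\eta)$, and the dynamical conclusion via Lemma \ref{conservative} together with the Hopf decomposition. The paper states this in three sentences; you have merely filled in the details it leaves implicit (Borel measurability and $\C$-invariance of $L^{\text{rad}}_\C$, and the observation that $\{v:v^+\in L^{\text{rad}}_\C\}$ is $P$-saturated so the measurable selection plays no role), all of which are sound.
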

\begin{proof}
By Proposition \ref{radialmeasure}, we have the dichotomy that either $\mu_o(L^{\text{rad}}_\C)=\mu_o(\pX)$ or $\mu_o(L^{\text{rad}}_\C)=0$. If $\mu_o(L^{\text{rad}}_\C)=\mu_o(\pX)$, by the local product structure of $m_\C$ and Lemma \ref{conservative}, we have $\Omega_C$ has full $m_\C$-measure in $SM$ and hence $\phi^t$ is completely conservative. Analogous arguments hold for the latter case and $\phi^t$ is completely dissipative.
\end{proof}

The rest of this subsection are devoted to proving the following.
\begin{proposition}\label{divergent}
If $\C$ is nonelementary and contains an expansive isometry, then $\mu_o(L^{\text{rad}}_\C)=0$
implies that $\sum_{\c\in \C}e^{-\d_\C d(o,\c o)}$ converges.
\end{proposition}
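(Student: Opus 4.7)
The plan is to argue the contrapositive. Suppose that $\sum_{\c\in \C}e^{-\d_\C d(o,\c o)}$ diverges; I will show $\mu_o(L^{\text{rad}}_\C) > 0$, which by Proposition \ref{radialmeasure} forces $\mu_o(L^{\text{rad}}_\C) = \mu_o(\pX)$, contradicting the hypothesis $\mu_o(L^{\text{rad}}_\C) = 0$. The crux is the following uniform lower bound, which I take to be the content of the expected Lemma \ref{divergent2}: there exist constants $c > 0$, $K > 0$ and $R_0 > 0$ such that, for every $R \ge R_0$,
\begin{equation*}
\mu_o\Big(\bigcup_{\c\in\C,\ R < d(o,\c o) \le R+1}\pr_o B(\c o, c)\Big) \ge K.
\end{equation*}
From this the proposition is immediate. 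Since $L_\C(c,R) = \bigcup_{d(o,\c o) > R}\pr_o B(\c o, c)$ is decreasing in $R$ and contains the annular union above for all $R \ge R_0$, we get $\mu_o(L_\C(c,R)) \ge K$ uniformly, and continuity from above of the finite measure $\mu_o$ yields $\mu_o(L_\C(c)) = \lim_{R\to\infty}\mu_o(L_\C(c,R)) \ge K$, whence $\mu_o(L^{\text{rad}}_\C) \ge \mu_o(L_\C(c)) > 0$.

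The first step in establishing the lemma is to concentrate the divergent Poincar\'e mass near an expansive direction, where the shadow combinatorics are well-behaved. Fix the expansive isometry $h\in \C$ supplied by hypothesis, an axis $c_{v_0}$ with $v_0 \in \R_0$, and use Proposition \ref{regularneighbor} to choose neighborhoods $U \ni v_0^-$ and $V \ni v_0^+$ in $\pX$ such that every pair $(\xi,\eta) \in U\times V$ is joined by a regular geodesic uniformly close to $c_{v_0}$. Lemma \ref{finitebeta} then furnishes a finite set $\L\subset \C$ such that, for every $\c$ of sufficient displacement, some $\b=\b(\c)\in\L$ pushes the shadow data of $\b\c$ into $U\times V$ with
\[\CL_{r,c}(o,\b\c o) \cap (U\times V) \supset (U\cap\pX)\times \O^-_{r,c}(o,\b\c o).\]
A pigeonhole argument then isolates one $\b_0\in\L$ for which the subfamily $\C_0 = \{\c\in\C: \b(\c) = \b_0\}$ retains a divergent Poincar\'e series; after translating by $\b_0$ (an inessential shift for asymptotic estimates), one reduces to studying shadows of orbit points uniformly localized near $v_0^+$.

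The main obstacle is to combine two complementary estimates. The first is a \emph{bounded multiplicity} for the family of shadows near $v_0$: there exists $N$ such that each $\xi \in U\cap \pX$ lies in at most $N$ shadows $\pr_o B(\c o, c)$ as $\c$ ranges over $\C_0$ with $d(o,\c o) \in (R,R+1]$, uniformly in $R$. In nonpositive curvature this is Ballmann's lemma; in our setting it is extracted from expansiveness of $v_0$ together with the cone-topology tools of Subsection 7.2, chiefly Lemma \ref{piconvergence} and Proposition \ref{north}, using proper discontinuity of the $\C$-action to forbid unbounded accumulation of such $\c$'s. Combined with the generalized shadow lemma (Proposition \ref{generalizedshadow}), which gives $\mu_o(\pr_o B(\c o, c)) \gtrsim e^{-\d_\C R}$ on this annulus, this yields
\[\mu_o\Big(\bigcup_{\c\in\C_0,\ R < d(o,\c o)\le R+1}\pr_o B(\c o, c)\Big) \gtrsim N^{-1}\cdot \#\{\c\in\C_0 : R < d(o,\c o)\le R+1\}\cdot e^{-\d_\C R}.\]

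The second and genuinely delicate step is to upgrade divergence of the full Poincar\'e series (which a priori only asserts $\sum_R S_R = \infty$, with $S_R$ the spherical sum, and allows $S_R \to 0$) to a uniform spherical lower bound $\#\{\c\in\C_0 : R<d(o,\c o)\le R+1\}\gtrsim e^{\d_\C R}$ valid for \emph{every} large $R$. The idea here is to exploit the dynamics of the expansive element $h$ itself: powers $\{h^n\}_{n\in\ZZ}$ furnish orbit points with linearly spaced displacements along $c_{v_0}$, and translating the divergent mass of $\C_0$ by $\{h^{\pm n}\}$ populates every annulus uniformly, at bounded multiplicative cost near the expansive vector. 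This propagation step, which replaces the rank-one flat-strip arguments of Link--Picaud by cone-topology tools (Propositions \ref{regularneighbor} and \ref{north}) tailored to the no-conjugate-points setting, is expected to be the most technical part of the proof.
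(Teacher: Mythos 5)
Your overall strategy (contrapositive plus Proposition \ref{radialmeasure}) is reasonable, but the pivotal step fails. You reduce everything to the claim that for every large $R$ the annulus $\{\c\in\C_0: R<d(o,\c o)\le R+1\}$ contains $\gtrsim e^{\d_\C R}$ orbit points, and you acknowledge that divergence of the Poincar\'e series only gives $\sum_R S_R e^{-\d_\C R}=\infty$, which is compatible with $S_R e^{-\d_\C R}\to 0$ along subsequences or with many sparse annuli. The proposed repair --- translating the divergent mass by powers $h^{\pm n}$ of the expansive isometry --- cannot produce such a bound: composing with $h^n$ shifts displacements by roughly $n|h|$ and so can at best redistribute orbit points additively among annuli; it cannot manufacture $\asymp e^{\d_\C R}$ points in an annulus the group never had, and a uniform per-annulus exponential lower bound is simply not a consequence of divergence for a general non-elementary discrete $\C$ (it is essentially a Margulis-type counting asymptotic, which the paper only establishes for compact quotients with extra hypotheses in Sections 7--8). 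Note also that your bounded-multiplicity claim cannot be pushed from a single annulus to the whole tail $\{d(o,\c o)>R\}$, since bounded multiplicity there together with the shadow lemma and divergence would give $\mu_o(L_\C(c,R))=\infty$; this tension is exactly the signal that a uniform covering estimate is the wrong mechanism here.

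The standard (and the paper's) mechanism is a second-moment argument rather than a uniform lower bound on each sphere: one shows that the sum of the measures of the relevant sets diverges (a first-moment lower bound) while the pairwise correlations are controlled by the square of the partial sums (a second-moment upper bound), and then the Kochen--Stone form of the second Borel--Cantelli lemma yields positive measure for the $\limsup$ set. Concretely, the paper's Lemma \ref{divergent2} is a lower bound $\int_0^T\sum_{\c}m(K\cap\phi^{-t}\c K)\,dt\ge C'\sum_{d(o,\c o)\le T}e^{-\d_\C d(o,\c o)}$ for a thickened flow box $K$ built near the expansive axis (this is where Proposition \ref{regularneighbor}, Lemma \ref{finitebeta} and the shadow lemma enter, playing the role of Ballmann's lemma), and Lemma \ref{divergent1} is the matching quadratic upper bound on triple intersections; together with Roblin's argument these show $(SM,\phi^t,m_\C)$ is not completely dissipative, contradicting Proposition \ref{dic} under $\mu_o(L^{\text{rad}}_\C)=0$. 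If you want to stay on the boundary as in your draft, the correct substitute for your uniform annulus bound is to apply Kochen--Stone directly to the shadows $A_\c=pr_o B(\c o,c)$, using $\sum_\c\mu_o(A_\c)=\infty$ and a quasi-independence estimate $\mu_o(A_\c\cap A_{\c'})\lesssim\mu_o(A_\c)\mu_o(A_{\c'})$; either way, some form of the second Borel--Cantelli lemma is unavoidable and is what your proposal is missing.
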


We first define for $x\in X$ and $c > 0$ 
\begin{equation}\label{K0}
K^0(x, c) := \{v\in SX: d(x, \pi v) < c \text{\ and\ } x\in H(v)\}.
\end{equation}
We note that in \cite{LiP} for the nonpositive curvature case, $K^0(x, c)$ is defined using a vector $v(x; v^-,v^+)$. More precisely, for a point $x\in X$ and $(\xi, \eta)\in \partial^2X$ we denote $v(x; \xi, \eta)$ the unique element $v\in SX$ whose footpoint is the orthogonal projection of $x$ to the flat strip $(\xi\eta)$ and such that $P(v) = (\xi, \eta)$. This is well defined in nonpositive curvature case. In our setting, due to the possible existence of focal points, $v(x; \xi, \eta)$  is not uniquely defined. Our new definition in \eqref{K0} does not affect anything, in particular the proof of Lemma \ref{Kshadow} below still works.

We need to thicken these sets by the geodesic flow and consider
\begin{equation*}
K(x, c) := \{\phi^s v: v\in K^0(x, c), s\in (-c, c)\}\subset  SB(x, 2c). 
\end{equation*}
Such a set has the important property that the orbit of an arbitrary vector $v\in SX$ under the geodesic flow either does not intersect it at all or spends precisely time $2c$ inside.

\begin{lemma}\label{Kshadow}
    $P(\{K(o,c)\cap \phi^{-t}\c K(\c o,c): t>0\})=\CL_{c,c}(o,\c o)$.
\end{lemma}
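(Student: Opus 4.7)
The plan is to prove the claimed equality by verifying both inclusions. The core observation is that a vector $v \in K(x,c)$ is, by definition, of the form $\phi^s v_0$ with $v_0 \in K^0(x,c)$ and $|s|<c$; the conditions $\pi v_0 \in B(x,c)$ and $x \in H(v_0)$ together encode exactly that the geodesic $c_v$ passes within a controlled neighborhood of $x$. I will use this dictionary throughout, together with the elementary Busemann bound $|b_\eta(p,q)| \le d(p,q)$, which is available on any no-conjugate-points manifold.

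For the inclusion $\subseteq$, I would fix $v \in K(o,c)\cap \phi^{-t}\c K(\c o,c)$ for some $t>0$ and unpack both memberships. Writing $v = \phi^s v_0$ with $v_0 \in K^0(o,c)$ places the point $\pi v_0 \in B(o,c)$ on the geodesic $c_v$, while $\phi^t v = \c \phi^{s'} u_0$ with $u_0 \in K^0(\c o,c)$ places, via the $\C$-equivariance of $\phi$ and $\pi$, a second point of $c_v$ inside the corresponding neighborhood of $\c o$. Hence the endpoints $P(v) = (v^-,v^+)$ are connected by a single geodesic that meets both $B(o,c)$ and $B(\c o,c)$, so $P(v) \in \CL_{c,c}(o,\c o)$ by definition.

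For the reverse inclusion $\supseteq$, given $(\xi,\eta) \in \CL_{c,c}(o,\c o)$ realized by witness points $x' \in B(o,c)$ and $y' \in B(\c o,c)$ on a connecting geodesic $c_{x',y'}$ with $c_{x',y'}(-\infty) = \xi$ and $c_{x',y'}(+\infty) = \eta$, I would construct the required $v_0$ by translating along $c_{x',y'}$ so as to anchor $o$ on its horosphere. Concretely, I set $\pi v_0 := c_{x',y'}(\tau_0)$ with $\tau_0 = -b_\eta(o,x')$, so that $o \in H(v_0)$ by the cocycle identity for Busemann functions, and $|\tau_0| \le d(o,x') < c$ by the Busemann bound. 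A symmetric anchoring near $y'$ identifies a time $t > 0$ for which $\phi^t v_0 \in \c K(\c o,c)$, and then $P(v_0) = (\xi,\eta)$ as required.

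The main obstacle will be keeping the constants honest without strict convexity of the distance function, which is unavailable in the no-conjugate-points setting beyond the quasi-convexity from Proposition~\ref{geo}. The naive triangle-inequality bound $d(o,\pi v_0) \le d(o,x') + |\tau_0| < 2c$ overshoots the radius $c$ in the definition of $K^0$ by a factor of two, and a similar loss arises at the other end. I expect to absorb this either by exploiting the freedom to enlarge $c$ (the surrounding use in Proposition~\ref{generalizedshadow} allows $c$ to be taken larger than an explicit threshold $c_0$), or by shifting along the geodesic so that the witnesses $x',y'$ and the basepoints $\pi v_0, \pi(\c u_0)$ coincide, thereby exchanging the excess distance for use of the flow-parameter slack $s \in (-c,c)$ already present in the definition of $K(x,c)$.
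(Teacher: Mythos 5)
Your forward inclusion is correct, and it is essentially all the paper extracts from this lemma for the upper bound (where the lemma is in fact only invoked with the relaxed target $\CL_{2c,2c}$). The reverse inclusion is where the proof breaks, and the obstruction you flag at the end is not a bookkeeping nuisance that can be absorbed: it defeats both of your proposed repairs and, as stated, the lemma itself. On a fixed geodesic $c_{x',y'}$ with $c_{x',y'}(+\infty)=\eta$, the condition $o\in H(\dot c_{x',y'}(\tau))$ holds for exactly one $\tau$ (the function $\tau\mapsto b_\eta(o,c_{x',y'}(\tau))$ is affine with slope $1$), so the anchor $v_0$ is \emph{uniquely} determined: its footpoint is the intersection of the geodesic with the horosphere through $o$ centered at $\eta$. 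Since $K(o,c)$ is obtained by flowing for time $|s|<c$ starting from that single anchor, the flow-parameter slack can never move the anchor; if $d(o,\pi v_0)\ge c$ the entire geodesic misses $K(o,c)$ and contributes nothing to the left-hand side. Your second repair therefore cannot work, and your first repair (enlarging $c$) proves a different statement. Moreover the factor of two is sharp: already in the hyperbolic plane, if $x'$ is the point of the geodesic closest to $o$, the horosphere-anchored point can lie at distance arbitrarily close to $2\,d(o,x')$ from $o$, so one can arrange $d(o,x')<c$ while $d(o,\pi v_0)>c$. Hence the inclusion $\CL_{c,c}(o,\c o)\subseteq P\bigl(\bigcup_{t>0}K(o,c)\cap\phi^{-t}\c K(\c o,c)\bigr)$ fails in general.

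For comparison, the paper's own proof is a one-line citation of \cite[Lemma 6.2]{LiP} together with the assertion that the horosphere-based redefinition of $K^0$ in \eqref{K0} ``does not affect anything.'' That assertion is precisely where the difficulty sits: in Link--Picaud the anchor is the orthogonal (nearest-point) projection onto the strip $(\xi\eta)$, for which ``anchor in $B(o,c)$'' and ``geodesic meets $B(o,c)$'' are the same condition and the equality is clean; with the horosphere anchor they differ by the factor of two you identified. What survives, and what Lemmas \ref{divergent1} and \ref{divergent2} actually use, is the two-sided inclusion $\CL_{c/2,c/2}(o,\c o)\subseteq P\bigl(\bigcup_{t>0}K(o,c)\cap\phi^{-t}\c K(\c o,c)\bigr)\subseteq \CL_{2c,2c}(o,\c o)$, together with the fact that on geodesics passing within $\e\ll c$ of the centers the intersection with $K$ has full length $2c$. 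Your forward argument already gives the right-hand inclusion (even with constant $c$), and your reverse argument, run verbatim with the witness radius halved, gives the left-hand one; in that corrected form your proof closes, but as a proof of the stated equality it has a genuine gap shared with the paper.
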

\begin{proof}
The proof of \cite[Lemma 6.2]{LiP} still works here under our new definition \eqref{K0}.
\end{proof}

Recall that $h\in \C$ is an expansive isometry with fixed points $h^-, h^+\in \pX$. Without loss of generality, we assume that $o\in Ax(h)$. We fix $\e>0$ and the associated neighborhoods $U=U_\e, V=V_\e\subset \overline{X}$ of $h^-, h^+$ respectively provided by Proposition \ref{regularneighbor}. Let $c_0>r$ be the two constants from the shadow lemma Proposition \ref{generalizedshadow}. Let $\Lambda \subset \C$ be a finite symmetric set provided by Proposition \ref{finitebeta} and
$$\rho:=\{d(o,\b o): \b\in \L\}.$$ 
Fix $c>c_0+\rho+\e>r+\rho+\e$ and from now on consider the set
$$K:=\overline{K(o,c)}.$$

\begin{lemma}\label{divergent1}
There exists a constant $C>0$ such that for $T$ sufficiently large,
$$\int_0^Tdt\int_0^Tds \sum_{\c,\phi\in \C}m(K\cap g^{-t}\c K\cap g^{-t-s}\phi K)\le C\left(\sum_{\c\in \C, d(o, \c o)\le T}e^{-\d_\C d(o, \c o)}\right)^2.$$
\end{lemma}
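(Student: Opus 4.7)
The plan is to exploit the local product structure $dm=\lambda_{\xi,\eta}(\cdot)\,d\bar\mu(\xi,\eta)$, together with the observation that each orbit spends a total time of at most $2c$ in any $\C$-translate of the flow box $K$. After identifying the boundary set of pairs $(\xi,\eta)$ supporting the triple intersection, one applies the generalized shadow lemma (Proposition \ref{generalizedshadow}) to transform the measure estimate into a product of exponentially decaying factors indexed by $\c$ and by $\phi$; the double integral over $[0,T]^2$ then collapses to the square of a Poincar\'e-type partial sum.

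Fix $(\xi,\eta)\in\partial^2 X$ and let $J_0$, $J_\c$, $J_\phi\subset\RR$ be the (possibly empty) time intervals along a representative geodesic in $P^{-1}(\xi,\eta)$ during which the orbit lies in $K$, $\c K$, and $\phi K$ respectively. Each interval has length at most $2c$ by construction of $K$, and by Lemma \ref{Kshadow} together with the flow-box geometry, when all three are nonempty their centers lie within $O(c)$ of $0$, $d(o,\c o)$, and $d(o,\phi o)$. Consequently
\[
\lambda_{\xi,\eta}\bigl(K\cap g^{-t}\c K\cap g^{-t-s}\phi K\bigr)=\bigl|J_0\cap(J_\c-t)\cap(J_\phi-t-s)\bigr|,
\]
and the integration over $(t,s)\in[0,T]^2$ forces $t$ into a window of length at most $4c$ around $d(o,\c o)$ and $s$ into one of length at most $4c$ around $d(\c o,\phi o)$, inside which the integrand is bounded by $2c$. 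Hence
\[
\int_0^T\!\!\int_0^T\lambda_{\xi,\eta}\bigl(K\cap g^{-t}\c K\cap g^{-t-s}\phi K\bigr)\,dt\,ds\le C_1,
\]
with $C_1$ depending only on $c$, and the left-hand side vanishes unless both $d(o,\c o)$ and $d(\c o,\phi o)$ lie in $[0,T+O(c)]$.

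Integrating against $\bar\mu$ and applying Lemma \ref{Kshadow} once to $(K,\c K)$ and once, after a $g^t$-translation and $\c$-shift, to $(\c K,\phi K)$ yields
\[
\int_0^T\!\!\int_0^T m\bigl(K\cap g^{-t}\c K\cap g^{-t-s}\phi K\bigr)\,dt\,ds\le C_1\,\bar\mu\bigl(\CL_{c,c}(o,\c o)\cap\CL_{c,c}(\c o,\phi o)\bigr).
\]
On this intersection the connecting geodesic passes within $c$ of $o$, so $|\beta_o(\xi,\eta)|=O(c)$; moreover $\eta\in\O_{c,c}^+(o,\phi o)$ and $\xi\in\O_{c,c}^+(\phi o,o)$. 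By Proposition \ref{generalizedshadow}, $\mu_o(\O_{c,c}^+(o,\phi o))\le De^{-\d_\C d(o,\phi o)}$, while the $\C$-equivariance of the Patterson-Sullivan densities combined with the same shadow lemma applied after a $\phi^{-1}$-shift yields $\mu_o(\O_{c,c}^+(\phi o,o))\le D'$ uniformly in $\phi$. Therefore
\[
\bar\mu\bigl(\CL_{c,c}(o,\c o)\cap\CL_{c,c}(\c o,\phi o)\bigr)\le C_2\,e^{-\d_\C d(o,\phi o)}.
\]
Because $o$, $\c o$, $\phi o$ are visited in this order along a single geodesic (up to the $c$-fattening), the quasi-convexity of $X$ (Proposition \ref{geo}(4)) forces the approximate additive bound $d(o,\phi o)\ge d(o,\c o)+d(\c o,\phi o)-O(c)$, so
\[
e^{-\d_\C d(o,\phi o)}\le e^{O(c)\d_\C}\,e^{-\d_\C d(o,\c o)}\,e^{-\d_\C d(\c o,\phi o)}.
\]

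Putting the pieces together, for $\c,\phi$ satisfying the distance restrictions $d(o,\c o),\,d(\c o,\phi o)\le T+O(c)$,
\[
\int_0^T\!\!\int_0^T m\bigl(K\cap g^{-t}\c K\cap g^{-t-s}\phi K\bigr)\,dt\,ds\le C_3\,e^{-\d_\C d(o,\c o)}\,e^{-\d_\C d(\c o,\phi o)}.
\]
Summing over $\c,\phi\in\C$ and performing the $\C$-invariance substitution $\psi=\c^{-1}\phi$, which gives $d(\c o,\phi o)=d(o,\psi o)$, the two factors decouple into
\[
\sum_{\c,\phi\in\C}\int_0^T\!\!\int_0^T m(\cdots)\,dt\,ds\le C_3\Bigl(\sum_{\c\in\C,\,d(o,\c o)\le T+O(c)}e^{-\d_\C d(o,\c o)}\Bigr)^{\!2},
\]
and for $T$ large enough the enlargement from $T$ to $T+O(c)$ only changes the sum by a multiplicative constant, yielding the claim. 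The main obstacle I foresee is the order-of-visitation used to obtain the additive distance relation: for $(\xi,\eta)\in\CL_{c,c}(o,\c o)\cap\CL_{c,c}(\c o,\phi o)$ one must verify that $o$, $\c o$, and $\phi o$ are genuinely encountered in this order along the same connecting geodesic (up to the uniform $c$-fattening), and this requires a careful application of Proposition \ref{geo}(4) and possibly enlarging $c$ relative to the quasi-convexity constants of $X$.
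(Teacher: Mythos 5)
Your proof is correct and follows essentially the same route as the paper's: a fiberwise bound of order $O(c^3)$ on $\int_0^T\!\int_0^T\lambda_{\xi,\eta}(\cdot)\,dt\,ds$ from the $2c$-occupation-time property of $K$, the shadow lemma applied to $\O^+(o,\phi o)$ to get the factor $e^{-\d_\C d(o,\phi o)}$, the additive splitting $d(o,\phi o)\ge d(o,\c o)+d(\c o,\phi o)-O(c)$, and the substitution $\psi=\c^{-1}\phi$ to decouple the double sum. The obstacle you flag at the end is not one: since the integration range imposes $t,s\ge 0$ and geodesics in the simply connected manifold $X$ are globally minimizing, the connecting geodesic meets the $O(c)$-balls around $o$, $\c o$, $\phi o$ at parameters approximately $0\le t\le t+s$, so the ordered-visitation inequality follows from the triangle inequality alone, with no appeal to quasi-convexity.
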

\begin{proof}
Recall the definition of measure $\lambda_{\xi,\eta}$. It is relatively easier to get a upper bounded for it. If
$$(\xi,\eta)\in P(K\cap g^{-t}\c K\cap g^{-t-s}\phi K)$$
for some $s,t>0$, then
$$\int_0^Tdt\int_0^Tds \lambda_{\xi,\eta}(P^{-1}(\xi,\eta)\cap K\cap g^{-t}\c K\cap g^{-t-s}\phi K)\le (2c)^4.$$

By Lemma \ref{Kshadow}, 
$$P(K\cap g^{-t}\c K\cap g^{-t-s}\phi K)\subset \CL_{2c,2c}(o,\phi o)\subset \pX\times \O^+_{2c,2c}(o, \phi o).$$
Therefor for any $\c, \phi\in \C$,
\begin{equation*}
\begin{aligned}
& \int_0^Tdt\int_0^Tds m(K\cap g^{-t}\c K\cap g^{-t-s}\phi K)\\
\le &16c^4\int_{\CL_{2c,2c}(o,\phi o)}d\mu_o(\xi)d\mu_o(\eta)e^{2\d_\C \b_o(\xi, \eta)}\\
\le &16c^4e^{4c\d_\C}\int_{\CL_{2c,2c}(o,\phi o)}d\mu_o(\xi)d\mu_o(\eta)\\
\le &16c^4e^{4c\d_\C}\int_{\pX}d\mu_o(\xi)\int_{\O^+_{2c,2c}(o, \phi o)}d\mu_o(\eta)\\
\le &16c^4e^{4c\d_\C}\mu_o(\pX)D(2c)e^{-\d_\C d(o,\phi o)}\\
= &C_1e^{-\d_\C d(o,\phi o)}
\end{aligned}
\end{equation*}
where $C_1:=16c^4e^{4c\d_\C}\mu_o(\pX)D(2c)$ only depends on $c$. Then 
\begin{equation*}
\begin{aligned}
& \int_0^Tdt\int_0^Tds \sum_{\c,\phi\in \C} m(K\cap g^{-t}\c K\cap g^{-t-s}\phi K)\\
\le &C_1\sum_{\c,\phi\in \C, d(o,\c o)\le T+4c, d(\c o, \phi o)\le T+4c}e^{-\d_\C (d(o,\c o)+d(\c o,\phi o)-12c)}\\
=&C_1e^{12c\d_\C}\sum_{\c,\phi\in \C, d(o,\c o)\le T+4c, d(o, \a o)\le T+4c}e^{-\d_\C (d(o,\c o)+d(o,\a o))}\\
\le &C_2\left(\sum_{\c\in \C, d(o, \c o)\le T+4c}e^{-\d_\C d(o, \c o)}\right)^2
\end{aligned}
\end{equation*}
where $C_2$ is a constant only depending on $c$. Finally, notice that 
$$\sum_{\c\in \C, T< d(o, \c o)\le T+4c}e^{-\d_\C d(o, \c o)}$$
is uniformly bounded in $T$. This follows from the proof of \cite[Corollary 3.8]{Link0}, which essentially uses the shadow lemma Proposition \ref{generalizedshadow}. We are done.
\end{proof}

\begin{lemma}\label{divergent2}
There exists a constant $C'>0$ such that for $T$ sufficiently large,
$$\int_0^Tdt \sum_{\c\in \C}m(K\cap g^{-t}\c K)\ge C'\sum_{\c\in \C, d(o, \c o)\le T}e^{-\d_\C d(o, \c o)}.$$
\end{lemma}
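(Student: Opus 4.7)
The plan is to combine Lemma~\ref{finitebeta} and the lower bound in Proposition~\ref{generalizedshadow} to produce, for each $\c\in\C$ of sufficiently large displacement, a contribution to $\int_0^T m(K\cap\phi^{-t}\c K)\,dt$ of size at least $\mathrm{const}\cdot e^{-\d_\C d(o,\c o)}$. The key simplification is that by Proposition~\ref{regularneighbor} every pair $(\xi,\eta)\in(U\cap\pX)\times(V\cap\pX)$ is connected by a \emph{unique} geodesic; hence on this region the fibre $P^{-1}(\xi,\eta)$ is a single flow orbit and $\lambda_{\xi,\eta}$ is ordinary Lebesgue measure along it, so the measurable-selection ambiguity in the definition of $m_\C$ plays no role in the lower bound.

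First I would apply Lemma~\ref{finitebeta}: for each $\c\in\C$ with $d(o,\c o)$ large enough there is $\b=\b(\c)\in\L$ with
\[
(U\cap\pX)\times\O^-_{r,c}(o,\b\c o)\;\subset\;\CL_{r,c}(o,\b\c o)\cap(U\times V).
\]
For $(\xi,\eta)$ in this product set, the unique connecting geodesic $c_{\xi,\eta}$ meets $K=\overline{K(o,c)}$ in a flow-time interval $I_1$ of length $2c$ (determined by the horosphere through $o$ based at $\eta$ together with the definition of $K^0(o,c)$) and meets $\b\c K$ in an interval $I_2$ of length $2c$ whose centre $t_2$ satisfies $|t_2-d(o,\b\c o)|\le 2c$. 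A direct overlap computation then gives
\[
\int_0^T\!\!\lambda_{\xi,\eta}(K\cap\phi^{-t}\b\c K)\,dt\;=\;\int_0^T\!\!|I_1\cap(I_2-t)|\,dt\;\ge\;(2c)^2
\]
whenever $d(o,\b\c o)\in[4c,\,T-4c]$.

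Integrating in $(\xi,\eta)$ against $d\bar\mu=e^{\d_\C\beta_o(\xi,\eta)}d\mu_o(\xi)d\mu_o(\eta)$, using that $|\beta_o(\xi,\eta)|$ is uniformly bounded on the region where the connecting geodesic meets $B(o,r)$, and applying Proposition~\ref{generalizedshadow} together with $d(o,\b\c o)\le d(o,\c o)+\rho$, yields
\[
\int_0^T m(K\cap\phi^{-t}\b\c K)\,dt\;\ge\;C_2\,e^{-\d_\C d(o,\c o)}
\]
for some $C_2>0$ and all $\c\in\C$ with $d(o,\c o)\in[4c+\rho,\,T-4c-\rho]$. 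Since the map $\c\mapsto\b(\c)\c$ is at most $\#\L$-to-one,
\[
\sum_{\c\in\C}\int_0^T m(K\cap\phi^{-t}\c K)\,dt\;\ge\;\frac{1}{\#\L}\sum_{\c\in\C}\int_0^T m(K\cap\phi^{-t}\b(\c)\c K)\,dt\;\ge\;C'\!\!\sum_{\substack{\c\in\C\\ d(o,\c o)\le T-4c-\rho}}\!\!e^{-\d_\C d(o,\c o)},
\]
and enlarging the summation range back to $d(o,\c o)\le T$ adds only a bounded tail (controlled, as at the end of the proof of Lemma~\ref{divergent1}, by the fact that $\sum_{d(o,\c o)\in(T-4c-\rho,T]}e^{-\d_\C d(o,\c o)}$ is uniformly bounded in $T$), which is absorbed into $C'$.

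The step I expect to be most delicate is the uniform lower bound on the ``depth'' of the two passages through $K$ and $\b\c K$, namely that $|I_1|,|I_2|\ge 2c$. The passage through $K$ is built directly into the definition of $K^0(o,c)$. The passage through $\b\c K$ uses precisely the \emph{lower} shadow $\O^-_{r,c}(o,\b\c o)$: membership of $\eta$ there forces every geodesic from any base point in $B(o,r)$ to $\eta$ to pass within distance $c$ of $\b\c o$, hence to spend time at least $2c$ inside the cylindrical neighbourhood $\b\c K$. Without this ``for all base points'' clause the lower bound would fail, and this is the point at which the choice of the lower shadow $\O^-_{r,c}$, rather than the upper shadow $\O^+_{r,c}$ used in Lemma~\ref{divergent1}, becomes essential.
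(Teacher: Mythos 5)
Your argument is correct and follows essentially the same route as the paper: Lemma \ref{finitebeta} to place a translate of $(U\cap\pX)\times\O^-_{r,c}$ inside the relevant set of endpoint pairs, Proposition \ref{regularneighbor} to reduce $\lambda_{\xi,\eta}$ to Lebesgue measure along a single regular geodesic passing $\e$-close to $o$ and to compute the overlap integral $\asymp c^2$, the lower bound of the shadow lemma (Proposition \ref{generalizedshadow}), and uniform boundedness of the tail sums. The only (cosmetic) difference is the bookkeeping for the finite set $\L$: you replace $\c$ by $\b(\c)\c$ and use that this map is at most $\#\L$-to-one, whereas the paper keeps $\c$ fixed and instead integrates over the $\b$-translate $\CL_{c,c}(o,\c o)\cap\b(U\times V)$ of the good region, invoking that $\C$ acts by isometries; the two devices are equivalent.
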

\begin{proof}
By Lemma \ref{Kshadow} and non-negativity of Gromov product, we have for $\c\in \C$ with $d(o,\c o)>4c$, 
\begin{equation*}
\begin{aligned}
& \int_0^Tdt m(K\cap g^{-t}\c K)\\
=&\int_0^Tdt \int_{\partial^2 X}d\mu_o(\xi)d\mu_o(\eta)e^{2\d_\C \b_o(\xi, \eta)}\lambda_{\xi,\eta}(P^{-1}(\xi,\eta)\cap K\cap g^{-t}\c K)\\
\ge &\int_{\CL_{c,c}(o,\c o)}d\mu_o(\xi)d\mu_o(\eta)\left(\int_0^Tdt \lambda_{\xi,\eta}(P^{-1}(\xi,\eta)\cap K\cap g^{-t}\c K)\right).
\end{aligned}
\end{equation*}

If $(\xi,\eta)\in \CL_{c,c}(o,\c o)\cap (U\times V)$, by Proposition \ref{regularneighbor}, any geodesic connecting $\xi$ and $\eta$ has distance less than $\e$ to $o$, and so is the generalized strip $(\xi\eta)$. Since  $\lambda_{\xi,\eta}$ is supported on  the generalized strip $(\xi\eta)$, we have for $T>d(o,\c o)+4c$ that
\[\int_0^Tdt \lambda_{\xi,\eta}(P^{-1}(\xi,\eta)\cap K\cap g^{-t}\c K)=2c^2.\]
Since $\C$ acts on $SX$ by isometries, the above also holds for $(\xi,\eta)\in \CL_{c,c}(o,\c o)\cap \b(U\times V)$ for any $\b\in \C$. So for any $\b\in \Lambda$,
\begin{equation*}
\begin{aligned}
& \int_0^Tdt m(K\cap g^{-t}\c K)\\
\ge &\int_{\CL_{c,c}(o,\c o)\cap \b(U\times V)}d\mu_o(\xi)d\mu_o(\eta)\left(\int_0^Tdt \lambda_{\xi,\eta}(P^{-1}(\xi,\eta)\cap K\cap g^{-t}\c K)\right)\\
=&2c^2\int_{\CL_{c,c}(o,\c o)\cap \b(U\times V)}d\mu_o(\xi)d\mu_o(\eta).
\end{aligned}
\end{equation*}
By Lemma \ref{finitebeta}, for $\c\in \C$ satisfying $d(o, \c o) > R$, there exists some $\b\in \L$ such that
$$\CL_{r,c}(o, \b^{-1}\c o)\cap (U\times V)\supset (U\cap \pX)\times \O_{r,c}^-(o, \b^{-1}\c o).$$
Since $c>r+\rho+\e>r+\rho$, we also have
$$\CL_{r,c}(o, \b^{-1}\c o)=\b^{-1}\CL_{r,c}(\b o, \c o)\subset \b^{-1}\CL_{r+\rho,c}(o, \c o)\subset \b^{-1}\CL_{c,c}(o, \c o).$$
Thus we have
\begin{equation*}
\begin{aligned}
& \int_0^Tdt m(K\cap g^{-t}\c K)\\
=&2c^2\int_{\b((U\cap \pX)\times \O_{r,c}^-(o, \b^{-1}\c o))}d\mu_o(\xi)d\mu_o(\eta)\\
\ge &2c^2 \mu_o(\b U)\mu_o(\b \O_{r,c}^-(o, \b^{-1}\c o))\\
\ge &2c^2 \mu_o(\b U)e^{-\d_\C d(o, \b^{-1}o)}\mu_o(\O_{r,c}^-(o, \b^{-1}\c o))\\
\ge &2c^2 \mu_o(\b U)e^{-\d_\C d(o, \b^{-1}o)} D(c)e^{-\d_\C d(o, \b^{-1}\c o)}\\
\ge& 2c^2 \min_{\b\in \Lambda}\mu_o(\b U)e^{-2\d_\C \rho} D(c)e^{-\d_\C d(o, \c o)}\\
=& C'e^{-\d_\C d(o, \c o)}.
\end{aligned}
\end{equation*}

Finally, summing over all $\c\in \C$ with $d(o, \c o)\in (R, T-4c)$ we get
$$\int_0^Tdt \sum_{\c\in \C}m(K\cap g^{-t}\c K)\ge C'\sum_{\c\in \C, R<d(o, \c o)\le T-4c}e^{-\d_\C d(o, \c o)}.$$
Notice that both $\sum_{\c\in \C, d(o, \c o)\le R}e^{-\d_\C d(o, \c o)}$ and $\sum_{\c\in \C, T-4c<d(o, \c o)\le T}e^{-\d_\C d(o, \c o)}$ are uniformly bounded. We are done.
\end{proof}

Finally, we can prove Proposition \ref{divergent}.
\begin{proof}[Proof of Proposition \ref{divergent}]
If $\sum_{\c\in \C}e^{-\d_\C d(o, \c o)}$ diverges, by Lemmas \ref{divergent1} and  \ref{divergent2}, we can apply the second Borel-Cantelli lemma and proceeds as in \cite[p. 20]{Rob} to get
\[m_\C(\{v\in SM: \int_0^\infty\chi_{K_\C\cap \phi^{-t}K_\C}(v)dt=\infty\})>0\]
where $K_\C\subset SM$ is the projection of $K$ onto $SM$. It means that $(SM, \phi^t, m_\C)$ is not completely dissipative. However, by Proposition \ref{dic}, $\mu_o(L^{\text{rad}}_\C)=0$ implies that it is completely dissipative. A contradiction and we are done.
\end{proof}

\subsection{Ergodicity for divergent groups}
\begin{theorem}[Hopf’s individual ergodic theorem, cf. \cite{Ho00}]\label{indivi}
Assume that  $(\Omega, \phi^t, \mu)$ is conservative, and let $\rho\in L^1(\mu)$ be a function which is strictly positive $\mu$-almost
everywhere. Then for any function $f\in L^1(\mu)$, the limits
$$f^{\pm}(\omega)=\lim_{T\to \infty}\frac{\int_0^Tf(\phi^{\pm t}(\omega))dt}{\int_0^T\rho(\phi^{\pm t}(\omega))dt}$$
exist and are equal for $\mu \ae \omega\in \Omega$. Moreover,
\begin{enumerate}
    \item  the functions $f^+, f^-$ are measurable and flow invariant,
    \item $\rho\cdot f^+, \rho\cdot f^-\in L^1(\mu)$,
    \item for every bounded measurable flow-invariant function $h$ we have
    $$\int_{\Omega}\rho(\omega)f^{\pm}(\omega)h(\omega)dM(\omega)=\int_{\Omega}f(\omega)h(\omega)d\mu(\omega),$$
    \item $(\Omega, \phi^t, \mu)$ is ergodic if and only if for every $f\in L^1(\mu)$, $f^+$ is constant $\mu$-almost
everywhere. 
\end{enumerate}
\end{theorem}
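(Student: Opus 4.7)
The plan is to deduce this classical statement from the discrete-time Hopf ratio ergodic theorem applied to the time-one map $T = \phi^1$, following the standard approach in \cite{Ho00}. The key technical ingredient is Hopf's maximal ergodic inequality: for $f \in L^1(\mu)$, the set $E = \{\omega : \sup_n \sum_{k=0}^{n-1} f(T^k\omega) > 0\}$ satisfies $\int_E f \, d\mu \geq 0$. Via the Chacon--Ornstein exhaustion argument, this maximal inequality yields the $\mu$-a.e. convergence of the discrete ratios $\sum_{k=0}^{n-1} f(T^k\omega) / \sum_{k=0}^{n-1} \rho(T^k\omega)$. To upgrade to the continuous-time statement, I would introduce $\tilde f(\omega) := \int_0^1 f(\phi^t\omega)\,dt$ and $\tilde \rho$ analogously, note that $\tilde f, \tilde \rho \in L^1(\mu)$ by Fubini, and verify that the continuous ratio $\int_0^T f \circ \phi^t\,dt / \int_0^T \rho \circ \phi^t\,dt$ differs from $\sum_{k=0}^{\lfloor T\rfloor - 1} \tilde f \circ T^k / \sum_{k=0}^{\lfloor T\rfloor - 1} \tilde \rho \circ T^k$ by a vanishing relative error as $T \to \infty$, thereby producing the limit $f^+$ (and dually $f^-$).

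The equality $f^+ = f^-$ uses conservativity in an essential way: either one argues via Poincar\'e recurrence that almost every orbit returns arbitrarily close to itself in both forward and backward time, or one observes that the $\phi^t$-invariant $\sigma$-algebra coincides with the $\phi^{-t}$-invariant $\sigma$-algebra and that both $f^+$ and $f^-$ are measurable with respect to it and satisfy the same characterizing identity in item (3) (which forces them to coincide). Properties (1)--(4) would then be derived in the standard way. Measurability in (1) is automatic from a.e.~limits of measurable functions, while flow-invariance follows from
\[
\int_0^T f(\phi^{t+s}\omega)\,dt = \int_s^{T+s} f(\phi^t\omega)\,dt = \int_0^T f(\phi^t\omega)\,dt + O(1),
\]
which, upon dividing by the analogous expression for $\rho$ and letting $T \to \infty$, gives $f^\pm \circ \phi^s = f^\pm$ a.e. Property (2) follows from a dominated ergodic bound combined with Fatou's lemma. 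Property (3) is the identification-of-the-limit step: it suffices to check the identity $\int_\Omega \rho \cdot f^\pm \cdot h \, d\mu = \int_\Omega f \cdot h \, d\mu$ on a dense class of bounded invariant $h$, which follows from approximation of $f$ by step functions and the defining ratio property. Property (4) is the standard equivalence between ergodicity and the triviality of the invariant $\sigma$-algebra, translated into constancy of all ratio limits $f^+$.

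The main obstacle is the Hopf ratio ergodic theorem itself in the $\sigma$-finite conservative setting, which rests on the delicate maximal inequality above and the Chacon--Ornstein exhaustion. Since the statement is a classical result and is cited directly from \cite{Ho00}, the exposition here is essentially to invoke the reference and record the precise formulation needed for the Hopf argument deployed in the subsequent proof of the ergodicity implication (ii)$\Rightarrow$(iv) in Theorem \ref{HTS}, rather than to reprove the maximal inequality in detail.
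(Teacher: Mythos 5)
The paper gives no proof of this theorem: it is stated as a classical result and attributed directly to Hopf's \emph{Ergodentheorie} \cite{Ho00}, exactly as you do in your final paragraph. Your sketch of the underlying argument (reduction to the time-one map, maximal inequality and Chacon--Ornstein exhaustion, equality of $f^+$ and $f^-$ via invariance and the identity in (3), with conservativity guaranteeing divergence of the denominator) is the standard route and contains no gaps at the level of detail required here.
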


The following lemma is crucial to apply Hopf argument.
\begin{lemma}\label{close}
Let $v\in \R_0\subset SX$ be a recurrent vector. Then for any $w\in \F^s(v) \subset SX$, there exist a sequence $\{\alpha_{n}\}^{\infty}_{n=1}\subset \Gamma$ and a sequence of time $\{t_{n}\}^{\infty}_{n=1}$ with $t_{n}\rightarrow +\infty$ such that $\lim_{n\to \infty}d\alpha_{n}\phi^{t_n}v=v$ and
$$\lim_{n\to +\infty}d(\phi^{t_{n}}w,\phi^{t_{n}}v)= 0.$$
If moreover $X$ has bounded asymptote, then 
$$\lim_{t\to +\infty}d(\phi^{t}w,\phi^{t}v)= 0.$$
Similar results hold for negatively asymptotic vectors.
\end{lemma}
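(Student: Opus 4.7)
The plan is to first extract a subsequential limit from recurrence and compactness, identify this limit as $v$ itself using the expansivity hypothesis $v\in\R_0$, and finally upgrade the resulting subsequential convergence to full convergence using bounded asymptote.

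First I would use recurrence of $v$ to pick $\alpha_n\in \Gamma$ and $t_n\to +\infty$ with $d\alpha_n\phi^{t_n}v\to v$. Since $w\in\F^s(v)$, Proposition~\ref{horofoliation}(3) gives $d_s(\phi^{t_n}v,\phi^{t_n}w)\le Ad_s(v,w)+B$, a uniform bound; as $\alpha_n$ is an isometry, the Sasaki distance $d(d\alpha_n\phi^{t_n}v,d\alpha_n\phi^{t_n}w)$ is also uniformly bounded, and the footpoints of $d\alpha_n\phi^{t_n}v$ converge to $\pi v$, so $d\alpha_n\phi^{t_n}w$ lies in a relatively compact subset of $SX$. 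Extract a subsequential limit $w'\in SX$; by continuity of the horospherical foliation and $\Gamma$-invariance, $w'\in\F^s(v)$.

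Next I would show $w'$ is biasymptotic to $v$. For any $t\in\RR$, eventually $t+t_n\ge 0$, so
\[d(\phi^tv,\phi^tw')=\lim_{n}d(\phi^{t+t_n}v,\phi^{t+t_n}w)\le Ad_s(v,w)+B,\]
uniformly in $t$. Then Lemma~\ref{striplemma} places $\pi w'\in I(v)=H(v)\cap H(-v)$; combined with $w'\in\F^s(v)$ and the shared endpoints $w'^{\pm}=v^{\pm}$, this gives $w'\in\F^s(v)\cap\F^u(v)=\mathcal{I}(v)$. The hypothesis $v\in\R_0$ forces $\mathcal{I}(v)=\{v\}$, so $w'=v$. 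Since every subsequential limit equals $v$, the full sequence $d\alpha_n\phi^{t_n}w\to v$, and $d(\phi^{t_n}v,\phi^{t_n}w)=d(d\alpha_n\phi^{t_n}v,d\alpha_n\phi^{t_n}w)\to 0$, establishing the first assertion.

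For the bounded asymptote upgrade I would integrate the Jacobi field inequality $\|J^s(t)\|\le C\|J^s(0)\|$ along a curve in $\F^s(v)$ joining $v$ to $w$, yielding the multiplicative bound $d_s(\phi^tv,\phi^tw)\le C\,d_s(v,w)$ for $t\ge 0$, and, applied from time $t_N$, the bound $d_s(\phi^tv,\phi^tw)\le C\,d_s(\phi^{t_N}v,\phi^{t_N}w)$ for every $t\ge t_N$. Given $\epsilon>0$, choose $N$ from Step~2 with $d_s(\phi^{t_N}v,\phi^{t_N}w)<\epsilon/C$; then $d(\phi^tv,\phi^tw)\le d_s(\phi^tv,\phi^tw)<\epsilon$ for all $t\ge t_N$, whence $d(\phi^tv,\phi^tw)\to 0$. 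The negatively asymptotic case follows verbatim after passing to $(-v,-w)$, using that $\R_0$ is flip-invariant and that backward recurrence of $v$ is forward recurrence of $-v$ via $\phi^t(-v)=-\phi^{-t}v$.

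The main obstacle will be pinning down $w'$: in the absence of conjugate points the shape of generalized strips is poorly controlled, and it is only the expansivity hypothesis at $v$ (not at $w'$) that, through Lemma~\ref{striplemma}, forces $\mathcal{I}(v)=\{v\}$ and hence $w'=v$. A secondary technical step will be the honest translation of the infinitesimal bound $\|J^s(t)\|\le C\|J^s(0)\|$ into a finite multiplicative contraction on $d_s$ along horocycles, which rests on integrating along $\F^s$-curves and using equivalence between the horospherical metric and the restriction of the Sasaki metric on small scales.
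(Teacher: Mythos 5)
Your proposal is correct and follows essentially the same route as the paper's proof: recurrence plus the uniform bound from Proposition~\ref{horofoliation}(3) to extract an accumulation point $w'$ of $d\alpha_n\phi^{t_n}w$, the generalized strip lemma together with $v\in\R_0$ to force $w'=v$, and the multiplicative bounded-asymptote estimate to upgrade subsequential to full convergence. The only cosmetic difference is that the paper argues by contradiction (assuming $\liminf d(\phi^{t_n}w,\phi^{t_n}v)>a$ and producing a nontrivial strip), whereas you argue directly that every subsequential limit is $v$; both are fine.
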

\begin{proof}
Since $v$ is a recurrent vector in $SX$, there exist a sequence $\{\alpha_{n}\}^{\infty}_{n=1}\subset \Gamma$ and a sequence of time $\{t_{n}\}^{\infty}_{n=1}$ with $t_{n}\rightarrow +\infty$, such that
\begin{equation}\label{recv2}
d\alpha_{n}(\phi^{t_n}v)\to v,~~~n\to +\infty.
\end{equation}

Assume the contrary that there exists $a>0$ such that  
$$\liminf_{ n\to +\infty}d(\phi^{t_{n}}w,\phi^{t_{n}}v)>a.$$
By taking a subsequence, let us assume that $\lim_{ n\to +\infty}d(\phi^{t_{n}}w,\phi^{t_{n}}v)>a.$
Since $w\in \F^s(v)$, $d(\phi^{t}(w), \phi^{t}(v))\le D, \forall t\ge 0$ for some $D>0$. 
Then for each $t_n$ in \eqref{recv2}, we have for any $s \in [-t_{n},+\infty)$,
\begin{equation}\label{e:recc2}
\begin{aligned}
&d(\phi^{t_{n}+s}(w),\phi^{t_{n}+s}(v))= d(d\alpha_{n}\circ\phi^{t_{n}+s}w,d\alpha_{n}\circ\phi^{t_{n}+s}v)\\
= & d(\phi^{s}\circ d\alpha_{n}\circ\phi^{t_{n}}w,\phi^{s}\circ d\alpha_{n}\circ\phi^{t_{n}}v).      
\end{aligned}
\end{equation}
Note that $d(\phi^{t_{n}}(w),\phi^{t_{n}}(v))\leq D$, and $d\alpha_{n}(\phi^{t_{n}}v)\rightarrow v$.
So we know that for any $\epsilon>0$ there is an integer $N>0$ such that for all $n>N$, $$d(v,d\alpha_{n}(\phi^{t_{n}}w))\leq D+\epsilon.$$
This implies that the set $\{d\alpha_{n}(\phi^{t_{n}}w)\}$ has an accumulate point. Without loss of generality, we assume $d\alpha_{n}(\phi^{t_{n}}w)\rightarrow w'$. Letting $n\rightarrow +\infty$ in \eqref{e:recc2}, we get that
$$d(\phi^{s}w',\phi^{s}v) \leq  D, ~~~\forall~s \in \mathbb{R}.$$

On the other hand, 
\begin{equation*}
\begin{aligned}
d(w', v)=\lim_{n\to \infty}d(d\alpha_{n}(\phi^{t_{n}}w),d\alpha_{n}(\phi^{t_{n}}v))=\lim_{n\to \infty}d(\phi^{t_{n}}w,\phi^{t_{n}}v)>a.
\end{aligned}
\end{equation*}
Thus $w'\neq v$. It follows that the geodesics $c_{v}$  and $c_{w'}$ bound a generalized strip,
which contradicts to the assumption that $v\in \R_0$. 

Assume that $X$ has bounded asymptote. Since $w\in \F^s(v)$, there exists $C>0$ such that for any $0<t_n<t$, 
$$d(\phi^{t}w,\phi^{t}v)\le Cd(\phi^{t_{n}}w,\phi^{t_{n}}v).$$
Thus $\lim_{t\to +\infty}d(\phi^{t}w,\phi^{t}v)= 0.$ We are done with the proof.
\end{proof}

\begin{lemma}
The function
$\tilde \rho: SX\to \RR$ by 
$$\tilde \rho(u):=e^{-4\d_\C d(\pi u, \C o)}, \ \forall u\in SX$$
descends to a function $\rho: SM\to \RR$ and belongs to $L^1(m_\C)$. Moreover, for any $u,v\in SX$ with $d(\pi u, \pi v)\le 1$, 
\begin{equation}\label{rho}
|\tilde\rho(u)-\tilde\rho(v)|\le \tilde\rho(u)4\d_\C d(\pi u, \pi v)e^{4\d_\C}.    
\end{equation}
\end{lemma}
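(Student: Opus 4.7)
The plan is to verify the three assertions in order: descent to the quotient $SM$, the pointwise Lipschitz-type bound, and $L^1$-integrability with respect to $m_\C$.

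First, $\C$-invariance of $\tilde\rho$ is immediate since $d(\c\pi u,\C o)=d(\pi u,\c^{-1}\C o)=d(\pi u,\C o)$ for every $\c\in\C$, so $\tilde\rho$ descends to a continuous $\rho:SM\to\RR$. For the Lipschitz-type estimate, the triangle inequality yields $|d(\pi u,\C o)-d(\pi v,\C o)|\le d(\pi u,\pi v)\le 1$, and combined with the elementary mean-value bound $|e^{-a}-e^{-b}|\le e^{-\min\{a,b\}}|a-b|$ applied to $t\mapsto e^{-4\d_\C t}$ this gives the claim, with the factor $e^{4\d_\C}$ arising because $\min(d(\pi u,\C o),d(\pi v,\C o))$ may be smaller than $d(\pi u,\C o)$ by at most $1$.

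The integrability is the substantive part. I will pass to a Dirichlet fundamental domain $D_o=\{x\in X:d(x,o)\le d(x,\c o)\ \forall\c\in\C\}$, on which $d(x,\C o)=d(x,o)$; by $\C$-invariance of $\tilde\rho$,
\begin{equation*}
\int_{SM}\rho\,dm_\C=\int_{\pi^{-1}(D_o)}e^{-4\d_\C d(\pi v,o)}\,dm(v)\le\sum_{n\ge 0}e^{-4\d_\C n}\,m(\pi^{-1}B(o,n+1)).
\end{equation*}
It therefore suffices to establish an exponential volume estimate of the form $m(\pi^{-1}B(o,R))\le C\,R\,e^{2\d_\C R}$, which will make the series a convergent geometric-type sum since $2\d_\C<4\d_\C$.

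I will prove this volume estimate using the product structure of $m$ via the formula $m(A)=\int_{\partial^2X}\lambda_{\xi,\eta}(A)\,d\bar\mu(\xi,\eta)$. The key geometric observation is that for any $(\xi,\eta)\in\partial^2X$ connected by a geodesic meeting $B(o,R)$, picking a point $q$ on that geodesic with $d(q,o)\le R$ and using $|b_\zeta(q,o)|\le d(q,o)$ for any $\zeta\in\pX$ yields the Gromov-product bound $\b_o(\xi,\eta)=-b_\xi(q,o)-b_\eta(q,o)\le 2R$. Hence the set of such pairs has $\bar\mu$-mass at most $e^{2\d_\C R}\mu_o(\pX)^2$. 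Simultaneously, $\lambda_{\xi,\eta}(\pi^{-1}B(o,R))$ is uniformly $O(R)$: because the measurable section $\tilde V$ takes values in $\I(\phi^tv)$, whose diameter is bounded by the constant $Q'$ from Lemma \ref{compact}, the condition that $\tilde V([\phi^tv])$ has footpoint in $B(o,R)$ forces $c_v(t)\in B(o,R+Q')$, hence $|t|\le R+Q'$. Multiplying these two bounds yields $m(\pi^{-1}B(o,R))\le 2(R+Q')e^{2\d_\C R}\mu_o(\pX)^2$, which closes the argument.

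The main obstacle is precisely this last uniform control of $\lambda_{\xi,\eta}$ on ball preimages, because $\lambda_{\xi,\eta}$ is defined through the measurable selection $\tilde V$ rather than arc-length along a canonically chosen geodesic in the generalized strip; without a size bound on $\I(v)$ the fiber mass could a priori be unbounded. Lemma \ref{compact} provides the uniform diameter bound on $\I(v)$, which translates the footpoint condition on $\tilde V([\phi^tv])$ into a bound on $|t|$, and once this is in hand the remaining steps are routine geometric-series bookkeeping.
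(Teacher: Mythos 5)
Your proposal is correct and follows essentially the same route as the paper: the product formula for $m$ together with the Gromov-product bound $\b_o(\xi,\eta)\le 2R$ and the $O(R)$ fiber bound gives $m(SB(o,R))\lesssim Re^{2\d_\C R}$, and the Dirichlet-domain annular decomposition with weight $e^{-4\d_\C R}$ then yields convergence. The only difference is that you justify the fiber estimate $\lambda_{\xi,\eta}(\pi^{-1}B(o,R))=O(R)$ via the uniform diameter bound on $\I(v)$ from Lemma \ref{compact}, a step the paper states without elaboration.
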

\begin{proof}
We only need to verify $\rho \in L^1(m_\C)$ since other statements are straightforward. For any $(\xi, \eta)\in \partial^2X$ and $R\ge 1$,
$$\lambda_{\xi,\eta}(P^{-1}(\xi,\eta)\cap SB(o,R))\le 2R.$$
If $P^{-1}(\xi,\eta)\cap SB(o,R)\neq \emptyset$, then $\beta_{o}(\xi,\eta)\le 2R$. Thus
$$m(SB(o,R))\le 2Re^{2\delta_\C R}.$$
Let $D_\C$ denote the Dirichlet domain for $\C$ with center $o$. Then $\tilde \rho(u)=e^{-4\d_\C d(\pi u, o)}$ for all $u\in S\overline{D_\C}$. Denote 
$$W(R):=SB(o,R)\setminus SB(o,R-1)\cap S\overline{D_\C}.$$ 
Then
\begin{equation*}
\begin{aligned}
\int_{W(R)}\tilde\rho(u)dm(u)\le e^{-4\d_\C(R-1)}\int_{SB(o,R)}dm(u)\le CRe^{-2\d_\C R}\le Ce^{-\d_\C R}
\end{aligned}
\end{equation*}
for some constant $C>0$. Hence $\rho \in L^1(m_\C)$.
\end{proof}
\begin{proposition}\label{ergodicconservative}
Assume that $X$ has bounded asymptote and $\C$ contains a regular isometry $h$. If $(SM, (\phi^t)_{t\in \RR}, m_\C)$ is completely conservative, then it is ergodic.
\end{proposition}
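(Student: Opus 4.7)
The plan is to apply Hopf's individual ergodic theorem (Theorem~\ref{indivi}) with the weight $\rho\in L^1(m_\C)$ constructed just above the proposition. For each $f\in C_c(SM)$ set
$$f^{\pm}(\lv):=\lim_{T\to\infty}\frac{\int_0^T f(\phi^{\pm t}\lv)\,dt}{\int_0^T\rho(\phi^{\pm t}\lv)\,dt};$$
by Theorem~\ref{indivi} these limits exist, are flow-invariant, and coincide $m_\C$-a.e., so by item (4) it suffices to show that the common value $\bar f:=f^+=f^-$ is $m_\C$-a.e.\ constant. Complete conservativity together with $\rho>0$ guarantees that $\int_0^T\rho(\phi^t\lv)\,dt\to+\infty$ for $m_\C$-a.e.\ $\lv$, which is what will let us absorb error terms in the comparisons below.

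The first step is the classical Hopf comparison along stable and unstable horospherical foliations. Lifting to $SX$, pick a typical lift $v$ of a conservative point and any $w\in \F^s(v)$. Lemma~\ref{close} together with the bounded asymptote hypothesis yields $d(\phi^tv,\phi^tw)\to 0$ as $t\to+\infty$. Uniform continuity of $\tilde f$ on compact subsets combined with the local Lipschitz estimate \eqref{rho} for $\tilde\rho$ then forces both the numerator and denominator differences between $v$ and $w$ to be of smaller order than $\int_0^T\tilde\rho(\phi^tv)\,dt$, so $f^+(v)=f^+(w)$. The time-reversed argument, using the analogue of Lemma~\ref{close} for negatively asymptotic vectors and $\F^u$ in place of $\F^s$, shows $f^-$ is invariant along $\F^u$; since $f^+=f^-$ a.e., the common value $\bar f$ is a.e.\ invariant along both $\F^s$ and $\F^u$.

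Next, I would translate these invariances into the Hopf parametrization $H(v)=(v^-,v^+,s(v))\in\pX\times\pX\times\RR$, in which the flow acts as $(\xi,\eta,s)\mapsto(\xi,\eta,s+t)$ and the Bowen-Margulis measure has the product form \eqref{BM}. Combining $\F^s$-invariance with flow-invariance gives invariance along the weak stable foliation $\F^{0s}$, which in Hopf coordinates is exactly $\{v^+=\text{const}\}$, so $\bar f$ does not depend on $v^-$; the symmetric argument yields independence of $v^+$; and flow-invariance eliminates $s$. Fubini against the product representation \eqref{current} of $\bar\mu$ on $\partial^2X$ then forces $\bar f$ to be $m_\C$-a.e.\ constant on $SM$, and Theorem~\ref{indivi}(4) concludes ergodicity.

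The main obstacle will be the translation step in the presence of nontrivial generalized strips, where $P:SX\to\partial^2X$ is not injective and $m_\C$ is built using the measurable selection $\tilde V$. Here the assumption that $\C$ contains a regular isometry $h$ is essential: the axis of $h$ lies in $\R_1$, and Proposition~\ref{crucial} produces flow boxes inside $\R_1$ of positive $m_\C$-measure on which the strip factor is trivial and the product decomposition is transparent; $\C$-invariance of the lift $\tilde{\bar f}$ together with the minimal action of $\C$ on $L_\C$ will propagate local constancy on such a flow box to $m_\C$-a.e.\ constancy on all of $SM$. Bounded asymptote, in turn, is exactly the ingredient that upgrades the subsequential convergence in Lemma~\ref{close} to full forward-time convergence $d(\phi^tv,\phi^tw)\to 0$, without which the Hopf comparison could not absorb its errors into the divergent denominator.
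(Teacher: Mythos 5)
Your proposal is correct and follows essentially the same route as the paper: Hopf's individual ergodic theorem with the weight $\rho$, invariance of $\tilde f^{\pm}$ along $\F^{s}/\F^{u}$ via Lemma \ref{close} under bounded asymptote, and then localization to a neighborhood $\Omega(U,V)$ of the axis of the regular isometry $h$ (Proposition \ref{crucial}) followed by propagation over the radial limit set by the $\Gamma$-action and conformality of the Patterson--Sullivan family. The only caution is that the middle "Fubini in Hopf coordinates" step cannot be taken literally because the two invariances hold only on a priori different full-measure sets; the explicit chain through a vector $u$ with $u^{-}=w^{-}$, $u^{+}=v^{+}$ inside the regular box (the paper's Lemma \ref{equa}) is exactly the fix, and your final paragraph correctly identifies this.
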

\begin{proof}
Let $f\in C_c(SM, \RR)$ and $\tilde f\in C(SX,\RR)$ be a lift to $SX$. Let $K=\supp (f)\subset SM$. Since $(SM, (\phi^t)_{t\in \RR}, m_\C)$ is completely conservative, Hopf's individual ergodic theorem states that for $m \ae u\in SX$, the limits
$$\tilde{f}^{\pm}(u)=\lim_{T\to \infty}\frac{\int_0^T\tilde{f}(\phi^{\pm t}(u))dt}{\int_0^T\tilde{\rho}(\phi^{\pm t}(u))dt}$$
exist and are equal.

Let $h^+,h^-$ denote the attractive and repulsive fixed point of the regular isometry $h\in \C$. By Proposition \ref{crucial}, there exist open neighborhoods $U$ of $h^-$ and $V$ of $h^+$ in $X$ such that each $\xi\in U$ and $\eta\in V$ can be joined by a unique regular geodesic $c_{\xi,\eta}$. We define
\begin{equation*}
\begin{aligned}
\Omega(U,V):=&\{w\in SX: w^-\in U, w^+\in V\},\\
\Omega^{\text{rec}}(U,V):=&\{w\in \Omega(U,V): w \text{\ is recurrent}\},\\
\Omega'(U,V):=&\{w\in \Omega^{\text{rec}}(U,V): \tilde{f}^{+}(u)=\tilde{f}^{-}(u)\}.
\end{aligned}
\end{equation*}
\begin{lemma}\label{equalvalue}
Let $v\in \Omega^{\text{rec}}(U,V)$ and $w\in \F^s(v)$. Then $\tilde{f}^{+}(w)=\tilde{f}^{+}(v)$. Analogous statement holds for $\F^u$ and $\tilde{f}^{-}$.
\end{lemma}
\begin{proof}
We have $v\in \R_1$ by  Proposition \ref{crucial}. In fact by Lemma \ref{recreg}, $w\in \R_1$. By Lemma \ref{close},
$$\lim_{t\to +\infty}d(\phi^{t}w,\phi^{t}v)= 0.$$
Then the conclusion of the lemma follows from the continuity of $f$ and \eqref{rho} applied to $v$ and $w$.   
\end{proof}

By the local product structure of $m$, there exists a vector $w\in \Omega'(U,V)$ with $w^-=\xi$ such that 
$$G_\xi:=\{\eta\in V: \exists u\in \Omega'(U,V) \text{\ with\ }u^-=\xi, u^+=\eta\}$$
has full $\mu_o$-measure in $V$. Let us fix such $w$ and $\xi$ in the following discussion.

\begin{lemma}\label{equa}
$\tilde f^+$ is constant $m \ae$ on $\Omega'(U,V)$.
\end{lemma}
\begin{proof}
For $m \ae v\in \Omega'(U,V)$, we have $v^+\in G_\xi$. Let $u\in \Omega'(U,V)$ be such that $u^-=\xi=w^-$ and $u^+=v^+$. Then there exist $s_1,s_2\in \RR$ such that $\phi^{s_1}u\in \F^s(v)$ and $\phi^{s_2}u\in \F^u(w)$. Then by Lemma \ref{equalvalue},
\[\tilde f^+(v)=\tilde f^+(\phi^{s_1}u)=\tilde f^+(\phi^{s_2}u)=\tilde f^-(\phi^{s_2}u)=\tilde f^-(w)=\tilde f^+(w).\]
\end{proof}
Let $V':=\{\eta\in V: \exists u\in \Omega'(U,V) \text{\ with\ }u^+=\eta\}$. Using Lemma \ref{equalvalue} again and Lemma \ref{equa}, we see that $\tilde f^+$ is constant $m \ae $ on $$\{v\in SX: v^+\in V'\}.$$
Consider the set $Y:=\bigcup_{\c\in \C}\c V'$. Let $Z:=\{v\in SX: v^+\in Y\}$.
\begin{lemma}
$Z$ has full $m$-measure in $SX$.
\end{lemma}
\begin{proof}
We have $\mu_o(Y\cap V)=\mu_o(V)$. By the conformality, we have $\mu_x(Y\cap V)=\mu_x(V)$ for every $x\in X$. For any $\eta\in L_\C^{\text{rad}}$, there exists an open neighborhood $O(\eta)\subset \pX$ which can be mapped by an element of $\C$ to $V$. Thus  $\mu_o(Y\cap O(\eta))=\mu_o(O(\eta))$ for each $\eta\in L_\C^{\text{rad}}$. So $\mu_o(Y)=\mu_o(L_\C^{\text{rad}})$. It follows that $Z$ has full $m$-measure.
\end{proof}
It is clear that $\tilde f^+$ is $m \ae$ on $Z$, so it is constant $m \ae$ on $SX$.

Finally, notice that $C_c(SM,\RR)$ is dense in $L^1(m_\C)$. So $\phi^t$ is ergodic with respect $m_\C$ by Hopf’s individual ergodic Theorem \ref{indivi}, which completes the proof of the proposition.
\end{proof}

\begin{proof}[Proof of Theorem \ref{HTS}]
By Proposition \ref{dic}, we have the dichotomy. In each case, (i)$\Leftrightarrow$(ii) follows from Lemma \ref{convradial} and Proposition \ref{divergent}. (ii)$\Leftrightarrow$(iii) follows from Proposition \ref{dic}.

If furthermore $X$ has bounded asymptote and $\C$ contains a regular isometry, in the second case, (iii)$\Rightarrow$(iv) follows from Proposition \ref{ergodicconservative} and (iv)$\Rightarrow$(iii) is straightforward since $m_\C$ is not supported on a single orbit.
\end{proof}

\ \
\\[-2mm]
\textbf{Acknowledgement.}
This work is supported by National Key R\&D Program of China No. 2022YFA1007800 and NSFC No. 12071474.




\begin{thebibliography}{10}
\bibitem{AKM}
R. L. Adler, A. G. Konheim and M. H. McAndrew, \emph{Topological entropy}, Transactions of the American Mathematical Society, 1965: 309-319.

\bibitem{An}
D.~V.~Anosov, \emph{Geodesic flows on closed Riemannian manifolds with negative curvature}, Proc. Steklov Inst. Math. {\bf 90} (1967), 1-235.

\bibitem{AnS}
D.~V.~Anosov and Ya. G. Sinai, \emph{Some smooth ergodic systems}, Russian Math. Surveys 22.5(1967), 103-168.

\bibitem{ALP}
E. Araujo, Y. Lima and M. Poletti, \emph{Symbolic dynamics for nonuniformly hyperbolic maps with singularities in high dimension}, preprint,  (2020), arXiv:2010.11808.

\bibitem{Ba}
 M. Babillot, \emph{On the mixing property for hyperbolic systems}, Israel Journal of Mathematics, 2002, 129(1): 61-76.

 \bibitem{Bal0}
W. Ballmann, \emph{Axial isometries of manifolds of nonpositive curvature}, Math. Ann., 259 (1982), 131-144.

 \bibitem{Bal}
W. Ballmann, \emph{Nonpositively curved manifolds of higher rank}, Annals of Mathematics 122.3 (1985): 597-609.

\bibitem{BB}
W. Ballmann, \emph{Lectures on spaces of nonpositive curvature} (with an appendix by Misha Brin),
DMV Seminar, \textbf{25}. Birkhauser Verlag, Basel, 1995.

\bibitem{BBE}
W.~Ballmann, M.~Brin and P.~Eberlein, \emph{Structure of manifolds of nonpositive curvature. I}, Ann. of Math. {\bf 122} (1985), 171-203.

\bibitem{BBS}
W. Ballmann, M. Brin and R. Spatzier, \emph{Structure of manifolds of nonpositive curvature. II}, Ann. of Math.(2) 122.2 (1985): 205-235.



\bibitem{BP}
L.~Barreira and Ya B.~Pesin, \emph{Nonuniform hyperbolicity: Dynamics of systems with nonzero Lyapunov exponents}, Encyclopedia of Mathematics and its Applications, \textbf{115}. Cambridge University Press, Cambridge, 2007.


\bibitem{Bo1} R. Bowen,
 \emph{Periodic orbits for hyperbolic flows}, Amer. J. Math., 94, (1972), 1-30.


\bibitem{Bo2} R. Bowen,
   \emph{Maximizing entropy for a hyperbolic flow}, Math. Systems Theory, 7, (1974), 300-303.

\bibitem{Bo3}
R. Bowen, \emph{Some systems with unique equilibrium states}, Mathematical systems theory, 1974, 8(3): 193-202.

\bibitem{Bo4}
R. Bowen, \emph{Equilibrium states and the ergodic theory of Anosov diffeomorphisms}, Lect. Notes in Math. 470, Springer Verlag, 1975.


\bibitem{Burns}
K. Burns, \emph{The flat strip theorem fails for surfaces with no conjugate points}, Proc. Amer. Math. Soc. 115 (1992), no. 1, 199-206.

\bibitem{BCFT}
K. Burns, V. Climenhaga, T. Fisher and D. J. Thompson, \emph{Unique equilibrium states for geodesic flows in nonpositive curvature}, Geom. Funct. Anal. 28, (2018), 1209-1259.

\bibitem{BG}
K.~Burns and K.~Gelfert, \emph{Lyapunov spectrum for geodesic flows of rank 1 surfaces}, Discrete Contin. Dyn. Syst. {\bf 34} (2014), 1841-1872.

\bibitem{BuKa}
K. Burns and A. Katok, \emph {Manifolds with non-positive curvature,} Ergodic Theory Dynam. Systems, 5, (1985), 307-317.

\bibitem{BM}
K.~Burns and V.~S.~Matveev, \emph{Open problems and questions about geodesics}, Ergodic Theory Dynam. Systems, 41 (2021), no. 3, 641-684.



\bibitem{BS1}
K. Burns and R. Spatzier, \emph{On topological Tits buildings and their classification}, Publications Math\'{e}matiques de l'Institut des Hautes \'{E}tudes Scientifiques 65.1 (1987): 5-34.

\bibitem{BS2}
K. Burns and R. Spatzier, \emph{Manifolds of nonpositive curvature and their buildings}, Publications Math\'{e}matiques de l'Institut des Hautes \'{E}tudes Scientifiques 65.1 (1987): 35-59.

\bibitem{Buzzi}
J. Buzzi, \emph{Intrinsic ergodicity for smooth interval maps}, Isreal J. Math., 100:125-161, 1997.

\bibitem{BFSV}
J. Buzzi, T. Fisher, M. Sambarino and C V\'{a}squez, \emph{Maximal entropy measures for certain partially hyperbolic, derived from Anosov systems}, Ergodic theory and dynamical systems, 32(1):63-79, 2012.

\bibitem{BuS}
J. Buzzi and O. Sarig, \emph{Uniqueness of equilibrium measures for countable Markov shifts and multidimensional piecewise expanding maps}, Ergodic Theory Dynam. Systems 23 (2003), no. 5, 1383-1400. 

\bibitem{CT}
B. Call and D. J. Thompson, \emph{Equilibrium states for self-products of flows and the mixing properties of rank 1 geodesic flows}, J. Lond. Math. Soc. (2) 105 (2022), no. 2, 794-824.




\bibitem{CKP1} D. Chen, L. Kao and K. Park, \emph{Unique equilibrium states for geodesic flows over surfaces without focal points}, Nonlinearity, 33 (2020), no. 3, 1118-1155.


\bibitem{CKP2} D. Chen, L. Kao and K. Park, \emph{Properties of equilibrium states for geodesic flows over manifolds without focal points}, Advances in Mathematics, 2021, 380: 107564.

\bibitem{CH}
N. I. Chernov and C. Haskell, \emph{Nonuniformly hyperbolic K-systems are Bernoulli}, Ergodic
Theory Dynam. Systems 16 (1996), no. 1, 19-44.

\bibitem{CKW1} V. Climenhaga, G. Knieper and K. War, \emph{Uniqueness of the measure of maximal entropy for geodesic flows on certain manifolds without conjugate points}, Advances in Mathematics, 376 (2021), 107452, 44 pp.

\bibitem{CKW2}
V. Climenhaga, G. Knieper and K. War, \emph{Closed geodesics on surfaces without conjugate points}, Commun. Contemp. Math. 24 (2022), no. 6, Paper No. 2150067, 35 pp.


\bibitem{CT1}
V. Climenhaga and D. J. Thompson, \emph{Unique equilibrium states for flows and homeomorphisms with non-uniform structure}, Advances in Mathematics, 2016, 303: 745-799.

\bibitem{Cl}
S. B. Clotet, \emph{Unique ergodicity of the horocyclic flow on nonpositively curved surfaces}, Israel Journal of Mathematics, to appear.

\bibitem{Cl2}
S. B. Clotet, \emph{Unique ergodicity of the horocycle flow of a higher genus compact surface with no conjugate points and continuous Green bundles}, Journal of Modern Dynamics, 19 (2023), 795-813.


\bibitem{Cou}
Y. Coud\`{e}ne, \emph{A short proof of the unique ergodicity of horocyclic flows}, In Ergodic theory, volume 485 of Contemp. Math., pages 85-89. Amer. Math. Soc., Providence, RI, 2009.


\bibitem{CS}
C. B. Croke and V. Schroeder, \emph {The fundamental group of compact manifolds without conjugate points,} Comment. Math. Helv., 61, (1986), 161-175.




\bibitem{DPS}
F. Dal'Bo, M. Peign\'{e} and A. Sambusetti, \emph{On the horoboundary and the geometry of rays of negatively curved manifolds}, Pacific J. Math., 259, (2012), 55-100.


\bibitem{Eb1}
P.~Eberlein, \emph{Geodesic flow in certain manifolds without conjugate points}, Trans. Amer. Math. Soc. 167 (1972), 151-170.

\bibitem{Eb}
P.~Eberlein, \emph{When is a geodesic flow of Anosov type? I}, J. Differential Geometry, {\bf 8} (1973), 437--463.




\bibitem{EO}
P.~Eberlein and B.~O'Neill, \emph{Visibility manifolds}, Pacific Journal of Mathematics, 46, no. 1 (1973), 45-109.


\bibitem{Es}
J. Eschenburg, \emph{Horospheres and the stable part of the geodesic flow}, Math. Z. 153 (1977), no. 3, 237-251.

\bibitem{EsO}
J. Eschenburg and J. J. O'Sullivan, \emph{Growth of Jacobi fields and divergence of geodesics}, Math. Z. 150 (1976), no. 3, 221-237. 

\bibitem{FH}
T. Fisher and B. Hasselblatt, \emph{Hyperbolic flows}, Zurich Lectures in Advanced Mathematics. EMS Publishing House, Berlin, (2019), xiv+723 pp. ISBN: 978-3-03719-200-9.

\bibitem{Franco}
E. Franco, \emph{Flows with unique equilibrium states}, American Journal of Mathematics, 1977, 99(3): 486-514.

\bibitem{FrMa} 
A. Freire and R. Ma\~{n}\'{e}, \emph{On the entropy of the geodesic flow in manifolds without conjugate points},
Invent. Math., 69, (1982), 375-392.

\bibitem{Fur}
H. Furstenberg, \emph{The unique ergodicity of the horocycle flow}, Recent advances in topological dynamics (Proc. Conf., Yale Univ., New Haven, Conn., 1972; in honor of Gustav Arnold Hedlund), pages 95-115. Lecture Notes in Math., Vol. 318, 1973.

\bibitem{Ga}
L. Garnett, \emph{Foliations, the ergodic theorem and Brownian motion}, J. Functional Analysis, 51 (1983), 285-311.





\bibitem{GR0} K. Gelfert and R. Ruggiero, \emph{Geodesic flows modelled by expansive flows}, Proc. Edinb. Math. Soc. (2), 62 (2019), 61-95.

\bibitem{GR}
 K. Gelfert and R. Ruggiero, \emph{Geodesic flows modeled by expansive flows: compact surfaces without conjugate points and continuous green bundles}, Ann. Inst. Fourier (Grenoble) 73 (2023), no. 6, 2605-2649. 


\bibitem{Gre1}
L.~W.~Green, \emph{A theorem of E. Hopf}, Michigan Math. J., 5 (1958), 31-34.

\bibitem{Gro}
M. Gromov, \emph{Manifolds of negative curvature}, J. Diff. Geom., 13 (1978), 223-230.

\bibitem{Gun}
R. Gunesch, \emph{Counting closed geodesics on rank one manifolds}, preprint, arXiv: 0706.2845, 2007.

\bibitem{Gun1}
R. Gunesch, \emph{Precise Volume Estimates in Nonpositive Curvature}, Fachbereich Mathematik Univ, Hamburg, 2007.

\bibitem{Ha} 
B. Hasselblatt, \emph{Problems in dynamical systems and related topics, Dynamics, ergodic theory, and geometry}, 273-324, Math. Sci. Res. Inst. Publ., 54, Cambridge Univ. Press, Cambridge, 2007.

\bibitem{Ho00}
E. Hopf, \emph{Ergodentheorie}, Springer, 1937.

\bibitem{Ho0}
E.~Hopf, \emph{Statistik der geod\"{a}tischen Linien in Mannigfaltigkeiten negativer Kr\"{u}mmung}, Ber. Verh. S\"{a}chs. Akad. Wiss. Leipzig  {\bf 91} (1939), 261-304.

\bibitem{Ho1}
E. Hopf, \emph{Statistik der L\"{o}sungen geod\"{a}tischer Probleme vom unstabilen Typus. II}, Math.
Ann. 117 (1940), 590-608.


\bibitem{Hu}
H. Huber, \emph{Zur analytischen Theorie hyperbolischen Raumformen und Bewegungsgruppen}, Math. Ann. 138 (1959), 1-26.

\bibitem{Kai}
V. A. Kaimanovich, \emph{Brownian motion on foliations: entropy, invariant measures, mixing}, Functional Anal. Appl. 22 (1988) 326-328.

\bibitem{Kai1}
V. A. Kaimanovich, \emph{Ergodicity of harmonic invariant measures for the geodesic flow on hyperbolic spaces}, J. Reine Angew. Math. 455 (1994), 57-103. 



\bibitem{Ka1}
A. Katok, \emph{Lyapunov exponents, entropy and periodic orbits for diffeomorphisms}, Inst. Hautes \'{E}tudes Sci. Publ. Math. No. 51 (1980), 137-173.


\bibitem{Ka}
A. Katok, \emph{Entropy and closed geodesics}, Ergodic Theory Dynam. Systems, 2 (1982), 339-367.

\bibitem{KH}
A.~Katok and B.~Hasselblatt, \emph{Introduction to the modern theory of dynamical systems}, Vol. 54. Cambridge university press, 1997.

\bibitem{Kn}
G. Knieper, \emph{Mannigfaltigkeiten ohne konjugierte Punkte}, Bonner Mathematische Schriften, vol. 168, Universit\"{a}t Bonn, Mathematisches Institut, Bonn, 1986, Dissertation, Rheinische Friedrich-Wilhelms-Universit\"{a}t, Bonn, 1985, iii+54 pages

\bibitem{Kn4}
G. Knieper, \emph{Spherical means on compact Riemannian manifolds of negative  curvature}, Differential Geom. Appl. Vol 4, 1994, 361-390.

\bibitem{Kn1}
 G. Knieper, \emph{On the asymptotic geometry of nonpositively curved manifolds}, Geom. Funct. Anal. 7, (1997), 755-782.

\bibitem{Kn2}
G.~Knieper, \emph{The uniqueness of the measure of maximal entropy for geodesic flows on rank 1 manifolds}, Ann. of Math. 148, (1998), 291-314.

\bibitem{Kn3}
G.~Knieper, \emph{Hyperbolic dynamics and Riemannian geometry}, Handbook of
dynamical systems, Vol. 1A, North-Holland, Amsterdam, 2002, pp. 453-545.



\bibitem{Kol}
A. N. Kolmogorov, \emph{A new metric invariant of transient dynamical systems and automorphisms in Lebesgue spaces}, Doklady Akademii Nauk SSSR (N.S.), 119 (1958), 861-864.


\bibitem{Led}
F. Ledrappier, \emph{Mesures d'\'{e}quilibre d'entropie compl\`{e}tement positive}, Ast\'{e}risque 50 (1977), 251-272.


\bibitem{LS}
F. Ledrappier and L. Shu, \emph{Entropy rigidity of symmetric spaces without focal points}, Trans. Amer. Math. Soc. 366 (2014), no. 7, 3805-3820.



\bibitem{LP}
Y. Lima and M. Poletti, \emph{Homoclinic classes of geodesic flows on rank 1 manifolds}, preprint, (2024), arXiv:2403.03759

\bibitem{Link0}
G. Link, \emph{Asymptotic geometry and growth of conjugacy classes of nonpositively curved manifolds}, Ann. Global Anal. Geom., 31 (2007), 37-57.

\bibitem{Link1}
G. Link, \emph{Hopf-Tsuji-Sullivan dichotomy for quotients of Hadamard spaces with a rank one isometry}, Discrete and Continuous Dynamical Systems. Series A., 38 (2018), no. 11, 5577-5613. 

\bibitem{Link2}
G. Link, \emph{Equidistribution and counting of orbit points for discrete rank one isometry groups of Hadamard spaces}, Tunisian Journal of Mathematics, 2019, 2(4): 791-839.


\bibitem{LiP}
G. Link and J.-C. Picaud, \emph{Ergodic geometry for non-elementary rank one manifolds}, Discrete and Continuous Dynamical Systems. Series A., 36 (2016), no. 11, 6257-6284. 

\bibitem{LLW}
F. Liu, X. Liu and F. Wang, \emph{On the mixing and Bernoulli properties for geodesic flows on rank 1 manifolds without focal points}, Discrete and Continuous Dynamical Systems. Series A., 41 (2021), no. 10, 4791-4804.

\bibitem{LLW2}
F. Liu,  X. Liu and F. Wang, \emph{The Hopf-Tsuji-Sullivan Dichotomy on Visibility Manifolds Without Conjugate Points}, preprint, (2023), arXiv:2305.00301

\bibitem{LWW}
F. Liu, F. Wang and W. Wu, \emph{On the Patterson-Sullivan measure for geodesic flows on rank 1 manifolds without focal points}, Discrete and Continuous Dynamical Systems. Series A. 40, no.3 (2020), 1517-1554.

\bibitem{Mamani}
E. F. Mamani, \emph{Geodesic flows of compact higher genus surfaces without conjugate points have expansive factors}, Nonlinearity 37 (2024), no. 5, Paper No. 055019, 22 pp.

\bibitem{MR}
E. F. Mamani and R. Ruggiero, \emph{Expansive factors for geodesic flows of compact manifolds without conjugate points and with visibility universal covering}, preprint, (2023), arXiv:2311.02698.

\bibitem{Ma}
B. Marcus, \emph{Unique ergodicity of the horocycle flow: variable negative curvature case}, Israel J. Math., 21(2-3):133-144, 1975.

\bibitem{Mar1}
G. A. Margulis, \emph{Certain applications of ergodic theory to the investigation
of manifolds of negative curvature}, Funkcional. Anal. i Prilo\v{z}en. 3 (1969), no. 4, 89-90.

\bibitem{Mar2}
G. A. Margulis, \emph{On some aspects of the theory of Anosov systems},
Springer Monographs in Mathematics, Springer-Verlag, Berlin, 2004,
With a survey by Richard Sharp: Periodic orbits of hyperbolic flows,
Translated from the Russian by Valentina Vladimirovna Szulikowska.

\bibitem{Newhouse}
S. E. Newhouse, \emph{Continuity properties of entropy}, Ann. Math., 129 (1989), 215-35.



\bibitem{Or1}
D. Ornstein, \emph{Bernoulli shifts with the same entropy are isomorphic}, Advances in Math., 4:337-352, 1970.

\bibitem{Or2}
D. Ornstein, \emph{An example of a Kolmogorov automorphism that is not a Bernoulli shift}, Advances in Math., 10:49-62, 1973.

\bibitem{OW1}
D. Ornstein and B. Weiss, \emph{Geodesic flows are Bernoullian}, Israel J. Math., 14 (1973), 184-198.





\bibitem{PP}
W. Parry and M. Pollicott, \emph{An analogue of the prime number theorem for closed orbits of Axiom A flows}, Ann. of Math. (2) 118
(1983), no. 3, 573-591.

\bibitem{Pat} S. J. Patterson, \emph{The limit set of a Fuchsian group}, Acta Math., 136, (1976), 241-273.


\bibitem{PPS}
F. Paulin, M. Pollicott and B. Schapira, \emph{Equilibrium states in negative curvature}, Ast\'{e}risque No. 373 (2015), viii+281 pp.




\bibitem{Pe2}
Ja. B. Pesin, \emph{Geodesic flows in closed Riemannian manifolds without focal points}, Izv. Akad. Nauk
SSSR Ser. Mat. 41 (1977), no. 6, 1195-1228.

\bibitem{Pesin}
Ya. B. Pesin, \emph{Dimension theory in dynamical systems: contemporary views and applications}, Chicago Lectures in Mathematics, University of Chicago Press, 2008.

\bibitem{Pl}
J. Plante, \emph{Foliations with measure preserving holonomy}, Ann. of Math. (2) 102 (1975) 327-361.

\bibitem{PW}
M. Pollicott and K. War, \emph{Counting geodesic loops on surfaces of genus at least 2 without conjugate points}, preprint (2023), arXiv:2309.14099.

\bibitem{Po}
G. Ponce, \emph{Lyapunov stability and the Bernoulli property}, arXiv preprint arXiv:1906.05396, 2019.

\bibitem{PTV}
G. Ponce, A. Tahzibi and R. Var\~{a}o, \emph{On the Bernoulli property for certain partially hyperbolic diffeomorphisms}, Advances in Mathematics, 2018, 329: 329-360.


\bibitem{PV}
G. Ponce and R. Var\~{a}o, \emph{An Introduction to the Kolmogorov-Bernoulli Equivalence}, Springer International Publishing, 2019.





\bibitem{Rat}
M. Ratner, \emph{Anosov flows with Gibbs measures are also Bernoullian}, Israel Journal of Mathematics, 1974, 17(4): 380-391.

\bibitem{Ri}
R. Ricks, \emph{Counting closed geodesics in a compact rank-one locally CAT(0) space}, Ergodic Theory Dynam. Systems 42 (2022), no. 3, 1220-1251.

\bibitem{RiR}
L. Rifford and R. Ruggiero, \emph{On the stability conjecture for geodesic flows of manifold without conjugate points}, Annales Henri Lebesgue, 4:759-784, 2021.


\bibitem{Rob}
T. Roblin, \emph{Ergodicit\'{e} et \'{e}quidistribution en courbure n\'{e}gative}, M\'{e}m. Soc. Math. Fr. (N.S.), (2003), vi+96.

\bibitem{RH}
F.~Rodriguez Hertz, \emph{On the geodesic flow of surfaces of nonpositive curvature}, arXiv preprint math/0301010 (2003).

\bibitem {R} 
D. Ruelle, \emph{Statistical mechanics on a compact set with $Z^{v}$ action satisfying expansiveness and specification}, Trans. Amer. Math. Soc.,  185(1973), 237-251.

\bibitem {R1}
D. Ruelle, \emph{Thermodynamical formalism}, The mathematical structures of classical equilibrium statistical mechanics, Encyclopedia Math. Appl. 5(1978), Addison-Wesley\ Publishing Co,\ Reading, MA.

\bibitem{Ru0}
R. Ruggiero, \emph{Expansive dynamics and hyperbolic geometry}, Boletim da Sociedade Brasileira de Matem\'{a}tica Bulletin/Brazilian Mathematical Society, 25(2): 139-172, 1994.

\bibitem{RR}
R.~Ruggiero, \emph{Flatness of Gaussian curvature and area of ideal triangles},
Bul. Braz. Math. Soc. {\bf 28}, 1 (1997) 73-87.


 
\bibitem{Ru1}
R. Ruggiero, \emph{Expansive geodesic flows in manifolds with no conjugate points,} Ergodic Theory Dynam. Systems, 17, (1997), 211-225.

\bibitem{Ru2}
R. Ruggiero, \emph{Dynamics and global geometry of manifolds without conjugate points,} Ensaios Matematicos, 12, (2007), 1-181.

\bibitem{Ru3}
R. Ruggiero, \emph{Does the hyperbolicity of periodic orbits of a geodesic flow without conjugate points imply the Anosov property?} Trans. Amer. Math. Soc., 374 (2021), no. 10, 7263-7280. 



\bibitem{Sr}
S. M. Srivastava, \emph{A Course on Borel Sets}, Graduate Texts in Mathematics, vol. 180, Springer-Verlag, New York, 1998.


\bibitem{Sul} D. Sullivan, \emph{The density at infinity of a discrete group of hyperbolic motions}, Publ. Math. IHES., 50, (1979), 171-202.

\bibitem{Tsu} 
M. Tsuji, \emph{Potential theory in modern function theory}, Chelsea Publishing Co., New York, 1975, Reprinting of the 1959 original.

\bibitem {W2} 
P. Walters, \emph{A variational principle for the pressure of continuous transformations}, Amer. J. Math.,  97(4)(1975), 937-971.

\bibitem{W}
P. Walters, \emph{An introduction to ergodic theory}, Graduate Texts in Mathematics, 79, Springer Science \& Business Media, 2000.


\bibitem{Wat}
J.~Watkins, \emph{The higher rank rigidity theorem for manifolds with no focal points},
Geom. Dedicata, {\bf 164}, (2013), 319--349.


\bibitem{We}
B. Weaver, \emph{Growth rate of periodic orbits for geodesic flows over
surfaces with radially symmetric focusing caps}, J. Mod. Dyn. 8 (2014), no. 2, 139-176.

\bibitem{Wu}
W.~Wu, \emph{On the ergodicity of geodesic flows on surfaces of nonpositive curvature}, Annales de la Facult\'{e} des Sciences de Toulouse. Math\'{e}matiques. S\'{e}rie 6, 24,  no. 3 (2015), 625-639.

\bibitem{Wu1}
W.~Wu, \emph{Volume asymptotics, Margulis function and rigidity beyond nonpositive curvature,} Mathematische Annalen, to appear.


\bibitem{Wu2}
W. Wu, \emph{Counting closed geodesics on rank one manifolds without focal points}, arXiv preprint (2021), arXiv:2105.01841.

\bibitem{WLW}
W. Wu, F. Liu and F. Wang, \emph{On the ergodicity of geodesic flows on surfaces without focal points}, Ergodic Theory and Dynamical Systems, 43 (2023), no. 12, 4226-4248.
 
\bibitem{Yue3}
C. Yue, \emph{Integral formulas for the Laplacian along the unstable foliation and applications to rigidity problems for manifolds of negative curvature}, Ergodic Theory Dynam. Systems 11 (1991), no. 4, 803-819.

\bibitem{Yue2}
C. Yue, \emph{Rigidity and dynamics around manifolds of negative curvature}, Math. Res. Lett. 1 (1994), no. 2, 123-147.

\bibitem{Yue1}
C. Yue, \emph{Conditional measure and flip invariance of Bowen-Margulis and harmonic measures on manifolds of negative curvature},
 Ergodic Theory Dynam. Systems 15 (1995), no. 4, 807-811.

\bibitem{Yue}
C. Yue, \emph{Brownian motion on Anosov foliations and manifolds of negative curvature}, Journal of Differential Geometry, 1995, 41(1): 159-183.

\bibitem{Yue0}
C. Yue, \emph{The ergodic theory of discrete isometry groups on manifolds of variable negative curvature}, Trans. Amer. Math. Soc. 348 (1996), no. 12, 4965-5005. 

\bibitem{Zi}
A. M. Zimmer, \emph{Compact asymptotically harmonic manifolds}, Journal of Modern Dynamics, 6 (2012), no. 3, 377-403.

\end{thebibliography}
\end{document}